\newtheorem{SATZ}{Theorem}[section]
\newtheorem{PROP}[SATZ]{Proposition}
\newtheorem*{SATZO}{Main Theorem \ref{RESULTGLOBAL}}
\newtheorem{HAUPTSATZ}[SATZ]{Main Theorem}
\newtheorem{LEMMA}[SATZ]{Lemma}
\newtheorem{DEF}[SATZ]{Definition}
\newtheorem{DEFSATZ}[SATZ]{Definition/Theorem}
\newtheorem{DEFPROP}[SATZ]{Definition/Proposition}
\newtheorem{DEFLEMMA}[SATZ]{Definition/Lemma}
\newtheorem{BEISPIEL}[SATZ]{Example}
\newtheorem{FRAGE}[SATZ]{Question}
\newtheorem{VERMUTUNG}[SATZ]{Conjecture}
\newtheorem{ASSERTION}[SATZ]{Assertion}
\newtheorem{KOR}[SATZ]{Corollary}
\newtheorem{BEM}[SATZ]{Remark}
\newtheoremstyle{bare}        % name
  {}            % Space above, empty = `usual value'
  {}            % Space below
  {\normalfont}                 % Body font (\normalfont)
  {}                            % Indent amount (empty = no indent, \parindent = para indent)
  {\bfseries}                   % Thm head font
  {}                            % Punctuation after thm head
  {.0em}                           % Space after thm head: " " = normal interword space;
\theoremstyle{bare}
\newtheorem{PAR}[SATZ]{}
\newcommand{\M}[1]{ \left(\begin{matrix} #1 \end{matrix} \right) }
\newcommand{\F}{ \mathbb{F} }
\newcommand{\R}{ \mathbb{R} }
\newcommand{\C}{ \mathbb{C} }
\newcommand{\Q}{ \mathbb{Q} }
\newcommand{\Qp}{ {\mathbb{Q}_p} }
\newcommand{\Z}{ \mathbb{Z} }
\newcommand{\Zh}{ {\widehat{\mathbb{Z}}} }
\newcommand{\Zp}{ {\mathbb{Z}_p} }
\newcommand{\Fp}{ {\mathbb{F}_p} }
\newcommand{\Zpp}{ {\mathbb{Z}_{(p)}} }
\newcommand{\Zl}{ {\mathbb{Z}_l} }
\newcommand{\Gm}{ {\mathbb{G}_m} }
\newcommand{\Ga}{ {\mathbb{G}_a} }
\newcommand{\N}{ \mathbb{N} }
\newcommand{\HH}{ \mathbb{H} }
\newcommand{\G}{ \mathbb{G} }
\newcommand{\A}{ \mathbb{A} }
\newcommand{\Af}{ {\mathbb{A}^{(\infty)}} }
\newcommand{\Afp}{ {\mathbb{A}^{(\infty, p)}} }
\newcommand{\PP}{ \mathbb{P} }
\newcommand{\SSS}{ \mathbb{S} }
\newcommand{\OOO}{\text{\footnotesize$\mathcal{O}$}}
\newcommand{\OO}{ {\cal O} }
\newcommand{\D}{ \mbox{d} }
\newcommand{\where}{\enspace | \enspace }
\newcommand{\cat}[1]{ {[\textbf{ #1 }]} }
\newcommand{\tensor}{\otimes}
\DeclareMathOperator{\supp}{supp}
\DeclareMathOperator{\dd}{d}
\newcommand{\ddc}{ \dd \dd^c }
\DeclareMathOperator*{\dotsop}{\dots}
\DeclareMathOperator{\Ad}{Ad}
\DeclareMathOperator{\Lie}{Lie}
\DeclareMathOperator{\Int}{int}
\DeclareMathOperator{\im}{im}
\DeclareMathOperator{\ind}{ind}
\DeclareMathOperator{\Frob}{Frob}
\DeclareMathOperator{\Aut}{Aut}
\DeclareMathOperator{\Iso}{Iso}
\DeclareMathOperator{\Model}{M}
\DeclareMathOperator{\Princ}{P}
\DeclareMathOperator{\Cycle}{Z}
\DeclareMathOperator{\Cliff}{C}
\DeclareMathOperator{\id}{id}
\DeclareMathOperator{\Hom}{Hom}
\DeclareMathOperator{\Sym}{Sym}
\DeclareMathOperator{\End}{End}
\DeclareMathOperator{\Stab}{Stab}
\DeclareMathOperator{\Spin}{Spin}
\DeclareMathOperator{\GSpin}{GSpin}
\DeclareMathOperator{\Isome}{I}
\DeclareMathOperator{\BP}{BP}
\DeclareMathOperator{\Weil}{Weil}
\DeclareMathOperator{\ModForm}{ModForm}
\DeclareMathOperator{\HolModForm}{HolModForm}
\DeclareMathOperator{\PowSer}{PowSer}
\DeclareMathOperator{\Laur}{Laur}
\DeclareMathOperator{\Sing}{Sing}
\DeclareMathOperator{\rec}{rec}
\DeclareMathOperator{\res}{res}
\DeclareMathOperator{\Res}{Res}
\DeclareMathOperator{\qpar}{qpar}
\DeclareMathOperator{\GSp}{GSp}
\DeclareMathOperator{\USp}{USp}
\DeclareMathOperator{\PSp}{PSp}
\DeclareMathOperator{\WSp}{WSp}
\DeclareMathOperator{\SO}{SO}
\DeclareMathOperator{\Grass}{Grass}
\DeclareMathOperator{\CH}{CH}
\newcommand{\aCH}{\widehat{\CH}}
\DeclareMathOperator{\hght}{ht}
\DeclareMathOperator{\vol}{vol}
\newcommand{\avol}{{\widehat{\vol}}}
\DeclareMathOperator{\Gal}{Gal}
\DeclareMathOperator{\type}{type}
\DeclareMathOperator{\ftype}{ftype}
\DeclareMathOperator{\tr}{tr}
\DeclareMathOperator{\Sp}{Sp}
\DeclareMathOperator{\SL}{SL}
\DeclareMathOperator{\GL}{GL}
\DeclareMathOperator{\PGL}{PGL}
\DeclareMathOperator{\Mp}{Mp}
\DeclareMathOperator{\adeg}{\widehat{deg}}
\DeclareMathOperator{\Sh}{Sh}
\DeclareMathOperator{\chern}{c}
\DeclareMathOperator{\achern}{\widehat{c}}
\DeclareMathOperator{\chernch}{ch}
\DeclareMathOperator{\achernch}{\widehat{ch}}
\DeclareMathOperator{\spec}{spec}
\DeclareMathOperator{\spf}{spf}
\DeclareMathOperator{\gr}{gr}
\DeclareMathOperator{\Div}{div}
\newcommand{\nGa}{\mathbb{W}}
\newcommand{\nSD}{\mathbf{X}}
\newcommand{\nSDp}{{\mathbf{X}'}}
\newcommand{\nSDi}{\mathbf{Y}}
\newcommand{\nSDB}{\mathbf{B}}
\newcommand{\nSDBp}{{\mathbf{B}'}}
\newcommand{\nP}{P}
\newcommand{\nG}{G}
\newcommand{\nX}{\mathbb{D}}
\newcommand{\nh}{h}
\newcommand{\nw}{w}
\newcommand{\nW}{W}
\newcommand{\nU}{U}
\newcommand{\nV}{V}
\newcommand{\nC}{C}
\newcommand{\nZ}{\Cycle}
\newcommand{\nSh}{\Model}
\newcommand{\nShD}{\Model^\vee}
\newcommand{\nSPB}{\Princ}
\newcommand{\nMorphD}{\Pi}
\newcommand{\nMorphSt}{\Xi}
\newcommand{\WeilGamma}{\Upsilon}
\newcommand{\nRPCD}{\Delta}
\newcommand{\nH}{\mathbf{H}}
\newcommand{\nL}{L}
\newcommand{\nM}{M}
\newcommand{\nI}{I}
\newcommand{\nS}{\mathbf{S}}
\newcommand{\nO}{\mathbf{O}}
\newcommand{\nQ}{Q}
\newcommand{\nIsome}{\Isome}
\journal{?}
\begin{document}

\begin{frontmatter}

%% Title, authors and addresses

%% use the tnoteref command within \title for footnotes;
%% use the tnotetext command for the associated footnote;
%% use the fnref command within \author or \address for footnotes;
%% use the fntext command for the associated footnote;
%% use the corref command within \author for corresponding author footnotes;
%% use the cortext command for the associated footnote;
%% use the ead command for the email address,
%% and the form \ead[url] for the home page:
%%
%% \title{Title\tnoteref{label1}}
%% \tnotetext[label1]{}
%% \author{Name\corref{cor1}\fnref{label2}}
%% \ead{email address}
%% \ead[url]{home page}
%% \fntext[label2]{}
%% \cortext[cor1]{}
%% \address{Address\fnref{label3}}
%% \fntext[label3]{}

\title{The geometric and arithmetic volume of Shimura varieties of orthogonal type}

%% use optional labels to link authors explicitly to addresses:
%% \author[label1,label2]{<author name>}
%% \address[label1]{<address>}
%% \address[label2]{<address>}

\author{Fritz H\"ormann}

\address{Mathematisches Institut, 
Albert-Ludwigs-Universit\"at Freiburg \\
\emph{E-mail: fritz.hoermann@math.uni-freiburg.de}}

\begin{abstract}
We apply the theory of Borcherds products to calculate arithmetic volumes (heights)
of Shimura varieties of orthogonal type up to contributions from very bad primes. The approach is analogous
to the well-known computation of their geometric volume by induction, using special cycles. 
A functorial theory of integral models of toroidal compactifications of those varieties and 
a theory of arithmetic Chern classes of integral automorphic vector bundles with singular metrics are used.
We obtain some evidence in the direction of Kudla's conjectures on
relations of heights of special cycles on these varieties to special derivatives of Eisenstein series.
\end{abstract}

\begin{keyword}
%% keywords here, in the form: keyword \sep keyword
Kudla's conjectures \sep Borcherds products \sep Integral models of toroidal compactifications of mixed Shimura varieties \sep Arithmetic automorphic vector bundles \sep Arakelov geometry with singular metrics

%% MSC codes here, in the form: \MSC code \sep code
%% or \MSC[2008] code \sep code (2000 is the default)

%\MSC 14G40 \sep 14G35 \sep 11E45 \sep 11E12

\MSC[2010] 11F27 \sep 11F23 \sep 11G18 \sep 14G35 \sep 14G40

\end{keyword}

\end{frontmatter}

% \linenumbers

%\tableofcontents

%% main text

\tableofcontents

\section{Introduction}

A Shimura variety of orthogonal type arises from the Shimura datum $\nO(\nL)$ consisting of the orthogonal group $\SO(\nL_\Q)$ of a quadratic space $\nL_\Q$ of signature $(m-2,2)$ and the set
\[ \nX := \{ N \subseteq \nL_\R \ |\ N \text{ negative definite plane} \}, \]
which has the structure of an Hermitian symmetric domain and can be interpreted as
a conjugacy class of morphisms $\SSS=\Res_\R^\C \Gm \rightarrow \SO(\nL_\R)$. For any compact open subgroup $K\subseteq \SO(\nL_\Af)$, we can form the Shimura variety (orbifold):
\[ [ \SO(\nL_\Q) \backslash \nX \times \SO(\nL_\Af) / K ].  \]
It is a smooth manifold if $K$ is sufficiently small,
has a canonical algebraic model over $\Z_{(p)}$\footnote{if $m>2$} for $p \not| 2D$, where
$D$ is the discriminant of a lattice $\nL_\Z$, and $K$ is equal to the corresponding maximal subgroup at $p$.
In the first part of this article (sections \ref{SECTIONGROUPSCHEMES}--\ref{SECTIONINTMODELS}), we recall 
the functorial theory of
\begin{itemize}
\item canonical integral models of toroidal compactifications of mixed Shimura varieties of Abelian type,
\item arithmetic automorphic vector bundles on them,
\end{itemize}
developed in the thesis of the author \cite{Thesis}. This theory, for general Hodge or even Abelian type, still relies on an assumption regarding the stratification of the compactification (\ref{MAINCONJECTURE}) which remains unproven. Even though therefore this theory has to be seen as work in progress, we think that it is of independent interest to publish this application. 
 
The orthogonal Shimura varieties are interesting because they easily enable to define algebraic cycles of arbitrary codimension which turn out to have amazing arithmetic properties:
For an isometry $x: \nM \hookrightarrow \nL$, where $\nM$ is positive definite, we can simply form
\[ \nX_x := \{ N \in \nX \ |\ N \perp x(\nM) \}. \]
If ($K$-stable) lattices $\nL_\Z$ and $\nM_\Z$ or more generally a $K$-invariant Schwartz function $\varphi \in S(\nM_\Af^* \otimes \nL_\Af)$ is chosen, 
we can form cycles $\nZ(\nM, \nL, \varphi; K)$ on the Shimura variety by taking the quotient of the union of the $\nX_x$ over all integral isometries (resp. all isometries in the support of $\varphi$ in a weighted way) as above. Also for singular lattices $\nM$ analogous cycles can be defined.
Consider a model $\nSh({}^K_\nRPCD \nO(\nL))$ of a toroidal compactification of the Shimura variety. 
We consider its algebraic Chow groups $\CH^n(\nSh({}^K_\nRPCD \nO(\nL)))$ and the
generating series
\[ \Theta_n(\nL) = \sum_{Q \in \Sym^2((\Z^{n})^*)} [\nZ(\Z^n_Q, \nL, \varphi; K)] \exp(2\pi i Q \tau)   \] 
with values in the Chow group (tensored with $\C$). 
Assume now that $\nSh({}^K_\nRPCD \nO(\nL))$ is even defined over $\Z$ in a ``reasonably canonical'' way.
Kudla proposes a ``reasonably canonical'' way of defining arithmetic cycles $\widehat{\nZ}(\nM, \nL, \varphi; K, \nu)$, depending on the
imaginary part $\nu$ of $\tau$, too, and also for indefinite $\nM$, such that
\[ \widehat{\Theta}_n(\nL) = \sum_{Q \in \Sym^2((\Z^{n})^*)} [\widehat{\nZ}(\Z^n_Q, \nL, \varphi; K, \nu)] \exp(2\pi i Q \tau),  \]
should have values in a suitable Arakelov Chow group $\aCH^n(\nSh({}^K_\nRPCD \nO(\nL)))_\C$. 
He proposes specific Greens functions, which have singularities at the boundary.
The orthogonal Shimura varieties come equipped with a natural Hermitian automorphic line bundle $\nMorphSt^*\overline{\mathcal{E}}$ (whose metric also has singularities along the boundary). Multiplication with a suitable power of its first Chern  class and taking push-forward provides us with geometric (resp. arithmetic) degree maps $\deg: \CH^p(\cdots) \rightarrow \Z$ (resp. $\adeg:\aCH^p(\cdots) \rightarrow \R$). Assuming that an Arakelov theory can be set up to deal with all different occurring singularities, Kudla conjectures (for the geometric case this goes back to Siegel, Hirzebruch, Zagier, Kudla-Millson, Borcherds, to name a few)\footnote{If $m-r \le n+1$, the statement has to be modified. Here $r$ is the Witt-rank of $\nL$.}
\begin{enumerate}
\item[(K1)] $\Theta_n$ and $\widehat{\Theta}_n$ are (holomorphic, resp. non-holomorphic) Siegel modular forms of weight $\frac{m}{2}$ and genus $n$.
\item[(K2)] $\deg(\Theta_n)$ and $\adeg(\widehat{\Theta}_n)$ are equal to a special value of a normalized version of a standard Eisenstein series of weight $\frac{m}{2}$ associated with the Weil representation of $\nL$ \cite[section 4]{Paper1}, resp. its special derivative at the same point.
\item[(K3)] $\Theta_{n_1}(\tau_1) \cdot \Theta_{n_2}(\tau_2) = \Theta_{n_1+n_2}(\M{\tau_1&\\&\tau_2})$ and similarly for $\widehat{\Theta}$.
\item[(K4)] $\widehat{\Theta}_{m-1}$ can be defined, with coefficients being zero-cycles on the arithmetic model and satisfies the properties above.
\end{enumerate}
Kudla shows (see \cite{Kudla6} for an overview) that this implies almost formally vast generalizations of the formula of Gross-Zagier \cite{GZ}.
In full generality the conjectures are known only for Shimura curves \cite{KRY3}. See \cite[section 11]{Paper1}, the
introduction to \cite{Thesis}, and especially Kudla's article \cite{Kudla6} for more information on what is known in other cases. In the {\em geometric case}
Borcherds \cite{Borcherds2} shows (K1) for $n=1$. (K2) follows either from Kudla-Millson theory and the Siegel-Weil formula, or directly from the knowledge of the Tamagawa number, or from Borcherds' theory. All 3 proofs (for non-singular coefficients) are sketched in section 13. 

In this work, we show --- in a sense --- the good reduction part of the {\em arithmetic case} of (K2) for {\em all} Shimura
varieties of orthogonal type. The approach uses only information from the ``Archimedean fibre'' to attack the problem. It is a generalization of work of
Bruinier, Burgos, and K\"uhn \cite{BBK}, which dealt with the case of Hilbert modular surfaces. It uses
Borcherds' construction of modular forms on these Shimura varieties \cite{Borcherds1} and a computation of the integral of their norm \cite{BK, Kudla4}.
More precisely, we show the following:

\begin{SATZO}Assume \ref{MAINCONJECTURE}.

Let $D'$ be the product of primes $p$ such that $p^2|D$, where $D$ is the discriminant of $\nL_\Z$.
We have
\[
\begin{array}{rrcl}
\text{1.} \qquad & \vol_{E}(\nSh({}^{K}_\nRPCD\nO(\nL_\Z))) &=& 4 \widetilde{\lambda}^{-1}(\nL_\Z; 0)  \\
\text{2.} \qquad & \widehat{\vol}_{\overline{\mathcal{E}}}(\nSh({}^{K}_\nRPCD \nO(\nL_\Z))) &\equiv& \frac{d}{ds} 4 \widetilde{\lambda}^{-1}(\nL_{\Z};s)|_{s=0} \qquad \text{ in $\R_{2D'}$}
\end{array}
\]

Let $\nM_\Z$ be a lattice of dimension $n$ with positive definite $Q_\nM \in \Sym^2(\nM_\Q^*)$. 
Let $D''$ be the product of primes $p$ such that $\nM_\Zp \not\subseteq \nM_\Zp^*$ or $\nM_\Zp^*/\nM_\Zp$ is not cyclic. 
Assume 
\begin{itemize}
\item $m-n \ge 4$, {\em or} 
\item $m=4, n=1$, and $\nL_\Q$ has Witt rank 1. 
\end{itemize}
Then we have
\[
\begin{array}{rrcl}
\text{3.} &  \vol_{\mathcal{E}}(\nZ(\nL_\Z, \nM_\Z, \kappa; K)) &=& 4 \widetilde{\lambda}^{-1}(\nL_\Z; s) \widetilde{\mu}(\nL_\Z, \nM_\Z, \kappa; 0),  \\
\text{4.} &  \hght_{\overline{\mathcal{E}}}(\nZ(\nL_\Z, \nM_\Z, \kappa; K)) &\equiv& \frac{\dd}{\dd s} \left. { 4  \left( \widetilde{\lambda}^{-1}(\nL_\Z; s) \widetilde{\mu}(\nL_\Z, \nM_\Z, \kappa; s) \right) } \right|_{s=0} \\
& & & \text{in } \R_{2D D''}.
\end{array}
\]
3. is true without the restriction on $m$ and $n$. Note that for $n=1$ and integral $Q_\nM$, we have trivially always $D''=1$. 
\end{SATZO}

Here $\R_N$ is $\R$ modulo rational multiples of $\log(p)$ for $p|N$, and
the $\widetilde{\lambda}$ and $\widetilde{\mu}$ are
functions in $s \in \C$, given by certain Euler products (\ref{DEFLAMBDAMU}) associated with representation densities of $\nL_\Z$ and $\nM_\Z$. $\pm\widetilde{\mu}$ appears as the ``holomorphic part'' of a Fourier coefficient of the standard Eisenstein series associated with the Weil representation of $\nL_\Z$. $K$ is the discriminant kernel and $\nSh({}^{K}_\nRPCD\nO(\nL_\Z))$ is any
toroidal compactification of the orthogonal Shimura variety (see below). $\overline{\mathcal{E}}$ is the integral tautological bundle on the compact dual equipped with a metric on the restriction
of its complex fibre to $\nX$; the volumes/heights are computed
w.r.t. the associated arithmetic automorphic line bundle (\ref{DEFAUTOMORPHICVB})
 $\nMorphSt^*\overline{\mathcal{E}}$ on $\nSh({}^K_\nRPCD \nO(\nL_\Z))$. 

For a more detailed introduction to the method of proof, we refer the reader to section 11 of \cite{Paper1}, in which
the results of this paper are announced and explained. They have their origin in the thesis of the author \cite{Thesis}.

In the first sections we explain in detail the functorial properties integral models of toroidal compactifications of orthogonal Shimura varieties ought to satisfy. These properties have been established in the thesis of the author \cite{Thesis}
assuming a certain technical assumption (\ref{MAINCONJECTURE}) that is conjectured to be always satisfied. This is proven if
$m\le 5$ by Lan \cite{Lan} (the Shimura varieties --- Spin-version --- are then of P.E.L. type) 
and was announced in general. 
For the rest of this work, we assume that the assumption holds true, and we have hence models with the required properties. These are constructed locally (i.e. over an extension of $\Zpp$).
Input data for the theory are $p$-integral mixed Shimura data ($p$-MSD) $\nSD=(\nP_\nSD, \nX_\nSD, \nh_\nSD)$ consisting
of an affine group scheme $\nP_\nSD$ over $\spec(\Zpp)$ of a certain rigid type (P) (see \ref{DEFTYPEP}) and a set $\nX_\nSD$ which
comes equipped with a finite covering $\nh_\nSD: \nX_\nSD \rightarrow \Hom(\SSS_\C, \nP_{\nSD,\C})$ onto a conjugacy class in the latter group, subject to some axioms, which are roughly Pink's mixed extension \cite{Pink} of Delige's axioms for a pure Shimura datum. Then call a compact open subgroup $K \subseteq \nP_\nSD(\Af)$ admissible, if it is of the form 
$K^{(p)} \times \nP_\nSD(\Zp)$ for a compact open subgroup $K^{(p)} \subseteq \nP_\nSD(\Afp)$. For the toroidal compactification a certain rational polyhedral cone decomposition $\nRPCD$ is needed. We call the collection
${}^K \nSD$ (resp. ${}^K_\nRPCD \nSD$) extended (compactified) $p$-integral mixed Shimura data ($p$-EMSD, resp. $p$-ECMSD). These form 
categories where morphisms ${}^K_\nRPCD \nSD \rightarrow {}^{K'}_{\nRPCD'} \nSDi$ are pairs 
$(\alpha, \rho)$ of a morphism $\alpha$ of Shimura data and $\rho \in \nP_\nSDi(\Afp)$ satisfying compatibility with the $K$'s and $\nRPCD$'s. The theory is then a functor $\nSh$ from $p$-ECMSD to the category of Deligne-Mumford stacks over reflex rings (above $\Zpp$) which over $\C$ and restricted to $p$-MSD becomes naturally isomorphic to the one given by the analytic mixed Shimura variety construction. It is characterized uniquely by Deligne's canonical model condition, Milne's extension property (integral canonicity) and a stratification of the boundary into mixed Shimura varieties, together with boundary isomorphisms of the formal completions along the latter with similar completions of more mixed Shimura varieties.  These boundary isomorphisms, for the case of the symplectic Shimura varieties, are given by Mumford's construction \cite[Appendix]{FC}. There is also a functor `compact' dual from the category of $p$-MSD to the category of schemes over reflex rings. The duals come equipped with an action of the group scheme $\nP_\nSD$, and we have morphisms 
\[ \nMorphSt_\nSD: \nSh({}^K_\nRPCD \nSD) \rightarrow \left[ \nShD(\nSD)/\nP_{\nSD, \OOO_\nSD} \right], \]
which form a pseudo-natural transformation of functors with values in Artin stacks over reflex rings, which are
a model of the usual construction over $\C$ if $\nRPCD$ is trivial.
This is the theory of integral automorphic vector bundles. It is compatible with boundary isomorphisms. For more information on the `philosophy' of these objects in terms of motives see the introduction to \cite{Thesis}.

In section 7 and 8 a precise general $q$-expansion principle is derived from these abstract properties. 

In section 9 the structure of the models of orthogonal Shimura varieties is investigated and special cycles are defined. 

In section 10 we define integral Hermitian automorphic vector bundles, define the notions of arithmetic and
geometric volume. Furthermore we set up an Arakelov theory which has enough properties to deal
with singularities of the natural Hermitian metrics on automorphic vector bundles. This uses work
of Burgos, Kramer, and K\"uhn \cite{BKK1, BKK2}. 

In section 11 we use the general $q$-expansion principle to prove that Borcherds products with their natural norm yield {\em integral} sections of an appropriate integral Hermitian line bundle. Among other things the product 
expansions of Borcherds' are adelized and their Galois properties investigated.

In section 12, we prove that the bundle of vector valued modular forms for the Weil representation (which appear as input forms in the construction of Borcherds products) has a rational structure. Then we use this to construct input forms with special properties which will be needed in the application.

In section 13 we develop the main ingredient of the proof of the Main Theorem which connects Borcherds' theory and Arakelov geometry on the orthogonal Shimura varieties.

In section 14 the Main Theorem is proven. 

In 3 appendices additional material on ``lacunarity of modular forms'', on quadratic forms, and on semi-linear representations is provided. 

Finally, it is a pleasure to thank the Department of Mathematics and Statistics at McGill University and especially Eyal Goren, Jayce Getz, and Henri Darmon for providing a very inspiring working environment during the preparation of this article.

\section{Group schemes of type (P)}\label{SECTIONGROUPSCHEMES}

\begin{PAR}
Our definition of integral mixed Shimura data involves a special kind of group schemes over
$\spec(\Zpp)$ which are --- in a sense --- as rigid as reductive group schemes. We call them group schemes of type (P), and they are a slight generalization of those of type (R) and (RR) considered in \cite[XXII, 5.1]{SGAIII}, which occur as parabolics of reductive group schemes. We use extensively the notation and results of [loc. cit.]. Proofs of the theorems in this section can be found in \cite[section 1]{Thesis}.
\end{PAR}

\begin{PAR}
For a coherent sheaf on a scheme $S$, we denote
by $\nGa(E)$ its additive group scheme.
For a group scheme $X$ over $S$ we denote the functor $D(X):=\underline{\Hom}(X, \Gm)$. For an ordinary Abelian group $M$, denote $M_S$ the associated constant group scheme. 
\end{PAR}
 
\begin{DEF}\label{DEFTYPEP}
Let $P$ be a group scheme of finite type
over $S$. We call $P$ of type (P), if the following conditions are satisfied:

There exists a closed unipotent normal subgroup scheme $W$
(called unipotent radical).

$S$ is covered by etale neighborhoods $S' \rightarrow S$ for which there exist
\begin{enumerate}
\item
a closed reductive subgroup scheme $G$ of $P_{S'}$ such that
$P_{S'}$ is isomorphic to the semi-direct product of $W_{S'}$ with $G$,

(Then $P$ is smooth and affine over $S$, and there
exist maximal split tori locally in the etale topology \cite[XII, 1.7]{SGAIII}),
\item
a split maximal torus $T \cong D(M_{S'})$ of $G$, where $M$ is some lattice, 
\item
a system of roots \cite[XIX, 3.2, 3.6]{SGAIII} $R = R_G \dot{\cup} R_W \subset M$, such that:
\begin{eqnarray*}
 \Lie(G) &=& \Lie(T_{S'}) \oplus \bigoplus_{r \in R_G} \Lie(G)^r \\
 \Lie(W_{S'}) &=& \bigoplus_{r \in R_W} \Lie(W_{S'})^r,
\end{eqnarray*}

\item closed embeddings
\[ \exp_r: \nGa(\Lie(W_{S'})^r) \rightarrow W \]
for all $r \in R_W$,
inducing via $\Lie$ the inclusion and satisfying for all $S'' \rightarrow S'$, $t \in T(S'')$, $X \in \Lie(G_{S''})^r$:
\[ \Int(t) \exp_r(X) = \exp_r(r(t)X). \]
\end{enumerate}
with the property that any two different $r_1 \in R$, $r_2 \in R_W$ are linearly independent.
\end{DEF}

\begin{BEM}This property is obviously stable under base change. If the objects required in (1--4) above are already defined over $S$ we call $(P, T, M, R)$ a split group of type (P).
\end{BEM}

Group schemes of type (P) have a nice theory of (quasi-)parabolics.

\begin{DEF}[{\cite[XV, 6.1]{SGAIII}}]\label{DEFPARABOLIC}
Let $P$ be a group scheme of finite type over $S$.
A {\bf parabolic} subgroup scheme of $P$ is a smooth subgroup scheme $Q$ of $G$, such that for each
$s \in S$, $Q_{\overline{s}}$ is a parabolic subgroup of $G_{\overline{s}}$, (i.e. such that
$G_{\overline{s}}/Q_{\overline{s}}$ is proper).
\end{DEF}

\begin{PROP}\label{EXISTENCEQUASIPARABOLIC}
Let $S$ be a {\em reduced} scheme, and let $(P,T,M,R)$ be a split group of type (P) over $S$.
If $R' \subset R$ is a closed subset, there exists a unique smooth subgroup scheme $P' \subset P$,
containing $T$, such that
\[ \Lie(P') = \Lie(T) \oplus \bigoplus_{r \in R'} \Lie(P)^r \]
$P'$ is of type (P) and closed in $P$.
In particular each parabolic is of this form.
\end{PROP}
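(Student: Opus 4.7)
The plan is to construct $P'$ by combining the known theory for reductive groups (for the $G$-part) with the explicit exponential structure of the unipotent radical $W$ (for the $W$-part), and then to check the properties by reducing to SGA III style arguments fiber by fiber.

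First, I would decompose $R' = R'_G \dot{\cup} R'_W$ where $R'_G = R' \cap R_G$ and $R'_W = R' \cap R_W$. Since $R'$ is closed in $R$, the subset $R'_G$ is closed in $R_G$. Applying \cite[XXII]{SGAIII} to the split reductive group $(G,T,M,R_G)$, one obtains a unique smooth closed subgroup scheme $G' \subseteq G$ containing $T$ whose Lie algebra is $\Lie(T) \oplus \bigoplus_{r \in R'_G} \Lie(G)^r$, and $G'$ is itself of type (P) with split reductive part determined by $R'_G$.

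Next, I would construct the unipotent piece $W'$ as follows. For each $r \in R'_W$, the embedding $\exp_r:\nGa(\Lie(W)^r) \hookrightarrow W$ is given. Choose any total order on $R'_W$. Consider the morphism
\[ \mu: \prod_{r \in R'_W} \nGa(\Lie(W)^r) \longrightarrow W \]
given by taking the product of the $\exp_r$ in the chosen order. The rigidity condition of \ref{DEFTYPEP} (linear independence of distinct roots in $R_W$ against any root in $R$) forces the commutators $[\exp_{r_1}(X), \exp_{r_2}(Y)]$ to be expressible in the subgroup generated by the $\exp_{r_1+r_2+\cdots}$ whose roots lie in $R_W$ and are non-negative integer combinations of $r_1, r_2$; since $R'$ is closed in $R$, any such combination that is actually a root of $W$ already lies in $R'_W$. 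This shows first that $\mu$ is an isomorphism of schemes onto a closed subscheme $W'$ of $W$, and second that $W'$ is stable under multiplication and inversion, hence is a closed smooth subgroup scheme of $W$ with $\Lie(W') = \bigoplus_{r \in R'_W} \Lie(W)^r$. Moreover $T$ normalizes $W'$ by the conjugation formula $\Int(t)\exp_r(X) = \exp_r(r(t)X)$, and in fact $G'$ normalizes $W'$ because $W'$ is characterized Lie-algebraically and $G'$ preserves the weight decomposition for $T$.

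I would then set $P' := W' \rtimes G' \subseteq W \rtimes G = P$ and verify that $P'$ is a closed smooth subgroup scheme of type (P) with the prescribed Lie algebra. Uniqueness: if $P''$ is another smooth subgroup of $P$ containing $T$ with the same Lie algebra, then over each geometric point $P''_{\bar{s}} = P'_{\bar{s}}$ by the classical theory of root subgroups, so $P''$ and $P'$ coincide set-theoretically; since both are smooth over the reduced base $S$, they coincide scheme-theoretically. Finally the statement about parabolics: any parabolic contains a maximal torus after an etale cover, hence (after the cover) contains some $T$ as above, and then by the classical theory its Lie algebra must be of the form $\Lie(T) \oplus \bigoplus_{r \in R'} \Lie(P)^r$ for a closed $R' \subseteq R$, so uniqueness shows it equals our $P'$.

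The main obstacle I expect is the construction and closedness of $W'$: one must verify carefully that the product map $\mu$ is an isomorphism onto a subgroup, using the commutator formulas valid in type (P). Fortunately, the linear independence hypothesis in \ref{DEFTYPEP} severely restricts the possible commutator terms, so the argument should reduce to a filtration by normal subgroups corresponding to height in $R'_W$, much as in \cite[XXII, 5.5.1]{SGAIII} but in the simpler type (P) setting where only $W$-roots (not reflections) appear.
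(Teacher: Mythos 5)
The paper itself does not contain a proof of this proposition: all results of section \ref{SECTIONGROUPSCHEMES} are delegated to \cite[section 1]{Thesis}, so there is no in-paper argument to compare yours against. Judged on its own terms, your route is the natural one and matches what one would expect the cited proof to do: split $R'=R'_G\,\dot\cup\,R'_W$, invoke \cite[XXII]{SGAIII} for the subgroup of type (R) attached to the closed subset $R'_G$ of the (reduced) root system of $G$, directly span $W'$ by the given $\exp_r$, $r\in R'_W$, using closedness of $R'$ and the linear-independence axiom of \ref{DEFTYPEP}, and set $P'=W'\rtimes G'$; the reducedness of $S$ then enters exactly where you use it, in promoting fibrewise equality of reduced subschemes to equality.

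Two points need more care than you give them. First, your uniqueness step ("over each geometric point $P''_{\bar s}=P'_{\bar s}$ by the classical theory of root subgroups") is only valid for subgroups with \emph{connected} fibres: already for $R'=\emptyset$ both $T$ and $N_G(T)$ are smooth subgroup schemes containing $T$ with Lie algebra $\Lie(T)$, so the fibrewise rigidity you invoke (and indeed the uniqueness assertion itself) must be read inside the class of fibrewise connected (type (R)-style) subgroups; you should say so explicitly, and note that for the final claim about parabolics connectedness of the fibres is automatic. Second, the assertion that $G'$ normalizes $W'$ "because $W'$ is characterized Lie-algebraically and $G'$ preserves the weight decomposition for $T$" is not an argument: $G'$ does not preserve the $T$-weight decomposition. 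What is true, and what you need, is that conjugation of $\exp_r(X)$, $r\in R'_W$, by a root subgroup $U_\alpha\subset G'$, $\alpha\in R'_G$, has $T$-weights of the form $r+k\alpha$, $k\ge 0$, hence lands in the span of root subgroups for roots in $R'_W$ by closedness of $R'$ — i.e. exactly the same $T$-equivariance/filtration argument you use to show $W'$ is closed under multiplication, and it presupposes that $W$ is directly spanned by its root subgroups in any order. This bootstrapping (direct spanning of $W$, Chevalley-type commutation, equivariant normalization) is the real content that \cite[section 1]{Thesis} has to supply; your outline identifies it correctly but should treat the normalization step as part of it rather than as a formal consequence.
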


\begin{DEF}
Let $\nP$ be of type (P).
A closed smooth subgroup scheme $Q$ of $P$ is called a {\bf quasi-parabolic} group, if etale locally, say on $S'\rightarrow S$, there is a splitting
$(P, T, M, R)$ over $S'$, $Q_{S'}$ is of the form given in Proposition \ref{EXISTENCEQUASIPARABOLIC} for
a $R' \subset R$ which contains a set of positive roots (but not necessarily $R_W$!).
\end{DEF}

\begin{SATZ}\label{QUASIPARABOLICS}
Let $S$ be reduced and $P$ be a group scheme over $S$ of type (P).
\begin{enumerate}
\item
The functor $S' \mapsto \{\text{(quasi-)parabolic subgroups of $P_{S'}$}\}$ is representable by a smooth (quasi-)projective $S$-scheme $(\mathcal{Q})\mathcal{PAR}$.

There is an open and closed embedding $\mathcal{PAR}$ into $\mathcal{QPAR}$.

\item
There is an etale sheaf $\mathcal{TYPE}$ of finite sets over $S$ and a surjective morphism
\[ \type: \mathcal{QPAR} \rightarrow \mathcal{TYPE}, \]
with the property $Q$ and $Q'$ are locally conjugated in the etale topology, if and only if $\type(Q) = \type(Q')$.

\item
If $(P, T, M, R)$ is split
\begin{eqnarray*}
 \mathcal{TYPE} &\cong& \left\{ \begin{array}{c}
 \text{$W(R_G)$-orbits of closed subsets of $R$} \\ { \text{containing a set of positive roots} } \end{array} \right\}_S. \\
\end{eqnarray*}

\item
Let $Q$ be a quasi-parabolic of $P$.
The morphism $P/Q \rightarrow \mathcal{QPAR}$,
$g \mapsto \Int(g)Q$ is an open and closed immersion onto a connected component of $\mathcal{QPAR}$.
It is hence the fibre above $\type(Q)$.

\item
$\underline{\Hom}(\Gm, P)$ is representable by a smooth affine scheme over $S$.

There is an etale sheaf $\mathcal{FTYPE}$ of (infinite) sets over $S$ and a surjective morphism
\[ \ftype: \underline{\Hom}(\Gm, P) \rightarrow \mathcal{FTYPE}, \]
with the property $\alpha$ and $\alpha'$ are locally conjugated in the etale topology, if and only if $\ftype(\alpha) = \ftype(\alpha')$.

\item
If $(P, T, M, R)$ is split:
\begin{eqnarray*}
 \mathcal{FTYPE} &\cong& \{ \text{ $W(R_G)$-orbits in $M^*$ } \}_S. \\
\end{eqnarray*}

\item
There is a surjective morphism
\[ \qpar: \underline{\Hom}(\Gm, P) \rightarrow \mathcal{QPAR}, \]
characterized by the properties
\begin{enumerate}
\item Let $\alpha: \G_{m,S'} \rightarrow G_{S'}$ be some cocharacter.
For each etale $S''\rightarrow S'$, where there is a splitting $(G_{S''}, T, M, R)$ such that
$\alpha_{S''}: \G_{m,S''} \rightarrow T \subset G_{S''}$, we have
\[ \type(\qpar(\alpha)) = W(R_G) \{r \in R \where r \circ \alpha \ge 0\}. \]

\item For each $\alpha: \G_{m,S'} \rightarrow G_{S'}$, $\alpha$ factors through $\qpar(\alpha)$.
\end{enumerate}

There is a commutative diagram
\[ \xymatrix{
\underline{\Hom}(\Gm, P) \ar[r]^-{\qpar} \ar[d]^{\ftype} & \mathcal{QPAR} \ar[d]^{\type}  \\
\mathcal{FTYPE} \ar[r] & \mathcal{TYPE}
} \]
If $(P, T, M, R)$ is split, the morphism on the bottom is induced by
\[
 M^* \ni m \mapsto \{ r \in R \where \langle r,m \rangle \ge 0 \}.
\]

\end{enumerate}
\end{SATZ}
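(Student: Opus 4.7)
The plan is to reduce all seven assertions to the split case by etale descent, leveraging the rigidity properties built into the definition of type (P). Fix an etale cover $S' \to S$ on which $P$ admits a splitting $(P_{S'}, T, M, R)$ with $T \cong D(M_{S'})$; Proposition \ref{EXISTENCEQUASIPARABOLIC} already classifies the closed smooth subgroup schemes containing $T$ whose Lie algebra is $\Lie(T) \oplus \bigoplus_{r \in R'} \Lie(P)^r$ via closed subsets $R' \subseteq R$, with parabolics picked out by $R_W \subseteq R'$ and quasi-parabolics by the condition that $R'$ contain a set of positive roots. The first technical step is to establish the self-normalizing identity $N_P(Q) = Q$ for any such $Q$: one computes on Lie algebras, noting that an element normalizing $Q$ must preserve the weight-space decomposition under $T$, and the linear-independence hypothesis on distinct pairs $r_1 \in R$, $r_2 \in R_W$ rules out any exotic cross-terms, forcing the normalizer to sit inside $Q$.

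For parts (1)--(4), with the self-normalizing property in hand, the quotient $P_{S'}/Q$ exists as a smooth $S'$-scheme, quasi-projective since the adjoint representation embeds it into a Grassmannian, and projective precisely when $Q$ is parabolic. I would construct $\mathcal{QPAR}$ locally on $S'$ as the disjoint union $\bigsqcup_{[R']} P_{S'}/Q_{R'}$ indexed by Weyl orbits of admissible $R'$; the descent data attached to the etale cover act on this finite indexing set and are compatible with the local presentation, so the pieces glue to a scheme $\mathcal{QPAR}$ over $S$. The parabolic locus is visibly open and closed, cutting out $\mathcal{PAR} \hookrightarrow \mathcal{QPAR}$. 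For parts (2) and (3), the sheaf $\mathcal{TYPE}$ is the etale sheafification of the presheaf of Weyl orbits, whose sections over a split base are exactly the set in (3); the map $\type$ is the tautological projection and separates conjugacy classes locally by construction, hence globally. Part (4) is then built in: the morphism $P/Q \to \mathcal{QPAR}$ induced by conjugation is an open-closed immersion onto the connected component sitting over $\type(Q)$.

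For parts (5)--(7), representability of $\underline{\Hom}(\Gm, P)$ by a smooth affine scheme follows from $P$ being smooth and affine, presenting cocharacters as a closed subfunctor of the functor of pointed morphisms $\Gm \to P$. In the split case, each cocharacter is etale-locally conjugate into $T$, where it corresponds to an element of $M^*$, and conjugation classes are described by $W(R_G)$-orbits; sheafifying yields $\mathcal{FTYPE}$ with its split-case description in (6). The map $\qpar$ is defined on a torus-valued cocharacter $\alpha$ by $R'(\alpha) := \{r \in R : r \circ \alpha \geq 0\}$, which is automatically closed in $R$ and contains a set of positive roots; extending by $P$-equivariance and descent gives the global map, and property (a) is immediate. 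Property (b) is that $\alpha$ factors through $Q_{R'(\alpha)}$, which holds since $\alpha$ acts on $\Lie(Q_{R'(\alpha)})$ with non-negative weights; the commutative square reduces to the tautology that $\type(Q_{R'(\alpha)})$ is the $W(R_G)$-orbit of $R'(\alpha)$. The main obstacle is the self-normalizing calculation above: the linear-independence hypothesis in the definition of type (P) is precisely what decouples the $R_G$ and $R_W$ contributions on Lie algebras, and once that is settled the remainder of the theorem is essentially a careful adaptation of the corresponding statements for reductive groups in \cite{SGAIII}.
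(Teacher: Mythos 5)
The paper does not actually contain a proof of this theorem: the opening paragraph of Section \ref{SECTIONGROUPSCHEMES} states that proofs of all results in that section are in \cite[section 1]{Thesis}, so there is nothing in this document to compare your argument against line by line. Your overall strategy --- classify the candidate subgroups over a splitting via Proposition \ref{EXISTENCEQUASIPARABOLIC}, prove self-normalization, realize $\mathcal{QPAR}$ etale-locally as a disjoint union of orbits $P_{S'}/Q_{R'}$ indexed by Weyl orbits of admissible closed subsets, and descend --- is exactly the SGA3-style route one would expect here, and parts (5)--(7) are handled in the standard way (cocharacters into a smooth affine group scheme, $\qpar$ defined dynamically on torus-valued cocharacters and extended by equivariance).

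There is, however, a genuine flaw in the one step you yourself identify as the crux. You assert that an element normalizing $Q$ ``must preserve the weight-space decomposition under $T$.'' That is false as stated: a point $g$ of $N_P(Q)$ need not normalize $T$, so it has no reason to preserve the $T$-weight spaces, and no appeal to the linear-independence hypothesis on $R$, $R_W$ can repair a computation that starts from this premise. The correct argument is the classical one: since $g$ normalizes $Q$, the torus $gTg^{-1}$ is a second maximal torus of the smooth group $Q$, hence conjugate to $T$ by a point of $Q$ (etale-locally); after modifying $g$ by that point one may assume $g \in N_P(T)$, so $g$ acts on $R$ through $W(R_G)$ and stabilizes $R'$, and one then checks that the stabilizer of an admissible $R'$ (which contains a system of positive roots) in $W(R_G)$ is the Weyl group of the Levi part of $Q$ and is therefore realized inside $Q$. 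Without this detour through conjugacy of maximal tori the self-normalization --- and with it the representability of the orbit by $P/Q$, the immersion in (4), and the separation of conjugacy classes by $\type$ in (2) --- is not established. The rest of your outline is sound modulo this repair, together with the routine verification that the descent data genuinely permute the finitely many components of the local model.
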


\section{Mixed Shimura data}

In this section we define $p$-integral mixed Shimura data. The notion is basically an integral version of the one defined in Pink's thesis \cite{Pink}. Proofs of the theorems in this section can be found in \cite[section 2]{Thesis}.
Recall the Deligne torus $\SSS := \Res_{\R}^{\C}(\Gm)$, with weight morphism $w: \G_{m,\R} \rightarrow \SSS$. 

\begin{DEF}\label{DEFADMISSIBLEH}
Let $\nP$ be a group scheme of type (P) defined over $\spec(\Zpp)$, and let $\pi: \nP \rightarrow \nP/\nW$ the 
projection, where $W$ is its unipotent radical (\ref{DEFTYPEP}).

A morphism
$\nh: \SSS_\C \rightarrow \nP_{\C}$
is called {\bf admissible} if
\begin{itemize}
\item $\pi \circ \nh$ is defined over $\R$,
\item $\pi \circ \nh \circ \nw$ is a cocharacter of the center of $\nP_{\nSD,\Q}/\nW_{\nSD,\Q}$ defined over $\Q$.
\item Under the weight filtration on $\Lie(\nP_{\nSD,\Q})$ defined by $\Ad \circ \nh \circ \nw$, we have:
\[ W_{-1}(\Lie(\nP_{\nSD,\Q})) = \Lie(\nW_{\nSD,\Q}). \]
\end{itemize}
\end{DEF}

Recall from \cite[Prop. 1.5]{Pink} that
a morphism $\nh: \SSS \rightarrow \GL(\nL_\R)$ is associated with a mixed Hodge structure if and only if $\nh$ factors through a subgroup $\nP_{\nSD,\Q} \subset \GL(\nL_\Q)$, and the induced morphism is admissible.

\begin{LEMMA}\label{LEMMADISCRETE}
 Let $T$ be a $\Q$-torus.
 The following are equivalent conditions:
\begin{enumerate}
 \item $T(\Q)$ is discrete in $T(\Af)$,
 \item $T$ is an almost direct product of a $\Q$-split torus with a torus $T'$, such that $T'(\R)$ is compact.
\end{enumerate}
\end{LEMMA}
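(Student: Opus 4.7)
The plan is to reduce both conditions to a single criterion on the maximal $\Q$-anisotropic subtorus $T_{an}$ of $T$, namely that $T_{an}(\R)$ be compact. Recall that up to isogeny one has $T \sim T_{sp} \cdot T_{an}$, where $T_{sp}$ is the maximal $\Q$-split subtorus and $T_{an}$ the maximal $\Q$-anisotropic subtorus. Since any $\R$-compact torus is $\R$-anisotropic and a fortiori $\Q$-anisotropic, the torus $T'$ appearing in (2) must sit inside $T_{an}$, so condition (2) is equivalent to the assertion that $T_{an}(\R)$ is compact. The remaining work is to show the same equivalence for (1).

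For $(2) \Rightarrow (1)$ I would verify discreteness on each factor $T_{sp}$ and $T'$ and then transfer through the finite-kernel-and-cokernel isogeny $T_{sp} \times T' \to T$. For the $\Q$-split part it suffices to treat $T = \Gm$: unique factorisation gives $\Q^\times \cap \Zh^\times = \{\pm 1\}$, and refining at the prime $2$ (to $(1 + 4\Z_2)\times \prod_{p \neq 2} \Z_p^\times$, for instance) isolates $1$. For the $\R$-compact factor $T'$, one combines the classical discreteness of $T'(\Q)$ in $T'(\A) = T'(\R) \times T'(\Af)$ with the properness of the projection $T'(\A) \to T'(\Af)$, whose fibres are the compact group $T'(\R)$: this makes $T'(\Q) \cap K$ finite for every compact open $K \subseteq T'(\Af)$, and one then shrinks $K$ further to exclude the remaining non-trivial points, using that a rational point of $T'$ trivial at every finite place is trivial.

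For the converse $(1) \Rightarrow (2)$ I argue by contraposition: assuming $T_{an}(\R)$ is non-compact, I produce infinitely many elements of $T(\Q)$ inside some compact open neighbourhood of $1$ in $T(\Af)$. The input is the Dirichlet-unit-type statement that, since $T_{an}$ carries no non-trivial $\Q$-rational character, the adelic quotient $T_{an}(\A)/T_{an}(\Q)$ is compact (a standard fact on anisotropic tori, e.g.\ in Platonov--Rapinchuk). For a fixed compact open $K \subset T_{an}(\Af)$ the ``unit group'' $U_K := T_{an}(\Q) \cap K$ embeds as a discrete subgroup of $T_{an}(\R)$ whose quotient by the maximal compact subgroup $C \subset T_{an}(\R)$ is cocompact; hence it has $\Z$-rank $\dim_\R T_{an}(\R) - \dim_\R C$, which is strictly positive whenever $T_{an}(\R)$ is non-compact, so $U_K$ is infinite. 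Pulling back along the isogeny $T_{sp} \times T_{an} \to T$ then produces infinitely many rational points in every compact open of $T(\Af)$, contradicting (1).

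The main obstacle is the ``lattice of full rank'' step in $(1) \Rightarrow (2)$: one must identify the image of $U_K$ in $T_{an}(\R)/C$ and verify its cocompactness, deduced from compactness of $T_{an}(\A)/T_{an}(\Q)$ by projecting along $T_{an}(\A)\to T_{an}(\R)/C$ and observing that the kernel $C \times K$ is compact. Once this Dirichlet-unit input is established, the equivalence reduces to formal manipulations with the isogeny decomposition $T \sim T_{sp} \cdot T_{an}$ and the elementary discreteness of $\Q^\times$ in $\Af^\times$.
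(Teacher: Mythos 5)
The paper itself gives no proof of this lemma --- it defers to \cite[section 2]{Thesis} --- so there is nothing in the text to compare your argument against. On its own terms, your proposal follows the standard route (reduction to the compactness of $T_{an}(\R)$ for the maximal $\Q$-anisotropic subtorus, the elementary computation $\Q^\times\cap\Zh^\times=\{\pm1\}$ for the split part, and the Dirichlet-unit-type consequence of the compactness of $T_{an}(\A)/T_{an}(\Q)$ for the converse), and the overall structure is sound: the identification of $T'$ with $T_{an}$, the finiteness of $T'(\Q)\cap K$ via properness of $T'(\A)\to T'(\Af)$, and the cocompact-lattice argument giving $\mathrm{rank}(U_K)=\dim T_{an}(\R)-\dim C>0$ when $T_{an}(\R)$ is non-compact are all correct (in the last step you do not even need to pull back along an isogeny, since $T_{an}$ is already a subtorus of $T$ and $U_{K\cap T_{an}(\Af)}\subseteq T(\Q)\cap K$ directly).

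The one step you should repair is the transfer in $(2)\Rightarrow(1)$ ``through the finite-kernel-and-cokernel isogeny $T_{sp}\times T'\to T$''. On rational points this map does \emph{not} have finite cokernel: $T(\Q)/\mathrm{im}\bigl(T_{sp}(\Q)\times T'(\Q)\bigr)$ injects into $H^1(\Q,\mu)$ for the finite kernel $\mu$, which is typically infinite (already for $\Gm\xrightarrow{x\mapsto x^2}\Gm$ the cokernel is $\Q^\times/(\Q^\times)^2$). So finiteness of $S(\Q)\cap K'$ for $S=T_{sp}\times T'$ does not bound $T(\Q)\cap K$ by a direct image argument. The standard fix is to run the isogeny in the other direction: use the surjection $T\to(T/T_{an})\times(T/T_{sp})$, whose kernel $T_{sp}\cap T_{an}$ is finite, whose first factor is $\Q$-split and whose second has compact real points (compactness of real points being an isogeny invariant); then $T(\Q)\cap K$ maps with finite fibres into a product of two sets you have already shown to be finite. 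Equivalently one can use a dual isogeny $T\to T_{sp}\times T'$ with $\hat\phi\circ\phi=[n]$. With that adjustment the proof is complete; the remaining imprecisions (the exact kernel of $T_{an}(\A)\to T_{an}(\R)/C$, the unnecessary shrinking of $K$ at $2$) are cosmetic.
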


\begin{DEF}\label{DEFSHIMURADATA}\label{DEFPINTEGRALMIXEDSHIMURADATA}
Let $p$ be a prime.

A {\bf $p$-integral mixed Shimura datum ($p$-MSD)} $\nSD$ consists of
\begin{enumerate}
 \item a group scheme $\nP_\nSD$ of type (P) over $\spec(\Z_{(p)})$,
 \item a closed unipotent subgroup scheme $\nU_\nSD$,
 \item a homogeneous space $\nX_\nSD$ under $\nP_\nSD(\R)\nU_\nSD(\C)$,
 \item a $\nP_\nSD(\R)\nU_\nSD(\C)$-equivariant finite-to-one morphism
 $\nh_\nSD: \nX_\nSD \rightarrow \Hom(\SSS_\C, \nP_{\nSD,\C})$,
 such that the image consists of {\em admissible} morphisms (\ref{DEFADMISSIBLEH}),
\end{enumerate}

subject to the following condition:
For (one, hence for all) $h_x, x \in \nX_\nSD$,
\begin{enumerate}
\item $\Ad_\nP \circ h_x$ induces on $\Lie(\nP)$ a mixed Hodge structure of type
\[ (-1,1),(0,0),(1,-1) \qquad (-1,0),(0,-1) \qquad (-1,-1),  \]
\item the weight filtration on $\Lie(\nP_\Q)$ is given by
\[ W_i(\Lie(\nP_\Q)) = \begin{cases} \Lie(\nP_\Q) & \text{if $i \ge 0$,} \\ \Lie(\nW_\Q) & \text{if $i = -1$,} \\ \Lie(\nU_\Q) & \text{if $i =-2$,} \\ 0 & \text{if $i < -2$}, \end{cases} \]
where $\nW_\Q$ is the unipotent radical of $\nP_\Q$ and $\nU_\Q=\nU_{\nSD,\Q}$ is a central subgroup,
\item $\Int(\pi(h_x(i)))$ induces a Cartan involution on $\nG^{ad}_\R$, where $\nG = \nP/\nW$,
\item $\nG^{ad}_\R$ possesses no nontrivial factors of compact type that are defined over $\Q$,
\item the center $Z$ of $\nP$ satisfies the properties of Lemma \ref{LEMMADISCRETE}.
\end{enumerate}
$\nSD$ is called {\bf pure}, if $\nW_\nSD=1$.
A {\bf morphism of $p$-MSD} $\nSD \rightarrow \nSDi$
is a pair of a homomorphism of group schemes $\nP_\nSD \rightarrow \nP_\nSDi$ and a homomorphism
$\nX_\nSD \rightarrow \nX_\nSDi$ respecting the maps $\nh$.

We call a morphism an {\bf embedding}, if the morphism of group schemes is a {\em closed} embedding, and
the map $\nX_\nSD \rightarrow \nX_\nSDi$ is {\em injective}.

If we have a $p$-MSD $\nSD$, then an {\bf admissible} compact open subgroup of $\nP_\nSD(\Af)$
is a group of the form $K^{(p)} \nP_\nSD(\Z_p)$, where $K^{(p)}$ is a compact open subgroup of $\nP_\nSD(\Afp)$.
\end{DEF}
see \cite[2.1]{Pink}\footnote{The property 5. in the definition is stated in [loc. cit.] in a weaker form, namely it is required that the action on $W$ is through a torus of type \ref{LEMMADISCRETE}. In \cite{Milne4} this property is called (SV5) for pure Shimura varieties.}
 for the rational case. The subgroup scheme $\nU_\nSD$ is determined by the triple
 $(\nP_\nSD, \nX_\nSD, \nh_\nSD)$.

\begin{DEF}\label{DEFPEMSD}
 A pair ${}^K \nSD$ is called {\bf $p$-integral extended mixed Shimura data ($p$-EMSD)}, where
 $\nSD$ is $p$-integral Shimura data, $K$ is an admissible compact open subgroup of $\nP_\nSD(\Af)$.
 
 A {\bf morphism of $p$-EMSD} ${}^{K'} \nSDi \rightarrow {}^{K} \nSD$
 is a pair $(\gamma, \rho)$, where $\gamma$ is a morphism of $p$-integral mixed Shimura data and $\rho \in \nP_\nSD(\Afp)$ such that $\gamma(K')^\rho \subset K$,

 If $\gamma$ is an automorphism, then we call the morphism a {\bf Hecke operator}.
 If $\gamma$ is an embedding and $\gamma(K_1)^\rho = K_2 \cap P_1(\Af)$, and such that the map
\[ [ \nP_\nSDi(\Q) \backslash \nX_\nSDi \times (\nP_\nSDi(\Af) / K')] \rightarrow [\nP_\nSD(\Q) \backslash \nX_\nSD \times (\nP_\nSD(\Af) / K)] \]
 is a closed embedding (compare also \ref{EMBEDDINGEMSD}), then we call the morphism an {\bf embedding}.
\end{DEF}
\begin{LEMMA}\label{STABILIZER}
$\nP_\nSD(\Q) \cap \Stab(x,\nP_\nSD(\R)\nU_\nSD(\C))K$ is finite for every compact open subgroup $K \subset \nP_\nSD(\Af)$ and trivial for
sufficiently small $K$.
\end{LEMMA}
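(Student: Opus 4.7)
Let $\pi\colon \nP_\nSD \to \nG := \nP_\nSD/\nW_\nSD$ denote the projection to the pure quotient, write $\overline{x}$ for the image of $x$ in the associated pure Shimura datum, and set $\Gamma := \nP_\nSD(\Q) \cap \Stab(x,\nP_\nSD(\R)\nU_\nSD(\C))\,K$. The plan is first to show that $\pi(\Gamma) \subseteq \nG(\Q)$ is finite using the axioms for the pure part, and then to bound each fiber of $\pi|_\Gamma$ using the discreteness of $\nW_\nSD(\Q)$ in $\nW_\nSD(\A)$.

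For the pure quotient, $\pi(\Gamma)$ lies in $\nG(\Q) \cap \Stab(\overline{x},\nG(\R))\,\pi(K)$. By axiom (3) of \ref{DEFSHIMURADATA}, $\Int(\pi(h_x(i)))$ is a Cartan involution on $\nG^{ad}_\R$, so $\Stab(\overline{x},\nG(\R))$ has compact image in $\nG^{ad}(\R)$. Hence the image of $\pi(\Gamma)$ in $\nG^{ad}(\Q)$ lies in a compact subset of $\nG^{ad}(\A)$, and is therefore finite because $\nG^{ad}(\Q)$ is discrete in $\nG^{ad}(\A)$ (general fact for semisimple $\Q$-groups). The kernel of $\pi(\Gamma) \to \nG^{ad}(\Q)$ is contained in $Z_\nG(\Q)$ intersected with a compact set; by axiom (5) and Lemma~\ref{LEMMADISCRETE}, $Z_\nG(\Q)$ is discrete in $Z_\nG(\Af)$, so this kernel is finite. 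Thus $\pi(\Gamma)$ is finite.

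To lift, fix representatives $g_1,\dots,g_r \in \nP_\nSD(\Q)$ of the classes in $\pi(\Gamma)$, so that every $\gamma \in \Gamma$ factors as $\gamma = g_i w$ with $w \in \nW_\nSD(\Q)$. Writing $\gamma = sk$ with $s \in \Stab(x,\nP_\nSD(\R)\nU_\nSD(\C))$ and $k \in K$, the finite-adelic component of $w$ is constrained to the compact open set $\nW_\nSD(\Af) \cap g_i^{-1}K$, while the archimedean condition $w\cdot x = g_i^{-1}\cdot x$ pins $w$ down at infinity: axioms (1)--(2) on the weight filtration of $\Lie(\nP_{\nSD,\Q})$ together with admissibility of $h_x$ (\ref{DEFADMISSIBLEH}) force $\nW_\nSD(\R)\nU_\nSD(\C)$ to act freely on the orbit of $x$. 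Since $\nW_\nSD(\Q)$ is discrete in $\nW_\nSD(\A)$ by Borel--Harish-Chandra for unipotent groups, the set of admissible $w$ is finite, and therefore so is $\Gamma$.

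For triviality, fix any admissible $K_0$ and its associated finite $\Gamma_0$. For each nontrivial $\gamma \in \Gamma_0$, the same compact-intersect-discrete arguments allow one to shrink $\Stab(x,\nP_\nSD(\R)\nU_\nSD(\C))\,K$ past the discrete point $\gamma$ by passing to a smaller admissible $K_\gamma \subseteq K_0$; intersecting over the finitely many nontrivial $\gamma \in \Gamma_0$ produces an admissible $K$ with $\Gamma = \{1\}$. The main obstacle in the plan is the freeness claim invoked in the lifting step --- that $\nW_\nSD(\R)\nU_\nSD(\C)$ acts freely on the $\nP_\nSD(\R)\nU_\nSD(\C)$-orbit of $x$. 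This is a mixed-Hodge-theoretic input that ultimately follows from the finite-to-one property of $h_\nSD$ together with the weight conditions, and is established in Pink's thesis \cite{Pink}.
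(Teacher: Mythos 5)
Your argument is essentially the standard one (the paper itself defers the proof to [Thesis, section 2], so there is no in-text proof to compare against): project to the pure quotient, use the Cartan-involution axiom plus the (SV5)-type condition to get finiteness there, and control the unipotent ambiguity fibrewise. I consider it correct, with two remarks. First, the "main obstacle" you defer to Pink — freeness of the $\nW_\nSD(\R)\nU_\nSD(\C)$-action on the fibres of $\nX_\nSD \rightarrow \nX_{\nSD/\nW_\nSD}$ — is already available inside this paper: by the remark following \ref{UNIPOTENTFIBRE}, every $p$-MSD satisfies $\nSD \cong (\nSD/\nW_\nSD)[\Lie(\nU_\nSD),\Lie(\nV_\nSD)]$, and \ref{UNIPOTENTFIBRECONVERSE} says the fibration of the $\nX$'s is a \emph{torsor} under $\nW_\nSD(\R)\nU_\nSD(\C)$. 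Granting this, your lifting step can be shortened: choose the representatives $g_i$ inside $\Gamma$ itself; then $w=g_i^{-1}\gamma \in \nW_\nSD(\Q)$ fixes $x$, hence $w=1$ by freeness, so $\Gamma \rightarrow \pi(\Gamma)$ is injective and no appeal to discreteness of $\nW_\nSD(\Q)$ in $\nW_\nSD(\A)$ is needed. Second, be explicit about which centre you feed into Lemma \ref{LEMMADISCRETE}: your argument needs discreteness of $Z(\Q)$ in $Z(\Af)$ for $Z$ the centre of $\nG_\nSD=\nP_\nSD/\nW_\nSD$, whereas axiom (5) of \ref{DEFSHIMURADATA} literally speaks of the centre of $\nP_\nSD$. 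For genuinely mixed data these differ: e.g. for a Kuga--Sato type datum $V\rtimes\Res_{F/\Q}\GL_2$ over a Hilbert modular variety the centre of $\nP$ is trivial while the centre of $\nG$ is $\Res_{F/\Q}\Gm$, whose unit group would produce an infinite stabilizer of a point on the zero section — so under the literal reading the lemma itself would fail, and the condition must be understood (as the footnote comparing with Pink's condition and Milne's (SV5) indicates) as the strengthened requirement on the centre of the reductive quotient. Your use of the axiom is the correct one in substance, but you should state this reading rather than cite "axiom (5)" verbatim. The final shrinking argument for triviality is fine: separate each nontrivial element of the finite group $\Gamma_{K_0}$ from $1$ by a compact open subgroup of $\nP_\nSD(\Af)$ (possible since $\nP_\nSD(\Q)$ injects into $\nP_\nSD(\Af)$) and intersect.
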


\begin{DEF}\label{DEFKN}
Let $M$ be an integer.
Let $\nP_\nSD$ be a group scheme over $\Z[1/M]$ of type (P) and $\nX_\nSD$ such that
they define {\em rational} mixed Shimura data $\nSD$. For each $p \nmid M$, $\nP_\nSD \times_{\Z[1/M]} \Z_{(p)}$
will then define {\em $p$-integral} mixed Shimura data which we equally denote by $\nSD$.
Let $\nL_{\Z}$ be a lattice with a faithful
representation of $\nP_\nSD$, i.e. a closed embedding $\nP_\nSD \hookrightarrow \GL(\nL_{\Z[1/M]})$.

For each integer $N$, we define the following compact open subgroup
of $\nP_\nSD(\Af)$:
\[ K(N) := \{ g \in \nP_\nSD(\Af) \where g \nL_{\widehat{\Z}} = \nL_{\widehat{\Z}}, g \equiv \id \mod N \}. \]
If $p \nmid N$ then $K(N)$ is admissible at $p$.
\end{DEF}

\begin{LEMMA}
For each (admissible) compact open subgroup $K \subset \nP_\nSD(\Af)$,
there is a $\gamma \in \nP_\nSD(\Af)$ (resp. $\in \nP_\nSD(\Afp)$) such that
\[ \nP_\nSD^\gamma \subseteq K(1). \]
\end{LEMMA}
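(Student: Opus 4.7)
I read $\nP_\nSD^\gamma \subseteq K(1)$ as $K^\gamma \subseteq K(1)$ (consistent with the conjugation notation $\gamma(K')^\rho$ used in Definition~\ref{DEFPEMSD}), i.e.\ every compact open $K$ can be conjugated into the lattice-stabilizer $K(1)$ by an element of $\nP_\nSD(\Af)$, respectively of $\nP_\nSD(\Afp)$ in the admissible case. The plan is to reduce to a local problem at each prime and assemble local solutions.

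First, decompose $K$ into its local components $K_\ell := K \cap \nP_\nSD(\Q_\ell)$ via the natural projection. For all but finitely many primes $\ell$, the compact-open condition forces $K_\ell = \nP_\nSD(\Z_\ell) = K(1)_\ell$, so I can take $\gamma_\ell = 1$. When $K$ is admissible, the equality $K_p = \nP_\nSD(\Zp)$ additionally holds at $p$ by definition, so $\gamma_p = 1$; this is precisely what lets the global $\gamma$ be chosen in $\nP_\nSD(\Afp)$ in that case.

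For each of the finitely many remaining primes $\ell$, I construct $\gamma_\ell \in \nP_\nSD(\Q_\ell)$ with $\gamma_\ell K_\ell \gamma_\ell^{-1} \subseteq K(1)_\ell$ as follows. Via the faithful representation $\nP_\nSD \hookrightarrow \GL(\nL_{\Z[1/M]})$, the group $K_\ell$ acts compactly on $\nL_{\Q_\ell}$, so the $\Z_\ell$-submodule
\[ \Lambda_\ell := \sum_{k \in K_\ell} k \cdot \nL_{\Z_\ell} \subset \nL_{\Q_\ell} \]
is finitely generated and hence a $K_\ell$-stable lattice. Producing $\gamma_\ell$ is then equivalent to exhibiting an element of $\nP_\nSD(\Q_\ell)$ that carries $\nL_{\Z_\ell}$ onto $\Lambda_\ell$: any such element will conjugate $K_\ell$ (which stabilizes $\Lambda_\ell$) into the stabilizer of $\nL_{\Z_\ell}$, namely $K(1)_\ell$.

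The main obstacle is this orbit-matching step, since \emph{a priori} $\Lambda_\ell$ is only known to be conjugate to $\nL_{\Z_\ell}$ by an element of the ambient $\GL(\nL_{\Q_\ell})$, not of $\nP_\nSD(\Q_\ell)$. Overcoming this uses the rigidity built into the type (P) structure of Section~\ref{SECTIONGROUPSCHEMES}: take a local Levi decomposition $\nP_\nSD = \nW \rtimes \nG$ with $\nG$ reductive, invoke Bruhat--Tits theory to conjugate the reductive projection of $K_\ell$ into $\nG(\Z_\ell)$, and exploit the root-subgroup coordinates $\exp_r$ of Definition~\ref{DEFTYPEP} to normalize the unipotent part of $\Lambda_\ell$ by an element of $\nW(\Q_\ell)$. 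Gluing the local $\gamma_\ell$ (trivial away from finitely many primes, and away from $p$ in the admissible case) then yields the required element $\gamma$.
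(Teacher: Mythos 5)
The paper itself contains no proof of this lemma (the proofs of this section are delegated to the author's thesis), so your argument has to stand on its own; and its crucial step does not. You have correctly isolated the difficulty yourself: the averaging argument produces a $K_\ell$-stable lattice $\Lambda_\ell$, but you then need an element of $\nP_\nSD(\Q_\ell)$ carrying $\nL_{\Z_\ell}$ to $\Lambda_\ell$, and the appeal to ``type (P) rigidity plus Bruhat--Tits'' cannot deliver this. The Bruhat--Tits fixed point theorem only places a compact subgroup inside \emph{some} maximal parahoric; it does not conjugate it into the \emph{fixed} group $\Stab_{\nP_\nSD(\Q_\ell)}(\nL_{\Z_\ell})$, and for the groups actually occurring in this paper ($\SO$, $\GSpin$, $\GSp$ and their mixed extensions) there are several conjugacy classes of maximal compact subgroups, equivalently $\nP_\nSD(\Q_\ell)$ is not transitive on lattices. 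Concretely, take $\nP_\nSD=\SO(\nL)$ with $\dim \nL_\Q=3$, so $\SO(\nL_{\Q_\ell})\cong \PGL_2(\Q_\ell)$, and let $\ell\neq p$ be a prime where $\nL_{\Z_\ell}$ is self-dual, so $K(1)_\ell$ is a hyperspecial (vertex) stabilizer on the tree. The normalizer of an Iwahori subgroup is a maximal compact subgroup containing an edge-inverting element, hence fixing no vertex, hence contained in no conjugate of $K(1)_\ell$; the group $K$ equal to this subgroup at $\ell$ and to $K(1)_q$ at all $q\neq\ell$ is an admissible compact open subgroup for which no $\gamma$ as in your reading exists. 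So under the reading $K^\gamma\subseteq K(1)$ the assertion quantified over all compact open $K$ is false, and no elaboration of your local step can close the gap; what the averaging argument honestly proves is only that $K$ stabilizes \emph{some} $\widehat{\Z}$-lattice in $\nL_\Af$, i.e. $K\subseteq K'(1)$ for the group attached to a possibly different global lattice $\nL'_\Z\subset\nL_\Q$ (with $\nL'_{\Zp}=\nL_{\Zp}$ in the admissible case), not that the new lattice lies in the $\nP_\nSD(\Af)$-orbit of the old one. Since the printed statement ``$\nP_\nSD^\gamma\subseteq K(1)$'' is evidently garbled, the intended lemma is presumably such a weaker (true) statement, and your proposal is then proving the wrong thing; if the intended statement really is your reading, it needs a hypothesis you have not used.

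A secondary but genuine flaw is the local--global reduction: you work with the intersections $K_\ell=K\cap\nP_\nSD(\Q_\ell)$, but a compact open subgroup of $\nP_\nSD(\Af)$ is in general not the product of these intersections (it may involve congruence conditions linking several primes), so conjugating the $K_\ell$ into $K(1)_\ell$ does not yield $K^\gamma\subseteq K(1)$. Since $K(1)=\prod_\ell \Stab_{\nP_\nSD(\Q_\ell)}(\nL_{\Z_\ell})$, the correct reduction is to conjugate the projections $\pr_\ell(K)$ (which are compact open and equal to $\nP_\nSD(\Z_\ell)$ for almost all $\ell$, and to $\nP_\nSD(\Zp)$ at $p$ in the admissible case) into $K(1)_\ell$; this repair is routine, but it does not touch the fundamental obstruction described above.
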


\begin{LEMMA}\label{EMBEDDINGEMSD}
\begin{enumerate}
\item
For each ${}^{K_1}\nSD_1$ and an embedding $\alpha: \nSD_1 \hookrightarrow \nSD_2$,
there is an admissible $K_2 \subset \nP_{\nSD_2}(\Af)$ such that ${}^{K_1}\nSD_1 \hookrightarrow {}^{K_2}\nSD_2$ is an embedding (\ref{DEFPEMSD}).

\item
If ${}^{K_1}\nSD_1 \hookrightarrow {}^{K_2}\nSD_2$ is an embedding, and if $K_2$ is neat, then for each $K_2' \subset K_2$, the map ${}^{K_2' \cap \nP_{\nSD_1}(\Af)}\nSD_1 \hookrightarrow {}^{K_2'}\nSD_2$
is an embedding.

\item
Let $\nP_{\nSD_2}$ operate linearly on a $V_\Zpp$, and choose a lattice $V_\Z \subset V_\Zpp$.
There is an integer $N$ such that for all $(M,p)=1, N|M$,
${}^{K(M)_1}\nSD_1 \hookrightarrow {}^{K(M)_2}\nSD_2$ is an embedding.
\end{enumerate}
\end{LEMMA}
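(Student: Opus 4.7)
The plan is to unpack what ``embedding of $p$-EMSD'' requires: beyond $\alpha \colon \nSD_1 \hookrightarrow \nSD_2$ being an embedding of Shimura data together with the level-group condition $\alpha(K_1)^\rho = K_2 \cap \alpha(\nP_{\nSD_1}(\Af))$, one also needs the induced map of double quotients to be a closed immersion. I would split this into (a) injectivity on orbits and (b) the map being locally a closed embedding; (b) is automatic once (a) holds because $\alpha$ is already a closed immersion of group schemes and the map respects the natural orbit structure. Injectivity on points amounts to: whenever $\gamma \in \nP_{\nSD_2}(\Q)$ and $k \in K_2$ produce an identification of two points lying in $\alpha(\nX_{\nSD_1}) \times \alpha(\nP_{\nSD_1}(\Af))\rho$, the element $\gamma$ already lies in $\alpha(\nP_{\nSD_1}(\Q))$, up to the finite-stabilizer subtlety of \ref{STABILIZER}.

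For part 1 I would first pick any admissible $K_2^{(0)} \supseteq \alpha(K_1)$, e.g.\ a $K(N)$ from \ref{DEFKN} for a faithful representation of $\nP_{\nSD_2}$, and then shrink it. The number of relevant double cosets $\alpha(\nP_{\nSD_1}(\Af)) \backslash \nP_{\nSD_2}(\Af) / K_2^{(0)}$ meeting $\nP_{\nSD_2}(\Q) \cdot \alpha(\nP_{\nSD_1}(\Af))$ is finite by the usual finiteness of class numbers, so only finitely many ``bad'' elements of $\nP_{\nSD_2}(\Q) \setminus \alpha(\nP_{\nSD_1}(\Q))$ need to be excluded. These can all be ruled out simultaneously by passing to an admissible finite-index subgroup $K_2 \subseteq K_2^{(0)}$ still containing $\alpha(K_1)$.

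Part 2 is a base-change argument. When $K_2$ is neat, the level-lowering maps $\nSh({}^{K_2'}\nSD_2) \to \nSh({}^{K_2}\nSD_2)$ and $\nSh({}^{K_2'\cap \nP_{\nSD_1}(\Af)}\nSD_1) \to \nSh({}^{K_1}\nSD_1)$ are finite \'etale, and neatness ensures that the natural square
\[ \xymatrix{
\nSh({}^{K_2' \cap \nP_{\nSD_1}(\Af)}\nSD_1) \ar[r] \ar[d] & \nSh({}^{K_2'}\nSD_2) \ar[d] \\
\nSh({}^{K_1}\nSD_1) \ar[r] & \nSh({}^{K_2}\nSD_2)
} \]
is Cartesian --- no non-trivial stabilizer can introduce ramification. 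Since closed immersions are stable under base change, the top map is a closed immersion. Part 3 then combines the previous two: choose $N$ large enough that $K(N)_2$ is neat and satisfies the conditions of part 1, and observe that $K(M)_2 \cap \nP_{\nSD_1}(\Af) = K(M)_1$ for every $M$, because both principal congruence subgroups are defined using the same lattice $V_\Z$. Applying part 2 to the inclusion $K(M)_2 \subseteq K(N)_2$ then upgrades the embedding property from $K(N)$ to any $K(M)$ with $N \mid M$.

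The principal obstacle I anticipate lies in the injectivity analysis of part 1 in the mixed (non-pure) setting: $\nX_{\nSD_2}$ is acted on by the larger group $\nP_{\nSD_2}(\R)\nU_{\nSD_2}(\C)$ including a complex unipotent factor, so one must verify that any identification in $\alpha(\nX_{\nSD_1})$ induced by this larger group descends already from $\nP_{\nSD_1}(\R)\nU_{\nSD_1}(\C)$. This should follow from compatibility of the weight filtrations and of $\nU_{\nSD_1} \subseteq \nU_{\nSD_2}$ under $\alpha$, both of which are forced by the axioms in \ref{DEFPINTEGRALMIXEDSHIMURADATA}, but requires careful bookkeeping with the Hodge-theoretic data and with the extra twisting parameter $\rho$.
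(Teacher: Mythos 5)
The paper itself contains no proof of this lemma (it is deferred to [Thesis, section~2], where the argument is an integral adaptation of Pink's Proposition~3.8), so I can only judge your sketch on its own terms, and there are two genuine gaps in it. In part~1 the crux is precisely what you pass over: the claim that ``only finitely many bad elements of $\nP_{\nSD_2}(\Q)\setminus\alpha(\nP_{\nSD_1}(\Q))$ need to be excluded'' is, read literally, false --- the set of $\gamma\in\nP_{\nSD_2}(\Q)$ producing identifications of points of $\alpha(\nX_{\nSD_1})\times\alpha(\nP_{\nSD_1}(\Af))$ is infinite (it contains all of $\alpha(\nP_{\nSD_1}(\Q))$ times stabilizer elements, and more), and what one actually needs is a finiteness modulo $\alpha(\nP_{\nSD_1}(\Q))$ and stabilizers, or better the Deligne/Pink limit argument: injectivity of the map of infinite-level Shimura varieties (this is where axiom~5 of \ref{DEFSHIMURADATA} via Lemma~\ref{LEMMADISCRETE} enters) plus a compactness/finiteness step to descend to a single level. ``Finiteness of class numbers'' does not deliver this. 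Two further constraints specific to this integral setting are not engaged at all: the definition of embedding in \ref{DEFPEMSD} demands the \emph{equality} $\alpha(K_1)=K_2\cap\nP_{\nSD_1}(\Af)$, and shrinking an arbitrary $K_2^{(0)}\supseteq\alpha(K_1)$ (e.g.\ a $K(N)$ from \ref{DEFKN}) need never achieve it --- one has to construct $K_2=\alpha(K_1)\cdot V$ with $V$ small, normalized by $\alpha(K_1)$ and $V\cap\alpha(\nP_{\nSD_1}(\Af))\subseteq\alpha(K_1)$; and admissibility freezes the component at $p$ to $\nP_{\nSD_2}(\Zp)$, so all shrinking is prime to $p$, and one must check that prime-to-$p$ level suffices to kill the bad identifications, using that $\alpha$ is a closed immersion of group schemes over $\Zpp$ (so $\nP_{\nSD_2}(\Zp)\cap\nP_{\nSD_1}(\Qp)=\nP_{\nSD_1}(\Zp)$).

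In part~2 your mechanism is wrong: the square is \emph{not} Cartesian, neatness or no. Over a point $[x,g]$ of $\nSh({}^{K_1}\nSD_1)(\C)$ with $K_2$ neat, the fibre of the fibre product $\nSh({}^{K_1}\nSD_1)\times_{\nSh({}^{K_2}\nSD_2)}\nSh({}^{K_2'}\nSD_2)$ has $[K_2:K_2']$ points, while the fibre of $\nSh({}^{K_2'\cap\nP_{\nSD_1}(\Af)}\nSD_1)$ has only $[K_1:K_1\cap K_2']$; these agree only when $K_2=K_1K_2'$, and in general the fibre product also picks up the Hecke-translated copies of the sub-Shimura variety (the embeddings with $\rho\neq 1$), so base change alone cannot prove the statement. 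What neatness actually buys is the pointwise argument: a coincidence $\alpha(x')=\gamma\alpha(x)$, $\alpha(g')=\gamma\alpha(g)k$ with $k\in K_2'$ is in particular a coincidence at level $K_2$, injectivity there gives $\gamma_1\in\nP_{\nSD_1}(\Q)$, $k_1\in K_1$, and then $\gamma^{-1}\alpha(\gamma_1)$ lies in $\nP_{\nSD_2}(\Q)\cap\Stab(\alpha(x),\nP_{\nSD_2}(\R)\nU_{\nSD_2}(\C))\cap{}^{\alpha(g)}K_2$, which is trivial by Lemma~\ref{STABILIZER}; hence $\gamma=\alpha(\gamma_1)$, $k=\alpha(k_1)$ with $k_1\in K_2'\cap\nP_{\nSD_1}(\Af)$, which is exactly the assertion (closedness then follows since the map is finite and universally injective). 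Part~3 is essentially fine --- the identity $K(M)_1=K(M)_2\cap\nP_{\nSD_1}(\Af)$ is correct --- except that part~1 as stated only produces \emph{some} admissible $K_2$, not one of congruence form, so you must run the part-1 argument within the cofinal family $K(N)_2$ before invoking part~2.
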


We have the following structure theorem for $p$-integral mixed Shimura data
\begin{PROP}\label{FUNCTORIALITYW}\label{UNIPOTENTFIBRE}
Let $\nSD$ be $p$-MSD.

There are smooth closed subgroup schemes of $\nP_\nSD$:
$\nW_\nSD$ (the unipotent radical) and $\nU_\nSD = \nGa(W_{-2}(\Lie(\nP_\nSD)))$. There is a
smooth group scheme $\nV_\nSD \cong \nGa(\gr_{-1}(\Lie(\nP_\nSD))$ and an exact sequence
\[ \xymatrix{ 0 \ar[r] & \nU_\nSD \ar[r] & \nW_\nSD \ar[r] & \nV_\nSD \ar[r] & 0. } \]
There is a closed reductive subgroup scheme $\nG_\nSD$, such that $\nP_\nSD = \nW_\nSD \rtimes \nG_\nSD$.
Any two such subgroup schemes are conjugated by an element in $\nW_\nSD(\Zpp)$.

For an morphism of $p$-MSD $\alpha: \nSD \rightarrow \nSDi$,
we have a diagram
\[ \xymatrix{ 0 \ar[r] & \nU_\nSD \ar[r] \ar[d] & \nW_\nSD \ar[r] \ar[d] & \nV_\nSD \ar[r] \ar[d] & 0 \\
  0 \ar[r] & \nU_\nSDi \ar[r] & \nW_\nSDi \ar[r] & \nV_\nSDi \ar[r] & 0. } \]
If $\alpha$ is an embedding, the vertical maps are closed embeddings, and the outer ones are
given by
saturated inclusions (i.e. inducing an inclusion mod $p$ as well) of the corresponding modules over $\Zpp$.
Furthermore, there are closed reductive subgroup schemes $\nG_\nSD$ and $\nU_\nSDi$ of
$\nP_\nSD$ and $\nP_\nSDi$, respectively, such that $\nP_\nSD = \nW_\nSD\rtimes \nG_\nSD$,
$\nP_\nSDi = \nW_\nSDi \rtimes \nG_\nSDi$ and such that $\nG_\nSD$ is mapped to $\nG_\nSDi$.
\end{PROP}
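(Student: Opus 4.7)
The plan is to combine the \'etale-local rigidity coming from the type-(P) axioms (\ref{DEFTYPEP}) with the canonical integral weight filtration arising from the admissibility condition (\ref{DEFADMISSIBLEH}). Fix any $h \in \nh_\nSD(\nX_\nSD)$; functoriality will be checked afterwards, so one-off choices are allowed. The morphism $\pi \circ h \circ \nw$ is a $\Q$-rational central cocharacter of $\nP_{\nSD,\Q}/\nW_{\nSD,\Q}$, so by the rigidity built into type (P) it lifts to a cocharacter $\chi\colon \Gm \to \nP_\nSD$ over $\Zpp$, unique up to $\nW_\nSD(\Zpp)$-conjugation. The $\Gm$-action via $\Ad \circ \chi$ gives a weight decomposition $\Lie(\nP_\nSD) = \bigoplus_i \Lie(\nP_\nSD)^{(i)}$ of $\Zpp$-modules; setting $W_i := \bigoplus_{j \le i} \Lie(\nP_\nSD)^{(j)}$, axiom (2) of \ref{DEFPINTEGRALMIXEDSHIMURADATA} identifies $W_{-1}$ with $\Lie(\nW_{\nSD,\Q})$ and $W_{-2}$ with $\Lie(\nU_{\nSD,\Q})$ rationally. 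Since both sides are $\Zpp$-saturated in $\Lie(\nP_\nSD)$ (the left tautologically, the right because $\nW_\nSD$ and $\nU_\nSD$ are smooth closed subgroup schemes), the identifications persist integrally.

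Next I would build $\nU_\nSD$ and $\nV_\nSD$ from this data. Since $\nU_\nSD$ is central unipotent, $\exp$ yields $\nU_\nSD \cong \nGa(W_{-2}\Lie(\nP_\nSD))$. Compatibility of the Lie bracket with weights forces $[\Lie(\nW_\nSD),\Lie(\nW_\nSD)] \subseteq W_{-2} = \Lie(\nU_\nSD)$, hence $\nV_\nSD := \nW_\nSD/\nU_\nSD$ is abelian unipotent, and $\exp$ produces the isomorphism $\nV_\nSD \cong \nGa(\gr_{-1}\Lie(\nP_\nSD))$ and the short exact sequence. For the Levi decomposition I would argue cohomologically: the functor of Levi subgroup schemes of $\nP_\nSD$ is representable and, where non-empty, is a torsor under $\nW_\nSD$ by inner conjugation; by type (P) it is \'etale-locally non-empty. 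Since $\nW_\nSD$ is smooth unipotent, it carries a finite filtration with additive quotients, and using $H^1_{\mathrm{et}}(\spec(\Zpp),\Ga)=0$ together with d\'evissage gives $H^1_{\mathrm{et}}(\spec(\Zpp), \nW_\nSD) = 0$; hence the Levi torsor admits a global section $\nG_\nSD$, and any two such sections differ by an element of $\nW_\nSD(\Zpp)$.

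Functoriality of the exact sequence is then immediate: for $\alpha\colon \nSD \to \nSDi$, the pull-back of $\chi_\nSDi$ is $\nW_\nSDi(\Zpp)$-conjugate to the image of $\chi_\nSD$, so the induced Lie map respects weight filtrations and carries $\nU,\nW,\nV$ into $\nU',\nW',\nV'$. If $\alpha$ is an embedding, $\Lie(\nP_\nSD) \hookrightarrow \Lie(\nP_\nSDi)$ is $\Zpp$-saturated (the quotient of smooth groups is smooth at the origin), and intersecting with the $\Zpp$-direct summands $W_{-2}$ resp.\ $\gr_{-1}$ on the target preserves saturation, giving the claimed saturated embeddings on $\nU$ and $\nV$. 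For the compatibility of Levis, I would choose any $\nG_\nSD \subseteq \nP_\nSD$ and observe that $\alpha(\nG_\nSD)$ is a closed reductive subgroup of $\nP_\nSDi$; reapplying the torsor/$H^1$-vanishing argument to the sheaf of Levis of $\nP_\nSDi$ \emph{containing} $\alpha(\nG_\nSD)$ furnishes the desired $\nG_\nSDi$.

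The main obstacle is this last compatibility step: one must verify that the sheaf of Levis of $\nP_\nSDi$ containing a given closed reductive subgroup is itself (\'etale-locally) a torsor under a smooth unipotent subgroup scheme of $\nW_\nSDi$, so that the cohomological vanishing argument still applies. Rationally this is classical, since $\nW_\nSDi$ acts simply transitively on Levis and transitively on those containing a fixed reductive subgroup; over $\Zpp$ one has to identify the correct smooth subtorsor and check its non-emptiness \'etale-locally by unwinding the type-(P) splittings of $\nP_\nSDi$ restricted to $\alpha(\nP_\nSD)$. All other steps amount either to the weight-filtration bookkeeping of the first paragraph or to the single application of $H^1(\spec(\Zpp),\nGa)=0$ used for the absolute Levi decomposition.
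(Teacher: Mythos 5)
Your skeleton — fix $h$, extract the central cocharacter, lift it to $\Zpp$, read off the integral weight filtration and the sequence $\nU_\nSD\hookrightarrow\nW_\nSD\twoheadrightarrow\nV_\nSD$, then descend \'etale-local Levi splittings using $H^1_{\mathrm{et}}(\spec(\Zpp),\nW_\nSD)=0$ — is the right shape, and the weight-filtration bookkeeping and the d\'evissage for the $H^1$-vanishing are fine. But there is a gap in the Levi argument that surfaces \emph{before} the compatibility step you flag. You assert that the sheaf of Levi subgroups is a torsor under $\nW_\nSD$ by inner conjugation. That requires the conjugation action to be simply transitive, i.e.\ $Z_{\nW_\nSD}(\nG_\nSD)=1$, which is not verified; and even if one settles for a pseudo-torsor under the relevant stabilizer, that stabilizer is the centralizer of a reductive subgroup scheme inside a unipotent one, and over $\Zpp$ such a centralizer need not be flat: on the $\Fp$-fibre the reductive group is not linearly reductive, so the fixed-point subscheme can fail to be smooth, and then the $H^1$-argument does not produce a $\Zpp$-section. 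The compatibility problem in your last paragraph is a second, amplified, manifestation of exactly the same issue.

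Both issues disappear if you construct the Levi directly from the cocharacter instead of descending the abstract Levi functor. Set $\nG_\nSD:=Z_{\nP_\nSD}(\chi)$, the centralizer of the image of the lifted cocharacter $\chi$. This is a smooth closed subgroup scheme by SGA3, Exp.\ XI, 5.3 (centralizer of a multiplicative-type subgroup in a smooth affine group scheme); since $\Ad\circ\chi$ has only the nonzero weights $-1,-2$ on $\Lie(\nW_\nSD)$ by \ref{DEFSHIMURADATA}(2), one gets $Z_{\nW_\nSD}(\chi)=1$, so $Z_{\nP_\nSD}(\chi)\to\nP_\nSD/\nW_\nSD$ is an isomorphism and $\nG_\nSD$ is a Levi. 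Conjugacy of two Levis then reduces to conjugacy of the corresponding lifts $\chi,\chi'$ of the same central cocharacter of $\nP_\nSD/\nW_\nSD$: the transporter in $\nW_\nSD$ between them is a torsor under the trivial (hence certainly smooth) $Z_{\nW_\nSD}(\chi)$, \'etale-locally non-empty by the type-(P) splittings plus Hensel, and so has a $\Zpp$-point by your $H^1$-argument. Functoriality is now tautological: because $\alpha\circ h_\nSD=h_\nSDi$, the cocharacter $\alpha(\chi_\nSD)$ still acts with weights $-1,-2$ on $\Lie(\nW_\nSDi)$, hence $\nG_\nSDi:=Z_{\nP_\nSDi}(\alpha(\chi_\nSD))$ is a Levi of $\nP_\nSDi$, $\nW_\nSDi(\Zpp)$-conjugate to $Z_{\nP_\nSDi}(\chi_\nSDi)$, and $\alpha(\nG_\nSD)\subseteq Z_{\nP_\nSDi}(\alpha(\chi_\nSD))=\nG_\nSDi$ with nothing more to check. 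In short, the Levi should be \emph{defined} as a cocharacter centralizer, not located cohomologically; once that is done the compatibility step you were worried about costs nothing.
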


\begin{DEFPROP}\label{UNIPOTENTFIBRECONVERSE}
Let $\nSD$ be $p$-MSD. We have the following converse to \ref{UNIPOTENTFIBRE}:

Given two $\Zpp$-modules $V$ and $U$ acted on by $\nP_\nSD$ with unimodular {\em invariant}
symplectic form $\Psi: \nV \times \nV \rightarrow \nU$ (i.e. inducing an isomorphism $\nV \cong \nV^*$)
This defines a group scheme $\nW_0$ sitting in an exact sequence:
\[ \xymatrix{ 0 \ar[r] & \nU_0 \ar[r] & \nW_0 \ar[r] &  \nV_0 \ar[r] & 0,  } \]
where $\nU_0:=\nGa(U)$ and $\nV_0:=\nGa(V)$. Using the action of $\nP_\nSD$ we may form a
semi-direct product $\nP_\nSDp:= \nW_0 \rtimes \nP_\nSD$. Assume that
every subquotient of $\Lie(W_{0,\R})$ is of type
\[ \{(-1,0), (0, -1)\} \text{ or } \{(-1,-1)\}. \]

Define $\nX_\nSDp$ as
\begin{gather*} 
\{(x, k) \in \nX_\nSD \times \Hom(\SSS_\C, P'_\C) \where \\
h_x=\phi \circ k; \enspace \pi' \circ k: \SSS_\C \rightarrow (P'/U')_\C \text{ is defined over } \R \}.
\end{gather*}

$\nSDp$ is then $p$-MSD, called a {\bf unipotent extension} of $\nSD$, denoted by $\nSD[U, V]$.

$\nX_\nSDp \rightarrow \nX_\nSD$ it a torsor under $\nW_0 (\R)(\nW_0 \cap \nU_\nSDp)(\C)$.

We have $\nSD[U, V]/W_0 \cong \nSD$.

\end{DEFPROP}

Proposition \ref{UNIPOTENTFIBRE} may be strengthened as follows. Every $p$-MSD satisfies
\[ \nSD \cong (\nSD/\nW_\nSD)[\Lie(\nU_\nSD), \Lie(\nV_\nSD)], \]
the symplectic form and action of $\nG_\nSD$ being determined by $\nSD$.

\begin{DEF}\label{DEFADMISSIBLEPARABOLIC}
Let $\nSD$ be (rational) MSD. For $\nG_{\nSD,\Q}:=\nP_{\nSD,\Q}/\nW_{\nSD,\Q}$ every
$\Q$-parabolic subgroup of $\nP_{\nSD,\Q}$ is the inverse image of a $\Q$-parabolic subgroup of $\nG_{\nSD,\Q}^{ad}$.
Let $\nG_{\nSD,\Q}^{ad} = \nG_{1,\Q} \times \cdots \nG_{r,\Q}$ be the decomposition into $\Q$-simple factors.
Choose $\Q$-parabolic subgroup $Q_{i,\Q} \subseteq \nG_{i,\Q}$ for every $i$, and let $Q_\Q$ be the inverse image of
$Q_{1,\Q}\times \cdots Q_{r,\Q}$ in $\nP_{\nSD,\Q}$. We call $Q_\Q$ an {\bf admissible}
$\Q$-parabolic subgroup of $\nP_{\nSD,\Q}$, if every $Q_{i,\Q}$ is either equal to $\nG_{i,\Q}$ or a
maximal proper $\Q$-parabolic subgroup of $\nG_{i,\Q}$.
\end{DEF}

\begin{LEMMA}\label{LEMMAPARABOLIC_S}
If $\nSD$ is $p$-MSD and $Q_\Q$ is a parabolic subgroup of $\nP_{\nSD,\Q}$, then
there is a parabolic subgroup scheme $\nQ$ of $\nP_\nSD$ (\ref{DEFPARABOLIC}), such that
$Q_\Q = Q \times_S \spec(\Q)$.
\end{LEMMA}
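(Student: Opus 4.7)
The plan is to reduce the lemma immediately to the valuative criterion applied to the scheme of parabolics constructed in Theorem \ref{QUASIPARABOLICS}.

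First I observe that $S = \spec(\Zpp)$ is a Dedekind scheme, in particular reduced, so the hypotheses of the preceding structure theorems on $\nP_\nSD$ are satisfied. By Theorem \ref{QUASIPARABOLICS}(1), the functor
\[ S' \mapsto \{\text{parabolic subgroups of } \nP_{\nSD, S'}\} \]
is represented by a smooth \emph{projective} $\Zpp$-scheme $\mathcal{PAR}$. The given parabolic $Q_\Q \subseteq \nP_{\nSD,\Q}$ therefore corresponds to a unique $\Q$-valued point $x_\Q \colon \spec(\Q) \to \mathcal{PAR}$.

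Next, since $\Zpp$ is a discrete valuation ring with fraction field $\Q$ and since $\mathcal{PAR} \to \spec(\Zpp)$ is projective, hence proper and separated, the valuative criterion of properness produces a unique $\Zpp$-valued point $x \colon \spec(\Zpp) \to \mathcal{PAR}$ extending $x_\Q$. By the representability of $\mathcal{PAR}$, this point $x$ corresponds to a parabolic subgroup scheme $\nQ \subseteq \nP_\nSD$ whose generic fibre $\nQ \times_S \spec(\Q)$ equals $Q_\Q$, which is exactly the statement of the lemma.

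Given this appeal to the previously stated representability result, the argument is entirely formal; the real work is packaged inside Theorem \ref{QUASIPARABOLICS}. If instead one wished to avoid citing that result, one would need to reconstruct the moduli of parabolics of a given type by hand. In that case the main obstacle would be to verify projectivity (as opposed to mere quasi-projectivity) over a non-field base; this is handled classically for reductive groups via \cite{SGAIII}, XXVI, and extends to groups of type (P) using that any parabolic automatically contains the unipotent radical $\nW_\nSD$ (since its image in $\nP_\nSD/\nQ$ would be simultaneously affine and proper, hence finite, hence trivial because $\nW_\nSD$ is connected unipotent), so one may pull back along $\pi\colon \nP_\nSD \to \nG_\nSD$ from the reductive case. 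In any formulation, the lemma itself is a direct consequence of properness.
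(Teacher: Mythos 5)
Your proof is correct and follows the intended route: the paper defers the argument to the thesis, but the mechanism it relies on is exactly what you use, namely that by Theorem \ref{QUASIPARABOLICS} the functor of parabolics of $\nP_\nSD$ is represented by the smooth \emph{projective} $\Zpp$-scheme $\mathcal{PAR}$ (a fact the paper itself invokes again in the proof of Proposition \ref{ORTHBOUNDARYCOMP}), so the $\Q$-point given by $Q_\Q$ extends uniquely to a $\Zpp$-point by the valuative criterion of properness over the d.v.r. $\Zpp$, yielding the parabolic subgroup scheme $\nQ$ with generic fibre $Q_\Q$. Your closing aside about reproving projectivity by hand is not needed for the lemma and is only loosely justified, but since it is not load-bearing it does not affect the correctness of the proof.
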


\begin{PAR}\label{LONGROOTS}
Let $S$ be a maximal $\R$-split torus of $(G_\R)^{der}$. Let $R$ be the root system of $S$ (acting on $\Lie(G^{ad})$).
The irreducible components of $R$ are of type (C) or (BC) \cite[Corollaire 3.1.7]{Deligne7}, cf. also \cite[\S 2, Proposition 4]{AMRT}.
The {\em long roots} \cite[loc. cit.]{Deligne7}, cf. also \cite[p. 185]{AMRT}. $L=\{\alpha_1,\dots,\alpha_g \}$
form a basis of mutually orthogonal roots of $S$.
An admissible $\R$-parabolic (\ref{DEFADMISSIBLEPARABOLIC}) $Q_\R \supset S$ is determined (in the sense of \ref{QUASIPARABOLICS}) by a homomorphism $\lambda: \G_{m,\R} \hookrightarrow S$, where
\[ \langle \lambda, \alpha \rangle = \begin{cases} 2 & \alpha \in L', \\ 0 & \alpha \notin L'. \end{cases} \]
where $L'$ is any subset of $L$. If the corresponding cocharacter can be defined over $\Q$, then $Q$ is defined over $\Q$.
\end{PAR}

Recall the group $H_0 := \{(z, \alpha)\in\SSS \times \GL_{2,\R} \where z \overline{z} = \det(\alpha) \}$. Let $h_0: \SSS_\R \rightarrow H_{0,\R}$ be the morphism given by $z \mapsto (z, h(z))$, where
$h \in \nX_{\nH_1}$ is any element. Here $\nX_{\nH_1}$ is the set of representations of type $(-1,0),(0,-1)$ (see \ref{GSP} for the notation).
Let $h_\infty: \SSS_\C \rightarrow H_{0,\C}$ be the morphism $z \mapsto (z, h_m(z))$, where $h_m: \SSS_\C \rightarrow \M{*&*\\&*}$ describes the mixed Hodge structure having the same
Hodge filtration as the one described by $h$, but weight filtration corresponding to (i.e. fixed by) $\M{*&*\\&*}$.

\begin{PROP}\label{BOUNDARYCOMPONENTS}
Let $Q_\Q$ be a $\Q$-parabolic subgroup of $\nP_{\nSD,\Q}$. Let $\pi'$ be the projection $\nP_\nSD \rightarrow \nP_\nSD/\nU_\nSD$.
The following are equivalent:
\begin{enumerate}
\item $Q_\Q$ is admissible
\item
For every $x \in \nX_\nSD$ there is a unique homomorphism
\[ \omega_x: H_{0,\C} \rightarrow \nP_{\nSD,\C} \]
such that
\begin{enumerate}
\item $\pi'\circ \omega_x: H_{0,\C} \rightarrow (\nP_\nSD/\nU_\nSD)_\C$ is already defined over $\R$,
\item $h_x = \omega_x \circ h_0$,
\item $\omega_x \circ h_\infty \circ w: \G_{m,\C} \rightarrow Q_\C$ is conjugated to $\mu \cdot \lambda$, where $\lambda$ is the morphism constructed in \ref{LONGROOTS}, $\mu=h_x \circ w$, and
$\Lie(Q_\C)$ is the direct sum of all nonnegative weight spaces in $\Lie(\nP_{\nSD,\C})$ under $\Ad_P \circ \omega_x \circ h_\infty \circ w$.
\end{enumerate}
\item There exists an $x \in \nX_\nSD$ and a homomorphism $\omega_x$ such that the three conditions in 2. are satisfied.
\end{enumerate}
\end{PROP}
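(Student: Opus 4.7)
The implication $(2) \Rightarrow (3)$ is trivial. The two substantive parts are $(3) \Rightarrow (1)$ and $(1) \Rightarrow (2)$; the proof is a refinement of Pink's construction of rational boundary components from his thesis, adapted to the $H_0$-morphism formalism. In both directions one reduces modulo $\nW_\nSD$: admissibility of $Q_\Q$ depends only on its image $\overline{Q}_\Q$ in $\nG^{ad}_{\nSD,\Q}$, and the structural decomposition \ref{UNIPOTENTFIBRE} reduces the essential content to the pure case, the extension to $\nP_\nSD$ being functorial.

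For $(3) \Rightarrow (1)$: by \ref{DEFADMISSIBLEH}, $\mu = h_x \circ w$ projects to a central cocharacter of $\nG_{\nSD,\Q}/\nW_{\nSD,\Q}$, so its image in $\nG^{ad}_\nSD$ is trivial. Hence $\Ad \circ \mu$ acts trivially on $\Lie(\nG^{ad}_{\nSD,\C})$, and the nonnegative weight space of $\Ad_P \circ \omega_x \circ h_\infty \circ w$ --- identified with $\Lie(Q_\C)$ by (c) --- has the same image in $\Lie(\nG^{ad}_{\nSD,\C})$ as the nonnegative weight space of $\lambda$. By \ref{LONGROOTS} the latter is the Lie algebra of an admissible $\R$-parabolic, and since $Q$ is defined over $\Q$ the descent of admissibility is automatic.

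For $(1) \Rightarrow (2)$: fix $x \in \nX_\nSD$ and construct $\omega_x$ stepwise. Working modulo $\nW_\nSD$, refine the $\Q$-simple decomposition of $\nG^{ad}_\nSD$ into $\R$-simple factors. Admissibility forces the image $\overline{Q}_{j,\R}$ in each factor $\nG_{j,\R}$ to be either everything or a maximal proper $\R$-parabolic, the latter singled out by a long root $\alpha_j$ as in \ref{LONGROOTS}. In the nontrivial case, the classical theory of Hermitian symmetric domains of type (C) or (BC) (cf.\ \cite[\S 2]{AMRT}, \cite[3.2]{Deligne7}) produces, relative to the Cartan involution of $\pi(h_x(i))$, an $\mathfrak{sl}_2$-triple in $\Lie(\nG_{j,\R})$, hence a homomorphism $\SL_{2,\R} \to \nG_{j,\R}$; combining with the Deligne torus factor inherited from $h_x$ gives an $H_{0,\R} \to \nG_{j,\R}$ satisfying the factorwise analogues of (a)--(c). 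Taking the product over $j$ produces $\overline{\omega}_x : H_{0,\R} \to \nG^{ad}_{\nSD,\R}$. The lift of $\overline{\omega}_x$ through $\nP_\nSD \twoheadrightarrow \nP_\nSD/\nU_\nSD \twoheadrightarrow \nG^{ad}_\nSD$ is then forced: on $\SSS$ it must equal $h_x$ by (b), on $h_\infty \circ w$ it is fixed by (c), and centrality of $\nU_\nSD$ in $\nP_\nSD$ pins down the remaining degree of freedom. Uniqueness follows because $h_0(\SSS)$ and the image of $h_\infty \circ w$ together generate $H_0$ as an algebraic group, so any two candidates for $\omega_x$ agreeing on them must coincide.

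The main obstacle is the first step of the construction, namely producing $\overline{\omega}_x$ on each noncompact $\R$-simple factor with weight cocharacter agreeing on the nose with the $\mu \lambda$ of \ref{LONGROOTS}. This is where the structure theory of Hermitian symmetric domains (strongly orthogonal long roots, the Cayley transform, the tube-domain description) really enters; the delicate point is the exact compatibility between the $\mathfrak{sl}_2$-triple produced on the symmetric-space side and the cocharacter recipe in \ref{LONGROOTS}. Once this is settled, all remaining lifts, the compatibility condition (a), and the uniqueness statement are formal consequences of the axioms \ref{DEFADMISSIBLEH} and \ref{DEFPINTEGRALMIXEDSHIMURADATA} together with the extension theory \ref{UNIPOTENTFIBRECONVERSE}.
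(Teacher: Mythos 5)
You should first be aware that the paper does not actually prove Proposition \ref{BOUNDARYCOMPONENTS}: the opening of that section states that all proofs are deferred to \cite[section 2]{Thesis}, and the statement itself is the $p$-integral analogue of Pink's Proposition 4.6 in \cite{Pink}. So there is no in-paper argument to compare against; I can only measure your outline against the standard proof (Pink, resp.\ \cite{AMRT}) that the paper implicitly invokes. On that score your route is the expected one — reduce modulo $\nW_\nSD$, use that $\mu$ dies in $\nG^{ad}$ for $(3)\Rightarrow(1)$, and build $\omega_x$ factorwise from the $\mathfrak{sl}_2$-triples attached to strongly orthogonal long roots for $(1)\Rightarrow(2)$ — and you correctly identify the Cayley-transform compatibility as the real content.

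Two points in your sketch are weaker than they should be. First, the uniqueness argument: condition 2(c) does \emph{not} determine $\omega_x \circ h_\infty \circ w$ on the nose — it only fixes it up to conjugation, together with the requirement that its nonnegative weight spaces cut out $\Lie(Q_\C)$ — so "any two candidates agreeing on $h_0(\SSS)$ and $h_\infty\circ w$ must coincide" does not follow directly from the hypotheses; you must first argue that the conjugating element can be normalized away (e.g.\ that it lies in $Q_\C$ and centralizes the relevant data), which is where the actual work in Pink's uniqueness proof sits. Second, in $(3)\Rightarrow(1)$ the step "since $Q$ is defined over $\Q$ the descent of admissibility is automatic" elides a real issue: admissibility in the sense of \ref{DEFADMISSIBLEPARABOLIC} demands that $Q$ induce either everything or a \emph{maximal proper $\Q$-parabolic} in each $\Q$-simple factor, whereas your argument only produces an $\R$-parabolic of the type in \ref{LONGROOTS}. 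A $\Q$-simple factor may split into several $\R$-simple factors, and one must check that rationality of $Q$ forces the subsets of long roots chosen on the various $\R$-factors to be Galois-coherent, so that the resulting $\Q$-parabolic is in fact maximal over $\Q$. Both gaps are fillable by the standard theory, but as written they are asserted rather than proved.
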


If $\nP_\nSD$ is reductive, $Q$ corresponds to a boundary component in the sense of [AMRT, III, p. 220, no. 2].
The morphism $\omega_x \circ h_\infty$ is independent of the choice of $h_0$.

\begin{DEFSATZ}\label{DEFBOUNDARYCOMP}\label{INTEGRALBOUNDARYCOMP}
Assume that $\nSD$ is $p$-MSD. Choose an admissible $Q_\Q$ as above.
We will define mixed Shimura data $\nSDB$ as follows:

Let $\nP_{\nSDB,\Q}$ be the smallest normal $\Q$-subgroup of $Q$, such that $\omega_x \circ h_\infty$ factorizes through it.

Consider the map
\begin{eqnarray}\label{bdrsymdomain}
 \nX_\nSD &\rightarrow& \pi_0(\nX_\nSD) \times \Hom(\SSS_\C, \nP_{\nSDB,\C}) \\
 x &\mapsto & ([x], \omega_x \circ h_\infty). \nonumber
\end{eqnarray}

Choose a $\nP_\nSDB(\R)\nU_\nSDB(\C)$-orbit $\nX_\nSDB$ containing an $([x], \omega_x \circ h_\infty)$ in the above image. The
image is contained in the union of finitely many such. Each $\nX_\nSDB$ is a finite covering of
the corresponding $\nP_\nSDB(\R)\nU_\nSDB(\C)$-orbit $h(\nX_\nSDB)$ in $\Hom(\SSS_\C, \nP_{\nSDB,\C})$.
Let $\nX_{\nSDB \Longrightarrow \nSD}$ be its inverse image in $\nX_\nSD$.

The closure $\nP_\nSDB$ of $\nP_{\nSDB,\Q}$ in $\nP_\nSD$ is of type (P) \cite[Theorem 2.4.5]{Thesis}, and hence
$\nSDB$ is $p$-integral mixed Shimura data and called a {\bf boundary component} of $\nSD$. It is called {\bf proper}, if $Q$ is a proper parabolic, otherwise
{\bf improper}.

A {\bf boundary map} $\nSDB' \Longrightarrow \nSD$ is an isomorphism of $\nSDB'$ with one of the boundary components $\nSDB$.
\end{DEFSATZ}

For each $Q$ there are finitely many choices of $\nX_\nSDB$'s and accordingly, finitely many boundary components.

\begin{PROP}\label{IMAGINARYPART}
There is a functorial map, called {\bf projection on the imaginary part},
\[ \im: \nX_\nSD \rightarrow \nU_\nSD(\R)(-1) \qquad x \mapsto u_x, \]
where $u_x$ is the unique element, such that $\Int(u_x^{-1})\circ h_x$ is defined over $\R$.
\end{PROP}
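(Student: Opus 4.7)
The plan is to analyse the failure of $h_x$ to be defined over $\R$ via the ``defect''
\[ f_x(z) := \overline{h_x}(z) \cdot h_x(z)^{-1}. \]
Since $\pi'\circ h_x$ is defined over $\R$, $f_x$ takes values in $\nU_\nSD(\C)$, and a direct computation shows that $f_x:\SSS_\C\to \nU_\nSD(\C)$ is an algebraic $1$-cocycle for the action of $\SSS_\C$ on $\nU_\nSD(\C)$ via $\Int\circ h_x$:
\[ f_x(zw) = f_x(z)\cdot \Int(h_x(z))(f_x(w)). \]

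The key observation is that axiom~(1) of Definition~\ref{DEFPINTEGRALMIXEDSHIMURADATA}, together with the placement of $\Lie(\nU_\nSD)$ in $W_{-2}$, forces $\Lie(\nU_{\nSD,\C})$ to be pure of Hodge type $(-1,-1)$. Identifying $\nU_\nSD(\C)$ with $\Lie(\nU_{\nSD,\C})$ via $\exp$, the action of $(z_1,z_2)\in\SSS_\C$ becomes scalar multiplication by $z_1 z_2$, and the cocycle equation reduces to $f_x(zw) = f_x(z) + z_1 z_2\cdot f_x(w)$. Expanding $f_x$ as a Laurent polynomial in $z_1^{\pm 1}, z_2^{\pm 1}$ with coefficients in $\Lie(\nU_{\nSD,\C})$ and comparing monomials forces
\[ f_x(z_1,z_2) = (z_1 z_2 - 1)\cdot V_x \]
for a unique $V_x \in \Lie(\nU_{\nSD,\C})$. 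Applying complex conjugation to the definition of $f_x$ yields the symmetry $\overline{f_x} = f_x^{-1}$, which translates into $\overline{V_x} = -V_x$; thus $V_x$ lies in $i\cdot\Lie(\nU_\nSD)(\R)$, identified with $\nU_\nSD(\R)(-1)$ via the Tate twist.

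Setting $u_x := \exp(V_x/2)$, a direct computation gives $(\Int(u_x^{-1})\circ h_x)(z) = \exp((z_1z_2-1)(V_x/2))\cdot h_x(z)$, which combined with $f_x(z) = \exp((z_1 z_2 - 1)V_x)$ and $\overline{V_x} = -V_x$ is readily checked to be defined over $\R$. The identity $\overline{u_x} = u_x^{-1}$ places $u_x$ in $\nU_\nSD(\R)(-1)$. For uniqueness within $\nU_\nSD(\R)(-1)$: if $u_x'$ is another candidate, then $v := u_x(u_x')^{-1}\in\nU_\nSD(\R)(-1)$ simultaneously satisfies $\bar v = v^{-1}$ and (from $\Int(v)$ intertwining the two $\R$-defined morphisms, by the same cocycle analysis) $\bar v = v$, hence $v^2 = 1$ in the torsion-free vector group $\nU_\nSD$, so $v = 1$. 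Functoriality is immediate, since the entire construction uses only $h_x$, $\nU_\nSD$, and complex conjugation, all transported naturally by any morphism $\alpha:\nSD\to\nSDi$ of $p$-MSD.

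The main technical obstacle is the Laurent-polynomial classification of the cocycle: it hinges crucially on the $(-1,-1)$ purity of $\Lie(\nU_\nSD)$, which reduces the cocycle equation to a single scalar relation whose only solutions are the claimed form. Everything else amounts to routine bookkeeping with complex conjugation and the exponential map in the abelian unipotent group $\nU_\nSD$.
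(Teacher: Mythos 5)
Your proof is correct, and it is essentially the standard argument: the paper itself contains no proof of \ref{IMAGINARYPART} (it defers to section~2 of the author's thesis, ultimately to Pink's construction of the imaginary part), and that argument runs exactly as yours does --- the failure of $h_x$ to be real is a $\nU_\nSD(\C)$-valued algebraic cocycle for $\SSS_\C$ acting on $\Lie(\nU_{\nSD,\C})$ through the nontrivial character coming from the pure type $(-1,-1)$, every such cocycle is a coboundary $z\mapsto(\chi(z)-1)V_x$, and halving $V_x$ gives the unique conjugating element in $\nU_\nSD(\R)(-1)$. Your monomial classification of the cocycle, the relation $\overline{V_x}=-V_x$, the check that $\exp(V_x/2)$ works, the uniqueness argument ($\bar v=v$ and $\bar v=v^{-1}$ force $v=1$ in a torsion-free vector group), and the deduction of functoriality from uniqueness (which tacitly uses that morphisms of $p$-MSD carry $\nU_\nSD$ into $\nU_\nSDi$, Proposition \ref{FUNCTORIALITYW}) all check out.

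One point deserves an explicit word: your opening claim that $\pi'\circ h_x$ (projection modulo $\nU_\nSD$) is defined over $\R$ is not literally contained in Definition \ref{DEFADMISSIBLEH}, which only requires realness modulo the full unipotent radical $\nW_\nSD$, and it cannot be derived from the conditions as printed (conjugating an admissible $h_x$ by a non-real element of $\nV_\nSD(\C)$ preserves all the listed axioms). Realness modulo $\nU_\nSD$ is part of the intended axioms (it is Pink's condition in his Definition 2.1 and is visible in the definition of unipotent extensions, \ref{UNIPOTENTFIBRECONVERSE}), and indeed it is forced by the very statement of the proposition; you should cite it as an axiom rather than treat it as automatic.
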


\begin{PROP}\label{FUNCTORIALITYBOUNDARYCOMP}
If $\nSDB$ is a boundary component of $\nSD$ and $\alpha: \nSD \rightarrow \nSDp$ is a morphism, there is a unique
boundary component $\nSDBp$ of $\nSDp$ and a corresponding
map $\widetilde{\alpha}: \nSDB \rightarrow \nSDBp$. If $\alpha$ is an embedding,
$\widetilde{\alpha}$ is either.
\end{PROP}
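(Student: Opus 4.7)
The plan is to transport the boundary data of $\nSDB$ along $\alpha$ using the cocharacter characterization of admissible parabolics from Proposition \ref{BOUNDARYCOMPONENTS}, and then verify that the construction descends to the integral model. Pick any $x \in \nX_{\nSDB \Longrightarrow \nSD}$ and let $\omega_x: H_{0,\C} \rightarrow \nP_{\nSD,\C}$ be the unique homomorphism associated to the admissible $\Q$-parabolic $Q_\Q \subseteq \nP_{\nSD,\Q}$ defining $\nSDB$. I would set $\omega'_{\alpha(x)} := \alpha_\C \circ \omega_x : H_{0,\C} \to \nP_{\nSDp,\C}$ and let $Q'_\Q \subseteq \nP_{\nSDp,\Q}$ be the subgroup whose complex Lie algebra is the sum of the nonnegative weight spaces of $\Lie(\nP_{\nSDp,\C})$ under $\Ad \circ \omega'_{\alpha(x)} \circ h_\infty \circ w$. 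Conditions (a) and (b) of Proposition \ref{BOUNDARYCOMPONENTS}(2) for $\omega'_{\alpha(x)}$ relative to $Q'_\Q$ are immediate, since $\alpha$ intertwines the projections $\nP_\nSD \to \nP_\nSD/\nU_\nSD$ and $\nP_\nSDp \to \nP_\nSDp/\nU_\nSDp$ and satisfies $h_{\alpha(x)} = \alpha \circ h_x$ by the very definition of a morphism of $p$-MSD.

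The main technical point is condition (c), which amounts to showing that $Q'_\Q$ is admissible in the sense of Definition \ref{DEFADMISSIBLEPARABOLIC}: since $\omega_x \circ h_\infty \circ w$ is conjugate to $\mu \cdot \lambda$ with $\lambda$ a long-root cocharacter as in \ref{LONGROOTS}, we have $\omega'_{\alpha(x)} \circ h_\infty \circ w$ conjugate to $\mu' \cdot (\alpha \circ \lambda)$, and one must verify that $\alpha \circ \lambda$ is again a long-root cocharacter of $\nP_{\nSDp,\Q}$ (up to conjugation). The plan is to do this by decomposing $\alpha^{ad}: \nG_{\nSD,\Q}^{ad} \to \nG_{\nSDp,\Q}^{ad}$ along $\Q$-simple factors: each simple factor of the source either becomes trivial in the target or maps nontrivially into a single $\Q$-simple factor of the target, and in the latter case the classification of the relevant root systems as type $(C)$ or $(BC)$ (see \ref{LONGROOTS}) together with the explicit form of the long-root cocharacter forces the image to be either trivial or long-root again. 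This factor-by-factor analysis of morphisms of Hermitian symmetric data is the hard step.

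Once admissibility is granted, define $\nP_{\nSDBp,\Q}$ as the smallest normal $\Q$-subgroup of $Q'_\Q$ through which $\omega'_{\alpha(x)} \circ h_\infty$ factors, and let $\nP_\nSDBp$ be its closure in $\nP_\nSDp$, which is of type (P) by \cite[Theorem 2.4.5]{Thesis}. Since $\omega_x \circ h_\infty$ factors through $\nP_{\nSDB,\Q}$, the image $\alpha(\nP_{\nSDB,\Q})$ is a normal $\Q$-subgroup of $Q'_\Q$ containing the image of $\omega'_{\alpha(x)} \circ h_\infty$, so $\alpha$ restricts to a homomorphism $\nP_\nSDB \to \nP_\nSDBp$. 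Take $\nX_\nSDBp$ to be the unique $\nP_\nSDBp(\R)\nU_\nSDBp(\C)$-orbit containing $([\alpha(x)], \omega'_{\alpha(x)} \circ h_\infty)$, and define $\widetilde{\alpha}: \nX_\nSDB \to \nX_\nSDBp$ by equivariance. Uniqueness of $\nSDBp$ follows because $Q'_\Q$, $\nP_{\nSDBp,\Q}$, and the orbit $\nX_\nSDBp$ are each forced by the requirement that $\widetilde{\alpha}$ extend $\alpha$.

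Finally, if $\alpha$ is an embedding, then $\nP_\nSDB \to \nP_\nSDBp$ is the restriction of the closed embedding $\nP_\nSD \hookrightarrow \nP_\nSDp$ to the closed subscheme $\nP_\nSDB$, hence itself a closed embedding; injectivity of $\widetilde{\alpha}$ on $\nX_\nSDB$ then follows from injectivity of $\alpha$ on $\pi_0(\nX_\nSD)$ and on the relevant Hom spaces, via the inclusion $\nX_\nSDB \hookrightarrow \pi_0(\nX_\nSD) \times \Hom(\SSS_\C, \nP_{\nSDB,\C})$ from \eqref{bdrsymdomain}.
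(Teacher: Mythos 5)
Your overall architecture (push $\omega_x$ forward along $\alpha$, then invoke the characterization of admissible parabolics in Proposition \ref{BOUNDARYCOMPONENTS} to manufacture $\nSDBp$) is the natural one, and the paper itself gives no proof to compare against — it defers to \cite[section 2]{Thesis}, the rational prototype being Pink's functoriality of boundary components. But your treatment of what you yourself call the hard step has a genuine gap. You need that $\alpha\circ\lambda$ is, up to conjugation, again a long-root cocharacter in the sense of \ref{LONGROOTS} (equivalently, that the parabolic $Q'_\Q$ you define by nonnegative weight spaces is \emph{admissible}), and you claim this is forced by the classification of the restricted root systems as type $(C)$ or $(BC)$ together with the explicit form of $\lambda$. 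That cannot be the whole argument: it makes no use of the Shimura-datum axioms, and as a statement about homomorphisms between groups of Hermitian type it is false. For instance, $\Sym^3\colon \SL_2\to\Sp_4$ is a homomorphism between groups with restricted root systems $C_1$ and $C_2$, and the image of the long-root cocharacter $t\mapsto\mathrm{diag}(t,t^{-1})$ pairs with the two long roots of $C_2$ with values $6$ and $2$, not $\{0,2\}$; the only reason this does not contradict the Proposition is that $\Sym^3$ is not a morphism of Shimura data. So the admissibility of $Q'_\Q$ must be extracted from the constraint $h_{\alpha(x)}=\alpha\circ h_x\in h(\nX_\nSDp)$, e.g.\ by observing that $\Ad\circ\alpha\circ\omega_x\circ h_\infty$ equips $\Lie(\nP_{\nSDp,\Q})$ with a mixed Hodge structure whose Hodge types are confined to the list in Definition \ref{DEFSHIMURADATA}, so that the weights of $\alpha_*(\mu\cdot\lambda)$ on $\Lie(\nP_{\nSDp})$ lie in a fixed short range, which pins the pairing of the semisimple part of the cocharacter against each long root of the target to $0$ or $2$ (a Satake-type $(H2)$ argument). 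Your sketch also asserts that each $\Q$-simple factor of the source maps into a single $\Q$-simple factor of the target, which already fails for diagonal morphisms; the projection to each target factor is what one must analyze. Without this input the central claim of the Proposition is simply not proved.

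A secondary, fixable slip: to get the map $\nP_\nSDB\to\nP_\nSDBp$ you argue that $\alpha(\nP_{\nSDB,\Q})$ is a normal $\Q$-subgroup of $Q'_\Q$ containing the image of $\omega'_{\alpha(x)}\circ h_\infty$. Even granting normality (which is unclear, since $\alpha(Q_\Q)$ need not be all of $Q'_\Q$), minimality of $\nP_{\nSDBp,\Q}$ would then only give $\nP_{\nSDBp,\Q}\subseteq\alpha(\nP_{\nSDB,\Q})$ — the reverse of the inclusion you need. The correct argument goes the other way: once one knows $\alpha(Q_\Q)\subseteq Q'_\Q$ (which follows on Lie algebras from the compatibility of weight spaces), the preimage $\alpha^{-1}(\nP_{\nSDBp,\Q})\cap Q_\Q$ is a normal $\Q$-subgroup of $Q_\Q$ through which $\omega_x\circ h_\infty$ factors, so minimality of $\nP_{\nSDB,\Q}$ gives $\alpha(\nP_{\nSDB,\Q})\subseteq\nP_{\nSDBp,\Q}$, and one then passes to closures over $\Zpp$. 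The remaining points (rationality of $Q'_\Q$, definition of $\nX_\nSDBp$ as the orbit of $([\alpha(x)],\alpha\circ\omega_x\circ h_\infty)$, uniqueness, and the embedding case) are essentially right, though the $\Q$-rationality of $Q'_\Q$ and the injectivity on $\nX_\nSDB$ (which is only a finite cover of its image in $\pi_0(\nX_\nSD)\times\Hom(\SSS_\C,\nP_{\nSDB,\C})$) deserve a word each.
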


\begin{PROP}\label{BOUNDARYCOMPONENTS2}
\begin{enumerate}
\item
Let $\nX_{\nSDB \Longrightarrow \nSD}$ be as in Definition \ref{DEFBOUNDARYCOMP}. Let $\nX_\nSD^0$ be a connected component
of $\nX_{\nSDB \Longrightarrow \nSD}$ and $\nX_\nSDB^0$ be corresponding component of $\nX_\nSDB$.
Then
\begin{enumerate}
\item
The map $\nX_{\nSDB \Longrightarrow \nSD} \rightarrow \nX_\nSDB$ is an open embedding.
\item
The image of $\nX_\nSD^0$ in $\nX^0_\nSDB$ is the inverse image of an open complex cone $C:=C(\nX_\nSD^0, P_\nSDB) \subset U_\nSDB(\R)(-1)$ under the map $\im|_{\nX_\nSDB^0}$
\item
The cone is an orbit in $\nU_\nSDB(\R)(-1)$ under translation by $\nU_\nSD(\R)(-1)$ and conjugation by $Q(\R)^\circ$. It is also invariant under translation
by $(\nU_\nSDB \cap \nW_\nSD)(\R)(-1)$.
\item
 Modulo $(\nU_\nSDB \cap \nW_\nSD)(\R)(-1)$ the cone $C$ is a non-degenerate homogeneous self-adjoint cone (in the sense of [AMRT, II, p. 57, \S 1.1]).
\end{enumerate}
\item
Consider a morphism of Shimura data $\iota: \nSD \rightarrow \nSDp$. For each
rational boundary component $\nSDB$ of $\nSD$ there is a unique boundary component $\nSDBp$ of
$\nSDp$ and a morphism  $\widetilde{\iota}: \nSDB \rightarrow \nSDBp$
such that
\[\xymatrix{
\nX_{\nSDB \Longrightarrow \nSD} \ar[r]^\iota \ar@{^{(}->}[d] & \nX_{\nSDBp \Longrightarrow \nSDp} \ar@{^{(}->}[d] \\
\nX_\nSDB \ar[r]^{\widetilde{\iota}} & \nX_\nSDBp \\
 } \]
commutes.
\item Each boundary component $\nSDBp$ of $\nSDB$ is naturally a boundary component of $\nSD$.
This defines a partial order on the set of boundary components of $\nSD$.
\end{enumerate}
\end{PROP}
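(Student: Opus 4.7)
The plan is to deduce all three parts from the characterization of boundary cocharacters in Proposition \ref{BOUNDARYCOMPONENTS} together with the explicit description of the construction \ref{DEFBOUNDARYCOMP}, extending the rational case of Pink's thesis to the $p$-integral setting via Lemma \ref{LEMMAPARABOLIC_S}. For part 1(a), fix a connected component $\nX_\nSD^0$ and observe that the map $x \mapsto ([x], \omega_x \circ h_\infty)$ is well-defined and continuous by the uniqueness in \ref{BOUNDARYCOMPONENTS}; a dimension count, using that the Lie algebra of $\nP_\nSD/\nP_\nSDB$ accounts exactly for the ``missing'' tangential directions, shows that it is an immersion. Injectivity onto its image follows from the fact that $h_x$ is recovered from $(\[x\], \omega_x \circ h_\infty)$ via $h_x = \omega_x \circ h_0$, and openness is a stabilizer comparison: the stabilizer of $x$ in $\nP_\nSD(\R)\nU_\nSD(\C)$ maps surjectively to the stabilizer of its image in $\nP_\nSDB(\R)\nU_\nSDB(\C)$, both being reductive up to the appropriate unipotent centralizer.

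For 1(b)--(c): I would choose a base point $x_0$ lying above $y_0 \in \nX_\nSDB^0$ and identify the fiber of $\nX_{\nSDB \Longrightarrow \nSD} \to \nX_\nSDB^0$ through translation by elements of $\nU_\nSDB(\C)$. The imaginary part of such a translation lies in $\nU_\nSDB(\R)(-1)$ by \ref{IMAGINARYPART} (applied to $\nSDB$), and the condition that the translated point lies in $\nX_\nSD$ rather than merely $\nX_\nSDB$ becomes an open positivity condition on this imaginary part, coming from the Hodge-type condition in \ref{DEFPINTEGRALMIXEDSHIMURADATA} for $\nSD$. Invariance under $\nU_\nSD(\R)(-1)$-translation is immediate from the torsor structure of the fiber; invariance under $Q(\R)^\circ$-conjugation follows from the functoriality of the assignment $x \mapsto \omega_x$, since $Q$ normalizes $\nP_\nSDB$ and preserves $\nX_{\nSDB \Longrightarrow \nSD}$; and invariance under $(\nU_\nSDB \cap \nW_\nSD)(\R)(-1)$ follows because conjugation by such central elements preserves the Hodge type of $h_x$. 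The hard part is 1(d) --- the self-adjoint homogeneous cone property --- which I would reduce to the pure case by passing to the quotient $\nSD/\nW_\nSD$: modulo $(\nU_\nSDB \cap \nW_\nSD)(\R)(-1)$, the cone $C$ becomes the cone attached to the induced boundary component of the pure Shimura datum, where AMRT [II, §1.1] furnishes the self-adjoint homogeneous structure. Nondegeneracy follows from axiom (4) of \ref{DEFSHIMURADATA}, which ensures $\nG^{ad}_\R$ has no rational compact factor and hence the cone has nonempty interior.

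For part 2, given $\iota: \nSD \to \nSDp$ and an admissible $Q_\Q \subseteq \nP_{\nSD,\Q}$ defining $\nSDB$, I would use Proposition \ref{FUNCTORIALITYW} together with the classification \ref{DEFADMISSIBLEPARABOLIC} to find a unique admissible parabolic $Q'_\Q \subseteq \nP_{\nSDp,\Q}$ such that the cocharacter $\iota \circ \omega_x \circ h_\infty$ admits a factorization through the minimal normal subgroup of $Q'_\Q$ containing its image; this minimal subgroup is then $\nP_{\nSDBp,\Q}$. The uniqueness in \ref{BOUNDARYCOMPONENTS} applied in $\nSDp$ yields $\widetilde{\iota}$, and the embedding property is inherited from $\iota$ because the relevant subgroup schemes are cut out by the same Hodge-theoretic conditions on both sides. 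For part 3, an admissible parabolic of $\nP_{\nSDB,\Q}$ pulls back to an admissible parabolic $Q'' \subseteq Q$ of $\nP_{\nSD,\Q}$ --- this uses the fact that $\nP_{\nSDB,\Q}$ is itself constructed as a normal subgroup of $Q$ and that the simple-factor decomposition of $\nG_{\nSDB,\Q}^{ad}$ is inherited from that of $\nG_{\nSD,\Q}^{ad}$. The boundary cocharacter $\omega_x \circ h_\infty$ for $\nSDBp$ (relative to $\nSD$) factors through that for $\nSDB$, so the iterated boundary is naturally a boundary component of $\nSD$, and inclusion of the underlying parabolics gives the desired partial order.
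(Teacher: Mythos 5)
The paper does not actually prove this proposition: it is one of the results in this section for which the author writes ``Proofs of the theorems in this section can be found in [Thesis, section 2]'', and the statement is essentially the $p$-integral upgrade of Pink's results on boundary components (cf.\ [Pink, 4.15ff]) combined with the AMRT theory of self-adjoint cones. So there is no in-paper argument to compare yours against; your outline follows exactly the route the thesis/Pink take (reduce 1(d) to the pure case modulo $(\nU_\nSDB\cap\nW_\nSD)(\R)(-1)$ and invoke AMRT; derive part 2 from the weight-space characterization of the admissible parabolic in Proposition \ref{BOUNDARYCOMPONENTS}; obtain part 3 from the compatibility of iterated admissible parabolics).

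That said, two points in your sketch fall short of the statement as written. First, 1(c) asserts that $C$ \emph{is} an orbit under $\nU_\nSD(\R)(-1)$-translation and $Q(\R)^\circ$-conjugation, but you only argue invariance; the substantive content is transitivity, which requires the standard (but nontrivial) fact that a real parabolic $Q(\R)^\circ$, together with $\nU_\nSD(\C)$, already acts transitively on the connected component $\nX_\nSD^0$ --- without this, homogeneity of the cone in 1(d) is also not established. Second, your injectivity argument for 1(a) recovers $h_x=\omega_x\circ h_0$ from the image point, but the image only records $\omega_x\circ h_\infty$ (and the component $[x]$), so you must first show that $\omega_x$ itself is determined by $\omega_x\circ h_\infty$ together with $[x]$; this uses the minimality of $\nP_\nSDB$ as a normal subgroup through which $\omega_x\circ h_\infty$ factors and is exactly where the uniqueness clause of Proposition \ref{BOUNDARYCOMPONENTS} has to be exploited, not merely cited for well-definedness. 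Finally, attributing the nondegeneracy of $C$ in 1(d) to axiom (4) of \ref{DEFSHIMURADATA} is misplaced: that axiom excludes rational compact factors of $\nG^{ad}$, whereas nondegeneracy of the cone is part of the AMRT structure theory of boundary components of the Hermitian symmetric domain attached to $\nSD/\nW_\nSD$.
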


\begin{DEF}\label{DEFCONICALCOMPLEX}
Let $\nSDB_1$ be a rational boundary component of $\nSD$ and $\nX_\nSD^0$ be a connected
component of $\nX_{\nSDB_1 \Longrightarrow \nSD}$.
Let $C^*(\nX_\nSD^0, \nP_{\nSDB_1}) \subset \nU_{\nSDB_1}(\R)(-1)$ denote the union of the cones $C(\nX_\nSD^0, \nP_{\nSDB_2})$ for all rational
boundary components $\nSDB_2$ such that
$\nSDB_1 \Longrightarrow \nSDB_2 \Longrightarrow \nSD$. It is a convex cone.
Form the following quotient
\[ \nC_\nSD := \coprod_{(\nX_\nSD^0, \nP_{\nSDB_1})} C^*(\nX_\nSD^0, \nP_{\nSDB_1}) / \sim \]
by the equivalence relation generated by the graph of all embeddings
\[ C^*(\nX_\nSD^0, \nP_{\nSDB_2}) \hookrightarrow C^*(\nX^0_\nSD, \nP_{\nSDB_1}) \] 
for $\nSDB_1 \Longrightarrow \nSDB_2 \Longrightarrow \nSD$.
It is called the {\bf conical complex} associated with $\nSD$ (cf. \cite[4.24]{Pink}).
\end{DEF}

Consider the set
\[ \nC_\nSD \times \nP_\nSD(\Af). \]

$\nP_\nSD(\Q)$ acts on this from the left by conjugation of boundary components \cite[4.23]{Pink} and on $\nP_\nSD(\Af)$ by left multiplication.
$\nP_\nSD(\Af)$ acts via multiplication on the right on the second factor. 
Furthermore $\nP_\nSD(\Af)$ acts on the second factor through left multiplication. 

Let a set $\nRPCD$ of subsets of $\nC_\nSD \times \nP_\nSD(\Af)$ be given, such that every $\sigma \in \nRPCD$ is contained in some
$\nC(\nX_\nSD^0, \nP_\nSDB) \times {\rho}$. We denote by $\nRPCD(\nX_\nSD^0, \nP_\nSDB, \rho)$ the subset of $\sigma \in \nRPCD$, such that 
$\sigma \subset \overline{\nC(\nX_\nSD^0, \nP_\nSDB) \times {\rho}}$.

Let $K$ be an (admissible) compact open subgroup of $\nP_\nSD(\Af)$.
For any $\nX_\nSD^0$, $\nSDB$ and $\rho$, we let $\Gamma_\nU \subset \nU_\nSDB(\Q)$ be the image of
\[ \{z \in Z(\nP_\nSD(\Q)) \where z|_{\nX_\nSD} = \id\} \nU_\nSDB(\Q)) \cap {}^\rho K \]
under the projection $Z(\nP_\nSD(\Q)) \times \nU_\nSDB(\Q) \rightarrow \nU_\nSDB(\Q)$.

\begin{DEF}\label{DEFRPCD}
$\nRPCD$ is called a {\bf $K$-admissible (partial) rational polyhedral cone decomposition} for $\nSD$, if
\begin{enumerate}
 \item For each $\nX_\nSD^0, \nSDB$ and $\rho$, $\nRPCD(\nX_\nSD^0, \nP_\nSDB, \rho)$ is a (partial) rational polyhedral cone decomposition of
the closure of $C(\nX_\nSD^0, \nP_\nSDB) \times {\rho}$. {\em We understand a cone as open in its closure}.
 \item $\nRPCD$ is invariant under right multiplication of $K$ and
   under left multiplication of $P_\nSD(\Q)$.
 \item For each $\nSDB$ the set $\bigcup_{\rho \in \nP_\nSD(\Af)}\nRPCD(\nX_\nSD^0, \nP_\nSDB, \rho)$ is invariant
 under left multiplication of $\nP_\nSDB(\Af)$.
\end{enumerate}
It is called {\bf finite}, if the quotient $\nP_\nSD(\Q) \backslash \nRPCD / K$ is finite.

It is called {\bf complete}, if in 1. $\nRPCD(\nX_\nSD^0, \nP_\nSDB, \rho)$ is a complete rational polyhedral cone decomposition.

It is called {\bf projective}, if on each $\nRPCD(\nX_\nSD^0, \nP_\nSDB, \rho)$ there exists a polarization function (\cite[IV, \S 2.1]{AMRT}, cf. \cite[IV, 2.4]{FC}).

It is called {\bf smooth} with respect to $K$, if for all $\nX_\nSD^0$, $\nSDB$ and $\rho$, as above,
$\nRPCD (\nX^0_\nSD, \nP_\nSDB, \rho)$ is smooth with respect to the lattice $\Gamma_\nU$.
\end{DEF}

The condition 3. is called the arithmeticity condition. Without it, the compactification exists over $\C$ but may not
descend to the reflex field or a reflex ring.

\begin{DEF}\label{DEFPECMSD}\label{DEFFUNCTORIALITYRPCD}
A triple ${}^K_\nRPCD \nSD$ is called {\bf $p$-integral extended compactified mixed Shimura data ($p$-ECMSD)}, where everything is as in
Definition \ref{DEFPEMSD}, but $\nRPCD$ is in addition a $K$-admissible (partial) rational polyhedral cone decomposition.

Morphisms of $p$-ECMSD have to satisfy the property that
for each $\sigma_1 \in \nRPCD_1$ there is a $\sigma_2 \in \nRPCD_2$ with $\gamma(\sigma_1)^\rho \subset \sigma_2$.

Let ${}^K_\nRPCD \nSD$ be $p$-ECMSD and $[\alpha, \rho]: {}^{K'} \nSDi \rightarrow {}^{K} \nSD$ be a morphism of $p$-EMSD, such that
$\alpha$ is a {\em closed embedding}.

Set $[\alpha, \rho]^* \nRPCD$ to be the set of all cones
$\{ (u, \rho') \where (\alpha(u), \alpha(\rho') \rho) \in \sigma \}$ for all $\sigma \in \nRPCD$. This is a
$K'$-admissible rational partial cone decomposition for $\nSDi$. It is finite, resp. complete, resp. projective
if $\nRPCD$ is finite, resp. complete, resp. projective.

This association is functorial. If $[\alpha, \rho]$ was an embedding (this includes a condition on $K, K'$, see \ref{DEFPEMSD}), we call
${}^{K'}_{\nRPCD'} \nSDi \rightarrow {}^{K}_{\nRPCD} \nSD$
an {\bf embedding}.
\end{DEF}

Be aware that, in general, smoothness is not inherited by $[\alpha, \rho]^* \nRPCD$, but we have:

\begin{PROP}[{\cite[Theorem 2.4.12]{Thesis}}]\label{PROPERTIESRPCD}
Let $\nSD$ be $p$-MSD. Let $K$ be an admissible compact open. 
\begin{enumerate}
 \item $K$-admissible (complete) rational polyhedral cone decompositions for $\nSD$ exist.
 \item If $\nRPCD$ is a $K$-admissible rational polyhedral cone decomposition for $\nSD$, there is a smooth and projective refinement $\nRPCD'$.
  {\em Any} refinement of $\nRPCD'$ will be projective again.
 \item If $\nRPCD_i$, $i=1,2$ are 2 rational polyhedral cone decompositions for $\nSD$, there is a common refinement $\nRPCD$ 
   (supported on the intersection of their supports).
 \item If $\alpha: \nSD \rightarrow \nSDp$ is an embedding and $\rho \in \nP_\nSDp(\Afp)$ is given, there is $K'$ 
such that we have an embedding $[\alpha, \rho]: {}^K \nSD \rightarrow {}^{K'} \nSDp$,
a $K'$-admissible rational polyhedral smooth and projective cone decomposition $\nRPCD'$ for $\nSDp$, 
with the property that $\nRPCD:= [\alpha, \rho]^* \nRPCD'$ is smooth and projective. For {\em every} smooth refinement of $\widetilde{\nRPCD}$ of $\nRPCD$, there is 
a smooth refinement $\widetilde{\nRPCD}'$ of $\nRPCD'$ with $\widetilde{\nRPCD}:= [\alpha, \rho]^* \widetilde{\nRPCD}'$.
\end{enumerate}
\end{PROP}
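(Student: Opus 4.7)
The statement is really a package of four compatibility properties for $K$-admissible rational polyhedral cone decompositions, and my plan is to prove them one by one, reducing each to classical results from Ash--Mumford--Rapoport--Tai \cite[AMRT]{} and Faltings--Chai \cite{FC}, and then handling the new ingredients coming from the mixed/adelic setting.

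The plan for (1). For each admissible $\Q$-parabolic $Q_\Q$ of $\nP_{\nSD,\Q}$, Proposition \ref{BOUNDARYCOMPONENTS2}(1d) tells us that (modulo the unipotent part contained in $\nW_\nSD$) the cone $C(\nX_\nSD^0,\nP_\nSDB)$ is a non-degenerate homogeneous self-adjoint cone acted on by an arithmetic subgroup, and the conical complex of Definition \ref{DEFCONICALCOMPLEX} glues these along the partial order of boundary components. By finiteness of $\nP_\nSD(\Q)\backslash(\text{admissible parabolics})/K$, and finiteness of the double cosets $\nP_\nSDB(\Q)\backslash \nP_\nSD(\Af)/K$, there are only finitely many $\nP_\nSD(\Q)$-orbits of triples $(\nX_\nSD^0,\nP_\nSDB,\rho)$ modulo $K$. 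Pick one representative in each orbit and apply the classical existence theorem for $\Gamma_\nU$-admissible rational polyhedral decompositions to its $C^*$; then propagate by translations under $\nP_\nSD(\Q)$ (on the left) and $K$ (on the right). Compatibility across the boundary order comes from the fact that restricting the cone of a smaller component gives the cone of a larger one, and one just intersects the chosen decomposition with the face. The arithmeticity condition \ref{DEFRPCD}(3) is forced by taking the $\nP_\nSDB(\Af)$-translates. Completeness is obtained by starting from a complete decomposition of each representative cone (which exists by AMRT).

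The plan for (2) and (3). For smoothness, given a $\Gamma_\nU$-admissible RPCD, iterated star subdivision at non-smooth rays produces a smooth refinement that is still $\Gamma_\nU$-admissible; this is entirely classical toric combinatorics once one has fixed the lattice $\Gamma_\nU$. For projectivity, I would follow the construction of a polarization function in \cite[IV, \S 2.1]{AMRT} and \cite[IV, 2.4]{FC}: assign carefully chosen strictly positive real values to the rays, extend piecewise-linearly and equivariantly using the arithmetic group action, and verify strict convexity on each top-dimensional cone. The remark that \emph{any} refinement of a projective $\nRPCD'$ stays projective is because projectivity is encoded by a support function that remains strictly convex on finer decompositions. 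For (3), the common refinement is the obvious one: $\{\sigma_1 \cap \sigma_2 \mid \sigma_i\in \nRPCD_i\}$, reunited over all triples $(\nX_\nSD^0,\nP_\nSDB,\rho)$; admissibility follows from admissibility of each $\nRPCD_i$ and intersecting with $\bigcup \sigma_2$ gives the claimed support.

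The plan for (4), which I expect to be the main obstacle. Given $\alpha\colon \nSD\hookrightarrow \nSDp$, Proposition \ref{FUNCTORIALITYBOUNDARYCOMP} furnishes, for every boundary component $\nSDB$ of $\nSD$, a unique boundary component $\nSDBp$ of $\nSDp$, and an induced $\R$-linear embedding $\nU_\nSDB(\R)\hookrightarrow \nU_\nSDBp(\R)$ carrying $C^*(\nX_\nSD^0,\nP_\nSDB)$ into $C^*(\nX_\nSDp^0,\nP_\nSDBp)$ compatibly with the partial orders. Choose $K'$ large enough, by Lemma \ref{EMBEDDINGEMSD}(3), so that $[\alpha,\rho]\colon {}^K\nSD\hookrightarrow {}^{K'}\nSDp$ is an embedding of EMSD. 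The construction of $\nRPCD'$ then proceeds in three steps: (i) on each target cone $C^*(\nX_\nSDp^0,\nP_\nSDBp)$ pick a $\Gamma_\nU'$-admissible RPCD that restricts, along the embedded subspace $\nU_\nSDB(\R)\subset \nU_\nSDBp(\R)$, to a prescribed RPCD of $C^*(\nX_\nSD^0,\nP_\nSDB)$; (ii) refine on the target so that the resulting decomposition is smooth with respect to $\Gamma_\nU'$ \emph{and}, when intersected with the subspace, remains smooth with respect to $\Gamma_\nU$; (iii) use the projectivity construction from (2) to make it projective. Step (ii) is the technical heart: it amounts to a relative toric resolution statement, saying that one can always choose a smooth fan on an ambient lattice whose trace on a sublattice is smooth, which one proves by a finite induction over non-smooth cones, in each step picking a subdivision whose new rays lie over the sublattice whenever possible and otherwise stay transverse to it. The final assertion --- that \emph{every} smooth refinement $\widetilde{\nRPCD}$ of $\nRPCD$ on the source lifts to a smooth refinement $\widetilde{\nRPCD}'$ of $\nRPCD'$ with $[\alpha,\rho]^*\widetilde{\nRPCD}'=\widetilde{\nRPCD}$ --- follows by the same lifting: extend each new ray of $\widetilde{\nRPCD}$ into the ambient cone of $\nRPCD'$ by a compatible star subdivision, making further subdivisions of the complementary direction to restore smoothness without touching the embedded subspace.

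The main obstacle is indeed the lifting of smooth refinements in step (4); everything else is classical AMRT recast in the mixed and adelic language of $p$-ECMSD, and the bookkeeping with $\rho\in \nP_\nSD(\Af)$ and the axioms of Definition \ref{DEFRPCD} is tedious but not conceptually new.
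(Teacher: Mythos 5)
The paper itself gives no proof of this proposition --- it is quoted from \cite[Theorem 2.4.12]{Thesis}, and the surrounding text explicitly defers all proofs of this section to the thesis --- so I can only judge your plan on its own merits, and there are two places where it does not hold up. The serious one is your justification of the second sentence of (2): you claim that a refinement of a projective $\nRPCD'$ is projective ``because projectivity is encoded by a support function that remains strictly convex on finer decompositions.'' This is false: a polarization function that is strictly convex with respect to $\nRPCD'$ is merely convex, not strictly convex, with respect to any proper refinement (the new walls are walls across which the function stays linear), and in general refinements of projective decompositions are \emph{not} projective --- already in ordinary toric geometry there are smooth complete non-projective toric threefolds whose fans refine the projective fan of $(\PP^1)^3$, and non-regular subdivisions of a single cone show that one cannot even fix this cone by cone by adding a small strictly convex correction. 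So the ``any refinement stays projective'' claim is exactly the nontrivial content of (2); it must come from a \emph{particular} construction of $\nRPCD'$ (which is what the thesis supplies), not from persistence of strict convexity, and your argument as written proves nothing here. Note that this claim is also what the paper later relies on (e.g.\ in \ref{QEXPANSIONNOT}, \ref{BPADELIC}, \ref{MAINTHEOREMAVERAGE}) when it refines $\nRPCD$ at will, so it cannot be waved through.

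The second gap is in (4), which you correctly identify as the heart of the matter but then dispose of with an unproved relative resolution lemma: ``extend each new ray of $\widetilde{\nRPCD}$ into the ambient cone by a compatible star subdivision, making further subdivisions of the complementary direction to restore smoothness without touching the embedded subspace.'' Restoring smoothness of the ambient decomposition without altering its trace on the embedded conical complex is precisely the delicate point: standard resolution of fans inserts rays in non-smooth cones, and a non-smooth ambient cone meeting the image of $\nU_\nSDB(\R)$ will in general acquire new rays whose stars do change the pullback $[\alpha,\rho]^*$; you give no argument that the subdivisions can be chosen transverse to the subspace, nor do you address that the pullback is not a single linear slice but runs over all boundary components and all adelic translates $\rho'$ simultaneously, with $\Gamma_\nU$- and $\Gamma_{\nU'}$-equivariance and projectivity to be preserved at every step. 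Parts (1) and (3) are fine in outline (modulo the usual care with stabilizers and with compatibility of choices along shared faces of cones in different orbits), but as it stands the proposal does not prove (2) and (4), which are the substantive assertions of the proposition.
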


\begin{DEF}\label{DEFBOUNDARYCOMPRPCD}
Let ${}^K_\nRPCD \nSD$ be $p$-ECMSD and $\iota: \nSDB_1 \Longrightarrow \nSD$ a boundary map.
For any $\rho \in \nP_\nSD(\Af)$, we define $K' := \nP_{\nSDB_1}(\Af) \cap {}^\rho K$, write
\[ (\iota, \rho): {}^{K'}_{\nRPCD'} \nSDB_1 \Longrightarrow {}^K_{\nRPCD} \nSD, \]
and call this a boundary component of (or boundary morphism to) the $p$-ECMSD ${}^K_{\nRPCD} \nSD$.
$\nRPCD'$ is defined as $([\iota,\rho]^* \nRPCD)|_{\nSDB_1}$, where restriction is characterized by
\[ \nRPCD|_{\nSDB_1}( \nX_{\nSDB_1}^0, \nP_{\nSDB_2}, \rho_1 ) = \nRPCD( \nX_\nSD^0, \nP_{\nSDB_2}, \rho_1 ) \]
for all $\rho_1 \in \nP_{\nSDB_1}(\Af)$, every boundary map $\nSDB_2 \Longrightarrow \nSDB_1$ and every pair
of connected components $\nX^0_\nSD$ and $\nX^0_{\nSDB_1}$ such that $\nX^0_\nSD \hookrightarrow \nX^0_{\nSDB_1} \hookrightarrow \nX_{\nSDB_2}$.
$\nRPCD'$ in general inherits neither completeness nor finiteness. It is $K'$-admissible.

We call two boundary components
\[ (\iota', \rho'): {}^{K'}_{\nRPCD'} \nSDB' \Longrightarrow {}^K_{\nRPCD} \nSD \]
and
\[ (\iota'', \rho''): {}^{K''}_{\nRPCD''} \nSDB'' \Longrightarrow {}^K_{\nRPCD} \nSD \]
{\bf equivalent}, if (the images of) $\nSDB'$ and $\nSDB''$ are conjugated via $\alpha \in \nP_\nSD(\Q)$ and
\[ \alpha \rho' \in \Stab_{Q(\Q)}(\nX_\nSDB) \nP_\nSDB(\Af) \rho'' K. \]
(Here $Q$ is the parabolic defining $\nSDB$.)
In an equivalence class, we may assume $\rho \in \nP_\nSD(\Afp)$. 
\end{DEF}

\begin{DEF}\label{DEFCONCENTRATIONUNIPOTENTFIBRE}
Let ${}^K_\nRPCD \nSD$ be $p$-ECMSD.
Define $\nRPCD^0$ as the set of all $\sigma \in \nRPCD$, such that $\sigma \subset C(\nX_\nSD^0, \nP_\nSDB)\times \nP_\nSD(\Af)$
for some {\em improper} rational boundary component $\nSDB\Longrightarrow\nSD$. We say that $\nRPCD$ is {\bf concentrated in the
unipotent fibre}, if $\nRPCD=\nRPCD^\circ$.
\end{DEF}

\section{Symplectic, Hodge type, and Abelian type Shimura data}

\begin{PAR}\label{GSP}
Let $S$ be a scheme.
Let $\nL$ be a locally free sheaf on $S$ with unimodular alternating form $\langle v, w \rangle = \Psi(v,w)$, i.e.
satisfying $\langle v, v \rangle = 0$ and such that the induced homomorphism $\nL \rightarrow \nL^*$ is an isomorphism.

Let us assume first that $\nL$ is {\em non-zero}. We then have the reductive group schemes $\Sp(\nL)$ (resp. $\GSp(\nL)$) over $S$, the {\bf symplectic group}, resp. the
{\bf group of symplectic similitudes} of $\nL$.

There is an exact sequence
\[ \xymatrix{ 0 \ar[r] & \Sp(\nL) \ar[r] & \GSp(\nL) \ar[r]^\lambda & \Gm \ar[r] & 0, } \]
where $\lambda$ is the similtude factor.
Let $\rho$ denote the standard representation of $\GSp(\nL)$ on $\nL$.

Let now $S=\spec(\Zpp)$. We define $p$-integral pure Shimura data $\nH_g$ associated with $\nL$
(it depends, up to isomorphism, only on the rank $2g$ of $\nL$) by
$\nP_{\nH_g} := \GSp(\nL)$ and $\nX_{\nH_g}$ to be {\em the} conjugacy class of morphisms $h: \SSS \rightarrow \GSp(\nL_\R)$,
such that they give pure Hodge structures of type $(-1,0),(0,-1)$ on $\nL$ and
which are {\bf polarized}, i.e. such that
the form $\langle \cdot, h(i) \cdot \rangle$ is symmetric and (positive or negative) definite.

If $\nL$ is the {\em zero} sheaf on $S$ we define $\Sp(\nL) := 1$ and $\GSp(\nL):=\G_{m,S}$, and we let $\nX_{\nH_0}$ be the 2 point set of isomorphisms $\Z \rightarrow \Z(1)$ with
the nontrivial action of $\Gm(\R)$. This defines a ($p$-integral) Shimura datum
$\nH_0$. We understand the morphism $\lambda: \GSp(0) \rightarrow \Gm$ be the identity.
\end{PAR}

\begin{PAR}\label{PSP}
Let $\nL_0$ be a locally free sheaf on $S$ as before with unimodular sympletic form (possibly 0).

Let $\nI$ be another locally free sheaf on $S$ (also possibly 0).
Let $\nI^*$ be its dual. 
We define the semi-direct product
\[ \PSp(\nL_0, \nI) := \nGa(\nL_0 \otimes \nI) \rtimes \GSp(\nL_0). \]
It is of type (P). Here $\GSp(\nL_0)$ acts via the product of the standard representation on $\nL_0$ with the
trivial one on $\nI$.
It acts on $\nL := \nL_0 \oplus \nI^*$ as follows: The action of $\GSp$ is given by the standard
representation on $\nL_0$ and trivial action on $\nI^*$, $X = v' \otimes u' \in \nL_0 \otimes \nI$ 
acts, considered as element of the Lie algebra, as
\[ X(v, u^*) = ((u^* u) v',0) \]
and $\nGa(\nL_0 \otimes \nI)$ via the exponential $\exp(X)(v, u^*) = (v, u^*) + X(v, u^*)$, which in this case makes sense over any $S$.
This is compatible with the structure of semi-direct product.
Let $S=\spec(\Zpp)$ from now on. The action fixes a weight filtration
\[ W_i(\nL_0 \oplus \nI^*) := \begin{cases} 0 & i \le -2 \\
 \nL_0 & i=-1 \\
 \nL_0 \oplus \nI^*  & i \ge 0.
\end{cases} \]

We have the unipotent extension $\nH_{g_0}[0,\nI\otimes\nL_0]$
of the $p$-integral pure Shimura data $\nH_{g_0}$.
Its underlying $\nP$ is $\PSp(\nL_0, \nI)$.
Its underlying $\nX$ (if $g_0 \not=0$ may be identified with {\em the} conjugacy class of morphisms $h: \SSS \rightarrow \PSp(\nL_{0,\R}, \nI_\R)$,
such that they give mixed Hodge structures of type $(-1,0),(0,-1),(0,0)$ with respect to the weight filtration $W$,
such that on $W_{-1}$ they are polarized.
(We have $\PSp(0, \nI) = \GSp(0)$ and $\PSp(\nL_0, 0) = \GSp(\nL_0)$).
\end{PAR}

\begin{PAR}\label{USP}
Let $\nL_0$ be a locally free sheaf on $S$ with unimodular symplectic form as before (possibly 0).
Consider the following extension of Abelian unipotent groups ({\bf Heissenberg group})
\[ \xymatrix{ 0 \ar[r] & \nGa((\nI \otimes \nI)^s) \ar[r] & \WSp \ar[r] & \nGa(\nL_0 \otimes \nI) \ar[r] & 0 } \]
defined (if $2$ is invertible in $S$) by the following group law 
\[ (u_1 u_2, v u_3) (u_1' u_2', v' u_3') = (u_1 u_2 + u_1' u_2' {\bf + \frac{1}{2}\langle v,v'\rangle (u_3 u_3'+u_3' u_3)
 }, v u_3 + v'u_3'). \]

There is an action of $\GSp(\nL_0)$ on $\WSp$ given by $\lambda$ acting on $(\nI\otimes \nI)^s$ by scalars and standard representation tensored with the trivial one on $\nL_0 \otimes \nI$.

We define the semi-direct product
\[ \USp(\nL_0, \nI) := \WSp \rtimes \GSp(\nL_0). \]
It is again of type (P).

Denote $\mathfrak{I} := \nI \oplus \nI^* $. Choose on $\nL_0 \oplus \mathfrak{I}$ the symplectic form
\[ \langle v_1,u_1,u_1^* ; v_2,u_2,u_2^* \rangle := \langle v_1, v_2 \rangle + u_2^*u_1 - u_1^* u_2.  \]

We define an action of $\USp(\nL_0, \nI)$ on $\nL:=\nL_0 \oplus \mathfrak{I}$ as follows: The action of $\GSp(\nL_0)$ is given by the standard
representation on $\nL_0$, trivial representation on $\nI^*$ and $\lambda$ acting by scalars on $\nI$.
$X = v' \otimes u' \in \nL_0 \otimes \nI)$ acts, considered as element of the Lie algebra, as
\[ X(v, u, u^*) = ( (u^* u') v' , \langle v',v \rangle u', 0), \]
and $X = u_1 \otimes u_2 \in \nI \otimes \nI$ acts by
\[ X(v, u, u^*) = (0,(u^* u_1) u_2,0), \]
and $\nGa(\cdots)$ acts via the exponential $\exp(X)(v, u^*) = (v, u^*) + X(v, u^*) + \frac{1}{2}X^2(v, u^*)$. This is compatible with the group structure given above.

If 2 is not invertible in $S$, we assume that there is an isomorphism $\nL_0 \cong \nL_{00} \oplus \nL_{00}^*$, such
that the alternating form is given by the standard one. In this case we let the groups
$\nGa(\nL_{00} \otimes \nI)$, $\nGa(\nL_{00}^* \otimes \nI)$ and $\nGa((\nI \otimes \nI)^s)$ operate as above via exponential 
(it terminates after the second step). One checks 
that
\[ \nGa((\nI \otimes \nI)^s) \times \nGa(\nL_{00} \otimes \nI) \times \nGa(\nL_{00}^* \otimes \nI) \rightarrow \GSp(\nL) \]
is a closed embedding onto a subgroup scheme.
Explicitly the group law is given by
\[  (X_1,X_2,X_3)(X_1',X_2',X_3') = (X_1+X_1'{\bf+\langle X_3,X_2'\rangle - \langle X_2',X_3\rangle},X_2+X_2',X_3+X_3'). \]

This is in any case compatible with the structure of semi-direct product.
Let $S=\spec(\Zpp)$ from now on. The action fixes a weight filtration
\[ W_i(\nL) := \begin{cases} 0 & i \le -3, \\
 \nI & i=-2, \\
 \nL_0 \oplus \nI & i=-1, \\
 \nL  & i \ge 0.
\end{cases} \]

We have the unipotent extension $\nH_{g_0}[(\nI\otimes\nI)^s,\nI\otimes\nL_0]$
of the $p$-integral pure Shimura data $\nH_{g_0}$.
Its underlying $\nP$ is $\USp(\nL_0, \nI)$.
Its underlying $\nX$ (if $g_0 \not=0$ may be identified with {\em the} conjugacy class of morphisms $h: \SSS \rightarrow \USp(\nL_{0,\R}, \nI_\R)$,
such that they give (polarized)
mixed Hodge structures on $\nL_\R$ of type 
$(-1,-1)$, $(-1,0)$, $(0,-1)$, $(0,0)$ with respect to the weight filtration above.

The action of $\USp(\nL_0, \nI)$ on $\nL=\nL_0 \oplus \mathfrak{I}$ defined above induces a closed embedding
\[ \USp(\nL_0, \nI) \hookrightarrow \GSp(\nL). \]
The image is precisely the subgroup scheme fixing the weight filtration and $\nI^*=\gr_{0}(\nL_0 \oplus \mathfrak{I})$ point-wise.
\end{PAR}

\begin{DEF}\label{SYMPLECTICSHIMURADATA}
For any saturated submodule $\nU' \subseteq (\nI \otimes \nI)^s$ we call the $p$-integral mixed Shimura data
\[ \nH_{g_0}[(\nI \otimes \nI)^s, \nI \otimes \nL_0] / \nGa(\nU') \]
a Shimura datum of {\bf symplectic type}.
\end{DEF}
Note that any of the $p$-integral mixed Shimura data in this section is of this form.

\begin{PROP}[{\cite[Theorem 2.5.4]{Thesis}}]\label{SYMPLECTICBOUNDARYCOMP}
There is  a bijection
\[  \{ \text{ isotropic subspaces $\nI_\Q$ of $\nL_\Q$ }\} \cong \{ \text{ boundary components $\nSDB$ of $\nH_g$} \}. \]

Let $\nI_\Q$ be an isotropic subspace of dimension $g-g_0$, $\nI=\nI_{\Zpp}$ the associated saturated sublattice and $\nSDB \Longrightarrow \nH_g$ the corresponding boundary component.

For any choice of
splitting $\nL \cong \nL_0 \oplus \mathfrak{\nI}$ (where $\mathfrak{\nI} = \nI^* \oplus \nI$, as usual, with natural symplectic form),
there is an isomorphism 
\[ \nH_{g_0}[(\nI \otimes \nI)^s, \nI \otimes \nL_0] \cong \nSDB, \]
whose underlying morphism is given by the representation of $\PSp(\nL_0, \nI)$ on
$\nL = \nL_0 \oplus \nI \oplus \nI^*$ given above (it depends on the choice of splitting).
\end{PROP}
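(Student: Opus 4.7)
My plan is to reduce the classification of rational boundary components of $\nH_g$ to that of admissible $\Q$-parabolic subgroups of $\GSp(\nL_\Q)$ via Definition/Theorem~\ref{DEFBOUNDARYCOMP}, identify the latter with (possibly zero) isotropic subspaces $\nI_\Q \subseteq \nL_\Q$, and then compute the boundary Shimura datum attached to each such parabolic using Proposition~\ref{BOUNDARYCOMPONENTS}.

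For the parabolic classification, $\Sp(\nL_\Q)$ is $\Q$-simple for $g \ge 1$, so Definition~\ref{DEFADMISSIBLEPARABOLIC} says an admissible $\Q$-parabolic is either $\GSp(\nL_\Q)$ itself (the improper case $\nI_\Q=0$) or a maximal proper $\Q$-parabolic. Standard symplectic linear algebra identifies the maximal proper $\Q$-parabolics with the stabilizers $Q_{\nI_\Q}$ of nonzero isotropic subspaces $\nI_\Q \subseteq \nL_\Q$; the assignment $\nI_\Q \mapsto Q_{\nI_\Q}$ is injective since $\nI_\Q$ is recovered as the unique minimal nonzero subspace preserved by the unipotent radical of $Q_{\nI_\Q}$ on $\nL_\Q$.

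To realize the asserted isomorphism for a fixed $\nI_\Q$, I fix a splitting $\nL = \nL_0 \oplus \nI \oplus \nI^*$ compatible with the symplectic form (so that $\nL_0 \cong \nI^\perp/\nI$ carries the induced symplectic form). For $x \in \nX_{\nH_g}$, I construct $\omega_x \colon H_{0,\C} \to \GSp(\nL_\C)$ of \ref{BOUNDARYCOMPONENTS}(2) as follows: the $\SSS$-factor of $H_0$ acts via the induced symplectic Hodge datum on $\nL_{0,\R}$, extended by scalars on $\nI$ and trivially on $\nI^*$, while the $\GL_{2,\R}$-factor acts on the triple $(\nI_\C, \nL_{0,\C}, \nI^*_\C)$ through the upper-triangular embedding attached to the long-root cocharacter $\lambda$ of \ref{LONGROOTS}. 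A direct calculation verifies the three axioms of \ref{BOUNDARYCOMPONENTS}(2); in particular the weight cocharacter $\omega_x \circ h_\infty \circ w$ assigns weights $-2,-1,0$ to $\nI, \nL_0, \nI^*$ respectively and is conjugate to $\mu \cdot \lambda$. By inspection of the actions given in~\ref{USP}, the smallest normal $\Q$-subgroup of $Q_{\nI_\Q,\Q}$ through which $\omega_x \circ h_\infty$ factorizes is precisely the image of $\USp(\nL_{0,\Q}, \nI_\Q)$, and the resulting orbit $\nX_\nSDB$ is the full conjugacy class of mixed Hodge structures on $\nL_\R$ of type $(-1,-1),(-1,0),(0,-1),(0,0)$ polarized on $W_{-1}$ — exactly the $\nX$-datum of $\nH_{g_0}[(\nI \otimes \nI)^s, \nI \otimes \nL_0]$ in~\ref{USP}. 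Taking the schematic closure in $\GSp(\nL)$, which lies in a group of type (P) by~\cite[Theorem 2.4.5]{Thesis}, upgrades this to an integral isomorphism. The connectedness of this conjugacy class yields that the boundary component is unique for each $\nI_\Q$, completing the bijection.

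I expect the main obstacle to be the minimality step in the identification of $\nP_{\nSDB,\Q}$: one must show that the normal closure of $\omega_x \circ h_\infty$ inside $Q_{\nI_\Q,\Q}$ is neither a proper subgroup of $\USp(\nL_{0,\Q}, \nI_\Q)$ missing the Heisenberg center $(\nI \otimes \nI)^s$ of $\WSp$, nor a larger subgroup engulfing the full unipotent radical of $Q_{\nI_\Q}$. This requires tracking commutators between the $\SSS$-image and the $\GL_{2,\R}$-image of $H_0$ inside the Heisenberg-type unipotent, using the commutator formula displayed in~\ref{USP}. A secondary technical point is the passage from the $\Q$-rational isomorphism to the integral one, for which one needs the saturation of $\nI$ in $\nL$ over $\Zpp$ to guarantee that the chosen splitting and the resulting closed embedding $\USp(\nL_0, \nI) \hookrightarrow \GSp(\nL)$ are defined over $\Zpp$.
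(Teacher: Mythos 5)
Your overall route is the intended one: the paper itself only cites the thesis for this proposition, but its detailed proof of the orthogonal analogue \ref{ORTHBOUNDARYCOMP} follows exactly your plan (admissible $\Q$-parabolics via \ref{DEFADMISSIBLEPARABOLIC}/\ref{BOUNDARYCOMPONENTS} $\leftrightarrow$ isotropic subspaces, computation of the weight cocharacter $\lambda\cdot\mu$, identification of $\nP_\nSDB$ as the smallest normal subgroup through which $\omega_x\circ h_\infty$ factors, and matching of the orbit with the $\nX$ of a unipotent extension). One of your flagged worries is also easier than you make it: by \ref{USP} the image of $\USp(\nL_0,\nI)\hookrightarrow\GSp(\nL)$ is exactly the subgroup of $Q$ fixing the weight filtration and acting trivially on $\gr_0=\nI^*$, i.e. it is the kernel of $Q\to\GL(\gr_0(\nL))$, hence normal in $Q$, and it already contains the full unipotent radical $W_Q=\WSp$ (whose Lie algebra is $(\nI\otimes\nL_0)\oplus(\nI\otimes\nI)^s$). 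So the normal closure of the image of $\omega_x\circ h_\infty$ can never be ``too big''; only the ``not too small'' direction (that it reaches the Heisenberg centre and all of $\GSp(\nL_0)$) needs the commutator computation you describe.

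The genuine gap is in your last step, where you claim uniqueness of the boundary component for each $\nI_\Q$ from ``connectedness of this conjugacy class'' and identify $\nX_\nSDB$ with a conjugacy class of mixed Hodge structures. By \ref{DEFBOUNDARYCOMP}, $\nX_\nSDB$ is a $\nP_\nSDB(\R)\nU_\nSDB(\C)$-orbit in $\pi_0(\nX_{\nH_g})\times\Hom(\SSS_\C,\nP_{\nSDB,\C})$ meeting the image of $x\mapsto([x],\omega_x\circ h_\infty)$, and a priori one parabolic can carry several such orbits, hence several boundary components (this really happens in degenerate situations, e.g. the two data $\nS(\nL)_\pm$ for signature $(0,2)$); for the asserted bijection you must prove the image lies in a single orbit. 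Connectedness is neither available nor relevant: $\nX_{\nH_g}$ and $\nX_\nSDB$ each have two components. What does the work is the action on $\pi_0(\nX_{\nH_g})$ — e.g. that $\GSp(\nL_0)(\R)\subset\USp(\nL_0,\nI)(\R)$ (and $\Gm(\R)$ when $g_0=0$) contains elements whose similitude factor is negative, which interchange the two components — exactly parallel to the paper's analysis of $T_1(\R)$, resp. of the stabilizer of $h_0$, in the proof of \ref{ORTHBOUNDARYCOMP}, combined with transitivity on the $\Hom$-part. Relatedly, identifying the orbit with $\nX_{\nH_{g_0}[(\nI\otimes\nI)^s,\nI\otimes\nL_0]}$ as ``the conjugacy class of mixed Hodge structures of type $(-1,-1),(-1,0),(0,-1),(0,0)$ polarized on $W_{-1}$'' is only licensed by \ref{USP} when $g_0\neq 0$; for Lagrangian $\nI_\Q$ (so $g_0=0$) the $\nX$-datum of the unipotent extension involves the two-point set $\nX_{\nH_0}$, which is invisible to the morphisms $h$, and you must carry the $\pi_0$-factor through the comparison as in \ref{UNIPOTENTFIBRECONVERSE}. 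With these points repaired, your argument coincides with the paper's.
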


\begin{KOR}\label{SYMPLECTICBOUNDARYCOMP2}
Every boundary component of ($p$-integral) mixed Shimura data of symplectic type is again
of symplectic type.
\end{KOR}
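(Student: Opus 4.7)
The plan is to reduce to Proposition \ref{SYMPLECTICBOUNDARYCOMP} by factoring the given symplectic type datum through an honest boundary component of some $\nH_g$. Write $\nSD = \widetilde{\nSD}/\nGa(\nU')$ with $\widetilde{\nSD} := \nH_{g_0}[(\nI \otimes \nI)^s, \nI \otimes \nL_0]$ as in Definition \ref{SYMPLECTICSHIMURADATA}. The canonical projection $\pi: \widetilde{\nSD} \to \nSD$ is the identity on $\nG^{ad}$, so by Definition \ref{DEFADMISSIBLEPARABOLIC} admissible $\Q$-parabolics correspond on both sides. Combined with Proposition \ref{FUNCTORIALITYBOUNDARYCOMP}, this yields a natural bijection between boundary components of $\widetilde{\nSD}$ and those of $\nSD$; moreover, given $\nSDB \Longrightarrow \nSD$, the corresponding lift $\widetilde{\nSDB} \to \nSDB$ should be surjective with kernel equal to the image of $\nU'$ inside $\nU_{\widetilde{\nSDB}}$.

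Next I would identify $\widetilde{\nSD}$ itself as a boundary component of $\nH_g$ for $g := g_0 + \dim \nI$: pick any lattice $\nL$ of rank $2g$ containing $\nI$ as a saturated isotropic sublattice with quotient $\nL_0$, and apply Proposition \ref{SYMPLECTICBOUNDARYCOMP}. Then Proposition \ref{BOUNDARYCOMPONENTS2}(3) shows that $\widetilde{\nSDB}$ is naturally a boundary component of $\nH_g$ as well, and a second application of Proposition \ref{SYMPLECTICBOUNDARYCOMP} provides an isomorphism
\[ \widetilde{\nSDB} \;\cong\; \nH_{g_0'}[(\nI' \otimes \nI')^s, \nI' \otimes \nL_0'] \]
for a suitable saturated isotropic $\nI' \supseteq \nI$ and corresponding $\nL_0'$.

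Finally, the inclusion $\nI \hookrightarrow \nI'$ is saturated, hence so is the induced inclusion of central unipotents $(\nI \otimes \nI)^s \hookrightarrow (\nI' \otimes \nI')^s$; in particular the image of $\nU'$ remains saturated in $(\nI' \otimes \nI')^s$. Combined with the kernel computation from the first paragraph, this gives
\[ \nSDB \;\cong\; \nH_{g_0'}[(\nI' \otimes \nI')^s, \nI' \otimes \nL_0']\,/\,\nGa(\nU'), \]
which fits Definition \ref{SYMPLECTICSHIMURADATA} and completes the reduction.

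The main obstacle is the precise assertion in the first paragraph that the kernel of $\widetilde{\nSDB} \to \nSDB$ equals the image of $\nU'$ (rather than a strictly larger subgroup of $\nU_{\widetilde{\nSDB}}$). To verify this, one has to combine the explicit descriptions of $\USp(\nL_0, \nI)$ and $\PSp(\nL_0, \nI)$ from paragraphs \ref{USP} and \ref{PSP} with the uniqueness statement of Proposition \ref{FUNCTORIALITYBOUNDARYCOMP}, applied to the quotient morphism $\pi$ and to the embedding realizing $\widetilde{\nSDB}$ as a boundary component of $\widetilde{\nSD}$; the explicit matrix actions in \ref{USP} show that no extra central element is killed by passing to $\nSDB$.
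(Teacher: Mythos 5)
Your argument is correct and is essentially the proof the paper has in mind: it states the corollary without further argument as an immediate consequence of Proposition \ref{SYMPLECTICBOUNDARYCOMP}, the transitivity of boundary components (Proposition \ref{BOUNDARYCOMPONENTS2}, 3.), and the compatibility of forming boundary components with the central unipotent quotient by $\nGa(\nU')$, which is exactly the chain you spell out. The two points you flag resolve formally: the kernel claim follows from the uniqueness of $\omega_x\circ h_\infty$ together with $\nGa(\nU')\subseteq \nU_{\widetilde{\nSD}}\subseteq \nP_{\widetilde{\nSDB}}$, and the saturation of $\nU'$ in $\nU_{\widetilde{\nSDB}}$ follows from the chain of saturated inclusions $\nU'\subseteq \nU_{\widetilde{\nSD}}\subseteq \nU_{\widetilde{\nSDB}}$ without needing the explicit matrix actions.
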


\begin{PAR}\label{FUNCTORIALITYSYMPLECTICSD}

We have quotient maps of mixed Shimura data.
\[ \nH_{g_0}[(\nI\otimes \nI)^s,\nI \otimes \nL_0] \rightarrow \nH_{g_0}[0,\nI \otimes \nL_0] \rightarrow \nH_{g_0}. \]
The first gives an isomorphism
\[ \nH_{g_0}[(\nI\otimes \nI)^s,\nI \otimes \nL_0]/\nU \cong \nH_{g_0}[0,\nI \otimes \nL_0] \]
(where $\nU=\nGa((\nI\otimes \nI)^s)$), and the second gives an isomorphism
\[ \nH_{g_0}[0,\nI \otimes \nL_0]/\nW \cong \nH_{g_0},\]
(where $\nW=\nV=\nGa(\nI \otimes \nL_0)$).
\end{PAR}

\begin{LEMMA}\label{SYMPLECTICGSTRUCTURES}
Let $S$ be a scheme with $H^1_{et}(S, \Gm)=1$ and $\nM$ a locally free sheaf on $S$.
\begin{enumerate}
\item A $(\GSp(\nL), \nL)$-structure on $\nM$ is a unimodular symplectic form on $\nM$ (i.e. inducing an iso $\nM \cong \nM^*$)
up to multiplication by $H^0(S, \OO_S^*)$.

\item A $(\PSp(\nL_0,\nI), \nL)$-structure on $\nM$ is a saturated filtration
\[ W_0=\nM \supset W_{-1} \supset W_{-2}=0 \]
with unimodular symplectic form on $W_{-1}$ up to multiplication by $H^0(S, \OO_S^*)$
and an isomorphism $\rho: W_0/W_{-1} \cong \nI^*_S$.

\item A $(\USp(\nL_0,\nI), \nL)$-structure on $\nM$ is a unimodular symplectic form on $\nM$
up to multiplication by $H^0(S, \OO_S^*)$, a saturated filtration
\[ W_0=\nM \supset W_{-1} \supset W_{-2} \supset W_{-3}=0 \]
such that $W_{-2}$ is isotropic and $W_{-1} = (W_{-2})^\perp$ and an isomorphism $\rho: W_0/W_{-1} \cong \nI^*_S$.
\end{enumerate}
\end{LEMMA}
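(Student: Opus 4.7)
The proof plan is to unwind each of the three $(G, \nL)$-structures. By a $(G, \nL)$-structure on $\nM$ I mean a $G$-torsor $T$ on $S_{et}$ together with an isomorphism $T \times^G \nL \cong \nM$; equivalently, étale-local isomorphisms $\nM|_{U_i} \cong \nL|_{U_i}$ with transition cocycle valued in $G$, modulo left $G$-modifications. In each case I would identify what data on $\nM$ are preserved by transition functions in $G$, and then check that, conversely, such data determine the $G$-torsor. The only cohomological input used is the hypothesis $H^1_{et}(S, \Gm) = 1$, which kills the similitude obstruction.

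For (1), I would apply nonabelian $H^1$ to the exact sequence $1 \to \Sp(\nL) \to \GSp(\nL) \xrightarrow{\lambda} \Gm \to 1$. The image of a $\GSp(\nL)$-torsor $T$ in $H^1_{et}(S, \Gm)$ is the similitude line bundle and vanishes by hypothesis. A trivialization of it lifts $T$ canonically to an $\Sp(\nL)$-torsor, and an $\Sp(\nL)$-torsor with an isomorphism of its associated $\nL$-bundle onto $\nM$ is tautologically a unimodular symplectic form on $\nM$. Two trivializations of the similitude bundle differ by an element of $H^0(S, \Gm) = H^0(S, \OO_S^*)$, which rescales the symplectic form, giving precisely the stated indeterminacy.

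For (2), I would use that $\PSp(\nL_0, \nI) = \nGa(\nL_0 \otimes \nI) \rtimes \GSp(\nL_0)$ preserves the filtration $0 \subset \nL_0 \subset \nL_0 \oplus \nI^* = \nL$ and acts trivially on the quotient $\nI^*$. From a $(\PSp, \nL)$-structure one reads off a saturated filtration $W_{-1} \subset W_0 = \nM$, a canonical isomorphism $W_0/W_{-1} \cong \nI^*_S$ (from the trivial action on the quotient), and a $(\GSp(\nL_0), \nL_0)$-structure on $W_{-1}$, hence by (1) a unimodular symplectic form up to a global unit. Conversely, from this data one Zariski-locally splits the filtration (possible since $W_0/W_{-1}$ is locally free) and trivializes the $\GSp(\nL_0)$-torsor on $W_{-1}$, obtaining local isomorphisms $\nM \cong \nL$ whose transition functions preserve filtration, quotient trivialization and the scalar-ambiguous symplectic form on $W_{-1}$, hence lie in $\PSp(\nL_0, \nI)$. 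Two splittings differ by an element of $\nGa(\nL_0 \otimes \nI) \subset \PSp$, so the resulting $\PSp$-torsor is well-defined up to isomorphism.

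Case (3) I would handle by invoking the last sentence of \ref{USP}: the embedding $\USp(\nL_0, \nI) \hookrightarrow \GSp(\nL)$ identifies $\USp$ with the stabilizer in $\GSp(\nL)$ of the weight filtration $W_\bullet(\nL)$ that fixes $W_0/W_{-1} = \nI^*$ pointwise. Thus a $(\USp(\nL_0, \nI), \nL)$-structure on $\nM$ is the same as a $(\GSp(\nL), \nL)$-structure together with a reduction to this stabilizer, which unwinds to a unimodular symplectic form on $\nM$ up to a unit (by case (1) applied to $\nL$), a saturated filtration $0 \subset W_{-2} \subset W_{-1} \subset W_0 = \nM$ with $W_{-2}$ isotropic and $W_{-1} = (W_{-2})^\perp$, and an isomorphism $\rho: W_0/W_{-1} \cong \nI^*_S$ coming from the pointwise fixing of $\gr_0$. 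The main obstacle in (2) and (3) is the descent argument showing that local isomorphisms $\nM|_U \cong \nL|_U$ built from arbitrary splittings and trivializations really glue into a global $G$-torsor; what rescues this is that all indeterminacies land in the unipotent radical, Heisenberg subgroup, or central $\Gm$ of $G$, so the resulting cocycles are tautologically $G$-valued and the only nontrivial $H^1$-obstruction, namely $H^1_{et}(S, \Gm)$, vanishes by hypothesis.
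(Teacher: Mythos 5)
The paper states this lemma without giving a proof (it is deferred to the standard theory of reductions of structure group, cf.\ the author's thesis), and your torsor-theoretic argument is exactly the intended one: identify the relevant data as what the transition cocycles preserve, use the vanishing of $H^1_{et}(S,\Gm)$ only to trivialize the similitude line bundle, and in case (3) invoke the identification, asserted at the end of \ref{USP}, of $\USp(\nL_0,\nI)$ with the stabilizer in $\GSp(\nL)$ of the weight filtration together with the pointwise action on $\gr_0$. The only step worth spelling out beyond what you wrote is the local normal form (existence, Zariski- or \'etale-locally, of a symplectic basis adapted to the isotropic subbundle $W_{-2}$ and to the splitting of the filtration), which is what makes the sheaf of structure-preserving isomorphisms $\nL_S\rightarrow\nM$ locally nonempty and hence a genuine torsor under the stabilizer group; this is standard and your proof is correct.
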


Similar statements are true for a $(\GSp(\nL)(R),\nL_R)$ (resp. $\dots$) structure on a local system.

\begin{LEMMA}\label{SIMPLEGSTRUCTURES}
Let $S$ be a scheme and $\nM$ a locally free sheaf on $S$.
Let $G = \nGa(\nU_\Zpp) \rtimes \Gm$ acting on $\Zpp \oplus \nU_\Zpp^*$ as follows:
$\Gm$ acts by scalar multiplication on $\Zpp$ and $\Ga(\nU_\Zpp)$ acts via
$u(x, u^*) = (x-u^*u, u^*)$.
Let $S$ be a scheme over $\Zpp$ and $\nM$ be a locally free sheaf on $S$.
\begin{enumerate}
\item A $(G,\Zpp \oplus \nU_\Zpp^*)$-structure on $\nM$ is a 
saturated filtration  
\[ W_0=\nM \supset W_{-1} = W_{-2} \supset W_{-3}=0 \]
with isomorphism
\[ \rho: W_0/W_{-1} \cong \nU^*_S \]
and such that $W_{-1}$ is locally free of rank 1.
\end{enumerate}
\end{LEMMA}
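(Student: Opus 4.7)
The plan is to identify $(G, \Zpp \oplus \nU_\Zpp^*)$-structures on $\nM$—that is, isomorphisms $\phi \colon \nM \cong \Zpp_S \oplus \nU_{\Zpp,S}^*$ available \'etale-locally, taken modulo the $G$-action—with the filtered data described in the statement, following the blueprint of Lemma \ref{SYMPLECTICGSTRUCTURES} but in a simpler setting since there is no unimodular form to track.

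The first step is the structural observation that the line $\Zpp \oplus 0$ is preserved by every element of $G$, and that the induced action on the quotient $(\Zpp \oplus \nU_\Zpp^*)/(\Zpp \oplus 0) = \nU_\Zpp^*$ is trivial. Indeed, $\Gm$ scales only $\Zpp$ and fixes $\nU_\Zpp^*$, while an element $u \in \nGa(\nU_\Zpp)$ acts by $(x,u^*) \mapsto (x - u^*(u), u^*)$. Consequently, from $\phi$ one extracts canonically the sub-bundle $W_{-1} := \phi^{-1}(\Zpp_S \oplus 0)$, locally free of rank $1$, and the isomorphism $\rho \colon \nM/W_{-1} \cong \nU^*_S$ induced on the quotient. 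Setting $W_0 := \nM$, $W_{-2} := W_{-1}$, $W_{-3} := 0$ produces the saturated filtration claimed.

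Conversely, given a filtration of the prescribed shape together with $\rho$, I would reconstruct $\phi$ \'etale-locally by choosing a trivialization $\sigma \colon W_{-1} \cong \OO_S$ (possible since $W_{-1}$ is locally free of rank $1$) and a splitting of the surjection $\nM \twoheadrightarrow \nU^*_S$ determined by $\rho$. Two trivializations $\sigma$ differ by an element of $\OO_S^*$, which acts on $\phi$ through the $\Gm$-factor of $G$; two splittings differ by a homomorphism $\nU^*_S \to W_{-1}$, which, after identifying $W_{-1} \cong \OO_S$ via $\sigma$, is given by an element of $\nU_\Zpp \otimes \OO_S$ acting on $\phi$ precisely via the prescribed $\nGa(\nU_\Zpp)$-formula. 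Hence $\phi$ is well-defined modulo $G$, and the two constructions are inverse to one another by direct inspection.

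I do not anticipate a serious obstacle. The only point requiring a moment of care is the consistency of the semi-direct product action: one must verify that the pointwise formulas for $\Gm$ and $\nGa(\nU_\Zpp)$ on $\Zpp \oplus \nU_\Zpp^*$ assemble into a genuine $G$-action, which forces $\Gm$ to act on $\nGa(\nU_\Zpp)$ by the scalar weight dictated by the consistency of $\lambda u \lambda^{-1}$—a short calculation. The rest is mechanical unwinding, entirely parallel to the last case of Lemma \ref{SYMPLECTICGSTRUCTURES}.
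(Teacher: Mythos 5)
Your argument is correct and is the expected one. The paper itself offers no written proof of Lemma \ref{SIMPLEGSTRUCTURES} (it is stated, like the last item of Lemma \ref{SYMPLECTICGSTRUCTURES}, as a routine verification), so there is no "official" argument to compare against; but what you do — identifying the canonical $G$-invariant line $\Zpp \oplus 0$ and the trivial action on its quotient as the invariants that survive to $\nM$, then reconstructing the \'etale-local trivialization from a trivialization of $W_{-1}$ plus a splitting, and checking that the two ambiguities ($\OO_S^*$ for the trivialization, $\Hom(\nU^*_S, W_{-1}) \cong \nU_S$ for the splitting) are exactly absorbed by $\Gm$ and $\nGa(\nU_\Zpp)$ respectively — is precisely the unwinding intended. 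Your sanity check on the semidirect-product structure ($\lambda u \lambda^{-1}$ acting as $\lambda u$) is also correct. One cosmetic point: the filtration in the statement is indexed with $W_{-1}=W_{-2}$ to match the weight filtration on $\Zpp \oplus \nU^*$ as a representation of $\nH_0[\nU,0]$ (the $\Zpp$-line has weight $-2$, the quotient weight $0$, cf. \ref{PSP}, \ref{USP}); the degenerate equality $W_{-1}=W_{-2}$ carries no extra content and your construction handles it tacitly, but it is worth saying why those indices appear if you want the statement to read smoothly alongside Lemma \ref{SYMPLECTICGSTRUCTURES}.
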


\begin{PAR}
Clearly, given two different representations $\nL, \nL'$ of a group scheme $G$, there is a 1:1 ``change of respresentation correspondence'' between
$G$-structures associated with those representations. For a translation of a certain such change of representation correspondence of $\USp(0, \nI)$ to 1-motives (cf. \ref{H0REPR}), we need a fairly explicit construction of this correspondence 
in terms of linear algebra.

Let $S$ be a $\Zpp$-scheme.
Let $G = \USp(0, \nI)$ and $\nL$ its standard representation. Let $\nM$ be
a locally free sheaf on $S$ with $(G, \nL)$-structure.
We may associate with it a $(G, \Zpp \oplus \nU_\Zpp^*)$-structure $\nM''$ by the following (local) procedure:
Define $\nM'$ to be the quotient of $\nM \otimes \nI^*$ by the kernel of contraction $W_{-1}\otimes \nI^* \cong (\nI \otimes \nI^*)_S \rightarrow \OO_S$ (the isomorphism is defined only up to scalars).  Claim: The set $\Lambda^2 \nI^*$ has a unique lift to $\nM'$ (under the map $\nM \otimes \nI^* \rightarrow (\nM/W_{-1}\nM) \otimes \nI^* = \nI^* \otimes \nI^*$) defined as follows: Let $i_1^* \wedge i_2^*$ be in  $\Lambda^2 \nI^*$. 
We choose lifts $\widetilde{i}_1^*, \widetilde{i}_2^* \in \nM$ with the property that $\langle \widetilde{i}_1^*, \widetilde{i}_2^* \rangle = 0$ .
The projection of the element $\widetilde{i}_1^* \otimes i_2^* - \widetilde{i}_2^* \otimes i_1^*$ in $\nM'$ is then well-defined 
(for this consider a difference $(\widetilde{i}_1^*-\widetilde{\widetilde{i}}_1^*) \otimes i_2^* - (\widetilde{i}_2^*-\widetilde{\widetilde{i}}_2^*) \otimes i_1^*$. 
Both summands lie already in $W_{-1}\nM \otimes \nI^*$ and, since $W_{-1}\nM$ is isotropic, its contraction may be computed by the expression:
$\langle \widetilde{i}_1^*-\widetilde{\widetilde{i}}_1^* , \widetilde{i}_2^* \rangle - \langle \widetilde{i}_2^*-\widetilde{\widetilde{i}}_2^*, \widetilde{\widetilde{i}}_1^* \rangle$. This is zero because 
of the choice of lifts.)
We define $\nM'' = \nM'/\Lambda^2(\nI^*)$ by means of this lift. We get an inherited filtration:
\[ 0 \subseteq W_{-1}\nM'' \subseteq \nM'' \]
such that  $W_{-1}\nM'' \cong \OO_S$ non-canonically and $\nM''/ W_{-1}\nM'' \cong \Sym^2(\nI^*)$ canonically. That is, glueing these local constructions, we get a 
$(G, \Zpp \oplus \nU_\Zpp^*)$-structure, where $\nU_\Zpp = (\nI\otimes \nI)^s$. The construction may be reversed. 
\end{PAR}

\begin{DEF}\label{HODGETYPE}
Let $\nSD$ be $p$-integral mixed Shimura data ($p$-MSD). $\nSD$ is called of {\bf Hodge type}, if
there is an embedding into a $p$-integral mixed Shimura datum of symplectic type (see \ref{SYMPLECTICSHIMURADATA} for the notation)
\[ \nSD \hookrightarrow \nH_{g_0}[(\nI\otimes\nI)^s,\nL_0 \otimes \nI]/\nGa(\nU'). \]

Also all ${}^K \nSD$ and ${}^K_\nRPCD \nSD$ are called of Hodge type.

A mixed Shimura datum $\nSD$ is called of {\bf Abelian type}, 
if there is a mixed Shimura datum of Hodge type $\nSDp$, a central isogeny
$\nP_\nSDp^{der} \rightarrow \nP_\nSD^{der}$ which induces an isomorphism of mixed Shimura data $\nSDp^{ad} \cong \nSD^{ad}$. 
\end{DEF}

\begin{PROP}
Let $\nSD$ be $p$-integral mixed Shimura data and $\nSDB$ a boundary component of
$\nSD$. If $\nSD$ is of Hodge (resp. Abelian) type then $\nSDB$ is either.
\end{PROP}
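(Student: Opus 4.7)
The plan is to handle the Hodge type case directly from functoriality of boundary components together with the fact that boundaries of symplectic data are symplectic, and then bootstrap to the Abelian type case using that admissible parabolics, and hence boundary components, depend only on the adjoint Shimura datum.

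For the Hodge type case, if $\nSD$ admits an embedding $\nSD \hookrightarrow \nSDp$ into a $p$-MSD of symplectic type (Def \ref{HODGETYPE}), then by Prop \ref{FUNCTORIALITYBOUNDARYCOMP} any boundary $\nSDB \Longrightarrow \nSD$ fits into a diagram with a boundary $\nSDBp \Longrightarrow \nSDp$ and an embedding $\nSDB \hookrightarrow \nSDBp$. Cor \ref{SYMPLECTICBOUNDARYCOMP2} says $\nSDBp$ is itself of symplectic type, so this embedding exhibits $\nSDB$ as Hodge type.

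For the Abelian type case, fix a Hodge type witness $\nSDp$ with central isogeny $\nP_{\nSDp}^{der}\to\nP_\nSD^{der}$ inducing an isomorphism $\nSDp^{ad}\cong\nSD^{ad}$. By Def \ref{DEFADMISSIBLEPARABOLIC}, admissible $\Q$-parabolics on either side are pulled back from $\Q$-parabolics of the common adjoint group, so a boundary $\nSDB\Longrightarrow\nSD$ coming from an admissible $Q\subseteq\nP_{\nSD,\Q}$ corresponds to a boundary $\nSDBp\Longrightarrow\nSDp$ coming from the admissible $Q'\subseteq\nP_{\nSDp,\Q}$ with the same image in the adjoint; by the Hodge type case already handled, $\nSDBp$ is of Hodge type.

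It remains to verify that $\nSDBp$ witnesses $\nSDB$ as Abelian type, i.e.\ that $\nP_{\nSDBp}^{der}\to\nP_{\nSDB}^{der}$ is a central isogeny inducing $\nSDBp^{ad}\cong\nSDB^{ad}$. The adjoint agreement is straightforward from functoriality of the $\omega_x\circ h_\infty$-construction (Def \ref{DEFBOUNDARYCOMP}), which on both sides reduces to the boundary of the common adjoint $\nSD^{ad}\cong\nSDp^{ad}$ cut out by the matched parabolic. The central-isogeny claim is more delicate: $\nP_{\nSDB,\Q}$ is characterized as the smallest normal $\Q$-subgroup of $Q$ through which $\omega_x\circ h_\infty$ factors, and the isogeny $\nP_\nSDp^{der}\to\nP_\nSD^{der}$ restricts to a central isogeny of the derived parts of $Q'$ and $Q$ because central kernels lie in every parabolic. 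The main obstacle — and the step I would spend most care on — is showing that the formation of minimal normal $\Q$-subgroups of a connected semisimple group supporting a given cocharacter commutes with this central-isogeny correspondence, so that $\nP_{\nSDBp}^{der}$ coincides with the preimage of $\nP_{\nSDB}^{der}$ under the isogeny; granted this, the Abelian type property of $\nSDB$ is immediate.
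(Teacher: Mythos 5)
Your Hodge type argument is clean and correct: Prop.~\ref{FUNCTORIALITYBOUNDARYCOMP} gives the embedding $\nSDB\hookrightarrow\nSDBp$ and Cor.~\ref{SYMPLECTICBOUNDARYCOMP2} gives that $\nSDBp$ is again of symplectic type, which together directly exhibit $\nSDB$ as Hodge type. The paper itself only cites \cite[Theorem 2.6.3]{Thesis} for this proposition, so there is no visible in-paper argument to compare against; your route is the natural one using the stated functoriality.

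For the Abelian type case, the reduction to the adjoint level is the right idea, but be careful with one point you pass over quickly: you write that the adjoint agreement ``reduces to the boundary of the common adjoint $\nSD^{ad}\cong\nSDp^{ad}$,'' but the adjoint of a boundary component is not the same thing as a boundary component of the adjoint --- $\nSD^{ad}$ is pure, and its boundary components carry their own unipotent parts, which are not $\nW_{\nSDB}^{ad}$. What is actually true, and what you should say, is that the reductive quotient $\nG_{\nSDB}^{ad}$ (and hence $\nSDB^{ad}$) is determined by the admissible parabolic $Q^{ad}$ in $\nG_\nSD^{ad}$ together with the adjoint image of the cocharacter $\lambda\cdot\mu$ from Prop.~\ref{BOUNDARYCOMPONENTS}, and these data are identified under $\nSD^{ad}\cong\nSDp^{ad}$; the central kernel of $\nP_{\nSDp}^{der}\to\nP_\nSD^{der}$ lies in $Z(\nP_{\nSDp}^{der})$ and therefore in $Q'$, so the isogeny restricts to an isogeny on the root-theoretically matched subgroups, which is what the central-isogeny claim for $\nP_{\nSDBp}^{der}\to\nP_{\nSDB}^{der}$ amounts to. The ``minimal normal subgroup supporting a cocharacter'' phrasing is slightly off-target because $Q$ is a parabolic rather than semisimple; phrase it instead at the level of roots of the Levi and the long-root cocharacter $\lambda$, where it is a purely root-theoretic statement invariant under central isogeny. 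With that reformulation I see no gap; the step you flag as delicate does hold.
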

\begin{proof}An easy extension of \cite[Theorem 2.6.3]{Thesis}.
\end{proof}

\section{Orthogonal Shimura data}

\begin{PAR}
Let $\nL$ be a $\Zpp$-lattice with unimodular quadratic form $Q_\nL$. 
We have the reductive group schemes $\SO(\nL)$ and $\GSpin(\nL)$.
We let $\SO(\nL)$ act on $\nL$ by its natural operation and $\GSpin(\nL)$ by $g v \mapsto g v g'$, where $'$ is the main involution of the Clifford algebra $\Cliff(\nL)$ of $\nL$. Note that these actions are {\em not} compatible
with the projection $l: \GSpin(\nL) \rightarrow \SO(\nL)$. The group $\GSpin(\nL)$ is equipped with a morphism $\lambda: \GSpin(\nL) \rightarrow \Gm$ whose kernel is the Spin group.
\end{PAR}

For the rest of this section we assume $p\not=2$.
Then there exists an orthogonal basis $\{ v_i \}$ of $\nL$, such that all $Q_\nL(v_i)$ are units.
We have the basis
$ v_{i_1} \cdots v_{i_j}, i_1 < \cdots < i_j, j \text{ even} $
of $\Cliff^+(\nL)$ and the basis
$ v_{i_1} \cdots v_{i_j}, i_1 < \cdots < i_j, j \text{ odd} $
of $\Cliff^-(\nL)$.
The trace of any basis element (acting by left multiplication on $\Cliff^+(\nL)$ or $\Cliff^-(\nL)$)
except 1 is 0. The trace of 1 is $2^{m-1}$ in any case.

\begin{LEMMA}\label{LEMMACLIFFORDSYMPLECTIC}
For an element $\delta \in \Cliff^+(\nL)^*$ with $\delta' = -\delta$, the form
\[ \langle x,y \rangle_\delta \mapsto \tr(x \delta y') \]
on $\Cliff^+(\nL)$ is symplectic, unimodular, and $\Spin(Q)$-invariant (resp. $\GSpin(Q)$-invariant up to scalar given by $\lambda$), where these groups act by left multiplication.
\end{LEMMA}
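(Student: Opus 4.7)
The plan is a direct three-part verification, each step using only the trace formula on $\Cliff^+(\nL)$ recalled just above the statement.

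\emph{Step 1 (alternating).} First I would set $z := x\delta x'$. Since $'$ is an anti-involution of $\Cliff(\nL)$ and $\delta' = -\delta$, we get $z' = (x')'\delta' x' = -x\delta x' = -z$. As the unit $1$ is fixed by $'$ but $z$ is anti-fixed, the component of $z$ along $1$ in the Clifford monomial basis must vanish; the trace formula (trace of every basis monomial except $1$ is zero, trace of $1$ is $2^{m-1}$) then immediately gives $\langle x, x\rangle_\delta = \tr(z) = 0$, so the form is alternating (hence symplectic, since $p \neq 2$).

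\emph{Step 2 (unimodularity).} Next I would check that the symmetric trace form $(x, y) \mapsto \tr(xy)$ on $\Cliff^+(\nL)$ is unimodular: in the orthogonal monomial basis $v_I = v_{i_1}\cdots v_{i_j}$ with $|I|$ even, $v_I v_J$ is a unit multiple of $v_{I \triangle J}$, proportional to $1$ precisely when $I = J$, so the Gram matrix is diagonal with entries $\pm 2^{m-1} \prod_{i\in I} Q_\nL(v_i) \in \Zpp^*$. The form $\langle\cdot,\cdot\rangle_\delta$ is obtained from $\tr(xy)$ by composing with the $\Zpp$-linear automorphism $y \mapsto \delta y'$ of $\Cliff^+(\nL)$ (an automorphism because $\delta \in \Cliff^+(\nL)^*$ and $'$ is an involution), hence is itself unimodular.

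\emph{Step 3 (invariance).} Finally, writing $L_a$ for left multiplication by $a$ as an endomorphism of $\Cliff^+(\nL)$, the identity $(gy)' = y'g'$ together with cyclicity of the trace of endomorphisms yields
\[ \langle gx, gy\rangle_\delta = \tr\bigl(L_g \circ L_{x\delta y'} \circ L_{g'}\bigr) = \tr\bigl(L_{g'g \cdot x\delta y'}\bigr). \]
The relation $gg' = \lambda(g)$ defining $\lambda$ implies $g'g = \lambda(g)$ as well (multiply on the left by $g^{-1}$ and on the right by $g$), so $\langle gx, gy\rangle_\delta = \lambda(g)\langle x, y\rangle_\delta$. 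Restricting to $\Spin(\nL) = \ker(\lambda)$ gives outright invariance.

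None of the three steps looks delicate; if there is a point worth watching it is the identity $g'g = \lambda(g)$ (as opposed to merely $gg' = \lambda(g)$), but this is formal once $g$ is invertible in $\GSpin$.
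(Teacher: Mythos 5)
Your proof is correct and in substance the same as the paper's: the paper verifies skew-symmetry from $\tr$ being $'$-invariant and $\delta'=-\delta$, unimodularity from the diagonal Gram matrix of the trace form in the monomial basis, and invariance from cyclicity of the trace together with $g'g=\lambda(g)$. Your minor variations — showing $\langle x,x\rangle_\delta = 0$ directly via the $\pm 1$ eigenspaces of $'$, and factoring through $\tr(xy)$ rather than $\tr(xy')$ — are cosmetic and do not change the argument.
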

\begin{proof}
The form is invariant because $\tr(AB)=\tr(BA)$. Since $\delta$ is invertible, nondegeneracy is equivalent to that
of $\tr(xy')$ which is given by an invertible diagonal matrix with respect to the basis chosen above.
The form is symplectic because $\tr$ is invariant under $'$ and $\delta' = -\delta$.
\end{proof}

\begin{PAR}\label{ORTHSHIMURADATUM}
Suppose again, $\nL_\Zpp$ is a lattice with unimodular quadratic form.
Suppose that $\nL_\R$ has signature $(m-2,2)$, $m \ge 3$. A polarized Hodge structure
of type $(-2,0),(-1,-1),(0,-2)$ on $\nL_\C$ with $\dim(\nL^{-2,0})=1$
is determined by an isotropic subspace $\nL^{-2,0}$, satisfying $\langle z,\overline{z} \rangle < 0$ for
non-zero $z \in \nL^{-2,0}$. Then the other spaces in the Hodge decomposition 
are determined by $\nL^{0,-2} = \overline{\nL^{-2,0}}$ and $\nL^{-1,-1}=(\nL^{-2,0}+\nL^{0,-2})^\perp$.
We define $\nX_{\nS(\nL)}=\nX_{\nO(\nL)}$ to be the set of these Hodge structures. It is identified
with
\[ \{ <z> \in \PP(\nL_\C) \where \langle z, z \rangle = 0, \langle z,\overline{z} \rangle < 0 \}. \]
It is equipped with a complex structure coming from $\PP(\nL_\C)$.

$\nX_\nS$ has 2 connected components.
There is a 2:1 map
\begin{eqnarray*}
\nX_{\nS(L)} & \rightarrow & \Grass^-(\nL_\R) \\
<z> &\mapsto& D_{<z>}:=(<z,\overline{z}>)^{\Gal(\C|\R)}
\end{eqnarray*}
onto the Grassmannian $\Grass^-$ of negative definite subspaces of $\nL_\R$.
It has a section determined by a choice of $\C$-orientation\footnote{i.e. the choice of a $\R$-linear isometry $D \cong \C$} on any of the subspaces $D$.
The images of these sections are the 2 connected components of $\nX_\nS$.

It will be convenient to fix an orientation on some (hence on all) of the negative definite subspaces $D$. This identifies
$\pi_0(\nX_{\nS(L)})$ with $\nX_{\nH_0} = \Hom(\Z, \Z(1))$ and induces a morphism of $p$-MSD
\[ \nS(\nL) \rightarrow \nH_0. \]
\end{PAR}

The space $D_{<z>}$ has an orthonormal (with respect to $Q_\nL$) basis given by
\begin{equation}\label{x1x2eqn}
 x_1 = \frac{z + \overline{z}}{\sqrt{-\langle z, \overline{z} \rangle}}, \qquad 
 x_2 = \pm \frac{z - \overline{z}}{\sqrt{ \langle z, \overline{z} \rangle}}. 
\end{equation}

\begin{LEMMA}\label{LEMMAGSPINEMBEDDING}
$\nS(L)$ is $p$-integral pure Shimura data.

If $\nL$ has signature $(0,2)$, $\GSpin$ does not operate at all on $\nX_\nS$, if the latter is
defined as above (i.e. consisting of 2 points). Therefore, in this case we get 2 different $p$-integral Shimura data $\nS(\nL)_\pm$.
If $\nL'$ is a saturated sublattice of $\nL$ of signature $(m-n-2,2)$, $0 \le n \le m-2$, it induces an embedding of
$p$-integral Shimura data
\[ \nS(\nL') \hookrightarrow \nS(\nL). \]

For any chosen $\delta \in \Cliff^+(\nL)^*$ with $\delta' = -\delta$ there is an embedding
\[ \nS(\nL) \hookrightarrow \nH(\Cliff^+(\nL), \langle\cdot ,\cdot \rangle_\delta) \]
and accordingly $\nS(\nL)$ is of Hodge type. $\nO(\nL):= \nS(\nL)/\Gm$ is of Abelian type.
\end{LEMMA}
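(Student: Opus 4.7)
The plan is to verify the five assertions in order. For the statement that $\nS(\nL)$ is $p$-MSD, the key observation is that, for $p \neq 2$ and $Q_\nL$ unimodular, $\GSpin(\nL)$ is reductive over $\Zpp$, hence of type (P) with $\nW_\nSD = \nU_\nSD = 1$; the conditions of Definition \ref{DEFPINTEGRALMIXEDSHIMURADATA} then collapse to the pure-case ones. I would check the adjoint Hodge type on $\Lie(\GSpin(\nL_\Q)) \subset \Cliff^+(\nL_\Q)$ is $(-1,1),(0,0),(1,-1)$, which is a direct consequence of $\nL$ carrying a Hodge structure of pure weight $-2$; the Cartan involution axiom follows from negative-definiteness of $D_{[z]}$; absence of $\Q$-anisotropic factors is standard for $\SO(\nL_\Q)$ in indefinite signature; and the center $\Gm$ is $\Q$-split, so Lemma \ref{LEMMADISCRETE} holds. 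For the signature $(0,2)$ statement, the Grassmannian of negative $2$-planes in $\nL_\R$ is a single point, so $\nX_{\nS(\nL)}$ consists of two elements indexed by $\C$-orientation; since $\GSpin(\nL_\R)$ acts on $\nL_\R$ through the connected group $\SO(\nL_\R)$ which preserves orientation, it acts trivially on this set, forcing two disconnected data $\nS(\nL)_\pm$.

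For the sublattice embedding, the orthogonal decomposition $\nL = \nL' \oplus (\nL')^\perp$ with $(\nL')^\perp$ positive definite gives a closed embedding $\Cliff(\nL') \hookrightarrow \Cliff(\nL)$ and hence a closed embedding $\GSpin(\nL') \hookrightarrow \GSpin(\nL)$ as the pointwise stabilizer of $(\nL')^\perp$. The inclusion $\nX_{\nS(\nL')} \hookrightarrow \nX_{\nS(\nL)}$ is automatic from the defining conditions on isotropic lines, and $\nh$-equivariance is immediate.

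The main step is the Hodge-type assertion. By Lemma \ref{LEMMACLIFFORDSYMPLECTIC}, $\langle\cdot,\cdot\rangle_\delta$ is a $\GSpin$-invariant (up to $\lambda$) unimodular symplectic form on $\Cliff^+(\nL)$, yielding a morphism $\GSpin(\nL) \to \GSp(\Cliff^+(\nL), \langle\cdot,\cdot\rangle_\delta)$. I would check that this is a closed immersion of $\Zpp$-group schemes by identifying its image with the stabilizer of Clifford multiplication viewed as a tensor on the standard representation, which is an integrally closed condition. For the induced map on symmetric domains, I fix $[z] \in \nX_{\nS(\nL)}$, lift to $h_{\mathrm{Spin}}\colon \SSS_\R \to \GSpin(\nL_\R)$, and decompose left multiplication by $h_{\mathrm{Spin}}(z)$ on $\Cliff^+(\nL_\C)$ using the Hodge decomposition $\nL_\C = \nL^{-2,0} \oplus \nL^{-1,-1} \oplus \nL^{0,-2}$; only the Hodge types $(-1,0)$ and $(0,-1)$ appear, each in dimension $2^{m-2}$. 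The polarization condition on $\langle\cdot, h_{\mathrm{Spin}}(i)\cdot\rangle_\delta$ then reduces, via the orthonormal basis \eqref{x1x2eqn} of $D_{[z]}$, to an explicit computation in a fixed Clifford algebra, where the sign of $\delta$ must be matched to the component of $\nX_{\nS(\nL)}$. This positivity/orientation bookkeeping is the main obstacle; the rest is structural. Finally, the Abelian-type assertion for $\nO(\nL)$ is then immediate: the central isogeny $\GSpin(\nL) \to \SO(\nL) = \nS(\nL)/\Gm$ restricts on derived groups to a central isogeny inducing an isomorphism on adjoints, so Definition \ref{HODGETYPE} applies directly.
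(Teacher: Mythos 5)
Your route is essentially the paper's: Lemma \ref{LEMMACLIFFORDSYMPLECTIC} supplies the invariant unimodular symplectic form, the Hodge structure on $\Cliff^+(\nL)$ is by left multiplication through $\GSpin(\nL)$ (the paper uses the projectors $P^{-1,0}_z=\frac{z\overline{z}}{\langle z,\overline{z}\rangle}$, which is your type computation), and polarization is checked at a base point via the basis (\ref{x1x2eqn}) and then propagated by Spin-equivariance; the axioms, the signature $(0,2)$ case, the sublattice embedding via $\Cliff^+(\nL')\subset\Cliff^+(\nL)$, and the Abelian-type statement for $\nO(\nL)$ all match the paper's treatment.

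The gap is in the polarization step, which you describe as an explicit computation valid for an \emph{arbitrary} $\delta$ with $\delta'=-\delta$, up to ``matching the sign of $\delta$ to the component of $\nX_{\nS(\nL)}$''. Sign matching is never what is at stake (the definition of $\nX_{\nH_g}$ in \ref{GSP} allows either sign of definiteness), and the deferred computation cannot succeed for general $\delta$: definiteness of $\langle\cdot,h_z(i)\cdot\rangle_\delta$ genuinely depends on the choice of $\delta$. Concretely, let $f_1,f_2$ be an orthogonal basis of the negative plane $D_z$ (so $h_z(i)$ is proportional to $f_1f_2$), pick an anisotropic $e_1\perp f_1,f_2$ with $Q_\nL(e_1)>0$, and take $\delta=e_1f_1$; then $\delta'=-\delta$ and $\delta^2=-Q_\nL(e_1)Q_\nL(f_1)>0$ so $\delta$ is invertible, yet $\langle 1,h_z(i)\cdot 1\rangle_\delta=-\tr(\delta\,f_1f_2)=-Q_\nL(f_1)\,\tr(e_1f_2)=0$, since the trace of any basis monomial other than $1$ vanishes. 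So the symmetric form is isotropic, the composed Hodge structures are not polarized, and no choice of connected component repairs this; there is simply no embedding into $\nH(\Cliff^+(\nL),\langle\cdot,\cdot\rangle_\delta)$ for such a $\delta$. The paper's proof handles exactly this point by \emph{choosing} $\delta=x_1x_2$ for an orthonormal basis of a rank-two definite direct summand $D_0$, verifying symmetry and definiteness of $\langle x,h(i)y\rangle_\delta=-\tr(x\delta y'\delta)$ at a point $z$ with $D_z=D_0$, and noting that replacing $x,y$ by $gx,gy$ for $g$ in the spin group preserves symmetry and definiteness; the ``for any chosen $\delta$'' in the statement has to be read with such a $\delta$ (existence of one good $\delta$ is all that is needed for the Hodge-type conclusion), and your write-up should either make that choice or impose the definiteness of $\langle\cdot,h(i)\cdot\rangle_\delta$ as a hypothesis.
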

\begin{proof}
To each $<z> \in \nX_{\nS(\nL)}$ the associated morphism $h$ factors through $\GSpin(\nL_\R)$. It is explicitly given by
\begin{eqnarray*}
\SSS &\rightarrow& \Cliff^+(\nL,\R) \\
w=a + bi &\mapsto& a + b i \frac{z \overline{z} - \overline{z} z }{ \langle z, \overline{z} \rangle } = a+b x_1x_2
\end{eqnarray*}
for any choice of $i\in \C$ (note: $x_2$ depends on the choice of $i \in \C$ as well).

It even defines a field isomorphism $\C \cong \Cliff^+(D_{<z>}, \R)$. It acts on $\nL^{-2,0}$ by $w^2$, on $\nL^{0,-2}$
by $\overline{w}^2$ and on $\nL^{-1,-1}$ by $w\overline{w}$.
The second statement is true because the inclusion $\Cliff^+(\nL') \subset \Cliff^+(\nL)$ induces a closed embedding
$\GSpin(\nL') \hookrightarrow \GSpin(\nL)$, and any morphism $h$ of the first datum defines a morphism $h$ for the second
datum of the same type.

For the property of Hodge type,
fix a positive definite $\Zpp$-sublattice $D_0$ which is a direct summand and an 
orthonormal basis $x_1, x_2$ (hence an induced orientation of $D_{0,\R}$).
Consider the element $\delta := x_1 x_2$. It satisfies the requirements of Lemma \ref{LEMMACLIFFORDSYMPLECTIC}.

Let $<z>$ be a point in $\nX_{\nS(\nL)}$.
$P^{-1,0}_z := \frac{z\overline{z}}{\langle z, \overline{z} \rangle}$ resp. its complex conjugate, satisfy
$P^{-1,0}_z+P^{0,-1}_z = \id$, $(P^{i,j}_z)^2=P^{i,j}_z$, and on $P^{i,j}_z \Cliff^+(\nL_\C)$ the morphism $h$ acts as $w^{-i} \overline{w}^{-j}$. 
Furthermore, for $z$ chosen, such that $D_z = D_0$, the form
$\langle \cdot, x_1 x_2 \cdot \rangle_\delta = \langle \cdot, h(i) \cdot \rangle_\delta = \tr(x \delta y' \delta') = 
- \tr(x \delta y' \delta) $ is symmetric and definite.
(Here $i \in \C$ it the root of $-1$ determined by the orientation $x_1 \wedge x_2$ of $D_0 = D_z$, see (\ref{x1x2eqn})).
The substitution $x \mapsto gx$, $y \mapsto gy$ for any $g$ in the spin group changes $x_1 x_2$ in $g(x_1) g(x_2)$ 
but does not affect the properties of symmetry and definiteness. The sign of definiteness, however, is reflected by
the chosen orientation of $D_0$. Hence the map
$<z> \mapsto P^{-1,0}_z\Cliff^+(\nL_\C)$ may be seen as a map $\nX_{\nS(\nL)} \rightarrow \nX_{\nH(\Cliff^+(\nL),\langle\cdot,\cdot \rangle_\delta)}$.
Together with the closed embedding
\[ \GSpin(\nL) \hookrightarrow \GSp(\Cliff^+(\nL),\langle\cdot ,\cdot \rangle_\delta), \]
it induces an embedding $\nS(\nL) \hookrightarrow \nH(\Cliff^+(\nL), \langle\cdot ,\cdot \rangle_\delta)$.
$\nO(\nL)$ is, as quotient of $\nS(\nL)$, clearly of Abelian type.
\end{proof}

We have the following proposition 
(similar to \ref{SYMPLECTICBOUNDARYCOMP}):

\begin{PROP}\label{ORTHBOUNDARYCOMP}
There is  a bijection
\[  \{ \text{ isotropic subspaces $\nI_\Q$ of $\nL_\Q$ }\} \cong \{ \text{ boundary components $\nSDB$ of $\nS(\nL)$ or $\nO(\nL)$} \}. \]

Let $\nI_\Q$ be an isotropic subspace, $\nI=\nI_{\Zpp}$ the associated saturated sublattice and $\nSDB$ the corresponding boundary component.

For any choice of
splitting $\nL = \nI \oplus \nI^* \perp \nL_0$, 
there is an isomorphism 
(in each case, $\nS(\nL)$ or $\nO(\nL)$):
\[ \nSDB \cong \begin{cases}
  \nH_0[\nI \otimes (\nI^\perp/\nI), 0] & \text{if }\dim(\nI)=1 , \\
  \nH_1(\nI)[\Lambda^2 \nI, \nI \otimes (\nI^\perp/\nI)] & \text{if }\dim(\nI)=2,
            \end{cases} \]
where in the first case $\Gm$ acts on $\nI \otimes (\nI^\perp/\nI)$ by scalars and
$\GL(\nI)$ in the second case acts trivial on $\nI^\perp/\nI$ 
(see the definition of unipotent extension \ref{UNIPOTENTFIBRECONVERSE}).
The isomorphisms depend in addition on a common choice of orientation on the maximal negative definite subspaces of $\nL_\R$.
\end{PROP}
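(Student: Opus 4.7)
The plan is to mirror the proof of Proposition~\ref{SYMPLECTICBOUNDARYCOMP}: classify the admissible $\Q$-parabolics of $\GSpin(\nL_\Q)$, invoke Proposition~\ref{BOUNDARYCOMPONENTS} and Definition/Theorem~\ref{DEFBOUNDARYCOMP} to identify each boundary component, and finally match the resulting datum with the claimed unipotent extension. Since $\SO(\nL_\Q)^{\ad}$ is $\Q$-simple and non-trivial, the admissible proper $\Q$-parabolics of $\GSpin(\nL_\Q)$ are exactly the preimages of maximal proper parabolics of $\SO(\nL_\Q)^{\ad}$; in orthogonal type these are the stabilizers $Q_{\nI_\Q}$ of non-zero isotropic subspaces $\nI_\Q \subseteq \nL_\Q$. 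The signature $(m-2,2)$ forces the $\Q$-Witt index to be at most $2$, so $\dim \nI_\Q \in \{1,2\}$, giving the claimed bijection in the first assertion.

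Next I would fix a splitting $\nL = \nI \oplus \nI^* \perp \nL_0$ over $\Zpp$ and work out $Q_{\nI_\Q}$ in block form. Its Levi is $\GL(\nI) \times \SO(\nL_0)$, and a direct computation in $\Lie(\SO(\nL))$ shows that the unipotent radical $\nW$ fits into a central extension
\[ 0 \to \nGa(\Lambda^2 \nI) \to \nW \to \nGa(\nI \otimes \nL_0) \to 0, \]
with commutator $\nW \wedge \nW \to \nGa(\Lambda^2 \nI)$ induced by the quadratic form on $\nL_0$ composed with the natural map $\nI \wedge \nI \to \Lambda^2\nI$. In particular the centre is trivial when $\dim \nI = 1$. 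Applying Proposition~\ref{BOUNDARYCOMPONENTS} and computing the associated morphism $\omega_x \circ h_\infty$ then shows that $\nP_{\nSDB,\Q}$, the smallest normal $\Q$-subgroup of $Q_{\nI_\Q}$ containing its image, equals $\nW_\Q \rtimes \GL(\nI_\Q)$. The key point is that $\SO(\nL_0)$ contributes nothing: for $\dim \nI = 2$ the lattice $\nL_{0,\R}$ is positive definite, so $\SO(\nL_{0,\R})$ is already compact; for $\dim \nI = 1$ the signature $(m-3,1)$ of $\nL_{0,\R}$ means $\SO(\nL_{0,\R})^{\ad}$ carries no Hermitian symmetric structure at all.

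It then remains to recognise $\nSDB$ as the asserted unipotent extension in the sense of Definition/Proposition~\ref{UNIPOTENTFIBRECONVERSE}. For $\dim\nI = 1$ one has $\Lambda^2\nI = 0$, the unipotent radical is $\nGa(\nI\otimes(\nI^\perp/\nI))$, and $\GL(\nI) \cong \Gm$ acts by scalars; the common orientation of the negative definite planes identifies this $\Gm$ with $\nP_{\nH_0}$, giving $\nSDB \cong \nH_0[\nI\otimes(\nI^\perp/\nI),0]$. For $\dim\nI = 2$ the computation above exhibits $\nW$ as a Heisenberg group whose commutator yields a unimodular symplectic form on $\nGa(\nI\otimes(\nI^\perp/\nI))$ valued in $\nGa(\Lambda^2\nI)$; identifying $\GL(\nI) \cong \GSp(\nI,\wedge)$ with the determinant serving as similitude factor matches the construction of~\ref{USP}, and one obtains $\nSDB \cong \nH_1(\nI)[\Lambda^2\nI, \nI\otimes(\nI^\perp/\nI)]$. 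The integral structure over $\Zpp$ is inherited from the chosen splitting via Theorem/Definition~\ref{INTEGRALBOUNDARYCOMP}.

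The main technical obstacle lies in the $\dim\nI = 2$ case: one must verify that the commutator pairing on $\nI\otimes(\nI^\perp/\nI)$ is unimodular over $\Zpp$ and that the $\GL(\nI)$-action on $\nW$ matches the action in $\USp(\nI^\perp/\nI, \nI)$ of~\ref{USP}, including the correct similitude weight on the central $\Lambda^2\nI$-piece. This reduces to an explicit calculation with skew-symmetric block matrices relative to $\nL = \nI \oplus \nI^* \perp \nL_0$, using the dualities $\nI^* \cong \Hom(\nI, \Zpp)$ and $\nL_0^* \cong \nL_0$ supplied by $Q_\nL$; unimodularity of the commutator pairing comes directly from unimodularity of $Q_\nL$.
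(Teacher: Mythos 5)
Your outline does follow the paper's own route: classify the admissible $\Q$-parabolics as stabilizers of isotropic subspaces, compute the Levi $\GL(\nI)\times\SO(\nL_0)$ and the (Heisenberg) unipotent radical in the chosen splitting, determine $\nP_\nSDB$ via Proposition \ref{BOUNDARYCOMPONENTS} and Definition \ref{DEFBOUNDARYCOMP}, and match the result with the unipotent extensions of \ref{UNIPOTENTFIBRECONVERSE}. The genuine gap is in your ``key point'', i.e.\ the justification that the $\SO(\nL_0)$-factor drops out of $\nP_\nSDB$. First, the stated fact is false in general: for $m=5$ the space $\nL_{0,\R}$ has signature $(2,1)$ and $\SO(\nL_{0,\R})$ is isomorphic to $\PGL_2(\R)$, whose symmetric space is the upper half-plane, so it certainly carries a Hermitian symmetric structure. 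Second, and more importantly, compactness or non-Hermitian-ness of a Levi factor is not the criterion of Definition \ref{DEFBOUNDARYCOMP}: $\nP_{\nSDB,\Q}$ is the \emph{smallest normal} subgroup of $Q$ through which $\omega_x\circ h_\infty$ factors, so what must actually be shown is that the projection of $\omega_x\circ h_\infty$ to the $\SO(\nL_0)$-factor is trivial, and then that the resulting candidate is normal and minimal. The paper does this by explicit computation: the induced Hodge structure on the middle graded piece $\nL_0$ of $\gr^W$ is scalar of type $(-1,-1)$ (equivalently, $h_\infty(z)=t_1(z\bar z)$ lands in the torus $T_1$ when $\dim\nI=1$, and $h_\infty=\rho\circ h_0$ with $\rho:\GL(\nI)\to\GSpin(\nL)$ when $\dim\nI=2$), and then one checks that $\nGa(\nI\otimes(\nI^\perp/\nI))\cdot T_1$, resp.\ $\nW_\nSDB\cdot\GL(\nI)$, is a closed normal subgroup of $Q$, hence equals $\nP_\nSDB$. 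Without this computation your argument does not establish the identification of $\nP_\nSDB$, which is the heart of the proposition.

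Two further points. The identification of the homogeneous space $\nX_\nSDB$ is essentially skipped in your proposal: by Definition \ref{DEFBOUNDARYCOMP} it is a $\nP_\nSDB(\R)\nU_\nSDB(\C)$-orbit inside $\pi_0(\nX_{\nS(\nL)})\times\Hom(\SSS_\C,\nP_{\nSDB,\C})$, and matching it with the $\nX$ of the claimed unipotent extension requires the explicit formulas for the translated filtrations together with the $\pi_0$-bookkeeping; this is precisely where the orientation choice enters ($T_1(\R)$ acts nontrivially on $\pi_0(\nX_{\nS(\nL)})$ in the one-dimensional case, while for $\dim\nI=2$ the stabilizer of $h_0$ in $\GL(\nI_\R)$ is connected, so the orbit projects isomorphically to $\Hom(\SSS_\C,\nP_{\nSDB,\C})$). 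Finally, your reduction of the bijection to maximal proper parabolics via $\Q$-simplicity of $\SO(\nL_\Q)^{ad}$ breaks down for $m=4$: there $\SO(\nL)^{ad}$ may split into two factors, and the stabilizer of an isotropic line is an admissible parabolic in the sense of \ref{DEFADMISSIBLEPARABOLIC} (a product of maximal parabolics of the factors) without being a maximal parabolic of the whole group; the bijection with isotropic subspaces still holds, but not by the argument you give.
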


\begin{proof}
According to Proposition \ref{BOUNDARYCOMPONENTS} we have to classify admissible parabolic subgroups of $\SO(\nL)$ or $\GSpin(\nL)$ (this amounts to the same). 
We could also restrict ourselves to $\GSpin(\nL_\Q)$ because of the projectivity of $\mathcal{PAR}$ (\ref{QUASIPARABOLICS}).

So let $Q$ be any proper {\em admissible} parabolic \ref{DEFADMISSIBLEPARABOLIC} of $\GSpin(\nL)$. It is well- known that $Q$ corresponds to a saturated filtration
\[ 0 \subset \nI \subseteq \nI^\perp \subset \nL, \]
with $\nI$ isotropic, whose stabilizer it is. We will now determine the boundary component in the sense of \ref{DEFBOUNDARYCOMP} associated with $Q$. First note that
we can identify $\Lie(\SO)=\Lie(\Spin)$ with $\Lambda^2 \nL$ via the action:
\[ (v \wedge w) x := \langle w, x \rangle v - \langle v, x \rangle w. \]
The filtration induces the following saturated filtration
\[ 0 \subseteq \Lambda^2 \nI \subseteq \nI \wedge \nI^\perp \]
consisting of those elements in $\Lambda^2 \nL$ shifting the filtration by at least -2, resp.  -1.
We have $\Lie(W_Q)=\nI \wedge \nI^\perp$, where $W_Q$ is the unipotent radical of $Q$ (exists for parabolics, e.g. by the fact that they are of type (P), cf. \ref{EXISTENCEQUASIPARABOLIC}).

\begin{PAR}
Let $\nL$ be a unimodular quadratic form over a discrete valuation ring $R$ with 2 invertible. 
Decompose $\nL_R = H_{1,R} \perp \cdots \perp H_{k,R} \perp \nL'_R$, where $\nL'$ is
anisotropic over $R$. We get an $R$-split torus of $\GSpin$ as follows:
Let $z_i,z'_i$ be an isotropic basis of $H_{i,R}$ with $\langle z_i, z_i' \rangle=1$.
We get a 1-parameter subgroup $T_i$ consisting of
\[ t_i(\alpha) := \alpha z_i z_i' + z_i'z_i = 1 + (\alpha-1)z_i z_i' \]
in $\GSpin(\nL)$. $l$ induces an isomorphism of it with its image in $\SO(\nL)$.
The product of all $T_i$ is direct and yields a split torus $T$ of $\GSpin(\nL)$.
The product of it with the center of $\GSpin$ is a maximal $R$-split torus.
$l$ induces again an {\em isomorphism} of $T$ with $l(T)$ which is a {\em maximal} $R$-split torus of $\SO$. $t_i(\alpha)$ acts, via the standard representation of $\GSpin(\nL)$ by $z_i \mapsto \alpha^2 z_i$, $z'_i \mapsto z'_i$ and $v \mapsto \alpha v$ for $v \in <z, z'>^\perp$. $l(t_i(\alpha))$ acts, via the standard representation of $\SO(\nL)$ by $z_i \mapsto \alpha z_i$, $z_i' \mapsto \alpha^{-1} z_i'$  and $v \mapsto v$ for $v \in <z,z'>^\perp$.
\end{PAR}

\begin{PAR}\label{cocharorth} Let $\nL_\R$ be a quadratic space over $\R$ with signature $(m-2,2)$.
Decompose $\nL_\R = H_{1,\R} \perp H_{2,\R} \perp \nL'_\R$, where $H_{i,\R}$ are hyperbolic planes.
A maximal $\R$-split torus of $\SO(\nL_\R)$ is $l(T_1\cdot T_2) \cong \G_m^2$. The long roots (\ref{LONGROOTS}) are in this case: $\alpha_1=(1,-1)$ and $\alpha_2=(1,1)$ in $\Z^2 = \Hom(l(T_1T_2), \Gm)$. 
The weight morphism $\mu:=\mu_x=h_x \circ w$ for any $x\in \nX_{\nS(\nL)}$ is given
by the natural inclusion $\Gm \hookrightarrow \GSpin$.  
The homomorphism $\lambda$ satisfying the criterion in \ref{LONGROOTS}, in paricular mapping to $\Spin(\nL)$, and its product with $\mu$ is given in the following table: 
\[ \begin{array}{lll}
\text{subset of long roots} & \lambda & \lambda \cdot \mu \\
\hline
\emptyset & (0; 0,0) & (1; 0,0) \\
\{\alpha_1\} & (0; 1,-1) & (1; 1,-1) \\
\{\alpha_2\} & (-1; 1,1) & (0; 1,1) \\
\{\alpha_1, \alpha_2\} & (-1; 2,0) & (0; 2,0)\\
\end{array}
\]
Here $(l; m, n)$ denotes the cocharacter of $\GSpin$: $\alpha \mapsto \alpha^l t_1(\alpha^m) t_2(\alpha^n)$.
\end{PAR}

\begin{PAR}\label{I1DIM} Case: $\nI$ 1 dimensional (boundary point).

Choose a splitting $\nL = \nI \oplus \nI^* \perp \nL_0$. This is possible because the discriminant is a unit at $p$. We may assume $H_{1,\R} = \nI_\R \oplus \nI^*_\R$. Only the last cocharacter of \ref{cocharorth} is defined over $\Q$ {\em and} yields a proper
parabolic subgroup.

The morphism $\lambda \cdot \mu$ which is 
a weight morphism for the boundary component has the following eigenspaces:
\[ \nL^{-4} = \nI  \qquad \nL^{-2} = \nL_0  \qquad \nL^{0} = \nI^*.  \]

We get a Levi decomposition
\[ Q = \nGa(\nI \otimes (\nI^\perp/\nI)) \rtimes G, \]
where $\nGa(\nI \otimes (\nI^\perp/\nI))$ is the unipotent radical and acts via the exponential (note $\nI \wedge \nI^\perp \cong \nI \otimes (\nI^\perp/\nI)$ in this case).
 
We interpreted the set $\nX_{\nS(\nL)}$ as the set of Hodge structures of weight 2 on $\nL_\R$ with
$\nL_{-2,0}$ isotropic, and therefore $h(\nX_\nSDB)$ can be interpreted as an orbit in the set of 
mixed Hodge structures
with respect to the weight filtration
\[ W_i(\nL) = \begin{cases}
 0 & i<=-5 \\
 \nI & i=-4,-3 \\
 \nI^\perp & i=-2,-1 \\
 \nL & i \ge 0
\end{cases}
\]
containing the same Hodge filtrations as above.

Since the Hodge filtration type stays the same, the induced Hodge structures on $\gr_W(\nL)$ have to be trivial by weight reasons. Also, we see that a mixed Hodge structure
is given by 
\begin{equation}\label{refmhs} 
0 \subseteq F^0 = \nI^*_\C \subseteq F^{-1}  = (\nI^*)^\perp \subseteq \nL_\C .
\end{equation}
The associated morphism $h_0: \SSS_\C \rightarrow \GSpin(\nL_\C)$ is given by $z \mapsto t_1(z\overline{z})$. Its weight morphism is precisely the morphism $\lambda \cdot \mu$ constructed above. The product $\nGa(\nI \otimes (\nI^\perp/\nI)) T_1$ is already a closed normal subgroup scheme of $Q$, hence equal to $\nP_\nSDB$. It is the same as the subgroup scheme underlying $\nH_0[\nI \otimes (\nI^\perp/\nI), 0]$. 

Consider
\[ \pi_0(\nX_{\nS}) \times \Hom(\SSS_\C, \nP_{\nSDB,\C}). \]
$T_1(\R)$ acts by the nontrivial operation on the set $\pi_0(\nX_{\nS})$.

Choose vectors $z, z'$ spanning $\nL$ and $\nL^*$ respectively. 
Via the isomorphism $\nL_0 \rightarrow \nI \otimes \nI^\perp / \nI,$ $k \mapsto z \tensor k$, a vector $k \in \nL_{0,\C}$ maps the mixed Hodge structure (\ref{refmhs}) above to the one determined by 
\[ F^0 = < z' + \langle z, z' \rangle k - \left( \langle k, z' \rangle + \frac{1}{2} \langle z, z' \rangle \langle k, k \rangle \right) z >. \]
If $\Im(k)>0$, this is the filtration associated with a Hodge structure in $\nX_{\nS(\nL)}$.
Hence by Definition \ref{DEFBOUNDARYCOMP} the 
$T_1(\R)\nL_{0,\C}$-orbit $\nX_\nSDB$ in $\pi_0(\nX_{\nS}) \times \Hom(\SSS_\C, \nP_{\nSDB,\C})$ is equal to $\pi_0(\nX_{\nS}) \times \nL_{0,\C} \cdot h_0$.

This is (see Definition \ref{UNIPOTENTFIBRECONVERSE}) isomorphic to
\[ \nX_{\nH_0[\nI \otimes (\nI^\perp/\nI), 0]}, \]
where $\pi_0(\nX_{\nS})$ can be identified (after choice of a common orientation 
of the negative definite subspaces) with the set of isomorphisms $\Z \cong \Z(1)$ (choice of root of $-1$). 
Combining this with the isomorphism of group schemes above, we get an isomorphism
\[ \nSDB \cong \nH_0[\nI \otimes (\nI^\perp/\nI), 0], \]
as required. 
\end{PAR}

\begin{PAR} Case: $\nI$ 2 dimensional (boundary curve).

We may now assume that the whole torus $T_1 T_2$ is defined over $\Q$. We get an associated maximal parabolic by considering the two 
parabolics associated with the morphisms $(1; 1,-1)$ and $(0;1,-1)$.
The morphism $\lambda \cdot \mu$ which is 
a weight morphism for the boundary component has the following eigenspaces:
\[ \nL^{-3} = <z,w'>  \qquad \nL^{-2} = \nL_0  \qquad \nL^{-1} = <z',w>  \]

In the other case the morphism $\lambda \cdot \mu$ which is 
a weight morphism for the boundary component has the following eigenspaces:
\[ \nL^{-3} = <z,w>  \qquad \nL^{-2} = \nL_0  \qquad \nL^{-1} = <z',w'>  \]

If $m=4$ these are {\em not} conjugated, in accordance with the fact that $\Spin(\nL)$ is not simple. 

We get
\[ W_i(\nL) = \begin{cases}
 0 & i<=-4 \\
 \nI & i=-3 \\
 \nI^\perp & i=-2 \\
 \nL & i \ge -1.
\end{cases}
\]

For any isotropic $<z> \subset \nI_\C$ and  corresponding $<z'> = <z>^{\perp,\nI^*_\C}$, we get a filtration
\begin{equation}\label{refmhs2}
 0 \subseteq F^0=<z'> \subseteq F^{-1}=<z'>^\perp \subseteq \nL_\C .
 \end{equation}
If $F^0=<z> \subset \nI_\C$  is a Hodge structure on $\nI_\C$, that is, if
$<\overline{z}> \not= <z>$ then the filtration (\ref{refmhs2}) defines a mixed Hodge structure on $\nL_\C$, together with the weight filtration above.

We have an exact sequence 
\[ \xymatrix{
0 \ar[r] & \Lambda^2 (\nI) \ar[r] & \nI \wedge \nI^\perp \ar[r] & \nI \otimes (\nI^\perp/\nI) \ar[r] & 0
} \]
where $\Lambda^2(\nI)$ are the elements shifting the filtration by 2 and a corresponding central extension
\[ \xymatrix{
0 \ar[r] & \nU_\nSDB:=\nGa(\Lambda^2 (\nI)) \ar[r] & \nW_\nSDB \ar[r] & \nV_\nSDB:=\nGa(\nI \otimes (\nI^\perp/\nI)) \ar[r] & 0 
} \]

If we choose, in a addition to our splitting, a basis $z, w$ (as before) determining
$\nI \cong (\Z_{(p)})^2$ and dual basis $\nI^* \cong (\Z_{(p)})^2$, we get an obvious isomorphism $\det: \Lambda^2(\nI) \cong \Zpp$ and an isomorphism $(\nI \otimes (\nI^\perp/\nI)) \cong L_0^2$. An element
of $\nW_\nSDB(\R)\nU_\nSDB(\C)$ may now be written as $(u, k_1, k_2)$ with $u \in \C$ and
$k_i \in \nL_{0, \R}$.
It sends the Hodge filtration (\ref{refmhs2}) determined by $z=\M{\beta \\ -\alpha}_\nI$
explicitly to the following one:

\[ F^0 = <
\left( \begin{array}{c} \alpha \\ \beta \end{array} \right)_{\nI^*}
+ u \left( \begin{array}{c} \beta \\ -\alpha \end{array} \right)_\nI
+ \alpha k_1 + \beta k_2 - \left( \begin{array}{c} \alpha Q(k_1) \\
\beta Q(k_2) + \alpha \langle k_1, k_2 \rangle \end{array} \right)_\nI>.
\]
(these again contain all filtrations of Hodge structures determined by elements in $h(\nX_{\nS(\nL)})$.)

We also get a morphism $\SL(\nI) \rightarrow \SO(\nL)$, letting
a matrix act on $\nI$ by the standard representation, on $\nI^*$ by its contragredient and
on $\nL_0$ trivially. It lifts to a morphism $\SL(\nI) \rightarrow \Spin(\nL) \subset \GSpin(\nL)$ because
$\SL(\nI)$ is algebraically simply-connected. Note that $t_1(-1)t_2(-1)$ acts the same way than
$-1$ under the above lift. Hence the morphism extends to a morphism
$\rho: \GL(\nI) \rightarrow \GSpin(\nL)$ mapping the diagonal torus to $T_1T_2$.
It hence acts via the standard representation of $\GSpin$ as follows on $\nL$: 
By the standard representation times determinant on $\nI$, by the contragredient times
determinant on $\nI^*$ and by the determinant on $\nL_0$.
The mixed Hodge structure determined by (\ref{refmhs2}) is hence given by 
the morphism $\SSS_\C \rightarrow \GSpin(\nL_\C)$ obtained composing the
morphism $h_0: \SSS \rightarrow \GL(\nI_\R)$ associated with the Hodge structure $F^0=<z> \subseteq \nI_\C$ with $\rho$. Its weight morphism is precisely the morphism $\lambda \cdot \mu$ constructed above.
The product $\nW_\nSDB \GL(\nL)$ is already a closed normal subgroup scheme of $Q$, hence equal to $\nP_\nSDB$.
\end{PAR}

Consider
\[ \pi_0(\nX_{\nS}) \times \Hom(\SSS_\C, \nP_{\nSDB,\C}). \]
The stabilizer in $\GL(\nL_\R)$ of $h_0$ 
is connected (in real topology) and hence acts trivially on the set $\pi_0(\nX_{\nS})$. Hence by Definition \ref{DEFBOUNDARYCOMP} the 
$\nP_\nSDB(\R)\nU_\nSDB(\C)$-orbit $\nX_\nSDB$ in $\pi_0(\nX_{\nS}) \times \Hom(\SSS_\C, \nP_{\nSDB,\C})$ is isomorphic to its projection to $\Hom(\SSS_\C, \nP_{\nSDB,\C})$.

This is (see Definition \ref{UNIPOTENTFIBRECONVERSE}) isomorphic to
\[ \nX_{\nH_1(\nI)[\Lambda^2 \nI, \nI \otimes (\nI^\perp/\nI)]}. \]

Combining this with the isomorphism of group schemes above, we get an isomorphism:
\[ \nSDB \cong \nH_1(\nI)[\Lambda^2 \nI, \nI \otimes (\nI^\perp/\nI)] \]
as required. 
\end{proof}

\section{Integral models of Shimura varieties and their toroidal compactifications}\label{SECTIONINTMODELS}

\begin{PAR}
To prove our main theorems about the arithmetic volume of Shimura varieties of
orthogonal type, we need to assume that the following statements on canonical integral models of them and their toroidal compactifications hold true. We treat exclusively the case of good reduction. In this case the theorems have been proven under a certain hypothesis (\ref{MAINCONJECTURE}) in the thesis of the author \cite{Thesis}.
\end{PAR}

\begin{DEFSATZ}\label{DEFREFLEXRING}
Let $\nSD$ be $p$-integral mixed Shimura data. With $x \in \nX_\nSD$ there is associated $h_x: \SSS_\C \rightarrow \nP_{\nSD,\C}$, hence
a morphism $u_x: \G_{m,\C} \rightarrow \nP_{\nSD,\C}$ via composition with the inclusion of the first factor 
into $\SSS_\C \cong \G_{m,\C} \times \G_{m,\C}$.
The conjugacy class $M$ of these morphisms is defined over a number field $E_\nSD$, called the {\bf reflex field}, unramified over $p$. We call the localization of its ring of integers $\OOO:=\OOO_E \otimes_\Z \Zpp$ at $p$ the {\bf reflex ring}. It is a semi-local ring.
The closure $\overline{M}$ in $\underline{\Hom}(\G_{m,\OOO}, \nP_{\nSD,\OOO})$ maps surjectively onto $\spec(\OOO)$.

We consider the 2-category of Deligne-Mumford stacks over reflex rings as the 2-category of morphisms $(\pi: S \rightarrow \spec(\OOO))$, where $\OOO \subset \C$ is the localization of a ring of integers at $\Z \setminus (p)$ and $\pi: S \rightarrow \spec(\OOO)$ is a morphism of Deligne-Mumford stacks.
Morphisms are commutative diagrams where the map $\spec(\OOO) \rightarrow \spec(\OOO')$ has to come
from the inclusion. 
\end{DEFSATZ}

\begin{PAR}
We begin by describing the functorial theory of (uncompactified) integral models of mixed Shimura varieties in the
case of good reduction. The Main Theorem is essentially due to Kisin \cite{Kisin, Kisin2} or Vasiu \cite{Vasiu1} in the pure case.
\end{PAR}

\begin{DEF}\label{DEFTESTSCHEME}
Let $\OO$ be a discrete valuation ring with fraction field $F$.
Let a {\bf test scheme} $S$ over $\OO$ be as in \cite[Def. 3.5]{Moonen}, i.e.
$S$ has a cover by open affines $\spec(A)$, such that there exist rings $\OO \subseteq \OO' \subseteq A_0 \subseteq A$,
where
\begin{itemize}
\item $\OO \subseteq \OO'$ is a faithfully flat and unramified extension of d.v.r. with $\OO'/(\pi)$
separable over $\OO/(\pi)$.
\item $A_0$ is a smooth $\OO'$-algebra.
\item $A_0 \subseteq A_1 \subseteq \cdots \subseteq A$ is a countable union of etale extensions.
\end{itemize}
\end{DEF}

\begin{HAUPTSATZ}\label{MAINTHEOREM1}
Let $p\not=2$ be a prime. 

There is a {\em unique} (up to unique isomorphism) pseudo-functor $\nSh$ from the category of $p$-EMSD ${}^K \nSD$ of Abelian type to the 2-category of smooth Deligne-Mumford stacks over reflex rings:
\[ {}^K \nSD \mapsto (\nSh({}^K \nSD) \rightarrow \spec(\OO_\nSD)) \]
with a pseudo-natural isomorphism:
\[ [ \nP_\nSD(\Q) \backslash \nX_\nSD \times (\nP_\nSD(\Af) / K) ] \cong (\nSh({}^K \nSD) \times_{\spec(\OOO_\nSD)}\spec(\C))^{an},  \]
satisfying the following properties:

\begin{enumerate}
\item If $(\gamma, \rho): {}^{K'} \nSDi \rightarrow {}^K \nSD$ is an embedding, 
  \[ \nSh(\gamma, \rho) \times \pi_\nSDi: \nSh({}^{K'} \nSDi) \rightarrow \nSh( {}^K \nSD) \times_{\spec(\OOO_\nSD)} \spec(\OOO_\nSDi) \]
   is a normalization followed by a closed embedding.
  If $(\gamma, \rho)$ is a Hecke operator, then $\nSh(\gamma, \rho) \times \pi_\nSDi$ is etale and finite.

\item (rational canonicity) For a pure Shimura datum $\nSDi$, where $\nP_\nSDi$ is a torus, the composition of the
reciprocity isomorphism (normalized as in \cite[11.3]{Pink})
\[ \Gal(\overline{E}|E)^{ab} \cong \pi_0(T_E(\Q) \backslash T_E(\A)), \]
(where $T_E = \Res_\Q^E(\Gm)$) with $\pi_0$ of the reflex norm \cite[11.4]{Pink}
\[ \pi_0(T_E(\Q) \backslash T_E(\A)) \rightarrow \pi_0( \nP_\nSDi(\Q) \backslash \nP_\nSDi(\A) ) \]
defines by means of the natural action of $\pi_0( \nP_\nSDi(\Q) \backslash \nP_\nSDi(\A) )$ on $\nSh({}^K\nSDi)(\C)$ the
rational model $\nSh({}^K\nSDi)_E$.

\item (integral canonicity) The projective limit
\[ \nSh^p(\nSD) := \underleftarrow{\lim}_{K} \nSh({}^{K} \nSD), \]
where the limit is taken over all {\em admissible} compact open subgroups, satisfies the following property:

For any prime $\wp|p$ of $\OOO_\nSD$, each {\em test scheme} (\ref{DEFTESTSCHEME}) 
$T$ over $S=\spec(\OOO_{\nSD,\wp})$, and a given morphism
\[ \alpha_E : T \times_{S} Q \rightarrow \nSh^p(\nSD) \times_{S} Q, \]
where $Q = \spec(E_\nSD)$ is the generic fibre of $S$, there exists a uniquely determined morphism
\[ \alpha: T \rightarrow \nSh^p(\nSD) \]
such that $\alpha_E = \alpha \times_{S} Q$.

\item 
  $M({}^{K} \nSD)$ is a quasi-projective scheme, if $K$ is neat.

\end{enumerate}
\end{HAUPTSATZ}

For $p=2$, it may be necessary to change the notion of `integral canonicity' as formulated above, cf. \cite[3.4]{Moonen}.

The following Main Theorem \ref{MAINTHEOREM1a} on canonical models of {\em toroidal compactifications} was only proven in \cite{Thesis} under the following hypothesis, which remains unproven:

\begin{VERMUTUNG}\label{MAINCONJECTURE}
Let $\nSD$ be pure $p$-integral Shimura data. Let any embedding 
${}^K_\nRPCD \nSD \hookrightarrow {}^{K'}_{\nRPCD'} \nSDp$ in a datum of symplectic type $\nSDp = \nH_{g_0}[(\nI\otimes\nI)^s,\nL_0\otimes \nI]/\nGa(\nU')$ be given, 
such that $\nRPCD$ {\em and} $\nRPCD'$ are smooth projective and $K$, $K'$ are neat.

Consider the toroidal compactification $\nSh({}^{K'}_{\nRPCD'} \nSDp)$ over $\spec(\Z_{(p)})$, constructed in \cite{FC}, and let $D$ be its boundary divisor.
For every prime $\wp|p$ of $\OOO_\nSD$ we have the following:

Let $S=\spec(\OOO_{\nSD,\wp})$ and $\mathcal{M}'$ be the
Zariski closure of (the rational canonical model of)
$\nP_\nSD(\Q) \backslash \nX_\nSD \times (\nP_\nSD(\Af) / K)$ in $\nSh({}^{K'}_{\nRPCD'} \nSDp)_S$.

No {\em connected} component of $N(\mathcal{M}') \cap D_S$ lies entirely in the fibre above $\wp$.
\end{VERMUTUNG}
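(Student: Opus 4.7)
The plan is to identify every connected component of $N(\mathcal{M}') \cap D_S$ as (the normalization of) the Zariski closure of a Shimura variety coming from a boundary component of $\nSD$, and then to exploit that these boundary Shimura varieties are flat over $\spec(\OOO)$ with non-empty generic fibre. Since each irreducible component would then meet the generic fibre, no component can be entirely contained above $\wp$.

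The first substantive step is to combine the Faltings-Chai stratification \cite{FC} of $D \subset \nSh({}^{K'}_{\nRPCD'} \nSDp)$ with the functoriality of boundary components. The boundary $D$ decomposes into locally closed strata $D_{[\nSDBp, \sigma]}$, indexed by equivalence classes of pairs $(\nSDBp, \sigma)$ with $\nSDBp \Longrightarrow \nSDp$ a rational boundary component and $\sigma \in \nRPCD'$ a cone in the corresponding piece of the conical complex; each such stratum is a finite quotient of a torus-bundle minus its zero section over an Abelian scheme over the uncompactified integral Shimura variety attached to $\nSDBp$, and in particular is flat over $\spec(\Zpp)$. By Proposition \ref{FUNCTORIALITYBOUNDARYCOMP} and Definition \ref{DEFBOUNDARYCOMPRPCD}, every rational boundary component $\nSDB \Longrightarrow \nSD$ maps to a unique $\nSDBp$, and $\nRPCD$ is obtained from $\nRPCD'$ by pullback and restriction. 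I would then combine the analytic description of $\mathcal{M}'_\C \cap D_\C$ as a union of images of $\nSh({}^{K_1}_{\nRPCD|_{\nSDB}} \nSDB)$ indexed by these boundary maps (essentially Pink's decomposition \cite{Pink} applied to the embedding $\nSD \hookrightarrow \nSDp$) with integral canonicity (Main Theorem \ref{MAINTHEOREM1}(3)) applied to test schemes mapping into $D_{[\nSDBp, \sigma]}$. Using that $\nSDB$ is again of Hodge type, one identifies $N(\mathcal{M}') \cap D_{[\nSDBp, \sigma]}$ with (a finite disjoint union of quotients of) the canonical integral model of $\nSh({}^{K_1} \nSDB)$ inside the analogous stratum. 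Each such piece is flat over $\OOO$ with non-empty generic fibre, which finishes the argument.

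The hard step is the last compatibility: identifying the Zariski closure with the image of the canonical integral model of the boundary datum. The boundary strata of the Faltings-Chai compactification parametrize $1$-motive / semi-abelian degenerations carrying a principal polarization, and lying in $\mathcal{M}'$ amounts to the existence of an extra Hodge-theoretic structure (coming from $\nSD \hookrightarrow \nSDp$) that must be propagated along these degenerations in mixed characteristic. This is a genuine integral comparison problem on the boundary, analogous to the interior results of Kisin \cite{Kisin, Kisin2} and Vasiu \cite{Vasiu1}; it is known for PEL data (via Lan's work) and in small-dimensional orthogonal cases (via Madapusi Pera), but the absence of a direct crystalline description of the extra structure at the mixed Shimura boundary strata appears to be the central obstruction to proving \ref{MAINCONJECTURE} in full Abelian-type generality.
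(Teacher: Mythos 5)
You have set out to prove a statement that the paper itself does not prove: \ref{MAINCONJECTURE} is labelled a \emph{Conjecture} (Vermutung), is described in the introduction and in Section 6 as an assumption that ``remains unproven,'' and every subsequent result depending on it is stated conditionally (``Assume \ref{MAINCONJECTURE}''). The paper only records that it is known for $m\le 5$ via Lan's P.E.L.-type work and ``was announced in general.'' So there is no proof in the paper to compare against, and your proposal does not close the gap either --- your own final paragraph concedes that the decisive step is ``the central obstruction.''

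To name the gap concretely: your strategy is to show that each connected component of $N(\mathcal{M}')\cap D_S$ is flat over $\OOO$ by identifying it with (a quotient of) the canonical integral model of a boundary mixed Shimura variety $\nSh({}^{K_1}\nSDB)$ sitting inside the Faltings--Chai stratum $D_{[\nSDBp,\sigma]}$. But $N(\mathcal{M}')\cap D_S$ is the special locus of a Zariski closure, and a priori it may contain vertical components in characteristic $p$ that are invisible in the generic fibre; the conjecture is precisely the assertion that it does not. The tools you invoke do not rule this out: integral canonicity (Main Theorem \ref{MAINTHEOREM1}, 3.) is an extension property for morphisms from test schemes out of the \emph{generic} fibre and says nothing about which components the Zariski closure acquires at the boundary in the special fibre; and the integral boundary stratification of $\nSh({}^K_\nRPCD\nSD)$ into mixed Shimura varieties (Main Theorem \ref{MAINTHEOREM1a}, 3.--4.) is itself only established \emph{assuming} \ref{MAINCONJECTURE}, so appealing to it would be circular. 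What would be needed is an unconditional integral comparison at the boundary --- propagating the extra Hodge-type structure along the semi-abelian degenerations parametrized by the Faltings--Chai strata in mixed characteristic --- and this is exactly the open input the paper isolates as a hypothesis rather than proves.
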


\begin{PAR}\label{BOUNDARYSTRATA}
The toroidal compactifications will have a stratification, which consists of mixed Shimura varieties itself. For this, we have to set up some notation (cf. \cite[7.3ff]{Pink}).
Let $(\iota, \rho): {}^{K'}_{\nRPCD'} \nSDB \Longrightarrow {}^{K}_{\nRPCD} \nSD$ be a boundary component. 
For each cone 
$\sigma \in \Delta'$ with $\sigma \subset C(\nX^0_\nSD, \nP_\nSDB) \times \rho'$ for some $\rho'$ in the equivalence class of $\rho$,
we define a $p$-ECMSD ${}^{K_{[\sigma]}}_{\nRPCD_{[\sigma]}}\nSDB_{[\sigma]}$ as follows:
$\nSDB_{[\sigma]}$ is defined by
\begin{eqnarray*}
 \nP_{\nSDB_{[\sigma]}} &:=& \nP_\nSDB / <\sigma>, \\
 \nX_{\nSDB_{[\sigma]}} &:=& \nP_{\nSDB_{[\sigma]}}(\R)\nU_{\nSDB_{[\sigma]}}(\C)-\text{orbit generated by } (\nX_\nSD^0/<\sigma>(\C)) \times \{ \sigma \nP_\nSD(\Af) \} \\
  && \text{ in } (\nX_{\nSD}^0/<\sigma>(\C)) \times \{[\sigma] / \nP_\nSD(\Af)\} \text{ for } \sigma \in [\sigma]\cap \nRPCD(\nX_\nSD^0,\nP_\nSDB,1).
\end{eqnarray*}
$K_{[\sigma]}$ is the image of $K'$ under the projection $\nP_\nSDB \rightarrow \nP_{\nSDB_{[\sigma]}}$.
$\nRPCD_{[\sigma]}$ is defined by
\[ \nRPCD_{[\sigma]}(\nX^0_{\nSDB_{[\sigma]}}, \nP_{\nSDB_{[\sigma]}}, \overline{\rho} ) := \{\tau \text{ mod } <\sigma> \where \tau \in \nRPCD(\nX^0_\nSD, \nP_\nSDB, \rho) \text{ such that } \sigma \text{ is a face of } \tau \}. \] 
It is $K_{[\sigma]}$-admissible.

Furthermore, we define
\begin{eqnarray*}
\Gamma &:=& (\Stab_{Q(\Q)}(\nX_\nSDB)\cap (\nP_\nSDB(\Af) ({}^\rho K))) / \nP_\nSDB(\Q),
\end{eqnarray*}
where $Q$ is the parabolic describing $\nSDB$.

$\Sigma(\iota, \rho)$ is defined to be the set of equivalence classes of the action of $\Gamma$ (defined above)
on those $[\sigma] \in \nP_\nSDB(\Q) \backslash (\nRPCD')^0 / \nP_\nSDB(\Af)$, which satisfy $\sigma \subset \nC(\nX_\nSD^0, \nP_\nSDB) \times \rho'$ for some $\nX_\nSD^0$.
\end{PAR}

\begin{PAR}\label{COMPUNIPOTENTFIBRE}
Recall from \ref{UNIPOTENTFIBRE}: Any morphism
\[ \nSD \rightarrow \nSD/\nU \]
is a torsor under the {\em group object}
\[ \nSD/\nW_\nSD[U,0] \rightarrow \nSD/\nW_\nSD \]
(cf. also \cite[6.6--6.8]{Pink}).
The corresponding map of Shimura varieties (assume all $K$'s in the sequel to be neat):
\[ \nSh({}^{K} (\nSD/\nW_\nSD)[U,0]) \rightarrow \nSh({}^{K/U} \nSD/\nW_\nSD) \]
is a split torus with character group (in general non canonically) isomorphic to $U_\Q \cap K$.
If $\nRPCD$ is a $K$-admissible polyhedral cone decomposition for $\nSD$, concentrated in the unipotent fibre (\ref{DEFCONCENTRATIONUNIPOTENTFIBRE}), we can define
an associated torus embedding. Recall: The rational polyhedral cone decomposition is determined by its restriction to 
any $(U_\R)(-1) \times \rho = \nC(\nX^0, \nP_\nSD) \times \rho$ by the
arithmeticity condition (\ref{DEFRPCD}, 3.). Therefore this determines a torus embedding (see [FC, IV, \S 2] for the integral case):
\[ \nSh({}^K \nSD/\nW_\nSD[U,0]) \hookrightarrow \nSh({}^K_\nRPCD \nSD/\nW_\nSD[U,0]) \]
and a corresponding toroidal embedding of the torsor:
\[ \nSh({}^{K'} \nSD) \hookrightarrow \nSh({}^{K'}_\nRPCD \nSD) \]
(here $K'$ and $K$ are related by $K/U = K'/W$ and $K\cap \nU(\Af)=K' \cap \nU(\Af)$.)
\end{PAR}

\begin{HAUPTSATZ}\label{MAINTHEOREM1a}
Let $p\not=2$ be a prime.
{\em Assume Main Conjecture \ref{MAINCONJECTURE}}.

There is a {\em unique} (up to unique isomorphism) extension of the pseudo-functor $\nSh$ to the category of $p$-ECMSD 
of Abelian type --- with sufficiently fine smooth (w.r.t. the respective $K$) and projective $\nRPCD$ --- to the category of Deligne-Mumford stacks over reflex rings, satisfying 

\begin{enumerate}
 \item The morphism $\nSh(\id,1): \nSh({}^K \nSD) \rightarrow \nSh({}^K_\nRPCD \nSD)$ is an open embedding.
 If $(\gamma, \rho): {}^{K'}_{\nRPCD'} \nSDi \rightarrow {}^K_{\nRPCD} \nSD$ is an embedding, 

  \[ \nSh(\gamma, \rho) \times \pi_\nSDi: \nSh({}^{K'}_{\nRPCD'} \nSDi) \rightarrow \nSh( {}^K_\nRPCD \nSD) \times_{\spec(\OOO_\nSD)} \spec(\OOO_\nSDi) \]
 is a normalization map followed by a closed embedding.

\item If $\nRPCD=\nRPCD^0$ (i.e. $\nRPCD$ is concentrated in the unipotent fibre, \ref{DEFCONCENTRATIONUNIPOTENTFIBRE}), 
$\nSh({}^{K}_{\nRPCD} \nSD)$ is the toroidal embedding described in \ref{COMPUNIPOTENTFIBRE}.

\item
  $\nSh({}^{K}_\nRPCD \nSD)$ possesses a stratification into mixed Shimura varieties (stacks)
\[  \underset{ \substack{ [ (\iota, \rho): {}^{K'}_{\nRPCD'} \nSDB \Longrightarrow {}^K_\nRPCD \nSD ] \\
   [\sigma] \in \Sigma(\iota, \rho) } } {\coprod}
[\Stab_{\Gamma}([\sigma]) \backslash \nSh({}^{K_{[\sigma]}}\nSDB_{[\sigma]})].  \]

Here $[(\iota, \rho): {}^{K'}_{\nRPCD'} \nSDB \Longrightarrow {}^K_\nRPCD \nSD]$ in the first line means equivalence classes of boundary
components of ${}^K_\nRPCD \nSD$ (\ref{DEFBOUNDARYCOMPRPCD}).

\item 
For each boundary map (\ref{DEFBOUNDARYCOMPRPCD})
\[ (\iota, \rho): {}^{K'}_{\nRPCD'} \nSDB \Longrightarrow {}^K_\nRPCD \nSD, \]
and $\sigma \in \nRPCD'$ such that $\sigma \subset \nC(\nX_\nSD^0, \nP_\nSDB) \times \rho'$ for some $\nX_\nSD^0$ and $\rho'$, there is a boundary isomorphism
\[ \nSh(\iota, \rho) : \left[\Stab_{\Gamma}([\sigma]) \backslash \widehat{\nSh( {}^{K'}_{\nRPCD'}\nSDB)} \right] \rightarrow \widehat{\nSh({}^K_\nRPCD \nSD)} \]
of the formal completions along the boundary stratum
\[ \left[ \Stab_{\Gamma}([\sigma]) \backslash \nSh( {}^{K_{[\sigma]}}_{\Delta_{[\sigma]}} \nSDB_{[\sigma]} ) \right]. \]

The complexification of $\nSh(\iota, \rho)$ converges in a neighborhood of the boundary stratum and is
in the interior, via the identification with the complex analytic mixed Shimura varieties given by a quotient of the map
\[ \nX_{\nSDB \Longrightarrow \nSD} \times (\nP_\nSDB(\Af) / K')  \rightarrow \nX_{\nSDB \Longrightarrow \nSD} \times (\nP_\nSD(\Af) / K)  \]
induced by the closed embedding $\nP_\nSDB \hookrightarrow \nP_\nSD$, where we considered $\nX_{\nSDB \Longrightarrow \nSD} \subseteq \nX_\nSD$
as a subset of $\nX_{\nSDB}$ via the analytic boundary map (\ref{BOUNDARYCOMPONENTS}).

The stratification is compatible with the functoriality, i.e. the morphisms $\nSh(\cdot)$ induce
morphisms of the strata of the same type again.
\end{enumerate}

We have in addition: 
\begin{itemize}
\item[5.] $\nSh({}^{K}_\nRPCD \nSD)$ is smooth.
\item[6.] $\nSh({}^{K}_\nRPCD \nSD)$ is proper, if $\nRPCD$ is complete.
\item[7.] $\nSh({}^{K}_\nRPCD \nSD)$ is a projective scheme, if $\nRPCD$ is complete and $K$ is neat.
\item[8.] The complement $D$ of the open stratum $\nSh({}^K \nSD)$ in $\nSh({}^K_\nRPCD \nSD)$ ---
the union of all lower dimensional strata in 3. --- is a
smooth divisor of normal crossings on $\nSh({}^K_\nRPCD \nSD)$.
\end{itemize}
\end{HAUPTSATZ}

\begin{BEM}
The stratification in 3. is indexed by pairs of an equivalence class of boundary components $[ (\iota, \rho): {}^{K'}_{\nRPCD'} \nSDB \Longrightarrow {}^K_\nRPCD \nSD ]$
and a class $[\sigma] \in \Sigma(\iota, \rho)$. The set of these pairs is just isomorphic to the set of double cosets $\nP_\nSD(\Q) \backslash \Delta / K$.
The bijection is as follows. Each $\sigma \in \Delta$ is supported on a $\nC(\nX^0_\nSD, \nP_\nSDB) \times \rho$. $\nSDB$ and $\rho$ determine a boundary component 
$(\iota, \rho): {}^{K'}_{\nRPCD'} \nSDB \Longrightarrow {}^K_\nRPCD \nSD$ and the class of the image of $\sigma$ under restriction to $\nRPCD'$ lies in $\Sigma(\iota, \rho)$.

The stratum $\nSh({}^{K_{[\sigma]}} \nSDB_{[\sigma]})$ is contained in the closure of the stratum
$\nSh({}^{K_{[\tau]}} \nSDB_{[\tau]})$, if and only if (up to a change of representatives in $\nP_\nSD(\Q) \backslash \Delta / K$)
$\tau$ is a face of $\sigma$.
\end{BEM}

\begin{PAR}
There exist integral canonical models of the compact duals as well, which are defined as
conjugacy classes of quasi-parabolic subgroup schemes. 
Let $p$ be a prime and $\nSD$ be $p$-integral mixed Shimura data. Let $\OOO$ be the reflex ring of it at $p$ and $S=\spec(\OOO)$. The closure $\overline{M}$ of the conjugacy class of the morphisms $u_x, x \in \nX_\nSD$ is defined over $\OOO$ by Definition \ref{DEFREFLEXRING}. It corresponds to a
section $t' \in \mathcal{FTYPE}(S)$.

The image of $\overline{M}$ via $\qpar$ is a fibre of the morphism `$\type$' above a section $t: S \rightarrow \mathcal{TYPE}$. We define $\nShD(\nSD)$ to be this fibre. It may, as a scheme over $\spec(\Zpp)$, be
seen as an open and closed subscheme of $\mathcal{QPAR}$ itself.
We understand here the action of $\nP_\nSD$ on $\mathcal{QPAR}$ by conjugation on the right (contrary to \ref{QUASIPARABOLICS}).
The action fixes the morphism `$\type$', hence we have an induced operation on $\nShD(\nSD)$.
\end{PAR}

\begin{HAUPTSATZ}\label{MAINTHEOREM2}The construction above yields a functor from $p$-MSD to schemes over reflex rings (defined analogously).
$\nShD(\nSD)$ is a smooth connected quasi-projective right $\nP_{\nSD,\OOO_\nSD}$-homogeneous scheme
over $S$ called {\bf the dual} of $\nSD$ (depending only on $\nP_\nSD$ and $\nh(\nX_\nSD)$) with the following properties:
\begin{enumerate}
\item If $\nSD$ is pure, $\nShD(\nSD)$ is projective.
\item Let $\nL$ be a free $\Zpp$-module of finite dimension, acted on faithfully by $\nP_\nSD$.
$\nShD(\nSD)$ (considered as scheme over $\OO_\nSD$) represents
\[ S' \mapsto \{ \text{ filtrations $\{F^\bullet\}$ on $\nL_{S'}$ compatible with $(\nP_{\nSD,S'},\nL_{S'})$ of type $t'$ } \}, \]
compatible with $\nP_{\nSD, \OO_\nSD}$-action (defined on $\nL$ as $v \cdot g := g^{-1}v$).
\item
For each map of $p$-MSD $\gamma: \nSDi \rightarrow \nSD$,
 \[ \nShD(\gamma) \times \pi_\nSDi: \nShD(\nSDi) \rightarrow \nShD( \nSD) \times_{\spec(\OOO_\nSD)} \spec
 (\OOO_\nSDi) \]
 is homogenous and is a closed embedding, if $\gamma$ is an embedding.
\item
For each $p$-integral boundary map $\iota: \nSDB \Longrightarrow \nSD$, there is a
$\nP_\nSDB$-equivariant open embedding
\[ \nShD(\iota): \nShD(\nSDB) \hookrightarrow \nShD(\nSD). \]
\item
There is an $\nP_\nSD(\R)\nW_\nSD(\C)$-equivariant open embedding (Borel embedding)
\[ h(\nX_\nSD) \hookrightarrow (\nShD(\nSD) \times_{\spec(\OO_X)} \spec(\C))^{an}.  \]
It is an isomorphism, if $\nG_\nSD$ (the reductive part of $\nP_\nSD$) is a torus.
These embeddings are compatible with the embeddings in 4., resp. (\ref{BOUNDARYCOMPONENTS2}, 1.)\footnote{going in different directions!}. Furthermore they are compatible with morphisms of Shimura data.
\end{enumerate}
\end{HAUPTSATZ}

\begin{BEM}
For a boundary component $\nSDB \Longrightarrow \nSD$, 
there is also an embedding 
\[ \nShD(\nSDB/\nW_\nSDB) \hookrightarrow \nShD(\nSD) \]
into (in general not onto) the complement of the image
of $\nShD(\nSDB)$. The image of the composition with the Borel embedding $h(\nX_\nSDB/\nW_\nSDB) \hookrightarrow \nShD(\nSDB/\nW_\nSDB)(\C)$ is
the boundary component of $h(\nX_\nSD)$ associated with $Q$ (the parabolic associated with $\nSDB$) in the sense of [AMRT], if $\nSD$ is pure.
\end{BEM}

The following Main Theorem \ref{MAINTHEOREM3STACKS} comprises the arithmetic theory of automorphic vector bundles on toroidal compactifications
(see \cite[Theorem 3.5.2]{Thesis} for a more concrete formulation of this). We will later derive a very concrete classical $q$-expansion principle (\ref{QEXPANSION}). The theorem includes many kinds of classical results on CM-values of integral (at $p$ of good reduction) modular forms on mixed Shimura varieties.

First, by taking quotients, we have a pseudo-functor $\nSD \rightarrow \left[ \nShD(\nSD) / \nP_{\nSD,\OOO_\nSD} \right]$ from $p$-MSD to 
the 2-category of Artin stacks over reflex rings (defined analogously to above). It can be extended to $p$-ECMSD by forgetting $K$ and $\nRPCD$.

\begin{HAUPTSATZ}\label{MAINTHEOREM3STACKS}Let $p\not=2$ be a prime.
{\em Assume Main Conjecture \ref{MAINCONJECTURE}}.

There is a {\em unique} (up to unique isomorphism) pseudo-natural transformation 
\[ \nMorphSt({}^K_\nRPCD \nSD): \nSh({}^K_\nRPCD \nSD) \rightarrow \left[ \nShD(\nSD) / \nP_{\nSD,\OOO_\nSD} \right], \]
and 2-isomorphism (or modification) of its restriction to $p$-EMSD:
\[ (\nMorphSt({}^K \nSD) \times_{\spec(\OOO_\nSD)} \spec(\C))^{an} \rightarrow \nMorphSt_\C({}^K \nSD), \]
where $\nMorphSt_\C({}^K \nSD)$ is the complex analytic morphism described by the diagram:
\[ \xymatrix{ [ \nP_\nSD(\Q) \backslash \nX_\nSD \times \nP_\nSD(\Af) / K ]  \\ 
[ \nP_\nSD(\Q) \backslash \nX_\nSD \times \nP_\nSD(\C) \times \nP_\nSD(\Af) / K ] \ar[u] \ar[d]  \\ 
(\nShD(\nSD) \times_{\spec(\OO_X)} \spec(\C))^{an}, }
\]
satisfying:

\begin{enumerate}
\item
Let $[\iota, \rho]: {}^{K'}_{\nRPCD'}\nSDB \Longrightarrow {}^K_\nRPCD \nSD$ be a boundary map and $\sigma \in \nRPCD'$ such that $\sigma \in C(\nX^0_\nSD, \nP_\nSDB)$, 
as in \ref{BOUNDARYSTRATA}.

There is a 2-isomorphism $[\iota, \rho]_\nMorphSt$ fitting into the diagram
\[\xymatrix{
\widehat{ \left[ \Stab_{\Gamma}([\sigma])\backslash \nSh({}^{K'}_{\nRPCD'} \nSDB) \right]  } \ar[rr]^-{\nMorphSt({}^{K'}_{\nRPCD'}\nSDB)} \ar[dd]_{\nSh(\iota, \rho)} \ar@/^2pc/[rdrd]_{}="1" \ar@/_2pc/[rdrd]^{}="2"
&& \left[ \nShD(\nSDB) / \nP_{\nSDB} \right] \ar[dd]^{\nShD(\iota)} \\
& \\
\widehat{\nSh({}^{K}_{\nRPCD} \nSD)}  \ar[rr]_-{\nMorphSt({}^{K}_{\nRPCD} \nSD)} && 
\left[ \nShD(\nSD) / \nP_{\nSD} \right] 
\ar@{=>}^-{[\iota, \rho]_\nMorphSt}"2";"1"
}\]
where the formal completion is taken along
\[ \left[ \Stab_{\Gamma}([\sigma]) \backslash \nSh( {}^{K'_{[\sigma]}}_{\Delta'_{[\sigma]}} \nSDB_{[\sigma]} ) \right], \]
such that
$[\iota, \rho]_{\nMorphSt,\C}$ converges in the interior and is the obvious 2-morphism over $\C$.

\item (canonicity) For $\nSD = \nH_{g_0}[(\nI\otimes\nI)^s,\nL_0\otimes \nI]$ (and $\nRPCD=0$),
$\nMorphSt({}^K \nSD)$ is described by the diagram:
\[  \cat{${}^K \nSD$-$\nL$-mot} \leftarrow \cat{${}^K \nSD$-$\nL$-mot-triv} \rightarrow \nShD(\nSD), \]
(see \ref{DEF1MOTIVECATEGORY}) given by the forgetful left arrow (which is a right $\nP_\nSD$-torsor) and the right arrow given by transport of
Hodge filtration. The identification with $\nMorphSt_\C({}^K \nSD)$ is the one given by the period construction (\ref{DOUBLEQUOTIENT}).

\item For $\nRPCD$ concentrated in the unipotent fibre (\ref{DEFCONCENTRATIONUNIPOTENTFIBRE}), $\nMorphSt$ is given as follows:
Recall from \ref{COMPUNIPOTENTFIBRE} that 
$\nSh({}^{K'} \nSD) \rightarrow \nSh({}^{K'} \nSD/\nU)$ is a torsor under the torus $T=\nSh({}^{K} \nSD/\nW_\nSD[\nU,0])$ over $\nSh({}^{K} \nSD/\nW_\nSD)$ for $K$ and $K'$ chosen as in \ref{COMPUNIPOTENTFIBRE}. 
$\Xi({}^{K'} \nSD)$ may etale locally (say on $S \rightarrow \nSh({}^{K'/\nU} \nSD/\nU_\nSD)$) be trivialized $T$-invariantly.
$\nMorphSt({}^{K'}_\nRPCD \nSD)$ is then the unique extension of $\nMorphSt({}^{K'} \nSD)$ to the torus embedding $\nSh({}^{K'}_\nRPCD \nSD)$ by means of extending any such $T$-invariant trivialization. 
\end{enumerate}
\end{HAUPTSATZ}

\section{Mixed Shimura varieties of symplectic type}

\begin{PAR}
If ${}^K \nSD$ is $p$-EMSD and $\nL$ is a representation of $\nP_\nSD$, there should exist a category fibred in groupoids
(over $\spec(\OO_\nSD)$) 
\[ \cat{${}^K \nSD$-$\nL$-mot} \]
of motives with ``$(\nP_\nSD, \nL)$-structure'' (see e.g. the introduction to \cite{Thesis}). It should
yield (up to taking several connected components) a canonical integral model of the Shimura variety described by
\ref{MAINTHEOREM1}. This can be made precise for P.E.L.-type Shimura data and the associated standard representation. We will need only the case of the symplectic Shimura data and certain representations (sometimes different ones, however) of their groups.

For each of the symplectic mixed Shimura data (\ref{SYMPLECTICSHIMURADATA}) we will explicitly describe
a moduli problem in terms of 1-motives which belongs to the standard representation of the underlying group scheme.
These moduli problems are representable by a Delige-Mumford stack and give an explicit description of
the canonical integral models. In the case of the morphism $\nMorphSt$ encoding the theory of automorphic vector bundles, the corresponding explicit model was already used to normalize it for all mixed Shimura varieties of Abelian type (cf. Main Theorem \ref{MAINTHEOREM3STACKS}).

For the language of 1-motives, as used here, we refer the reader to \cite[Chapter 5]{Thesis}.
We will denote a motive $M$ by $[\xymatrix{\underline{Y} \ar[r]^\mu &  G}]$ or by the symmetric description
$(A, A^\vee, \underline{X}, \underline{Y}, \alpha, \alpha^\vee, \nu)$ interchangingly. 
\end{PAR}

\begin{LEMMA}
The reflex field of the symplectic mixed Shimura data (\ref{SYMPLECTICSHIMURADATA}) is $\Q$.
\end{LEMMA}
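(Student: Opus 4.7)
By Definition \ref{DEFREFLEXRING} the reflex field $E_\nSD$ is the field of definition inside $\C$ of the $\nP_{\nSD,\C}$-conjugacy class $M$ of the cocharacters $u_x:\G_{m,\C}\to \nP_{\nSD,\C}$ attached to points $x\in\nX_\nSD$. The plan is to exhibit, by brute force, one $\Q$-rational point of $M$: since $\nP_\nSD$ (and hence $\underline{\Hom}(\G_m,\nP_\nSD)$) is defined over $\Q$, the $\nP_{\nSD,\Q}$-orbit of a $\Q$-rational cocharacter in $M$ is a $\Q$-subvariety of $\underline{\Hom}(\G_m,\nP_\nSD)$ whose base change to $\C$ is $M$, forcing $E_\nSD=\Q$.

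I will treat the universal symplectic case $\nSD=\nH_{g_0}[(\nI\otimes\nI)^s,\,\nI\otimes\nL_0]$ first; the remaining data in \ref{SYMPLECTICSHIMURADATA} are quotients $/\nGa(\nU')$ with $\nU'\subseteq(\nI\otimes\nI)^s$ a $\Q$-submodule, and the special cases of \ref{GSP} and \ref{PSP} are further degenerations. From \ref{USP} the morphism $h_x$ corresponds to a polarized mixed Hodge structure on $\nL_\R=\nL_{0,\R}\oplus\nI_\R\oplus\nI_\R^*$ of type $(-1,-1),(-1,0),(0,-1),(0,0)$ adapted to the weight filtration of \ref{USP}. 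Unwinding the convention that $u_x(z)$ acts on a $(p,q)$-piece as $z^{-p}$, one reads off that $u_x(z)$ acts on $\nL_\C$ as the scalar $z$ on $\nI_\C\oplus F$ and as $1$ on $\nI_\C^*\oplus F'$, where $\nL_{0,\C}=F\oplus F'$ is the Lagrangian decomposition determined by the pure weight $-1$ Hodge structure on $\nL_0$.

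Now choose any $\Q$-rational Lagrangian decomposition $\nL_{0,\Q}=L\oplus L'$, which exists because a unimodular symplectic $\Q$-space is hyperbolic, and define
\[ u_0:\G_{m,\Q}\to \GSp(\nL_{0,\Q})\hookrightarrow \USp(\nL_0,\nI)_\Q,\quad u_0(z)|_L=z\cdot\id,\ u_0(z)|_{L'}=\id. \]
Then $\lambda(u_0(z))=z$, so via the $\GSp(\nL_0)$-action built into the semi-direct product of \ref{USP} (standard on $\nL_0$, multiplication by $\lambda$ on $\nI$, trivial on $\nI^*$) the cocharacter $u_0$ acts on $\nL_\C$ with the same eigenvalue pattern as $u_x$. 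Two cocharacters of $\GSp(\nL)_\C$ inducing the same weight decomposition of the standard representation are conjugate, and such a conjugation can be chosen inside $\USp(\nL_0,\nI)(\C)$; hence $u_0\in M$.

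The only step that demands attention is the explicit matching between the Hodge-theoretic description of $u_x$ given in \ref{USP} and the action of the Levi cocharacter $u_0$ on $\nL$ through the semi-direct product structure — once verified, $u_0$ is a $\Q$-rational point of $M$ and so $E_\nSD=\Q$. For any quotient $\nP_{\nSD}=\USp(\nL_0,\nI)/\nGa(\nU')$ in \ref{SYMPLECTICSHIMURADATA} the cocharacter $u_0$ factors through the Levi $\GSp(\nL_0)$, which maps isomorphically to its image in the quotient, so $u_0$ descends to a $\Q$-rational cocharacter representing the corresponding class $M$; the same argument covers the degenerate data of \ref{GSP} and \ref{PSP}.
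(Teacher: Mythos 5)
The paper states this lemma without proof, so there is no argument of the author's to compare yours against; what you give is the standard computation (produce a $\Q$-rational cocharacter in the conjugacy class $M$ of \ref{DEFREFLEXRING} from a rational Lagrangian splitting of $\nL_0$), and its overall structure is sound, including the reduction of the quotients $/\nGa(\nU')$ and the degenerate data of \ref{GSP} and \ref{PSP} to the universal case.

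Two steps are asserted rather than proved, and one of them is the real content. The harmless one: for a general $x$ the Deligne bigrading of the mixed Hodge structure is not compatible with the chosen complements, so the $z$-eigenspace of $u_x(z)$ is $\nL^{-1,-1}\oplus \nL^{-1,0}$, which contains $\nI_\C$ but whose weight $-1$ part is only a lift of a Lagrangian of $\gr_{-1}\nL_\C=\nL_{0,\C}$, not a subspace of $\nL_{0,\C}$; since your conjugacy argument only uses the multiset of eigenvalues ($z$ with multiplicity $\dim\nI+g_0$, $1$ with multiplicity $\dim\nI^*+g_0$), which depends only on the graded pieces, this costs nothing. The substantive one is the claim that the conjugation between $u_x$ and $u_0$ ``can be chosen inside $\USp(\nL_0,\nI)(\C)$'': conjugacy in $\GSp(\nL)_\C$ does not formally imply conjugacy in a subgroup, and this is exactly where the work lies. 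It is true, and can be repaired either directly --- $u_x$ factors through a maximal torus of $\USp(\nL_0,\nI)_\C=\WSp_\C\rtimes\GSp(\nL_0)_\C$, every such torus is conjugate into the Levi because $\WSp$ is unipotent, and two cocharacters of a common maximal torus of $\GSp(\nL_0)_\C$ with the same weight multiset on $\nL_0$ (the weights on $\nI$ and $\nI^*$ being forced by $\lambda$) are conjugate under the signed-permutation Weyl group --- or, more cleanly, by invoking \ref{UNIPOTENTFIBRECONVERSE}: since $\nX_\nSD\rightarrow\nX_{\nH_{g_0}}$ is a torsor under $\nW_0(\R)(\nW_0\cap\nU)(\C)$ containing the Levi lifts $(x,\iota\circ h_x)$, the class $M$ contains $\iota\circ u_{x_0}$ for $x_0\in\nX_{\nH_{g_0}}$, and one is reduced to the classical fact that $u_{x_0}$ is $\Sp(\nL_{0,\C})$-conjugate to your $u_0$ by transitivity on pairs of transverse Lagrangians, i.e.\ to the reflex field computation for $\nH_{g_0}$ itself. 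With either repair your proof is complete.
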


\begin{DEF}\label{DEF1MOTIVECATEGORY}
We define the following stacks over $\Zpp$:
\begin{enumerate}
\item 
Recall the mixed Shimura datum $\nSD=\nH_{g_0}[(\nI\otimes \nI)^s, \nL_0 \otimes \nI]$ from \ref{SYMPLECTICSHIMURADATA}.
Let $\nL = \nI^* \oplus \nL_0 \oplus \nI$ with standard representation (cf. \ref{USP}) of $\nP_\nSD$ on it, where $\nL_{0,\Zpp}$ is a lattice of dimension $2 g_0$ with unimodular symplectic form (if $g_0\not=0$), $\nI_\Zpp$ is any lattice, and
$K \subset \PSp(\nL_\Af)$ is an admissible compact open subgroup. Assume that $\nL\not=0$.
We define
\[ \cat{${}^K \nH_{g_0}[(\nI\otimes \nI)^s, \nL_0 \otimes \nI]$-$\nL$-mot} \]
as the stack of the following data:
\begin{enumerate}
\item a $\Zpp$-scheme $S$,
\item a 1-motive $M=(A,A^\vee,\underline{X},\underline{Y},\alpha,\alpha^\vee,\nu)$ over $S$, with $\dim(A)=g_0$,
\item a $p$-polarization of some degree $d \in \Zpp^*$
\[ \psi_1: A \rightarrow A^\vee \]
and an isomorphism
\[ \psi_2: \underline{X}\otimes \Zpp \rightarrow \underline{Y}\otimes \Zpp \]
of the same degree $d$, such that they give a $p$-polarization of $M$,
\item an isomorphism $\rho: (\nI_{\Zpp}^*)_S \cong \underline{Y} \otimes \Z_{(p)}$,
\item a $K^{(p)}$-level structure (as above)
\[ \xi \in \underline{\Iso}_{(\USp(\nL_{0},\nI),\nL)} (\nL_\Afp, H^{et}(M,\Afp))/K^{(p)}, \]
where the isomorphisms in the etale sheaf on the right have to be compatible with the $(\USp(\nL_{0,\Zpp},\nI_{\Zpp}),\nL_{\Zpp})$-structure 
(see below) on both parameters.

Here the $(\USp(\nL_{0,\Afp}, \nI_{\Afp}),\nL_{\Afp})$-structure is given as follows:
The $p$-polarization induces an isomorphism
\[ H^{et}(M, \Afp) \rightarrow H^{et}(M, \Afp)^*(1). \]
Choosing some isomorphism $\Z_{(p)}^{et}(1) \cong \Afp$, we get an alternating form
on $H^{et}(M, \Afp)$ up to a scalar. The weight filtration satisfies $W_{-2}$ totally isotropic
and $W_{-1} = (W_{-2})^\perp$. Furthermore, we have (via $\rho$) an isomorphism
\[ \gr^0 (H^{et}(M, \Afp)) = \underline{Y} \tensor_\Z \Afp = \nI^*_{\Afp} \]
(i.e. a $\USp$-structure, see \ref{SYMPLECTICGSTRUCTURES}).

\end{enumerate}
Morphisms are $\Z_{(p)}$-morphisms of schemes $S \rightarrow S'$, together with an
$\Zpp$-isomorphism of the pullback of $M$ with the pullback of $M'$ to $S$, 
compatible with polarization (up to scalar), $\rho$'s and level structures.

\item 
With the same notation as in 1. we define
\[ \cat{${}^K \nH_{g_0}[(\nI\otimes \nI)^s, \nL_0 \otimes \nI]$-$\nL$-mot-triv} \]
as the stack consisting of the same data $(S, M, \dots)$ as before, but in addition with a trivialization
\[ \gamma: \nL \otimes \OO_S \rightarrow H^{dR}(M)  \]
of the de Rham realization of $M$ compatible with the $\USp(\nL_0, \nI)$-structure on both sides.
$\nP_{\nH_{g_0}[(\nI\otimes \nI)^s, \nL_0 \otimes \nI]} = \USp(\nL_0, \nI)$ acts on this stack
{\em on the right} by translation of $\beta$.

This renders the stack
 $\cat{${}^K \nH_{g_0}[(\nI\otimes \nI)^s, \nL_0 \otimes \nI]$-$\nL$-mot-triv}$
into a right $\USp(\nL_0, \nI)$-torsor over $\cat{${}^K \nH_{g_0}[(\nI\otimes \nI)^s, \nL_0 \otimes \nI]$-$\nL$-mot}$.

\item
Consider the Shimura datum $\nSD=\nH_0[\nU,0]$, a simple unipotent extension of $\nH_0$,
where $U$ is any lattice of dimension $k$. 
We have $\nP_\nSD = \nGa(\nU) \rtimes \Gm$. $\nP_\nSD$ acts on $\nL' = \Zpp \oplus \nU^*$ as in \ref{SIMPLEGSTRUCTURES}  
(If $\nU = (\nI \otimes \nI)^s$ we get one of the basic symplectic Shimura data but $\nL'$ is {\em not} the standard
representation). Let $K \subset \PSp(\nL_\Af)$ be an admissible compact open subgroup.
We define
\[ \cat{${}^K \nH_0[\nU,0]$-$\nL'$-mot} \]
as the stack of the following data:
\begin{enumerate}
 \item a $\Zpp$-scheme $S$,
 \item a 1-motive of the form $M=[\xymatrix{\underline{Y} \ar[r] & T}]$ (without part of weight -1),
 \item an isomorphism $\rho: (\nU_{\Zpp}^*)_S \cong \underline{Y} \otimes \Z_{(p)}$,
 \item an isomorphism $\rho': (\Zpp)_S \rightarrow \underline{X} \otimes \Z_{(p)}$,
 \item a $K^{(p)}$-level structure (as above),
\[ \xi \in \underline{\Iso}_{(\nP_\nSD,\nL_{\Zpp})} (\nL'_\Afp, H^{et}(M,\Afp))/K^{(p)}, \]
where the isomorphisms in the etale sheaf on the right have to be compatible with the obvious $(\nP_\nSD,\nL'_{\Zpp})$-structure on both parameters.
\end{enumerate}
Isomorphisms are morphisms over $\spec(\Zpp)$ of schemes $S \rightarrow S'$
together with an isomorphism of $M$ with the pullback of $M'$ compatible with $\rho$'s and level structures.

\item 
With the same notation as in 3. we define
\[ \cat{${}^K \nH_0[\nU, 0]$-$\nL'$-mot-triv} \]
consisting of the same data $(S, M, \dots)$ as before, but in addition with a trivialization
\[ \gamma: \nL' \otimes \OO_S \rightarrow H^{dR}(M)  \]
of the de Rham realization of $M$ compatible with the $(\nP_{\nH_0[\nU, 0]}, \nL')$-structure on both sides.

$\nP_{\nH_0[\nU, 0]}$ acts on this stack {\em on the right} by translation of $\beta$.

This renders the stack
 $\cat{${}^K \nH_{0}[\nU, 0]$-$\nL'$-mot-triv}$
into a right $\nP_\nSD$-torsor over $\cat{${}^K \nH_{0}[\nU, 0]$-$\nL'$-mot}$.
\end{enumerate}
\end{DEF}

Actually there is a similar description of stacks 
\[ \cat{${}^K \nSD$-$\nL$-mot-triv} 
 \qquad \cat{${}^K \nSD$-$\nL$-mot} \] 
where $\nSD$ is any of the symplectic Shimura data.

\begin{DEF}
\label{FUNCTORIALITY1MOT}
Let $\nSD$ be one of the symplectic mixed Shimura data of Definition \ref{DEF1MOTIVECATEGORY} with natural 
representation of $\nP_\nSD$ on $\nL_{\Zpp}$.
Furthermore for each $\rho \in \nP_\nSD(\Afp)$ and admissible compact open subgroups $K_1,K_2$,
such that $K_1^\rho \subseteq K_2$
we have a map
\[ \cat{${}^{K_1}\nSD$-$\nL$-mot} \rightarrow \cat{$^{K_2}\nSD$-$\nL$-mot} \]
by multiplication of the level-structure by $\rho$ from the right.
\end{DEF}

\begin{PAR}\label{DOUBLEQUOTIENT}
For $\nSD=\nH_{g_0}[(\nI\otimes\nI)^s, \nL_0\otimes \nI]$ with $g_0\not=0$,
 $\cat{${}^K \nSD$-$\nL$-mot}$ yields a model of the analytic
 mixed Shimura variety:
 \[ [\nP_\nSD(\Q) \backslash \nX_\nSD \times \nP_\nSD(\Af) / K].  \]
The identification is given as follows. 
Let an object 
\[ (M, \dots) \in \cat{$\spec(\C)$-${}^K \nSD$-$\nL$-mot} \] 
be given. Choose an isomorphism
$\beta: H^B(M, \Q) \rightarrow \nL_\Q $ (compatible with $(\nP_\nSD(\Q),\nL_\Q)$-structures on both sides). 
Consider the mixed Hodge structure $F^\bullet, W_\bullet$ on 
$H^{B}(M, \C)$, where $F^\bullet$ comes via the comparison isomorphism 
\[ \rho_{dR, B}: H^{dR}(M, \C) \rightarrow H^{B}(M, \C) \] from the Hodge filtration of $H^{dR}$.
The Hodge structure $\beta(F^\bullet), \beta(W_\bullet)$ is given by a morphism $h: \SSS_\C \rightarrow \GL(\nL_\C)$ which factors through the used representation of $\nP_{\nSD, \C}$ on $\nL_\C$ because the isomorphism
was chosen to be compatible with the $\nP_\nSD$-structures.
The level structure $\xi$ gives an element $\beta \rho_{et, B} \xi \in \nP_\nSD(\Af)$. The class of 
$(h, \beta \rho_{et, B} \xi)$ is well defined. 

The identification of a moduli point of the triv-category fibre over $\spec(\C)$ with
\[ \nP_\nSD(\Q) \backslash \nX_\nSD \times \nP_\nSD(\C) \times \nP_\nSD(\Af) / K  \]
sends a pont $(M, \dots,\gamma)$, where $(M, \dots)$ is as before and $\gamma: \nL_\C \rightarrow H_1^{dR}(M)$ is a trivialization to 
the (well-defined) point $(h, \beta \rho_{dR, B} \gamma, \beta \rho_{et, B} \xi)$.

If $g_0=0$, but $\nI \not= 0$, i.e. in the case of the mixed Shimura datum 
$\nSD=\nH_{0}[(\nI\otimes\nI)^s, 0]$, we have two descriptions as a moduli problem, according to the
representations $\nL=\nI \oplus \nI^*$ or $\nL'=\Zpp \oplus (\nI\otimes\nI)^s$.

In the first case (representation $\nL$)
$\nX_\nSD$ is given as the product of the set of Hodge structures as above and $\nX_{\nH_0} = \Hom(\Z, \Z(1))$.
We get the point $(\alpha, h, \beta \rho_{dR, B} \gamma, \beta \rho_{et, B} \xi)$ where $\alpha$ is determined as follows:
The chosen isomorphism $\beta: H_1(M, \Q) \cong \nL_\Q$, pins down a symplectic form on 
$H_1(M,\Q)$. The comparison with the natural $\Q(1)$-valued form on $H_1(M,\Q)$ (which coincides up to scalar) induces an isomorphism $\Q \rightarrow \Q(1)$ of
the form $q \alpha$ with $q \in \Q_{>0}$. 

In the second case (representation $\nL'$) --- and this even includes $\nI=0$ ---
the isomorphism $W_{-1}(\beta)^{-1}: \Q \cong H_1(\Gm, \Q)=\Q(1)$ is  of
the form $q \alpha$, as before. 

This determines in both cases an $\alpha \in \Hom(\Z, \Z(1))$. 
\end{PAR}

The following theorem is due to Riemann, Mumford, Artin, Deligne, etc. in the pure case, due to Pink in the mixed case (rational case) and was extended to the integral mixed case in the thesis of the author \cite{Thesis}:

\begin{SATZ}\label{REPR}
For each of the $p$-integral mixed Shimura data of symplectic type $\nSD$ and each admissible compact open subgroup $K \subset \nP_\nSD(\Af)$
\[ \cat{${}^K \nSD$-$\nL$-mot}, \] 
considered as a model of the analytic mixed Shimura variety via the construction \ref{DOUBLEQUOTIENT}, 
is representable by the canonical model $\nSh({}^K \nSD)$ (compatible with the $\nP_\nSD(\Afp)$-action)
which is a smooth, Deligne-Mumford stack of finite type over $\spec(\Zpp)$.
It is a quasi-projective scheme, if $K$ is neat.

The stack
\[ \cat{${}^K \nSD$-$\nL$-mot-triv} \] 
is representable by a right $\nP_\nSD$-torsor over $\cat{${}^K \nSD$-$\nL$-mot}$.
\end{SATZ}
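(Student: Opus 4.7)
The plan is to reduce to the pure symplectic (Siegel) case via the unipotent extension filtration, and then verify that the resulting moduli stack satisfies the characterizing properties of the canonical integral model listed in Main Theorem \ref{MAINTHEOREM1}.

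First, for the pure case $\nSD=\nH_{g_0}$ with $g_0\ge 1$, a datum in $\cat{${}^K \nSD$-$\nL$-mot}$ degenerates to a $p$-principally polarized (up to $\Zpp^*$) abelian scheme $A/S$ of relative dimension $g_0$ together with a $K$-level structure on $H^{et}(A,\Afp)$ compatible with the symplectic form up to a scalar; representability by a smooth quasi-projective Deligne--Mumford stack (a scheme for $K$ neat) is due to Mumford and Deligne--Rapoport, and the identification with the analytic double coset description via \ref{DOUBLEQUOTIENT} is the classical Riemann bilinear relations interpretation. For $g_0=0$ the moduli problem is vacuous apart from the two-element set of isomorphisms $\Z\cong \Z(1)$, which is represented by $\spec(\Zpp)\coprod \spec(\Zpp)$.

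Next, I would build up the unipotent extensions step by step using Proposition~\ref{UNIPOTENTFIBRECONVERSE} and the resulting description $\nSD \cong (\nSD/\nW_\nSD)[\Lie(\nU_\nSD),\Lie(\nV_\nSD)]$. Passing from $\nH_{g_0}$ to $\nH_{g_0}[0,\nL_0\otimes \nI]$ adds to the moduli problem a morphism $\mu:\underline{Y}\to A$ (a section of $A\otimes \underline{X}$), which, once $\underline{Y}\cong\nI^*$ is fixed via $\rho$, is representable by the relative abelian scheme $A^{\otimes \underline{X}}$ over the Siegel stack. Passing further to $\nH_{g_0}[(\nI\otimes\nI)^s,\nL_0\otimes \nI]$ amounts to replacing $A$ by a semi-abelian extension $G$ of $A$ by the torus $T=D(\nU)$, classified by a symmetric biextension compatible with the principal polarization; representability of the resulting moduli problem is the standard Mumford/Moret-Bailly/Faltings--Chai theory of semi-abelian schemes with compatible polarization and biextension data, and the lift $\underline{Y}\to G$ is again parametrized by a relative semi-abelian scheme. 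Quotienting by a subgroup $\nGa(\nU')$ of $\nGa((\nI\otimes\nI)^s)$ corresponds on motives to pushing out the toric part by the surjection $T\twoheadrightarrow D(\nU/\nU')$, which preserves representability. The remaining cases of Definition~\ref{DEF1MOTIVECATEGORY} (the stack $\cat{${}^K \nH_0[\nU,0]$-$\nL'$-mot}$) are handled identically, the motive being simply $[\underline{Y}\to T]$.

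Having built a candidate smooth Deligne--Mumford stack of finite type over $\spec(\Zpp)$, I would then verify the characterization of Main Theorem~\ref{MAINTHEOREM1}. Smoothness follows from the fact that the deformation functor of a polarized $1$-motive with level structure is formally smooth: on the abelian part this is Grothendieck--Messing / Serre--Tate, the toric part is rigid, and the extension/lattice data live in torsors under smooth groups (an affine space, an abelian scheme, respectively a torus). The identification with the complex analytic mixed Shimura variety via periods is exactly the content of \ref{DOUBLEQUOTIENT}, which I sketched using the comparison isomorphism $\rho_{dR,B}$. Rational canonicity at CM points for the (sub-)torus Shimura data inside $\nH_{g_0}$ is a theorem of Shimura--Taniyama--Deligne--Milne via the reflex norm acting on CM-motives; in the mixed case it extends because CM $1$-motives inherit the CM field structure from their abelian quotient. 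Integral canonicity is checked by extending morphisms from $T\times_S Q$ to $T$: Néron--Ogg--Shafarevich for the abelian quotient, Néron's criterion for tori, and the generic extendibility of finite étale data (which applies because $K$ is admissible, so prime to $p$) extend the 1-motive, its polarization, and its level structure to $T$; uniqueness is the separation of the moduli stack. Functoriality with respect to Hecke operators $(\gamma,\rho)$ is built into the moduli description via translation of the level structure (Definition~\ref{FUNCTORIALITY1MOT}), and embeddings of Shimura data correspond to imposing additional tensors on the $1$-motive, producing a closed embedding followed by a normalization as required.

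Finally, the triv-version is handled as a formal consequence: adding the datum of a trivialization $\gamma:\nL\otimes\OO_S\cong H^{dR}(M)$ compatible with the $(\nP_\nSD,\nL)$-structure defines a right $\nP_\nSD$-torsor over $\cat{${}^K\nSD$-$\nL$-mot}$, and since $\nP_\nSD$ is smooth and affine of type (P) this torsor is representable by a smooth affine stack over the base. The main obstacle is the integral canonicity step: one has to check that a morphism from the generic fibre of a test scheme (\ref{DEFTESTSCHEME}) to the moduli stack extends uniquely, which for the abelian part uses potential good reduction together with the vanishing of ramification in the $\ell$-adic Tate module (Néron--Ogg--Shafarevich) and then a descent through the unipotent fibration to propagate the extension to the torus, lattice, and biextension pieces; it is essentially this step that is worked out in detail in \cite[Chapter 5]{Thesis}.
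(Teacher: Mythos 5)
The paper itself contains no proof of this theorem: it is stated with an attribution only (Riemann, Mumford, Artin, Deligne et al.\ in the pure case, Pink for the rational mixed case, and the author's thesis \cite{Thesis} for the integral mixed case), so there is no in-paper argument to measure yours against. Your overall strategy --- settle the Siegel case by the classical moduli theory of polarized abelian schemes with prime-to-$p$ level structure, climb the unipotent-extension tower of \ref{UNIPOTENTFIBRECONVERSE} by interpreting the extra data as the lattice/extension/torus parts of a $1$-motive, identify the complex fibre via the period construction \ref{DOUBLEQUOTIENT}, and then pin the result down as $\nSh({}^K\nSD)$ by rational canonicity plus the extension property --- is the expected route and is consistent with the sources the paper cites.

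Several specific steps, however, are misstated or only asserted. First, the base case $g_0=0$ is wrong as written: with a level-$N$ structure the moduli problem for $(\Gm,\xi)$ is represented by $\spec(\Zpp[\zeta_N])$, not by $\spec(\Zpp)\coprod\spec(\Zpp)$; this is precisely what Propositions \ref{SATZBOUNDARYPOINTS} and \ref{INTEGRAL1MOTIVES} use, and it matters because the torus data are where rational canonicity in Main Theorem \ref{MAINTHEOREM1} is anchored. Second, your treatment of the quotient data $\nH_{g_0}[(\nI\otimes\nI)^s,\nL_0\otimes\nI]/\nGa(\nU')$ by ``pushing out the toric part along $T\twoheadrightarrow D(\nU/\nU')$'' does not typecheck: the torus part of the $1$-motive has character group $\underline{X}$ of rank $\dim\nI$, whereas $\nU'$ sits inside $(\nI\otimes\nI)^s$; what must be modified is the datum $\nu$ (equivalently the lift $\underline{Y}\to G$), retained only modulo the subtorus of the fibre torus cut out by $\nU'$ --- and the theorem as stated covers all symplectic-type data, so this case cannot be waved through. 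Third, and most substantially, integral canonicity in the mixed case is not delivered by ``N\'eron--Ogg--Shafarevich for the abelian quotient and N\'eron's criterion for tori'': a semi-abelian scheme over the generic fibre of a test scheme need not extend in general; the argument has to use that in this moduli problem the toric part is split of constant rank (its character group is pinned by $\rho$ and the polarization), so that the extension class valued in the dual abelian scheme and the lift $\underline{Y}\to G$ extend by the N\'eron/Weil extension property over the test schemes of \ref{DEFTESTSCHEME}, while the level structure extends because it is prime to $p$. You acknowledge that this step is the content of \cite{Thesis}; that is honest, but it means your proposal, like the paper, ultimately delegates exactly the step that distinguishes the integral mixed statement from the classical results.
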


\begin{PAR}
Note that the torsor $\cat{${}^K \nSD$-$\nL$-mot-triv}$ was used to ``normalize''
the theory of the of the morphism $\nMorphSt$ in \ref{MAINTHEOREM3STACKS}.
Involved in the construction of $\nMorphSt$ there is a $\nP_\nSD$-equivariant morphism
$\nSPB({}^K \nSD) \rightarrow \nShD(\nSD)$ which in the modular description is given by transport of
the Hodge filtration to $\nL_{\OO_S}$ by means of the trivialization $\beta$ (and then taking its fixing quasi-parabolic, cf. \ref{MAINTHEOREM2}, 2.).
\end{PAR}

We have the following `integral' variants of the foregoing stacks:

\begin{DEF}
Fixing an $N \in \N$, we define the following stacks over $\spec(\Z[1/N])$. 
\begin{enumerate}

\item
Let $\nL_{0,\Z}$ be a with unimodular symplectic form of dimension $2g_0$. Let $\nI_\Z$ be another lattice and $\nL_\Z = \nI_\Z^* \oplus \nL_{0,\Z} \oplus \nI_\Z$ as above. Assume again $\nL\not=0$. We define
\[ \cat{${}^N \nH_{g_0}[(\nI_\Z\otimes \nI_\Z)^s,\nL_{0,\Z} \otimes \nI_\Z]$-$\nL_\Z$-mot} \]
as the stack of the following data:
\begin{enumerate}
\item a scheme $S$ over $\spec(\Zpp)$,
\item a 1-motive $M=(A,A^\vee,\underline{X},\underline{Y},\alpha,\alpha^\vee,\nu)$ over $S$, where $\dim(A)=g$,
\item a principal polarization
\[ \psi_1: A \rightarrow A^\vee \]
and an isomorphism
\[ \psi_2: \underline{X} \rightarrow \underline{Y} \]
such that they induce a polarization of $M$,
\item an isomorphism $\rho: (\nI_\Z^*)_S \cong \underline{Y}$,
\item a level-$N$-structure
\[ \xi \in \Iso_{(\USp(\nL_{0,\Z/N\Z},\nI_{\Z/N\Z}),\nL_{\Z/N\Z})} (\nL_{\Z/N\Z}, H^{et}(M,\Z/N\Z)). \]
This means that the isomorphisms have to be compatible with the
$(\USp(\nL_{0,\Z/N\Z},\nI_{\Z/N\Z}),\nL_{\Z/N\Z})$-structure (see the adelic case) on both parameters.
\end{enumerate}
Morphisms are as in the adelic case with the difference, that the $\Zpp$-morphism of the 1-motives has to be an isomorphism.
(Note: In particular $\psi_Y$ is determined by the $\rho$'s and $\psi_X$ by them and the polarization.)

\item We define the stack
\[ \cat{${}^N \nH_{g_0}[(\nI_\Z\otimes \nI_\Z)^s,\nL_{0,\Z} \otimes \nI_\Z]$-$\nL_\Z$-mot-triv} \]
as before with a trivialization of de Rham.

\item Let $\nL'_\Z = \Z \oplus \nU_\Z^*$ be as in \ref{DEF1MOTIVECATEGORY}, 3. We define the stack
\[ \cat{${}^N \nH_{0}[\nU_\Z,0]$-$\nL'_\Z$-mot} \]
as the category of the following data:
\begin{enumerate}
\item a scheme $S$ over $\spec(\Zpp)$,
\item a 1-motive of the form $M=[\xymatrix{\underline{Y} \ar[r] & T}]$,
\item an isomorphism $\rho: (\nU_{\Z}^*)_S \cong \underline{Y}$,
\item an isomorphism $\rho': (\Z)_S \cong \underline{X}$,
 \item a level-$N$-structure 
\[ \xi \in \underline{\Iso}_{(\nP_\nSD,\nL'_{\Z/N\Z})} (\nL'_{\Z/N\Z}, H^{et}(M,\Z/N\Z)), \]
where the isomorphisms in the etale sheaf on the right have to be compatible with the obvious $(\nP_\nSD,\nL'_{\Z/N\Z})$-structure on both parameters.
Morphisms are as in the adelic case, with the difference that the $\Zpp$-morphism of the 1-motives has to be an isomorphism.
\end{enumerate}

\item We define the stack
\[ \cat{${}^N \nH_{0}[\nU_\Z,0]$-$\nL'_\Z$-mot-triv} \]
as before with  a trivialization of de Rham.

\item We define the stack
\[ \cat{${}^{N,0} \nH_{0}[\nU_\Z,0]$-$\nL'_\Z$-mot} \]
where $\nL'_\Z = \Z \oplus \nU_\Z^*$
as the category of the following data ($S, M, \dots, \xi$) as before, but with
\[ \xi \in \underline{\Iso} (\Z/N\Z, H^{et}(\gr_{-2} M,\Z/N\Z)), \]
only.

\item We define the stack
\[ \cat{${}^{N,0} \nH_{0}[\nU_\Z,0]$-$\nL'_\Z$-mot-triv} \]
as before with  a trivialization of de Rham.
\end{enumerate}
\end{DEF}

\begin{BEM}\label{H0REPR}
For $\nU=(\nI \otimes \nI)^s \not= 0$, 
we explicitly describe the ``change-of-representation equivalences'':
\[ \cat{${}^K \nH_0[ (\nI \otimes \nI)^s,0]$-$\nL=\nI^* \oplus \nI$-mot} 
\rightarrow  \cat{${}^K \nH_0[\nU,0]$-$\nL'=\Zpp \oplus \nU^*$-mot}. \]
and
\[  \cat{${}^K \nH_0[ (\nI \otimes \nI)^s,0]$-$\nL=\nI^* \oplus \nI$-mot-triv} 
 \rightarrow \cat{${}^K \nH_0[\nU,0]$-$\nL'=\Zpp \oplus \nU^*$-mot-triv}. \]
as follows:
Let a ($M=(0, 0, \underline{X}, \underline{Y}, 0, 0, \nu), 0, \psi_2, \rho, \xi$) over $S$ be given.  
$\nu$ can via polarization and $\rho$ be considered as a morphism $\Sym^2(\nI^*)_S \rightarrow \G_{m,S}$.
This gives rise to a motive $M''$ and a $\rho$ required for the second category ($\rho'$ is given by the canonical identification $\Hom(\Gm,\Gm) \cong \Z$). The level structures
translate as follows. It is easy to check that the realizations $H^{et}(M''), H^{dR}(M'')$ and $H^B(M'')$ (when defined), together with their 
$(\USp(0, \nI), \Zpp \oplus \Sym^2(\nI^*))$-structures, are obtained from
the corresponding realizations of $M$ with their $(\USp(0, \nI), \nI \oplus \nI^*)$-structures by the procedure described in \ref{SIMPLEGSTRUCTURES}.
Since the procedure described in \ref{SIMPLEGSTRUCTURES} applied to $\nI \oplus \nI^*$ gives $\Zpp \oplus \Sym^2(\nI^*)$ 
canoncially, we may translate the
$K$-level structure or trivialization of $H^{dR}$ accordingly.
\end{BEM}

\begin{PROP}\label{INTEGRAL1MOTIVES}
Let $S$ be a scheme over $\spec(\Zpp)$.
For each of the $p$-integral mixed symplectic Shimura data as above, we have equivalences
\[ \cat{${}^N \nSD$-$\nL_\Z$-mot} \rightarrow  \cat{${}^{K(N)}\nSD$-$\nL$-mot}, \]
where 
\[ K(N) = \{ g \in \nP_\nSD(\Af) \where g\nL_\Zh = \nL_\Zh, g \equiv \id\ (N) \}. \]
We have also a similar equivalences for $\nSD = \nH_0[\nU,0]$ and the representation $\nL'$, as well as 
\[ \cat{${}^{N,0} \nSD$-$\nL_\Z'$-mot} \rightarrow  \cat{${}^{\nU_\Zh \rtimes K(\Gm,N)} \nSD$-$\nL'$-mot}. \]
\end{PROP}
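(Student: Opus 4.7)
The plan is to construct an explicit functor from the integral side to the adelic side and then verify that it is an equivalence by showing that it admits a quasi-inverse given by a prime-to-$p$ isogeny construction. I will describe the argument in the main case, namely
\[ \cat{${}^N \nH_{g_0}[(\nI_\Z\otimes \nI_\Z)^s,\nL_{0,\Z} \otimes \nI_\Z]$-$\nL_\Z$-mot} \rightarrow \cat{${}^{K(N)}\nH_{g_0}[(\nI\otimes \nI)^s,\nL_{0} \otimes \nI]$-$\nL$-mot}, \]
the other cases being strictly analogous (and the \texttt{mot-triv} variants following formally, since a de Rham trivialization is $\nP_\nSD$-equivariantly the same datum on both sides).

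First, I would define the forward functor. Given integral data $(M, \psi_1, \psi_2, \rho, \xi)$ with principal polarization $\psi_1$, integral isomorphism $\psi_2$, integral trivialization $\rho \colon (\nI_\Z^*)_S \cong \underline{Y}$, and level-$N$ structure $\xi \colon \nL_{\Z/N\Z} \cong H^{et}(M,\Z/N\Z)$, the principal polarization is tautologically a $p$-polarization of degree $1 \in \Zpp^*$, the isomorphism $\psi_2$ tensors to an isomorphism of degree $1$ over $\Zpp$, and $\rho$ tensors to $\rho \otimes \Zpp$. For the level structure, the integral $\xi$ determines a canonical $\Zh^{(p)}$-lattice in $H^{et}(M,\Afp)$ (namely the image of $\nL_{\Zh^{(p)}}$ under any lift of $\xi$; all lifts differ by $K(N)^{(p)}$ since elements of $K(N)^{(p)}$ are exactly those preserving $\nL_{\Zh^{(p)}}$ and acting as identity modulo $N$). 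Thus $\xi$ yields a well-defined coset in $\underline{\Iso}(\nL_\Afp, H^{et}(M,\Afp))/K(N)^{(p)}$. One checks that $(\USp(\nL_0,\nI),\nL)$-compatibility follows from the integral compatibility mod $N$, using that $K(N)^{(p)}$ preserves the $\USp$-structure on $\nL_{\Zh^{(p)}}$.

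For essential surjectivity, suppose we are given adelic data $(M', \psi_1', \psi_2', \rho', \xi')$. A representative $\widetilde{\xi}' \in \Iso(\nL_\Afp, H^{et}(M',\Afp))$ determines a $\Zh^{(p)}$-lattice $\Lambda \subset H^{et}(M',\Afp)$ as the image of $\nL_{\Zh^{(p)}}$. By standard Mumford-type theory for 1-motives (cf. the dictionary recalled at the beginning of Section 6), there exists a 1-motive $M$ together with a prime-to-$p$ isogeny $M \to M'$ such that $\Lambda \cong H^{et}(M, \Zh^{(p)})$ under the isogeny; moreover the isogeny is unique up to unique isomorphism. The $p$-polarization $\psi_1'$ transfers to a polarization of $M$ whose degree over $\Zpp$ and whose induced form on $\Lambda$ are principal (because $\widetilde{\xi}'$ was chosen to identify the form with the unimodular form on $\nL$ up to scalar, and the scalar can be absorbed by adjusting $\widetilde{\xi}'$ inside its $K(N)^{(p)}$-coset). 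Similarly $\psi_2'$ and $\rho'$ descend to integral isomorphisms $\psi_2, \rho$, and $\xi'$ descends to a level-$N$ structure $\xi$ via $\widetilde{\xi}' \bmod N$. Different choices of $\widetilde{\xi}'$ inside its $K(N)^{(p)}$-coset give the same $M$ and the same $\xi$ modulo $N$ by construction of $K(N)$.

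Full faithfulness is then automatic: a morphism on either side amounts to a morphism of 1-motives compatible with all the rigidifying data, and these conditions match under the correspondence just described because the $K(N)^{(p)}$-orbit of the integral lattice is preserved by precisely those prime-to-$p$ quasi-isogenies coming from honest isomorphisms of the integral data. The hard point (which is really the crux of all the bookkeeping) is the essential surjectivity step: one must verify carefully that the $(\USp(\nL_0, \nI), \nL)$-compatibility of the adelic level structure, combined with the $p$-polarization of degree in $\Zpp^*$, exactly forces the induced polarization and isomorphism on the integral lattice $\Lambda$ to be unimodular, i.e. principal. This is where the restriction $p \nmid N$ (built into admissibility) and the requirement that $\psi_1, \psi_2$ have the same degree $d$ are used. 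The remaining cases, where $\nSD$ is a non-symplectic unipotent extension, proceed in the same way with the simpler moduli of tori and characters replacing the abelian part.
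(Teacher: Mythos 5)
Your overall route is the same as the paper's: a forward functor from integral to adelic data, essential surjectivity via passage to a prime-to-$p$ isogenous $1$-motive realizing the lattice determined by the adelic level structure, and full faithfulness from rigidity of the integral data. However, three of the steps you treat as routine are exactly where the paper has to do real work, and as written they are gaps. First, in the forward functor the existence of a lift of the level-$N$ structure $\xi$ to a $\USp$-structure-compatible isomorphism $\nL_{\widehat{\Z}^{(p)}} \rightarrow H^{et}(M_{\overline{s}},\widehat{\Z}^{(p)})$ is not free: the paper chooses an auxiliary structure-compatible $\delta$, compares with $\xi$ mod $N$ to get an element of $\USp(\nL_{\Z/N\Z})$, and lifts it using smoothness of $\USp$ over $\Z$ and Hensel's lemma; your ``any lift of $\xi$'' presupposes precisely this. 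Second, your essential surjectivity argument is carried out fibrewise: the lattice $\Lambda$ lives in $H^{et}(M'_{\overline{s}},\Afp)$ at a geometric point, and ``standard Mumford-type theory'' only hands you an isogenous $1$-motive over a field. To get an object of the stack over an arbitrary base $S$ one must check that the finite quotient $K_{\overline{s}} = \Lambda / H^{et}(M'_{\overline{s}},\widehat{\Z}^{(p)})$ (after clearing denominators into $\ker([c])$, $p \nmid c$) is $\pi_1^{et}(S,\overline{s})$-stable — which follows from the invariance of the level-structure coset — so that it spreads out to a finite \'etale subgroup scheme over $S$, and then invoke an existence theorem for isogenies of $1$-motives with prescribed finite \'etale kernel (the paper cites \cite[Theorem 5.1.8]{Thesis}). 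Without this descent step the equivalence of stacks over $\Zpp$ is not established.

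Third, fullness is not ``automatic'': what is needed is that a prime-to-$p$ quasi-isogeny of $1$-motives which induces an isomorphism on $H^{et}(\cdot,\widehat{\Z}^{(p)})$ and respects the rigidifications is an honest morphism of the integral objects; your sentence asserting that the $K(N)^{(p)}$-orbit of the lattice is preserved ``by precisely those quasi-isogenies coming from honest isomorphisms'' is a restatement of this claim, not a proof (the paper appeals to \cite[Theorem 5.1.7]{Thesis} here). Finally, a small slip: the unimodularity of the transferred polarization is obtained by rescaling $\psi_1'$ by $\pm d$ with $d \in \Zpp^*$ (only one sign giving a genuine polarization), not by adjusting $\widetilde{\xi}'$ inside its $K(N)^{(p)}$-coset, since such adjustments only move the identification by similitude factors congruent to $1$ mod $N$.
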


\begin{BEM}
Since the $K(N)$ form a cofinal system of compact open groups, the integral categories give a complete description of the canonical models. 
In the case $\nSD = \nH_0[\nU,0]$, each $K(N)$ is conjugated to $\nU_\Zh \rtimes K(\Gm,N)$, therefore
already the integral categories with $(N,0)$-level structure suffice to describe the canonical models. 
\end{BEM}

\begin{proof}[Proof of the Proposition.]
We will prove this for the case $\nH_{g_0}[(\nI_\Z \otimes \nI_\Z)^s, \nL_{0,\Z} \otimes \nI_\Z]$, where $\nI_\Z \not=0$, $\nL_{0,\Z} \not=0$
--- the other cases are degenerate special cases of this construction. We may assume that $S$ is connected.

We first describe the functor.
Let $[M,\psi_1,\psi_2,\rho,\xi]$ 
be an object of $\cat{${}^N\nSD$-$\nL_\Z$-mot}$ over $S$.
Choose a geometric point $\overline{s}$.
$\xi$ can be considered as an isomorphism
\[ \xi : \nL_{\Z/N\Z} \rightarrow H^{et}(M_{\overline{s}},\Z/N\Z ) \]
invariant under the action of $\pi_1^{et}(S, \overline{s})$ ($H^{et}(M, \Z/N\Z)$ has to be constant).

Choose some isomorphism
\[ \delta : H^{et}(M_{\overline{s}},\widehat{\Z}^{(p)} ) \rightarrow \nL_{\widehat{\Z}^{(p)}}  \]
compatible with $\USp$-structures. Composing the reduction mod $N$ with $\xi$, we get an
element of $\USp(\nL_{0,\Z/N\Z}, \nI_{\Z/N\Z})$. Since $\USp$ is a smooth group scheme over $\Z$, by Hensel's Lemma
we get a lift to $\USp(\nL_{\widehat{\Z}^{(p)}})$. Taking composition again with the chosen isomorphism
we get a
\[ \xi' : \nL_{\widehat{\Z}^{(p)}} \rightarrow  H^{et}(M_{\overline{s}},\widehat{\Z}^{(p)} ) ,  \]
reducing mod $N$ to $\xi$. It is well-defined mod $K(N)$ and the class is, by construction, invariant under
$\pi_1^{et}(S, \overline{s})$.

This functor is faithful. It is full because an $p$-morphism of 1-motives which
induces an isomorphism
$H^{et}(M_{\overline{s}},\widehat{\Z}^{(p)}) \cong H^{et}(M_{\overline{s}},\widehat{\Z}^{(p)})$
must be a morphism. This follows from \cite[Theorem 5.1.7]{Thesis}.

Let on the other hand
$[M,\psi_1,\psi_2,\rho,\xi']$ be an object of 
$\cat{${}^{K(N)} \nSD$-$\nL$-mot}$ over $S$.
Choose a geometric point $\overline{s}$.

If $\xi'$ is represented by an isomorphism
\[ \nL_{\widehat{\Z}^{(p)}} \rightarrow H^{et}(M_{\overline{s}},\widehat{\Z}^{(p)})   \]
then the object is in the image of the functor because of the following:

\begin{enumerate}
\item $\xi'$ can be given as a lift of a $\xi$ as above.

\item There is a principal polarization in the class of $\psi'$.
For, there is a $d \in \Afp$, such that $\Psi$ induces an isomorphism
\[ H^{et}(M_{\overline{s}}, \Zh^{(p)}) \rightarrow d H^{et}(M_{\overline{s}}^\vee, \Zh^{(p)}) \]
because $\xi'$ is a morphism of $\USp$-structures, hence a symplectic similitude.
So we get a principal polarization by changing $\psi'$ by $+d$ or $-d$, which must lie in $\Zpp^*$. Only one
sign leads to a {\em polarization}.

\item $\rho$ maps $\nI_\Z^*$ to $\underline{Y}$.
\end{enumerate}

If $\xi'$ is not represented by an isomorphism as above, we have to show that there exists an 
isogenous object with this property.

Composing with a $p$-isogeny $\in \Z_{(p)} \setminus \{0\}$, we may assume that there is an $c, p\nmid c$, and
a diagram
\[ \xymatrix{
0 \ar[r] & H^{et}(M_{\overline{s}}, \Zh^{(p)}) \ar@{^{(}->}[r]^{\xi^{-1}} \ar@{=}[d]& \nL_{\widehat{\Z}^{(p)}} \ar@{^{(}->}[d] \ar[r] & K_{\overline{s}} \ar@{^{(}->}[d] \ar[r] & 0 \\
0 \ar[r] & H^{et}(M_{\overline{s}}, \Zh^{(p)}) \ar[r]^{[c]} & H^{et}(M_{\overline{s}}, \Zh^{(p)}) \ar[r] & \ker([c]) \ar[r] & 0
}\]
Furthermore the operation of $\pi_1^{et}(S, \overline{s})$ induces one on $K_{\overline{s}}$, hence there
is a finite etale group scheme $K \subset \ker([c])$ with fibre $K_{\overline{s}}$, and we have 
\cite[Theorem 5.1.8]{Thesis}
an isogeny $\psi: M \rightarrow M'$ with $\ker(\psi)=K$, hence a diagram
\[ \xymatrix{
0 \ar[r] & H^{et}(M_{\overline{s}}, \Zh^{(p)}) \ar@{^{(}->}[r]^{\xi^{-1}} \ar@{=}[d]& \nL_{\widehat{\Z}^{(p)}} \ar[r] & K_{\overline{s}} \ar@{=}[d]  \ar[r] & 0 \\
0 \ar[r] & H^{et}(M_{\overline{s}}, \Zh^{(p)}) \ar[r]^{\psi} & H^{et}(M'_{\overline{s}}, \Zh^{(p)})
\ar[u]_{(\xi')^{-1}} \ar[r] & K_{\overline{s}} \ar[r] & 0
}\]
There is an $\rho'$ because $\xi$ is a morphism of $\USp$-structures and hence $\Psi_Y$ has to be an isomorphism of
$\Z$-lattices! Therefore we get an isogenous object with the property that $\xi'$ is an isomorphism
$\nL_{\widehat{\Z}^{(p)}} \rightarrow H^{et}(M_{\overline{s}},\widehat{\Z}^{(p)})$.
\end{proof}

\begin{PAR}\label{SECTIONBOUNDARY}
Let $\nU$ be any $\Zpp$-lattice. 
Consider the mixed Shimura datum $\nSD = \nH_0[\nU,0]$. It plays the mayor role in the description of point-like boundary components
of many pure Shimura varieties, in particular of those symplectic and also orthogonal type (cf. \ref{SYMPLECTICBOUNDARYCOMP} and \ref{ORTHBOUNDARYCOMP}).

Consider the category (cf. Definition \ref{DEF1MOTIVECATEGORY}, 3.):
\[ \cat{ ${}^{K}\nSD$-$\nU^* \oplus \Zpp$-mot }, \]
where $K$ is any admissible compact open subgroup of $\nP_{\nH_0[\nU,0]}(\Af)$.

Recall that, by definition, 
\begin{equation} \label{split}
 \nP_{\nH_0[\nU,0]} = \nGa(\nU) \rtimes \Gm
\end{equation}
and that $K$ contains the subgroup $\nU_\Zh \rtimes K(M)$ for some $M \in \Z$ and some (saturated at $p$) lattice $\nU_\Z \subset \nU$.
The splitting (\ref{split}) determines and is determined by a $\Zpp$-valued point $v$ of the
dual $\nShD(\nSD)$ (The preimage of $\Gm$ under the splitting is the quasi-parabolic corresponding to the point).

The category 
\[ \cat{ ${}^{\nU_\Zh \rtimes K(M)}\nH_0[\nU,0]$-$\nU^* \oplus \Zpp$-mot } \]
is isomorphic to the integral category
\[ \cat{ ${}^{(M,0)}\nH_0[\nU,0]$-$\nU^*_\Z \oplus \Z$-mot }, \]
 see \ref{INTEGRAL1MOTIVES}.
An object in the latter is isomorphic to $(M, \xi)$, where $M$ is of the form
\[ [ \mu: \nU_\Z^* \rightarrow \Gm ], \]
with a level structure $\xi: \Z/M\Z \rightarrow H_1^{et}(\Gm, \Z/M\Z) \cong \Gm[M]$ only.

We may identify 
\[  \nX_{\nH_0} \times \nU_\C \rightarrow \nX_{\nH_0[\nU,0]}  \]
such that an element $(\alpha, u)$ is mapped to $(\alpha, u \circ  h_0 \circ (-u))$, where $u \circ h_0 \circ (-u)$ is the $u$-conjugate (from the left) of the morphism
$h_0: \SSS_\C \rightarrow \nP_{\nH_0[\nU,0]} = \nGa(\nU) \rtimes \Gm$ mapping
$z$ to $z\overline{z}$ in the second factor. The associated mixed Hodge structure is the $u$-translate of the one given by $F^0=\nU_\C$.
\end{PAR}

The following Proposition --- which is quite tautological --- describes the associated canonical models. It will be used in the proof of the $q$-expansion principle (Theorem \ref{QEXPANSION}). 

\begin{PROP}\label{SATZBOUNDARYPOINTS}
The $S$-valued points of the dual $\nShD(\nSD)(S)$ are in natural bijection with splittings $\nP_{\nSD,S} \cong \nGa(\nU_S) \rtimes \G_{m,S}$.

We have a commutative diagram (where all morphisms are functorial in $S$)
\begin{equation}\label{diagm1}
\xymatrix{
\cat{$S$-${}^{(M,0)} \nSD$-$\Z \oplus \nU_\Z^*$-mot-triv} \ar[r]^-\sim_-{i_3} \ar@<-1ex>[d]_p & (\nP \times_\Zpp \G_{m,\Zpp(\zeta_M)} \otimes \nU_\Z)(S) \ar@<-1ex>[d]_p\\
\cat{$S$-${}^{(M,0)} \nSD$-$\Z \oplus \nU_\Z^*$-mot} \ar@<-1ex>[u]_{can.} \ar[d]_p  \ar[r]^-\sim_-{i_2}  & (\G_{m,\Zpp(\zeta_M)} \otimes \nU_\Z)(S) \ar@<-1ex>[u]_{can.} \ar[d]_p  \\
\cat{$S$-${}^{M} \nH_0$-$\Z$-mot} \ar[r]^-\sim_-{i_1}  & \spec(\Zpp(\zeta_M))(S)
}
\end{equation}
where $\zeta_M$ an (abstract) primitive $M$-th root of unity. Here the middle horizontal morphisms
give morphisms of tori, and the topmost horizontal morphisms give morphisms of right $\nP_S$-torsors. 

Over $\C$, the identifications \ref{DOUBLEQUOTIENT} render this into the following commutative diagram
\begin{equation}\label{diagm2}
 \xymatrix{
  \nP(\Q) \backslash \nX_{\nH_0} \times \nU_\C \times \nP(\C) \times \nP(\Af) / K    \ar[r]^-\sim_-{j_3} \ar@<-1ex>[d]_p & (\nP \times_\Zpp \G_{m,\Zpp(\zeta_M)} \otimes \nU_\Z)(\C) \ar@<-1ex>[d]_p\\
   \nP(\Q) \backslash \nX_{\nH_0} \times \nU_\C \times \nP(\Af) / K    \ar@<-1ex>[u]_{can.} \ar[d]_p  \ar[r]^-\sim_-{j_2}  & (\G_{m,\Zpp(\zeta_M)} \otimes \nU_\Z)(\C) \ar@<-1ex>[u]_{can.} \ar[d]_p  \\
 \Q^* \backslash \nX_{\nH_0} \times (\Af)^* / K(M)    \ar[r]^-\sim_-{j_1}  & \spec(\Zpp(\zeta_M))(\C)
} 
\end{equation}
where $K = \nU_\Zh \rtimes K(M)$ for the splitting (\ref{split}). The morphism can. on the left hand side is given as follows:
\[ [\alpha, \mu_{\log}, \rho ] \mapsto [\alpha, \mu_{\log}, \mu_{\log}\alpha^{-1}(1), \rho]  \]
where $\rho$ must be chosen to be in $\nU_\Zh \rtimes K(1)$ or even in $K(1) \cong \Zh^*$ (this amounts to the same). 

This yields a description of the canonical integral models $\nSh({}^K \nSD), \nMorphSt({}^K \nSD)$ and $\nSh({}^{K(M)} \nH_0)$, respectively. 
\end{PROP}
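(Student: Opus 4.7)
The statement is described as ``quite tautological'', so the proof plan is essentially to unwind the definitions of the moduli problems, the dual, and the morphism $\nMorphSt$, and check that everything matches the explicit coordinates. I would organize this into three pieces.

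\textbf{Part 1 (splittings $=$ points of the dual).} By construction, $\nShD(\nSD)$ is the fibre of $\type$ on $\mathcal{QPAR}$ above the type of the quasi-parabolic $\qpar(h_0 \circ \nw)$. Under the splitting $\nP_\nSD = \nGa(\nU) \rtimes \Gm$, the morphism $h_0 \circ \nw$ lands in the $\Gm$-factor, and by \ref{QUASIPARABOLICS}(7) its $\qpar$ is precisely this $\Gm$. Using the homogeneity statement \ref{QUASIPARABOLICS}(4), I would identify $\nShD(\nSD)$ with $\nP_\nSD / \Gm$, for which the map $\nGa(\nU) \to \nShD(\nSD)$ sending $u \mapsto u \Gm u^{-1}$ is an isomorphism of $S$-schemes. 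Since a splitting is the same data as a complement to $\nGa(\nU)$, i.e.\ a $\nGa(\nU)$-conjugate of the standard $\Gm$, this yields the desired bijection functorially in $S$.

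\textbf{Part 2 (construction of $i_1, i_2, i_3$).} For $i_1$: an object of $\cat{$S$-${}^M \nH_0$-$\Z$-mot}$ is essentially a level-$M$ trivialization of $\gr_{-2}(H^{et}) \cong \Gm[M]$, i.e.\ a choice of primitive $M$-th root of unity in $\OO_S$, which is exactly an $S$-point of $\spec(\Zpp(\zeta_M))$. For $i_2$: the additional datum is the 1-motive $[\mu: \nU_\Z^* \to \Gm]$ together with $\rho, \rho'$; since $\nU_\Z^*$ is a free constant group scheme, giving $\mu$ is giving an element of $\Hom(\nU_\Z^*, \Gm)(S) = (\Gm \otimes \nU_\Z)(S)$. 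Combined with $i_1$, this gives an $S$-point of $\G_{m,\Zpp(\zeta_M)} \otimes \nU_\Z$. For $i_3$: a de Rham trivialization $\gamma: \nL' \otimes \OO_S \to H^{dR}(M)$ compatible with the $(\nP, \nL')$-structure is a torsor under $\nP$, so it exactly amounts to lifting the $\Gm$-valued point to a $\nP \times_\Zpp (\G_m \otimes \nU_\Z)$-valued point. Functoriality in $S$, the vertical $\nP$-torsor structures, and commutativity of the squares are all automatic.

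\textbf{Part 3 ($\C$-analytic compatibility).} Here I would use Proposition \ref{INTEGRAL1MOTIVES} to identify the integral and adelic moduli problems at a chosen level, and then invoke \ref{DOUBLEQUOTIENT} to identify the $\C$-fibre with the double quotient. Concretely, for an $S = \spec(\C)$-valued moduli point $(M, \rho, \rho', \xi)$, the motive $M = [\nU_\Z^* \xrightarrow{\mu} \Gm]$ corresponds under the exponential/period map to an element $\mu_{\log} \in \nU_\C$ lifting $\mu \in \Gm \otimes \nU_\Z(\C)$, and the comparison isomorphism $\rho_{dR,B}$ forces the extra trivialization $\gamma$ in the triv-category to be $\mu_{\log} \alpha^{-1}(1)$ where $\alpha$ is determined by the isomorphism $\Z \cong \Z(1)$ coming from the root of unity (cf.\ the description at the end of \ref{DOUBLEQUOTIENT}). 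This exactly matches the formula given for the canonical section on the left-hand side of (\ref{diagm2}), and the horizontal identifications $j_1, j_2, j_3$ are then just the analytifications of $i_1, i_2, i_3$.

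\textbf{Main obstacle.} The only non-formal point is pinning down the ``can.''~morphism: one has to verify that the Hodge-theoretic trivialization used in \ref{DOUBLEQUOTIENT} is precisely the one that sends $[\alpha, \mu_{\log}, \rho]$ to $[\alpha, \mu_{\log}, \mu_{\log}\alpha^{-1}(1), \rho]$, i.e.\ that the transcendental period of $[\nU_\Z^* \to \Gm]$ computed through $\rho_{dR,B}$ yields the factor $\mu_{\log}\alpha^{-1}(1)$ with exactly the right normalization of $2\pi i$ built into $\alpha: \Z \cong \Z(1)$. Once this bookkeeping is done, commutativity of both diagrams and functoriality in $S$ follow directly from the definitions.
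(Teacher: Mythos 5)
Your proposal follows essentially the same route as the paper's proof: unwind the moduli data to define $i_1,i_2,i_3$ explicitly (level structure $\leadsto$ root of unity, $\mu \leadsto$ point of $\G_m\otimes\nU_\Z$, and the canonical de Rham splitting $F^0\oplus\Lie(\Gm)$ with $1\mapsto(\frac{\D z}{z})^*$ as base point of the $\nP$-torsor), and then compute $\beta\circ\rho_{dR,B}\circ can.$ over $\C$ via \ref{DOUBLEQUOTIENT} to obtain the factor $\mu_{\log}\alpha^{-1}(1)$, which is exactly the bookkeeping you flag as the main obstacle and which the paper carries out. The only cosmetic differences are that you also argue the splitting--dual-point bijection via \ref{QUASIPARABOLICS} (the paper treats it as definitional from \ref{SECTIONBOUNDARY}) and the paper closes by invoking \ref{REPR}/\ref{H0REPR} and the torus-equivariance of $can.$ to tie the description to the canonical models and to \ref{MAINTHEOREM3STACKS}, 3.
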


\begin{PAR}\label{detailsrpcd}
The lower morphisms $p$ describe in all diagrams obvious structures of a relative group object, which are the same as those induced by the unipotent extension morphism of Shimura data $\nH_0[\nU, 0] \rightarrow \nH_0$, which is a group object (with the unit section given by the splitting $\gamma$, chosen above). 
The trivialization $can.$ extends to partial compactifications, which are given by torus embeddings of
$\G_{m,\Zpp[\zeta_M]}  \otimes \nU_\Z$ over $\spec(\Zpp[\zeta_M])$.

More precisely: By the arithmeticity condition and left invariance under conjugation by $\nP_\nSD(\Q)$ a rational (w.r.t. $\nU_\Q$) polyhedral cone decomposition of $\nU_\R$ into cones $\{\sigma\}$ determines a rational polyhedral cone decomposition $\nRPCD$ of the conical complex (here $\nH_0 \times \nU_\R(-1)$).
On $\alpha \times \nU_\R(-1)$, the decomposition is given by $\{ \alpha^{-1}(\sigma) \}$. 

Hence, for each such $\sigma$ the corresponding torus embedding is
\[ \spec(\Zpp[\zeta_M][\nU_\Z^*]) \hookrightarrow \spec(\Zpp[\zeta_M][\nU_\Z^* \cap \sigma^\vee]).  \]
They are glued in the usual way using the topology of the decomposition.
\end{PAR}

\begin{proof}[Proof of the Proposition]
We begin by describing the horizontal morphisms in the diagram (\ref{diagm1}): 

$i_1$: Up to isomorphism any element in the 
category 
\[ \cat{$S$-${}^{M} \nH_0$-$\Z$-mot} \]
is of the form $X=(\G_{m,S}, 0, \id, \xi)$
with  $\xi: (\Z/ N\Z)_S \rightarrow \Gm[M]_S \cong \spec(\Zpp(\zeta_M))_S$.
$i_1(X)$ is the morphism determined by $1\in \Z/N\Z$, which determines the level-structure uniquely. 

$i_2$: Up to isomorphism any element in the 
category 
\[ \cat{$S$-${}^{(M,0)} \nH_0[\nU_\Z, 0]$-$\nL_\Z$-mot} \] 
is of the form $X=(\mu: (\nU_\Z)_S \rightarrow \G_{m,S}, \id, \id, \xi)$.
$i_2$ is defined as the fibre product of $i_1(X)$ (defined as before) with $\mu$.

$i_3$: Up to isomorphism any element in the 
category 
\[ \cat{$S$-${}^{(M,0)} \nH_0[\nU_\Z, 0]$-$\nL_\Z$-mot-triv}  \]
is of the form $X=(\mu: (\nU_\Z)_S \rightarrow \G_{m,S}, \id, \id, \xi, \gamma)$,
where $\gamma: \nL_S \rightarrow H_1^{dR}(M)$ is a trivialization (compatible with the $\nP_{\nSD,S}$-structure).
We have a canonical trivialization $can.:  F^0(H^{dR}(M)) \oplus \OO_S (\frac{\D z}{z})^* \cong H^{dR}(M)$,
where $\OO_S (\frac{\D z}{z})^*$ is $\Lie(\Gm)$ and $\nU_S^* \cong F^0(H^{dR}(M))$ (canonically via $\rho$) over any $S$.
Mapping $1$ to $(\frac{\D z}{z})^*$ the domain of $can.$ can be identified with $\nL_S$. In other words, we have 
\[ can.:  \nL_S \rightarrow H^{dR}(M).  \] 
Equipping a point  
\[ X=(\mu: (\nU_\Z)_S \rightarrow \G_{m,S}, \id, \id, \xi) \in \cat{$S$-${}^{(M,0)} \nH_0[\nU_\Z, 0]$-$\nL_\Z$-mot} \]
 with the trivialization $can.$
defines the vertical morphism $can.$ on the left in (\ref{diagm1}). The morphism $can.$ on the right hand side is the obvious trivialization. The morphism $i_3$ is defined
as the only $\nP_{\nSD,S}$-equivariant morphism that makes the diagram commutative (it maps $X$ to $(can.^{-1} \circ \gamma, i_2(X)) \in \nP_{\nSD}(S) \times (\G_{m,\Zpp[\zeta_M]} \otimes \nU_\Z)(S)$). 

We now describe the resulting horizontal morphisms in diagram (\ref{diagm2}):

$j_1$:
We choose an isomorphism 
\[ \beta: H^{B}(\Gm, \Z) \rightarrow \Z.  \]
Since $H^B_1(\Gm, \Z) = \Z(1)$, we have an $\alpha \in \nX_{\nH_0}$ with $\alpha^{-1} = \beta$.

Let $\xi$ be a level structure. Its image in $\Zh^*$ is given mod $K(M)$ by the composite (see \ref{DOUBLEQUOTIENT}): 
\[ \xymatrix{ \Z/ N\Z \ar[r]^-\xi & H^{et}_1(\Gm, \Z/N\Z) = \Gm[N](\C) \ar[r]^-{\rho_{et,B}} & H^{B}_1(\Gm, \Z/N\Z) \ar[r]^-\beta &\Z/N\Z }. \]
Note that $\rho_{et,B}$ is given by the logarithm.

We get an element
\[ [\alpha, \rho]  \in \Q^* \backslash \nX_{\nH_0} \times \Af^* / K(M) \]
where $\rho \in K(1)$ 
and have 
\[ j_1([\alpha, \rho]) = \exp(\alpha(\frac{1}{M}\rho)) \in \Gm(\C)[M]. \]

$j_2$:
Choose an isomorphism $\beta: H^B(M) \rightarrow \Z \oplus \nU^*_\Z$ (in the $\nP_\nSD(\Z)$-structure) 
such that $W_{-1}(\beta) = \alpha^{-1}$ as morphism $H^B_1(\Gm)=\Z(1) \rightarrow \Z$.

Recall that $\nL_\Z = \Z \oplus \nU_\Z^*$. The composite
\[ \xymatrix{ \nL_\C \ar[r]^-{can.} & H^{dR}(M) \ar[r]^-{\rho_{dR,B}} & H^{B}(M) \otimes \C \ar[r]^-\beta & \nL_\C }. \]
maps $(1, 0)$ to $(\alpha^{-1}(1), 0)$ and $(0, u^*)$ to $(-\alpha^{-1}(\log(\mu(u^*))), u^*)$. This is the same as the operation of $\mu_{\log} \alpha^{-1}(1) \in \nP_\nSD(\C)$
under the action defined in \ref{DEF1MOTIVECATEGORY}, 4., where  $\mu_{\log} \in \nU_\C/\nU_\Z$ is such that $u^*(\mu_{\log}) = \alpha^{-1}(\log(\mu(u^*)))$ for all $u^* \in \nU_\Z^*$.

We have then 
$\beta(F^0(M)) = \mu_{\log} U_\C^*$.

Hence the moduli point of $([\mu: \nU_\Z^* \rightarrow \Gm], \id, \id, \xi)$ in the complex analytic Shimura variety is:
\[ [\alpha, \mu_{\log}, \rho] \in \nP(\Q) \backslash \nX_{\nH_0} \times \nU_\C \times \nP_\nSD(\Af) / K \]

$j_3$: The image of the point $ [\alpha, \mu_{\log}, \rho]$ via $can.$ in the standard principal bundle is given by
\[Ê[\alpha, \mu_{\log}, \mu_{\log} \alpha(1)^{-1}, \rho] \in \nP(\Q) \backslash \nX_{\nH_0} \times \nU_\C \times \nP_\nSD(\C) \times \nP_\nSD(\Af) / K. \]

Therefore the trivialization $can.$ on the left in diagram (\ref{diagm2}) is given by
\begin{eqnarray*} 
 \nP(\Q) \backslash \nX_{\nH_0} \times \nU_\C \times \nP(\Af) / K &\rightarrow&
 \nP(\Q) \backslash \nX_{\nH_0} \times \nU_\C \times \nP(\C) \times \nP(\Af) / K \\
{ [\alpha, \mu_{\log}, \rho]} & \mapsto &Ê[\alpha, \mu_{\log}, \mu_{\log} \alpha(1)^{-1}, \rho] 
 \end{eqnarray*}
 for $\rho \in \nU_\Zh K(1)$ (!), as claimed. This also defines the morphism $j_3$ by $\nP_{\nSD}(\C)$-equivariance.
 
It is easy to see that this morphism $can.$ is compatible with the action of
the torus multiplication in the relative torus $\nSh({}^K\nSD)$ over $\nSh({}^{K(N)} \nH_0)$. It is hence by the characterization \ref{MAINTHEOREM3STACKS}, 3. the trivialization which
extends to every partial compactification $\nSh({}^K_\nRPCD \nSD)$.

The model of the Shimura variety (resp. standard principal bundle) described by the schemes representing
\[ \cat{${}^K \nH_0[\nU, 0]$-$\nL$-mot}, \text{ resp. } \cat{${}^K \nH_0[\nU, 0]$-$\nL$-mot-triv} \]
are the canonical ones by \ref{REPR} and \ref{H0REPR}.
\end{proof}

\section{$q$-expansion principle}

\begin{PAR}
From the
abstract theory of canonical models of toroidal compactifications as described by the Main Theorems \ref{MAINTHEOREM1a} and \ref{MAINTHEOREM3STACKS}, a classical ``$q$-expansion principle'' can be derived.
In this section we explain this for a wide class of Shimura varieties with boundary.
\end{PAR}

\begin{PAR}\label{QEXPANSIONPREP}
Resume the discussion from \ref{SECTIONBOUNDARY}.
Let $\mathcal{E}$ be a $\nP_\nSD$-equivariant vector bundle on $\nShD(\nSD) = \nGa(\nU)v$. Here $v$ is the
$\Zpp$-valued point corresponding to the splitting (\ref{split}). $H^0(\nSh({}^K \nSD), \Xi^* \mathcal{E})$ is identified with the
$\nP_\Q$-invariant functions on 
\[  \nX_\nSD \times \nP(\Af) / K \rightarrow \mathcal{E} \]
Choose a basis $v_1, \dots, v_n$ of the fibre $\mathcal{E}_v$. 

The canonical trivialization ($can.$ of the Proposition \ref{SATZBOUNDARYPOINTS}) of the standard principal bundle trivializes also (the algebraic) $\Xi^* \mathcal{E}$.
For this consider the following diagram

\[\xymatrix{
 \nSh({}^K\nSD) \ar[r]^{can.} & \nSPB({}^K\nSD) \ar@<1ex>[l]^p \ar[r]^\nMorphD & \nShD(\nSD), } \]

$\nMorphSt^* \mathcal{E}$ is defined as the $\nP$-invariants in the pullback $\nMorphD^* \mathcal{E}$.
$can.$ identifies those with $can.^*  \nMorphD^* \mathcal{E} = (\mathcal{E}_v)_{\nSh({}^K\nSD)}$.
This last identification comes from the fact, that $\nMorphD \circ can.$ is constant with image $v$ by construction.
By Proposition \ref{SATZBOUNDARYPOINTS} this trivialization, base changed to $\C$, is given by the function
\begin{eqnarray*}
 s_i: \nX_{\nH_0} \times \nU_\C \times \nP_\nSD(\Af) / K &\rightarrow& \mathcal{E} \\
 {[\alpha, \mu_{\log}, \rho]} &\mapsto&  \mu_{\log} \alpha(1)^{-1} v_i \qquad \text{for } \rho \in K_\nU K(1),
\end{eqnarray*}
where $\mathfrak{B} = \{v_i\}$, and the $v_i$ are considered as points in the fibre over $v$.
(Extend the function to other $\rho$ by requiring $\nP_\nSD(\Q)$-invariance).

A general section may be written as
\[ [\alpha, \mu_{\log}, \rho] \mapsto \sum_{k \in \nU_\Z^*, i} a_{k,\alpha,\rho,i} \exp(\alpha(\mu_{\log} k)) s_i \]
($\rho \in K(1)=\Zh^*$),
and the corresponding $i$-th coordinate-function on $\G_{m,\C} \otimes \nU_\Z \times_\C \C[\zeta_M] = \spec( \bigoplus_{\zeta_{M,\C}} \C[\nU_\Z^*])$
is given by 
\[ \sum_{k \in \nU_\Z^*} a_{k, \alpha,\rho,i} [k] \]
 in the fibre above $\zeta_{M,\C} = \exp(\alpha(\frac{1}{M}\rho))$. 

Let $\sigma$ now be a maximal dimensional rational polyhedral cone of $\nU_\R$. The completion at
the corresponding boundary stratum is given by
\[ \spf( \bigoplus_{\zeta_{M, \C}} \C \llbracket \nU_\Z^* \cap \sigma^\vee \rrbracket). \]
If a formal function $f$ given by
\[ \sum_{k \in \nU_\Z^* \cap \sigma^\vee} a_{k, \alpha,\rho}(f) [k] \]
in the fibre above the respective $\zeta_{M,\C} = \exp(\alpha(\frac{1}{M}\rho))$
converges in a neighborhood of the boundary points, it has the form
\[ [\alpha, \mu_{\log}, \rho] \mapsto \sum_{k \in \nU_\Z^*} a_{k,\alpha,\rho}(f) \exp(\alpha(\mu_{\log} k)), \]
there, too.

We make the following observation which is important for the $q$-expansion principle:
The function ${}^\tau f$ for an automorphism $\tau \in \Aut(\C)$ has the expansion
\[ \sum_{k \in \nU_\Z^* \cap \sigma^\vee} {}^\tau a_{k, \alpha,\rho}(f) [k] \]
in the fibre over ${}^{\tau} (\zeta_{M,\C}) = \exp(\alpha(\frac{1}{M} \rho \kappa_\tau))$. Here $\kappa_\tau \in K(1)/K(M)$ is determined by this (image of $\tau$ under the class field theory isomorphism).

Therefore the Fourier coefficients satisfy:
\begin{equation} \label{galoisactionfourier}
  a_{k,\alpha,\rho}({}^{\tau}f) = {}^\tau a_{k, \alpha, \rho k_\tau^{-1}}(f).
\end{equation}
\end{PAR}

\begin{PAR}
We continue by stating a formal $q$-expansion principle, stating roughly that --- under certain hypotheses --- a regular function is integral, if after completion at some point, the corresponding formal function is integral.

For a commutative ring $R$ and ideal $I \subseteq R$, we denote by $C_I(R)$ the completion of $R$ w.r.t.
the $I$-adic topology. For an $I$-adically complete ring $R$ and a multiplicatively closed subset $S\subset R$, we denote by $R\{S^{-1}\}$ the completion w.r.t. $I[S^{-1}]$ of
the localization $R[S^{-1}]$.

Let $R$ be an excellent normal integral domain and $I$ an ideal, such that $\spf(C_I(R))$ is connected (or equivalently,
such that $R/\sqrt{I}$ has no nontrivial idempotents).

Let $s$ be a prime element of $R$, neither a zero divisor nor a unit of $R/I$. Let $M$ be a projective $R$-module.
\end{PAR}

\begin{LEMMA}\label{LEMMA_FORMAL}
The diagram
\[\xymatrix{M \ar@{^{(}->}[r] \ar@{^{(}->}[d] & M [s^{-1}] \ar@{^{(}->}[d] \\ C_I(M) \ar@{^{(}->}[r]& C_I(M)\{s^{-1}\} = C_{I[s^{-1}]} (M[s^{-1}]) } \]
is Cartesian. 
\end{LEMMA}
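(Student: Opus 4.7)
The plan is to verify first that the four arrows in the diagram are genuine injections, and then to establish the Cartesian property, which reduces to a single $I$-adic separation statement where the hypothesis that $s$ is not a unit of $R/I$ finally enters. One may reduce to the case of finitely generated projective (indeed free) $M$ by writing $M$ as a direct summand of a free module and observing that all constructions in the diagram are compatible with the direct sum decomposition; this further reduces the injectivity statements and the Cartesian condition to $M=R$ coordinate-wise. Throughout we may assume $I\neq R$, otherwise $C_I(M)=0$ and the claim is vacuous.

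For the injectivities: flatness of $R\to C_I(R)$ (since $R$ is Noetherian) together with $s$ being a non-zero-divisor on the domain $R$ yields that $s$ is a non-zero-divisor on $C_I(M)=M\otimes_R C_I(R)$. Projectivity of $M$ over the domain $R$ makes $M$ torsion-free, and Krull's intersection theorem then gives $\bigcap_n I^nM=0$, whence $M\hookrightarrow C_I(M)$. An element $x\in C_I(M)$ mapping to zero in $C_I(M)\{s^{-1}\}$ satisfies $s^kx\in\bigcap_n I^nC_I(M)=0$ for some $k$, and $s$ being a non-zero-divisor forces $x=0$; the remaining two injections follow routinely from these.

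For the Cartesian property, suppose $x\in M[s^{-1}]$ maps to an element of $C_I(M)\subseteq C_I(M)\{s^{-1}\}$; write $x=m/s^n$ with $m\in M$ and let $y\in C_I(M)$ be the lift. The injection $C_I(M)\hookrightarrow C_I(M)\{s^{-1}\}$ upgrades $s^ny=m$ to an identity in $C_I(M)$, so $m\in M\cap s^nC_I(M)$. The whole lemma therefore reduces to the key inclusion
\[ M\cap s^nC_I(M)=s^nM \quad \text{inside } C_I(M), \]
which, using $C_I(M)/s^nC_I(M)\cong C_I(M/s^nM)$, is equivalent to the $I$-adic separation of the finitely generated module $M/s^nM$.

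The hard part is precisely this separation. By Krull's intersection theorem applied to the finitely generated $R$-module $M/s^nM$,
\[ \bigcap_k I^k(M/s^nM)=\{\overline m\in M/s^nM:(1+i)\overline m=0\ \text{for some}\ i\in I\}. \]
Let $\overline m\neq 0$ be annihilated by some $1+i$ with $i\in I$, and pick an associated prime $\mathfrak{p}$ of the cyclic submodule $R\overline m\subseteq M/s^nM$. Then $\mathfrak{p}\supseteq\mathrm{Ann}(\overline m)\supseteq (s^n,\,1+i)$, so primeness yields $s\in\mathfrak{p}$ while $1+i\in\mathfrak{p}$. But then $1=(1+i)-i\in\mathfrak{p}+I$, hence $\mathfrak{p}+I=R$, and combining with $(s)\subseteq\mathfrak{p}$ gives $(s)+I=R$, i.e.\ $s$ is a unit of $R/I$, contradicting the hypothesis. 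Therefore $\overline m=0$, the key inclusion holds, $m=s^nm'$ for some $m'\in M$, and $x=m'\in M$, completing the proof.
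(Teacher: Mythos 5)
Your overall strategy follows the paper's: reduce to $M=R$ (resp.\ finitely generated free $M$), reduce the Cartesian property to the $I$-adic separatedness of $M/s^nM$, and invoke the Krull-type description of the closure of $0$ as the set of elements killed by some $1+i$ with $i\in I$. But the final step contains a genuine logical error, and it occurs exactly where the hypothesis that $s$ is a \emph{prime} element must be used (your argument never uses it). From $(s)\subseteq\mathfrak{p}$ and $\mathfrak{p}+I=R$ you conclude $(s)+I=R$; the inclusion points the wrong way: $(s)\subseteq\mathfrak{p}$ only gives $(s)+I\subseteq\mathfrak{p}+I=R$, which is vacuous. The associated prime $\mathfrak{p}$ you chose could be much larger than $(s)$, and then $\mathfrak{p}+I=R$ says nothing about $s$ being a unit modulo $I$. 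Indeed, the separation statement you are proving at this point is false without primality of $s$: take $R=k[u,v]$, $s=uv$, $I=(u-1)$, $n=1$. Then $R$ is an excellent normal domain, $R/\sqrt{I}\cong k[v]$ has no nontrivial idempotents, $s$ is a non-zero-divisor and is not a unit in $R/I\cong k[v]$ (it maps to $v$), yet $uv\in(s)$ with $u=1+(u-1)$, $u-1\in I$, and $v\notin(s)$, so $R/sR$ is not $I$-adically separated. Hence the slip is not cosmetic and cannot be repaired without invoking primality.

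The repair is short: since $s$ is a prime non-zero-divisor in the domain $R$, the ideal $(s^n)$ is $(s)$-primary (if $s\nmid b$ and $s^n\mid ab$, cancelling $s$ repeatedly gives $s^n\mid a$), so $\mathrm{Ass}_R(M/s^nM)=\{(s)\}$ for $M$ free (or simply take $M=R$ after your reduction). Your associated prime is then forced to be $\mathfrak{p}=(s)$, and $1+i\in(s)$ gives $1\in(s)+I$, i.e.\ $s$ is a unit modulo $I$, the desired contradiction. This is in substance the paper's argument, which runs directly: if $(1-m)x\in(s^n)$ with $x\notin(s^n)$, primality of $s$ forces $s\mid 1-m$, contradicting that $s$ is not a unit mod $I$. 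A secondary point: your justification of the injectivity of $C_I(M)\to C_I(M)\{s^{-1}\}$ is too quick — vanishing in the completion of $C_I(M)[s^{-1}]$ gives, for each $n$, some $k_n$ with $s^{k_n}x\in I^nC_I(M)$, not one $k$ working for all $n$; the paper instead deduces this from $C_I(R)$ being an integral domain (using excellence, normality and connectedness of $\spf(C_I(R))$), and you should either argue that way or apply Krull's theorem in the Noetherian ring $C_I(R)[s^{-1}]$.
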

\begin{proof}
Because $M$ is projective this follows from the case $M=R$, which we assume from now on.

The top horizontal map is injective because $R$ is integral. The vertical maps are injective by Krull's Theorem. 
Because of the assumptions, the ring $C_I(R)$ is an integral domain.
Hence the bottom horizontal arrow is injective. 
Therefore also the map $C_I(R)[s^{-1}] \rightarrow C_I(R)\{s^{-1}\}$ is injective, and we are left to show that for
arbitrary $n$, the left square in the following diagram with exact lines is Cartesian:
\[ \xymatrix{
   0 \ar[r] & R \ar[r] \ar@{^{(}->}[d] & \frac{1}{s^n}R \ar[r]^{\cdot s^n} \ar@{^{(}->}[d] & R / s^n R \ar[d] \ar[r] & 0 \\
   0 \ar[r] & \widehat{R} \ar[r]  & \widehat{\frac{1}{s^n}R} \ar[r] & \widehat{R / s^n R} \ar[r] & 0 } \]
where in the bottom line we mean the $I$-adic completions of the respective f.g. $R$-modules.
(The rows are exact because exactness of the completion functor on f.g. modules. This is, however, not used in the sequel.)
We have to show that the right vertical map is injective. This is the case if $0 \in R / s^n R$ is closed w.r.t. the $I$-adic topology. Its closure is (by an extension of Krull's Theorem \cite[Chap. III, \S 3, 2., Prop. 5]{Bourbaki}) the set of
$x \in R/s^nR$ for which there exists an $m \in I$, such that $(1-m)x = 0$ holds true. Therefore the above map is injective, if
no $m \in I$ and $x \in R$ exist, such that $s^n|(1-m)x$ and $s^n \nmid x$. Since $s$ is prime, this is the case, if no
$m \in I$ exists, such that $s |1-m$. Since by assumption $s$ is not a unit modulo $I$, this is indeed impossible.
\end{proof}

\begin{PAR}\label{QEXPANSIONNOT}
Let ${}^K_\nRPCD \nSD$ be $p$-ECMSD such that an integral canonical model $\nSh({}^K_\nRPCD \nSD)$ exists.
We consider $\nRPCD$ ``up to refinements'' in this section, i.e. we feel free to refine it whenever necessary.

Assume that there is a boundary morphism:
\[ \iota: \nH_0[\nU, 0] \Longrightarrow \nSD,  \]
which we fix once and for all in this section.

By \cite[Proposition 12.1]{Pink}, its existence forces the reflex field of $\nSD$ to be $\Q$. We
write also shorthand $\nSDB = \nH_0[\nU, 0]$.
It induces a dual boundary morphism (open embedding):
\[ \nShD(\iota): \nShD(\nSDB) \hookrightarrow \nShD(\nSD).  \]
between $\Zpp$-schemes and therefore a base point $\nShD(\iota)(v) \in \nShD(\nSD)(\Zpp)$ which we denote by $v$, too, $\iota$ being fixed.

Let $\mathcal{E}$ be a $\nP_\nSD$-equivariant line bundle on $\nShD(\nSD)$. 

For any $\rho \in \nP_\nSD(\Af)$ we have an associated boundary morphism
\[ (\iota,\rho): {}^{K_1}_{\nRPCD_1} \nSDB\Longrightarrow {}^K_\nRPCD \nSD,  \]
where $K_1 = K^\rho \cap \iota(\nP_{\nSDB}(\Af))$ and $\nRPCD_1$ is defined as pullback (\ref{DEFFUNCTORIALITYRPCD}).
Refining the original $\nRPCD$, if necessary, we can assume that $\nRPCD_1$ is smooth.

$\nSh(\iota,\rho)^* \nMorphSt^* \mathcal{E} \cong \nMorphSt^* \nShD(\iota)^*  \mathcal{E} $ (\ref{MAINTHEOREM3STACKS}, 1.) may be trivialized as in \ref{QEXPANSIONPREP}, 
using a basis vector $z'$ of $(\nShD(\iota)^* \mathcal{E})_v=\mathcal{E}_v$.
$K_1$ always contains a group of the form $\nU_\Zh K(M)$ for a saturated (at $p$) lattice $\nU_\Z \subseteq \nU$ and $p \nmid M$.

Let $s_{z'}$ denote the corresponding trivializing section (\ref{QEXPANSIONPREP}).
Thus every section $f \in H^0(\nSh({}^K_\nRPCD \nSD)_\C, \nMorphSt^* \mathcal{E})$ yields
Fourier coefficients (see \ref{QEXPANSIONPREP}):
\[ a_{\iota, z', k, \alpha, \rho}(f) := a_{k, \alpha, 1}(\frac{\nSh(\iota, \rho)^* f}{s_{z'}}). \]
This definition includes all the Fourier expansions defined before
because we have for $\rho' \in \Zh^*=K(1) \subseteq \Af^*$:
\[ a_{\iota, z', k, \alpha, \rho \rho'}(f) = a_{k, \alpha, \rho'}(\frac{\nSh(\iota, \rho)^* f}{s_{z'}}). \]
\end{PAR}

\begin{LEMMA}\label{LEMMAQEXP}
If $f \in H^0(\nSh({}^K_\nRPCD \nSD)_\C, \nMorphSt^* \mathcal{E}_{merom.})$ is a meromorphic section (any $\nRPCD$, including $\{0\}$ --- the meromorphic sections are the same), we get Fourier coefficients
\[ a_{\iota, z', k, \alpha, \rho, \sigma}(f) := a_{k, \alpha, \rho, \sigma}(\frac{\nSh(\iota, \rho)^* f}{s_{z'}}), \]
which are defined the same way (\ref{QEXPANSIONPREP}) but valid analytically only on a set of the form: 
\[ \{\alpha\} \times (x + \nU_{\R} + \sigma) \times \{\rho\} \subset \nX_{\nSDB} \times \nP_{\nSDB}(\Af), \] 
where $\sigma$ is a maximal dimensional rational polyhedral cone in $\nU_{\R}(-1)$.
\end{LEMMA}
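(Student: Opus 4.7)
The proof extends the construction of \ref{QEXPANSIONPREP} from holomorphic to meromorphic sections by a routine local-analytic argument. My plan is as follows.

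First I refine $\nRPCD_1$ (the pullback cone decomposition for $\nSDB$) if necessary so that the given maximal cone $\sigma$ lies in $\nRPCD_1$; this is harmless because meromorphic sections coincide with the function field and are insensitive to the toroidal embedding chosen. The cone $\sigma$ then determines a zero-dimensional boundary stratum $y_\sigma$ of $\nSh({}^{K_1}_{\nRPCD_1}\nSDB)_\C$, lying over the point $\zeta=\exp(\alpha(\rho/M))$ of $\spec(\C[\zeta_M])$. By Proposition \ref{SATZBOUNDARYPOINTS} combined with \ref{detailsrpcd}, an analytic neighborhood of $y_\sigma$ is a polydisk in the toric coordinates $z_i:=\exp(\alpha(\mu_{\log}(e_i^*)))$ for a $\Z$-basis $\{e_i\}$ of $\nU_\Z^*$ contained in $\sigma^\vee$; the preimage of such a polydisk in the universal cover, quotiented by the $\nU_\Z$-translation in the real direction of $\mu_{\log}$, is precisely a tube $\{\alpha\}\times(x+\nU_\R+\sigma)\times\{\rho\}$ with $x$ determined by the polyradius. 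The trivialization $s_{z'}$ extends across $y_\sigma$ by \ref{MAINTHEOREM3STACKS}(3), so the pullback $\tilde f:=\nSh(\iota,\rho)^*f/s_{z'}$ becomes an ordinary meromorphic function on the polydisk.

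Next I express $\tilde f$ locally as $g/t$ with $g$ and $t$ holomorphic in the chosen polydisk, where I can take $t$ to be a local equation for the polar divisor of $\tilde f$ in this polydisk (so that $t$ vanishes exactly on the polar locus and $g=\tilde f\cdot t$ is then holomorphic). Choosing $x$ deep enough in $\sigma$, i.e. the polydisk small enough, the holomorphic $q$-expansion of \ref{QEXPANSIONPREP} applies both to $g$ and to $t$, giving absolutely and uniformly convergent Fourier expansions
\[ g=\sum_{k\in\nU_\Z^*\cap\sigma^\vee}b_k\exp(\alpha(\mu_{\log} k)),\qquad t=\sum_{k\in\nU_\Z^*\cap\sigma^\vee}c_k\exp(\alpha(\mu_{\log} k)) \]
on the tube. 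Since absolutely convergent Fourier series with fixed period form a $\C$-algebra closed under division away from the zero set of the denominator, $\tilde f$ has a Fourier expansion
\[ \tilde f(\alpha,\mu_{\log},\rho)=\sum_{k\in\nU_\Z^*}a_k\exp(\alpha(\mu_{\log} k)) \]
wherever $t$ is nonzero on the tube. I set $a_{\iota,z',k,\alpha,\rho,\sigma}(f):=a_k$; compatibility with the notation of \ref{QEXPANSIONPREP} when $f$ is holomorphic is then immediate since one may take $t\equiv1$. The Galois-equivariance statement analogous to \eqref{galoisactionfourier} is inherited from the holomorphic case by the same quotient construction.

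The main obstacle is to justify that the auxiliary choice of denominator $t$ does not affect the resulting coefficients. This is standard: the $a_k$ are uniquely determined by $\tilde f$ on any open subset of the tube on which $\tilde f$ is holomorphic, via the integral formula over a fundamental domain for $\nU_\Z$ in $\nU_\R$ (with fixed imaginary part inside $x+\sigma$), and the $\nU_\Z$-periodicity in the real direction is built into the boundary-component structure of $\nSh({}^{K_1}_{\nRPCD_1}\nSDB)$. Two different local denominators $t,t'$ give the same quotient $\tilde f$, hence (by the uniqueness of absolutely convergent Fourier series on a common domain) the same coefficients $a_k$.
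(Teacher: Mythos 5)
There is a genuine gap at the division step. You reduce to $\tilde f=g/t$ with $g,t$ holomorphic on a polydisk around the stratum $y_\sigma$ and then invoke the claim that ``absolutely convergent Fourier series with fixed period form a $\C$-algebra closed under division away from the zero set of the denominator.'' In several variables this is false, and it is exactly the point where the lemma has content. A Fourier series $\sum_{k\in \nU_\Z^*\cap\sigma^\vee}a_k\exp(\alpha(\mu_{\log}k))$ convergent on a tube $x+\nU_\R+\sigma$ defines a function holomorphic on the \emph{whole} tube; so if the polar divisor of $\tilde f$ passes through $y_\sigma$ without being a union of boundary components, no such expansion can exist for the given $\sigma$, no matter how deep you push $x$. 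Concretely, if in the toric coordinates the denominator is $t=z_1-z_2$ (vanishing at $y_\sigma$ but not divisible by any $z_i$), its zero set is $\{\mu_{\log}\text{ with }\tau_1-\tau_2\in\Z\}$, whose imaginary part is the hyperplane $y_1=y_2$; this meets $x+\sigma$ for every translate $x$ of the full-dimensional cone $\sigma$, so $g/t$ is holomorphic on no tube of the required shape over that $\sigma$, and the one-variable picture (``shrink the polydisk to avoid the poles'') that motivates your argument does not carry over. Your uniqueness argument for the coefficients is then moot, and in any case the coefficient integrals at fixed imaginary part would depend on which component of the complement of the polar set the imaginary part lies in.

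What is missing is precisely the step the paper devotes its proof to: the cone $\sigma$ in the statement is not the one you start with but must be \emph{produced}, by refining $\nRPCD$ (equivalently, blowing up the toroidal compactification), so that at some boundary point of the refinement the divisor of $f$ is supported entirely on the boundary divisor. The paper does this formally: writing $f$ as a quotient of formal power series at a maximal-dimensional boundary point, it applies the barycentric-subdivision substitutions $S_j$ ($z_j\mapsto z_j$, $z_i\mapsto z_iz_j$ for $i\neq j$) and clears monomial factors, and an induction on the minimal degree of occurring monomials shows one eventually reaches a series with nonzero constant term, i.e.\ a unit times a monomial. After this monomialization, $f$ is holomorphic and nonvanishing off the boundary in a neighborhood of the new boundary point, and that neighborhood maps onto a tube $\{\alpha\}\times(x+\nU_\R+\sigma')\times\{\rho\}$ over a (subdivided, still maximal-dimensional) cone $\sigma'$, where the convergent Laurent--Fourier expansion then exists by the holomorphic case. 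To repair your proof you would have to supply an equivalent of this resolution step --- for instance an argument that some full-dimensional subcone avoids the ``amoeba''/logarithmic image of the polar divisor --- rather than the false closure-under-division claim; as written, the proposal does not establish the lemma.
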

\begin{proof}
We show that there is a boundary point on {\em some} toroidal compactification over $\C$ at which locally the divisor of $f$ consists only of components of the boundary divisor itself. From this the statement follows because a small neighborhood around such a boundary point is mapped to the set given in the statement of the lemma.

It suffices to check this condition in a formal neighborhood of the boundary point. Writing $f$ as a quotient of formal functions, we are reduced to the
case, where $f$ is a formal function in the formal completion at
some boundary point associated with $\iota, \rho, \alpha \in \nX_{\nH_0}$ and a r.p.c. $\sigma$ of maximal dimension (now considered in $\nU_\R$, see \ref{detailsrpcd}). 
Let the cone be generated by $\{\lambda_i \in \nU_\Z\}$ and call $\{z_i\}$ the variables corresponding to the vertices of the dual cone.

This formal completion is
\[ \spf(\bigoplus_{\zeta_{M,\C}} \C \llbracket U_\Z^* \cap \sigma^\vee \rrbracket). \]
Hence the formal function is an element in
\[ \bigoplus_{\zeta_{M,\C}} \C \llbracket U_\Z^* \cap \sigma^\vee \rrbracket = \bigoplus_{\zeta_{M, \C}} \C \llbracket z_1, \dots, z_n \rrbracket. \]
We consider its components separately and call $f$ any of them.

We may now refine the polyhedral cone decomposition in a way that it induces barycentric subdivisions of faces of $\sigma$. 
The faces $\tau$ of $\sigma$ correspond to subsets $S \subseteq \{1,\dots,n\}$ (the vertices of $\tau$). 
There are then $|S|$ new maximal dimensional cones $\sigma'$ in the corresponding subdivison, indexed by the respective vertex $j$ which does not belong to $\sigma'$ anymore.
Let $\lambda_1', \dots, \lambda_n'$ be the generators of the corresponding cone. We have
$\lambda_j' = \sum_{i \in S} \lambda_i$ and $\lambda_i' = \lambda_i$, $i \not =j $. We get a corresponding homomorphism:
\[  \C \llbracket z_1, \dots, z_n \rrbracket  \rightarrow \C \llbracket z_1', \dots, z_n' \rrbracket .  \]
given by $z_j \mapsto \sum_{i \in S} z_i'$ and $z_i \mapsto z_i'$, $i \not= j$.

Since both rings have the same form, we may consider this homomorphism as automorphism $T_{j,S}$ of the ring on the left, in the obvious way. 
We have to show that repeatedly applying those to $f$ and clearing factors of $z_i$, $i=1,\dots, n$ one arrives at an element $\widetilde{f}$ containing a constant term.
Let $S_j$ be the product $T_{i,\{i,j\}}$ for $i\not=j$. It is given by mapping $z_j \mapsto z_j$ and $z_i \mapsto z_i z_j$ for $i \not=j$.

We show by induction on the minimal degree of the occurring monomials in $f$, that by repeatedly applying the $S_i$ to $f$ and clearing factors of $z_i$ one arrives at a function $\widetilde{f}$ containing a constant term. Let $N = \sum_i n_i$ be the minimal degree of a monomial $g = \prod z_i^{n_i}$ occurring in $f$. 
Let $j$ be  such that $n_j \not=0$. By applying $S_j$ to $f$, a monomial $h=\prod z_i^{m_i}$ gets transformed to $ z_j^{\deg(h)}  \prod_{i \not= j} z_i^{m_i }$.
Since $\deg(g)$ is minimal, we may define $\widetilde{f}$ by $\frac{S_j(f)}{ z_j^{\deg(g)}}$. We have
$\deg(\frac{S_j(g)}{ z_j^{\deg(g)}}) = \sum_{i \not= j} n_i < \deg(g)$, hence $\widetilde{f}$ has a term of smaller minimal degree.
\end{proof}

\begin{SATZ}[$q$-expansion principle]\label{QEXPANSION}
Assume \ref{MAINCONJECTURE}.

With the notations of \ref{QEXPANSIONNOT}, 
there is an $M \in \N$, $p \nmid M$ and a finite subset $\{\rho_1, \dots, \rho_n\} \subset \nP_\nSD(\Af)$, such that for $R:=\Zpp[\zeta_{N}]$, $M|N$,
we have
\begin{gather*}
 H^0(\nSh({}^K_{\nRPCD} \nO)_R, \nMorphSt^*\mathcal{E}) = \\
 \left\{ f \in H^0(\nSh({}^K_{\nRPCD} \nO)_\C, \nMorphSt^*\mathcal{E}) \middle| \forall i \
 \forall k \in U^*_\Q, \forall \alpha\  a_{\iota, z', k, \alpha, \rho_i}(f) \in R \right\}
\end{gather*}
and similarly\footnote{$horz.$ means that we inverted all functions, which have no component of the fibre above $p$ in its divisor.}
\begin{gather*}
 H^0(\nSh({}^K_\nRPCD \nO)_R, \nMorphSt^*\mathcal{E}_{horz}) = \\
 \left\{ f \in H^0(\nSh({}^K\nO)_\C, \nMorphSt^*\mathcal{E}_{merom.}) \middle| \substack{ \forall i\ \forall \alpha \ \exists \sigma \subset U_\R \text{ open r.p. cone}, \\
 \forall k \in U_\Q^*,\ a_{\iota, z', k, \alpha, \rho_i, \sigma}(f) \in R } \right\}.
\end{gather*}
Moreover, we have
\[ a_{\iota, z', k, \alpha, \rho, \sigma}({}^\tau f) = {}^\tau (a_{\iota, z', k, \alpha, \rho k_\tau^{-1}, \sigma}(f)) \qquad \forall \tau \in \Gal(\Q(\zeta_N)|\Q), \]
hence $f$ is (in both cases) defined over $\Zpp$ instead of $R$, if it satisfies
\[ a_{\iota, z', k, \alpha, \rho, \sigma}(f) = {}^\tau (a_{\iota, z', k, \alpha, \rho k_\tau^{-1}, \sigma}(f)), \]
for all $\tau \in \Gal(\Q(\zeta_N)|\Q)$, all $\rho, \alpha, k$ and some $\sigma$ (depending on $\rho$ and $\alpha$).
Here $k_\tau$ is the image of $\tau$ under the natural isomorphism $\Gal(\Q(\zeta_N)|\Q) \cong K(1)/K(N)$.
\end{SATZ}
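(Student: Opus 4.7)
The plan is to combine three ingredients: the boundary isomorphism of Main Theorem~\ref{MAINTHEOREM1a}(4), the explicit description of the formal neighbourhood with its trivialization given by Proposition~\ref{SATZBOUNDARYPOINTS} (together with Main Theorem~\ref{MAINTHEOREM3STACKS}(3)), and the formal $q$-expansion principle, Lemma~\ref{LEMMA_FORMAL}. Before applying these, I refine $\nRPCD$, which by Proposition~\ref{PROPERTIESRPCD} can be made complete, smooth and projective without enlarging the space of sections, and pick finitely many $\rho_i$ whose associated point-like boundary strata detect every component of the special fibre of the compactification.

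\textbf{Choice of $\rho_i$.} By Main Theorem~\ref{MAINTHEOREM1a}(3) and the subsequent remark, the zero-dimensional boundary strata of $\nSh({}^K_\nRPCD \nSD)$ are indexed by the finite double coset $\nP_\nSD(\Q)\backslash\{\sigma \in \nRPCD \text{ maximal}\}/K$, and every irreducible component of the boundary divisor contains at least one such stratum in its closure. Each zero-dimensional stratum is attached to a point-like rational boundary morphism, which --- by the reflex field being $\Q$ (\cite[Prop.~12.1]{Pink}) and $\nP_\nSD(\Q)$-conjugacy --- is equivalent to one of the form $(\iota, \rho)$. I choose a system of representatives $\rho_1, \ldots, \rho_n$, and an integer $M$ coprime to $p$ such that $\nU_\Zh K(M) \subseteq K_i := \iota(\nP_\nSDB(\Af)) \cap K^{\rho_i}$ for every $i$, and set $R := \Zpp[\zeta_N]$ for any $M \mid N$.

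\textbf{Formal expansion and integrality.} At the zero-dimensional stratum indexed by $(\iota, \rho_i)$ and a maximal cone $\sigma$, Main Theorem~\ref{MAINTHEOREM1a}(4) identifies the formal completion of $\nSh({}^K_\nRPCD \nSD)_R$ with that of a torus embedding of $\nSh({}^{K_i} \nSDB)_R$. By Proposition~\ref{SATZBOUNDARYPOINTS}, this completion is
\[ \spf\Bigl(\bigoplus_{\zeta_{M,R}} R\llbracket \nU_\Z^* \cap \sigma^\vee \rrbracket\Bigr), \]
and the trivialization $can.$ of the principal bundle extends across the torus embedding by Main Theorem~\ref{MAINTHEOREM3STACKS}(3), trivializing $\nMorphSt^*\mathcal{E}$. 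The calculation of~\ref{QEXPANSIONPREP} identifies the formal power series representing $f/s_{z'}$ in the component indexed by $\zeta_{M,R} = \exp(\alpha(\tfrac{1}{M}\rho_i))$ with $\sum_k a_{\iota,z',k,\alpha,\rho_i}(f)[k]$. Now let $X$ be an irreducible component of the geometric special fibre at a prime $\mathfrak{p}\mid p$ of $R$; by assumption~\ref{MAINCONJECTURE} the closure $\overline{X}$ contains some zero-dimensional stratum attached to one of the $(\iota, \rho_i)$. Applying Lemma~\ref{LEMMA_FORMAL} to the local ring at the generic point of $X$ (with $s$ a uniformizer at $\mathfrak{p}$ and $I$ the ideal of the stratum) reduces integrality of $f$ along $X$ to integrality of its formal expansion, i.e.\ to integrality of the $a_{\iota, z', k, \alpha, \rho_i}(f)$. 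Running this over all $X$ and $\rho_i$ yields the first displayed equality; the horizontal version uses Lemma~\ref{LEMMAQEXP} to produce expansions of meromorphic sections on open maximal cones $\sigma$ and proceeds identically. The Galois formula for Fourier coefficients is the content of~(\ref{galoisactionfourier}), already derived in~\ref{QEXPANSIONPREP}, and descent from $R$ to $\Zpp$ follows from Galois invariance of sections together with faithful flatness of $\Zpp \to R$.

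\textbf{Main obstacle.} The hard part is ensuring that the finitely many formal neighbourhoods of the point-like strata $(\iota, \rho_i)$ suffice to detect integrality --- i.e.\ that every irreducible component of the special fibre of the toroidal compactification contains such a stratum in its closure. Completeness of $\nRPCD$ handles the irreducible boundary divisors immediately, but the special-fibre geometry genuinely depends on assumption~\ref{MAINCONJECTURE}: without it, a component of the mod-$p$ fibre could be invisible to the chosen boundary expansions, and the $q$-expansion principle as stated would fail.
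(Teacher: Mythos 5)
Your overall strategy coincides with the paper's: reduce to the formal neighbourhood of a point-like boundary stratum via the boundary isomorphism of Main Theorem \ref{MAINTHEOREM1a}, identify the formal power series there with the Fourier coefficients via Proposition \ref{SATZBOUNDARYPOINTS} and \ref{QEXPANSIONPREP}, and conclude with the abstract principle of Lemma \ref{LEMMA_FORMAL} (plus Lemma \ref{LEMMAQEXP} for meromorphic sections and the formula (\ref{galoisactionfourier}) for the Galois statement). The only organisational difference is that the paper picks one boundary point and one cone $\sigma$ per connected component of $\nSh({}^K \nSD)_\C$, rather than enumerating zero-dimensional strata through maximal cones; both produce the required finite set $\{\rho_i\}$.

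One step, however, is misapplied as written. You invoke Lemma \ref{LEMMA_FORMAL} ``for the local ring at the generic point of $X$'' with $I$ ``the ideal of the stratum''. In the local ring at the generic point $\eta_X$ of a component of the special fibre, the ideal of the zero-dimensional boundary stratum is the unit ideal (the stratum is a closed point distinct from $\eta_X$), so $C_I$ vanishes and the lemma gives nothing. The lemma has to be applied to (an affine piece of) the entire connected component of $\nSh({}^K_\nRPCD \nSD)_R$ containing both the stratum and $\eta_X$: there $I$ is the ideal of the stratum, $s$ is a uniformizer of $R$ above $p$, the condition that $s$ not be a unit in $R/I$ holds because the stratum is finite flat over $R$ and hence meets the special fibre, and the condition that $s$ be \emph{prime} is exactly the irreducibility of the special fibre of that connected component. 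The latter is supplied by Lemma \ref{COMPONENTS} (bijection between geometric components of the generic and special fibres, defined over $\Zpp[\zeta_N]$) together with smoothness --- not by Assumption \ref{MAINCONJECTURE}, whose role is only to guarantee the existence of the integral model and its boundary isomorphisms. With that repair your argument is the paper's.
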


\begin{proof}
We may show the theorem for a particular $\nRPCD$ and all of its refinements. A posteriori, it will then be true for any $\nRPCD'$, using a common refinement of $\nRPCD$ and $\nRPCD'$.
Let $f$ be a complex section of the bundle $\nMorphSt^*(\mathcal{E})$ over $\nSh({}^K_\nRPCD \nSD)$.
By the abstract $q$-expansion principle \ref{LEMMA_FORMAL}, a section of a locally free sheaf is defined over $\nSh({}^{K}_{\nRPCD} \nSD)_R$ if on
any connected component (over $R$), there is a point $p$, defined over $R$, such that after passing to the completion at $p$, the corresponding formal series is defined over $R$.
Now on any connected component $i$ of $\nSh({}^K \nSD)_\C$, there is a boundary point corresponding to a boundary component $(\iota, \rho_i)$ and a $\sigma \subset \nU_\R$.
Using the boundary isomorphism
$\nSh(\iota, \rho_i)$ (\ref{MAINTHEOREM1a}), defined over $\Zpp$, it suffices to show that the pullback of $f$ to $\nMorphSt^*(\nShD(\iota)^* \mathcal{E})$ over
$\widehat{\nSh({}^{K_i}_{\nRPCD_i}\nH_0[\nU, 0])}$ (completion with respect to the boundary point corresponding to $\sigma$) is defined over $R$. We may choose the integer $M$ such that all connected components of
$\nSh({}^{K_i}_{\nRPCD_i}\nH_0[\nU, 0])$ are defined over $R$, for all $i$ and such that we may test $f$ by pulling it further back to $\widehat{\nSh({}^{K_i'}_{\nRPCD_i}\nH_0[\nU, 0])}$, where $K_i' = \nU_{\Zh,i} K(M)$ for some lattice $\nU_{\Z,i} \subset \nU$ as in \ref{QEXPANSIONPREP}. We refine $\nRPCD$ such that $\nRPCD_i$ is smooth
w.r.t. $K_i'$ for all $i$.

By the consideration in \ref{QEXPANSIONNOT} above, the formal function $f/s_{z'}$  on
\[ \widehat{\nSh({}^{\nU_{\Zh,i} K(M)}_{\nRPCD_i}\nH_0[\nU, 0])} \]
for $\nU_{\Zh,i} K(M) \subseteq K_i$ is given by

\[ \left(\frac{f}{s}\right)_{\zeta_{M,\C}} = \sum_{\lambda \in \nU_\Z^*} \alpha_{\iota,z', k,\alpha,\rho_i k,\lambda,\sigma}(f). \]
Here $[\alpha, k] \in \Q^* \backslash \nX_{\nH_0} \times \Af^* / K(M)$, with $k\in K(1)$ is such that $\zeta_{M} = \exp(\alpha(\frac{k}{M}))$.
(It suffices of course to look at the fibre over the $\zeta_{M,\C}$ corresponding to $[\alpha, 1]$).
Then, since by assumption these coefficients lie in $R$, $f/s_{z'}$ is defined over $R$ and hence $f$ itself.
The first statements follow because we investigated at least one boundary point on each connected component.
If $f$ was a meromorphic section, we may refine $\nRPCD$ in a way such that for all $i$ its pullback $\nRPCD_i$ contains a cone determined by $\sigma \subset \nU(\R)$ such
that there exists a Fourier expansion in a neighborhood of the corresponding boundary point (see \ref{LEMMAQEXP}). 
The statement about the Fourier coefficients of the Galois conjugates was shown in \ref{QEXPANSIONPREP}.
\end{proof}

\section{Shimura varieties of orthogonal type, special cycles}

\begin{PROP}
If $\dim(\nL) \ge 3$, the reflex field of $\nS(\nL)$ is $\Q$.
The compact dual is the zero quadric, i.e.
\[ \nShD(\nS(\L))(R) = \{ <z> \in \PP(\nL_R) \where \langle z, z \rangle = 0 \} \]
for any ring $R$ over $\Zpp$.

If $\dim(\nL) = 2$, the reflex field of $\nS(\nL)_+$, resp. $\nS(\nL)_-$ is the imaginary quadratic field (in $\C$) associated with the 
(negative) definite binary quadratic form $\nL$.
The compact dual is a point given by the isotropic vector corresponding to {\em the} filtration in $\nX_{\nS(\nL)_+}$, respectively $\nX_{\nS(\nL)_-}$.
\end{PROP}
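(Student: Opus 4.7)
The strategy is to describe the compact dual explicitly as the zero quadric of $\nL$ using Main Theorem \ref{MAINTHEOREM2}(2), and then read off the reflex field from its Galois descent data.

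First I would unpack the cocharacter $u_x: \G_{m,\C} \to \GSpin(\nL_\C)$ attached to some $x \in \nX_{\nS(\nL)}$. From the explicit formula $w \mapsto a + b\, x_1 x_2$ in Lemma \ref{LEMMAGSPINEMBEDDING}, together with the Hodge decomposition $\nL_\C = \nL^{-2,0} \oplus \nL^{-1,-1} \oplus \nL^{0,-2}$ from \ref{ORTHSHIMURADATUM}, one reads that $u_x$ acts via the standard representation on $\nL$ with weights $2, 1, 0$ on the three summands. The decreasing filtration it determines is
\[ 0 \subset F^0 = \nL^{-2,0} \subset F^{-1} = (F^0)^\perp \subset F^{-2} = \nL_\C, \]
where the identity $F^{-1} = (F^0)^\perp$ follows from $F^p \perp F^q$ for $p + q > -2$ (the polarization axiom applied to the weight $-2$ Hodge structure), together with a dimension count. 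Thus the filtration is completely determined by the single datum $F^0$, an isotropic line in $\nL_\C$.

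Next I would invoke Main Theorem \ref{MAINTHEOREM2}(2): $\nShD(\nS(\nL))$ represents the functor of $(\GSpin(\nL),\nL)$-compatible filtrations of the type just described. The assignment $F^\bullet \mapsto F^0$ identifies this functor with the functor of points of the zero quadric
\[ Q(\nL) = \{ <z> \in \PP(\nL) \where \langle z,z\rangle = 0\} \subset \PP(\nL_{\Zpp}), \]
cut out by the $\Zpp$-rational quadratic form $Q_\nL \in \Sym^2(\nL^*)$. Compatibility of this identification with the Borel embedding (\ref{MAINTHEOREM2}, 5.) is automatic from the analytic description of $\nX_{\nS(\nL)}$ in \ref{ORTHSHIMURADATUM} as an open subset of $Q(\nL)(\C)$.

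Finally, for the reflex field (Definition \ref{DEFREFLEXRING}) I would examine when $[u_x]$ --- equivalently the variety $Q(\nL) \subset \PP(\nL_\Q)$ or any of its components --- is defined over a subfield of $\C$. In the case $m = \dim(\nL) \geq 3$, $Q(\nL)$ is defined over $\Q$ by the $\Q$-rational quadratic form, so the reflex field is $\Q$. In the case $m = 2$ with signature $(0,2)$, $Q_\nL$ is anisotropic over $\Q$ (and over $\R$) but factors over $\C$ into two isotropic lines, which are Galois-conjugate over precisely the imaginary quadratic field $\Q(\sqrt{-\discr(\nL)})$. The orientation on negative definite subspaces of $\nL_\R$ used in \ref{ORTHSHIMURADATUM} to distinguish $\nS(\nL)_+$ from $\nS(\nL)_-$ selects one of the two isotropic lines, so the reflex field of each $\nS(\nL)_\pm$ is exactly this imaginary quadratic field. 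The only mildly subtle step --- and the main potential obstacle --- is matching the orientation datum bijectively with the choice of isotropic line; this is a direct computation with the formulas $x_1 = (z+\overline{z})/\sqrt{-\langle z,\overline{z}\rangle}$, $x_2 = \pm(z-\overline{z})/\sqrt{\langle z,\overline{z}\rangle}$ from \ref{ORTHSHIMURADATUM}, since complex conjugation on $\PP(\nL_\C)$ (relative to the $\Q$-structure) swaps $<z>$ and $<\overline{z}>$.
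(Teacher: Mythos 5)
Your identification of the compact dual is essentially the paper's argument: you read off that $u_x$ acts with weights $2,1,0$ on the Hodge summands, observe that the resulting filtration is $0 \subset \langle z\rangle \subset \langle z\rangle^\perp \subset \nL$ and hence is determined by the isotropic line alone, and invoke the moduli description of $\nShD$ (Main Theorem \ref{MAINTHEOREM2}, 2.). That part is fine.

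The gap is in the reflex-field step for $m \ge 3$. You argue ``$Q(\nL)$ is defined over $\Q$ by the $\Q$-rational quadratic form, so the reflex field is $\Q$,'' but the zero quadric is a $\Q$-scheme for \emph{every} $m$, including $m=2$ (where it simply has no rational points and is geometrically disconnected) --- so field of definition of the quadric cannot by itself be the criterion, and your fallback ``or any of its components'' is exactly the assertion that needs proof. The reflex field is by Definition \ref{DEFREFLEXRING} the field of definition of the \emph{conjugacy class} of $u_x$, and what must be checked is that $\overline{u_x}$ is $\GSpin(\nL_{\overline\Q})$-conjugate to $u_x$. Complex conjugation replaces the weight-$2$ line $\langle z\rangle$ by $\langle\overline z\rangle$ (and swaps the weight-$0$ line accordingly), so one needs the group to act transitively on the relevant cocharacters, equivalently on ordered hyperbolic pairs of isotropic lines; this holds for $m\ge 3$ (Witt extension, correcting the determinant by a reflection in the orthogonal complement) and is precisely what fails for $m=2$, where $\GSpin(\nL)$ is a torus, conjugation is trivial, and $\overline{u_x}=u_{x'}\neq u_x$, forcing the reflex field to be the imaginary quadratic splitting field. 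The paper's proof consists of exactly this check (``$\overline{u_h}$ is also of the form $u_{h'}$ and hence conjugated to $u_h$ if $n\ge 3$''; ``if $n=2$, $\GSpin(\nL)$ is a torus''). Your write-up locates the subtlety instead in matching the orientation with the choice of isotropic line for $\nS(\nL)_\pm$ --- that is a genuine but minor bookkeeping point; the conjugacy statement above is the step that actually carries the $m\ge3$ versus $m=2$ dichotomy and is missing from your argument.
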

\begin{proof}
Let $D$ be a negative definite subspace $D$ defined over $\Zpp$.
Over an imaginary quadratic field $F$, unramified at $p$, there is a morphism
\[ \mu: \Gm \hookrightarrow \GSpin(\nL), \]
acting with exponent $2$ on one isotropic vector $z$, with exponent $0$ on $\overline{z}$ and with exponent $1$ on the orthogonal complement. Over $\C$ it yields the associated morphism $u_h$ (\ref{DEFREFLEXRING}), where $h$ is one of the 2 morphisms corresponding to $D$. The morphism $\overline{u_h}$ is also 
of the form $u_{h'}$ and hence conjugated to $u_h$
if $n\ge 3$. The conjugacy class of these morphisms is therefore defined over $\Q$.
If $n=2$, $\GSpin(\nL)$ is a torus, and hence the reflex field is $F$.
Furthermore, the morphism $u$ defines the filtration $0 \subset <z> \subset <z>^\perp \subset \nL$ which is
completely determined by $<z>$.
\end{proof}

\begin{BEM}\label{HODGEEMBEDDINGDUAL}
The Hodge embedding from Lemma \ref{LEMMAGSPINEMBEDDING} induces a morphism (cf. Main Theorem \ref{MAINTHEOREM2})
\[ \nShD(\nS(\nL)) \hookrightarrow \nShD(\nH(\Cliff^+(\nL), \langle\cdot ,\cdot \rangle_\delta)). \]
Here $\nShD(\nS(\nL))$ is the space of isotropic lines in $\nL$ and $\nShD(\nH(\Cliff^+(\nL), \langle\cdot ,\cdot \rangle_\delta))$
is the space of Lagrangian saturated sublattices of  $\Cliff^+(\nL)$ with respect to the form $\langle\cdot ,\cdot \rangle_\delta$. The proof of Lemma \ref{LEMMAGSPINEMBEDDING} shows that this map is given (for any $R$)
by sending a $<z>$ in $\nL_R$, with $z$ assumed to be primitive, to
the subspace $zw\Cliff^+(\nL_R)$, where $w$ is a primitive vector with $\langle z, w \rangle=1$ (this subspace actually does not depend on $w$).
One also immediately sees that this is compatible with the Borel embedding because $w=\frac{\overline{z}}{\langle z, \overline{z}\rangle}$ leads to
the projector considered in the proof of Lemma \ref{LEMMAGSPINEMBEDDING}.
\end{BEM}

\begin{LEMMA}\label{COMPONENTS}
Let $K$ be a neat admissible compact open subgroup of $\SO(\nL_\Af)$ and $\nRPCD$ be a smooth $K$-admissible rational polyhedral cone decomposition.
The (geometric) connected components of $\nSh({}^K_\nRPCD \nO)_{\overline{\Q}}$ and $\nSh({}^K_\nRPCD \nO)_{\overline{\Fp}}$ are in bijection and defined over
$\spec(\Zpp[\zeta_N])$, where $\zeta_N$, $p\nmid N$ is an $N$-th root of unity.
\end{LEMMA}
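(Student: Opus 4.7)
The strategy is to reduce to a proper smooth toroidal compactification, identify $\pi_0$ of the complex fibre together with a cyclotomic Galois action unramified at $p$, and then transport this to the special fibre using rigidity of geometric connected components under proper smooth morphisms.

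First I would replace $\nRPCD$ by a smooth projective complete refinement $\nRPCD'$ supplied by Proposition \ref{PROPERTIESRPCD}. The open immersion $\nSh({}^K \nO) \hookrightarrow \nSh({}^K_\nRPCD \nO)$ is dense with smooth (hence locally irreducible) target by item 5 of Main Theorem \ref{MAINTHEOREM1a}, so it preserves $\pi_0$; passing from $\nRPCD$ to $\nRPCD'$ preserves $\pi_0$ for the same reason. After this reduction $\nSh({}^K_{\nRPCD'} \nO)$ is smooth and projective over $\Zpp$ by items 5 and 7 of the same theorem.

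Next I would analyse $\pi_0$ of the complex fibre. From the double-coset description $[\SO(\nL_\Q) \backslash \nX \times \SO(\nL_\Af)/K]$ together with strong approximation for the simply connected cover $\Spin(\nL)$ (valid since $\Spin(\nL_\R)$ is non-compact in signature $(m-2,2)$), the set $\pi_0(\nSh({}^K \nO)_\C)$ is a finite abelian quotient controlled by the spinor norm $\theta\colon \SO(\nL_\Af) \to \Af^*/(\Af^*)^2$. By item 2 of Main Theorem \ref{MAINTHEOREM1} the Galois action on this finite set factors through the reciprocity morphism of a zero-dimensional Shimura datum for a quotient $\Q$-torus, split by a cyclotomic extension $\Q(\zeta_N)$ of $\Q$. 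Admissibility of $K$ at $p$ forces $K_p = \SO(\nL_\Zpp)$, and the unimodularity of $\nL_\Zpp$ makes the spinor norm at $p$ take values in $\Zp^* \cdot (\Qp^*)^2$, so the associated cyclotomic character is unramified at $p$; we may therefore arrange $p \nmid N$.

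Finally, after base change to $\Zpp[\zeta_N]$ (which is étale over $\Zpp$), the scheme $\nSh({}^K_{\nRPCD'} \nO)_{\Zpp[\zeta_N]}$ decomposes as a disjoint union of proper smooth $\Zpp[\zeta_N]$-schemes $X_i$, one per geometric component of the generic fibre, each with geometrically connected generic fibre. For each such $X_i$ the Stein factorisation $X_i \to \spec(\mathcal{A}_i) \to \spec(\Zpp[\zeta_N])$ has $\mathcal{A}_i$ finite étale over the base with geometrically connected geometric generic fibre, hence $\mathcal{A}_i = \Zpp[\zeta_N]$; this forces the special fibre of $X_i$ to be geometrically connected as well, and summing over $i$ produces the desired canonical bijection. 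The main obstacle is pinning down the torus $T$ controlling the Galois action in the second paragraph precisely enough to ensure both cyclotomicity of its splitting field and unramifiedness at $p$; this latter integral refinement is the step that genuinely uses the unimodularity of $\nL_\Zpp$ together with the explicit form of Pink's reciprocity morphism.
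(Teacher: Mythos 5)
Your proposal is correct and is in substance the same argument the paper intends: the paper's proof is just a citation of Milne's description of $\pi_0$ of the Shimura variety (via strong approximation and the spinor norm, which is exactly your second paragraph), of the rational canonicity in Main Theorem \ref{MAINTHEOREM1} for the cyclotomic Galois action, and of the smoothness/properness statements in Main Theorem \ref{MAINTHEOREM1a}, which is what your reduction to a complete $\nRPCD$ and the Stein-factorisation/Zariski-connectedness step make explicit. The one step you should repair is the unramifiedness-at-$p$ argument: what is needed is that the spinor norm image of $K_p=\SO(\nL_\Zp)$ \emph{contains} $\Zp^*$ modulo squares (so that the inertia subgroup $\Zp^*\subset \Zh^*$ lands in the stabiliser of each component and acts trivially on $\pi_0$), not that it is contained in $\Zp^*(\Qp^*)^2$ as you wrote; the needed containment does hold, since for $p\neq 2$ a unimodular form of rank $\ge 2$ over $\Zp$ represents every unit by Hensel's lemma, so products of two reflections in $\SO(\nL_\Zp)$ realise every class in $\Zp^*/(\Zp^*)^2$ as a spinor norm.
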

\begin{proof}
This follows directly from \ref{MAINTHEOREM1}, \ref{MAINTHEOREM1a} using [Milne4, Theorem 5.17].
\end{proof}

\begin{LEMMA}
Let $\nL_\Z$ be a lattice with quadratic form of discriminant $D \not=0$.
Let $K$ be a compact open subgroup of $\SO(\nL_\Af)$, and pick a set of representatives
\[ g_i \in \SO(\Q) \backslash \SO(\Af) / K. \]
For each $g_i$ there is the lattice $\nL_\Z^{(i)}$ characterized by $\nL_\Zp^{(i)} = g_i \nL_\Zp$.
We have
\[ \left[ \SO(\Q) \backslash \nX_{\nO} \times (\SO(\Af) / K) \right] = \bigcup_i \left[ \Gamma_i \backslash \nX_\nO \right], \]
where $\Gamma_i$ is the subgroup of $\SO(\nL^{(i)}_\Z)$ defined by $\SO(\Q) \cap g_i K g_i^{-1}$.

If $K$ is the discriminant kernel of $\nL_\Z$, each $\Gamma_i$ is the discriminant kernel of the respective $\nL^{(i)}_\Z$.
\end{LEMMA}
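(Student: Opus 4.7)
The plan is to use the standard double-coset decomposition of adelic Shimura varieties, which in the present context is a purely group-theoretic statement about the action of $\SO(\Q)$ on $\nX_\nO \times \SO(\Af)/K$.

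First, I would define a map
\[ \coprod_i [\Gamma_i \backslash \nX_\nO] \longrightarrow \left[ \SO(\Q) \backslash \nX_\nO \times (\SO(\Af)/K) \right] \]
by sending a point $x$ in the $i$-th piece to the class of $(x, g_i)$. This is well defined because if $\gamma \in \Gamma_i = \SO(\Q) \cap g_i K g_i^{-1}$, then $(\gamma x, g_i) = (\gamma x, \gamma g_i k)$ for some $k \in K$, which is equivalent to $(x, g_i)$ modulo the $\SO(\Q) \times K$-action. To show the map is essentially surjective, take any pair $(x, g)$; writing $g = q g_i k$ with $q \in \SO(\Q)$, $k \in K$ gives $(x, g) \sim (q^{-1} x, g_i)$ in the $i$-th piece. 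For injectivity (on orbifold points and isotropy), if $(x, g_i) \sim (x', g_j)$ then $g_j = q g_i k$, which forces $i = j$ and $q \in \Gamma_i$, with $x' = q x$.

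Second, I would identify $\Gamma_i$ as the discriminant kernel of $\nL_\Z^{(i)}$ when $K$ is the discriminant kernel of $\nL_\Z$. Recall that the discriminant kernel is defined adelically as
\[ K = \{ g \in \SO(\nL_\Af) \mid g \nL_{\widehat{\Z}} = \nL_{\widehat{\Z}},\ g \equiv \id \text{ on } \nL_{\widehat{\Z}}^*/\nL_{\widehat{\Z}} \}, \]
so it is the intersection over all primes $\ell$ of the local discriminant kernels of $\nL_{\Z_\ell}$. Since $\nL_\Zp^{(i)} = g_i \nL_\Zp$ (and $\nL_{\Z_\ell}^{(i)} = g_i \nL_{\Z_\ell}$ locally at every finite place, by the definition of $\nL_\Z^{(i)}$), the conjugate $g_i K g_i^{-1}$ is precisely the discriminant kernel $K^{(i)} \subset \SO(\nL_\Af)$ of $\nL_\Z^{(i)}$. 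Intersecting with $\SO(\Q)$ and using that an element of $\SO(\Q)$ preserves the global lattice iff it preserves it at every finite place, we get $\Gamma_i = \SO(\Q) \cap K^{(i)} = \SO(\nL_\Z^{(i)})_{\mathrm{disc. ker.}}$.

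No serious obstacle is expected: the only delicate point is checking the orbifold equivalence (matching isotropy groups on both sides), which follows automatically because at a fixed $g_i$ the stabilizer of $(x, g_i)$ in $\SO(\Q)$ modulo the $K$-action on the right is exactly $\Stab_{\SO(\Q)}(x) \cap g_i K g_i^{-1}$, matching the isotropy in $[\Gamma_i \backslash \nX_\nO]$. The finiteness of the index set $\{g_i\}$ is guaranteed by the usual finiteness of class numbers for compact open $K$.
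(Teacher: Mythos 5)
Your argument is correct and is exactly the standard double-coset unfolding that the paper itself does not write out but delegates to its references ([H, 1.10] or [Milne4]); in substance it is the same approach. The only point worth noting is that you correctly read the defining condition $\nL^{(i)}_{\Z_\ell}=g_i\nL_{\Z_\ell}$ at every finite place, which is what makes the identification of $g_iKg_i^{-1}$ with the adelic discriminant kernel of $\nL^{(i)}_\Z$, and hence $\Gamma_i=\SO(\Q)\cap g_iKg_i^{-1}$ with its integral discriminant kernel, go through.
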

\begin{proof}{[H, 1.10]} or [Milne4].
\end{proof}

\begin{PAR}
Let $\nL_\Zpp$ be a lattice with unimodular
quadratic form, i.e. inducing an isomorphism $\nL_\Zpp \cong (\nL_\Zpp)^*$, and $\nL'_{\Zpp}$ be a saturated sublattice with unimodular form, of signature $(m-2,2)$ and $(m-2-n,2)$, respectively, where $0 < n \le m-2$.

Recall the embedding \ref{LEMMAGSPINEMBEDDING}, here used mod $\Gm$
\[ \iota: \nO(\nL') \hookrightarrow \nO(\nL), \]
in the case $m-n>2$ and the 2 embeddings 
\[ \iota_\pm: \nO(\nL')_\pm \hookrightarrow \nO(\nL), \]
in the case $m-n=2$.

Let an admissible compact open subgroup $K$ be given.
For each $g \in \SO(\nL_\Afp)$ we get a conjugated embedding:
\[ (\iota, g): {}^{K'_g}\nO(\nL') \hookrightarrow {}^K \nO(\nL) \]
for $K'_g = K \cap g^{-1} \iota(\SO(\nL'_\Af)) g$.

In addition, we may find smooth (w.r.t. $K$, resp. $K_g'$) complete and projective $\nRPCD$, and $\nRPCD'_g$ for each $g$ (\ref{PROPERTIESRPCD}),
such that models exist (\ref{MAINTHEOREM1a}), and we have, in the end, embeddings of $p$-ECMSD:
\[ (\iota, g): {}^{K'_g}_{\nRPCD'_g} \nO(\nL') \hookrightarrow {}^K_\nRPCD \nO(\nL) \]
(resp. with $\pm$). 

Let $\nM_{\Zpp}$ be another lattice with unimodular quadratic form and define 
\[ I(\nM, \nL)_R := \{ \alpha: \nM_R \rightarrow \nL_R \where \alpha  \text{ is an isometry}\}. \]
We have $I(\nM,\nL)_\Q \not= \emptyset \Leftrightarrow I(\nM,\nL)_\Af \not= \emptyset$ by Hasse's principle.
If these sets are nonempty, we may then find even an $x \in I(\nM, \nL)_{\Zpp}$.

For a $K$-invariant admissible\footnote{by this we mean that $\varphi$ is the tensor
product of a Schwartz function $\varphi \in S((\nM^*\otimes \nL)_\Afp)$
with the characteristic function of $(\nM^*\otimes \nL)_\Zp$} Schwartz function $\varphi \in S((\nM^*\otimes \nL)_\Af)$
\[ I(\nM, \nL)_\Af \cap \supp(\varphi) = \coprod_j K g_j^{-1} x \]
with finitely many $g_j \in \SO(\nL_{\Afp})$. We assume that $\nRPCD$ has been refined such that $\nRPCD'_{g_j}$ exists, with the
properties claimed above for all $j$.
\end{PAR}

\begin{DEF}\label{DEFSPECIALCYCLEP}
If $I(\nM,\nL)_\Q \not= \emptyset$,
we define the $p$-integral {\bf special cycle} associated with $\nM_{\Z_{(p)}}$ and $\varphi$
on $\nSh({}^K_\nRPCD \nO(\nL))$ as
\[ \nZ(\nL, \nM, \varphi; K) := 
\sum_j \varphi(g_j^{-1}x) \im( \nSh(\iota, g_j) ) ,
 \]
where $\iota$ is associated with the embedding $x^\perp \hookrightarrow \nL_{\Zpp}$.
If $I(\nM,\nL)_\Q = \emptyset$ we define $\nZ(\nL, \nM, \varphi; K):=0$.
\end{DEF}
Note that if $m-n=2$, $\nSh({}^{K_g'}_{\nRPCD_g'} \nO(x^\perp))$ is actually equipped with a morphism to
$\spec(\OO_\nSD)$ which is an extension of $\spec(\Zpp)$. 
In the definition above, we consider, however, the structural morphism to $\spec(\Zpp)$. (For the $\C$-points this means, that we get in any case the special cycles as defined e.g. in \cite{Borcherds1} or \cite{Kudla4}.)
The sum is  finite and independent of the choice of the $g_i$.
The formation is compatible with Hecke operators (\ref{DEFPECMSD}) in the obvious sense.

If $I(\nM,\nL)_\Q \not= \emptyset$ and $\nM_{\Q}$ is only a $\Q$-vector space with non-degenerate quadratic form and $\varphi$ any Schwartz function, we write
\[ I(\nM, \nL)_\Af \cap \supp(\varphi) = \coprod_j K g_j^{-1} x \]
for any $x \in I(\nM, \nL)_\Q$. 
\begin{DEF}\label{DEFSPECIALCYCLE}
We define the rational {\bf special cycle} associated with $\nM_{\Q}$ and $\varphi$
on $\nSh({}^K_\nRPCD \nO(\nL))$ as 
\[ \nZ(\nL, \nM, \varphi; K) := \sum_j \varphi(g_j^{-1}x) \overline{\im( \nSh(\iota, g_j) )}^{Zar}. \]
Here $\iota$ is associated with the embedding $x^\perp \hookrightarrow \nL_{\Q}$ and
$\nSh(\iota, g_j)$ is only a morphism of rational Shimura varieties. 
Here even $\nRPCD$ may be chosen arbitrary. In that case we understand the Zariski closure of
the embedding of the uncompactified rational Shimura variety.
\end{DEF}
From the extension property (cf. Main Theorem \ref{MAINTHEOREM1}, 3.) and the properties of
toroidal compactifications (cf. Main Theorem \ref{MAINTHEOREM1a}) follows that this definition coincides with the previous one, if there exists
a lattice $\nM_{\Zpp}$, such that the restriction of the 
quadratic form is unimodular, $\varphi$ is admissible, and $\nRPCD$ is chosen in such a way that we have embeddings of $p$-ECMSD.

\section{The arithmetic theory of automorphic forms}
  
\begin{PAR}\label{INPUTDATAAUTOMORPHICVB}
Let ${}^K_\nRPCD \nSD$ be pure $p$-ECMSD (\ref{DEFPECMSD}) with smooth (w.r.t. $K$), projective, and complete $\Delta$, $E \subset \C$ be the reflex field and $\OOO_\nSD = \OOO_{E,(p)}$ the reflex ring. 
We have an isomorphism
\[ \nSh({}^K_\nRPCD \nSD) \times_{\spec(\Zpp)} \C = \coprod_{\sigma \in \Hom(E, \C)} \nSh({}^K_\nRPCD \nSD) \times_{\OO_{\nSD}, \sigma} \C.  \]

Conjecture A of \cite[p. 58]{MiShi} which is proven in \cite[Corollary 18.5]{MiShi} states in particular that
for each $\sigma \in \Hom(E, \C)$ there
is a (rational) Shimura datum $\nSD^\sigma$, and we have an isomorphism
\begin{equation} \label{compos}
 (\nSh({}^K \nSD) \times_{\OO_{\nSD}, \sigma} \C)^{an} \rightarrow [\nP_{{}^\sigma\nSD} \backslash \nX_{{}^\sigma \nSD} \times \nP_{{}^\sigma \nSD}(\Af) / K]  
\end{equation}
which renders the system $\nSh({}^K \nSD) \times_{\OO_{\nSD}, \sigma} {}^\sigma \OO_{\nSD}$ with a twisted adelic action into a rational canonical model for the Shimura datum ${}^\sigma \nSD$. 

The compact dual $\nShD(\nSD)$, considered as scheme over $\spec(\Zpp)$, can be identified also with 
a closed subscheme of $\mathcal{QPAR}$ (\ref{QUASIPARABOLICS}) and carries an action of
$\nP_\nSD$.

Let $\mathcal{E}$ a locally free sheaf on the stack (over $\spec(\Zpp)$!)
\[ \left[ \nShD(\nSD)/P_\nSD \right] \]
(i.e. a $\nP_\nSD$-equivariant sheaf on $\nShD(\nSD)$)
and $h_E=\{h_{E,\sigma}\}$ for $\sigma \in \Hom(E, \C)$ be a collection of $\nP_{{}^\sigma \nSD}(\R)\nU_{{}^\sigma \nSD}(\C)$-invariant Hermitian metric on $E_\C|_{h(\nX_{{}^\sigma \nSD})}$
where $\bigcup_\sigma h(\nX_{{}^\sigma \nSD})$ is embedded into $(\nShD(\nSD) \times_\Zpp \C)(\C)$ via the Borel embeddings --- \ref{MAINTHEOREM2} --- taking into account the isomorphism (\ref{compos}).

Recall from \ref{MAINTHEOREM3STACKS} the 1-morphism 
\[ \nMorphSt: \nSh({}^K_\nRPCD\nSD) \rightarrow \left[ \nShD(\nSD)/\nP_\nSD \right]. \]
\end{PAR}

\begin{PAR}\label{AUTOMORPHICVBC}
Let $D$ be the boundary divisor (\ref{MAINTHEOREM1a}). It is a divisor of normal crossings.
We will define a Hermitian metric $\nMorphSt^* h_E$ on $(\nMorphSt^*\mathcal{E})_\C$, singular along $D_\C$,
as follows\footnote{This is a slight abuse of notation, since the construction seems to depend on the particular presentation of 
the target stack as a quotient of $\nShD$}: We do this for each ${}^\sigma \nSD$ separately, so let $\nSD$ be one of them.
By definition
$\nMorphSt^*E_\C(U)$, for any open $U \subset \nSh({}^K_\nRPCD \nSD)(\C)$, is given by the
$\nP(\Q)$-invariant sections of the pullback of $E_\C$ to the standard principal bundle $\nSPB({}^K_\nRPCD \nSD)$ 
(the right $\nP_\nSD$-torsor describing the morphism $\nMorphSt$) via the compatibility isomorphism of $\nMorphSt$ over $\C$ with $\nMorphSt_\C$ (\ref{MAINTHEOREM3STACKS}).
Now consider the diagram of analytic manifolds (we assumed $K$ to be neat):
{ \footnotesize 
\[
\xymatrix{ \nP_\nSD(\Q) \backslash \nX_\nSD \times (\nP_\nSD(\Af)/K) \ar@{=}[d] & \nX_\nSD \times (\nP_\nSD(\Af)/K) 
\ar@{^{(}->}[d] \ar[l] \ar[r]^p & h(\nX_\nSD) \ar@{^{(}->}[d] \\
\nP_\nSD(\Q) \backslash \nX_\nSD \times (\nP_\nSD(\Af)/K)  & \nP_\nSD(\Q) \backslash \nX_\nSD \times \nP_\nSD(\C) \times (\nP_\nSD(\Af)/K) 
\ar[l] \ar[r] & \nShD(\nSD)(\C)
} \]}Let $U \cong \nSh({}^K \nSD)(\C)$ be the complement of $D_\C$.
The diagram shows that $\nMorphSt^*E_\C|_U$ is canonically identified with the
$\nP_\nSD(\Q)$-invariant sections of the pullback $p^*(E_\C|_{h(\nX_\nSD)})$ to the standard local system $\nX_\nSD \times \nP_\nSD(\Af)/K$.
But this pullback carries the pullback of the Hermitian metric $h_E$. 
For the induced $\nP_\nSD(\Q)$-action on $p^*(E_\C|_{h(\nX_\nSD)})$
compatible with the usual action on the standard local system, $h_E$ is invariant as well and descends
to a smooth Hermitian metric on $U$.
\end{PAR}

\begin{SATZ}\label{MUMFORD}
Let ${}_\nRPCD^K \nSD$ be {\em pure} $p$-ECSD, let $(\mathcal{E}, h_E)$ as in \ref{INPUTDATAAUTOMORPHICVB} which is fully-decomposed, i.e. the unipotent radical of 
a stabilizer group under the action of $\nP_\nSD(\C)$ on $\nShD(\nSD)(\C)$ acts trivially on $\mathcal{E}$. 

Then the vector bundles $\nMorphSt^*\mathcal{E}_\C$ are the unique extensions to $\nSh({}^K_\nRPCD \nSD)$ of these bundles on $\nSh({}^K \nSD)$ 
such that the metric $\nMorphSt^* h_{\mathcal{E}}$ is good along the boundary divisor $D_\infty$ in the sense of \cite[p. 242]{Mumford1}.
\end{SATZ}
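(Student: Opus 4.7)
The plan is to reduce the statement to Mumford's classical result \cite{Mumford1} on automorphic vector bundles over pure Shimura varieties, by exploiting the various compatibilities of $\nMorphSt$ set up in the preceding sections.

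First, I would work complex-analytically and one connected component at a time, since both goodness of a metric and the construction of $\nMorphSt^*\mathcal{E}_\C$ localize in this sense. For each embedding $\sigma\colon E\hookrightarrow\C$, the 2-isomorphism in Main Theorem \ref{MAINTHEOREM3STACKS} identifies the analytification of $(\nMorphSt^*\mathcal{E}_\C,\nMorphSt^*h_E)$ over $(\nSh({}^K_\nRPCD \nSD)\times_{\OOO_\nSD,\sigma}\C)^{an}$ with the automorphic vector bundle on the rational canonical model $[\nP_{{}^\sigma\nSD}(\Q)\backslash \nX_{{}^\sigma\nSD}\times \nP_{{}^\sigma\nSD}(\Af)/K]$ built via the classical standard principal bundle construction, equipped with the pulled-back metric from $h_{E,\sigma}$. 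The fully-decomposed hypothesis is preserved under the passage to the ${}^\sigma\nSD$, because it is a property of the action of the stabilizer of a point of $\nShD(\nSD)(\C)$ on $\mathcal{E}$, which is independent of the adelic twist defining ${}^\sigma\nSD$.

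What remains is therefore Mumford's theorem for a fully-decomposed automorphic vector bundle on a pure Shimura variety equipped with a toroidal compactification, whose proof I would recall as follows. Use the boundary isomorphisms \ref{MAINTHEOREM1a}, 4 and the compatible description of $\nMorphSt$ in Main Theorem \ref{MAINTHEOREM3STACKS}, 1 to reduce to a formal neighborhood of a boundary stratum, in which case $\nRPCD$ is concentrated in the unipotent fibre (\ref{DEFCONCENTRATIONUNIPOTENTFIBRE}) and Main Theorem \ref{MAINTHEOREM3STACKS}, 3 characterizes $\nMorphSt$ by $T$-invariant extension across the torus embedding $\nSh({}^{K'}\nSD)\hookrightarrow\nSh({}^{K'}_\nRPCD\nSD)$, where $T=\nSh({}^K\nSD/\nW_\nSD[\nU,0])$. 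In toroidal coordinates $q_j=e^{2\pi i z_j}$, the fully-decomposed hypothesis forces the unipotent radical of the stabilizer of the chosen boundary point on $\nShD(\nSD)(\C)$ to act trivially on fibres, so the metric in the $T$-invariant trivialization is governed by the Harish-Chandra realization associated with the rational boundary component as constructed in \ref{BOUNDARYCOMPONENTS2} (cf.\ \cite[Ch. III]{AMRT}). One then checks Mumford's two explicit estimates: the entries of the metric matrix and its inverse, together with all their holomorphic and antiholomorphic derivatives, grow at most polynomially in $\log|q_j|$.

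Uniqueness follows from \cite[Prop. 1.2]{Mumford1}: any two extensions of a given bundle from the open stratum whose natural Hermitian metrics are good are canonically isomorphic. The main obstacle in this plan is not conceptual but rather a matter of careful bookkeeping: aligning the $T$-invariant trivialization implicit in Main Theorem \ref{MAINTHEOREM3STACKS}, 3 with the classical Harish-Chandra trivialization used by Mumford, and verifying that the fully-decomposed hypothesis is exactly what makes this alignment succeed uniformly in a neighborhood of each boundary stratum.
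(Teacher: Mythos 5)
Your proposal is correct and follows essentially the same route as the paper, which simply cites Mumford's construction of the canonical extension (cf.\ also Harris--Zucker) and observes that $\nMorphSt^*\mathcal{E}_\C$ coincides with it; your expansion of the argument, via the boundary isomorphisms of Main Theorem \ref{MAINTHEOREM1a} and the $T$-invariant trivialization of Main Theorem \ref{MAINTHEOREM3STACKS},~3, is just making explicit why the two extensions agree. One small correction: uniqueness is Mumford's Prop.\ 1.3 of \cite{Mumford1} rather than Prop.\ 1.2, but that does not affect the substance.
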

\begin{proof}
Follows from the construction of the extension in [loc. cit.], cf. also \cite[\S 3.8]{HaZu1}.
\end{proof}

\begin{DEF}\label{DEFAUTOMORPHICVB}
Let $(\mathcal{E}, h_E)$ be as in \ref{INPUTDATAAUTOMORPHICVB}. The locally free sheaf $\nMorphSt^*\mathcal{E}$, together with the
singular (along $D_\C$) Hermitian metric $\nMorphSt^*h_E$ is called the \textbf{Hermitian automorphic vector bundle} associated
to $(\mathcal{E}, h_E)$ on $\nSh({}^K_\nRPCD \nSD)$.
\end{DEF}

\begin{BEM}\label{FUNCTORIALITYAUTOMVECTB}
It follows directly from the construction of metrics in \ref{AUTOMORPHICVBC}
that for a morphism of $p$-ECMSD
\[ (\alpha, \rho): {}^{K_1}_{\nRPCD_1} \nSD_1 \rightarrow {}^{K_2}_{\nRPCD_2} \nSD_2 \]
and given $(\mathcal{E}, h_E)$, with $\mathcal{E}$ a locally
free sheaf on $\left[ \nShD(\nSD_1)/\nP_{\nSD_1} \right]$ and $h_E=\{ h_{E,\sigma}\}$ a collection of invariant Hermitian metrics, we have a canonical isomorphism
\[ \nSh(\alpha, \rho)^* (\nMorphSt^* \mathcal{E}, \nMorphSt^* h_E) \cong (\nMorphSt^* \nShD(\alpha)^* \mathcal{E}, \nMorphSt^* \nShD(\alpha)^*h_E), \]
in particular these isomorphisms are compatible with composition.

For the metrics this follows from the {\em 2-isomorphism} 
\[ (\nMorphSt({}^K \nSD) \times_{\spec(\OOO_\nSD)} \spec(\C))^{an} \rightarrow \nMorphSt_\C({}^K \nSD), \]
given by \ref{MAINTHEOREM3STACKS}.

Similarly, in the case of a boundary component 
\[ (\iota,\rho): {}^{K_1}_{\nRPCD_1} \nSDB \Longrightarrow {}^K_\nRPCD  \nSD, \]
and, say, the case of $\OO_\nSD = \Zpp$ for simplicity,
if $h_E$ is the restriction
of a $\nP_{\nSDB}(\R)\nU_\nSD(\C)$-invariant metric $h_{E,1}$ on $\nX_{\nSDB}$ to $\nX_{\nSDB \Longrightarrow \nSD} \subset \nX_{\nSDB}$, 
we have on the formal completion a canonical isomorphism
\[ \nSh(\iota,\rho)^* (\nMorphSt^*\mathcal{E}, \nMorphSt^*h_E) \cong (\nMorphSt^* \mathcal{E}|_{\nShD(\nSDB)}, \nMorphSt^* h_{E,1}) \]
taking into account that the formal isomorphism converges over $\C$. However, in general, the metrics
$h_E$ do not come from $h_{E,1}$'s as above (they are not even defined everywhere on $\nX_\nSDB$), but
of course an $\nMorphSt^* h_E$ is always defined in a neighborhood of $D_\C$ on
$\nSh({}^{K_1}_{\nRPCD_1}\nSDB)(\C)$ and the above statement makes sense.
\end{BEM}

\begin{PAR}
In the reminder of this section we will describe the necessary tools from Arakelov theory that will be needed to define uniquely determined heights on (resp.
arithmetic volumes of) Shimura varieties w.r.t. automorphic line bundles. We will be as minimalist as possible, stating only the properties of the objects involved that are needed in the sequel and which uniquely characterize the resulting values. 
We assume familiarity with classical Arakelov theory as presented in \cite{SABK}.
\end{PAR}

\begin{PAR}
We consider the category $(X, D)$, where $X$ is a smooth complex algebraic variety equipped with a divisor of normal crossings $D$. Morphisms $\alpha: (X, D_X) \rightarrow (Y,D_Y)$ are morphisms $\alpha: X \rightarrow Y$ which
satisfy $\alpha^{-1}D_Y \subseteq D_X$. 

We consider an association with maps a pair $(X,D)$ to a subspace of differential forms 
\[ A^{i,j}(X, D) \subset A^{i,j}(X-D), \]
for every $i,j$, stable w.r.t. the differentials $\dd$ and $\dd^c$. In addition, for any vector bundle $E$
on $X$, we consider a class $\mathcal{M}$ of Hermitian metrics on $E|_{X-D}$, such that 
\begin{enumerate}
\item[(F1)] For any morphism $f: (X,D) \rightarrow (Y,D')$, $f^*$ restricts to a map of $A^{i,j}$'s and $f^*$ of
a vector bundle with metric in $\mathcal{M}$ is again a vector bundle with metric in $\mathcal{M}$.
\item[(F2)] Each $\xi \in A^{i,j}(X, D)$ defines a current on $X$, and we have $\dd [\xi] = [\dd \xi]$, resp. $\dd^c [\xi] = [\dd^c \xi]$.
\item[(F3)] The Chern character form of any vector bundle with metric in $\mathcal{M}$ is in $\bigoplus_p A^{p,p}(X,D)$.
\item[(F4)] There is a unique way to associate with each $(X,D)$ and exact sequence 
\[ E_\bullet:  0 \rightarrow E_{-1} \rightarrow  E_0  \rightarrow  E_1 \rightarrow 0\]
of vector bundles on $X$ and metrics $h_{-1},h_0,h_1$ in $\mathcal{M}$
a Bott-Chern character $\chernch(E_{\bullet},h_{-1},h_{0},h_1) \in \bigoplus_p A^{p,p}(X,D)/(\im \dd + \im \dd^c)$ such that
\begin{enumerate}
\item $\ddc \chernch(E_\bullet, h_{-1}, h_0, h_1) = \sum_i (-1)^i \chernch(E_i, h_i)$,
\item the formation is compatible with pullback,
\item $\chernch(E_\bullet,h_{-1},h_0,h_1)=0$ if $E_\bullet$ splits and $h_0 = h_{-1} \oplus h_1$,
\item for an exact square consisting of vector bundles $E_{i,j}$, $i,j=-1,\dots,1$ equipped with metrics $h_{i,j}$ in $\mathcal{M}$, we have
\[ \sum_i (-1)^i \chernch(E_{i,\bullet},h_{i,-1},h_{i,0},h_{i,1}) = \sum_j (-1)^j \chernch(E_{\bullet,j},h_{-1,j},h_{0,j},h_{1,j}). \]
\end{enumerate}
\end{enumerate}

Often convenient is the following property, which we will not need, however:
\begin{enumerate}
\item[(F5)] If $Y$ is a cycle of codimension $p$ which intersects $D$ properly then for any Greens function $g$ of logarithmic type along $Y$ \cite[2.1, Definition 3]{SABK}, and for all $\xi \in A^{p,p}(X,D)$, $g \xi$ is locally integrable on $X$.
\end{enumerate}
\end{PAR}

Note that by the properties (F4,a--c) the Bott-Chern characters involving only smooth metrics have to be necessarily the same as those
considered in \cite[3.1, Theorem 2]{SABK}.
We will write also $\chernch(E, h_{-1}, h_0)$ for $\chernch(E_\bullet, h_{-1}, h_0, 0)$ where $E_\bullet$ is the sequence $0\rightarrow E \rightarrow E \rightarrow 0 \rightarrow 0$.

\begin{PAR}\label{STARPRODUCT}
From closedness under $\dd$ and $\dd^c$ follows in particular that for $e_1 \in A^{p,p}(X,D)$ and $e_2 \in A^{q,q}(X,D)$ the (star-)product $e_1 \ast e_2 := (\ddc e_1) e_2$ is associative and commutative mod image $\dd,\dd^c$.
If $g$ is a form on $(X-D)(\C)$ which can be expressed as a sum $\widetilde{g} + g_0$, where $\widetilde{g}$ is
a Greens form of logarithmic type \cite[Definition 3, p. 43]{SABK} and $g$ is in $\mathcal{M}$ one can define
$g \ast h \in A^{p+q+1,p+q+1}(X-D)$ to be $\widetilde{g} \ast \widetilde{h} + g_0 \ddc \widetilde{h} + h_0 \ddc \widetilde{g} + h_0 \ddc g_0$, which
is clearly (mod image $\dd, \dd^c$) independent of the way of expressing the sum.
\end{PAR}

\begin{PAR}\label{COMPSTARPRODUCT}
If $Y_1$ and $Y_2$ are cycles on $X$ of codimension $p$ and $q$, respectively, {\em without intersection}, and
$\widetilde{g}_1$, $\widetilde{g}_2$ be Greens forms of logarithmic type for them, we can represent their
star-product (modulo image $\dd, \dd^c$) by
\[ \widetilde{g}_1 \ast \widetilde{g}_2 =  \sigma_{1} \widetilde{g}_{1} (\ddc \widetilde{g}_2) + \ddc (\sigma_2 \widetilde{g}_1) \widetilde{g}_2, \]
where $\sigma_1 + \sigma_2 = 1$ is a partition of unity, such that $\sigma_1=0$ on $Y_1$ and $\sigma_2=0$ on 
$Y_2$. If $g_1$ and $g_2$ are the sums of the previous Greens forms with
forms $g_{1,0} \in A^{p,p}(X,D)$, resp. $g_{2,0} \in A^{q,q}(X,D)$, for the star product as defined in \ref{STARPRODUCT}, we have the same formula
\[ g_1 \ast g_2 =  \sigma_{1} g_{1} (\ddc g_2) + \ddc (\sigma_2 g_1) g_2. \]
For this note that (e.g.)
\[  \sigma_{1} g_{1} (\ddc g_{2,0}) + \ddc (\sigma_2 g_1) g_{2,0}
=  g_{2,0} \ddc g_1 +  \sigma_{1} g_{1} (\ddc g_{2,0}) - \ddc (\sigma_1 g_1) g_{2,0}. \]
However in
\[ \sigma_{1} g_{1} (\ddc g_{2,0}) - \ddc (\sigma_1 g_1) g_{2,0} \]
$\sigma_{1} g_{1}$ and $g_{2,0}$ are in $A^{q,q}(X,D)$. Therefore this
is contained in the image of $\dd, \dd^c$. Using an embedded resolution of singularities
and properties (F1, F2), one can extend this to non-empty intersections. We will not need this, however.
\end{PAR}

\begin{SATZ}[Burgos, Kramer, K\"uhn]\label{BURGOS}
A class $A^{i,j}(\cdot, \cdot)$ of differential forms and class of metrics $\mathcal{M}$ satisfying the axioms (F1)-(F4) such that $\mathcal{M}$ includes the natural metrics $\nMorphSt^* h_\mathcal{E}$ of fully decomposed automorphic vector bundles $\nMorphSt^* \mathcal{E}$ on pure Shimura varieties exists.
\end{SATZ}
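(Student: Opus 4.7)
The plan is to follow Burgos--Kramer--K\"uhn \cite{BKK1, BKK2} and realize $A^{i,j}(X,D)$ as the space of \emph{pre-log-log} differential forms on $X-D$, and $\mathcal{M}$ as the class of Hermitian metrics whose Chern and Bott--Chern forms lie in this space. Concretely, for $(X,D)$ a smooth variety with a divisor of normal crossings, I would consider smooth forms $\eta$ on $X-D$ which, together with all of their iterated derivatives under $\partial$, $\overline{\partial}$, and $\partial\overline{\partial}$, are locally bounded in a neighborhood of any boundary point by a universal constant times a power of $\log\log(1/|z_i|)$ in local equations $z_i$ of the components of $D$. Then I define $A^{p,p}(X,D)$ as the subspace of forms of type $(p,p)$ in this space. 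Stability under $\dd$ and $\dd^c$ is then tautological from the definition.

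Next, for (F2) I would verify local integrability of pre-log-log forms against smooth compactly supported test forms: since $\log\log(1/|z|)$ is locally integrable in any dimension and one has Stokes with no boundary contribution for these growth rates, $\dd[\eta]=[\dd\eta]$ and likewise for $\dd^c$. For (F1), pullback stability under morphisms $f:(X,D_X)\to (Y,D_Y)$ with $f^{-1}D_Y\subseteq D_X$ follows because the pullback of $\log\log$ bounds is again of $\log\log$ type (the subtle point is controlling pullback under blow-ups of boundary strata, which BKK treat by showing the class is invariant under such blow-ups).

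For $\mathcal{M}$, I would define it as the class of smooth Hermitian metrics on $E|_{X-D}$ such that in one (equivalently, any) local holomorphic frame near $D$, the matrix entries of the metric, of its inverse, and of its curvature matrix are all pre-log-log forms. Then (F3) follows since Chern character forms are polynomials in curvature matrix entries. To prove (F4), I would construct Bott--Chern forms using the standard Chern--Simons transgression: choose a family of metrics interpolating between $h_0$ and $h_{-1}\oplus h_1$, form the associated family of superconnections and transgress; the key technical point is that all the intermediate curvature forms that appear stay in the pre-log-log class, so the resulting fiber integral lies in $\bigoplus_p A^{p,p}(X,D)$. Uniqueness, normalization on split sequences, and the double-complex compatibility are then formal, essentially as in \cite[3.1]{SABK}.

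The main obstacle --- and the real content of \cite{BKK1, BKK2} --- is showing that the class $\mathcal{M}$ is large enough to include the natural automorphic metrics $\nMorphSt^*h_\mathcal{E}$ of fully decomposed automorphic vector bundles on pure Shimura varieties. For this I would invoke Mumford's theorem (here \ref{MUMFORD}) that $\nMorphSt^*h_\mathcal{E}$ is \emph{good} along $D_\infty$ in the sense of \cite{Mumford1}: a Mumford-good metric has, by definition, matrix entries with $O(\log^N(1/|z|))$ growth, and its curvature is Poincar\'e-bounded. This is strictly stronger than pre-log-log, so both the metric and its Chern forms automatically satisfy the pre-log-log bounds. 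The only non-formal part is tracking that the iterated $\partial,\overline\partial,\ddc$ derivatives continue to satisfy pre-log-log bounds; this follows from the description of the metric on each toroidal boundary chart via the nilpotent orbit and the explicit form of the invariant metrics of $h(\nX_\nSD)$ restricted to a punctured polydisk, as worked out by Mumford \cite{Mumford1} and Harris--Zucker \cite{HaZu1}. One then concludes by applying the boundary isomorphisms of \ref{MAINTHEOREM1a} to reduce the verification to an explicit local model.
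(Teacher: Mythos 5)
There is a genuine gap, and it sits exactly at the point the theorem is about. You define $\mathcal{M}$ as the class of metrics whose matrix entries, inverse entries, and curvature entries are all pre-log-log forms, and you justify the inclusion of the automorphic metrics by claiming that Mumford-goodness (entries $O(\log^N(1/|z|))$, Poincar\'e-bounded connection) is ``strictly stronger than pre-log-log.'' This is backwards: pre-log-log means bounds by powers of $\log\log(1/|z_i|)$, which is a \emph{smaller} growth class than powers of $\log(1/|z_i|)$. The natural automorphic metrics genuinely have logarithmic, not log-log, singularities --- already for the Hodge bundle on the modular curve the Petersson metric of a local frame behaves like $(\log(1/|q|))^{k}$ at the cusp --- so with your definition the class $\mathcal{M}$ simply does not contain $\nMorphSt^* h_\mathcal{E}$, and the last (and only non-formal) assertion of Theorem \ref{BURGOS} fails. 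The correct notion, which is what the paper uses, is Burgos--Kramer--K\"uhn's class of \emph{log-singular} metrics \cite[4.29]{BKK1}: the metric itself is allowed log-type growth (its logarithm is a ``log'' form), while only the Chern and Bott--Chern forms are required to be (pre-)log-log; the class of forms taken in the paper is that of log-log forms \cite[section 2.2]{BKK2}, and (F1)--(F4) are the cited propositions there. Note also that the membership of the automorphic metrics in this class is not a formal consequence of goodness (Theorem \ref{MUMFORD}): goodness only gives first-order conditions, whereas log-singularity requires estimates of all orders; this is the content of \cite[Theorem 6.3]{BKK2}, proved via the explicit boundary description \`a la Mumford \cite{Mumford1} and Harris--Zucker, which you gesture at but cannot reach with the too-small class $\mathcal{M}$ you fixed at the outset.

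Apart from this, your reconstruction of the formal part (stability of log-log type forms under $\dd$, $\dd^c$ and pullback, Chern forms of such metrics, transgression for Bott--Chern classes) follows the same route as \cite{BKK1, BKK2}, which the paper invokes by direct citation rather than re-proving; repairing your argument amounts to replacing your $\mathcal{M}$ by the log-singular metrics and quoting (or re-deriving) the estimates behind \cite[Theorem 6.3]{BKK2} instead of deducing the inclusion from goodness alone.
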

\begin{proof}The class of differential forms is the class of $\log$-$\log$-forms \cite[section 2.2]{BKK2}. Those are in particular pre-$\log$-$\log$ forms \cite[section 7.1]{BKK1}. The class of metrics is the class of $\log$-singular ones \cite[4.29]{BKK1}.
The properties above are shown respectively at the following places:
(F1) \cite[Proposition 2.24]{BKK2} for forms, \cite[Proposition 4.35]{BKK2} for the metrics,
(F2) \cite[Proposition 7.6]{BKK1},
(F3) is part of their definition of $\log$-singular metric,
(F4) \cite[Proposition 4.64]{BKK2},
(F5) (not needed here) follows from the proof of \cite[Lemma 7.35]{BKK1}. The statement about fully decomposed automorphic vector bundles is proven in \cite[Theorem 6.3]{BKK2}.
\end{proof}

\begin{PAR}\label{DEFCHOW}
Let $X$ be a projective regular and flat scheme over $\spec(R)$, where $R$ is any localization of $\Z$. Let $D$ be a divisor on $X$ such that $D_\C$ is a divisor of normal crossings.
From now on we assume that classes $A^{i,j}(\cdot, \cdot)$ and $\mathcal{M}$ satisfying (F1)-(F4) have been chosen and define in analogy to \cite[p. 55]{SABK}:
\begin{eqnarray*}
A^{p,p}(X,D) &:=& \{\omega \in A^{p,p}(X(\C), D(\C)) \ |\ \omega \text{ real}, F^*_\infty \omega = (-1)^p \omega \} \\
\widetilde{A}^{p,p}(X,D) &:=& A^{p,p}(X,D)/( \im \partial + \im \overline{\partial} ).
\end{eqnarray*}

We define arithmetic Chow groups $\aCH^p(X,D)$ by the following pushout
\[
\xymatrix{ 
 \CH^{p-1,p}(X)  \ar@{=}[d] \ar[r]&  \widetilde{A}^{p-1,p-1}(X,D) \ar[r]^a  & \aCH^p(X,D) \ar[r] & \CH^p(X) \ar@{=}[d] \ar[r] & 0\\  
 \CH^{p-1,p}(X) \ar[r] &\widetilde{A}^{p-1,p-1}(X) \ar@{^{(}->}[u]  \ar[r]^{a_{GS}} & \aCH_{GS}^p(X) \ar@{^{(}->}[u] \ar[r] & \CH^p(X)  \ar[r] & 0
}
\]
where an index $GS$ means the corresponding object as defined in \cite{SABK}.

We define a product
\[ \aCH^p(X,D) \times \aCH^q(X,D) \rightarrow \aCH^{p+q}_\Q(X,D) \]
by defining:
$a(b_1) a(b_2)$ to be $a(b_1 \ddc b_2)$ and
$a(b) \widehat{z}$ to be $a(b \omega(\widehat{z}))$ (for the map $\omega$ of [loc. cit.]). This is well defined because the formulas
hold for $a_{GS}$.
This product is obviously associative and commutative.
We also form the direct sum $\aCH^\bullet(X,D)_\Q := \bigoplus_p \aCH^p(X,D)_\Q$, which yields a contravariant functor (w.r.t. pullback) from the category of pairs (X,D), where $X$ is projective, regular, and flat over $\spec(R)$ and $D$ is a divisor on $X$ such that $D_\C$ has only normal crossings, to the category of graded $\Q$-algebras.
\end{PAR}

\begin{PAR}
If $X$ is projective over $\spec(R)$ of relative dimension $d$, where $R$ is any localization of $\Z$, we have a push forward
\[ \pi_*: \aCH^p(X,D) \rightarrow \aCH^{p-d}(\spec(R)), \]
which on the image of $a$ is defined by integration (see F2).
\end{PAR}

\begin{DEF}
For any vector bundle $\mathcal{E}$ with metric $h$ in the class we define an arithmetic Chern character by
\[ \achernch(\mathcal{E}, h) := \achernch_{GS}(\mathcal{E}, \widetilde{h}) + a(\chernch(E, h, \widetilde{h})), \]
where $\widetilde{h}$ is any smooth metric on $E=\mathcal{E}_\C$. In particular the graded components define
Chern classes $\achern_i(\mathcal{E},h)$ of every degree.
\end{DEF}

\begin{LEMMA}
This is well defined.
\end{LEMMA}
\begin{proof}
We have the property $\achernch_{GS}(E, \widetilde{h}_2)-\achernch_{GS}(E, \widetilde{h}_1) = a(\chernch(E, \widetilde{h}_2,\widetilde{h}_1))$ \cite[Proposition 1, p. 86]{SABK}. From properties (F4,c,d) follows that
$\chernch(E, \widetilde{h}_2,\widetilde{h}_1) = \chernch(E, \widetilde{h}_2,h)-\chernch(E, \widetilde{h}_1, h)$.
\end{proof}

The formation of Chern character (hence Chern classes) is compatible with pullback because this is true for $\achernch_{GS}(\mathcal{E}, \widetilde{h})$.

\begin{PAR}\label{RN}
For $R$ a localization of $\Z$ we have a map 
\[ \adeg: \aCH^1(\spec(R))_\Q \rightarrow \R_R, \]
where $\R_R$ is $\R$ modulo rational multiples of $\log(p)$ for all $p$ with $p^{-1} \in R$.
We denote $\R_{\Zpp}$ also by $\R^{(p)}$ and $\R_{\Z[1/N]}$ by $\R_N$.

Obviously $\R_N$ {\em injects} into the
fibre product of all $\R^{(p)}$ $p \nmid N$ over $\R_\Q$.
\end{PAR}

\begin{DEF} \label{CHOWHEIGHT}
Let $X$ be as in \ref{DEFCHOW}.
If $\overline{\mathcal{E}} = (\mathcal{E}, h)$ is a line bundle with metric in $\mathcal{M}$ and $Y$ is a cycle on $X$ such that $Y_\Q$ intersects $D_\Q$ properly, we define the {\bf height} with respect to $\overline{\mathcal{E}}$ of $Y$ as follows:
Choose a Greens form of logarithmic type for $Y_\C$ (exists by \cite[Theorem 3, p.44]{SABK}) and a smooth metric $\widetilde{h}$ on $\mathcal{E}$.
\begin{eqnarray*} 
\hght_{\overline{\mathcal{E}}} (Y) &:=& \adeg \pi_* ( (Y, g) \cdot \achern_1(E,\widetilde{h})^{n+1-d})  -\frac{1}{2} \int_{X} g \chern_1(E,\widetilde{h})^{n+1-d} + \frac{1}{2}\int_{Y} \phi \\
&=& \hght_{(\mathcal{E},\widetilde{h}), GS} (Y) + \frac{1}{2} \int_{Y} \phi,
\end{eqnarray*}
where $\phi \in \widetilde{A}^{n-d,n-d}(X,D)$ is such that $a(\phi) = \achern_1(E,h)^{n+1-d} - \achern_1(E, \widetilde{h})^{n+1-d}$.
The integral over $Y$ exists because of property (F2) (consider a desingularization of $Y$). Here it is crucial that $Y$ and $D$ intersect properly. 
\end{DEF}
\begin{LEMMA}
This is well defined.
\end{LEMMA}
\begin{proof}The first line is independent of $g$ by means of the definition of star product \cite[Definition 4, p.50]{SABK}. We have to see that it is independent of
the choice of smooth metric $\widetilde{h}$. Let $\widetilde{h}_i$, $i=1,2$ be two different ones. Let $\phi_i$ be such that $a(\phi_i) = \achern_1(\mathcal{E}, h)^{n+1-d} - \achern_1(\mathcal{E},\widetilde{h}_i)^{n+1-d}$. Note that
$\phi_2-\phi_1$ is smooth.
We have
\begin{gather*}
\adeg \pi_* \left( (Y, g) \cdot \achern_1(\mathcal{E},\widetilde{h}_2)^{n+1-d} -  (Y, g) \cdot \achern_1(\mathcal{E},\widetilde{h}_1)^{n+1-d}\right) = \frac{1}{2} \int_{X(\C)} (\phi_1-\phi_2) \ddc g \\
 -\frac{1}{2}\int_{X(\C)} g \chern_1(E,\widetilde{h}_2)^{n+1-d} + \frac{1}{2} \int_{X(\C)} g \chern_1(E,\widetilde{h}_1)^{n+1-d} = \frac{1}{2}  \int_{X(\C)} g \ddc (\phi_2-\phi_1) \\
 \frac{1}{2} \int_{Y(\C)} \phi_2 - \frac{1}{2} \int_{Y(\C)} \phi_1 =  \frac{1}{2} \int_{X(\C)}  (\phi_2-\phi_1) \ddc g - \frac{1}{2} \int_{X(\C)} g \ddc(\phi_2-\phi_1)
\end{gather*}
so the sum over these differences is zero.
\end{proof}

From (F1) and (F2) together with (F5) the following follows formally --- using an embedded resolution of singularities (see the proof of \cite[Theorem 7.33]{BKK1} with $\sigma_{YZ}=0$):
\[ \int_X g \ddc g_2 = \int_Y g_2 + \int_X (\ddc g) g_2 \]
for any form $g_2$ in our class. 
Hence, if (F5) holds, we could simply define
\[ \hght_{\overline{\mathcal{E}}} (Y) := \adeg ( \pi_* ((Y, g) \cdot \achern_1(\mathcal{E},h)^{n+1-d})  - \frac{1}{2}\int_{X(\C)} g  \chern_1(E,h)^{n+1-d}  \]
which is literally the same definition as $\hght_{\overline{\mathcal{E}},GS}$.

\begin{PAR}\label{HEIGHTPROJECTION}
In any case, if there is a desingularization $(\widetilde{Y},\widetilde{D})$ of $(Y,Y\cap D)$ which is 
itself a pair as considered in \ref{DEFCHOW}, we get from the projection formula \cite[Theorem 3 (iii), p. 64]{SABK} that
\[ \hght_{\overline{\mathcal{E}}} (Y) =  \adeg \pi_* (f^*\achern_1(\mathcal{E},h))^{n+1-d}   \]
for the corresponding morphism $f: (\widetilde{Y},\widetilde{D}) \rightarrow (X,D)$.
\end{PAR}

\begin{PAR}We will apply the constructions above to (pure) Shimura varieties now.
Assume first that all occurring $K$'s are neat, see \ref{ARITHMVOLNOTNEAT} for the general case.
For a pure Shimura variety (considered as defined over $\Zpp$ as in \ref{INPUTDATAAUTOMORPHICVB}) $\nSh({}^K_\nRPCD \nSD)$ 
with complete, smooth w.r.t. $K$ and projective $\nRPCD$, the exceptional divisor $D$ has normal crossings (\ref{MAINTHEOREM1a}) and
we define $\aCH^\bullet(\nSh({}^K_\nRPCD \nSD))$ as $\aCH^\bullet(\nSh({}^K_\nRPCD \nSD), D)$ as defined in \ref{DEFCHOW} w.r.t. any class of forms and metrics satisfying (F1--4), such that the metrics
of fully decomposed automorphic vector bundles are in the class (exists by Theorem \ref{BURGOS}).
We assume that $\nSh({}^K_\nRPCD \nSD)$ exists --- 
this requires that $\nRPCD$ is sufficiently fine and that Conjecture \ref{MAINCONJECTURE} is true (cf. \ref{MAINTHEOREM1a}).
\end{PAR}

\begin{DEF}\label{DEFVOLUME}
Let ${}^K_\nRPCD \nSD$ be pure $p$-ECMSD with complete, smooth w.r.t. $K$ and projective $\nRPCD$,
and $\overline{\mathcal{E}} = (\mathcal{E}, h_E)$ be an {\em invertible} sheaf $\mathcal{E}$ on
$\left[ \nShD(\nSD) / \nP_\nSD \right]$ (stack over $\Zpp$!) and $h_E=\{h_{E,\sigma}\}$ a
collection of invariant Hermitian metrics as in \ref{INPUTDATAAUTOMORPHICVB}.

The \textbf{geometric volume}
\[ \vol_{E} (\nSh({}^K_\nRPCD \nSD )) \]
is defined as the volume of $\nSh({}^K_\nRPCD \nSD )(\C)$
with respect to the volume form $(\chern_1(\nMorphSt^* \overline{E}_\C))^d$. 

The \textbf{arithmetic volume} at $p$
\[ \widehat{\vol}_{\overline{\mathcal{E}}, p} (\nSh({}^K_\nRPCD \nSD)) \]
is defined as
\[ \pi_* (\achern_1(\nMorphSt^*\mathcal{E}, \nMorphSt^*h_E)^d) \] 
in $\aCH^1(\spec(\Zpp)) = \R^{(p)}$, where $\pi: \nSh({}^K_\nRPCD \nSD) \rightarrow \spec(\Zpp)$ is the structural morphism

If $\nP_\nSD$ is a group scheme of type (P) over $\spec(\Z[1/N])$
and $K$ is admissible {\em for all} $p \nmid N$, then we define the
global \textbf{arithmetic volume}
\[ \widehat{\vol}_{\overline{\mathcal{E}}} (\nSh({}^K_\nRPCD \nSD)) \]
in $\R_N$ as the value determined by the arithmetic volumes at $p$ (compare \ref{RN}, see also \ref{RMK}).

If $(\alpha,\rho): {}^{K'}_{\nRPCD'} \nSDi_\Q \hookrightarrow {}^K_\nRPCD \nSD_\Q$
is an embedding of {\em rational} pure Shimura data, we define
the \textbf{height} at $p$:
\[ \hght_{\overline{\mathcal{E}}, p} (\nSh({}^{K'}_{\nRPCD'} \nSDi)) 
=  \hght_{\nMorphSt^* \overline{\mathcal{E}}}( \overline{\nSh(\alpha, \rho)(\nSh({}^{K'}_{\nRPCD'}\nSDi_\Q))}^{Zar}),  \]
where $\hght_{\nMorphSt^* \overline{\mathcal{E}}}$ is the height (\ref{CHOWHEIGHT}).
Similarly for the (global) height.
\end{DEF}

\begin{FRAGE}
We mention that there is a \textbf{proportionality principle} \cite{Mumford1} in the geometric case. 
This means that {\em all} polynomials of degree $\dim(\nSh({}^K_\nRPCD \nSD)_\C)$ in
the Chern classes of an automorphic vector bundle $(\nMorphSt^*\mathcal{E})_\C$, considered as a number, are proportional to the same expression in the
Chern classes of the original bundle $\mathcal{E}$, computed on $\nShD(\nSD)$.

Is there an analogue in the arithmetic case? --- cf. also \cite{Koehler}.
\end{FRAGE}

\begin{BEM}
If $(\alpha,\rho)$ was in fact an embedding of {\em $p$-integral} data:
${}^{K'}_{\nRPCD'} \nSDi \hookrightarrow {}^{K}_{\nRPCD} \nSD$,
we have, by the projection formula (cf. \ref{HEIGHTPROJECTION}) and \ref{FUNCTORIALITYAUTOMVECTB}:
\[ \hght_{\overline{\mathcal{E}},p} (\nSh({}^{K'}_{\nRPCD'} \nSDi))
= \widehat{\vol}_{\nShD(\alpha)^*\overline{\mathcal{E}},p} (\nSh({}^{K'}_{\nRPCD'} \nSDi)). \]
\end{BEM}

\begin{LEMMA}\label{LEMMAINDEPRPCD}
The geometric and arithmetic volumes do not depend on the rational polyhedral cone decomposition $\nRPCD$, i.e. not on
the chosen toroidal compactification.
\end{LEMMA}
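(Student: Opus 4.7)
The plan is to reduce to the case where $\nRPCD'$ refines $\nRPCD$, and there to apply a projection-formula argument on the resulting birational comparison morphism.

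First I would use Proposition \ref{PROPERTIESRPCD}(3) to find a common refinement of any two $K$-admissible complete smooth projective cone decompositions $\nRPCD_1$, $\nRPCD_2$; by part (2) of the same proposition the refinement can again be taken smooth and projective. This reduces the claim to showing invariance under passing to a refinement $\nRPCD'$ of $\nRPCD$. Such a refinement is encoded as a morphism of $p$-ECMSD $(\id, 1): {}^K_{\nRPCD'}\nSD \to {}^K_\nRPCD\nSD$ (see Definition \ref{DEFFUNCTORIALITYRPCD}), and Main Theorem \ref{MAINTHEOREM1a} provides an induced morphism $f := \nSh(\id, 1): X' \to X$ between the corresponding smooth projective Deligne-Mumford stacks over $\spec(\Zpp)$ (items 5 and 7). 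By item 1 of that theorem, both $X'$ and $X$ contain the common open stratum $\nSh({}^K\nSD)$, on which $f$ is the identity; thus $f$ is proper and birational. Remark \ref{FUNCTORIALITYAUTOMVECTB} then yields an isometry $f^*(\nMorphSt^*\mathcal{E}, \nMorphSt^*h_E) \cong (\nMorphSt^*\mathcal{E}, \nMorphSt^*h_E)$ on $X'$, so on both sides we are computing the volume of the same Hermitian automorphic line bundle pulled back to different birational models.

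For the geometric volume the argument is then direct: since $\chern_1(\nMorphSt^*\overline{\mathcal{E}}_\C)^d \in A^{d,d}(X, D)$ is locally integrable (by the choice of class of forms underlying \ref{DEFCHOW}) and $f$ is an isomorphism off the boundary divisors, a set of measure zero, one has
\[ \int_{X'(\C)} \chern_1(f^*\nMorphSt^*\overline{\mathcal{E}})^d = \int_{X(\C)} \chern_1(\nMorphSt^*\overline{\mathcal{E}})^d. \]
For the arithmetic volume I would use functoriality $\achern_1(f^*\nMorphSt^*\overline{\mathcal{E}}) = f^*\achern_1(\nMorphSt^*\overline{\mathcal{E}})$ from \ref{DEFCHOW} combined with the projection formula $f_* f^* = \id$ on $\aCH^\bullet$ for a proper birational morphism between smooth flat DM stacks over $\spec(\Zpp)$. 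Applying $\pi_{X*}$ to the class $\alpha := \achern_1(\nMorphSt^*\overline{\mathcal{E}})^d$ and using $\pi_{X*}f_* = \pi_{X'*}$ gives
\[ \pi_{X'*}(f^*\alpha) = \pi_{X*}(f_* f^* \alpha) = \pi_{X*}(\alpha), \]
i.e.\ equality of arithmetic volumes in $\R^{(p)}$. Since this holds for every prime $p\nmid N$, it yields the equality in $\R_N$ for the global arithmetic volume.

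The hard part will be verifying the projection formula $f_*f^* = \id$ in $\aCH^\bullet$ with log-log singular metrics. The algebraic component on the $\CH^\bullet$-piece is standard for a proper birational morphism; the archimedean component amounts to the statement that integration of a top form in $A^{d,d}(\cdot, D)$ is preserved under proper birational pullback, which is essentially the same locally-integrable computation as in the geometric step. That all this is formally available is implicit in axioms (F1) and (F4) of \ref{BURGOS} together with the definitions of pullback and pushforward in \ref{DEFCHOW}, but spelling the projection formula out carefully in the present DM-stack setting with log-log singular metrics is the main technical point of the argument.
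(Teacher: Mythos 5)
Your overall strategy is the one the paper follows: reduce via \ref{PROPERTIESRPCD} to a common refinement, use \ref{FUNCTORIALITYAUTOMVECTB} to identify the pulled-back Hermitian automorphic bundles, and note that the geometric volume is unchanged because the Chern forms correspond under pullback and are integrable (Mumford, cf. \ref{MUMFORD}). The geometric half of your argument is fine as written.

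The one real issue is the arithmetic step. You route it through a push-forward $f_*$ along the comparison morphism and a projection formula $f_*f^*=\id$ in $\aCH^\bullet(\cdot,\cdot)$ with log-log singular metrics, and you assert this is ``implicit'' in (F1), (F4) and \ref{DEFCHOW}. It is not: the Arakelov formalism set up in section 10 is deliberately minimalist and defines covariant functoriality \emph{only} for the structural morphism $\pi_*$ to $\spec(R)$ (computed by integration); no proper push-forward between pairs $(X,D_X)\to(Y,D_Y)$ is constructed, and establishing one in the BKK setting with singular Green objects is exactly the kind of delicate point the paper avoids. Fortunately your argument does not actually need $f_*$: what you use in the end is only the composite identity $\pi_{X'*}f^*=\pi_{X*}$ on the top-degree class $\achern_1(\nMorphSt^*\overline{\mathcal{E}})^{d}$, and this is what the paper verifies directly. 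Concretely, one writes a top-degree class as $x=(z,g)$ with $z$ a zero-cycle in the fibre above $p$ and $g$ a smooth top-degree Green form, or as $x=a(g')$ with $g'$ a top-degree form integrable on the whole compactification, and checks $\adeg(\pi_{X'*}p^*x)=\adeg(\pi_{X*}x)$ because both push-forwards to $\spec(\Zpp)$ are computed by the same integral (and the zero-cycle contribution is untouched since $p$ is an isomorphism off the boundary). Rephrasing your last step this way closes the gap without any new machinery; as stated, however, the projection-formula claim is an appeal to a theorem your framework does not contain.
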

\begin{proof}
For each pair $\nRPCD_i$, $i=1,2$, there is a common refinement $\nRPCD$, cf. \ref{PROPERTIESRPCD}, and we have two projections (\ref{MAINTHEOREM1}):
\[ \xymatrix{
 & \nSh({}^K_{\nRPCD} \nSD) \ar[dl] \ar[dr] & \\
\nSh({}^K_{\nRPCD_1} \nSD) & & \nSh({}^{K}_{\nRPCD_2} \nSD)
}\]
Furthermore this diagram is compatible with formation of $\nMorphSt^*\mathcal{E}, \nMorphSt^*h_E$ by \ref{FUNCTORIALITYAUTOMVECTB}.
Its Chern forms are therefore transported into each other by pullback along the arrows in the diagram. 
Since the forms are integrable on every $\nSh({}^{K}_{\nRPCD} \nSD)$ (see \cite{Mumford1}, cf. also \ref{MUMFORD}), the geometric volume agrees.

Now, there is pullback map 
\[ p^*: \aCH^\bullet(\nSh({}^K_{\nRPCD_i}\nSD))_\Q \rightarrow \aCH^\bullet(\nSh({}^K_{\nRPCD}\nSD))_\Q \]
as well, which is a ring homomorphism and compatible with Chern classes (see (F4,b) and \cite[Theorem 3, p. 84]{SABK}).

Therefore the assertion for the arithmetic volume boils down to the fact that
for a class $x = (z,g) \in \aCH^{n+1}_{GS}({}^K_{\nRPCD} \nSD)$ or $x=a(g')$, where $z$ is a zero-cycle in the fibre above $p$ and $g$ is a smooth
top-degree Greens form and $g'$ is a top-degree form on $\nSh({}^K \nSD)(\C)$ which is integrable on the whole $\nSh({}^K_\nRPCD \nSD)(\C)$, we have $\widehat{\deg}(\pi_* p^*(x)) = \widehat{\deg}(\pi_* x)$, where the $\pi$ are the respective structural morphisms.
This is true because the push-forwards of $g$, $g'$ are
computed by an integral. 
\end{proof}

\begin{BEM}\label{ARITHMVOLNOTNEAT}
If $K$ is not neat, $\nSh$ is only a Deligne-Mumford stack. Instead of extending the theory of \cite{BKK1, BKK2} to stacks like e.g. in \cite{Gillet}, we define the arithmetic
volume in this case as follows: We take some neat admissible $K' \subset K$ and define
\[ \widehat{\vol}_{\overline{\mathcal{E}},p} (\nSh({}^{K}_\nRPCD \nSD))
:= \frac{1}{[K:K']} \widehat{\vol}_{\overline{\mathcal{E}},p} (\nSh({}^{K'}_\nRPCD \nSD)) \]
as value in $\R^{(p)}$. Similarly for the global arithmetic volume in $\R_N$, assuming that both $K$ and $K'$ are admissible for all $p \nmid N$.
It follows from \ref{FUNCTORIALITYAUTOMVECTB}, the definition of pullback, and the same reasoning as in 
Lemma \ref{LEMMAINDEPRPCD}.
that this is independent of the choice of $K'$. In particular, if $K$ is neat --- or using any reasonable extension of Arakelov theory to stacks --- this definition agrees with the previous one.

In this sense the geometric and arithmetic volume do not depend essentially on $K$, as they are both multiplied by the index, if $K$ is changed. For the arithmetic case this is of course only true for admissible $K$, and the more primes
are considered the less admissible groups there are.
\end{BEM}

\begin{BEM}\label{RMK}
Since the canonical models can be defined as the normalization of a Zariski closure in a variety 
defined over some $\Z[1/N]$ (compactification of the moduli space of Abelian varieties), it follows that there is even a
model of $\nSh({}^{K}_\nRPCD \nSD)$ defined over some $\OOO_\nSD[1/N]$, yielding the canonical ones over the various reflex rings $\OOO_{\nSD,(p)}$ for $p|N$,
such that the arithmetic volume of this model is the value determined above. This justifies a priori that
our value lies actually in $\R_N$. Otherwise we will not care about this global model.
\end{BEM}

\section{The arithmetic of Borcherds products}

\begin{PAR}\label{WEILFORMULAS2}
Let $\nL$ be a f.d. $\Q$-vector space with non-degenerate quadratic form $Q_\nL \in \Sym^2(\nL^*)$
and $\nM$ be any f.d. $\Q$-vector space. Denote $\mathfrak{\nM} = \nM^* \oplus \nM$ with
standard symplectic form. Consider the following elements of $\Sp(\mathfrak{\nM})$:
\begin{align*}
 g_l(\alpha) &:= \left(\begin{matrix}\alpha &0\\0&^t\alpha^{-1} \end{matrix}\right)  \\
 u(\beta) &:= \left(\begin{matrix}1 & \beta \\ 0 & 1 \end{matrix}\right) \\
 d(\gamma) &:= \left(\begin{matrix}0 & -^t \gamma^{-1} \\ \gamma & 0 \end{matrix}\right) 
\end{align*}
for $\alpha \in \Aut(\nM)$, $\beta \in \Hom(\nM^*, \nM)$ and $\gamma \in \Iso(\nM, \nM^*)$.
We denote the image of $g_l$ and $u$ by $G_l$ and $U$ respectively, and their product by $P$, which is a maximal parabolic of $\Sp(\mathfrak{\nM})$.

Let $R$ be one of $\R, \Q_p, \Af$ or $\A$ and $\chi$ its standard additive character (depends on a choice of $i \in \C$).
Let $\Sp'(\mathfrak{\nM}_R)$ be $\Sp(\mathfrak{\nM}_R)$ if $m$ is even and
 $\Mp(\mathfrak{\nM}_R)$ (metaplectic 
double cover) if $m$ is odd. The elements above have lifts to $\Sp'(\mathfrak{\nM}_R)$, denoted $r''(\cdots)$ 
such that the {\bf Weil representation} of $\Sp'(\mathfrak{\nM}_R)$ on $S((\nM^* \otimes \nL)_R)$ (Schwartz functions) is given by:
\begin{align*}
\omega(r''(g_l(\alpha)))\varphi: &x^* \mapsto \widetilde{\WeilGamma}(\gamma_0\alpha)\widetilde{\WeilGamma}(\gamma_0)^{-1} |\alpha|^{\frac{m}{2}} \varphi(^t \alpha x^*) \\
\omega(r''(u(\beta)))\varphi: &x^* \mapsto \chi((x^*)^!Q_L \cdot \beta) \varphi(x^*) \\
\omega(r''(d(\gamma)))\varphi: &x^* \mapsto \widetilde{\WeilGamma}(\gamma) |\gamma|^{-\frac{m}{2}} \int_{\nM \otimes \nL^*} \varphi({}^t\gamma x) \chi(-\langle x^*, x \rangle) \dd x.
\end{align*}
$\widetilde{\WeilGamma}$ is a certain eigth root of unity. 
For $n=1$ we have just $\widetilde{\WeilGamma}(\gamma) = \WeilGamma(\gamma \otimes Q_L)$, where $\WeilGamma$ is the Weil index \cite{Weil2}. In the third formula $|\gamma|$ is computed w.r.t. the measure $\dd x$, and so it is independent of this measure.

If a global lattice $\nL_\Z \subset \nL_\Q$ is given, with $Q_\nL \in \Sym^2(\nL_\Z^*)$ and $\nM_\Z \subset \nM_\Q$,
as in Borcherds' paper, we often restrict to the subspace of $S((\nM^* \otimes \nL)_\Af)$ given by
characteristic functions of cosets in $(\nL_\Zh^*/\nL_\Zh) \otimes \nM_\Z^*$. 
We denote the space --- as in \cite{Borcherds1} --- by $\C[(\nL_\Z^*/\nL_\Z) \otimes \nM_\Z^*]$.
\end{PAR}

\begin{PAR}\label{EXPLICITWEIL}
In the $n=1$ case ($\nM=\Z$) we denote it also by $\Weil(\nL_\Z^*/\nL_\Z)$, and we consider it as a representation $\Sp'_2(\Z)$, $\Sp'_2(\Zh)$ or a suitable $\Sp'_2(\Z/N\Z)$. 
$\Sp'_2(\Z)$ is generated by 2 elements $S$ and $T$, satisfying $Z=S^2=(ST)^3$ and ($Z^4=1$ or $Z^2=1$ according to whether $m$ is odd or even), mapping down to $\M{0&-1\\1&0}$ and $\M{1&1\\0&1}$, respectively, in $\Sp_2(\Z)$
and acting explicitly as:
\begin{align*}
T \chi_\kappa &= \exp(2\pi i Q_L(\kappa)) \chi_\kappa, \\
S \chi_\kappa &= \frac{\WeilGamma_\infty(Q_\nL)^{-1} } {\sqrt{D}} \sum_{\delta \in \nL_\Z^*/\nL_\Z} \exp(- 2\pi i \langle \delta, \kappa \rangle) \chi_\delta.
\end{align*}
Note that the occurring correction factor $\widetilde{\WeilGamma}_f(1) = \WeilGamma_f(Q_\nL)$ 
is the product over the local $\WeilGamma_p(Q_\nL)$'s which is, however, the
same as $\WeilGamma_\infty(Q_\nL)^{-1}$ and we have $\WeilGamma_\infty(Q_\nL) = \exp(2\pi i (p-q))$, if the signature of $Q$ is $(p,q)$.
These are the formulas used in \cite{Borcherds1}.
We will need the following facts about the Weil representation, too:
The Weil representation of $\Sp'_2(\Zh)$, or $\Sp'_2(\Z)$, always factors through $\Sp_2'(\Z/N\Z)$, where $N$ is related to the exponent of $\nL_\Z^*/\nL_\Z$.
If $m$ is even, the Borel subgroup $\M{*&*\\ & *} \subset \Sp_2'(\Z/N\Z)$ acts on 
$\chi_{\nL_\Zh}$ by the {\em character} $\WeilGamma(\alpha Q_\nL)\WeilGamma(Q_\nL)^{-1}$.

By construction of the Weil representation, we have
\begin{equation}\label{weildecomp} \Weil(\nL_\Z^*/\nL_\Z) = \bigotimes_{p|D} \Weil(\nL_\Zp^*/\nL_\Zp), \end{equation}
where $\Weil(\nL_\Zp^*/\nL_\Zp)$ is actually a representation of $\Sp_2'(\Zp)$.
If $\nL_\Zp^*/\nL_\Zp$ is cyclic of prime order or of order $4$, we have
\[ \Weil(\nL_\Zp^*/\nL_\Zp) = \Weil(\nL_\Zp^*/\nL_\Zp)^+ \oplus \Weil(\nL_\Zp^*/\nL_\Zp)^- \]
(decompositions into odd and even functions), and both summands are {\em irreducible}. (If $p=2$ and the order is 2, $\Weil(\nL_\Zp^*/\nL_\Zp)^- = 0$.)
Together with (\ref{weildecomp}) this yields the decomposition of the Weil representation of $\nL$ into irreducible representations, if $D$ is square-free, 
resp. $\frac{D}{2}$ square-free and $\nL_{\Z_2}^*/\nL_{\Z_2}$ cyclic of order 4.
\end{PAR}

\begin{PAR}\label{BORCHERDSLIFT}
Let $\nL_\Q$ be a non-degenerate quadratic space of signature $(p,q)$.
Consider a one-dimensional space $\nM_\Q = \Q$. We canonically identify $\nM^* \otimes \nL = \nL$.

Let $N$ be a maximal negative definite subspace of $\nL_\R$.
We have the Gaussian $\varphi_\infty^0 \in S(\nL_\R) \otimes \mathcal{C}(\nX_\nO)$, 
defined by
\[ \varphi_\infty^0(x, N) := e^{- \pi \langle x, \alpha_N x \rangle }, \]
where $\alpha_N \in \End(M)$ acts as -1 on $N$ and as +1 on the orthogonal complement.
It satisfies
\[ \varphi_\infty^0(hx, hN) = \varphi_\infty^0(x, N) \qquad \forall h \in \SO(\nL_\R). \]

For any $\varphi \in S(\nL_\Af)$, define the theta function
\[ \Theta(g', N, h; \varphi) = \sum_{v \in \nL_\Q} \sum \omega(g')(\varphi_\infty(\cdot, N) \otimes \omega(h) \varphi)(x) \]
as a function of $g' \in \Sp'(\mathfrak{\nM}_\A), N \in \nX_\nO, h \in \SO(\nL_\Af)$.
Here $\omega$ is the Weil representation (\ref{WEILFORMULAS2}).

We have (cf. \cite{Weil1}) for any $\gamma \in \SO(\nL_\Q)$ and $\gamma' \in \Sp'(\mathfrak{\nM}_\Q)$ (canonical lift of $\Sp(\mathfrak{\nM}_\Q)$ to $\Sp'(\mathfrak{\nM}_\A)$),
\[ \Theta(\gamma' g', \gamma N, \gamma h; \varphi) = \Theta(g', N, h; \varphi) \]
and for $g_1' \in \Sp'(\mathfrak{\nM}_\Af)$ and $h_1 \in \SO(\nL_\Af)$:
\[ \Theta( g' g_1', N, h h_1; \varphi) = \Theta(g', N, h; \omega(g_1') \omega(h_1) \varphi). \]
By these invariance properties, we may consider $\Theta$ as a function
\[ \Sp'(\mathfrak{\nM}_\Q) \backslash \Sp'(\mathfrak{\nM}_\Af) \times \nSh({}^K \nO(\nL) )(\C) \rightarrow \left(S(\nL_\Af)^K\right)^*. \]
We have (cf. \cite[p. 10]{Kudla4}):
\[ \omega(k_\infty') \varphi_\infty^0(\cdot, N) = \chi_l(k_\infty') \varphi_\infty^0(\cdot, N) \]
for $l=\frac{p-q}{2}$ and $k_\infty \in K'_\infty$ (inverse image of $K_\infty=\SO(2)$ in $\Sp'_2(\R)$). 
It follows that
\[ \Theta(g' k_\infty' k', N, h) = \chi_l(k_\infty') ({^t \omega(k')})^{-1} \Theta(g', N, h) \]
for all $k_\infty' \in K'_\infty$ and $k' \in K'$ (inverse image of $\Sp_2(\Zh)$ in $\Sp'_2(\Af)$).
Now consider an automorphic function
\[ F: \Sp(\mathfrak{\nM}_\Q) \backslash \Sp'(\mathfrak{\nM}_\Af) \rightarrow S(\nL_\Af), \]
satisfying
\[ F(g'k'_\infty k') = \chi_{-l}(k'_\infty) \omega(k')^{-1} F(g') \]
and invariant under some compact open subgroup $K \subset \SO(\Af)$. 

Define the \textbf{Borcherds lift} of $F$ as
\[ \Phi(N, h; F) = \int_{\Sp_2'(\Q) \backslash \Sp_2'(\A)}^\bullet \Theta(g', N, h; F(g')) \dd g', \]
where $\bullet$ denotes Borcherds' regularization \cite{Borcherds1}, described in \cite{Kudla4} in the adelic language.
$\dd g'$ is the Tamagawa measure. Note that this formation does not depend on $K$.
By definition, we have
\begin{equation} \label{transfborcherds}
\Phi(N, h\rho; F) = \Phi(N, h; \omega(\rho)F).
\end{equation}
It compares to the language of Borcherds as follows:
If $K \subset \SO(\Af)$ is the discriminant kernel with respect to a lattice $\nL_\Z$, we may consider
$\Theta$ as valued in $\C[(\nL_\Z^*/\nL_\Z)]^*$ and $F$, valued in $\C[(\nL_\Z^*/\nL_\Z)]$, relates to a usual
modular form (if $l$ is odd, of half integer weight) by
\[ y^{l/2} F(g'_\tau) = f(\tau) \qquad \tau = x + iy \in \HH. \]
\end{PAR}

\begin{LEMMA}
We have
\[ \Phi(\iota(N), 1; F) = \Phi_{classic}(\iota, \nL_\Z, N, f) \]
for $N \in \Grass^-(\nL_\R)$ and according to $\iota: \Grass^-(\nL_\R) \hookrightarrow \nX_\nO$. $\iota$ is a parameter in Borcherds' theory (cf. \cite[p. 46]{Borcherds1}).
\end{LEMMA}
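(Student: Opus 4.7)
The plan is to unfold the adelic Borcherds integral to the classical one on the upper half plane by using strong approximation, Iwasawa decomposition, and the explicit formulas for the Weil representation recalled in \ref{EXPLICITWEIL}.

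First, I would apply strong approximation for $\Sp_2'$. Since $K' \subset \Sp_2'(\Af)$ is the inverse image of the full $\Sp_2(\Zh)$, strong approximation gives $\Sp_2'(\A) = \Sp_2'(\Q) \cdot \Sp_2'(\R)^+ \cdot K'$, so by the right $K'_\infty K'$-equivariance imposed on $F$ (and the corresponding behavior of $\Theta$ in the $g'$-variable, which is an equality up to the same factor $\chi_l(k'_\infty) ({}^t \omega(k'))^{-1}$), the integrand descends to a $\Gamma$-invariant function on $\Sp_2(\Z) \backslash \HH$ where $\Gamma = \Sp_2(\Z)$ acts in the usual way. Concretely, I would parametrize $\Sp_2'(\R)^+ / K'_\infty$ by $\tau = x+iy \in \HH$ via the standard element $g'_\tau$, and use that the Tamagawa measure on $\Sp_2'(\Q) \backslash \Sp_2'(\A)$, restricted to a fundamental domain, becomes (up to a factor the volume of $K'_\infty K'$, which is 1 for the Tamagawa normalization on $\Sp_2$) the standard measure $\frac{\dd x \dd y}{y^2}$ on $\Sp_2(\Z) \backslash \HH$.

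Second, I would unfold the theta kernel. At the archimedean place, $\omega(g'_\tau)\varphi_\infty^0(\cdot, N)$ is, by the formulas of \ref{WEILFORMULAS2} applied to $g'_\tau = u(x) g_l(\sqrt{y})$, exactly the standard factor $y^{l/2}$ times the Gaussian in $\tau$ and $x \in \nL_\R$ that appears in the classical Siegel theta function attached to $(N,\iota(N))$ (the splitting of $\nL_\R$ determined by $N$); here the sign $l = (p-q)/2$ arises from the $K_\infty'$-character. At the finite places, $F(g'_\tau)$ collapses, via the identification $F \leftrightarrow f$ of the lemma statement, to $y^{-l/2} f(\tau)$ viewed as an element of $\C[\nL_\Z^*/\nL_\Z]$. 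The two factors of $y^{\pm l/2}$ cancel, and the pairing of $\Theta$ with $F$ at a point $(\tau, N)$ produces exactly Borcherds' classical theta function $\Theta_\nL(\tau, N, \iota)$ paired with $f(\tau)$.

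Third, having identified the integrand pointwise, I would compare regularizations. The regularization $\bullet$ of Kudla is defined precisely to match Borcherds' regularization of the $\HH$-integral after the above change of variables (this is the content of \cite{Kudla4}); so the regularized integral equals
\[ \int_{\Sp_2(\Z) \backslash \HH}^\bullet \langle \Theta_\nL(\tau, N, \iota), f(\tau) \rangle \frac{\dd x \dd y}{y^2}, \]
which is Borcherds' $\Phi_{classic}(\iota, \nL_\Z, N, f)$. Finally the value of the second argument of $\Phi$ at $h=1$ kills the right $\omega$-translation in \eqref{transfborcherds}, so no further adjustment is needed.

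The main obstacle is the careful bookkeeping of normalizations: the factor $\widetilde{\WeilGamma}$ in the Weil action, the normalization of the Tamagawa measure versus $\frac{\dd x \dd y}{y^2}$, the convention $y^{l/2} F(g'_\tau) = f(\tau)$ for passing between automorphic and classical forms of (possibly half-integral) weight $l$, and the fact that $\iota$ only enters through the choice of connected component of $\nX_\nO$ above $N \in \Grass^-(\nL_\R)$; all signs and eighth-roots-of-unity must match on both sides. Once these conventions are aligned (as in \cite{Kudla4}), the identification is formal.
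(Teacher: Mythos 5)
Your argument is correct, and it is essentially the standard unfolding that underlies the identification; but be aware that the paper disposes of this lemma in one line. Since the adelic lift $\Phi(N,h;F)$ was \emph{defined} (in \ref{BORCHERDSLIFT}) by following Kudla's adelization of Borcherds' regularized integral \cite{Kudla4}, the strong-approximation/Iwasawa unfolding, the matching of the archimedean Gaussian with the classical Siegel theta kernel, the cancellation of the $y^{\pm l/2}$ factors via $y^{l/2}F(g'_\tau)=f(\tau)$, and the comparison of regularizations are all already built into that construction; the only point that remains to be checked is that at the finite places the Weil representation, restricted along $\Sp_2'(\Z)\rightarrow\Sp_2'(\Zh)$ to the subspace $\C[\nL_\Z^*/\nL_\Z]\subset S(\nL_\Af)$, is given by exactly the formulas Borcherds uses — and this is the content of \ref{EXPLICITWEIL}, which is all the paper's proof invokes. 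So your proposal re-derives the Kudla--Borcherds comparison from scratch, which is legitimate and self-contained but forces you to carry all the normalization bookkeeping (Tamagawa measure versus $\frac{\dd x\, \dd y}{y^2}$, the Weil index $\widetilde{\WeilGamma}$, the metaplectic conventions) that the paper outsources to \cite{Kudla4}; the paper's route buys brevity at the cost of leaning on that reference, yours buys independence at the cost of the bookkeeping you yourself flag as the main obstacle.
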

\begin{proof}
In \ref{EXPLICITWEIL}, we saw that the Weil representation composed with the inclusion $\Sp_2'(\Z) \rightarrow \Sp_2'(\Zh)$ acts on the subspace
$\C[\nL_\Z^*/\nL_\Z] \subset S(\nL_\Af)$ by the same formulas used in \cite{Borcherds1}.
\end{proof}
We call $f$, resp. $F$, nearly holomorphic, if $F$ is holomorphic on $\HH$ and has a Fourier expansion
\[ f(\tau) =  \sum_{k \in \Q} a_k \exp(2\pi i k \tau)   \]
with $a_k=0$ unless $k \gg -\infty$.

\begin{PAR}\label{DEFLAMBDAMU}
We recall some definitions from \cite{Paper1}. Let $\nL_\Z$ be a lattice of dimension $m$ with
non-degenerate quadratic form in $\Sym^2(\nL_\Z^*)$. 
Let $\nM_\Z$ be a lattice of dimension $n$ with quadratic form, not necessarily integer valued. We assume $m-n \ge 1$ throughout.  
(It suffices even to have $\Zh$-lattices.) 

Let $\kappa$ be a coset in $(\nL_\Z^*/\nL_\Z) \otimes \nM_\Z^*$.

We defined the following Euler products \cite[10.2, 10.3]{Paper1}:
\begin{eqnarray*}
 \lambda(\nL; s) &:=& \prod_{\nu} \lambda_\nu(\nL; s), \\
 \mu(\nL, \nM, \kappa; s) &:=& \prod_{\nu} \mu_\nu(\nL, \nM, \kappa; s), 
\end{eqnarray*}
where \cite[Defintion 5.9]{Paper1}:
\begin{eqnarray*}
\lambda_p(\nL_\Z; s) &:=& \frac{\vol(\SO'(\nL_\Zp \oplus H_\Zp^s))}{\prod_{i=1}^{s}(1-p^{-2i})}, \\
\mu_p(\nL_\Z, \nM_\Z, \kappa; s) &:=& \vol(\nIsome(\nM, \nL \oplus H_\Zp^s)(\Qp) \cap \kappa \oplus H_\Zp^s\otimes \nM_\Zp^*) \\
&=& |d(\nL_\Zp)|^{\frac{n}{2}} |d(\nM_\Zp)|^{-s+\frac{1+n-m}{2}}  \beta_p(\nL_\Zp, \nM_\Zp, \kappa; s),
\end{eqnarray*}
and \cite[Defintion 6.1]{Paper1}:
\begin{eqnarray*}
\lambda_\infty(\nL_\Z) &:=&  \Gamma_{m-1,m}(s) \\
\mu_\infty(\nL_\Z, \nM_\Z, \kappa) &:=& \Gamma_{n,m}(s).  
\end{eqnarray*}

The $\mu$ functions are related to Whittaker integrals occurring as Euler factors of Eisenstein series. The
coefficient $\mu_p$ is equal to the corresponding $p$-factor (\cite[Theorem 4.5]{Paper1}), whereas the infinite factor is only indirectly related to $\mu_\infty$
(\cite[Theorem 4.7]{Paper1}). In the case $n=1$ it occurs as a limit of the Archimedean factor up to a factor $-|2 d(\nM_\Z)|^{s-s_0}$, where $d(\nM_\Z)$ is
the discriminant of the associated bilinear form (a number in this case) on $\nM_\Z$. It is therefore convenient to define \cite[Section 10]{Paper1}:
\[ \widetilde{\mu}(\nL_\Z, \nM_\Z, \kappa; s) = |2 d(\nM_\Z)|_\infty^{-\frac{1}{2}s} \mu(\nL_\Z, \nM_\Z, \kappa; s). \]
because the derivative in $s$ of this quantity is actually is related to the integral of a Borcherds form (cf. Theorem \ref{BPADELIC}).

We define also:
\[ \widetilde{\lambda}(\nL; s) := |d(\nL_\Z)|_\infty^{\frac{1}{2}s} \lambda(\nL_\Z; s). \] 

The functions $\mu_p$ and $\lambda_p$ can be computed explicitly. See \cite[Theorem 8.1, 8.2]{Paper1}. For explicit computations 
of $\widetilde{\mu}$ and $\widetilde{\lambda}$ from this, see \cite[Example 11.10--11.15]{Paper1}.
We will continue the discussion of these functions in \ref{DISCORBIT}.
\end{PAR}

\begin{PAR}
Assume from now on that the signature of $\nL$ is $(m-2,2)$.
Borcherds proved in \cite[Theorem 13.3]{Borcherds1} that $\Phi(F)$ is roughly the logarithm of
the Hermitian norm of a modular form on $\nSh({}^K_\nRPCD \nO(\nL))$, which has a striking product expansion. The task of the rest of this section is a translation of this expansion into the adelic language.
\end{PAR}
\begin{DEF}\label{BORCHERDSBUNDLE}
Let $\mathcal{E}$ be the restriction of
the tautological line bundle on $\PP(\nL)$ to the zero-quadric $\nShD(\nO)$.
It carries a $\SO(\nL)$ action. $E_\C|_{h(\nX_\nO)}$ carries a natural Hermitian metric $h_E$, too,
which we normalize as follows:
\[ h_E(v) := - \frac{1}{2} e^{-C} \langle v, \overline{v} \rangle, \]
where $C=\gamma + \log(2\pi)$ and $\gamma = -\Gamma'(1)$ is Euler's constant.

Denote $\overline{\mathcal{E}} = (\mathcal{E}, h_E)$. 
\end{DEF}
\begin{PAR}
Let $\nI_\Q$ be an isotropic line in $\nL_\Q$ and $z' \in \nL_\Q$ a vector with $\langle z', \nI \rangle \not=0$.
We fix a measure $\dd x$ on $\Af$ which has the property, 
that the measure of the set
\[ \{ \langle \delta, z'  \rangle \ |\ \delta \in \nI_\Af, a_k(x+\delta) = a_k(x) \text{ for all } k \in \Q \} \]
is an integer for all $x \in \nL_\Af$. Since $f$ is a finite linear combination of input modular forms considered
by Borcherds, whose Fourier coefficients lies in the subspace $\C[\nL_\Z^*/\nL_\Z]$ for certain lattices $\nL_\Z$, this requirement does indeed make sense. We let $\chi_\alpha: \Af \rightarrow \C^*$, $\alpha \in \Iso(\Z, \Z(1))$, be the corresponding additive character, i.e. such that $\dd \delta$ is self-dual with respect to it, and such that $\chi_\alpha |_\Q = \exp( \alpha(q \cdot))$ for a $q>0$.

We let $\nU := (\nI^\perp /\nI) \otimes \nI \subseteq \Lambda^2 \nL$. It acts (Lie algebra action, see proof of Proposition \ref{ORTHBOUNDARYCOMP}) on $\nL$, and we have a (rational) boundary morphism
\[ \nH_0[\nU_\Q, 0] \Longrightarrow \nO(\nL_\Q). \]
We may identify $\nU^*$ with $(\nI^\perp/\nI) \otimes \nI^*$ by means of $Q_\nL$.

$z'$ induces the following quadratic form on $\nU$:
\[ Q_{z'}(k \wedge z_1) =  Q_\nL((k \wedge z_1) \circ z') = (\langle z_1, z' \rangle)^2 Q_\nL(k). \]
Here $\circ$ denotes the Lie algebra action of $\nU$ on $\nL$.
Its inverse on $\nU^*$ is:
\[ Q_{z'}(k \wedge z_1') = \left(\frac{\langle z,z_1' \rangle}{\langle z,z' \rangle}\right)^2 Q_\nL(k) \]
for an arbitrary  $z \in \nI$.

We may also embed $\nU^*$ in $\nI^\perp$ by associating
\begin{equation}\label{eqlambdazprime}
  \lambda = k \wedge z_1' \mapsto \lambda_{z'} := \frac{\langle z,z_1' \rangle}{\langle z,z' \rangle}  \widetilde{k}, 
\end{equation}
where $\widetilde{k} \in \nI^\perp$ is the unique lift of $k$ with $\langle z',\widetilde{k}\rangle =0 $.
The embedding may be seen as the composition
\[ \xymatrix{ \nU^* \ar[r]^-{\langle \cdot, \cdot \rangle_{z'}} & \nU \ar[r]^-{\circ z'}  & <\nI, z'>^\perp }.  \]
The pullback of $Q_\nL$ is the quadratic form $Q_{z'}$.
\end{PAR}

\begin{DEFLEMMA}\label{DEFWEYLVECTOR}
If $\sigma \subset \nU_\R$ is a sufficiently small cone, for example a Weyl chamber \cite[\S 10]{Borcherds1},
the Weyl vector $\rho(F, \nI, \sigma) \in \nU^*_\Q$ is characterized by
\[ 8 \pi \rho(F, \nI, \sigma) ( \nu )  = \sqrt{Q_{z'}(\nu)} \Phi(<\nu>, 1, F^{U,z'})
\qquad \text{for } \nu \in \sigma \subset U_\R, \]
where $F^{U,z'}$ as a $S(\nU_\Af)$-valued function is defined as:
\[ F^{U, z'}(g')(w) := C^{-1} \int_{\delta \in \nI_\Af} F(g')(w \circ z' + \delta) \dd \langle z', \delta \rangle \]
(any invariant measure). It does not depend on the choice of $z'$.
\end{DEFLEMMA}
\begin{proof}
Compare \cite[p. 40]{Borcherds1} for this characterization of the Weyl vector. 
First it is independent of $z'$:
Scaling $z'$ to $\beta z'$ changes the $w \circ z'$ to $\beta w \circ z'$.
$F^{U,z'}(w,z')$ changes to $\beta^{-1} F^{U,z'}(\beta w, g')$.
Since the argument of the quadratic form on $U_\Q$ and the argument in $K_\Af$ of $F^{U,z'}$ are both scaled by $\beta$,
$\Psi(\dots)$ is only scaled by $\beta^{-1}$ coming from scaling $F^{U,z'}$, hence invariance of the Weyl vector. (Observe that $\sigma$ is invariant under scaling.)
Replacing $z'$ by $\exp(w)z'$ for $w \in \nU$ does neither change $F^{U, z'}$ nor $Q_{z'}$. 
\end{proof}

\begin{DEFLEMMA}\label{DEFBP}
Let $F$ be a modular form as above, $\nI$, $z'$, $\alpha$ be chosen and $W \subset \nU_\R$ be a Weyl chamber. 
The associated adelic Borcherds product 
is the formal function
\begin{gather*} \BP(F, \nI, z', \alpha, W) :=  C^{-\frac{a_0(0)}{2}} \prod_{\substack{\delta \in \nI_\Af \\ \chi_\alpha(\langle z', \delta \rangle)\not=1}} (1-\chi_\alpha(\langle z', \delta \rangle))^{\frac{a_0(\delta)}{2}\dd \langle z' , \delta \rangle}  \\
 \cdot
 \prod_{\substack{\lambda \in U_\Q^*\\ \langle \lambda, W \rangle>0}} \prod_{\delta \in \nI_\Af} (1-\chi_\alpha(\langle z', \delta\rangle)[\frac{\lambda}{C}])^{a_{Q_{z'}(\lambda)}(\lambda_{z'} + \delta) \dd \langle z', \delta\rangle}
\end{gather*}
in $\C\llbracket \nU^*_\Z \cap \nC^\vee \rrbracket [\nU^*_\Z]$ for a certain appropriate lattice $\nU_\Z \subseteq \nU_\Q$.  Here $C$ is the conductor, i.e. the volume of $\widehat{\Z}$ with respect to $\dd x$.

$\BP(F, \nI, z', \alpha, W)$ is independent of the chosen measure.
\end{DEFLEMMA}
\begin{proof}[Proof of the independence.]
Rewrite the first integral as
\[ \left. \frac{\prod_{\delta \in \nI_\Af} (1-\chi_\alpha(\langle z', \delta\rangle)[C^{-1} \lambda])^{\frac{a_0(\delta)}{2}\dd \langle z', \delta \rangle}}{(1-[C^{-1} \lambda])^{\frac{a_0(0)}{2}}} \right|_{\lambda=0}. \]

Now consider the multiplicative integral
\begin{equation}\label{multint1}
\prod_{\delta \in (z')^{-1}(C\Zh) + a} (1 - \chi_\alpha(\langle z', \delta \rangle)[\lambda C^{-1}])^{\dd \langle z', \delta \rangle} = (1 - \chi_\alpha(\langle z', a \rangle)[C^{-1}\lambda]).
\end{equation}
All multiplicative integrals occurring in $\BP(F, I, z', \alpha, W)$ split into a
product of these integrals by hypothesis on the measure.
Now changing the measure to $N \dd \delta$, we get the expression
\begin{align*}
&  \prod_{\delta \in (z')^{-1}(C\Zh)} (1 - \chi_\alpha(N^{-1} \langle z', a \rangle) \chi_\alpha( N^{-1} \langle z', \delta \rangle)[ (NC)^{-1} \lambda ])^{\dd N^{-1} \langle z', \delta \rangle}  \\
=& \prod_{i \in \Z/N\Z} (1 - \chi_\alpha(N^{-1} \langle z', a \rangle) \zeta^i [(NC)^{-1} \lambda ])  \\
=& (1-\chi_\alpha(\langle z', a \rangle)[ C^{-1} \lambda])
\end{align*}
here $\zeta$ is some primitive $N$-th root of unity in $\C$.
This shows invariance of the multiplicative integral (\ref{multint1}). We have to check what happens to
\[ \frac{1}{(1-[\lambda C^{-1}])^{\frac{a_0(0)}{2}}}. \]
It changes to $\frac{1}{(1-[\lambda (NC)^{-1}])^{\frac{a_0(0)}{2}}}$ hence gets multiplied by
\[ (1 + [\lambda (CN)^{-1}] + [2 \lambda (CN)^{-1}] + \dots + [(N-1) \lambda (CN)^{-1}])^{\frac{a_0(0)}{2}}, \]
which for $\lambda = 0$ is equal to $N^{\frac{a_0(0)}{2}}$. $C^{-\frac{a_0(0)}{2}}$ gets $(NC)^{-\frac{a_0(0)}{2}}$.
Therefore the whole expression $\BP(F, I, z', \alpha, W)$ is invariant.
\end{proof}

We start by observing the following lemmas about the adelic Borcherds products:

\begin{LEMMA}\label{LEMMABP1}
$\BP(F, \nI, z', \alpha, W)$ multiplied with the trivialization $(s_{z'})^{\otimes \frac{a_0(0)}{2}}$ (\ref{QEXPANSIONPREP})
is invariant under $z' \mapsto \beta z'$ for $\beta \in \Q_{>0}$, i.e.
we have
\[ \BP(F, \nI, \beta z', \alpha, W) = \beta^{-\frac{a_0(0)}{2}} \BP(F, \nI, z', \alpha, W).  \]
\end{LEMMA}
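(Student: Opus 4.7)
The plan is to reduce $\BP(F,\nI,\beta z',\alpha,W)$ to $\BP(F,\nI,z',\alpha,W)$ by performing the substitution $\delta\mapsto\beta^{-1}\eta$ inside each multiplicative integral, exploiting $\Sp(\mathfrak{\nM}_\Q)$-invariance of $F$ to absorb the residual rescaling of Fourier-coefficient arguments, and then to collect the surviving global scalar.

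First I would record the dependence of the auxiliary data on $z'$: the lift $\widetilde k \in \nI^\perp$ appearing in (\ref{eqlambdazprime}) is determined by the condition $\langle z',\widetilde k\rangle=0$, which only uses $z'$ up to scalar, so $\widetilde k$ is unchanged; the formula (\ref{eqlambdazprime}) then gives $\lambda_{\beta z'}=\beta^{-1}\lambda_{z'}$, and consequently $Q_{\beta z'}=\beta^{-2}Q_{z'}$ on $\nU^*$. Performing the change of variables $\delta=\beta^{-1}\eta$ in both multiplicative integrals of \ref{DEFBP}, one checks that $\chi_\alpha(\langle\beta z',\delta\rangle)$ becomes $\chi_\alpha(\langle z',\eta\rangle)$ and $\dd\langle\beta z',\delta\rangle$ becomes $\dd\langle z',\eta\rangle$, while the argument $\lambda_{\beta z'}+\delta$ becomes $\beta^{-1}(\lambda_{z'}+\eta)$. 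Hence the only residual $\beta$-dependence is that every Fourier coefficient $a_k(x)$ of $F$ is replaced by $a_{\beta^{-2}k}(\beta^{-1}x)$.

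The central step is the identification of $a_{\beta^{-2}k}(\beta^{-1}x)$ with $a_k(x)$ up to a scalar depending only on $\beta$, which I would derive from the left $\Sp(\mathfrak{\nM}_\Q)$-invariance of $F$ applied to $g_l(\beta)$, using the explicit Weil-representation formula of \ref{WEILFORMULAS2}: one rewrites $g_l(\beta)\cdot (g'_\tau,1)=(g'_{\beta^{2}\tau},g_l(\beta))$ at the archimedean and finite places, and observes that right translation by $g_l(\beta)$ on the finite side acts via $\omega(g_l(\beta))\varphi(x)=\widetilde\WeilGamma(\gamma_0\beta)\widetilde\WeilGamma(\gamma_0)^{-1}|\beta|^{m/2}\varphi(\beta x)$. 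Matching Fourier coefficients on the two sides of the resulting equality yields
\[ a_{\beta^{-2}k}(\beta^{-1}x) \;=\; |\beta|^{m/2}\,\widetilde\WeilGamma(\gamma_0\beta)\widetilde\WeilGamma(\gamma_0)^{-1}\, a_k(x), \]
and the implicit rescaling of the Haar measure on $\Af$ (together with the ensuing rescaling of the conductor $C$) absorbs the $|\beta|^{m/2}$ and Weil-index factors, exactly as in the measure-independence computation at the end of \ref{DEFBP}.

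Collecting, the only scalar surviving all these cancellations comes from the ``ghost'' factor $C^{-a_0(0)/2}$ together with the $\beta$-rescaling of the measure normalisation determining $C$, and works out to $\beta^{-a_0(0)/2}$. The main obstacle is the bookkeeping of the Weil-index factors $\widetilde\WeilGamma(\gamma_0\beta)\widetilde\WeilGamma(\gamma_0)^{-1}$, the powers of $|\beta|^{m/2}$, and the implicit rescaling of the Haar measure on $\Af$; once these are shown to cancel pairwise in the spirit of the invariance argument of \ref{DEFBP}, only the claimed factor $\beta^{-a_0(0)/2}$ remains.
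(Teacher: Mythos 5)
You are right up to and including the change of variables $\delta=\beta^{-1}\eta$: the homogeneities $\lambda_{\beta z'}=\beta^{-1}\lambda_{z'}$ and $Q_{\beta z'}=\beta^{-2}Q_{z'}$ from (\ref{eqlambdazprime}), and the fact that the character and the transported measure return to $\chi_\alpha(\langle z',\eta\rangle)$ and $\dd\langle z',\eta\rangle$, are exactly the ingredients the paper also uses. The gap is your ``central step''. The identity $a_{\beta^{-2}k}(\beta^{-1}x)=|\beta|^{m/2}\,\widetilde{\WeilGamma}(\gamma_0\beta)\widetilde{\WeilGamma}(\gamma_0)^{-1}\,a_k(x)$ is false: left invariance of $F$ under $g_l(\beta)$, $\beta\in\Q_{>0}$, only relates $F(g'_{\beta^2\tau},1)$ to the right translate $F(g'_\tau,g_l(\beta)^{-1})$ at the finite places, and since $g_l(\beta)$ lies in $K'$ (the inverse image of $\Sp_2(\Zh)$) only for $\beta=1$, that translate is a genuinely different $S(\nL_\Af)$-valued form; no relation among the Fourier coefficients of the single form $F$ follows. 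Concretely, your identity would force the support of $k\mapsto a_k$ to be stable under $k\mapsto\beta^2k$ for every $\beta\in\Q_{>0}$, which fails already for a level-one input form ($\nL_\Z$ unimodular), where one may have $a_{-1}\neq 0$ while $a_{-1/4}=0$. Two further signs that the plan cannot close: if all exponents were multiplied by a common constant $c(\beta)$, the product would be raised to the power $c(\beta)$, i.e.\ the exponents get rescaled, which is not of the required form $\beta^{-a_0(0)/2}\BP(\dots)$; and after your own substitution no rescaling of $\dd x$ or of $C$ remains (neither depends on $z'$), so there is nothing left to ``absorb'' the $|\beta|^{m/2}$ and Weil-index factors.

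The step you are missing is elementary and uses no automorphy of $F$ at all: reindex the outer product by $\lambda\mapsto\beta\lambda$ (permitted since $\beta>0$ preserves $\langle\lambda,W\rangle>0$). Because $(\beta\lambda)_{\beta z'}=\lambda_{z'}$ and $Q_{\beta z'}(\beta\lambda)=Q_{z'}(\lambda)$, exactly the same coefficients $a_{Q_{z'}(\lambda)}(\lambda_{z'}+\delta)$ reappear, and the only remaining discrepancies with $\BP(F,\nI,z',\alpha,W)$ are $\chi_\alpha(\beta\langle z',\delta\rangle)$ in place of $\chi_\alpha(\langle z',\delta\rangle)$, the transported measure multiplied by $\beta^{-1}$, and the symbol $[\beta\lambda/C]$ in place of $[\lambda/C]$. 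These are precisely the changes produced by replacing the measure $\dd x$ by $\beta^{-1}\dd x$ (conductor $\beta^{-1}C$), so the measure-independence statement of \ref{DEFBP} applies, and the single uncompensated item is the explicit prefactor $C^{-a_0(0)/2}$, which yields the asserted factor $\beta^{-a_0(0)/2}$. This is in substance the paper's own proof (change of $z'$ in the pairings treated as a change of measure, plus a rescaling of the index $\lambda$); your proposal replaces that reindexing by a Fourier-coefficient homogeneity that does not hold.
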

\begin{proof}
Changing $z'$ in the various $\langle z', \delta \rangle$ has the same effect as scaling the measure,
hence (by the last lemma) as multiplying $C$ by $\beta$. The expression $\lambda_{z'}$ gets divided by $\beta$ 
(\ref{eqlambdazprime}), $Q_{z'}(\lambda)$ gets divided by $\beta^2$. Substituting $\beta^2 \lambda$ for $\lambda$, we get the claimed expression.
\end{proof}

\begin{LEMMA}\label{LEMMABP2}
For $w=k \wedge z_1 \in \nU_\Q$, 
$\BP(F, \nI, \exp(w)z', \alpha, W)$
is equal to $\BP(F, \nI, z', \alpha, W)$ transformed by the homomorphism 
$[\lambda] \mapsto \exp(\alpha( \lambda (w)))[\lambda]$.

\end{LEMMA}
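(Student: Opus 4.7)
The plan is to examine each factor in the definition of $\BP(F, \nI, z', \alpha, W)$ and track its behaviour under $z' \mapsto \exp(w) z'$. Write $w = k \wedge z_1 \in \nI \wedge \nI^\perp$ with $z_1 \in \nI$ and $\widetilde{k} \in \nI^\perp$ a lift of $k \in \nI^\perp/\nI$. A direct computation using the Lie-algebra action of $\nU$ on $\nL$ (together with $w^{3}(z') = 0$) yields
\[
\exp(w) z' \;=\; z' + \langle z_1, z' \rangle \widetilde{k} \;-\; \bigl( \langle \widetilde{k}, z' \rangle + Q_\nL(\widetilde{k}) \langle z_1, z' \rangle \bigr) z_1.
\]
Because $\widetilde{k}$ lies in $\nI^\perp$ and $\nI$ is isotropic, this shows $\langle \exp(w) z', v \rangle = \langle z', v \rangle$ for every $v \in \nI$. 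Consequently, the conductor $C$ and the measure $\dd \langle z', \delta \rangle$, the characters $\chi_\alpha(\langle z', \delta \rangle)$ for $\delta \in \nI_\Af$, the quadratic form $Q_{z'}$ on $\nU^*$, and the entire $\lambda = 0$ factor of $\BP$ (including the prefactor $C^{-a_0(0)/2}$) are all unchanged.

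The only non-trivial change occurs in the vectors $\lambda_{z'} \in \nI^\perp$ for fixed $\lambda = k' \wedge z_1' \in \nU^*$. Let $\widetilde{k'}_{\exp(w) z'}$ be the unique lift of $k'$ in $\nI^\perp$ with $\langle \exp(w) z', \widetilde{k'}_{\exp(w) z'} \rangle = 0$. Writing $\widetilde{k'}_{\exp(w) z'} = \widetilde{k'} + c z$ for a chosen generator $z$ of $\nI$, the orthogonality condition together with the formula for $\exp(w) z'$ above gives $c \langle z, z' \rangle = -\langle z_1, z' \rangle \langle \widetilde{k}, \widetilde{k'} \rangle$; hence
\[
u \;:=\; \lambda_{\exp(w) z'} - \lambda_{z'} \;=\; -\frac{\langle z, z_1' \rangle \, \langle z_1, z' \rangle \, \langle \widetilde{k}, \widetilde{k'} \rangle}{\langle z, z' \rangle^2}\, z \;\in\; \nI_\Q.
\]
Pairing with $z'$ and using the one-dimensionality of $\nI$ to write $z_1 = (z_1/z) \cdot z$ collapses the numerator and denominator, leaving $\langle z', u \rangle = - z_1'(z_1) \langle k, k' \rangle = -\lambda(w)$, where $\lambda(w)$ is the natural pairing between $\nU^*$ and $\nU$ under the identifications $\nU = (\nI^\perp/\nI) \otimes \nI$ and $\nU^* = (\nI^\perp/\nI) \otimes \nI^*$ (the second via $Q_\nL$).

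To finish, for each $\lambda$ with $\langle \lambda, W \rangle > 0$ I substitute $\delta \mapsto \delta - u$ in the inner multiplicative integral over $\delta \in \nI_\Af$; this is legitimate because $u \in \nI_\Q \subset \nI_\Af$. The measure is translation-invariant, the Fourier coefficient $a_{Q_{z'}(\lambda)}(\lambda_{\exp(w)z'} + \delta)$ returns to $a_{Q_{z'}(\lambda)}(\lambda_{z'} + \delta)$, and the character picks up the factor
\[
\chi_\alpha(-\langle z', u \rangle) \;=\; \chi_\alpha(\lambda(w)) \;=\; \exp(\alpha(\lambda(w))).
\]
Hence every factor $(1 - \chi_\alpha(\langle z', \delta\rangle) [\lambda/C])^{a_{Q_{z'}(\lambda)}(\lambda_{z'} + \delta) \dd \langle z', \delta\rangle}$ in $\BP(F, \nI, z', \alpha, W)$ is replaced in $\BP(F, \nI, \exp(w) z', \alpha, W)$ by the same factor with $[\lambda/C]$ scaled by $\exp(\alpha(\lambda(w)))$, which is precisely the image under the ring homomorphism $[\lambda] \mapsto \exp(\alpha(\lambda(w)))[\lambda]$. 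The main obstacle is the explicit shift computation culminating in $\langle z', u \rangle = -\lambda(w)$: it requires careful bookkeeping of the $Q_\nL$-identifications on $\nI^\perp/\nI$ and uses crucially that $\nI$ is one-dimensional (so $z_1$ and $z$ are proportional), without which the ratio simplification fails.
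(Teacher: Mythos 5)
Your proof is correct and follows essentially the same route as the paper's: both proceed by computing $\exp(w)z'$, observing that $\langle z,\cdot\rangle$ and hence $Q_{z'}$, the characters on $\nI_\Af$, and the conductor are unchanged, computing the shift $u = \lambda_{\exp(w)z'}-\lambda_{z'}\in\nI_\Q$, and absorbing it by a translation $\delta\mapsto\delta-u$ that produces the character $\chi_\alpha(\lambda(w))$. Your version spells out the determination of $u$ via the coefficient $c$ and the pairing $\langle z',u\rangle=-\lambda(w)$, whereas the paper states $u=-\frac{\lambda(w)}{\langle z_1,z'\rangle}z_1$ directly, but these are the same computation.
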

\begin{proof}Let $w = k \wedge z_1$ and
\[ z'' := \exp(w)z' = z' + \langle z_1, z' \rangle k - (\langle k, z' \rangle  +  \langle z_1, z' \rangle Q(k) )z_1. \]
First $Q_{z'}=Q_{z''}$ because $\langle z, z'\rangle = \langle z, z''\rangle$.

We have (see (\ref{eqlambdazprime}) for the definition):
 \[ \lambda_{z''} + \delta = \lambda_{z'} + \delta - \frac{\lambda( w)}{\langle z_1, z' \rangle}z_1. \]
 If we substitute $\delta \mapsto  \delta + \frac{\lambda( w)}{\langle z_1, z' \rangle}z_1$, we
 get the same by applying $[\lambda C^{-1}] \mapsto \chi_\alpha( \lambda w) [\lambda C^{-1}]$, i.e.
 by  $[\lambda] \mapsto \exp( \alpha( \lambda w)) [\lambda]$.
\end{proof}

\begin{PAR}
Using the $q$-expansion principle \ref{QEXPANSION}, we will show the following strengthening of
the main results \cite[Theorem 13.3]{Borcherds1} and \cite[Theorem 2.12]{Kudla4} (resp. \cite[Theorem 4.11]{BK}).

Let $p \not= 2$ be a prime\footnote{$\not=2$ in order to be able to apply the Main Theorem \ref{MAINTHEOREM1a}}. Let $\nL_\Zpp \subset \nL_\Q$ be a lattice such that the quadratic form is unimodular. For this to exist, it is necessary that $p$ does not divide a minimal discriminant of $\nL_\Q$.

Assume that $f$ has Fourier coefficients of the form $a_q = \chi_{\nL_\Zp} \otimes a^{(p)}_q$, $a^{(p)}_q$ as in \ref{BORCHERDSLIFT} with values in $S(\nL_{\Afp})$.
$f$ then is invariant under an (w.l.o.g.) {\em admissible} compact open subgroup $K \subseteq \SO(\Af)$,
with Fourier coefficients $a_q \in S(\nL_\Af)$, $q \in \Q$.
\end{PAR}

\begin{SATZ}\label{BPADELIC}
Assume \ref{MAINCONJECTURE}.

There is an $N$, $p \nmid N$ such that for $R=\Zpp[\zeta_{N, \C}] \subset \C$, and
up to multiplication of $F$ by a large constant $\in \Z$,
we have
\begin{enumerate}
\item
For any r.p.c.d. $\Delta$, there is a section
\[ \Psi(F) \in H^0(\nSh({}^K_{\nRPCD}\nO)_R, (\nMorphSt^*\mathcal{E})^{\otimes a_0(0)/2}_{horz.} ) \]
such that on $\nSh({}^K \nO)(\C)$, we have (see \ref{BORCHERDSBUNDLE} for the definition of $h$) 
\[ \Phi(\nu, h; F) = - 2 \log h( \Psi(\nu, h; F) ).  \]

There is a locally constant, invertible function $\Lambda$ on $\nSh({}^K_{\nRPCD}\nO)_R$
such that 
\[ \Lambda \Psi(F) \in H^0(\nSh({}^K_\nRPCD\nO), (\nMorphSt^*\mathcal{E})^{\otimes a_0(0)/2}_{horz.} ) \]
(i.e. $\Lambda \Psi(F)$ is defined over $\Zpp$).

\item
\[ \Div(\Psi(\nu, h, F)^2) = \sum_{q<0} \nZ(\nL_\Z, <-q>, a_q; K) + D_\infty \]
(cf. \ref{DEFSPECIALCYCLEP}), where $D_\infty$ has support only within the support of the exceptional divisor.

\item
\begin{gather*}
  \int_{\nSh({}^K\nO)_\C} \log \nMorphSt^* h( \Lambda \Psi(\nu, h; F) ) \chern_1(\nMorphSt^*\mathcal{E}, \nMorphSt^* h)^{m-2}  \\
 \equiv 
   \sum_{q<0} \vol_{\mathcal{E}}(\nSh({}^K\nO)) \cdot \left. \frac{\dd}{\dd s}\widetilde{\mu}(\nL, <-q>_\Z, a_q; s)\right|_{s = 0} 
\end{gather*}
in $\R^{(p)}$ (see \ref{DEFLAMBDAMU} for the definition of $\widetilde{\mu}$).

\item
For any saturated isotropic line $\nI \subset \nL_\Zpp$ and primitive isotropic vector $z'$ in $\nL_\Zpp$ with $\langle \nI, z' \rangle \not=0$ consider the associated boundary morphism (\ref{ORTHBOUNDARYCOMP})
\[ \iota: \nH_0[(\nI^\perp/\nI) \otimes \nI, 0] \rightarrow \nO(\nL).  \]
Using the trivialization $(s_{z'})^{\otimes a_0(0)/2}$ (\ref{QEXPANSIONPREP}),
we get Fourier coefficients (\ref{QEXPANSIONNOT}, \ref{LEMMAQEXP}) which are valid on a set of the form
$\{\alpha\} \times (x + \nU_\R + \sigma) \times \{1\}$, where $\sigma \subset \nU_\R(-1)$ is some r.p.cone.
The formal function
\[ \sum_{\lambda \in U_\Q^*} a_{\iota,z',\lambda,\alpha,1,\sigma}(\Psi(F)) [\lambda] \]
is, up to multiplying $\Psi(F)$ by a constant of absolute value 1, given by:
\begin{equation}\label{bpexpansion}
 [\rho(F, \nI, W)] \cdot \BP(F, \nI, z', \alpha, W).
\end{equation}
\end{enumerate}
\end{SATZ}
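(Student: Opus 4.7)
The plan is to assemble the proof from four essentially independent ingredients: (a) Borcherds' original theorem \cite{Borcherds1}, which produces over $\C$ a meromorphic section $\Psi(F)$ of $(\nMorphSt^*\mathcal{E})^{\otimes a_0(0)/2}$ whose Petersson norm realizes $\Phi(F)$ and whose divisor is a sum of Heegner cycles; (b) an adelic re-derivation of its boundary product expansion matching the formula for $\BP(F,\nI,z',\alpha,W)$ in Definition \ref{DEFBP}; (c) the $q$-expansion principle \ref{QEXPANSION}, which promotes the complex section to an integral one and controls its Galois behavior; and (d) the explicit regularized integral formula of Bruinier–K\"uhn/Kudla \cite{BK,Kudla4} identifying the integral of $\Phi(F)$ against $\chern_1(\overline{\mathcal{E}})^{m-2}$ with the derivative of a Whittaker integral, interpreted via \ref{DEFLAMBDAMU} as $\frac{\dd}{\dd s}\widetilde{\mu}(\nL,\langle -q\rangle,a_q;s)|_{s=0}$.

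First I would fix a rational boundary morphism $\iota:\nH_0[\nU,0]\Longrightarrow \nO(\nL)$ attached to $\nI$ and $z'$ (\ref{ORTHBOUNDARYCOMP}), and establish statement (4) as the key computation: translate Borcherds' product formula into the adelic formalism of Section 7 by identifying the trivialization $s_{z'}^{\otimes a_0(0)/2}$ with the canonical trivialization of $\nMorphSt^*\mathcal{E}$ at the cusp, as determined by \ref{SATZBOUNDARYPOINTS}, and then expand $\Phi(F,\nu,h)$ near $\nu\in W$ using the unfolding of the regularized theta integral. The Weyl vector term $\rho(F,\nI,W)$ arises from the constant coefficients of $F^{U,z'}$ via Lemma \ref{DEFWEYLVECTOR}, and the two independence lemmas \ref{LEMMABP1} and \ref{LEMMABP2} ensure that this expansion is intrinsic to $\iota$ and compatible with changes of $z'$ in its $\nP_{\nSDB}(\Q)$-orbit. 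Convergence of the expansion in the Weyl chamber $W$ follows from the convergence region worked out in \cite{Borcherds1}.

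Having verified (4), I would deduce (1) as follows. The complex section $\Psi(F)$ exists by Borcherds and (4) identifies its boundary Fourier coefficients, which up to the Weyl vector shift lie in $\Z[\zeta_N]$ for appropriate $N$ (prime to $p$ once $\nL_\Zpp$ is unimodular), because the input Fourier coefficients $a_q$ are integers after multiplying $F$ by a large integer. Applying \ref{QEXPANSION} to the compatible family of expansions across all connected components (indexed by choices of $\alpha$ and Hecke translates $\rho_i$ as in \ref{QEXPANSIONNOT}) then shows that $\Psi(F)$ extends to a horizontal section of the integral bundle over $\Zpp[\zeta_N]$. The locally constant correction $\Lambda$ absorbs the discrepancy between the a priori cyclotomic field of definition and $\Zpp$ on each component, using the formula (\ref{galoisactionfourier}) for the Galois action on Fourier coefficients together with the transformation behavior of $\BP$ under the Galois group. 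Statement (2) is then read off from the well-known singularity structure of the Borcherds lift (e.g.\ \cite[Theorem 13.3]{Borcherds1}) translated into the adelic cycle notation of Definition \ref{DEFSPECIALCYCLEP}, the boundary divisor $D_\infty$ accounting for corrections produced by the Weyl vector.

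Finally, (3) reduces to the formula of Bruinier–Kühn and Kudla, which expresses the regularized integral $\int_{\nSh({}^K\nO)_\C}\Phi(F)\cdot\chern_1(\nMorphSt^*\overline{\mathcal{E}})^{m-2}$ as a sum over $q<0$ of $\vol_\mathcal{E}(\nSh({}^K\nO))$ times a coefficient of the derivative of a Whittaker integral; the identification of that Whittaker integral with $\frac{\dd}{\dd s}\widetilde{\mu}(\nL,\langle -q\rangle,a_q;s)|_{s=0}$ is precisely the content of \cite[Section 10]{Paper1}. The correction $\Lambda$ is locally constant of absolute value one, so $\log\nMorphSt^* h(\Lambda\Psi)=\log\nMorphSt^*h(\Psi)=-\tfrac12\Phi$, giving the equality in $\R^{(p)}$. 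The main obstacle I expect is step (b), namely a careful bookkeeping of all normalization constants (Weil constants $\widetilde{\WeilGamma}$, the factor $C=\gamma+\log(2\pi)$ in the Hermitian metric $h_E$, and the measure-dependence in $\BP$) so that Borcherds' classical product expansion lines up on the nose with the adelic one; in particular the scaling lemmas \ref{LEMMABP1}--\ref{LEMMABP2} must be shown to match exactly the change-of-trivialization law for $s_{z'}^{\otimes a_0(0)/2}$ under $z'\mapsto\beta z'$ and $z'\mapsto\exp(w)z'$ coming from \ref{SATZBOUNDARYPOINTS}, a purely formal but delicate check.
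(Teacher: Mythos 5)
Your overall route coincides with the paper's: Borcherds' theorem for the complex section, the adelic translation of the boundary product expansion (with the reduction to one choice of $z'$ and $\alpha$ via \ref{LEMMABP1}--\ref{LEMMABP2} and the term-by-term matching with \cite[Theorem 13.3 (v)]{Borcherds1}), the $q$-expansion principle \ref{QEXPANSION} for descent, and the Bruinier--K\"uhn/Kudla integral formula for the archimedean identity. However, two steps of your outline have genuine gaps. First, you never argue why $\Psi(F)$ lands in the \emph{horizontal} sections, equivalently why its divisor on the integral model has no components in the fibre above $p$; this is exactly what is needed both for the subscript ``$horz.$'' in (1) and to read (2) as an identity of cycles in the sense of \ref{DEFSPECIALCYCLEP} rather than merely over $\C$. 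Borcherds' theorem only controls the complex divisor. The paper's argument is the observation that the boundary expansion $[\rho(F,\nI,W)]\cdot\BP(F,\nI,z',\alpha,W)$ is a unit in the formal power series ring over $R$, so that $\Psi(F)^{-1}$ is \emph{also} an integral section (of the inverse power of the bundle) by the same $q$-expansion principle, which forces the divisor to contain no irreducible component of the special fibre. Without this (or an equivalent) argument, (2) and the horizontality claim in (1) are unproven.

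Second, your treatment of $\Lambda$ in (3) is incorrect as stated: the cocycle $\Lambda_\tau$ produced by comparing ${}^\tau\Psi(F)$ with $\Psi(\omega(\iota(k_\tau))^{-1}F)$ has $|\Lambda_\tau|_\infty=1$, but the function $\Lambda$ obtained from Hilbert 90 (which is how the paper actually produces the locally constant correction making $\Lambda\Psi(F)$ defined over $\Zpp$ --- a step you only gesture at with ``absorbs the discrepancy'') need \emph{not} have absolute value one, so you cannot conclude $\log\nMorphSt^*h(\Lambda\Psi)=\log\nMorphSt^*h(\Psi)$ pointwise. The correct statement, and the reason (3) is only an identity in $\R^{(p)}$, is that the extra term $\sum_{\alpha}\vol(\alpha)\log|\Lambda(\alpha)|_\infty$ vanishes modulo rational multiples of $\log(\ell)$, $\ell\neq p$: averaging over $\Gal(\Q(\zeta_M)|\Q)$ (using Galois invariance of $\chern_1(\nMorphSt^*\mathcal{E})$, hence of the volumes of components) turns it into $\frac{1}{\phi(M)}\sum_\alpha\vol(\alpha)\log|N_{\Q(\zeta_M)|\Q}(\Lambda(\alpha))|_\infty$, and $N_{\Q(\zeta_M)|\Q}(\Lambda(\alpha))\in\Zpp^*$, whose logarithm is a combination of $\log(\ell)$ with $\ell\neq p$. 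You need this norm argument (or something equivalent); claiming $|\Lambda|_\infty\equiv 1$ is not available. Relatedly, the descent to $\Zpp$ itself requires the explicit computation that the Weil-representation action of $\iota(k_\tau)$ on $F$ reproduces Galois conjugation of the $\BP$-coefficients up to a unimodular constant (the substitution $\delta\mapsto k_\tau^{-1}\delta$ in the adelic product), which is the input to the Hilbert 90 step and should be spelled out rather than cited as ``transformation behavior of $\BP$ under the Galois group''.
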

Remark: The expansion in 4. suffices to know every other expansion, i.e. for $\rho\not=1$, by property (\ref{transfborcherds}).

\begin{proof}
First observe that $F$ is a finite linear combination of functions valued in $\C[\nL_\Z/\nL_\Z^*]$ for various lattices $\nL_\Z$ of $\nL_\Zpp$.
Hence, we may assume that $F$ is of this kind, observing that all properties are stable under taking finite linear combinations, resp. change of $K$.
Hence w.l.o.g. $F$ corresponds (as above) to an $f$ considered in \cite{Borcherds1}.

Recall the normalization of the Hermitian metric $h$ (\ref{BORCHERDSBUNDLE}).
Consider the boundary morphism as in 4. associated with some $I$ and a $<z'>$:
\[ \iota: \nH_0[(\nI^\perp/\nI) \otimes \nI, 0] \rightarrow \nO(\nL)  \]
$\frac{\Psi}{s_{z'}}$ may be seen as a function on $\nL_{0,\C}= <z', I>^\perp$. We denote the
corresponding variable by $Z$.
We have
\[ h_\mathcal{E}(\Psi) = |\frac{\Psi}{s}|^2 h_\mathcal{E}(s), \]
and
\[ h_\mathcal{E}(s) = ((2\pi)^{-1} e^{-\gamma} \langle\Im(Z),\Im(Z)\rangle (2\pi)^2 )^{\frac{a_0(0)}{2}}.  \]

Hence we get 
\[ \log h_\mathcal{E}(\Psi) = \log |\frac{\Psi}{s}|^2 + \frac{a_0(0)}{2} ( \log(\langle\Im(Z),\Im(Z)\rangle) + \log(2\pi) - \gamma ) \]
and 1. follows from \cite[Theorem 13.3]{Borcherds1}, 3. is essentially \cite[Theorem 2.12 (ii)]{Kudla4}, using \cite[Theorem 4.7 and section 10.2]{Paper1}. See \cite[Theorem 4.11]{BK} for
an independent proof (use \cite[Lemma 7.3]{Paper1} for the comparison).
Observe that the function $\Psi$ differs from the one in \cite{Kudla4} or \cite{Borcherds1} by $(2\pi i)^{\frac{a_0(0)}{2}}$. 
The constant of the Hermitian metric has been adjusted, such that
3. holds without an additive constant. Compare also to STEP 1 of the proof of Main Theorem \ref{RESULTGLOBAL}, where the same constant appears naturally, too.

That some multiple of $F$ exists, such that {\em all} coefficients are integral and even is due to
the classical $q$-expansion principle. Further multiplication with the order of the character of $\SO(\Q)$ occurring in \cite{Borcherds1}, we get a section of the {\em complex} bundle $\nMorphSt^* \mathcal{E}_\C$.

It suffices to prove 4. for any choice of $z'$ and $\alpha: 1 \mapsto 2 \pi i$. 
For note that change of $z'$ by $\beta'$ changes $\BP$ by
$\beta^{-a_0(0)/2}$ (\ref{LEMMABP1}) and $s_{z'}^{\otimes a_0(0)/2}$ by $\beta^{a_0(0)/2}$.
Changing $z'$ by $\exp(w)z'$ has the effect of translating by $w$ in the $\nU_\C$-domain. Hence
a formal function gets transformed by $[\lambda] \mapsto \exp(2\pi i \lambda(w))[\lambda]$. This
is the same effect as observed by $\BP$ (\ref{LEMMABP2}). The Weyl vector is independent of $z'$
so the above transformation has the effect of multiplication by a constant of absolute value 1.

Choose $z=z_{Bor} \in \nL_\Z \cap \nI$ and $z_{Bor}' \in \nL_\Z^*$ as in \cite{Borcherds1}, i.e. $z_{Bor}$ primitive and isotropic and $\langle z_{Bor}, z_{Bor}' \rangle=1$.

We may now show 4. for 
$z':=z'_{Bor} - Q(z'_{Bor})z_{Bor}$, which is now isotropic, 
but satisfies still $\langle z_{Bor}, z' \rangle = 1$. $z'$ may not lie in $\nL^*_\Z$, however.

As measure we may choose the standard invariant measure on $\Af$, giving $\Zh$ the volume one.
The trivialization now coincides (up to the factor $(2\pi i)^{\frac{a_0(0)}{2}}$ that we incorporated in
the characterization of $\Psi(F)$) with the one given in \cite{Borcherds1}, up to
identification of $\nI^\perp/\nI$ with $\nI \otimes (\nI^\perp/\nI)$ with by $k \mapsto k \otimes z_{Bor}$. This interchanges $Q$ and $Q_{z'}$.

Recall from \cite[Theorem 13.3 (v)]{Borcherds1} that the Fourier expansion with respect to this particular
choice is (up to a constant of absolute value 1) given by
\begin{gather}
\prod_{\delta \in \Z/N\Z} (1-e(\delta/N))^{a_{\delta z/N}(0)/2} \cdot [\rho(U, W, F_U)]\cdot \nonumber \\
\prod_{\substack{\lambda \in (<z>^\perp/<z>)^* \\ \langle \lambda, W\rangle >0}}
\prod_{\substack{\delta \in (\nL_\Z)^*/\nL_\Z \\ \delta|_{z^\perp} = \lambda }} (1-e(\langle \delta, z' \rangle)[\lambda])^{a_{\lambda^2/2}(\delta)}. \label{fcborcherds}
\end{gather}

The first product in $\BP$ may be rewritten as
\[ \prod_{\delta \in \frac{1}{N}\Z/\Z} (1 - e(\delta))^{\frac{a_0(\delta z)}{2}}, \]
and coincides with the product in Borcherds' expression. The Weyl vector is the same, too.

Claim: If $\langle z,z' \rangle =1$, there is a bijection between
\[ \{ \lambda \in U_\Q^*, \delta \in \Q/\Z = \Af/\Zh \where \lambda_{z'} + \delta z \in \nL_\Z^* \}. \]
and
\[  \{ k_{Bor} \in (<z>_\Z^\perp/<z>_\Z)^*, \delta_{Bor} \in \nL_\Z^*/\nL_\Z, \delta_{Bor}|_{z^\perp} = k_{Bor} \} \]

Proof: The map is given sending $(\lambda,\delta)$ to 
$(k_{Bor},\delta_{Bor}) = (\lambda z, \lambda_{z'} + \delta z)$ 
For $v \in <z>^\perp_\Z$, we have
\[ \langle k_{Bor}, v \rangle = \langle \lambda z, v \rangle = \langle \lambda_{z'}, v \rangle = \langle \lambda_{z'} + \delta z, v \rangle \in \Z, \] 
hence  $k_{Bor} \in (<z>_\Z^\perp/<z>_\Z)^*$.

In the other direction, we map $(k_{Bor}, \delta_{Bor})$ to 
$(\lambda, \delta) = (k_{Bor} \otimes z', \langle \delta_{Bor}, z' \rangle)$.
These maps are inverse to each other. One direction is clear; for the other we have to show:
\[ \delta_{Bor} \equiv \widetilde{k}_{Bor} +  \langle \delta_{Bor}, z' \rangle z \quad (\text{mod }\nL_\Z)§ , \]
where $\widetilde{k}_{Bor}$ is the representative in $<z>^\perp_\Q$ that satisfies $\langle \widetilde{k}_{Bor}, z'\rangle =0$. This follows from the condition $\delta_{Bor}|_{z^\perp} = k_{Bor}$.

The claim shows that
the second product occurring in Borcherds' Fourier expansion is equal to the second product appearing in $\BP$. This finishes the proof of 4.

We now show the integrality properties stated in 1. and 2.

We can multiply $\Psi(F)$ by an appropriate complex number of absolute value 1 on every connected component
and hence may assume that {\em one} Fourier expansion for each connected component is given
{\em precisely} by 
\begin{equation}\label{bpexp} 
[\rho(\omega(\rho) F, \nI,  W)] \BP(\omega(\rho) F, \nI, z', \alpha, W).
\end{equation} 
We get a finite number of different expansions with coefficients in some ring $R = \Zpp[\zeta_{N,\C}]$, $p \nmid N$ which we may assume to contain the ring described by \ref{QEXPANSION}.
By multiplying $F$ again by a large integer, we may also assume that all Weyl vectors involved are integral.

Therefore by \ref{QEXPANSION} $\Psi(F)$ extends to an (integral) section of $H^0(\nSh({}^K_{\nRPCD}\nO)_R, \nMorphSt^*(\mathcal{E})^{\frac{a_0(0)}{2}}_{horz.})$ for any $\Delta$.

Furthermore, the expansion (\ref{bpexp}) is invertible integrally, hence we may infer that $\Psi(F)^{-1}$ is a
section\footnote{Up to the problem of constants, this is $\Psi(-F)$.} of
\[ H^0(\nSh({}^K_{\nRPCD}\nO)_R, \nMorphSt^*(\mathcal{E})^{-\frac{a_0(0)}{2}}_{horz.}).\] 
It follows that $\Psi(F)$  cannot
have any connected (=irreducible) component of the fibre above $p$ in its divisor.

To prove that $\Psi(F)$ is actually defined over $\Zpp$, i.e. lies in
\[ H^0(\nSh({}^K_{\nRPCD} \nO), \nMorphSt^*(\mathcal{E})^{\frac{a_0(0)}{2}}_{horz.}), \]
we have to check the requested invariance property of the Fourier coefficients.

We compute
\[ a_{\iota, z', \lambda, \alpha, \rho, \sigma}({}^\tau \Psi(F))
= {}^\tau a_{\iota, z', \lambda, \alpha, \rho \iota(k_\tau)^{-1}, \sigma} (\Psi(F))   \]
for all $\tau \in \Gal(\Q(\zeta_N)|\Q)$.

By the transformation property (\ref{transfborcherds}), we are reduced to compute
\[ a_{\iota, z', \lambda, \alpha, 1, \sigma} ({}^\tau \Psi(F'))
= {}^\tau a_{\iota, z', \lambda, \alpha, 1, \sigma} (\Psi(\omega(\iota(k_\tau))^{-1} F')  \]
for $F'= \omega(\rho) F$. We will, however, keep the letter $F$.

We have seen that, up to a constant of absolute value 1, the coefficients 
\[ a_{\iota, z', \lambda, \alpha, 1, \sigma} (\Psi(\omega(\iota(k_\tau))^{-1} F) \] 
are the ones forming 
$\BP(\omega(\iota(k_\tau))^{-1} F, \nI, z', \alpha, W)$. 
We have by definition
\[ a_m(\omega(\iota(k_\tau))^{-1} F)(\rho) = a_m(F)(\iota(k_\tau) \rho). \] 

However
$\iota(k_\tau)$ acts trivial on $\lambda_{z'}$ because the latter is
perpendicular to $\nI \oplus \mbox{$<z'>$}$, and it acts by multiplication by $k_\tau$ on $\delta$. Substituting
$k_\tau^{-1} \delta$ 
for $\delta$ and using invariance of the measure ($k_\tau \in \Zh^*$!), we get the
same expression, but all $\chi_\alpha(\langle z', \delta\rangle)$ changed to
$\chi_\alpha(\langle z', k_\tau^{-1} \delta \rangle) = \chi_\alpha(k_\tau^{-1} \langle z', \delta \rangle) =
\chi_\alpha(\langle z', \delta \rangle)^{\tau^{-1}}$. 
This shows:
\[
  a_{\iota, z', \lambda, \alpha, 1, \sigma} (\Psi(\omega(\iota(k_\tau))^{-1} F))  
= \Lambda_\tau  {}^{\tau^{-1}} a_{\iota, z', \lambda, \alpha, 1, \sigma} (\Psi(F))  \]

with $|\Lambda_\tau| = 1$ and $\Lambda_\tau \in R^*$ (because $\Psi(F)$ is defined over $R$ and
invertible in the sense above).

Therefore
\[ {}^\tau f = {}^\tau \Lambda_\tau f \]
for all $\tau \in \Gal(\Q(\zeta_N)|\Q)$, where $\Lambda_\tau$ locally constant invertible on $\nSh({}^K\nO)_R$ with $|\Lambda_\tau|_\infty \equiv 1$ constant.
From Hilbert 90 follows that there exists a $\Lambda$, locally constant invertible on $\nSh({}^K\nO)_R$
such that
\[ {}^\tau (\Lambda f) = \Lambda f \]
(i.e. $({}^\tau \Lambda) / \Lambda = {}^\tau \Lambda_\tau$).
That is, $\Lambda f$ is defined over $\Zpp$.

It remains to show 3. Without the $\Lambda$ this is proven in \cite{Kudla4} or \cite{BK} as an identity in $\R$, cf. beginning of this proof. The evaluation of
\[ \int \log \parallel \Lambda f \parallel \chern_1( \nMorphSt^*(\overline{\mathcal{E}}) )^{m-2} \]
differs from the same expression for $f$ instead of $\Lambda f$ by
\[ \sum_{\alpha \in \pi_0(\nSh({}^K\nO))} \vol(\alpha) \log(|\Lambda(\alpha)|_\infty). \]

Since $|\Lambda(\tau^{-1}\alpha)^\tau|_\infty = |\Lambda(\alpha)|_\infty$ we may write this as
\begin{align*}
& \frac{1}{\phi(M)} \sum_{\tau \in \Gal(\Q(\zeta_M)|\Q), \alpha \in \pi_0(\nSh({}^K\nO))}
\vol(\alpha) \log(|\Lambda(\tau^{-1}\alpha)^\tau|_\infty) \\
=& \frac{1}{\phi(M)} \sum_{\tau \in \Gal(\Q(\zeta_M)|\Q), \alpha \in \pi_0(\nSh({}^K\nO))}
\vol(\tau \alpha) \log(|\Lambda(\alpha)^\tau|_\infty) \\
=& \frac{1}{\phi(M)} \sum_{\alpha \in \pi_0(\nSh({}^K\nO))}
\vol(\alpha) \log(|N_{\Q(\zeta_M)|\Q}( \Lambda(\alpha))|_\infty) \equiv 0 \qquad \text{in } \R^{(p)}. \\
\end{align*}
Here we used $\vol(\tau \alpha)=\vol(\alpha)$, which is true because $\chern_1(\nMorphSt^*(\mathcal{E}))$ is Galois invariant.
\end{proof}

We will later need some general lemmas about the line bundle of orthogonal modular forms.

\begin{LEMMA}\label{LEMMABUNDLEEMBEDDING}
With respect to the Hodge embedding (\ref{LEMMAGSPINEMBEDDING})
\[ \nS(\nL) \hookrightarrow \nH(\Cliff^+(\nL), \langle, \rangle_\delta), \]
the pullback of the canonical bundle $\Lambda^g \mathcal{L}$ via the `dual' embedding
\[ \nShD(\nS(\nL))  \rightarrow \nShD(\nH(\Cliff^+(\nL), \langle, \rangle_\delta)) \]
is, up to a bundle coming from a character $\GSpin(\nL) \rightarrow \Gm$, and up to a tensor-power,
the bundle $\mathcal{E}$ (\ref{BORCHERDSBUNDLE}).
\end{LEMMA}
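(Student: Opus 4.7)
The strategy is to interpret both $\mathcal{E}$ and $\iota^{*}\Lambda^{g}\mathcal{L}$ as $\GSpin(\nL)$-equivariant line bundles on the zero quadric $\nShD(\nS(\nL))=\GSpin(\nL)/P$, where $P$ is the parabolic stabilizing a fixed isotropic line $\langle z_0\rangle$, and then to compare their classes in the equivariant Picard group.

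First I would use Remark \ref{HODGEEMBEDDINGDUAL} to describe the pullback explicitly: at a point $\langle z\rangle$ with $z$ primitive isotropic, $\iota^{*}\mathcal{L}$ has fiber $zw\,\Cliff^{+}(\nL)$ for any $w$ with $\langle z,w\rangle=1$ (independent of $w$ since $z^{2}=0$). Hence $\iota^{*}\Lambda^{g}\mathcal{L}$ has fiber $\Lambda^{2^{m-1}}(zw\,\Cliff^{+})$. Next I would invoke the standard fact that the equivariant Picard group $\Pic^{\GSpin(\nL)}(\GSpin/P)$ coincides with the character group of $P$; modulo pullbacks of characters from $\GSpin(\nL)$ itself, the quotient is (rationally) one-dimensional, since $P$ is the maximal parabolic attached to the single long-root cocharacter computed in \ref{LONGROOTS} and the case-by-case analysis \ref{ORTHBOUNDARYCOMP}. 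Therefore, after tensoring with $\Q$, one has $\iota^{*}\Lambda^{g}\mathcal{L}\equiv \mathcal{E}^{\otimes k}$ modulo $\GSpin(\nL)$-characters for a unique rational $k$, and it suffices to pin this $k$ down numerically.

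To compute $k$, I would evaluate the weight by which the center $\Gm\subset \GSpin(\nL)$ acts on each fiber — this is canonical because the center acts trivially on the base. A scalar $\lambda\in \Gm$ satisfies $\lambda'=\lambda$, so under the representation $v\mapsto gvg'$ on $\nL$ it acts by $\lambda^{2}$, giving weight $2$ on $\mathcal{E}|_{\langle z\rangle}=\langle z\rangle$. On $\Cliff^{+}(\nL)$ the Spin representation used to embed into $\GSp$ is left multiplication, so $\lambda$ acts by $\lambda$ on every element of the Lagrangian $zw\,\Cliff^{+}$, hence by $\lambda^{2^{m-1}}$ on its top exterior power. The ratio of weights is $2^{m-2}$, so $\iota^{*}\Lambda^{g}\mathcal{L}\cong \mathcal{E}^{\otimes 2^{m-2}}$ up to a tensor power and a line bundle coming from a character $\GSpin(\nL)\to\Gm$, as asserted.

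The main obstacle is justifying the rank-one claim for the equivariant Picard group modulo $\GSpin(\nL)$-characters. For $m\ge 5$ this is immediate because $\Pic$ of the smooth quadric of dimension $\ge 3$ is already $\Z$, generated by $\mathcal{O}(1)\cong \mathcal{E}^{-1}$. The small cases $m=3,4$ need a bit more care (the quadric is $\PP^{1}$ resp.\ $\PP^{1}\times\PP^{1}$), but since the statement allows passage to an arbitrary tensor power, it is enough to work in the rational equivariant Picard group, where the long-root argument still gives a one-dimensional quotient, and the weight computation above identifies the correct multiple.
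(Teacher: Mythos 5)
Your identification of the fibres and the reduction to comparing $\GSpin(\nL)$-equivariant line bundles on the quadric is fine, and for $m\ge 5$ the non-equivariant argument ($\Pic$ of a quadric of dimension $\ge 3$ is $\Z\cdot\mathcal{O}(1)$ with $\mathcal{E}\cong\mathcal{O}(-1)$, so the two bundles differ by an equivariant structure on $\mathcal{O}$, i.e.\ by a character) is a legitimate and in those cases shorter route than the paper's. The gaps are in the two remaining steps. First, the step that ``pins down $k$'' via the central $\Gm$ cannot work as stated: the comparison is only asserted up to a character of $\GSpin(\nL)$, and such characters are \emph{not} trivial on the central $\Gm$ --- the similitude $\lambda$ (kernel $=\Spin(\nL)$) satisfies $\lambda(t)=t\cdot t'=t^{2}$ on central scalars, so twisting by $\lambda^{c}$ shifts the central weight by $2c$. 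Thus the central weight only yields one relation $2k+2c=\mathrm{weight}$ in the two unknowns $(k,c)$ and cannot identify the power; to determine it you must evaluate on a full torus of the stabilizing parabolic, e.g.\ the $t_{1}(\alpha)$ of \ref{ORTHBOUNDARYCOMP}, which scales the fibre $\langle z_{1}\rangle$ of $\mathcal{E}$ by $\alpha^{2}$ while acting by the scalar $\alpha$ on all of the Lagrangian $z_{1}z_{1}'\Cliff^{+}(\nL)$ (because $z_{1}z_{1}'$ is idempotent). This is exactly the ``explicit determination of the action of the stabilizer group'' that constitutes the paper's proof. There is also a numerical slip: $zw\Cliff^{+}(\nL)$ is Lagrangian, hence of rank $2^{m-2}=g$, not $2^{m-1}$; carrying out the torus computation one finds the power $2^{m-3}$, not $2^{m-2}$ (your corrected ``ratio'' would give $2^{m-3}$ only because the character twist happens to be trivial, which the central weight alone cannot tell you).

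Second, the structural claim that modulo characters of $\GSpin(\nL)$ the (even rational) equivariant Picard group is of rank one fails precisely in the case $m=4$ that you defer: there $\Spin(\nL)$ is not simple (two long roots, cf.\ \ref{cocharorth}), the compact dual is $\PP^{1}\times\PP^{1}$ of Picard rank $2$, and the stabilizer of an isotropic line is a product of Borel subgroups whose character group modulo characters of the ambient group still has rank $2$ rationally. So your argument does not show that $\iota^{*}\Lambda^{g}\mathcal{L}$ has symmetric bidegree, i.e.\ is proportional to $\mathcal{E}=\mathcal{O}(-1,-1)$ at all; this has to be checked by a weight computation on the stabilizer (it is true: the weights of $t_{1},t_{2}$ on $\det(z_{1}z_{1}'\Cliff^{+})$ are $4$ and $2$, against $2$ and $1$ on $\langle z_{1}\rangle$, giving $k=2$). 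Similarly for $m=3$ one needs the evenness of the degree of the pullback, since $\mathcal{E}\cong\mathcal{O}_{\PP^{1}}(-2)$ is not a generator of $\Pic$. In short, the low-dimensional cases and the identification (or even nontriviality) of the power force you back to the fibrewise stabilizer-character computation, which is the paper's entire argument.
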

\begin{proof}
Recall that the induced map on compact duals (\ref{HODGEEMBEDDINGDUAL})
is given by $<z> \mapsto zw\Cliff^+(\nL)$, where $z$ is assumed to be primitive in $\nL_\Zpp$ and $w$ is {\em any}
primitive vector with $\langle w, z \rangle = 1$. The fibre of $\mathcal{L}$ at the image of $<z>$ is given by the space $zw\Cliff^+(\nL)$ and the 
fibre of $\mathcal{E}$ is given by
$<z>$. By an explicit determination of the action of the stabilizer group in both cases the result follows.
\end{proof}

\begin{KOR}\label{BORCHERDSBUNDLEAMPLE}
The pullback of the bundle of Siegel modular forms $\nMorphSt^* (\Lambda^g \mathcal{L})^{\otimes n}$ of some positive weight $n$ is
equal to some positive power of the bundle of orthogonal modular forms $\nMorphSt^* \mathcal{E}^{\otimes n'}$. 
In particular $(\nMorphSt^* \mathcal{E})$ is ample on every $\nSh({}^K\nS(\nL))$ and $\nSh({}^K\nO(\nL))$ and a suitable power has no base points on 
$\nSh({}^K_\nRPCD \nO(\nL))$.
\end{KOR}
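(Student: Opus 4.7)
The plan is to derive Corollary \ref{BORCHERDSBUNDLEAMPLE} from Lemma \ref{LEMMABUNDLEEMBEDDING} together with classical ampleness (resp.\ base-point-freeness) of the Hodge line bundle on Siegel modular varieties (resp.\ their toroidal compactifications).

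First I would promote the $\GSpin(\nL)$-equivariant identification of Lemma \ref{LEMMABUNDLEEMBEDDING} to an actual identification of line bundles on the Shimura variety. The lemma produces an isomorphism
\[ \nShD(\iota)^*(\Lambda^g \mathcal{L})^{\otimes n_0} \cong \mathcal{E}^{\otimes n'_0} \otimes \mathcal{L}_\chi \]
on $\nShD(\nS(\nL))$, for some character $\chi\colon \GSpin(\nL)\to\Gm$ and some positive $n_0, n'_0$. Since $\Spin(\nL)$ is simply connected and semisimple, the character group of $\GSpin(\nL)$ is generated by the similitude character $\lambda$, so $\chi$ is a power of $\lambda$. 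Using the explicit description (from the proof of Lemma \ref{LEMMAGSPINEMBEDDING}) of how $\GSpin$ acts on the fiber $\mathcal{E}_{<z>}=\,<z>$ at the base point, one sees that the equivariant structure of $\mathcal{E}$ is, up to twist, a power of $\lambda$. Replacing $n_0, n'_0$ by a sufficiently divisible multiple therefore absorbs $\mathcal{L}_\chi$ into the power of $\mathcal{E}$, giving an isomorphism of $\GSpin$-equivariant line bundles $(\Lambda^g \mathcal{L})^{\otimes n} \cong \mathcal{E}^{\otimes n'}$. Pulling back through $\nMorphSt$ yields the identification on $\nSh({}^K\nS(\nL))$, and it descends to $\nSh({}^K\nO(\nL))=\nSh({}^K\nS(\nL))/\Gm$ since (after the divisibility adjustment) the central $\Gm$ acts trivially on both sides.

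Second, for ampleness on the uncompactified varieties, the Hodge embedding of Lemma \ref{LEMMAGSPINEMBEDDING} together with a compatible choice of level (Lemma \ref{EMBEDDINGEMSD}) gives an embedding of $p$-EMSD, and Main Theorem \ref{MAINTHEOREM1}(1) yields a closed embedding (after normalization) of the corresponding integral canonical models into a Siegel modular variety. Classical theory of Siegel modular forms shows that some positive power of $\Lambda^g \mathcal{L}$ is very ample; restricting to the closed subscheme and using the identification from the previous paragraph, a suitable power of $\nMorphSt^*\mathcal{E}$ is very ample on $\nSh({}^K\nS(\nL))$, hence also on $\nSh({}^K\nO(\nL))$.

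Third, for base-point-freeness on the toroidal compactification $\nSh({}^K_\nRPCD\nO(\nL))$, I would apply Proposition \ref{PROPERTIESRPCD}(4) to refine $\nRPCD$ so that the Hodge embedding lifts to an embedding of $p$-ECMSD ${}^K_\nRPCD\nO(\nL) \hookrightarrow {}^{K'}_{\nRPCD'}\nH_g$ with smooth projective $\nRPCD'$; Main Theorem \ref{MAINTHEOREM1a}(1) then supplies a closed embedding of the toroidal compactifications. Since a sufficiently high power of the extended Hodge bundle on the Siegel toroidal compactification is base-point-free (by the construction of \cite{FC}), pulling back and invoking the bundle identification yields a base-point-free positive power of $\nMorphSt^*\mathcal{E}$ on $\nSh({}^K_\nRPCD\nO(\nL))$. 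The main obstacle is the precise bookkeeping in the first step, verifying that the character $\chi$ is genuinely absorbed into a suitable tensor power of $\mathcal{E}$ as a $\GSpin$-equivariant bundle; once this is settled, the remaining assertions are formal consequences of the integral functoriality supplied by the Main Theorems.
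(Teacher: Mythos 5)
Your overall route is the same as the paper's: Lemma \ref{LEMMABUNDLEEMBEDDING} combined with the functoriality \ref{FUNCTORIALITYAUTOMVECTB}, the Hodge embedding together with Main Theorems \ref{MAINTHEOREM1} resp.\ \ref{MAINTHEOREM1a} to transport ampleness resp.\ base-point-freeness of powers of $\Lambda^g\mathcal{L}$ from the Siegel side (\cite{FC}; the paper quotes the morphism to the minimal compactification defined by a power of $\nMorphSt^*(\Lambda^g\mathcal{L})$ on every Siegel toroidal compactification, you pull back a base-point-free power through an embedding of toroidal compactifications --- same substance), and finally the passage between $\nSh({}^K\nS(\nL))$ and $\nSh({}^K\nO(\nL))$ (the paper via the finite \'etale maps relating them, you via descent along the central $\Gm$).

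The one step that does not work as you state it is the absorption of the character in your first paragraph. Raising the relation $\nShD(\iota)^*(\Lambda^g\mathcal{L})^{\otimes n_0}\cong\mathcal{E}^{\otimes n_0'}\otimes\mathcal{L}_\chi$ to a higher tensor power raises $\chi$ to a power, it does not remove it; and since $\chi$ is in general a nonzero power of the similitude character $\lambda$, which is of infinite order as a character of $\GSpin(\nL)$, while $\mathcal{L}_\chi$ is trivial as an underlying line bundle and $\mathcal{E}$ is not, there is no isomorphism of $\GSpin$-equivariant line bundles $(\Lambda^g\mathcal{L})^{\otimes n}\cong\mathcal{E}^{\otimes n'}$ on $\nShD(\nS(\nL))$ unless $\chi$ happens to be trivial. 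The correct place to dispose of the character is after applying $\nMorphSt^*$, which is also where the Corollary asserts the identity: the automorphic line bundle $\nMorphSt^*\mathcal{L}_\chi$ is locally constant, because on the arithmetic subgroups $\GSpin(\Q)\cap K$ the character $\lambda$ takes values in $\Q^*$ that are units at all finite places, i.e.\ in $\{\pm 1\}$; hence this bundle is trivial on geometric connected components and some positive power of it is trivial on the model (the Picard groups of the semi-local reflex rings being torsion). Equivalently, twisting by $\nMorphSt^*\mathcal{L}_\chi$ affects neither ampleness nor base-point-freeness of suitable powers. With that replacement for your ``bookkeeping'' step, your argument coincides with the paper's proof.
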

\begin{proof}
Follows from \ref{LEMMABUNDLEEMBEDDING} and \ref{FUNCTORIALITYAUTOMVECTB}. The bundle of Siegel modular forms $\nMorphSt^* (\Lambda^g \mathcal{L})$ is ample \cite{FC}.
The ampleness of $\nMorphSt^* \mathcal{E}$ follows from the fact that the Hodge embedding induces an
embedding (up to a normzlization) of the corresponding Shimura varieties (\ref{MAINTHEOREM1}) for suitable levels and the fact that 
all $\nSh({}^K\nS(\nL))$ and $\nSh({}^K\nO(\nL))$ are related by finite etale maps that pullback the $\nMorphSt^* \mathcal{E}$'s into each other.
The assertion on base points is true, because a suitable power of $\nMorphSt^* (\Lambda^g \mathcal{L})$ defines a morphism to the minimal compactification on every
$\nSh({}^{K'}_{\nRPCD'} \nH_g)$ \cite[Theorem 2.3, (1)]{FC}.
\end{proof}

We will compare the measure given by the volume form $c_1(\nMorphSt^* \overline{\mathcal{E}})^{m-2}$, the
highest power of the Chern form of the canonical Hermitian line bundle defined above with
the quotient $\mu$ of the canonical volume form \cite[2.4]{Paper1} on $\SO(\nL_\R)$ by the one giving $K_\infty = $ stabilizer of some positive definite subspace $N$ in $\nL_\R$ the volume one.
\begin{LEMMA}\label{BORCHERDSBUNDLECOMP}
\[  2 \lambda_\infty^{-1}(\nL; 0) \mu = (c_1(\nMorphSt^* \mathcal{E}, \nMorphSt^* h))^{m-2}. \]
\end{LEMMA}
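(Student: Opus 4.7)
The idea is that both sides are $\SO(\nL_\R)$-invariant smooth top-degree (positive real) forms on $\nX_\nO$, so it suffices to compare them at a single base point. The right-hand side is invariant because $\mathcal{E}$ is an $\SO(\nL_\R)$-equivariant line bundle and the Hermitian metric $h_E$ of Definition \ref{BORCHERDSBUNDLE} is given by the $\SO(\nL_\R)$-invariant pairing $\langle\cdot,\overline{\cdot}\rangle$ (times a constant). The left-hand side is invariant by construction, since $\mu$ is the quotient of invariant Haar measures. Transitivity of the $\SO(\nL_\R)$-action on $\nX_\nO$ then forces the two sides to be proportional, and the task reduces to determining the proportionality constant by a single pointwise computation.

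To carry out this pointwise computation, fix a base point $x_0 = \langle z_0\rangle \in \nX_\nO$, normalized so that $\langle z_0,\overline{z_0}\rangle = -2$, and denote by $N = D_{\langle z_0\rangle}$ the associated negative definite plane with orthogonal complement $W \subset \nL_\R$ of signature $(m-2,0)$. Using the standard affine chart of the compact dual (a point $\langle z\rangle$ near $\langle z_0\rangle$ is parameterized by its projection to $W_\C$), identify $T_{x_0}\nX_\nO \cong W_\C$. On this chart the potential for $h$ becomes
\[
-\log h(z) = -\log\!\bigl(-\tfrac{1}{2}\langle z,\overline{z}\rangle\bigr) + C,
\]
and I would expand $\langle z,\overline{z}\rangle$ in the chosen coordinates up to quadratic order to extract the Hessian. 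A short direct calculation then gives
\[
c_1(\mathcal{E},h)\big|_{x_0} \;=\; \frac{i}{2\pi}\,\partial\overline{\partial}\log\bigl(-\tfrac{1}{2}\langle z,\overline{z}\rangle\bigr)\Big|_{x_0} \;=\; \tfrac{1}{2\pi}\,\omega_W,
\]
where $\omega_W$ is the standard Kähler form on $W_\C \cong T_{x_0}\nX_\nO$ coming from the positive-definite form $\tfrac{1}{2}\langle\cdot,\cdot\rangle|_W$; the additive constant $C$ plays no role here, but it is precisely the right shift to match the archimedean $\Gamma$-factor in $\widetilde{\lambda}$ below. Raising to the $(m-2)$-th power and dividing by $(m-2)!$ produces the Bergman-type volume form on $T_{x_0}\nX_\nO$ determined by $\tfrac{1}{2}\langle\cdot,\cdot\rangle|_W$, scaled by $(2\pi)^{-(m-2)}$.

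Next I would compute $\mu$ at $x_0$. By construction $\mu$ is the pushforward of the canonical Haar measure on $\SO(\nL_\R)$ (normalized as in \cite[section~2.4]{Paper1}) under $\SO(\nL_\R)\to\SO(\nL_\R)/K_\infty = \Grass^-(\nL_\R)$, divided by the convention that $K_\infty$ has volume $1$; here $\nX_\nO\to\Grass^-$ is the $2{:}1$ covering of \ref{ORTHSHIMURADATUM}, whence a factor $2$ enters. The tangent space identification $T_{N}\Grass^-(\nL_\R) = \Hom_\R(N,W)$ together with the description of the canonical measure on $\SO(\nL_\R)$ in terms of the quadratic form identifies $\mu|_{x_0}$ with the euclidean volume form on $W_\C$ (now viewed as $\Hom_\R(N,W)$) coming from $\langle\cdot,\cdot\rangle|_W$, up to a universal normalization involving products of $\Gamma$-values and powers of $\pi$. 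Comparing with the expression obtained for $c_1(\mathcal{E},h)^{m-2}$ at $x_0$, the pointwise ratio becomes a product of $\Gamma$- and $\pi$-factors that is exactly the value $2\,\lambda_\infty^{-1}(\nL;0)^{-1}$, by the definition $\lambda_\infty(\nL_\Z;s) = \Gamma_{m-1,m}(s)$ in \ref{DEFLAMBDAMU} and the standard Siegel-Gamma identity at $s=0$.

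The real obstacle is bookkeeping: one must consistently normalize (i) the Haar measure of $\SO(\nL_\R)$ as in \cite[2.4]{Paper1}, (ii) the Gamma factor conventions of $\Gamma_{m-1,m}$ as in \cite[section~6]{Paper1}, (iii) the factor $\tfrac{1}{2}e^{-C}$ absorbed in $h_E$ (which is precisely what makes the integral computation of Theorem \ref{BPADELIC}(3) come out without an additive constant — this is the same reconciliation needed here), and (iv) the covering factor $2$ from $\nX_\nO \twoheadrightarrow \Grass^-(\nL_\R)$. Once these four normalizations are reconciled, the identity follows from the single-point calculation above without further input.
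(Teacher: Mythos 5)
Your plan follows essentially the same route as the paper's proof: both sides are $\SO(\nL_\R)$-invariant top-degree forms, so one compares them at a single base point, computing the Chern form in an explicit chart (the paper uses the tube-domain parametrization $Z \mapsto Z + z' - \langle Z, Z\rangle z$ at the point $\xi = ie_1+e_2$ and the potential $\log Y^2$; your bounded affine chart does the same job) and expressing $\mu$ through the canonical measure on $\Lie(\SO(\nL_\R))$ and the volume of $K_\infty$, with the $\Gamma$/$\pi$ bookkeeping producing the constant. So the approach is the right one; the issues are in the normalizations, which here \emph{are} the content of the lemma.

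Two concrete points. First, the factor $2$ is not a covering factor in the paper's setup: there $K_\infty$ is the stabilizer of the point of $\nX_\nO$, i.e.\ $\SO(N)\times\SO(N^\perp)$ (it must preserve the chosen orientation of $N$), so $\SO(\nL_\R)/K_\infty$ is already $\nX_\nO$ and the $2{:}1$ map to $\Grass^-(\nL_\R)$ never enters; the $2$ falls out of comparing $(m-2)!\,(2\pi)^{-(m-2)}$ from $(\dd\dd^c\log Y^2)^{m-2}$ with $\vol(K_\infty)=2\pi\cdot\tfrac12\prod_{j=1}^{m-2}2\pi^{j/2}/\Gamma(j/2)$, which together give exactly $2\lambda_\infty^{-1}(\nL;0)$. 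If you normalize the stabilizer of the \emph{unoriented} plane to volume one \emph{and} insert a covering factor $2$, the two conventions differ by precisely that factor and you risk double counting; you must commit to one. Second, your final step asserts rather than performs the matching (``a product of $\Gamma$- and $\pi$-factors that is exactly $2\lambda_\infty^{-1}(\nL;0)^{-1}$'' --- note also the stray inverse); to close the argument you need the explicit identification of the quotient measure on $T_{x_0}\nX_\nO$ (the paper does this by pushing the basis $L_{ij}=e_i\wedge e_j$ of $\Lie(\SO(\nL_\R))$, whose wedge is the canonical measure, into $\Hom(<\xi>_\C, <\xi>^\perp_\C/<\xi>_\C)$) together with the value $\lambda_\infty(\nL;0)=\Gamma_{m-1,m}(0)$; none of this is hard, but it is the whole lemma. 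Finally, your item (iii) is a red herring here: the constant $\tfrac12 e^{-C}$ rescales the metric by a constant and therefore does not affect the Chern form at all.
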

\begin{proof}
We will ignore henceforth all signs of volume forms, whenever they have no influence because we know that the 2 volume forms are positive and invariant.
Choose an orthogonal basis of $\nL_\R$ with $\langle e_i, e_i \rangle = -1, i = 1,2$ and
$\langle e_i, e_i \rangle = 1, i > 2$. Set $z = \frac{1}{2}(e_3-e_2), z' = \frac{1}{2}(e_3+e_2)$. We have $\langle z, z \rangle = 0$ and $\langle z, z' \rangle = \frac{1}{2}$.
Choose a base point $<\xi> \in \nX_\nO \subset \PP^1(\nL_\C)$, $\xi = i e_1 + e_2$. It induces an isomorphism
$\SO(\nL_\R) / K_\infty \cong \nX_\nO$. The tangent space at $\xi$ in $\nX_\nO \subset \nShD(\nO(\nL))(\C)$ is canonically identified with
\[ \Hom(<\xi>_\C, <\xi>^\perp_\C / <\xi>). \]
The induced map
\[ \Lie(\SO(\nL_\R)) \rightarrow \Hom(<\xi>_\C, <\xi>^\perp_\C / <\xi>_\C) \]
is given by
\[ \Lambda \mapsto \{ \xi \mapsto \Lambda \xi \mod <\xi>_\C \}. \]
On $\Lie(\SO(\nL_\R))$ we have the basis:
\[ L_{ij} = \{e_{i} \wedge e_{j}\}, \]
whose associated volume form is the canonical $\mu$ \cite[Lemma 3.6]{Paper1}. The basis elements $L_{12}$, $L_{ij}$, $i,j \ge 3$ are mapped to 0.
Its associated volume form is the canonical \cite[2.4]{Paper1} on $K_\infty$ \cite[Lemma 3.6]{Paper1}.
The induced volume form on $\nX_\nO$ hence is associated to
\[ \bigwedge_{i=3}^m \widetilde{L}_{1i}^* \wedge \bigwedge_{i=3}^m \widetilde{L}_{2i}^* \in \bigwedge^{2(m-2)} \Hom(<\xi>_\C, <\xi>^\perp_\C / <\xi>_\C)^*,  \]
where $\widetilde{L}_{1i} = \{ \xi \mapsto i e_i \}$ and $\widetilde{L}_{2i} = \{ \xi \mapsto e_i \}$ (wedge product over $\R$!).

Now let $K = <e_1, e_4, \dots e_m>_\C = <z,z'>^\perp_\C$.
We calculate
\begin{align*}
 4 \partial \overline{\partial} \log Y^2 &= \sum_{i,j \in I} \frac{\partial^2}{\partial y_i \partial y_j} \log(Y^2) \dd z_i \wedge \dd \overline{z}_j \\
&= \sum_{i,j \in I} -\frac{4\delta_i \delta_j y_iy_j}{Y^4} \dd z_i \wedge \dd \overline{z}_j + \sum_i \frac{2\delta_i}{Y^2} \dd z_i \wedge \dd \overline{z}_i
\end{align*}
where $K \ni Z = X+iY$, $I = \{1, 4, \dots, m\}$ and $\delta_1 = -1$ and $\delta_i = 1$ otherwise.

At the point $ie_1$ this yields:
\[ (\dd \dd^c \log Y^2)^{m-2} = \left(\frac{1}{2\pi}\right)^{m-2} (m-2)! \dd x_1 \wedge \dd y_1 \wedge \cdots \wedge \dd x_m \wedge \dd y_m \]
(observe $\dd \dd^c = -\frac{1} {2 \pi i} \partial \overline{\partial}$ and $\dd z_i \wedge \dd \overline{z}_i = 2 i \dd x_i \wedge \dd y_i$).

Under the parametrization
\begin{align*}
 K_\C &\rightarrow \nShD(\nO(\nL))(\C) \\
 Z &\mapsto Z + z' - \langle Z, Z \rangle z
\end{align*}
$ie_1$ is mapped to $\xi$. The tangent map $T$ at this point is
\[ Z \mapsto \{ \xi \mapsto Z - 2 i \langle Z, e_1 \rangle z \mod <\xi>_\C \}, \]
and we have: $T(e_1) = \widetilde{L}_{23}, T(ie_1) = -\widetilde{L}_{13}$ and
$T(e_i) = \widetilde{L}_{1i}, T(ie_i) = \widetilde{L}_{2i}$ for $i\ge 4$.

This yields the volume form associated to
\[ (m-2)!\left(\frac{1}{2\pi}\right)^{m-2} \bigwedge_{i=3}^m  \widetilde{L}_{1i}^* \wedge \bigwedge_{i=3}^m \widetilde{L}_{2i}^* \in \bigwedge^{2(m-2)} \Hom(<\xi>_\C, <\xi>^\perp_\C / <\xi>_\C)^*.  \]

Now observe that $K_\infty = \SO(N) \times \SO(N^\perp)$ because $K_\infty$ is not
allowed to change the orientation of $N$ (the $\C$-oriented negative definite plane corresponding to $<\xi>$).
Hence the volume of $K_\infty$ is $2\pi \cdot \frac{1}{2} \prod_{j=1}^{m-2} 2 \frac{\pi^{j/2}}{\Gamma(j/2)}$ \cite[Lemma 3.7, 2.]{Paper1}. This gives the factor
$2 \prod_{j=2}^{m} \frac{1}{2} \frac{\Gamma(\frac{j}{2})}{\pi^{\frac{j}{2}}}$ which is $2 \lambda_\infty^{-1}(\nL; 0)$.
\end{proof}

\section{The arithmetic of Borcherds input forms}

\begin{PAR}
In this section, we present and refine some results of \cite{Borcherds2}. We begin by recalling the notation of 
[loc. cit.]. Let $\Gamma \subset \Sp'_2(\R)$ be a subgroup, commensurable with $\Sp'_2(\Z)$ and with values in 
any representation $V$ of $\Sp'_2(\Z)$. Here $\Sp'_2$, as usual, is either $\Sp_2=\SL_2$ or $\Mp_2$. Let $k$ be
a weight, half integral, if $\Sp'=\Mp$ and an integer, if $\Sp'=\Sp$.

Let
\[ \ModForm(\Gamma, V, k) \]
be the space of modular forms of weight $k$ for $\Gamma$ of representation $V$,
meromorphic at the cusps. Let
\[ \HolModForm(\Gamma, V, k) \]
be the space of modular forms of weight $k$ for $\Gamma$ of representation $\rho$, which are holomorphic at the cusps.
\end{PAR}
\begin{PAR}\label{BORCHERDSGKZ}

Denote
\begin{align*}
 \PowSer(\Gamma) &= \bigoplus_\kappa \C \llbracket q_\kappa \rrbracket,   \\
 \Laur(\Gamma)   &= \bigoplus_\kappa \C \llbracket q_\kappa \rrbracket [q_\kappa^{-1}],  \\
 \Sing(\Gamma)   &=  \bigoplus_\kappa \C \llbracket q_\kappa \rrbracket [q_\kappa^{-1}] / q_\kappa \C \llbracket q_\kappa \rrbracket, 
\end{align*}
where $\kappa$ runs through all cusps for $\Gamma$, and $q_\kappa$ is a uniformizer at the the cusp $\kappa$.
$\Sp_2'(\Z)$ acts on these sets.

Let $\Gamma' \subset \Gamma$ be a subgroup of finite index, which acts trivial on $V$. We define
\[ \PowSer(\Gamma, V) = \left( \PowSer(\Gamma') \otimes V \right)^\Gamma, \]
where $\Gamma$ acts on both factors.

Similarly we define $\Laur(\Gamma, V)$ and $\Sing(\Gamma, V)$.
We have still (abstractly) a 1:1 correspondence between $f \in \PowSer(\Gamma, V)$ (say) and a collection $f_\kappa$ for each cusp of
$\Gamma$ (not $\Gamma'$!) where
\begin{equation}\label{eqnexpansion}
f_\kappa = \sum_{m \in \Q_{\ge 0}} a_m q_\kappa^m,
\end{equation}
and $a_m$ lies in a sub-vector space $V_m$ of $V$, depending on $m$. 

For $\Gamma = \Sp_2'(\Z)$, we have explicitly 
\begin{equation}\label{eqdefvm}
V_m = \{ v \in V \where \M{1&1\\0&1}v = e^{2 \pi i m}v, \}
\end{equation}
and in particular $V_m$ is zero if $m \notin \frac{1}{N}\Z$ for some $N$, depending on $\Gamma'$.

There are maps
\begin{align*}
 \lambda: \HolModForm(\Gamma, V^*, k) &\rightarrow \PowSer(\Gamma, V^*), \\
 \lambda: \ModForm(\Gamma, V, 2-k) &\rightarrow \Sing(\Gamma, V),
\end{align*}
which in the representation (\ref{eqnexpansion}) for the right hand side, this corresponds to taking Fourier series.

There is a non-degenerate pairing between $\PowSer(\Gamma, V)$ and $\Sing(\Gamma, V^*)$
given by
\[ \langle f, \phi \rangle = \sum_\kappa \res( \langle f_\kappa, \phi_\kappa \rangle q_\kappa^{-1} \dd q_\kappa ). \]
This pairing is invariant under the operation of $\Mp_2(\Z)$, and hence gives a well defined pairing between
the spaces in question. 

For example if $\Gamma=\Mp_2(\Z)$ and standard parameter $q$, up to a scalar, this pairing is the same as 
\[ \langle f, \phi \rangle = \sum_m \langle a_{-m}, b_m \rangle, \]
for $f = \sum_m a_m q^m$ and $\phi = \sum_m b_m q^m$.

In [loc. cit., Theorem 3.1] it is sketched that one can deduce from Serre duality that
\begin{equation}\label{SERREDUALITY}
 \lambda( \ModForm(\Sp_2'(\Z), V, 2-k) ) = \left( \lambda(\HolModForm(\Sp_2'(\Z), V^*, k) \right)^\perp, 
\end{equation}
and in [loc. cit., Lemma 4.3] this is refined to
\[ \lambda( \ModForm(\Sp_2'(\Z), V, 2-k)_\Z ) \otimes \C = \left( \Gal(\overline{\Q}|\Q) \lambda(\HolModForm(\Sp_2'(\Z), V^*, k) \right)^\perp. \]
It is also shown that the last space has finite index in $\Sing(\Sp_2'(\Z), V)$.

Let $\nL$ be a $\Z$-lattice of dimension $m$ with non-degenerate quadratic form $Q_\nL \in \Sym^2(\nL^*)$.
We let $\Weil(\nL^*/\nL) = \C[\nL^*/\nL]$ be the Weil representation of $\Sp_2'(\Z)$ and $2k=m$ modulo 2. 
If $V^* \subseteq \Weil(\nL^*/\nL)^*$ is a sub-representation, which, as a linear subspace, is defined over $\Q$,
Proposition \ref{LEMMAWEILGALOIS} below states that 
\[ \lambda(\HolModForm(\Sp_2'(\Z), V^*, k)) \]
 is actually Galois stable.
\end{PAR}

\begin{PAR}We start by a general discussion about the arithmetic of `vector valued modular forms' for $\GL_2$ here.
We denote by $\Q^{cycl}$ the direct limit of $\OO_{\G_{m,\Q}[\frac{1}{N}]}$. It is isomorphic to $\Q^{ab}$ but equipped with a
compatible system of primitive $N$-th roots of unity $\{ \zeta_N \}$.
Define 
\[  \tau: \GL_2(\Zh) \rightarrow \Gal(\Q^{cycl}/\Q) \]
\[ \tau: k \mapsto \tau_k:= \rec \circ \det(k) \]
 which is characterized by ${}^{\tau_k} \zeta_N = (\zeta_N)^{\det(k)}$.
\end{PAR}

\begin{PROP}\label{SATZCONSTRMODFORMVB}
For each $n \in \Z$ and an open subgroup $K \subseteq \GL_2(\Zh)$, we have a natural exact functor from the category of $\tau$-semi-linear representations of $K$  (\ref{SEMILINEAR}) to vector bundles over $\nSh({}^K \nH_1)_\Q$
denoted 
\[ V \mapsto \nMorphSt^*\mathcal{L}^{\otimes n}_V.  \]
For $K_1 \subseteq K_2 \subseteq \GL_2(\Zh)$ open subgroups,  we have a natural isomorphism:
\[ H^0(\nSh({}^{K_1} \nH_1)_\Q, \nMorphSt^*\mathcal{L}^{\otimes n}_V) \cong H^0(\nSh({}^{K_2} \nH_1)_\Q, \nMorphSt^*\mathcal{L}^{\otimes n}_{\ind_{K_1}^{K_2}V}). \]
\end{PROP}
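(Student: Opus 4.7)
The plan is to construct $\mathcal{L}^{\otimes n}_V$ by combining étale Hecke descent with the canonical Galois descent datum attached to a semi-linear action. First, I would choose an open normal subgroup $K' \triangleleft K$ of finite index that is neat (so that $\nSh({}^{K'}\nH_1)_\Q$ is a smooth scheme), is contained in $\ker(\tau|_K)$, and acts trivially on $V$; such a $K'$ exists because the pointwise stabilizer of $V$ in $K$ is open (by finite-dimensionality of $V$ and continuity), automatically lies in $\ker\tau$ by semi-linearity, and can be shrunk and intersected with its conjugates to achieve neatness and normality in $K$.

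With this choice, $V$ becomes a $\tau$-semi-linear representation of the finite quotient $K/K'$. On $\nSh({}^{K'}\nH_1)_{\Q^{cycl}}$ I would form the locally free sheaf
\[ \mathcal{F} := (\nMorphSt^*\mathcal{L}^{\otimes n})_{\Q^{cycl}} \otimes_{\Q^{cycl}} V, \]
and equip it with a descent datum for the morphism $\nSh({}^{K'}\nH_1)_{\Q^{cycl}} \to \nSh({}^K\nH_1)_\Q$ that combines (i) the $K/K'$-equivariant structure on the first factor (Hecke action, coming from functoriality of $\nSh$ and $\nMorphSt$) together with the $K/K'$-action on $V$, and (ii) the tautological Galois $\Q^{cycl}/\Q$-descent datum on $(\nMorphSt^*\mathcal{L}^{\otimes n})_{\Q^{cycl}}$ together with the Galois descent datum on $V$ provided by the $\tau$-semi-linear structure. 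The relation $k(cv) = \tau_k(c)\,k(v)$ is precisely the cocycle identity expressing compatibility of the Hecke and Galois pieces on the tensor product, so by finite étale/fpqc descent $\mathcal{F}$ descends to a vector bundle on $\nSh({}^K\nH_1)_\Q$, which I define to be $\nMorphSt^*\mathcal{L}^{\otimes n}_V$.

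Independence of the auxiliary choice of $K'$ follows from \ref{FUNCTORIALITYAUTOMVECTB}: for $K'' \subseteq K'$ both satisfying the requirements, the construction at $K''$ is the pullback of the construction at $K'$ and descends to the same bundle on $\nSh({}^K\nH_1)_\Q$. Exactness of the functor is immediate, since tensoring with a locally free sheaf and descent are both exact. For the global-sections identity, for $K_1 \subseteq K_2$ open in $\GL_2(\Zh)$, the map $f : \nSh({}^{K_1}\nH_1)_\Q \to \nSh({}^{K_2}\nH_1)_\Q$ is finite étale, and I would check on a common auxiliary cover that $f_*\nMorphSt^*\mathcal{L}^{\otimes n}_V \cong \nMorphSt^*\mathcal{L}^{\otimes n}_{\ind_{K_1}^{K_2}V}$; taking global sections yields the asserted isomorphism. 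The main obstacle I expect is not conceptual but notational: verifying the cocycle condition for the combined $K/K'$-and-Galois descent datum is essentially a direct translation of the semi-linearity relation, but arranging the notation so that this translation is transparent — rather than becoming a computation in a twisted product $K/K' \ltimes_\tau \Gal(\Q^{cycl}/\Q)$ — requires care.
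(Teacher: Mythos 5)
There is a genuine gap at the very first step, and it propagates through the rest of your argument. You ask for an open normal $K'\subseteq K$ that acts \emph{trivially} on $V$ and lies in $\ker(\tau|_K)$, justified by openness of the pointwise stabilizer of $V$. But for $V\neq 0$ no such open subgroup exists: literal triviality of $\omega$ on $K'$ forces $K'\subseteq\ker\tau$ (as you note), while $\tau=\rec\circ\det$ has \emph{infinite} image on any open subgroup of $\GL_2(\Zh)$, so $\ker(\tau|_K)$ has infinite index in $K$ and contains no open subgroup. Continuity of $\omega$ (with $V$ discrete) only gives open stabilizers of finitely many vectors, e.g.\ of a basis of a $\Q(\zeta_N)$-structure $V_N$; a small group such as $K(N)$ then acts on $V\cong V_N\otimes\Q^{cycl}$ \emph{through $\tau$}, not trivially. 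This is exactly the reduction the paper makes, and it cannot be strengthened to "trivially". As a consequence, your descent datum is also not defined: a $\tau$-semi-linear representation does not canonically decompose into a linear representation of a finite quotient $K/K'$ plus a separate "Galois descent datum on $V$" --- that would amount to trivializing the cocycle $\omega'\in H^1(K,\GL_{F}(V_F))$ of \ref{EXSEMILINEAR}, which is an extra hypothesis, not part of the data.

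The second, related omission is the arithmetic input that makes any combined Hecke--Galois descent consistent. For your cocycle identity to be a descent datum you must know that the Hecke operator $k$ acts on the cyclotomic field of constants of $\nSh({}^{K'}\nH_1)$ precisely by $\tau_k=\rec(\det k)$; this is the reciprocity law for the canonical model, and it is the heart of the paper's proof: one uses the morphism $(\det,1):{}^{K(N)}\nH_1\rightarrow{}^{K(N,\Gm)}\nH_0$ together with the explicit model $\nSh({}^{K(N,\Gm)}\nH_0)_\Q\cong\spec(\Q(\zeta_N))$, views $V_N$ as a sheaf $\mathcal{V}$ on $\spec(\Q(\zeta_N))$ with a right $K/K(N)$-action semi-linear over the Galois action, and defines $\nMorphSt^*\mathcal{L}^{\otimes n}_V$ as the quotient of $\nMorphSt^*\mathcal{L}^{\otimes n}\otimes\nSh(\det,1)^*\mathcal{V}$ by $K/K(N)$ over $\nSh({}^{K}\nH_1)_\Q$. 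Your strategy can be repaired along exactly these lines (replace "acts trivially" by "acts through $\tau$ on a $\Q(\zeta_N)$-structure", and replace the unfounded Galois datum on $V$ by the pullback of $\mathcal{V}$ along $\det$, citing the reciprocity law), after which the Shapiro-type identification of global sections under induction goes through as you and the paper both indicate.
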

\begin{proof}
The functor is defined as follows:
Let $N$ be such that the $\tau$-semi-linear representation $(V, \omega)$ has
a $\Q[\zeta_N]$-structure $V_N$ and $K(N)$ acts just via $\tau$ on $V \cong V_N \otimes_{\Q(\zeta_N)} \Q^{cycl}$.

We have the following obvious commutative diagram of extended Shimura data:
\[  \xymatrix{ {}^{K(N)}\nH_1 \ar[rr]^{(\id, k)} \ar[d]^{(\det,1)} && {}^{K(N)}\nH_1 \ar[d]^{(\det,1)} \\
{}^{K(N, \Gm)} \nH_0 \ar[rr]^{(\id, \det(k))} && {}^{K(N, \Gm)} \nH_0
}\]
Applying the canonical model functor $\nSh$ to this diagram, we see, that $\nSh(\det,1)$ is compatible with the
(Hecke) action of $\GL(\Z/N\Z)$. 

From the explicit description of the canonical model (cf. e.g. \ref{SATZBOUNDARYPOINTS}), we know that
\[ \nSh({}^{K(N,\Gm)} \nH_0)_\Q \cong \spec(\Q(\zeta_N)), \]
and the Hecke action transports to the natural isomorphism
\[ (\Z/N\Z)^* \rightarrow \Gal(\Q(\zeta_N)/\Q). \]

The $\tau$-semi-linear representation $V$ thus gives rise to a vector bundle $\mathcal{V}$ with {\em right} $K/K(N)$-action on
\[ \spec(\Q(\zeta_N)), \] 
which is compatible with the Hecke action (acting via $\tau=\rec \circ \det$). 
In detail: we let 
$k \in K$ act on $V$ as $v \mapsto \omega(k)^{-1} v$ (right action). This action is compatible with the action on $\spec(\Q(\zeta_N))$
given by letting $k \in K$ act on a function $\alpha \in \Q(\zeta_N)$ as $\tau_k(\alpha)$, (i.e. the action $k \mapsto \spec(\tau_k)$ on $\spec(\Q(\zeta_N)$), if we have
\[ \omega(k) (\alpha v) = \tau_{k}(\alpha) \omega(k) v. \]
This is precisely the condition of $\omega$ being $\tau$-semi-linear. 
 
We define $\nMorphSt^*\mathcal{L}^{\otimes n}_V$ as a quotient of the bundle on $\nSh({}^{K(N)} \nH_1)_\Q$
\[ \nMorphSt^*\mathcal{L}^{\otimes n} \otimes \nSh(\det, 1)^* \mathcal{V}, \]
by the action of $K/K(N)$ on both factors. Since the action is compatible with the Hecke-action on $\nSh({}^{K(N)} \nH_1)_\Q$, it descends along 
the quotient $\nSh({}^{K(N)} \nH_1)_\Q \rightarrow \nSh({}^{K} \nH_1)_\Q$. 
Up to isomorphism, this bundle does not depend on the $N$ and $V_N$ chosen.

The induction formula is a variant of Shapiro's Lemma.
\end{proof}

\begin{PAR}
Fourier coefficients:
We fix the boundary component $\nSDB$ (corresponding to $i\infty$ in the classical picture) with
\[ \nP_\nSDB = \M{*&*\\0&1} \qquad \nX_\nSDB = \nH_0 \times \C \]
with its standard splitting
\[ \nP_\nSDB = \M{1&*\\0&1} \rtimes \M{*&0\\0&1} \]
 (corresponding to $0\in \C$ in the classical picture). It gives rise to a boundary morphism
\[ \nH_0[0, \Q] \Longrightarrow \nH_1 \]
with respect to which each $f \in H^0(\nSh({}^{K_1} \nH_1)_\C, \nMorphSt^*\mathcal{L}^{\otimes n}_V)$ has a Fourier expansion,
depending on some fixed rational trivializing section $s \in H^0(\nSh({}^{K}\nH_0[0, \Q])_\Q, \nMorphSt^* \mathcal{L})$ 
as in \ref{QEXPANSIONPREP}:
\[ g(z, \rho) := \frac{f}{s}(z, \rho) = \sum_{k \in \frac{1}{N}\Z} a_{k,\alpha,\rho}(f) \exp(\alpha(k z)), \]
where $a_{k,\alpha,\rho} \in (V_\C)_{k,\alpha}$, where
\[ (V_\C)_{k,\alpha} = \{ v \in V_\C\ |\ \M{1&1\\0&1}v = \exp(\alpha(k))v \}, \]
and $\alpha$ is determined by the connected component of $\nH^{\pm}$ that contains $z$.
They satisfy (\ref{QEXPANSIONPREP})
\[  a_{k,\alpha,\rho}({}^\tau f) = {}^\tau a_{k, \alpha, \rho {\tiny \M{k_\tau^{-1}&\\&1} }}(f). \]

Now choose a $\Q$-structure $V_\Q$ on $V$ such that $\omega$ gives rise to 
a cocycle $\omega' \in H^1(K, \GL_{\Q^{cycl}}(V))$, where $K$ acts via $\tau$ (see \ref{EXSEMILINEAR}).
Any holomorphic function $g$ 
\[  \HH^{\pm} \times (\GL_2(\Af)) \rightarrow V_\C \]
which satisfies
\[ g(\gamma z, \gamma \rho) = j(\gamma, z)^n g(z, \rho), \]
where $j$ is the usual automorphy factor and
\begin{equation} \label{eqtransg}
g(z, \rho k) = \omega'(k^{-1})^{(\exp(\alpha(\det(\rho k)/N))} g(z, \rho) 
\end{equation}
for $\rho \in \GL_2(\Zh)$, 
(where $\omega'(k^{-1})^{(\exp(\alpha(\det(\rho k)/N)))}$ means that $\omega'(k^{-1})$ acts on $V_\C$ 
by mapping $\zeta_N$ to $\exp(\alpha(\det(\rho k)/N))$),
gives rise to a section 
\[ f=gs \in H^0(\nSh({}^{K(1)} \nH_1)_\C, \nMorphSt^*\mathcal{L}^{\otimes n}_V). \]

The condition (\ref{eqtransg}) can be expressed purely in terms of Fourier coefficients:
 \[ a_{k,\alpha, \rho k}(g) = \omega'(k^{-1})^{(\exp(\alpha(\det(\rho k)/N))} a_{k,\alpha,\rho}(g). \]
 
 In particular, the Galois operation on $f$ is determined by
\[ a_{k, \alpha^+, 1}(\frac{{}^\tau f}{s}) =  \omega'( \M{k_\tau&\\&1} )^{(\exp(\alpha^+(\frac{1}{N})))}\cdot {}^\tau (a_{k,\alpha^+,1}(\frac{f}{s})), 
\]
where $\alpha^+$ is one chosen isomorphism $\Z \rightarrow \Z(1)$.
 
We have proven:
\end{PAR}
\begin{PROP}\label{SATZINTERPRETCLASSMODFORM}
If $\det: K \rightarrow \Zh^*$ is surjective and $(V, \omega)$ 
is a $\tau$-semi-linear representation, the association $g \mapsto f:=gs$ determines a bijection
\[ \ModForm(\Gamma, V, k) \rightarrow H^0(\nSh({}^K \nH_1)_\C, (\nMorphSt^* {\mathcal{L}})^{\otimes k}_V ),  \]
where $\Gamma = K \cap \SL_2(\Z)$. A section is determined by the Fourier coefficients 
\[ a_k(g) = a_{k, \alpha^+, 1}(\frac{f}{s}). \]
Here $\alpha^+$ corresponds to the connected component $\HH^+$ chosen as domain for the classical modular form $g$.
$f$ is rational, if we have:
\[ a_{k}(g) =  \omega'(\M{k_\tau&\\&1})^{(\exp(\alpha^+(\frac{1}{N})))} \cdot {}^\tau ( a_{k}(g) ). \] 
\end{PROP}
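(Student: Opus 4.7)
The plan is to unwind the construction of $\nMorphSt^*\mathcal{L}^{\otimes k}_V$ from Proposition \ref{SATZCONSTRMODFORMVB}, translate the transformation laws of the bundle into the classical language of modular forms, and read off the rationality criterion from the $q$-expansion principle \ref{QEXPANSION} together with the Galois action on Fourier coefficients recorded in (\ref{galoisactionfourier}).

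First I would set up the bijection. Fix $N$ and a $\Q(\zeta_N)$-structure $V_N$ on $V$ as in the proof of Proposition \ref{SATZCONSTRMODFORMVB}, so that $\nMorphSt^*\mathcal{L}^{\otimes k}_V$ is the descent along $\nSh({}^{K(N)}\nH_1)_\Q \to \nSh({}^K\nH_1)_\Q$ of $\nMorphSt^*\mathcal{L}^{\otimes k}\otimes \nSh(\det,1)^*\mathcal{V}$. Passing to analytic complex fibres, a global section of the latter corresponds, via trivialisation by $s$, to a holomorphic $g\colon \HH^\pm \times \GL_2(\Af) \to V_\C$ satisfying both the $\GL_2(\Q)$-equivariance with automorphy factor $j(\gamma,z)^k$ and the right $K$-equivariance (\ref{eqtransg}). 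To match such $g$ with $\ModForm(\Gamma,V,k)$, I would exploit the surjectivity $\det\colon K \twoheadrightarrow \Zh^*$ together with strong approximation for $\SL_2$: any $g_1 \in \ModForm(\Gamma,V,k)$ extends uniquely to $\HH^\pm\times \GL_2(\Af)$ by writing an arbitrary $(z,\rho)$ as $(z,\gamma_\infty \cdot k)$ with $\gamma_\infty \in \GL_2(\Q)$, $k \in K$, setting $g(z,\rho) := j(\gamma_\infty,z)^{-k}\,\omega'(k^{-1})^{(\cdots)}\,g_1(\gamma_\infty z)$, and checking independence of the decomposition via the $\Gamma$-invariance of $g_1$. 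This yields the asserted bijection, and by unwinding the definitions in \ref{QEXPANSIONNOT}--\ref{QEXPANSIONPREP} the coefficient $a_{k,\alpha^+,1}(f/s)$ is precisely the classical Fourier coefficient $a_k(g)$ on the component $\HH^+$ singled out by $\alpha^+$.

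The rationality statement then follows from combining (\ref{galoisactionfourier})
\[ a_{k,\alpha^+,1}({}^\tau f) = {}^\tau a_{k,\alpha^+,k_\tau^{-1}}(f) \]
with the specialisation of (\ref{eqtransg}) at $\rho = 1$ and at a preimage in $K$ of $\M{k_\tau^{-1}&\\&1}$ under $\det$ (available by surjectivity of $\det$ on $K$), which reads
\[ a_{k,\alpha^+,k_\tau^{-1}}(f) = \omega'\bigl(\M{k_\tau&\\&1}\bigr)^{(\exp(\alpha^+(1/N)))} a_{k,\alpha^+,1}(f). \]
By the $q$-expansion principle \ref{QEXPANSION}, $f$ is defined over $\Q$ iff ${}^\tau f = f$ for all $\tau \in \Gal(\Q(\zeta_N)/\Q)$, and the two displayed identities convert this into the claimed condition on $a_k(g)$.

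The main obstacle I anticipate is the careful bookkeeping of the $\tau$-semi-linear twist in (\ref{eqtransg}): the exponent $(\exp(\alpha(\det(\rho k)/N)))$ indicates that $\omega'(k^{-1})$ is first conjugated by the Galois element of $\Q(\zeta_N)/\Q$ sending $\zeta_N \mapsto \exp(\alpha(\det(\rho k)/N))$, and tracing this twist consistently through the descent along $\nSh(\det,1)$ and through the Hecke action on $\spec(\Q(\zeta_N))$ via $\rec\circ\det$ must be done with care. Once this alignment is correctly set up, the remaining content is a routine translation between the adelic and classical pictures.
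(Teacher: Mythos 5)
Your route is the paper's route: the paper's proof is exactly the discussion preceding the statement, namely identifying complex sections of $(\nMorphSt^*\mathcal{L})^{\otimes k}_V$ (via the construction in \ref{SATZCONSTRMODFORMVB} and the trivialization $s$) with holomorphic $g$ on $\HH^{\pm}\times\GL_2(\Af)$ satisfying the automorphy law and (\ref{eqtransg}), using surjectivity of $\det$ on $K$ and strong approximation to reduce to classical forms for $\Gamma=K\cap\SL_2(\Z)$, and then reading the rationality criterion off from the Galois action on Fourier coefficients (\ref{galoisactionfourier}) of \ref{QEXPANSIONPREP} combined with (\ref{eqtransg}).

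One bookkeeping slip, precisely at the point you flagged as delicate: specializing (\ref{eqtransg}) at $\rho=1$ and $k=\M{k_\tau^{-1}&\\&1}$ (it must be this matrix itself, which has to lie in $K$; an arbitrary element of $K$ with determinant $k_\tau^{-1}$ would change the Fourier-coefficient index away from the one occurring in (\ref{galoisactionfourier})) gives
\[ a_{k,\alpha^+,\M{k_\tau^{-1}&\\&1}}(f) = \omega'\bigl(\M{k_\tau&\\&1}\bigr)^{(\exp(\alpha^+(k_\tau^{-1}/N)))}\, a_{k,\alpha^+,1}(f), \]
with twist $\exp(\alpha^+(k_\tau^{-1}/N))$, not $\exp(\alpha^+(1/N))$; the embedding $\zeta_N\mapsto\exp(\alpha^+(1/N))$ appears only after applying ${}^\tau$ to the whole right-hand side, which is what produces the criterion as stated. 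Composing your two displayed identities literally would give the criterion with the Galois-conjugated embedding $\zeta_N\mapsto\exp(\alpha^+(k_\tau/N))$, so the twist needs this correction. Finally, the descent to $\Q$ does not go through the integral $q$-expansion principle \ref{QEXPANSION} (which concerns models over $\Zpp[\zeta_N]$ and descent to $\Zpp$); it is ordinary Galois descent for sections over the rational canonical model together with (\ref{galoisactionfourier}), which is what the paper uses — a mis-citation rather than a gap.
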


It is crucial that, if $(V, \omega)$ is given by a $\Q(\zeta_N)$-representation of $\SL(\Z/N\Z)$, say, for the theorem to make sense, the representation must extend $\tau$-semi-linearly to $\GL(\Z/N\Z)$. 

\begin{BEISPIEL}\label{EXAMPLEGAMMA0P_1}
Consider the special case $K=K_0(p)$, $p$ an odd prime.
Let $\omega$ be the $\tau$-semi-linear representation
$\M{a&*\\0&d} \mapsto \chi(d) \cdot \rec(ad)$ defined on $\GL_2(\Z/p\Z)$ for the unique non-trivial character
$\chi: (\Z/p\Z)^* \rightarrow \{\pm 1\}$ of order 2. The action of Galois on the Fourier coefficients is the natural, untwisted one.

We have a decomposition (\ref{EXABSTRACTGAMMA0P}) 
\[ \ind_{K_0(p)}^{K(1)}(\omega) = V^+ \oplus V^-  \]
as $\tau$-semi-linear representations, and hence Propositions \ref{SATZCONSTRMODFORMVB} and \ref{SATZINTERPRETCLASSMODFORM}
yield
\[  \ModForm(\Gamma_0(k), \chi, k) = H^0(\nSh({}^{K(1)}\nH_1)_\C, (\nMorphSt^*\mathcal{L})^{\otimes k}_{V^+}) \oplus H^0(\nSh({}^{K(1)}\nH_1)_\C, (\nMorphSt^*\mathcal{L})^{\otimes k}_{V^-}).   \]
This decomposition has been described by Hecke. We get here a priori that it is actually defined over $\Q$.
See also \ref{EXAMPLEGAMMA0P_2} below.
\end{BEISPIEL}

Let $\nL$ be a lattice of {\em even} dimension with non-degenerate quadratic form.

\begin{LEMMA}The Weil representation $\Weil(\nL^*/\nL)$, identifying $\exp(2\pi i/N)$, as chosen in the underlying characters, with the distinguished $\zeta_N \in \Q^{cycl}$, gives rise to a $\tau$-semi-linear representation, where the matrix $\M{\alpha & \\& 1} \in \GL_2(\Zh)$ acts as $\rec(\alpha)$.
The dual of the Weil representation gives rise to a $\tau$-semi-linear representation, too.
\end{LEMMA}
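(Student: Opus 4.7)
The plan is to extend the given action of $\Sp_2(\Zh)$ on $\Weil(\nL^*/\nL)$ to a $\tau$-semi-linear action of $\GL_2(\Zh)$ by exploiting the decomposition $\GL_2(\Zh) = D \cdot \Sp_2(\Zh)$, where $D = \{d(\alpha):=\M{\alpha&0\\0&1} : \alpha \in \Zh^*\}$. Every element of $\GL_2(\Zh)$ has a unique factorization $d(\det k)\cdot (d(\det k)^{-1}k)$, so once $\Omega(d(\alpha))$ is defined compatibly, the extension $\Omega(d(\alpha)g):=\Omega(d(\alpha))\omega(g)$ is forced.

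First I would observe that the explicit formulas of \ref{EXPLICITWEIL} show that $\{\chi_\kappa\}_{\kappa \in \nL^*/\nL}$ is a basis for which $\omega(S),\omega(T)$ have entries in $\Q(\zeta_N) \subset \Q^{cycl}$ for $N$ the level of $Q_\nL$ (the factor $\WeilGamma_\infty(Q_\nL)^{-1}/\sqrt{D}$ lies in $\Q^{cycl}$ by quadratic reciprocity). Then I would define $\Omega(d(\alpha))$ as the unique $\C$-semi-linear endomorphism, semi-linear with respect to $\rec(\alpha)\in \Gal(\Q^{cycl}/\Q)$, that fixes each basis vector $\chi_\kappa$. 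This is tautologically $\tau$-semi-linear in the sense of \ref{SATZCONSTRMODFORMVB}, since $\tau_{d(\alpha)}=\rec(\alpha)$.

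Next I would verify that the extension is a group homomorphism. Since the Weil representation factors through $\Sp_2(\Z/N\Z)$ for $N$ as above, and this group is generated by the images of $S$ and $T$, the problem reduces to checking the single conjugation relation $\Omega(d)^{-1}\omega(g)\Omega(d) = \omega(d^{-1}g d)$ for $d = d(\alpha)$ and $g \in \{S,T\}$. For $T$ one computes $d(\alpha)^{-1}Td(\alpha) = T^{\alpha^{-1}}$; using $\omega(T)\chi_\kappa = \exp(2\pi i Q_\nL(\kappa))\chi_\kappa$ and the convention $\rec(\alpha)(\zeta_N)=\zeta_N^\alpha$, the left-hand side sends $\chi_\kappa$ to $\exp(2\pi i \alpha^{-1}Q_\nL(\kappa))\chi_\kappa$, matching $\omega(T^{\alpha^{-1}})\chi_\kappa$. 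For $S$, one has $d(\alpha)^{-1}S d(\alpha) = \M{0 & -\alpha^{-1} \\ \alpha & 0}$, which in the adelic description of \ref{WEILFORMULAS2} is the element $d(\alpha)$ acting by a rescaled Fourier transform; checking the relation amounts to verifying the Galois-equivariance $\rec(\alpha)\bigl(\WeilGamma_\infty(Q_\nL)^{-1}D^{-1/2}\bigr) = \widetilde{\WeilGamma}(\alpha)D^{-1/2}$, a standard transformation property of the Weil index under scaling the quadratic form (cf.\ \cite{Weil2}). This last equivariance will be the principal technical step: it packages together the change of sign of $\sqrt{D}$ under $\rec(\alpha)$ (a Kronecker-symbol computation) with the multiplicativity of the Weil index, and already appears implicitly in the behaviour of the Weil representation under similitudes.

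Finally, the statement for the dual follows formally: if $(V,\omega)$ is $\tau$-semi-linear, then the contragredient $(V^*,\omega^\vee)$, with $\omega^\vee(k)$ the $\tau_k$-semi-linear transpose-inverse of $\omega(k)$, is again $\tau$-semi-linear, and dualizing the above construction on $\Weil(\nL^*/\nL)$ gives the required structure on $\Weil(\nL^*/\nL)^*$. The main obstacle is the $S$-computation in the third paragraph; the $T$-part is essentially automatic from the definition.
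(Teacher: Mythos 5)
Your construction is correct, and its skeleton is the same as the paper's: extend the action by letting $\M{\alpha&\\&1}$ act as $\rec(\alpha)$ on the $\Q$-span of the $\chi_\kappa$, so that everything reduces to the conjugation identity $\omega\bigl(\M{\alpha&\\&1}\,k\,\M{\alpha^{-1}&\\&1}\bigr)=\rec(\alpha)(\omega(k))$. Where you genuinely diverge is in how this identity is verified. You check it on the generators $S,T$ of $\Sp_2(\Z/N\Z)$ via the finite formulas of \ref{EXPLICITWEIL}: the $T$-step is immediate, and the $S$-step is exactly the Galois equivariance of the constant $\WeilGamma_\infty(Q_\nL)^{-1}D^{-1/2}$, which (by Milgram's formula) is the reciprocal Gauss sum $\bigl(\sum_{\kappa\in\nL^*/\nL}e^{2\pi i Q_\nL(\kappa)}\bigr)^{-1}$, so the property you import from the Weil-index literature is literally $\sigma_a$ acting on a Gauss sum. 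The paper instead argues with the adelic generators $g_l(\alpha)$, $u(\beta)$, $d(\gamma)$ of \ref{WEILFORMULAS2}: stripped of their Weil-index factors the operators are visibly equivariant (for a Galois-invariant choice of measure), the $g_l$-factor is a sign because $m$ is even, and the equivariance of the remaining constant $c(\gamma)=\WeilGamma_f(\gamma Q_\nL)|\gamma|^{-1/2}$ is deduced internally from the relation $(u(-\gamma^{-1})d(\gamma))^3=1$ together with $c(\gamma)^2\in\Q$ — no external input on Gauss sums or Weil indices at all. Your route is more classical and makes the crux transparent; the paper's buys self-containedness and stays in the adelic normalization used throughout the section. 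Two harmless points to keep straight in your write-up: (a) with the convention $\rec(\alpha)\zeta_N=\zeta_N^\alpha$ and the relation in the form $\Omega(d)^{-1}\omega(S)\Omega(d)=\omega(d^{-1}Sd)$, the constants being compared are $\WeilGamma_f(\alpha^{-1}Q_\nL)$ and $\WeilGamma_f(\alpha Q_\nL)$; these coincide since the two rescaled forms differ by the square $\alpha^2$, so your displayed identity suffices up to this remark; (b) the reduction to $g\in\{S,T\}$ uses that conjugation by $\M{\alpha&\\&1}$ preserves the principal congruence subgroup, so both sides of your relation are homomorphisms factoring through $\Sp_2(\Z/N\Z)$ — you use this implicitly and it is fine. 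The dual statement is formal in both treatments, exactly as in the example on duals of $\tau$-semi-linear representations in the appendix.
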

Note that this $\tau$-semi-linear representation does not involve any choices anymore and does depend only on $\nL$.
\begin{proof}
That the above defines indeed a $\tau$-semi-linear representation boils down to the equation:
\begin{equation}\label{compweil}
 \omega(\M{\alpha&0\\0&1} k \M{\alpha^{-1}&0\\0&1} ) = \rec(\alpha)(\omega(k))
\end{equation}
for the Weil representation (cf. \ref{WEILFORMULAS2}). This is checked as follows:
The operators $\varphi \mapsto (x^* \mapsto \varphi({}^t \alpha x^*))$ are Galois invariant. 
If we act on the operator $\varphi \mapsto (x^* \mapsto \exp(2\pi i Q_\nL(x^*) \beta) \varphi(x^*))$ by $\tau_\alpha$, we get the same as by substituting $\beta$ by $\alpha\beta$.
If we act on the operator $\varphi \mapsto (x^* \mapsto \int_{\nL_\Af} \varphi(\gamma x) \exp(-2\pi i x^* x) \dd x)$  by $\tau_\alpha$, we get the same as by substituting $\gamma$ by $\alpha^{-1}\gamma$, provided $\dd x$ is chosen Galois invariant (e.g. such that $\nL_\Zh$ has a volume in $\Q$).
Now an actual operator $\omega(r''(g_l(\alpha)))$ differs by those considered by a $\frac{\WeilGamma_f(\alpha Q_L)}{\WeilGamma_f(Q_L)}$ in the first case, 
which is a sign because $m$ is even, hence Galois invariant. In the third case $\omega(r''(d(\gamma)))$ differs by $c(\gamma):=\frac{\WeilGamma_f(\gamma Q_L)}{|\gamma|^{\frac{1}{2}}}$, where $|\gamma|$ is calculated with respect to the chosen measure and its dual. Now consider the equation $\left( u(-\gamma^{-1}) d(\gamma) \right)^3 = 1$. Since $c(\gamma)^2 \in \Q$, it shows
$c(\alpha^{-1}\gamma) = {}^{\tau_\alpha} c(\gamma)$.
\end{proof}

\begin{PAR}\label{WEILREPGALOIS}
Consider the Weil representation $\Weil(\nL_\Z^*/\nL_\Z)$ as a $\tau$-semi-linear representation of $\SL_2(\Zh)$.
$K_0(N)$ acts on $\chi_{\nL_\Zh}$ by
\[ \M{a&*\\&d} \chi_{\nL_\Zh} = \WeilGamma(d Q_\nL)\WeilGamma(Q_\nL)^{-1} \chi_{\nL_\Zh} \] 
which is a character, valued in $\mu_2$. Call this character $\varepsilon$.

We have hence maps
\[ \alpha: \varepsilon \rightarrow \Weil(\nL_\Z^*/\nL_\Z), \]
and we have also the evaluation at 0 map:
\[ \beta: \Weil(\nL_\Z^*/\nL_\Z) \rightarrow \varepsilon. \]

Using the adjunctions, we get maps of $\tau$-semi-linear representations
\[\widetilde{\alpha}: \ind_{K_0(N)}^{K(1)} \varepsilon \rightarrow \Weil(\nL_\Z^*/\nL_\Z),  \]
and
\[ \widetilde{\beta}:   \Weil(\nL_\Z^*/\nL_\Z) \rightarrow \ind_{K_0(N)}^{K(1)} \varepsilon.  \]
We have $\widetilde{\beta} \circ \widetilde{\alpha} = \id$.

Using the functor, we get maps
\[ \alpha': H^0(\nSh({}^{K(1)}  \nH_1)_\Q, \nMorphSt^* \mathcal{L}^{\otimes k}_{\ind_{K_0(N)}^{K(1)} \varepsilon}) \rightarrow H^0(\nSh({}^{K(1)}  \nH_1)_\Q, \nMorphSt^* \mathcal{L}^{\otimes k}_{\Weil(\nL_\Z^*/\nL_\Z)}) \]
\[ \beta': H^0(\nSh({}^{K(1)}  \nH_1)_\Q, \nMorphSt^* \mathcal{L}^{\otimes k}_{\Weil(\nL_\Z^*/\nL_\Z)}) \rightarrow H^0(\nSh({}^{K(1)}  \nH_1)_\Q, \nMorphSt^* \mathcal{L}^{\otimes k}_{\ind_{K_0(N)}^{K(1)} \varepsilon}). \]

We get (this is true more generally) that the operator
$\widetilde{\beta} \circ \widetilde{\alpha}$ is given by convolution with the function
\[ \varphi(g) := \beta \circ \omega(g) \circ \alpha,\]
that is
\[ \widetilde{\beta}(\widetilde{\alpha}(f))(h) = \sum_{a\in K_0(p) \backslash K(1)}  \beta ( \omega(ha^{-1}) \alpha( f(a) ) ). \] 
\end{PAR}

\begin{BEISPIEL}\label{EXAMPLEGAMMA0P_2} The following was inspired by \cite{BB}.

Resuming example \ref{EXAMPLEGAMMA0P_1} above, it follows directly from the formul\ae\ of the Weil-representation (\ref{EXPLICITWEIL}), that this function $\varphi$ corresponding to $\widetilde{\beta}\circ \widetilde{\alpha}$ is determined by
\[ \varphi(1) = 1 \qquad \varphi(S) = \frac{\sqrt{-1}^{2-\frac{m}{2}}}{\sqrt{p}}. \]
Using the dichotomy between $p$ mod 4, the type of $\nL_\Z^*/\nL_\Z$, and $\frac{m}{2}$ mod 4, see \cite[\S 2]{BB}, we get  
\[ \varphi(S) = \begin{cases} \frac{1}{\sqrt{p^*}} & \text{if } \nL_\Z^*/\nL_\Z  \text{ represents the squares mod } p, \\ - \frac{1}{\sqrt{p^*}} & \text{if } \nL_\Z^*/\nL_\Z  \text{ represents the non-squares mod } p, \end{cases} \]
where $p^*=p$ if $p \equiv 1\ (4)$ and $p^* =-p$, otherwise. Here $\sqrt{-1}$ is the distinguished primitive 4-th root of unity in $\Q^{cycl}$.

Therefore $\widetilde{\beta} \circ \widetilde{\alpha}$ is a projector (\ref{EXABSTRACTGAMMA0P}) onto $V^+$, resp. $V^-$, and thus $\widetilde{\alpha}$ 
yields an isomorphism
\[  V^\pm \cong \Weil(\nL^*/\nL)^+ \]
according to the isomorphism class of $\nL^*/\nL$.
We therefore see from condition (\ref{eqdefvm}) and the explicit formul\ae\ for the Weil-representation that for $\lambda \not\in \Z$, we have
\begin{eqnarray*}
(V^+_\C)_\lambda &\cong& \begin{cases} \C & pk \text{ is a square mod } p, \\
 0 & p\lambda \text{ is a non-square mod } p, \end{cases} \\
(V^-_\C)_\lambda &\cong& \begin{cases} \C & pk \text{ is a non-square mod } p, \\
 0 & p\lambda \text{ is a square mod } p. \end{cases} 
\end{eqnarray*}

Recall that the Fricke involution $W_q$ is given by
\[ W_q(f)(\tau):= q^{\frac{k}{2}}(q\tau)^{-k}f(\M{&-1\\q&}\tau). \]
In terms of the corresponding $\widetilde{f} \in \ModForm(\SL_2(\Z), \ind_{\Gamma_0(0)}^{\SL_2(\Z)} \chi, k)$, we get 
\[ W_q(f)(\tau) = ( q^{\frac{k}{2}}\rho(\M{&-1\\1&}) \widetilde{f}(q\tau) )(1). \] 
Hence 
$\ModForm(\SL_2(\Z), V^\pm_\C, k)$ is the space of those 
$f \in \ModForm(\Gamma_0(q), \chi, k)$ such that
the Fourier coefficients $a_\lambda$ of $W_q(f)$, for $(\lambda,q)=1$ are 0 if $\lambda$ is a non-square (resp. square) mod $p$.
This is (for $W_q(f)$) the classical condition.
\end{BEISPIEL}

\begin{PROP}\label{LEMMAWEILGALOIS}
Let $k$ be a (half\nobreakdash-)integral weight and $\Sp_2'$ either $\Sp_2$ or $\Mp_2$ according to whether $k \in \Z$ or $k \in \frac{1}{2}+\Z$ and suppose $2k \equiv m$ modulo $2$.

For $f \in \ModForm(\Sp_2', \Weil(\nL^*_\Z/\nL_\Z)^*, k)$ with Fourier expansion
\[ f = \sum_{k\in \Q} a_k q^k, \]
$a_k \in \Weil(\nL^*_\Z/\nL_\Z)_k^*$, we have for $\tau \in \Aut(\C)$ and
\[ {}^\tau f := \sum_{k \in \Q} {}^\tau a_k q^k, \]
that ${}^\tau f \in \ModForm(\Sp_2', \Weil(\nL_\Z^*/\nL_\Z)^*, k)$, too.
\end{PROP}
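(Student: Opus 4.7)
The plan is to leverage the machinery of $\tau$-semi-linear representations developed in Propositions \ref{SATZCONSTRMODFORMVB} and \ref{SATZINTERPRETCLASSMODFORM}, combined with the preceding lemma identifying $\Weil(\nL^*/\nL)$ as a $\tau$-semi-linear representation of $\GL_2(\Zh)$ (respectively of its metaplectic cover for $k \in \frac{1}{2}+\Z$). The idea is: the space $\ModForm(\Sp_2'(\Z), \Weil(\nL^*/\nL)^*, k)$ carries a natural $\Q$-structure coming from sections of a vector bundle defined over $\Q$, and we must show that the corresponding Galois action on sections, when expressed in Fourier coefficients, reduces to coefficientwise conjugation.

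First, I would identify $\ModForm(\Sp_2'(\Z), \Weil(\nL^*/\nL)^*, k)$ with $H^0(\nSh({}^{K(1)}\nH_1)_\C, \nMorphSt^*\mathcal{L}^{\otimes k}_{V^*})_{\text{merom}}$ via Proposition \ref{SATZINTERPRETCLASSMODFORM} applied to $V^* = \Weil(\nL^*/\nL)^*$ regarded as a $\tau$-semi-linear representation of $\GL_2(\Zh)$ (the dual of the one furnished by the preceding lemma). By Proposition \ref{SATZCONSTRMODFORMVB} the vector bundle $\nMorphSt^*\mathcal{L}^{\otimes k}_{V^*}$ is defined over $\Q$, so $\Aut(\C)$ acts naturally on the space of $\C$-sections.

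Second, I would translate this Galois action into Fourier coefficients using the formula from Proposition \ref{SATZINTERPRETCLASSMODFORM}:
\[ a_k({}^\tau g) = \omega'\!\left(\M{k_\tau & \\ & 1}\right)^{(\exp(\alpha^+(1/N)))} \cdot {}^\tau(a_k(g)), \]
where $\omega'$ is the cocycle obtained by $\tau$-untwisting the semi-linear action $\omega$ on the dual. The decisive computation is to show that the twist factor $\omega'(\M{\alpha & \\ & 1})$ acts trivially on $V_{\Q^{cycl}}$. This follows directly from the defining property of the $\tau$-semi-linear structure on the Weil representation established above: the matrix $\M{\alpha & \\ & 1}$ was constructed to act purely as the Galois element $\rec(\alpha)$, which is precisely the statement $\omega'(\M{\alpha & \\ & 1}) = \id$ on the chosen $\Q$-structure (e.g.\ the one with basis $\{\chi_\kappa\}_{\kappa \in \nL^*/\nL}$, on which $T$ is diagonal with roots-of-unity eigenvalues); the compatibility (\ref{compweil}) is exactly what makes this assignment a consistent extension of the $\SL_2(\Zh)$-action to a $\tau$-semi-linear $\GL_2(\Zh)$-action. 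Taking duals preserves this triviality. Substituting gives $a_k({}^\tau g) = {}^\tau a_k(g)$, so Galois conjugation of sections coincides with coefficientwise conjugation, and the stability of the space follows.

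Third, I would treat the metaplectic case $k \in \frac{1}{2}+\Z$, where $m$ is odd. Proposition \ref{SATZCONSTRMODFORMVB} and \ref{SATZINTERPRETCLASSMODFORM} are stated for $\GL_2$, but the metaplectic cover $\widetilde{\GL}_2(\Zh)$ of $\GL_2(\Zh)$ (the natural extension of $\Mp_2(\Zh)$ by the diagonal matrices $\M{\alpha & \\ & 1}$) admits an entirely parallel $\tau$-semi-linear Weil representation: equation (\ref{compweil}) only uses that $\WeilGamma(\alpha Q_\nL)/\WeilGamma(Q_\nL)$ behaves Galois-equivariantly, and in the verification in the preceding lemma no parity of $m$ was actually used except to conclude that $\WeilGamma_f(\alpha Q_\nL)/\WeilGamma_f(Q_\nL)$ lies in $\Q^{\text{cycl}}$. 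Thus the same construction of $V$-valued modular forms as sections of a bundle on the modular curve applies, and the above argument carries through verbatim.

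The main obstacle will be the precise bookkeeping around the $\tau$-twist of the cocycle $\omega'$ under the identifications $\zeta_N \leftrightarrow \exp(\alpha^+(1/N))$, in particular verifying rigorously that "$\M{\alpha & \\ & 1}$ acts as $\rec(\alpha)$" translates into the cocycle-theoretic statement $\omega'(\M{\alpha & \\ & 1}) = \id_{V_{\Q^{\text{cycl}}}}$; this is really a matter of unwinding the definition (F1) of $\tau$-semi-linearity applied to the factorization $\omega(k) = \omega'(k) \circ \tau_{\det k}$. Once this is in place the proposition falls out directly from Proposition \ref{SATZINTERPRETCLASSMODFORM}. A small auxiliary point, namely that meromorphic sections (rather than only holomorphic ones) are covered, follows because Proposition \ref{SATZINTERPRETCLASSMODFORM} and the $q$-expansion formalism of \ref{BORCHERDSGKZ} make sense verbatim for the $\text{horz}$-sheaf of meromorphic sections with poles supported on the cuspidal divisor, and $\tau$ preserves this class of sections.
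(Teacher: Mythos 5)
Your treatment of the integral-weight case ($m$ even, $k\in\Z$) is exactly the paper's argument: identify $\ModForm(\Sp_2(\Z),\Weil(\nL_\Z^*/\nL_\Z)^*,k)$ with sections of the bundle of Proposition \ref{SATZCONSTRMODFORMVB} attached to the $\tau$-semi-linear dual Weil representation, use Proposition \ref{SATZINTERPRETCLASSMODFORM} to read off the Galois action on Fourier coefficients, and observe that the twisting matrix $\M{k_\tau&\\&1}$ acts trivially after untwisting because, by the lemma preceding \ref{WEILREPGALOIS}, it acts purely through $\rec(k_\tau)$ on the natural $\Q$-structure $\{\chi_\kappa\}$. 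No complaints there.

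The half-integral case is where your proposal has a genuine gap. You claim that the verification of (\ref{compweil}) "did not really use the parity of $m$", but it does, essentially: the proof of that lemma uses that the correction factor $\WeilGamma_f(\alpha Q_\nL)/\WeilGamma_f(Q_\nL)$ is a \emph{sign} — hence Galois invariant — precisely because $m$ is even; for odd $m$ it is a genuine eighth root of unity varying with $\alpha$, and in addition the element $\M{\alpha&\\&1}$ has no canonical lift to the metaplectic cover, so the asserted $\tau$-semi-linear extension of the representation to a group $\widetilde{\GL}_2(\Zh)$ is not available "verbatim" and would have to be constructed, cocycle issues and all. Moreover, the bundle machinery of Proposition \ref{SATZCONSTRMODFORMVB} only produces integral powers $\nMorphSt^*\mathcal{L}^{\otimes n}$ on the modular curve; there is no half-integral power of $\mathcal{L}$ in that framework, so the identification of $\ModForm(\Mp_2(\Z),\cdot,k)$ with sections of a $\Q$-rational bundle is not covered by the propositions you invoke. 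The paper sidesteps all of this with a reduction: for $m$ odd it passes to $\nL'_\Z=\nL_\Z\oplus\langle 1\rangle$, so that $\Weil((\nL'_\Z)^*/\nL'_\Z)^*=\Weil(\nL_\Z^*/\nL_\Z)^*\otimes\Weil(\langle 1\rangle^*/\langle 1\rangle)^*$, multiplies $f$ by the classical theta series $\theta\in\HolModForm(\Mp_2,\Weil(\langle 1\rangle^*/\langle 1\rangle)^*,\frac{1}{2})$, whose integral Fourier coefficients make multiplication by $\theta$ commute with coefficientwise Galois conjugation; the already-proved even case applies to ${}^\tau(f\otimes\theta)=({}^\tau f)\otimes\theta$ of weight $k+\frac{1}{2}$, and dividing by $\theta$ (nonvanishing on a dense set of points) recovers convergence and the weight-$k$ transformation law for ${}^\tau f$. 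You would either need to supply that reduction or actually build the metaplectic semi-linear theory you are assuming.
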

\begin{proof}
We first reduce to the case $k \in \Z$. For half integral weight, i.e. $m$ odd, consider the space
$\nL'_\Z = \nL_\Z \oplus <1>$. We have 
\[ \Weil((\nL'_\Z)^*/\nL'_\Z)^* = \Weil(\nL_\Z^*/\nL_\Z)^* \otimes \Weil(<1>^*/<1>)^*. \]
We have the classical theta function
\[ \theta \in \HolModForm(\Mp_2, \Weil(<1>^*/<1>)^*, \frac{1}{2}) \]
(with integral Fourier coefficients!).
Multiplication with it yields a map
\[ \ModForm(\Mp_2, \Weil(\nL_\Z^*/\nL_\Z)^*, k) \rightarrow \ModForm(\Mp_2, \Weil((\nL'_\Z)^*/\nL'_\Z)^*, k+\frac{1}{2}), \]
which commutes with the Galois action on Fourier coefficients (because $\theta$ has integral Fourier coefficients).
Now, if $f \in \ModForm(\Sp_2', \Weil(\nL^*_\Z/\nL_\Z)^*, k)$ is given, we get (assuming the integral case) that 
\[ {}^\tau (f \otimes \theta) = ({}^\tau f) \otimes \theta \in \ModForm(\Sp_2', \Weil(\nL_\Z^*/\nL_\Z)^*,  k+\frac{1}{2}), \] 
or, in other words:
\[  ({}^\tau f)(gz)\otimes \theta(g\tau) = j(g, \tau)^{2k+1} (\omega_\nL(g) ({}^\tau f)(g\tau)) \otimes (\omega_{<1>}(g)\theta(g \tau)), \]
where $j(g, \tau)$ is the fundamental cocycle of weight $\frac{1}{2}$ on $\Mp_2(\Z) \times \HH$.

For a dense set of $\tau$, $\theta(\tau)$ and $\theta(g\tau)$ are not zero; hence ${}^\tau f$ must converge, too, and
\[  ({}^\tau f)(gz) = j(g, \tau)^{2k} (\omega_\nL(g) ({}^\tau f)(g\tau)), \]
using that $\theta$ is in $\HolModForm(\Mp_2, \Weil(<1>^*/<1>)^*, \frac{1}{2})$.
Hence ${}^\tau f$ lies in $\ModForm(\Mp_2, \Weil(\nL_\Z^*/\nL_\Z)^*, k)$.

Now we may assume that $k\in \Z$ and $\Sp'_2(\Z) = \Sp_2(\Z)$. 
Then the statement of the theorem follows from Lemma \ref{WEILREPGALOIS} and Propositions \ref{SATZCONSTRMODFORMVB} and \ref{SATZINTERPRETCLASSMODFORM}.
\end{proof}

\begin{PAR}We resume the notation from section \ref{BORCHERDSGKZ}.
Assume now that $n \ge 3$ --- or --- $n=2$ and Witt rank of $\nL_\Q = 1$.
With the results above, we are able to construct Borcherds products $\Psi(F)$ whose divisor
\[ \Div(\Psi(F)^2) = \sum_{m \in \Q_{< 0}} \nZ(\nL_\Z, <-m>, a_m; K) + D_\infty \]
satisfies certain special properties (see Theorems \ref{THEOREMBORCHERDSPREP1}--\ref{THEOREMBORCHERDSPREP4} below).
\end{PAR}

\begin{LEMMA}\label{LEMMABORCHERDSPREP2}
Let $\nL_\Z$ be a lattice of signature $(m-2,2)$, $m \ge 4$.
Let $\Weil_0$ be a sub-representation of the Weil representation $\Weil(\nL_\Zh^*/\nL_\Zh)$, which is, as a subspace, defined over $\Q$. 
Let $\{M(m)\}_{m\in\Q_{> 0}}$ be a collection of sub-vector spaces $M(m) \subseteq \Weil_{0,m}^*$ (for the notation, see \ref{WEILREPGALOIS}), such that
$\sum_m \dim(M(m)^\perp) \rightarrow \infty$.

Assume that any modular form in $\HolModForm(\Sp'_2(\Z), \Weil_0^*, \frac{m}{2})$ whose Fourier coefficients satisfy $a_m \in M(m)$ for all $m>0$, vanishes.

\begin{enumerate}
 \item There is an $F \in \ModForm(\Sp'_2(\Z), \Weil_0, 2-\frac{m}{2})$ with Fourier expansion:
\[  \lambda(F) = \sum_{m \in \Q} a_{m} q^m  \]
with $a_0(0) \not= 0$ and all $a_{m} \perp M(-m), m<0$,

 \item Let $0<l \in \Q$ and $a_{-l} \in \Weil_{0,-l}$ with $a_{-l} \not\perp M(l)$ be given. 

There is an $F \in \ModForm(\Sp'_2(\Z), \Weil_0, 2-\frac{m}{2})$ with Fourier expansion:
\[  \lambda(F) = \sum_{m \in \Q} a_{m} q^m   \]
with $a_0(0) \not= 0$, all $a_{-m} \perp M(m), m>0, m\not=l$ and $a_{-l}$ is the given one.
\end{enumerate}

In both cases we may assume $a_m \in \Z[\nL^*_\Z/\nL_\Z]$ for all $m\in \Q$.

\end{LEMMA}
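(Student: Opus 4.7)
The plan is to deduce both parts from a single Serre-duality argument carried out in $\Sing(\Sp_2'(\Z), \Weil_0)$. By (\ref{SERREDUALITY}) and the residue pairing of \ref{BORCHERDSGKZ}, producing a weakly holomorphic
$F \in \ModForm(\Sp_2'(\Z), \Weil_0, 2-\frac{m}{2})$ with $\lambda(F)=f$ is equivalent to producing $f \in \Sing(\Sp_2'(\Z), \Weil_0)$ lying in the annihilator $Y^\perp$, where $Y := \lambda(\HolModForm(\Sp_2'(\Z), \Weil_0^*, \frac{m}{2}))$. The orthogonality requirements $a_{-m} \perp M(m)$ translate into $f$ being annihilated by $M := \bigoplus_{m>0} M(m)\cdot q^m \subset \PowSer(\Weil_0^*)$; the normalization $a_0(0)\neq 0$ says that $f$ pairs nontrivially with the evaluation-at-zero functional $\chi_0^* \in \Weil_{0,0}^*$; and the prescription $a_{-l}(F)=a_{-l}^0$ in part~2 imposes specified pairings with all elements of $\Weil_{0,l}^* \cdot q^l$.

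For part~1, I must produce $f \in (Y+M)^\perp$ with $\langle f, \chi_0^*\rangle \neq 0$. By non-degeneracy of the residue pairing on the relevant finite-dimensional slices (here the hypothesis $\sum_m \dim M(m)^\perp = \infty$ ensures genuine slack at each truncation level, so that the argument reduces to a finite-dimensional linear-algebra problem), such $f$ exists precisely when $\chi_0^* \notin Y+M$ in $\PowSer(\Weil_0^*)$. Suppose for contradiction $\chi_0^* = \phi + m^+$ with $\phi \in Y$ and $m^+\in M$; then $\phi \in \HolModForm(\Sp_2'(\Z), \Weil_0^*, \frac{m}{2})$ has $b_0(\phi)=\chi_0^* \neq 0$ and $b_k(\phi)=-m^+_k \in M(k)$ for every $k>0$, so the vanishing hypothesis forces $\phi = 0$, a contradiction.

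For part~2, set $\widetilde{M} := \bigoplus_{m>0,\,m\neq l} M(m)\cdot q^m$ and consider the affine system requiring $f \perp Y$, $f \perp \widetilde{M}$, $\langle f, v^* q^l\rangle = \langle a_{-l}^0, v^*\rangle$ for every $v^* \in \Weil_{0,l}^*$, and $\langle f, \chi_0^*\rangle \neq 0$. Its solvability reduces, exactly as in part~1, to showing that every relation $\phi + m + v^* q^l = 0$ in $\PowSer$ with $\phi \in Y$, $m \in \widetilde{M}$, $v^* \in \Weil_{0,l}^*$ forces $\langle a_{-l}^0, v^*\rangle = 0$. Such a relation gives $b_0(\phi)=0$, $b_k(\phi)\in M(k)$ for $k\neq l$, and $b_l(\phi)=-v^*$; the vanishing hypothesis (which, if in addition $v^* \in M(l)$, would imply $\phi = 0$ and hence $v^* = 0$) combined with the assumption $a_{-l}^0 \not\perp M(l)$ yields the required consistency, bearing in mind that the special cycle $\nZ(\nL_\Z, \langle l \rangle, a_{-l}^0; K)$ on the orthogonal Shimura variety depends only on $a_{-l}^0$ modulo $M(l)^\perp$.

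Finally, for integrality of the Fourier coefficients I invoke Proposition \ref{LEMMAWEILGALOIS}, which shows that $Y$ is Galois-stable; since $\Weil_0$, the evaluation functional $\chi_0^*$ and (by their geometric origin as vanishing conditions on divisors of Borcherds lifts) the subspaces $M(m)$ are defined over $\Q$, the subspace $Y+M$ is Galois-stable as well. The refined form of Serre duality \cite[Lemma 4.3]{Borcherds2} then promotes the $\C$-existence to an $F$ with integral Fourier coefficients after a harmless scalar. The main obstacle, I expect, lies in the careful analysis in part~2 of the ``obstruction space'' $Y_0 := \{\phi \in Y\,|\, b_0(\phi)=0,\ b_k(\phi)\in M(k)\ \forall k>0,\ k\neq l\}$ and the behaviour of $b_l$ on it: identifying $a_{-l}^0 \not\perp M(l)$ as precisely the condition that rules out nontrivial obstructions requires unpacking the Borcherds cycle formalism and is the step in which the vanishing hypothesis enters non-trivially.
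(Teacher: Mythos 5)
Your part 1 is essentially the paper's own argument: via the residue pairing and (\ref{SERREDUALITY}) you reduce to showing $\chi_0^*\notin Y+M$, and the contradiction — a relation $\chi_0^*=\phi+m^+$ would produce a $\phi\in\HolModForm(\Sp_2'(\Z),\Weil_0^*,\frac{m}{2})$ with all positive-index coefficients in the $M(k)$'s, hence $\phi=0$ by hypothesis — is exactly the one in the paper. Your treatment of the infinite-dimensional duality (``finite-dimensional slices'' plus $\sum_k\dim M(k)^\perp\to\infty$) is looser than the paper's use of the finite codimension of $\lambda(\ModForm(\Sp_2'(\Z),\Weil_0,2-\frac{m}{2})_\Q)\otimes\C$ in $\Sing$ together with the Galois-stability of $Y$ from \ref{LEMMAWEILGALOIS}, and you should note that rationality of the $M(k)$'s (true in all applications) is implicitly needed for the rational perp computation; but the skeleton and the integrality step (bounded denominators after \ref{LEMMAWEILGALOIS}) agree with the paper.

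Part 2 contains a genuine gap. You try to prescribe the \emph{entire} coefficient $a_{-l}$, i.e. the pairings $\langle f, v^*q^l\rangle$ for every $v^*\in\Weil_{0,l}^*$. The consistency condition for that affine system is that $\langle a_{-l}^0,v^*\rangle=0$ whenever there is a relation $\phi+m+v^*q^l=0$ with $\phi\in Y$, $m\in\widetilde M$ and $v^*$ \emph{arbitrary} in $\Weil_{0,l}^*$. The vanishing hypothesis only forces $\phi=0$ when all its coefficients lie in the $M(k)$'s, i.e. when $v^*\in M(l)$; for $v^*\notin M(l)$ nothing rules out such relations, and neither the assumption $a_{-l}^0\not\perp M(l)$ nor your remark that the special cycle depends only on $a_{-l}^0$ modulo $M(l)^\perp$ (a geometric statement external to the lemma, and not proved) supplies the required orthogonality $\langle a_{-l}^0,v^*\rangle=0$. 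The same defect affects your side condition $\langle f,\chi_0^*\rangle\neq0$ in the enlarged system, since the obstruction there is $\chi_0^*\in Y+\widetilde M+\Weil_{0,l}^*\,q^l$, again involving $q^l$-components outside $M(l)$. The paper's mechanism avoids this entirely: since $a_{-l}\not\perp M(l)$, choose $a_l^*\in M(l)$ with $\langle a_l^*,a_{-l}\rangle\neq0$ and show that the \emph{single} functional $[a_l^*q^l]$ does not vanish on $\lambda(\ModForm(\Sp_2'(\Z),\Weil_0,2-\frac{m}{2})_\Q)\cap\Sing_{\widetilde M}$ — the contradiction works precisely because $a_l^*\in M(l)$, so in a putative relation $[a_l^*q^l]=\lambda(f)+\phi$ with $\phi\in\PowSer^{\widetilde M}$ the holomorphic $f$ has all coefficients in $M$ and vanishes — and then add a suitable multiple of such an element to the form from part 1, which preserves $a_0(0)\neq0$ and the perpendicularity at $m\neq l$ while adjusting the coefficient at $l$. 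If you restrict your prescription to the pairings with $M(l)$ only (equivalently, prescribe $a_{-l}$ modulo $M(l)^\perp$), your consistency argument does close and recovers what the paper's proof actually delivers; prescribing all of $a_{-l}$ does not follow from the stated hypotheses, so this step needs to be reworked along those lines.
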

\begin{proof}
(Compare \cite[Lemma 4.11]{BBK})

1. Let $\Sing_M(\Sp'_2(\Z), \Weil_0) \subset \Sing(\Sp'_2(\Z), \Weil_0)$ be the subspace, defined by the conditions $a_{-m} \perp M(m), m>0$.
Obviously $\Sing_M(\Sp'_2(\Z), \Weil_0)^\perp$ is the subspace $\PowSer^M(\Sp'_2(\Z), \Weil_0^*)$, defined by the conditions $a_m^* \in M(m)$ for all $m>0$ and $c^*_0 = 0$.

Since $\lambda(\ModForm(\Sp'_2(\Z), \Weil_0, 2-\frac{m}{2})_\Q) \otimes \C$ has f. ind. in $\Sing(\Sp'_2(\Z), \Weil_0)$ 
and $\sum_m \dim(M(m)^\perp) \rightarrow \infty$, we have
\[ \lambda(\ModForm(\Sp'_2(\Z), \Weil_0, 2-\frac{m}{2})_\Q) \cap \Sing_M(\Sp'_2(\Z), \Weil_0)_\Q \not= 0. \]
We want to show that the application $[a_0(0)]: \Sing(\Sp'_2(\Z), \Weil_0)_\Q \rightarrow \Q$ does not vanish on this intersection.

Now the duality between $\PowSer(\cdots)$ and $\Sing(\cdots)$ is non-degenerate, therefore:
\begin{align*}
 & \left( \lambda(\ModForm(\Sp'_2(\Z), \Weil_0, 2-\frac{m}{2})_\Q) \cap \Sing_M(\Sp'_2(\Z), \Weil_0)_\Q \right)^\perp \\
 =&  \lambda(\HolModForm(\Sp'_2(\Z), \Weil_0^*, \frac{m}{2}))^{\Aut(\C)} + \PowSer^M(\Sp'_2(\Z), \Weil_0^*)_\Q.
\end{align*}
Here we used Proposition \ref{LEMMAWEILGALOIS}, stating that $\lambda(\HolModForm(\Sp'_2(\Z), \Weil_{0}^*, \frac{m}{2}))$ is a Galois invariant subspace 
because by assumption $\Weil_0^*$ is, as a subspace, defined over $\Q$.

Let us assume that there is a relation
\[ [a_{0}(0)] = \lambda(f) + \phi, \]
where $f \in \HolModForm(\Sp'_2(\Z), \Weil_0^*, \frac{m}{2})$  and $\phi$ is in $\PowSer^M(\Sp'_2(\Z), \Weil_0^*)$.

From this follows that $f$ is a modular form, whose coefficients $a_m^*$, $m>0$, satisfy $a_m^* \in M(m)$. Therefore $f=0$ by assumption, a contradiction.

2. Since $a_{-l} \not\perp M(l)$, we find a $a_{l}^* \in M(l)$ such that $a_{l}^* a_{-l} \not=0$.
We have to show that the element $[a_{l}^* q^{l}]: \Sing(\Sp'_2(\Z), \rho)_\Q \rightarrow \Q$
does not vanish on the intersection 
\begin{equation*}
 \lambda(\ModForm(\Sp'_2(\Z), \Weil_0, 2-\frac{m}{2})_\Q) \cap \Sing_{\widetilde{M}}(\Sp'_2(\Z), \Weil_0)_\Q,
\end{equation*}
where $\widetilde{M} = M$ except that $\widetilde{M}(l)$ has been set to 0.

For if this is not the case, we can add an appropriate element not being in its kernel to our original $F$ to get the result.
Suppose that $[a_{l}^* q^{l}]$ vanishes. Then we get a relation:
\[ [a_{l}^* q^{l}] = \lambda(f) + \phi, \]
where $f \in \HolModForm(\Sp'_2(\Z), \Weil_0^*, \frac{m}{2})$ and $\phi \in \PowSer^{\widetilde{M}}(\Sp'_2(\Z), \Weil_0^*)$.
Therefore $f=0$ as above, a contradiction.

The statement about integrality follows from the fact that all components of the constructed $f$s are
in fact modular forms for $\Gamma(N)$ for some $N$. For integral weight it is well-known that their Fourier coefficients have a bounded denominator. Multiplication with the common denominator yields a form with integral coefficients. For half-integral weight, use the reduction to integral weight method of the proof of Proposition \ref{LEMMAWEILGALOIS}.
\end{proof}

We will now show several theorems, stating the existence of Borcherds lifts, whose divisor has special properties. This is an essential
ingredient in the calculation of arithmetic volumes later. To not interrupt the discussion, we refer to the appendix \ref{LEMMATAQUADRATICFORMS}
for several elementary lemmas on quadratic forms which are needed and to appendix \ref{LACUNARITY} for several facts about vanishing of modular forms with
sparse Fourier coefficients.

\begin{SATZ}\label{THEOREMBORCHERDSPREP1}
\begin{enumerate}
\item
Let $\nL_\Zpp$ be an isotropic lattice \textbf{of signature $(m-2,2)$, $m \ge 4$}.
There is a saturated lattice $\nL_\Z \subset \nL_\Zpp$ and a 

$F \in \ModForm(\Sp'_2(\Z), \Weil(\nL_\Z^*/\nL_\Z), 2-\frac{m}{2})$ with integral Fourier coefficients, such that $\Psi(F)$ has non-zero weight,
all occurring $\nZ(\nL_\Z, <-m>, a_m; K)$, $m<0$ in $\Div(\Psi(F))$ are $p$-integral, i.e.
consist of canonical models of Shimura varieties $\nSh({}^K \nO(\nL'_\Zpp))$,
for various lattices $\nL'_\Zpp$ with unimodular (at $p$) quadratic form.

\item We find an $F$ above, such that $\Div(\Psi(F))$ contains, in addition, precisely one $\nZ(\nL_\Z, <m>, \kappa; K)$, $p | m$ with non-zero multiplicity.
\end{enumerate}
\end{SATZ}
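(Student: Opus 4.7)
My plan is to apply Lemma~\ref{LEMMABORCHERDSPREP2} with a carefully chosen input, thereby reducing the $p$-integrality of the divisor of $\Psi(F)$ to a lacunarity statement about scalar-valued modular forms. I begin by picking any $\Z$-lattice $\nL_\Z \subset \nL_\Q$ saturated in $\nL_\Zpp$, i.e.\ one with $\nL_\Z \otimes \Zpp = \nL_\Zpp$. Since $\nL_\Zp$ is unimodular, the local factor $\Weil(\nL_\Zp^*/\nL_\Zp)$ of the Weil representation is trivial, so every vector of $\Weil_0 := \Weil(\nL_\Z^*/\nL_\Z)$ corresponds, as a Schwartz function on $\nL_\Af$, to one admissible at $p$ in the sense of Definition~\ref{DEFSPECIALCYCLEP}. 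By (\ref{weildecomp}) and the canonical choice of roots of unity, $\Weil_0$ is naturally defined over $\Q$, so Proposition~\ref{LEMMAWEILGALOIS} applies. I then set
\[ M(m) := \begin{cases} \Weil_{0,m}^* & \text{if } v_p(m) \neq 0, \\ 0 & \text{if } v_p(m) = 0, \end{cases} \]
for each $m > 0$. Since $\dim \Weil_{0,m}$ is unbounded as $m$ ranges over positive rationals with $v_p(m) = 0$, the growth requirement $\sum_m \dim M(m)^\perp \to \infty$ of Lemma~\ref{LEMMABORCHERDSPREP2} is satisfied.

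Unpacking the hypothesis of Lemma~\ref{LEMMABORCHERDSPREP2}, what must be verified is the following lacunarity assertion: \emph{any $f \in \HolModForm(\Sp'_2(\Z), \Weil_0^*, \frac{m}{2})$ whose Fourier coefficient $a_{m'}$ vanishes for every $m' > 0$ with $v_p(m') = 0$ must itself vanish identically.} Passing to a congruence subgroup $\Gamma(N) \subset \Sp'_2(\Z)$ which acts trivially on $\Weil_0$ (with $N$ coprime to $p$, since $p \nmid |\nL_\Z^*/\nL_\Z|$), the components of $f$ become scalar-valued modular forms of weight $m/2$ for $\Gamma(N)$ whose $q$-expansions at every cusp omit every rational index with $p$-adic valuation equal to zero. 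I expect this to follow from the results on lacunarity of modular forms collected in the first appendix, applied componentwise and cusp by cusp. This step is the main obstacle and the reason the lacunarity appendix is needed.

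Once the lacunarity is in hand, part~(1) of Lemma~\ref{LEMMABORCHERDSPREP2} produces an $F \in \ModForm(\Sp'_2(\Z), \Weil_0, 2-\frac{m}{2})$ with integral Fourier coefficients, $a_0(0) \neq 0$, and $a_q = 0$ for every $q < 0$ with $v_p(q) \neq 0$. Then $\Psi(F)$ has non-zero weight $a_0(0)/2$ by Theorem~\ref{BPADELIC}~(1), and every cycle appearing in $\Div(\Psi(F)^2)$ is of the form $\nZ(\nL_\Z, \langle -q \rangle, a_q; K)$ with $-q \in \Zpp^*$. For such $q$ the lattice $\langle -q \rangle$ is unimodular over $\Zpp$, and any primitive representative $v \in \nL_\Zpp$ with $Q(v) = -q$ splits $\nL_\Zpp = \langle v \rangle \perp v^\perp$ as an orthogonal direct sum of unimodular $\Zpp$-lattices; together with the extension property in Theorem~\ref{MAINTHEOREM1a}, this gives the claimed $p$-integrality of each cycle, proving~(1). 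For~(2) I fix a single $l > 0$ with $p \mid l$ and a specific non-zero $a_{-l} \in \Weil_{0,-l}$ supported on one coset; then $a_{-l} \not\perp M(l) = \Weil_{0,l}^*$, and part~(2) of Lemma~\ref{LEMMABORCHERDSPREP2} --- whose vanishing hypothesis coincides with the one just verified, since the modified $\widetilde{M}$ only differs from $M$ at the single index $l$ --- yields $F$ whose divisor contains exactly one cycle $\nZ(\nL_\Z, \langle l \rangle, a_{-l}; K)$ at an index divisible by $p$, on top of the $p$-integral cycles from~(1).
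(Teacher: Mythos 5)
Your proposal is correct and takes essentially the same route as the paper: the identical choice of $M(m)$ (zero exactly at the indices $m$ with $|m|_p=1$), reduction via Lemma \ref{LEMMABORCHERDSPREP2} to the lacunarity statement — which is precisely Lemma \ref{LEMMAMODFORM1} applied to the components, viewed as forms for some $\Gamma(N)$ with $p\nmid N$ — and Theorem \ref{BPADELIC} together with the splitting-off of $\langle v\rangle$ with $Q(v)\in\Zpp^*$ for the $p$-integrality of the cycles, with part 2 handled by part 2 of the same lemma. The only cosmetic deviations are that the paper fixes the saturated lattice in the special form $\nL_\Z = H\oplus \nL_\Z'$ (convenient for the later use in section \ref{BORCHERDS}), and that the divergence of $\sum_m \dim M(m)^\perp$ follows from the infinitude of indices $m$ with $|m|_p=1$ and $\Weil_{0,m}\neq 0$, not from any unboundedness of $\dim \Weil_{0,m}$, which is of course bounded by $|\nL_\Z^*/\nL_\Z|$.
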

\begin{proof}
1. Choose any saturated lattice of the form $\nL_\Z = H \oplus \nL_\Z'$ in $\nL_\Zpp$.
Let
\[ M(m) = \begin{cases} 0 & |m|_p = 1, \\ \Weil_{m}^* & |m|_p \not= 1. \end{cases} \]

Then all $\nZ(\nL_\Z, <-m>, a_m; K)$ with $a_m \perp M(m)$
consist of $p$-integral canonical models of Shimura varieties $\nSh({}^K \nO(\nL'_\Zpp))$ because $v^\perp$, for any
$v \in \nL_\Zp$ with $Q_\nL(v)=m$, $|m|_p=1$, is unimodular.

In view of Theorem \ref{BPADELIC}, to construct the required Borcherds form, by Lemma \ref{LEMMABORCHERDSPREP2}, 1., we have to show that any modular form
\[ f \in \HolModForm(\Sp'_2(\Z), \Weil^*, \frac{m}{2}), \]  
whose Fourier coefficients are supported only on $M$, vanishes.
But every component of $f$ is (in particular) a modular form for some $\Gamma(N)$, $p\nmid N$. It vanishes by Lemma \ref{LEMMAMODFORM1}.

For 2., use Lemma \ref{LEMMABORCHERDSPREP2}, 2.
\end{proof}

\begin{SATZ}\label{THEOREMBORCHERDSPREP2}
\begin{enumerate}
 \item Let $\nL_\Zpp$ be a lattice \textbf{of signature $(3,2)$ and Witt rank 1}.
 Up to multiplication of $Q_\nL$ with a scalar $\in \Z_{(p)}^*$,
there is a saturated lattice $\nL_\Z \subset \nL_\Zpp$ and a $F \in \ModForm(\Mp_2(\Z), \Weil, 2-\frac{m}{2})$ with integral Fourier coefficients, such that $\Psi(F)$ has non-zero weight,
all occurring $\nZ(\nL_\Z, <-m>, a_m; K)$ in $\Div(\Psi(F))$ are $p$-integral, i.e.
consist of canonical models of Shimura varieties $\nSh({}^K \nO(\nL'_\Zpp))$,
for various lattices $\nL'_\Zpp$ with unimodular (at $p$) quadratic form (i.e. with $K$ admissible), such that
$\nL'_\Q$ has signature $(2, 2)$ and Witt rank 1.

\item Up to multiplication of $Q_{\nL'}$ with a scalar $\in \Z_{(p)}^*$, for every $\nL_\Zpp'$ \textbf{of signature $(2,2)$, Witt rank 0}, we find a lattice $\nL_\Zpp$ of signature $(3, 2)$, Witt rank 1, and an $F$ as in 1., such that in $\Div(\Psi(F))$ occurs, in addition to the subvarieties above, precisely one \mbox{$\nZ(\nL, <l>, \kappa; K)$} with non-zero coefficient,
consisting of canonical models of Shimura varieties $\nSh({}^{K_i} \nO(\nL'_\Zpp))$ (with the given $\nL'_\Zpp$) for various different admissible $K_i$'s.
\end{enumerate}
\end{SATZ}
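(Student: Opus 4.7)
The plan is to mimic the proof of Theorem~\ref{THEOREMBORCHERDSPREP1}, applying Lemma~\ref{LEMMABORCHERDSPREP2} after a careful choice of saturated sublattice and family of subspaces $\{M(m)\}_{m>0}$. The Borcherds product $\Psi(F)$ produced by Theorem~\ref{BPADELIC} will then automatically have the divisor structure required by the statement, and integrality and non-zero weight will follow by the same arguments as in Theorem~\ref{THEOREMBORCHERDSPREP1}.

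For Part~1, after rescaling $Q_\nL$ by a unit of $\Z_{(p)}^*$, I would choose a saturated decomposition $\nL_\Z = H \oplus \nL_{0,\Z}$ with $\nL_{0,\Q}$ anisotropic of signature $(2,1)$; this is possible by the Witt-rank-$1$ hypothesis on $\nL_\Q$ together with the quadratic-form lemmas in Appendix~\ref{LEMMATAQUADRATICFORMS}. The key observation is that, by Witt cancellation, the isometry class of $v^\perp_\Q$ depends only on $Q(v)$; likewise, for primitive $v \in \nL_\Zpp$, $v^\perp_\Zpp$ is unimodular iff $|Q(v)|_p = 1$. So I set $M(m) = 0$ precisely when $|m|_p = 1$ and $v^\perp_\Q$ has Witt rank $1$ for $Q(v)=m$ (equivalently $v^\perp_\Q \not\cong H \oplus H$, a condition on the square class of $m$ modulo $(\Q^*)^2$ together with local Hasse conditions), and $M(m) = \Weil_m^*$ otherwise. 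Since $v^\perp_\Q \cong H \oplus H$ can only occur when $m$ lies in the square class of $\det(\nL)$, the good set has positive density in every nonempty residue class of $\Q/\Z$ corresponding to $\nL_\Z^*/\nL_\Z$, giving $\sum_m \dim M(m)^\perp = \infty$.

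The main task is then to verify the vanishing hypothesis of Lemma~\ref{LEMMABORCHERDSPREP2}: no nonzero $f \in \HolModForm(\Mp_2(\Z), \Weil^*, \tfrac{5}{2})$ has all its positive Fourier coefficients supported in the "bad" set. Restricted to a level $\Gamma(N)$ with $p \nmid N$, each scalar component of such an $f$ would be a half-integral-weight modular form whose Fourier expansion misses infinitely many arithmetic progressions $\{m : p \nmid m$, $m$ in a complementary square class$\}$. This contradicts the lacunarity principle of Appendix~\ref{LACUNARITY}, specifically Lemma~\ref{LEMMAMODFORM1}, applied to each component after the square-class refinement. Lemma~\ref{LEMMABORCHERDSPREP2}(1) then produces $F$, and multiplication by a sufficiently large integer clears denominators; Theorem~\ref{BPADELIC}(1) ensures $\Psi(F)$ has weight $\tfrac{a_0(0)}{2}\neq 0$.

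For Part~2, given $\nL'_\Zpp$ of signature $(2,2)$ and Witt rank $0$, I would put $\nL_\Zpp := \langle l \rangle \oplus \nL'_\Zpp$ for a suitable $l \in \Zpp^*$ (this is the rescaling of $Q_{\nL'}$ permitted by the statement). Meyer's theorem gives that $\nL_\Q$ is isotropic, so has Witt rank $\geq 1$; Witt rank $2$ would, by Witt cancellation, force $\nL'_\Q$ to have Witt rank $\geq 1$, contradicting the hypothesis, so $\nL_\Q$ has Witt rank exactly $1$. Let $w$ generate $\langle l\rangle$ and $\kappa \in \nL_\Z^*/\nL_\Z$ be its coset for a saturated $\nL_\Z \subset \nL_\Zpp$ containing $w$. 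Applying Lemma~\ref{LEMMABORCHERDSPREP2}(2) with the same family $\{M(m)\}$ as in Part~1 and prescribed coefficient $a_{-l} := \chi_\kappa$ (noting $M(l) = \Weil_l^*$ since $\nL'_\Q$ has Witt rank $0$, so $\chi_\kappa \not\perp M(l)$ automatically), I obtain $F$ whose Borcherds lift has the desired divisor: the Part~1 "good" cycles plus precisely one extra cycle $\nZ(\nL, \langle l\rangle, \chi_\kappa; K)$. By construction $w^\perp_\Zpp = \nL'_\Zpp$, so this extra cycle decomposes, according to Definition~\ref{DEFSPECIALCYCLEP}, into a disjoint union of canonical models $\nSh({}^{K_i}\nO(\nL'_\Zpp))$ indexed by the finitely many $K$-orbits on isometries in the support of $\chi_\kappa$. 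The main obstacle in both parts is the vanishing hypothesis of Lemma~\ref{LEMMABORCHERDSPREP2}: one must verify that the complement of the "bad" set — which involves both $p$-divisibility and the square-class obstruction to Witt rank $1$ — still contains enough arithmetic subprogressions for the lacunarity lemma of Appendix~\ref{LACUNARITY} to force vanishing.
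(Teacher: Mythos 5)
Your overall architecture (rescale, choose a saturated sublattice, pick $\{M(m)\}$, invoke Lemma~\ref{LEMMABORCHERDSPREP2} and Theorem~\ref{BPADELIC}) matches the paper, and your Witt-rank bookkeeping in both parts is correct. But there is a genuine gap at the crux of the argument: the verification of the vanishing hypothesis of Lemma~\ref{LEMMABORCHERDSPREP2}. With your choice $M(m)=\Weil_m^*$ on the ``bad'' set, a form $f\in\HolModForm(\Mp_2(\Z),\Weil^*,\tfrac{5}{2})$ satisfying $a_m\in M(m)$ is merely one whose coefficients vanish on the good set; its support may therefore contain all $m$ with $p\nmid m$ lying in the excluded square classes (those not represented by the anisotropic part $\nL'_\Q$). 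Lemma~\ref{LEMMAMODFORM1} only kills forms whose support is contained in $\tfrac{p}{N}\Z$, so it says nothing about such $f$; and ``missing infinitely many arithmetic progressions'' does not force a modular form to vanish (theta series of binary forms already miss arbitrarily many progressions). There is no ``square-class refinement'' of Lemma~\ref{LEMMAMODFORM1} in Appendix~\ref{LACUNARITY}, and a component-wise application only kills the components $f_\kappa$ with $\kappa$ primitive (for those, by Lemma~\ref{LEMMA210REPN}, every $m\equiv Q(\kappa)$ with $|m|_p=1$ is good, so $f_\kappa$ is supported on $p\mid m$); it gives nothing for the non-primitive components, and $\Weil(\nL_\Z^*/\nL_\Z)$ is not irreducible, so you cannot conclude $f=0$ from the vanishing of some components.

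This is exactly the difficulty the paper's proof is engineered around, and the missing ingredients are: (i) choose the lattice via Lemma~\ref{LEMMA210REPN} so that $\nL_\Z^*/\nL_\Z$ is cyclic of order $2D'$ with $D'$ square-free, which makes the subrepresentation $\Weil(\nL_\Z^*/\nL_\Z)^+$ \emph{irreducible}; (ii) take $M(m)=(\C\gamma)^\perp$ (not $0$) for $|m|_p=1$, where $\gamma=\sum\chi_\kappa$ runs over \emph{primitive} cosets — by Lemma~\ref{LEMMA210REPN} these automatically index vectors whose orthogonal complement is unimodular at $p$ \emph{and} isotropic, so the divisor still has the required shape; (iii) deduce from the support condition that the single scalar form $\gamma\circ f$ is supported on $p\mid m$, kill it by Lemma~\ref{LEMMAMODFORM1}, and then upgrade to $f=0$ by irreducibility via Lemma~\ref{LEMMAMODFORM3}. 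The same repair is needed in your Part~2, where the lattice should be produced by Lemma~\ref{LEMMA220INCLUSION} (your $\langle l\rangle\oplus\nL'_\Zpp$ construction is fine, but you must additionally arrange the cyclic square-free discriminant group); once that is done, your use of Lemma~\ref{LEMMABORCHERDSPREP2}, 2.\ with $a_{-l}$ not orthogonal to $M(l)$ goes through as in the paper.
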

\begin{proof}
1. By Lemma \ref{LEMMA210REPN}, we may multiply $Q_\nL$ by a scalar in such a way that 
there is a saturated lattice $\nL_\Z \subset \nL_\Zpp$ with cyclic $\nL_\Z^*/\nL_\Z$ of order $2D'$, where $D'$ is square-free, of the form
\[ \nL_\Z = H \oplus \nL_\Z'. \]

By \ref{EXPLICITWEIL}, we know that $\Weil(\nL_\Z^*/\nL_\Z)$ decomposes
\[ \Weil(\nL_\Z^*/\nL_\Z) = \bigotimes_{l|D} \Weil(\nL_\Zl^*/\nL_\Zl), \]
and for each $l$ we have a decomposition 
\[ \Weil(\nL_\Zl^*/\nL_\Zl) = \Weil(\nL_\Zl^*/\nL_\Zl)^+ \oplus \Weil(\nL_\Zl^*/\nL_\Zl)^-, \]
into irreducible representations, here $\nL_{\Z_l}^*/\nL_{\Z_l}$ cyclic of odd prime order (respectively of order 2 or 4) is used.
(If the order is 2, $\Weil(\nL_{\Z_2}^*/\nL_{\Z_2})^-$ is zero).

We will work with the {\em irreducible} representation 
\[ \Weil(\nL_\Z^*/\nL_\Z)^+ := \bigotimes_l \Weil(\nL_\Zl^*/\nL_\Zl)^+, \]
with product basis build from the bases $(\chi_{\kappa}+\chi_{-\kappa}) \in \Weil(\nL_\Zl^*/\nL_\Zl)^+$
for $\kappa \in (\nL_\Zl^*/\nL_\Zl)/(\pm 1)$.
There will be a basis vector of the form 
\[ \gamma := \sum \chi_{\kappa}, \]
where the sum runs only over {\em primitive} $\kappa \in \nL_\Z^*/\nL_\Z$.

Let 
\[ M(m) = \begin{cases} (\C \gamma)^\perp & |m|_p = 1, \\ (\Weil(\nL_\Z^*/\nL_\Z)^+)^*_m & |m|_p \not = 1. \end{cases} \]

Now for $v \in \nL_\Q \cap \kappa$, with $\kappa$ such that $\chi_\kappa$ occurs in $\gamma$ (in particular $\kappa$ primitive), and $|Q_\nL(v)|_p=1$, $v^\perp$ is unimodular. It is also isotropic because by Lemma \ref{LEMMA210REPN} 
$m$ is already represented by $\nL_\Zh'$.

In view of Theorem \ref{BPADELIC} again, to construct the required Borcherds form, by Lemma \ref{LEMMABORCHERDSPREP2}, 2., we have to show that any modular form
\[ f \in \HolModForm(\Mp_2(\Z),(\Weil(\nL_\Z^*/\nL_\Z)^+)^*,\frac{m}{2}) \]
with Fourier coefficients supported only on $M$ vanishes.
Note that $\Weil(\nL_\Z^*/\nL_\Z)^+$ is defined over $\Q$ as a subvectorspace.

Now $\gamma \circ f$ vanishes by \ref{LEMMAMODFORM1}. Since $\Weil(\nL_\Z^*/\nL_\Z)^+$ is irreducible, Lemma \ref{LEMMAMODFORM3} tells us 
$f=0$.

2. Use \ref{LEMMA220INCLUSION} to construct the lattice $\nL_\Z$, and $\nZ(\nL_\Z, <x>, \chi_{\nL_\Zh}; K)$ consists obviously of the required models.
Since $\chi_{\nL_\Zh} \not\perp \gamma^\perp$, apply Lemma \ref{LEMMABORCHERDSPREP2}, 2.
\end{proof}

\begin{SATZ}\label{THEOREMBORCHERDSPREP3}
\begin{enumerate}
 \item 
Let $\nL_\Zpp$ be an \textbf{isotropic lattice of signature $(2,2)$ and discriminant $q$, $q$ a prime $\equiv 1 \enspace (4)$}.

Up to multiplication of $Q_\nL$ with a scalar $\in \Z_{(p)}^*$,
there is a lattice $\nL_\Z \subset \nL_\Zpp$ and a $F \in \ModForm(\Sp_2(\Z), \Weil(\nL^*_\Z/\nL_\Z), 2-\frac{m}{2})$ with integral Fourier coefficients, such that $\Psi(F)$ has non-zero weight,

all occurring $\nZ(\nL, <-m>, a_m; K)$ in $\Div(\Psi(F))$ are $p$-integral, i.e.
consist of canonical models of Shimura varieties $\nSh({}^K \nO(\nL'_\Zpp))$,
for various lattices $\nL'_\Zpp$ with unimodular quadratic form, such that
$\nL'$ has signature $(1, 2)$ and Witt rank 1.
\item 
For every $\nL_\Zpp'$ \textbf{of signature $(1,2)$, Witt rank 0}, up to multiplication of $Q_{\nL'}$ with a scalar $\in \Z_{(p)}^*$, we find a lattice $\nL_\Z$ of signature $(2, 2)$, Witt rank 1 and discriminant $q$, $q$ a prime $\equiv 1 \enspace (4)$ (as in i) and an $F$ as in 1. such that in $\Div(\Psi(F))$ occurs, in addition to the subvarieties above, precisely one $\nZ(\nL, <l>, \varphi; K)$ with non-zero coefficient,
consisting of canonical models of Shimura varieties $\nSh({}^{K_i} \nO(\nL'_\Zpp))$ (with the given $\nL_\Zpp$) for various different admissible $K_i$'s.
\end{enumerate}

\end{SATZ}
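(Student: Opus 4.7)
The plan is to adapt the argument of Theorem \ref{THEOREMBORCHERDSPREP2} to the even-weight, signature $(2,2)$ situation. Since $m=4$, the input form has weight $2-m/2=0$ and, because $m$ is even, the Weil representation factors through $\Sp_2(\Z)$ rather than $\Mp_2(\Z)$. By \ref{EXPLICITWEIL}, the decomposition $\Weil = \Weil^+ \oplus \Weil^-$ into irreducibles holds once $\nL_\Z^*/\nL_\Z$ is cyclic of prime order $q$; the congruence $q \equiv 1 \pmod 4$ is imposed so that a lattice $\nL_\Z$ of signature $(2,2)$, Witt rank $1$, and cyclic discriminant $q$ with the $\Q$-rational orthogonal splitting needed below actually exists (this is where the signature of the discriminant form mod $8$ enters).

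For part 1, I would first rescale $Q_\nL$ by a unit in $\Z_{(p)}^*$ and choose a saturated sublattice $\nL_\Z\subset\nL_\Zpp$ of the form $\nL_\Z=H\oplus\nL'_\Z$, where $H$ is a hyperbolic plane and $\nL'_\Z$ is anisotropic of signature $(0,2)$ with cyclic discriminant $q$. The key geometric observation is that for any $v\in\nL'_\Q$ one has $H\subset v^\perp$, so $v^\perp$ is automatically of signature $(1,2)$ and Witt rank $1$, and is moreover unimodular at $p$ as soon as $|Q_\nL(v)|_p=1$. I would then take the distinguished vector $\gamma:=\sum_\kappa\chi_\kappa\in\Weil^+$, where the sum ranges over the primitive cosets $\kappa$ of $(\nL'_\Z)^*/\nL'_\Z\subset\nL_\Z^*/\nL_\Z$, and define
\[ M(m)=\begin{cases}(\C\gamma)^\perp\cap(\Weil^+)^*_m & |m|_p=1,\\ (\Weil^+)^*_m & |m|_p\neq 1,\end{cases} \]
so that any coefficient $a_{-m}\perp M(m)$ lies in $\C\gamma$ and the associated cycle $\nZ(\nL_\Z,<-m>,a_{-m};K)$ is, by the divisor formula of Theorem \ref{BPADELIC}, a finite sum of canonical models $\nSh({}^K\nO(\nL'_\Zpp))$ of the required type.

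By Theorem \ref{BPADELIC} and Lemma \ref{LEMMABORCHERDSPREP2}, 1., producing the required $F$ reduces to proving that any $f\in\HolModForm(\Sp_2(\Z),(\Weil^+)^*,2)$ whose Fourier coefficients are all supported in $M$ must vanish. Pairing $f$ with $\gamma$ yields a scalar-valued modular form for a congruence subgroup $\Gamma(N)$ with $p\nmid N$ whose non-zero Fourier coefficients occur only at exponents $m$ with $|m|_p\neq 1$; the lacunarity result \ref{LEMMAMODFORM1} of the appendix forces this pairing to vanish, and irreducibility of $\Weil^+$ combined with Lemma \ref{LEMMAMODFORM3} then forces $f=0$. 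Integrality of the Fourier expansion of $F$ follows from the final clause of Lemma \ref{LEMMABORCHERDSPREP2}, and non-vanishing of the weight $a_0(0)$ is ensured by the construction.

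For part 2, given $\nL'_\Zpp$ anisotropic of signature $(1,2)$, I would use an analogue of Lemma \ref{LEMMA220INCLUSION} to embed $\nL'_\Zpp$, after rescaling, as the orthogonal complement of a primitive vector $v$ inside some $\nL_\Zpp=\langle v\rangle_\Zpp\oplus\nL'_\Zpp$ of signature $(2,2)$, Witt rank $1$, and cyclic discriminant of prime order $q\equiv 1\pmod 4$; the vector $v$ can moreover be arranged to lie in a coset $\kappa_0$ appearing in the support of $\gamma$, so $\chi_{\kappa_0}\not\perp\gamma$. Then Lemma \ref{LEMMABORCHERDSPREP2}, 2. with $a_{-l}=c\,\chi_{\kappa_0}$ combined with the same vanishing argument as above yields an $F$ whose divisor contains precisely the additional cycle $\nZ(\nL_\Z,<l>,\chi_{\kappa_0};K)$, which by construction decomposes as a finite union of models $\nSh({}^{K_i}\nO(\nL'_\Zpp))$ for various admissible $K_i$. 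The main obstacle will be lattice-theoretic: producing the host $\nL_\Z$ satisfying all constraints simultaneously (signature, Witt rank, cyclic prime discriminant in the class $1\pmod 4$, prescribed $\nL'_\Zpp$ as a specific orthogonal complement) reduces to a genus-theoretic existence problem resolved by combining the quadratic-form lemmas of appendix \ref{LEMMATAQUADRATICFORMS} with a local-global argument at $v\in\{q,p,\infty\}$.
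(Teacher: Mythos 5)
Your construction follows the template of Theorem \ref{THEOREMBORCHERDSPREP2}, but the step that makes that template work in signature $(3,2)$ fails here, and this is precisely the point of the $(2,2)$ case. Writing $\nL_\Z = H \perp \nL'_\Z$ (note $\nL'_\Z$ has signature $(1,1)$, not $(0,2)$: it is the indefinite anisotropic binary form $\langle x^2+xy+\frac{1-q}{4}y^2\rangle$), the complement $v^\perp$ of a positive vector $v$ with $|Q(v)|_p=1$ in a primitive coset is indeed unimodular at $p$ of signature $(1,2)$ --- but it is \emph{isotropic} (Witt rank 1, as the statement requires) if and only if $Q(v)$ is represented by the binary constituent $\nL'_\Q$. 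In dimension $\ge 3$ this representability is automatic for primitive cosets (Lemma \ref{LEMMA210REPN}), which is why the condition ``$a_{-m}\in\C\gamma$ and $|m|_p=1$'' suffices in Theorem \ref{THEOREMBORCHERDSPREP2}; the paper explicitly warns that this strong form is \emph{false} for binary forms, and Lemma \ref{LEMMA110REPN} only salvages it for prime numerators with $\chi_q=+1$. With your choice of $M(m)$ the divisor of $\Psi(F)$ can therefore contain cycles whose components are \emph{compact} Shimura curves (Witt rank 0), which the theorem must exclude (those are the base of a separate, reversed induction step). The paper's $M(m)$ carries the extra factor $R(m)^\perp$ recording exactly this representability condition.

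Once that condition is imposed, your vanishing argument collapses: the allowed exponents are no longer concentrated on $|m|_p\neq 1$ but include the full positive-density set of $m$ not represented by $\nL'$ (in particular all classes with $\chi_q=-1$), so Lemma \ref{LEMMAMODFORM1} does not apply. The paper instead transports $f$ via the Fricke involution to a form in $\HolModForm(\Gamma_0(q),\chi_q,2)$ whose coefficients vanish whenever $\chi_q(n)=-1$ and at all primes $p'\neq p$, and kills it with Proposition \ref{LEMMAMODFORM2} --- the Ono--Skinner-type argument with Galois representations and \v{C}ebotarev. This is also where $q\equiv 1\pmod 4$ is really used (it excludes CM forms and makes $-l'$ a square mod $q$ iff $l'$ is), not in the lattice-existence step as you suggest. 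Without some substitute for Proposition \ref{LEMMAMODFORM2}, both parts of your argument are incomplete; part 2 additionally should invoke Lemma \ref{LEMMA210INCLUSION} rather than an analogue of \ref{LEMMA220INCLUSION} to build the host lattice.
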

\begin{proof}
1. Let $\Weil^+$ be as in the last theorem.

By Lemma \ref{LEMMA210REPN}, we may multiply $Q_\nL$ by a scalar such that 
there is a lattice $\nL_\Z \subset \nL_\Zpp$, with cyclic $\nL_\Z^*/\nL_\Z$ of order $q$, of the form
\[ \nL_\Z = H \oplus <x^2+xy+\frac{1-q}{4}y^2>. \]

A $\nZ(\nL, <m>, \kappa; K)$ (for admissible $K$) consists of Shimura varieties of the required form, if $v^\perp$ for $v \in \pm\kappa+\nL_\Zh$, $Q_\nL(v)=m$ is isotropic. 
This is the case, if and only if $m$ is represented by $<x^2+xy+\frac{1-q}{4}y^2>$. Hence define 

\[ M(m) := \begin{cases} (\Weil^+)_{m}^* & |m|_p \not= 1, \\ 
(\Weil^+)_{m}^* \cap R(m)^\perp  & |m|_p = 1, \end{cases} \]
where $R(m) = \{ f \in \Weil_{0,m} \where \exists v \in <x^2+xy+\frac{1-q}{4}y^2>_\Af: f(v)\not=0, Q_\nL(v)=m \}$.

We know (\ref{EXAMPLEGAMMA0P_2}) that $(\Weil^+)_{\frac{j}{q}}^*$ is zero, if $\chi_q(j)=-1$. The definition of $M(m)$ and \ref{LEMMA110REPN} imply that for {\em primes} $p' \not = p$ with $\chi_q(p')=1$, $M(\frac{p'}{q})$ is also zero.

In view of Theorem \ref{BPADELIC} again, to construct the required meromorphic modular form, by Lemma \ref{LEMMABORCHERDSPREP2}, we have to show that any modular form
$f \in \HolModForm(\Sp_2(\Z), (\Weil^+)^*, \frac{m}{2})$ with Fourier coefficients supported only on $M$ vanishes.
From the remarks in \ref{EXAMPLEGAMMA0P_2} it follows that $f'$ defined by
$f'(\tau):=\rho(\M{0&-1\\1&0})f(q\tau)(1)$ is a scalar multiple of $W_q(f(1))$, hence a modular form for $\Gamma_0(q)$ again.
For primes $p' \not= p$, $M(\frac{p'}{q})$ is zero, hence the Fourier coefficient $a_{p'}$ of $f'$ is zero.
Similarly for $(n,q)=1$, $\chi_p(n)=-1$, $a_{n}$ is zero. Therefore the vanishing of $f'$ now follows from Proposition \ref{LEMMAMODFORM2} and that of $f$ by Lemma \ref{LEMMAMODFORM3}.

2. Use \ref{LEMMA210INCLUSION} to construct the lattice $\nL_\Z$ and $\nZ(\nL_\Z, <x>,\chi_{\nL_\Zh}; K)$ consists obviously of the required models.
Since $\chi_{\nL_\Zh} \not\perp R(m)^\perp$, apply Lemma \ref{LEMMABORCHERDSPREP2}, 2.
\end{proof}

\begin{SATZ}\label{THEOREMBORCHERDSPREP4}
Let $\nL_\Zpp$ be a \textbf{lattice of signature $(3,2)$ of Witt rank 2}.
There is a lattice $\nL_\Z \subset \nL_\Zpp$, such that up to multiplication of $Q_\nL$ with a scalar $\in \Z_{(p)}^*$,
there is an $F \in \ModForm(\Mp_2(\Z), \Weil(\nL^*_\Z/\nL_\Z), 2-\frac{m}{2})$ with integral Fourier coefficients, such that $\Psi(F)$ has non-zero weight,
$\Div(\Psi(F))$ consist of exactly one $\nZ(\nL_\Z, <-l>, a_l; K)$ with non-zero coefficient,
which itself consists of canonical models of a Shimura varieties $\nSh({}^K \nO(\nL'_\Zpp))$,
for {\em any} (a priori) given lattice $\nL'_\Zpp$ of signature $(2,2)$, Witt rank $\ge 1$.
\end{SATZ}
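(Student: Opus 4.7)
My plan is to follow the pattern of Theorems \ref{THEOREMBORCHERDSPREP1}--\ref{THEOREMBORCHERDSPREP3}, constructing the input form $F$ via Lemma \ref{LEMMABORCHERDSPREP2} (part~2) so that by Theorem \ref{BPADELIC} the Borcherds lift $\Psi(F)$ has divisor equal to a single prescribed special cycle. First I would realize the given $\nL'_\Zpp$ of signature $(2,2)$ and Witt rank $\ge 1$ as the orthogonal complement of one anisotropic vector $v$ inside a Witt-rank-2 lattice of signature $(3,2)$. Writing $\nL'_\Q \cong H \oplus Q''$ with $Q''$ anisotropic of signature $(1,1)$, I choose a positive $l \in \Q^*$ so that the ternary form $Q'' \perp \langle l \rangle$ is isotropic over $\Q$ (which is possible by Hasse--Minkowski after a scaling of $l$, absorbed in the ``up to $\Z_{(p)}^*$''-clause of the statement), so that $\nL_\Q := \nL'_\Q \perp \langle l \rangle$ has Witt rank $2$. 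Then pick a saturated lattice $\nL_\Z \subset \nL_\Zpp$ containing $v$ with $Q_\nL(v) = l$ and $v^\perp \cap \nL_\Z \cong \nL'_\Z$, arranging the discriminant form $\nL_\Z^*/\nL_\Z$ to be as small and structured as possible; the Witt-rank-$2$ hypothesis makes it feasible to take $\nL_\Z \cong H \oplus H \oplus \langle d \rangle$ (or a small-index modification) for some convenient $d$.

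Next I would invoke Lemma \ref{LEMMABORCHERDSPREP2}, part~2, with the element $a_{-l} := \chi_{v + \nL_\Z} + \chi_{-v + \nL_\Z} \in \Weil(\nL_\Z^*/\nL_\Z)_{-l}$, with $M(m) := \Weil(\nL_\Z^*/\nL_\Z)_m^*$ (the full dual space) for every $m \ne l$, and with $M(l)$ a subspace having non-trivial pairing with $a_{-l}$ (for instance the one-dimensional span of its natural dual). The condition $a_{-m} \perp M(m) = \Weil_m^*$ for $m \ne l$ forces all other negative Fourier coefficients of the resulting $F \in \ModForm(\Mp_2(\Z), \Weil, 2 - \tfrac{m}{2})$ to vanish, while $a_0(0) \ne 0$ and $a_{-l}$ is the prescribed element. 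Theorem \ref{BPADELIC} then yields a Borcherds product $\Psi(F)$ whose divisor is precisely $\nZ(\nL_\Z, \langle l \rangle, a_{-l}; K)$ together with a boundary contribution, and whose weight $a_0(0)/2$ is non-zero; moreover the cycle $\nZ(\nL_\Z, \langle l \rangle, a_{-l}; K)$ is by construction a disjoint union of canonical models of $\nSh({}^{K_i} \nO(\nL'_\Zpp))$ for the given $\nL'_\Zpp$. Integrality of the Fourier coefficients of $F$ will follow as in the previous theorems by the classical $q$-expansion principle after clearing a common denominator.

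The hard part will be verifying the vanishing hypothesis needed to apply Lemma \ref{LEMMABORCHERDSPREP2}: every $f \in \HolModForm(\Mp_2(\Z), \Weil(\nL_\Z^*/\nL_\Z)^*, \tfrac{5}{2})$ whose $l$-th Fourier coefficient lies in $M(l)$ must vanish identically. My strategy is to choose $\nL_\Z$ with sufficiently simple discriminant form that either the entire space $\HolModForm(\Mp_2(\Z), \Weil(\nL_\Z^*/\nL_\Z)^*, \tfrac{5}{2})$ is zero --- weight $5/2$ is small enough that explicit dimension arguments often give this directly --- or, failing outright vanishing, to decompose the Weil representation into its irreducible pieces as in \ref{EXPLICITWEIL}, pair $f$ against carefully chosen test functions (such as $\gamma := \sum \chi_\kappa$ over primitive cosets) to reduce to a scalar half-integral weight form on some $\Gamma_0(N)$, and apply the lacunarity Lemmas \ref{LEMMAMODFORM1}--\ref{LEMMAMODFORM3} in the style of the proofs of Theorems \ref{THEOREMBORCHERDSPREP2}--\ref{THEOREMBORCHERDSPREP3}. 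The Witt-rank-$2$ hypothesis on $\nL_\Zpp$ is essential here, for it permits the clean choice of $\nL_\Z$ made in the first step and keeps the arithmetic of the Weil representation as transparent as possible.
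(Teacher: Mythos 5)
Your overall route coincides with the paper's: build the ambient quadratic space as $\nL'_\Q \perp \langle l \rangle$ so that the given $\nL'_\Zpp$ appears as $v^\perp$, pick a saturated lattice of the shape $H^2 \perp \langle d\rangle$ with tiny discriminant form, and combine Lemma \ref{LEMMABORCHERDSPREP2} with Theorem \ref{BPADELIC}. The genuine gap sits exactly in the step you defer as ``the hard part'': verifying the vanishing hypothesis of Lemma \ref{LEMMABORCHERDSPREP2}. With your data ($M(m)=\Weil_m^*$ for $m\neq l$, and $M(l)$ a line pairing non-trivially with $a_{-l}$) that hypothesis is false. Since $m=5$, the weight-$\frac{5}{2}$ Eisenstein series attached to $\Weil(\nL_\Z^*/\nL_\Z)^*$ is a non-zero element of $\HolModForm(\Mp_2(\Z),\Weil(\nL_\Z^*/\nL_\Z)^*,\frac{5}{2})$ (cf.\ the beginning of the proof of Theorem \ref{MAINTHEOREMAVERAGE}), it satisfies the conditions $a_m\in M(m)$ vacuously for $m\neq l$, and nothing in your construction keeps its coefficient $a_l$ out of $M(l)$; for the order-two discriminant form (the ``simplest'' choice, and the one the paper uses) the eigenspace at $l$ is one-dimensional, so any $M(l)$ pairing non-trivially with $a_{-l}$ is forced to contain $a_l(E)$, and the same happens for $H^2\perp\langle d\rangle$ because both $a_{-l}=\chi_{v}+\chi_{-v}$ and the Eisenstein coefficients are symmetric under $\kappa\mapsto-\kappa$. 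Your two fallback strategies cannot repair this: the space of holomorphic forms is never zero (it contains this Eisenstein series), and the lacunarity Lemmas \ref{LEMMAMODFORM1}--\ref{LEMMAMODFORM2} need coefficient constraints along an infinite (density) set of indices, whereas your $M$ constrains a single coefficient, so the reduction-to-$\Gamma_0(N)$ trick of Theorems \ref{THEOREMBORCHERDSPREP2}--\ref{THEOREMBORCHERDSPREP3} has nothing to bite on.

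The paper's proof turns precisely on avoiding this: take $\nL_\Zpp=\nL'_\Zpp\perp\langle x\rangle$ with $|x|_p=1$ and, after scaling, $\nL_\Z=H^2\perp\langle 1\rangle$, so that $\nL_\Z^*/\nL_\Z$ has order $2$ and $\HolModForm(\Mp_2(\Z),\Weil^*,\frac{5}{2})$ is exactly the line spanned by the Eisenstein series; then set $M(x)=0$ (not a line pairing with $a_{-x}$), so the hypothesis becomes ``any holomorphic form with $a_x=0$ vanishes'', which holds because $a_x$ of the Eisenstein series is non-zero, $x$ being represented by $\nL_\Zh$. One then does not prescribe $a_{-x}$ at all: part 1 of the lemma kills every negative coefficient except $a_{-x}$, and since the weight $2-\frac{5}{2}$ is negative, $F$ cannot be holomorphic, so $a_{-x}\neq 0$ and $\Div(\Psi(F))$ is the single cycle $\nZ(\nL_\Z,\langle x\rangle,a_{-x};K)$ plus boundary. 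Two smaller omissions in your write-up: you never impose $|l|_p=1$, which is needed so that $v^\perp$ is unimodular at $p$ and the cycle really consists of canonical models $\nSh({}^{K}\nO(\nL'_\Zpp))$; and the identification of all complements occurring in the cycle with the one given $\nL'_\Zpp$ should be said explicitly (Witt cancellation over $\Q$ together with unimodularity at $p$).
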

\begin{proof}
Define $\nL_\Zpp = \nL_\Zpp' \perp <x>$, where $-x$ is represented by $\nL_\Zpp'$ and $|x|_p=1$.
Up to multiplication of $Q_\nL$ by a scalar, we find a saturated lattice $\nL_\Z = H^2 \perp <1> \subset \nL_\Zpp$.
$\Weil = \Weil(\nL^*_\Z/\nL_\Z)$ is irreducible in this case. W.l.o.g $x$ is assumed to be represented by $\nL_\Zh$.
Define 
\[ M(m) = \begin{cases} \Weil_m^* & m\not=x, \\ 0 & m =x. \end{cases} \]
In view of Theorem \ref{BPADELIC} again, to construct the required meromorphic modular form, by Lemma \ref{LEMMABORCHERDSPREP2}, we have to show that any modular form
$f \in \HolModForm(\Mp_2(\Z), \Weil^*, \frac{5}{2})$ with Fourier coefficients supported only on $M$ vanishes.
This follows because it is well-known that
\begin{gather*} 
\HolModForm(\Mp_2(\Z), \Weil^*, \frac{5}{2})
\end{gather*} 
is generated by the Eisenstein series whose coefficient $a_{x}$ is non-zero because $x$ is represented by $\nL_\Zh$.
\end{proof}

\section{Borcherds' products and Arakelov geometry}\label{BORCHERDS}

\begin{PAR}
Let $\nL_\Q$ be a f.d. vector space with non-degenerate quadratic form $Q_\nL$ of signature $(m-2,2)$ with $m \ge 4$, and assume the Witt rank to be 1 if $m=4$.
Let $\nL_\Z \subset \nL_\Q$ be a lattice of the form $\nL_\Z = H \perp \nL_\Z'$.

Take a modular form $F$ as in \ref{BORCHERDSLIFT} with Fourier expansion
\[  F(\tau) = \sum_{m \in \Q} a_m q^m, \]
where $a_m \in \Weil(\nL^*_\Z/\nL_\Z) \subset S(\nL_\Af)$, with $a_0(0) \not= 0$.

In this section we prove a relation (Theorem \ref{MAINTHEOREMAVERAGE}, 2.) between arithmetic volumes of different Shimura varieties of orthogonal type
which involves Borcherds lifts --- in particular their integral (\ref{BPADELIC}, 3). It will be the main ingredient in the proof of the main result of this article, Main Theorem \ref{RESULTGLOBAL}. As is explained there, the arithmetic formula expresses --- in some sense --- the special derivative of the orbit equation \ref{GLOBALORBITEQUATION}.
We mention also a rather well-known geometric analogue (Theorem \ref{MAINTHEOREMAVERAGE}, 1.) which expresses the special value of the orbit equation. Recall the $\SO$-equivariant
line bundle $\overline{\mathcal{E}}=(\mathcal{E},h_\mathcal{E})$ with Hermitian metric
on $\nX_\nO$. We denote $E=\mathcal{E}_\C$ (cf. \ref{BORCHERDSBUNDLE}).
\end{PAR}

\begin{SATZ}\label{MAINTHEOREMAVERAGE}
Assume \ref{MAINCONJECTURE}.

Under the conditions above, we have:
\begin{enumerate}
\item (geometric formula)
\begin{align*} 
 & \sum_{q} \widetilde{\mu}(\nL_\Z, <-q>, a_q; 0) \vol_E(\nSh({}^{K}_\nRPCD \nO(\nL))) \\
 =& \sum_{q} \vol_E(\nZ(\nL_\Z, <-q>, a_q; K))
\end{align*}
\item (arithmetic formula)
\begin{align*} 
 & \sum_{q} \widetilde{\mu}'(\nL_\Z, <-q>, a_q; 0) \vol_E(\nSh({}^{K}_\nRPCD \nO(\nL)))  \\
 +& \sum_{q} \widetilde{\mu}(\nL_\Z, <-q>, a_q; 0) \avol_{\overline{\mathcal{E}},p}(\nSh({}^{K}_\nRPCD \nO(\nL))) \\
 =& \sum_{q} \avol_{\overline{\mathcal{E}},p}(\nZ(\nL_\Z, <-q>, a_q; K))
\end{align*}
in $\R^{(p)}$. 
\end{enumerate}
\end{SATZ}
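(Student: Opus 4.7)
The plan is to deduce both identities from Theorem~\ref{BPADELIC} applied to the Borcherds product $\Psi(F)$. Recall that $\Psi(F)$ is a meromorphic section of $(\nMorphSt^{*}\mathcal{E})^{\otimes a_{0}(0)/2}$ with $\Div(\Psi(F)^{2})=\sum_{q<0}\nZ(\nL_{\Z},\langle -q\rangle,a_{q};K)+D_{\infty}$, where $D_{\infty}$ is supported inside the exceptional divisor, and that after replacing $F$ by a suitable integer multiple the corrected section $\Lambda\Psi(F)$ is $\Zpp$-integral by \ref{BPADELIC}(1) (with $\Lambda$ locally constant of absolute value one on $\nSh(\C)$).

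For the geometric identity (part~1), I would compute the top-degree intersection $c_{1}(\nMorphSt^{*}\overline{\mathcal{E}})^{m-1}$ on $\nSh({}^{K}_{\nRPCD}\nO(\nL))(\C)$ in two ways. On the one hand it is by definition $\vol_{\mathcal{E}}(\nSh)$. On the other hand, the Chern-class equality $a_{0}(0)\,c_{1}(\nMorphSt^{*}\overline{\mathcal{E}})=[\Div(\Psi(F)^{2})]$ together with the explicit divisor above yields
\[
a_{0}(0)\vol_{\mathcal{E}}(\nSh)=\sum_{q<0}\vol_{\mathcal{E}}(\nZ)+\vol_{\mathcal{E}}(D_{\infty}).
\]
By Mumford's theorem~\ref{MUMFORD} the metric on $\nMorphSt^{*}\overline{\mathcal{E}}$ is good, so $\vol_{\mathcal{E}}(D_{\infty})=0$. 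Part~1 then follows from the Siegel--Weil-type identity $a_{0}(0)=\sum_{q}\widetilde{\mu}(\nL_{\Z},\langle -q\rangle,a_{q};0)$, which expresses the weight of a Borcherds lift as the constant-term Fourier coefficient of the standard Eisenstein series attached to the Weil representation, itself a sum of representation densities by \ref{DEFLAMBDAMU} and \cite[\S4]{Paper1}.

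For the arithmetic identity (part~2), I would perform the analogous computation in Arakelov geometry. Using $\tfrac{a_{0}(0)}{2}\widehat{c}_{1}(\nMorphSt^{*}\overline{\mathcal{E}})=[(\Div(\Lambda\Psi(F)),-\log\nMorphSt^{*}h(\Lambda\Psi(F)))]$ in $\aCH^{1}_{\Q}$, pairing with $\widehat{c}_{1}(\nMorphSt^{*}\overline{\mathcal{E}})^{m-2}$ and pushing forward to $\spec(\Zpp)$ yields
\[
a_{0}(0)\avol_{\overline{\mathcal{E}},p}(\nSh)=\sum_{q<0}\hght_{\overline{\mathcal{E}},p}(\nZ)+\hght_{\overline{\mathcal{E}},p}(D_{\infty})-\int_{\nSh(\C)}\log\nMorphSt^{*}h(\Lambda\Psi(F))\,c_{1}(\nMorphSt^{*}\overline{\mathcal{E}})^{m-2}.
\]
The projection formula (remark after~\ref{DEFVOLUME}) gives $\hght_{\overline{\mathcal{E}},p}(\nZ)=\avol_{\overline{\mathcal{E}},p}(\nZ)$, while Theorem~\ref{BPADELIC}(3) evaluates the Green integral as $\sum_{q<0}\vol_{\mathcal{E}}(\nSh)\,\widetilde{\mu}'(\nL_{\Z},\langle -q\rangle,a_{q};0)$ in $\R^{(p)}$. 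Combined with the geometric identity $a_{0}(0)=\sum_{q}\widetilde{\mu}$, this produces the claimed formula, \emph{provided} $\hght_{\overline{\mathcal{E}},p}(D_{\infty})\equiv 0$ in $\R^{(p)}$.

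The main obstacle is precisely controlling this boundary height. Both $D_{\infty}$ and the log-singular locus of the metric $\nMorphSt^{*}h_{E}$ lie inside the exceptional divisor, so $\hght_{\overline{\mathcal{E}},p}(D_{\infty})$ has to be defined and computed within the Burgos--Kramer--K\"uhn framework underlying Theorem~\ref{BURGOS} and the definition of $\aCH^{\bullet}$ in~\ref{DEFCHOW}. The expected cancellation arises from the specific normalization of the Hermitian metric in~\ref{BORCHERDSBUNDLE} (with $C=\gamma+\log(2\pi)$) together with the explicit boundary product expansion of $\Psi(F)$ given in~\ref{BPADELIC}(4), which is controlled by the Weyl vector $\rho(F,\nI,\sigma)$: the Weyl-vector contribution to $\hght_{\overline{\mathcal{E}},p}(D_{\infty})$ should cancel the residual terms produced when the star-product rules \ref{STARPRODUCT}--\ref{COMPSTARPRODUCT} are applied to the singular Green function along $D_{\infty}$. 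Making this precise, using the $q$-expansion principle~\ref{QEXPANSION} to track the boundary expansion explicitly, is the technical heart of the argument and generalizes the Hilbert-modular computation of \cite{BBK}.
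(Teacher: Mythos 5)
Your part~1 is essentially the paper's own argument: the identity $\sum_q \widetilde{\mu}(\nL_\Z, <-q>, a_q; 0)=a_0(0)$ comes from pairing $F$ against the holomorphic Eisenstein series via (\ref{SERREDUALITY}), and the weight--divisor relation for $\Psi(F)$ plus the vanishing of the boundary contribution (which the paper takes from \cite{Br2} rather than from \ref{MUMFORD}, but the substance is the same) gives the claim.

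Part~2, however, has a genuine gap, and it sits exactly where you place the ``technical heart.'' Your computation hinges on the quantity $\hght_{\overline{\mathcal{E}},p}(D_\infty)$ and on the claim that it vanishes in $\R^{(p)}$. But within the Arakelov formalism of the paper this quantity is not even defined: the height of \ref{CHOWHEIGHT} is only defined for cycles $Y$ whose generic fibre meets the normal-crossings divisor $D$ \emph{properly}, whereas $D_\infty$ is supported inside $D$, which is simultaneously the singular locus of the metric $\nMorphSt^*h_E$; the class $\bigl(\Div(\Lambda\Psi(F)),-\log\nMorphSt^*h(\Lambda\Psi(F))\bigr)$ likewise does not lie in $\aCH^1_{GS}$ nor is it handled by the pushout groups $\aCH^\bullet(X,D)$ of \ref{DEFCHOW}, because the Green object is $\log$-singular precisely along components of the cycle. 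So the expected cancellation between Weyl-vector terms and star-product residues cannot be ``made precise'' inside the stated framework; some additional device is needed, and you do not supply it.

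The paper resolves this by a different decomposition that avoids the boundary height altogether: using \ref{BORCHERDSBUNDLEAMPLE}, a power of $\nMorphSt^*\mathcal{E}$ gives a projective morphism $\alpha$, and one chooses hyperplane sections $f_1,\dots,f_{m-2}$ whose divisors intersect $\Div(f_0)$ properly, never meet the boundary simultaneously, and satisfy $\supp\Div(f_1)\cap\supp\Div(f_2)\cap\supp(D)=\emptyset$ (resp.\ $\supp\Div(f_1)\cap\supp(D)=\emptyset$ for Witt rank $1$). Then $\avol_{\overline{\mathcal{E}},p}$ is computed as a proper arithmetic intersection $\Div(f_0)\cdot\Div(f_1)\cdots\Div(f_{m-2})$ plus the star product $\tfrac12\int g_0\ast g_1\ast\cdots\ast g_{m-2}$, and the whole analytic content is Theorem \ref{THEOREMSTARPRODUCT} (proved via the neighborhoods of first and second type and the $\log$-$\log$ estimates of Lemma \ref{LEMMAESTIMATEOMEGA}), which shows that the boundary contributes nothing to this star product. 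This is also exactly where the hypotheses $m\ge 5$, or $m=4$ with Witt rank $1$, enter; your proposal never uses them, which is another sign that the decisive step is missing. To repair your argument you would either have to reproduce something equivalent to Theorem \ref{THEOREMSTARPRODUCT}, or extend the height/arithmetic Chow formalism to cycles lying inside $D$ with $\log$-singular Green objects --- neither of which is available from the results you cite.
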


\begin{proof}
Let $f_0:= \Psi(F)$ be the Borcherds lift of $F$ --- cf. section \ref{BORCHERDSLIFT} ff. and especially Theorem \ref{BPADELIC}.

We sketch the proof of 1. The assumption on $m$ and the Witt rank imply that the
Eisenstein series associated with $\nL_\Z$ (see \cite[Section 4]{Paper1}) is holomorphic, i.e. there is a
\[ G \in \HolModForm(\SL'_2(\Z),\Weil(\nL_\Z^*/\nL_\Z)^*,\frac{m}{2}) \]
with $a_n(G)(\varphi) = -\widetilde{\mu}(\nL_\Z, <n>, \varphi; 0)$ for $n>0, n\in \Q$ and $a_0(G)(\varphi) = \varphi(0)$.
Hence relation (\ref{SERREDUALITY}) yields:
\begin{equation} \label{bcleq1}
\sum_{q} \widetilde{\mu}(\nL_\Z, <-q>, a_q; 0) = a_0(0).
\end{equation}
In addition, we have the relation
\begin{gather}
 a_0(0) \int_M  \chern_1(\nMorphSt^*E, \nMorphSt^*h)^m = 2\int_M \ddc \log h(f_0) \wedge \chern_1(\nMorphSt^*E,\nMorphSt^*h)^{m-1} \nonumber \\
  = 2 \int_{\Div(f_0) \cap M}  \chern_1(\nMorphSt^*E,\nMorphSt^*h)^{m-1}.   \label{bcleq2}
\end{gather}
The justification that here everything is integrable and that the second equality is true without contributions from the boundary, can be found in \cite{Br2}. From (\ref{bcleq1}) and (\ref{bcleq2}), 1. follows, taking the description of the divisor of $f_0$ (Theorem \ref{BPADELIC}) into account.

2. We may calculate
$\avol_{\overline{\mathcal{E}},p}(\nSh({}^{K}_\nRPCD\nO(\nL)))$
in the following way:

First assume w.l.o.g. (e.g. by taking a lattice with large discriminant in the construction of
$F$ or by just pulling back $\Psi(F)$ afterwards) that $K$ is neat.

We know by \ref{BORCHERDSBUNDLEAMPLE} that $\nMorphSt^*(\mathcal{E})$ is ample on $\nSh({}^K \nO)$ 
and that some power of it has no base points on 
$\nSh({}^K_\nRPCD \nO)$. Choose some sufficiently fine, smooth w.r.t. $K$, complete, and projective $\nRPCD$
(by Lemma \ref{LEMMAINDEPRPCD} the arithmetic volume does not depend on this choice) such that all models exist 
and all special cycles involved embed closedly as projective schemes (cf. \ref{DEFSPECIALCYCLEP}).
$\nMorphSt^*(\mathcal{E})^{\otimes k}$, for some positive $k$, defines a morphism 
\[ \alpha: \nSh({}^K_\nRPCD \nO) \rightarrow \PP^r(\Zpp), \]
whose restriction to $\nSh({}^K \nO)$ is an embedding.
Let $D$ be the boundary divisor. By Main Theorem \ref{MAINTHEOREM1a}, $\nSh$ and $D$ are defined over $\Zpp$.
We have $\dim(\alpha(D)_\C) = \text{Witt rank of $\nL_\Q$-1}$, hence $\dim(\alpha(D)) = \text{Witt rank of $\nL_\Q$}$,
because $\nMorphSt^*(\mathcal{E})_\C^{\otimes k}$ induces the Baily-Borel compactification.
In addition, we may also choose $k$ and $F$ such that $f_0 \in H^0(\nSh({}^K_\nRPCD \nO), \nMorphSt^*(\mathcal{E})^{\otimes k}_{horz})$ (Theorem \ref{BPADELIC}).

Up to increasing $k$ again, we may find hyperplanes $H_1, \dots, H_{m-2} \subset \PP^r(\Zpp)$
which intersect properly with $\alpha(\Div(f_0))$, and we
may avoid that they intersect simultaneously in any closed subset $Z$ of dimension $<m-2$. 

Our assumptions on $m$ and the Witt rank imply that we can take $Z$ to be the union of 
$\alpha(D)$ (dim=Witt rank$<m-2$) and the locus, where $\alpha_{\F_p}$ is not finite (dim $< m-2$). 

We may clearly assume that
already $H_{1,\C}$ and $H_{2,\C}$ do not intersect in $\alpha(D)_\C$. 

Let $f_1, \dots, f_{m-2}$ be the corresponding sections.

The construction assures that also $\Div(f_0), \Div(f_1), \dots, \Div(f_{m-2})$ intersect properly and not in $D$ simultaneously.
Therefore by Theorem \ref{BPADELIC}, 2., 
\[ \Div(f_0) \cdot \Div(f_1) \cdots \Div(f_{m-2}) = \frac{1}{2} \sum_{q<0} \nZ(\nL_\Z, <-q>, a_q; K) \cdot \Div(f_1) \cdots \Div(f_{m-2}). \]

Note that the arithmetic volume is the sum of this expression and
\begin{equation}\label{eqsp}
  \frac{1}{2} \int g_0 \ast g_1 \ast \cdots \ast g_{m-2},  
\end{equation} 
where
\[ g_i = -\log \nMorphSt^*h(f_i) \]
(and the star product has to be interpreted in the sense of \ref{STARPRODUCT}).
In the following we will denote 
\[ \Omega := \chern_1(\nMorphSt^*(\mathcal{E}), \nMorphSt^*(h_\mathcal{E})) \]
to be the first Chern form of the bundle $\nMorphSt^*(\mathcal{E})$ with respect to the ($\log$-singular) Hermitian metric $\nMorphSt^* h_\mathcal{E}$. 

On any parametrization defined by a point-like boundary component as in \ref{ORTHBOUNDARYCOMP} it is given by
\[ -\dd \dd^c \log(Q_{z'}(Y)) \]
where $Z = X + iY \in \nU_\C \cong \nX_{\nH_0[\nU,0]}$.

By Theorem \ref{THEOREMSTARPRODUCT} below, we may write (\ref{eqsp}) as
\begin{gather*}
 = \frac{1}{2}\int_{\nSh({}^K_\nRPCD \nO)_{\C}} g_0 k\Omega \wedge \cdots \wedge k \Omega \\  
 + \frac{1}{2} \int_{\Div(f_0) \cap \nSh({}^K_\nRPCD \nO)_\C} g_1 \ast \cdots \ast g_{m-2},  
\end{gather*}

hence we get the equation ($\in \R^{(p)}$)
\begin{align*}
 & k^{m-1} \avol_{\overline{\mathcal{E}},p}(\nSh({}^K_\nRPCD \nO(\nL))) \\
 =& \frac{1}{2} k^{m-2} \int_{\nSh({}^K_\nRPCD \nO)_{\C}} g_0 \Omega \wedge \cdots \wedge \Omega   \\  
 &+ \frac{1}{2} k^{m-2} \sum_{q} \avol_{\overline{\mathcal{E}},p}(\nZ(\nL_\Z, <-q>, a_q; K)),
\end{align*}
(cf. also \ref{CHOWHEIGHT}) and therefore the required one, taking into account that
\[ \frac{1}{2} \sum_{q} \widetilde{\mu}(\nL_\Z, <-q>, a_q; 0) = k = \frac{a_0(0)}{2} \]
(geometric formula --- part 1.) and by Theorem \ref{BPADELIC}, 3.:
\begin{gather*}
 \int_{\nSh({}^K_\nRPCD \nO)_{\C}} g_0 \Omega \wedge \cdots \wedge \Omega 
\equiv \\
- \vol_{\mathcal{E}}(\nSh({}^K_\nRPCD \nO)_{\C}) \sum_{q} \widetilde{\mu}'(\nL_\Z, <-q>, a_q; 0)
\end{gather*}
in $\R^{(p)}$.
\end{proof}

\begin{SATZ}\label{THEOREMSTARPRODUCT}
Let $\nL_\Q$ be as before, i.e. of signature $(m-2,2)$ with $m\ge 5$, or $m=4$ and $\nL_\Q$ of Witt rank 1.

Let $f_0, \dots, f_{m-2}$ be sections of $\nMorphSt^*(\mathcal{E})^{\otimes k}$ on $\nSh({}^K_\nRPCD \nO)$, such that $\Div(f_0), \dots, \Div(f_{m-2})$ intersect properly:
\[ \bigcap_i \supp(\Div(f_i)) = \emptyset, \] 
such that 
\[ \supp(\Div(f_1)) \cap \supp(\Div(f_2)) \cap \supp(D) = \emptyset\]
(Witt rank 2), resp.
\[ \supp(\Div(f_1)) \cap \supp(D) = \emptyset \]
(Witt rank 1).

Then we have
\begin{gather*}
\int_{\nSh({}^K_\nRPCD \nO)_{\C}} g_0 \ast g_1 \ast \cdots \ast g_{m-2} \\ 
=\int_{\nSh({}^K \nO)_{\C}} g_0 k \Omega \wedge \cdots \wedge k \Omega \\ 
+  \int_{\Div(f_0) \cap \nSh({}^K \nO)_\C} g_1 \ast \cdots \ast g_{m-2}.  
\end{gather*}
All occurring integrals exist.
\end{SATZ}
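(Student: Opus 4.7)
The plan is to prove the identity by induction on the number of factors, reducing to the fundamental property of the $\ast$-product asserting that if $G := g_1 \ast \cdots \ast g_{m-2}$ is viewed as a representative of a Green current for the intersection cycle $\Div(f_1) \cdot \ldots \cdot \Div(f_{m-2})$ (which is a well-defined cycle by the proper intersection hypothesis), then $G$ has Chern form $\omega_G = (k\Omega)^{m-2}$ by iterated Poincar\'e--Lelong, and one is left with the codimension-one identity
\[ \int_{\nSh({}^K_\nRPCD\nO)_\C} g_0 \ast G = \int_{\nSh({}^K\nO)_\C} g_0\,\omega_G + \int_{\Div(f_0)\cap \nSh({}^K\nO)_\C} G. \]
This is the analogue, in the BKK log-singular setup, of the classical identity $\int g_0 \ast G = \int g_0 \omega_G + \int_{Z_0} G$ for ordinary Green's forms of logarithmic type.

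First I would establish integrability of all the integrands. On the interior $\nSh({}^K\nO)_\C$ the metric $\nMorphSt^*h_{\mathcal{E}}$ is smooth, each $g_i$ is a Green's form of logarithmic type for $\Div(f_i)$, and the proper-intersection hypothesis ensures that $g_0 \ast G$ and $G$ are locally integrable there by the classical theory (\cite[Theorem 4, p.~43]{SABK}). Near the boundary $D$ the metric is log-log singular in the sense of \cite{BKK1,BKK2}; the hypotheses on the supports guarantee that at most one of the $g_i$ with $i\ge 1$ (in Witt rank~$1$, only $g_0$; in Witt rank~$2$, only $g_0$ and possibly $g_1$ or $g_2$) can acquire additional singularities along~$D$. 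Since log-log forms are closed under the operations $\dd$, $\dd^c$ and integration against smooth forms (\cite[Prop.~7.6]{BKK1}, \cite[Prop.~2.24, 4.64]{BKK2}), the iterated $\ast$-product $g_0 \ast \cdots \ast g_{m-2}$ is representable by an $L^1$-form on $\nSh({}^K_\nRPCD\nO)_\C$, and similarly $G$ is integrable on $\Div(f_0)\cap\nSh({}^K\nO)_\C$ once one desingularizes.

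Next I would derive the identity itself. Using a partition of unity $1 = \sigma + \sigma'$ with $\sigma$ vanishing in a tubular neighborhood $U$ of $\Div(f_0)$ and $\sigma'$ supported in a slightly larger tubular neighborhood chosen disjoint from $D$ (possible because $\Div(f_0)$ and the other $\Div(f_i)$ intersect properly and, by the boundary hypotheses, this intersection can be arranged to miss $D$), the definition of the $\ast$-product gives
\[ g_0 \ast G = \sigma g_0\,\ddc G + \ddc(\sigma' g_0)\,G \pmod{\operatorname{im}\dd + \operatorname{im}\dd^c}. \]
The first summand, upon applying Poincar\'e--Lelong iteratively to $G$ (using that $\Div(f_i)\cap\Div(f_0)$ avoids $D$ simultaneously, so the Dirac contributions from $G$ cancel against the support condition of $\sigma$), contributes exactly $\int_{\nSh({}^K\nO)_\C} g_0\,\omega_G$. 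The second summand is computed by Stokes' theorem on $\nSh({}^K\nO)_\C \setminus U$: the boundary integral over $\partial U$ converges, as $U$ shrinks to $\Div(f_0)$, to $\int_{\Div(f_0)\cap\nSh({}^K\nO)_\C} G$ by the residue formula for Green's functions of logarithmic type.

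The hard part will be controlling the contribution at the boundary $D$ when applying Stokes' theorem in the log-log setting: one has to show that the integrals over small $\varepsilon$-tubes $\partial B_\varepsilon(D)$ of the various pieces $\sigma' g_0 \cdot (\text{products of }\dd g_i, \dd^c g_i)$ tend to zero as $\varepsilon\to 0$. This follows from the explicit asymptotic control of log-log forms near normal crossings established in \cite[Lemma 7.35]{BKK1}, combined with the fact that the hypotheses ensure that at most one of the $g_i$ simultaneously contributes a log-log singularity along any component of $D$ (and the rest are smooth there, bounded along with all their derivatives up to $\Omega$). Summing all contributions gives the asserted identity, and the same analysis shows that each of the individual integrals on the right-hand side exists.
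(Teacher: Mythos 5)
Your overall skeleton (classical star--product identity in the interior plus vanishing of boundary contributions along $D$) matches the shape of the paper's argument, which starts from \cite[Theorem 1.14]{BBK} to get exactly the decomposition into $\int g_0\,\omega_G + \int_{\Div(f_0)}G$ plus a limit of integrals over $\partial B_\varepsilon(D)$. But there are two genuine gaps. First, your partition of unity is built on a false premise: you choose a tubular neighborhood of $\Div(f_0)$ ``disjoint from $D$'', but nothing in the hypotheses prevents $\Div(f_0)$ from meeting $D$ --- only $\Div(f_1)$ (and $\Div(f_2)$) are constrained relative to $D$, and in the intended application $f_0=\Psi(F)$ is a Borcherds product whose divisor actually \emph{contains} boundary components ($D_\infty$). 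So $g_0$ carries genuine $\log(r_k)$ singularities along $D$ and the boundary term cannot be made to live away from $D$.

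Second, the claim that ``at most one of the $g_i$ contributes a log-log singularity along any component of $D$'' is not correct: every $g_i=-\log\nMorphSt^*h(f_i)$ is singular along $D$ through the log-singular metric, and each factor of $\Omega=\chern_1(\nMorphSt^*\mathcal{E},\nMorphSt^*h)$ is itself only a log-log form. The integrands $g\wedge\dd^c g_0 - g_0\wedge\dd^c g$ on $\partial B_\varepsilon(D)$ are therefore \emph{not} log-log forms (they carry an unbounded factor $\log(r_k)$), and no general BKK lemma makes them vanish in the limit. The paper's proof hinges on the quantitative estimates of Lemma \ref{LEMMAESTIMATEOMEGA}, which exploit the explicit shape of $h_{\mathcal{E}}(s)$ near the two types of boundary strata of the orthogonal Shimura variety (the lower bounds $h_{\mathcal{E}}(s)\gg\log(r_i)\log(r_j)$ and $\gg\log(r_i)^2$ coming from $\langle\lambda_i,\lambda_j\rangle>0$) to gain a factor $1/\log(r_k')^2$ against the single $\log(r_k)$; for the curve-type boundary strata one even needs an extra integration by parts on $\partial B_\varepsilon(D)$ and a cancellation of $\dd^c\psi_1\wedge\dd\psi_1$ against the smooth form $\xi'$. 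This is also where the hypotheses $m\ge 5$, or $m=4$ with Witt rank $1$, enter (both for the existence of $\int g_0(k\Omega)^{m-2}$ via Bruinier's results and for the dimension of the Baily--Borel boundary); your proposal never uses them, which is a sign the decisive analytic input is missing.
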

\begin{proof}
According to \cite[Theorem 1.14]{BBK}, we have
\begin{align*}
 & \int_{\nSh({}^K_\nRPCD \nO)_\C} g_0 \ast \cdots \ast g_{m-2} \\
 = &\lim_{\varepsilon \rightarrow 0} \left( \int_{\nSh({}^K_\nRPCD \nO)_\C-B_\varepsilon(D)} g_0 (k\Omega) \wedge \cdots \wedge (k \Omega)  \right. \\
& - \left. \int_{\partial B_\varepsilon(D)} g \wedge \dd^c g_{0} - g_{0} \wedge \dd^c g \right) \\
& +  \int_{\Div(f_0) \cap \nSh({}^K \nO)_\C} g, 
\end{align*}
where 
\[ g=g_1 \ast \cdots \ast g_{m-2}. \]

The integral
\[ \int_{\nSh({}^K \nO)} g_0 (k\Omega) \wedge \cdots \wedge (k \Omega) \] 
exists by \cite[Theorem 2]{Br2}
because we excluded the cases $m=3$, Witt rank 1 (modular curves) and $m=4$, Witt rank 2 (product of
modular curves).

\begin{PAR}
We will need special neighborhoods of points on the boundary divisor of $\nSh({}^K_\nRPCD \nO)$. By Main Theorem
\ref{MAINTHEOREM1a}, 3. any such point lies on a stratum
\[ \left[ \Stab_{\Gamma([\sigma])} \backslash \nSh({}^{K_{[\sigma]}} \nSDB_{[\sigma]}) \right]_\C, \]
where $\nSDB$ corresponds (\ref{ORTHBOUNDARYCOMP}) to an isotropic line or an isotropic surface in $\nL_\Q$.
(The corresponding boundary stratum in the Baily-Borel compactification is 0, resp. 1-dimensional.) 
We will prepare special neighborhoods of these points and call them of \textbf{first (resp. second) type} for the rest of this section. By the boundary isomorphism \ref{MAINTHEOREM1a}, 4., we transfer the neighborhood to the boundary
of the mixed Shimura variety (compactfied only along the unipotent fibre):
\[ \nSh({}^{K'}_{(\nRPCD')^0} \nSDB)_\C,   \]
which is a torus embedding constructed from the (relative) torus
$\nSh({}^{K'} \nSDB)_\C$
over
$\nSh({}^{K''} \nSDB/\nU_\nSDB)_\C$ (\ref{COMPUNIPOTENTFIBRE}). 
\end{PAR}

\begin{PAR}\label{nbhdfirsttype}
For a point of the first type, we have  
\begin{equation}\label{splitting}
\nSDB \cong \nH_0[0, \nU_\Zpp],
\end{equation}
where $\nU_\Zpp = \nI \otimes (\nI^\perp/\nI)$
(\ref{ORTHBOUNDARYCOMP}) 
and
the trivialization $s$ of $\nMorphSt^*\mathcal{E}$ described in \ref{QEXPANSIONPREP}.
It is of the form
\[ s(z_1,\dots, z_{m-2}) = (\log(z_1)\lambda_1 + \cdots + \log(z_n)\lambda_n + \psi(z_{k+1}, \dots ,z_{m-2}) ) z' \]
for $z_1,\dots,z_{m-2} \in B_\epsilon(0)^{m-2}$, $z'$ is a vector in the fibre of $\mathcal{E}$ above the point corresponding to the splitting (\ref{splitting}), i.e. $z' \in \nI^*$.
The $\lambda_i$ span the rational polyhedral cone $\sigma$ corresponding to our boundary point and form part of a basis of $(\nU_\Q \cap K')(-1)$ ($\nRPCD$ is smooth w.r.t. $K$), and $\psi$ is a holomorphic embedding $B_\epsilon(0)^{m-2-k} \rightarrow \nU_\nSDB(\C) = \nU_\C$.

Let $r_i:=|z_i|$. The norm of $s$ is given by
\[ \sum_{i \le n,j \le n} \langle \lambda_i, \lambda_j \rangle_{z'} \log(r_i) \log(r_j) + \sum_{i \le n} \log(r_i) \Psi_i(z_1,\dots,z_{m-2}) + \Psi_0(z_1,\dots,z_{m-2}),  \]
where the $\Psi_i, i>0$ are smooth functions on $B_\epsilon(0)^{m-2}$ satisfying $\ddc \Psi_i = 0$, and $\Psi_0$ is smooth.
\end{PAR}

\begin{PAR}\label{nbhdsecondtype}
For a point of the second type, we have  
\begin{equation}\label{splitting2}
\nSDB \cong  \nH_1(\nI)[\Lambda^2 \nI, \nI \otimes (\nI^\perp/\nI)],
\end{equation}
(\ref{ORTHBOUNDARYCOMP}) 
and
the trivialization $s$ of $\nMorphSt^*\mathcal{E}$ described in \ref{QEXPANSIONPREP}.
It is of the form
\[ s(z_1, z_2, \cdots, z_{m-3}, z_{m-2}) \mapsto (\lambda \log(z_{1})) \circ (\psi_2(z_2,\cdots, z_{m-3})) (\psi_1(z_{m-2})). \]
Here $\psi_1$ is a local trivialization of $\mathcal{E}_\C$ over a small neighborhood over the image
of $\nX_{\nH_1(\nI)}$ in $\nX_{\nH_1(\nI)[\Lambda^2 \nI, \nI \otimes (\nI^\perp/\nI)]}$ such that $p(\psi_1(0))$ is
a representative of the projection of the boundary point. Since $\mathcal{E}$ is the tautological bundle, 
we may express this by saying that $\psi_1$ maps to $\nI^*$ inducing an embedding of $B_\epsilon(0)$ into $\PP(\nI^*)$.
$\psi_2$ is a holomorphic map $B_\epsilon(0)^{m-4} \rightarrow \nW_{\nSDB}(\C)$ which is modulo $\nU_\nSDB(\C)$ an open embedding into $\nV_\nSDB(\C)/\Stab(x)$ for all $x$ in the image of $p(\psi_1(z_{m-2}))$. $\lambda$ is a basis
of $(\nU_\nSDB(\Q)\cap K')(-1)$ and corresponds to the rational polyhedral cone $\sigma$ (here there is only one possible).

Its norm is of the form
\[ h_{\mathcal{E}}(s) = \log(r_1) \Psi_1(z_{m-2}) + \Psi_0(z_2,\dots,z_{m-3},z_{m-2}), \]
where $\ddc \Psi_1 =0$, and $\Psi_0$ is smooth. Note that $\Psi_1$ does only depend on $z_{m-2}$.
\end{PAR}

\begin{PAR}
We will prepare an $\varepsilon$-tube neighborhood of $D$ as follows.
We prepared neighborhoods around every point of the boundary above. Take a finite cover $\{U_i\}$ consisting of these.
Let $\tau_i$ be a partition of unity defined on $\nSh({}^K_\nRPCD \nO(\nL))_\C$ for the chosen cover, consisting of the neighborhoods considered in \ref{nbhdfirsttype} and \ref{nbhdsecondtype}.
Consider a neighborhood $U_i$ with coordinates $z_{i,1}, \dots, z_{i,m-2}$. $D$ has the
equation $(z_{i,1})^{a_1} \cdots (z_{i,m-2})^{a_{m-2}} = 0$. 
On intersections $U_i \cap U_j$ we have
\[ z_{j,k_j} = z_{i,k} (f_{i,k,0}^j(z_{i,1}, \dotsop^{\widehat{k}}, z_{i,m-2}) + z_{i,k} f_{i,k}^j(z_{i,1}, \dots, z_{i,m-2})),  \]
where $f_{i,k,0}^j(z_{i,1}, {\displaystyle \dotsop^{\widehat{k}}}, z_{i,m-2})$ does not depend on $z_{i,k}$ and is everywhere non-zero on the overlap in question and where $k_j$ is the index of the $k$-th divisor component in the
neighborhood $U_j$.
We define 
\[ r'_{i,k} = \sum_j \tau_j  | z^{j,k_j} | \quad \text{on } U_i. \]
If the $k$-komponent does not exist in $U_j$, we omit the term in the sum.

Define $r' = \min_{k| a_k\not=0} r'_{i,k}$. This does not depend on $i$ anymore and is a well-defined function in
a neighborhood of the divisor.
A global $\varepsilon$-tube neighborhood around $D$ may
therefore be described as 
\[ B_\varepsilon(D) := \{ z \in \nSh({}^K_\nRPCD \nO(\nL))_\C \ |\ r'(z) \le \varepsilon \}. \]

In a local neighborhood $U_i$ write
\[ r_{i,k}' = r_{i,k} \left(  \sum_j \tau_j |f_{0,i,k}^j(z_{i,1}, \dotsop^{\widehat{k}}, z_{i,m-2})| + r_{i,k} g \right), \]
where $g$ is a bounded $C^\infty$ function. If $\varepsilon$ is small enough, $0<r_i'\le \varepsilon$ will ensure
$r_{i,k}=|z_{i,k}|>0$. 

On $U_i$ we may cover $\partial B_\varepsilon(D)$ by 
(we omit the index $i$ because it will be fixed from now on)
\[ St_k'(\varepsilon) = \{ z \in U_i \where r_{k}'=\varepsilon, r_j'\ge \varepsilon \text{ if } a_j \not= 0 \}, \]
for all $k$, where $a_k \not= 0$. Note that on overlaps the $r_{i,k}'$ and $r_{j,k}'$ are the same up
to permutation of the indices $k$.

We have for small $r_k'$ and some bounded $C^\infty$-function $h$:
\begin{align}
r_k =& r_k' \left( \left(  \sum \tau_j |f_{0,k}^j(z_1, \dotsop^{\widehat{k}}, z_{m-2})|\right)^{-1} + r_k' h \right), \nonumber
\end{align}
\begin{align}
\D{r_k} =& \D{r_k}'  \left( \sum \tau_j |f_{0,k}^j(z_1, \dotsop^{\widehat{k}}, z_{m-2})| \right)^{-1}  \nonumber \\
 &+ r_k' \D{r_k}' h  \nonumber \\
 &+ r_k' \D{ \left( \left( \sum \tau_j |f_{0,k}^j(z_1, \dotsop^{\widehat{k}}, z_{m-2})|\right)^{-1} + r_k' h \right)}.
\label{eqnrprime}
\end{align}
\end{PAR}

\begin{PAR}In any of the neighborhoods $U_i$,
we will write the quantities in question in the basis
\[ \D{\varphi}_1, \cdots, \D{\varphi}_n, \D{r}_1, \dotsop^{\widehat{k}}, \D{r}_n, \D{r}_k'. \]
Only the term
\[ \omega := \D{\varphi}_1 \wedge \cdots \wedge \D{\varphi}_n \wedge \D{r}_1 \wedge \dotsop^{\widehat{k}} \wedge \D{r}_n \]
gives a non-zero contribution in integrals over $St_k'(\varepsilon)$. In what follows, we understand:
\[ |f(z)  \omega| := |f(z)| \omega. \]
We number the coordinates in such a way that $D$ is given by the equation $z_1\cdots z_n=0$ on $U_i$. 

Recall the notion of $\log$-$\log$-growth of infinite order \cite[2.2]{BKK2}.
A {\bf $\log$-$\log$-form} \cite[2.22]{BKK2} is a differential form generated (over smooth forms) by functions of $\log$-$\log$-growth of infinite order and the differentials
\[ \frac{\D{\varphi}_1}{ \log(r_1)}, \dots, \frac{\D{\varphi}_{n}}{ \log(r_{n})}, \frac{\D{r_1}}{r_1 \log(r_1)}, \dots, \frac{\D{r_n}}{r_n \log(r_n)} \]
such that $\dd, \dd^c$ and $\ddc$ of it are again of this type.

Call a form a \textbf{(*)-form} for an index $k$, if it is generated by functions of $\log$-$\log$-growth 
of infinite order and the differentials
\[ \D{\varphi}_1, \dots, \D{\varphi}_n, \frac{\D{r_1}}{r_1 \log(r_1)}, \dotsop^{\widehat{k}}, \frac{\D{r_n}}{r_n \log(r_n)} \]
and $f \D{r_k'}$ for any $f$, which is smooth outside $D$.
\end{PAR}

\begin{LEMMA}\label{INTSTARFORM}
For every (*)-form $\sigma$ for the index $k$:
\[ \int_{St_k'(\varepsilon)} |\sigma| = O(\log(|\log \varepsilon |)^M)  \]
for some $M$.
\end{LEMMA}
\begin{proof}
It suffices to show that on each of the sets
\[ St'_k(\varepsilon, R) = \{ z \in B(R) \where r_k'=\varepsilon, r_j \ge \delta \varepsilon \text{ if } a_j \not= 0 \}, \]
for some small $\delta$ (note that we wrote $r_j$ instead of $r_j'$), we have the estimate:
\[ \int_{St_k'(\varepsilon, R)} |\sigma| = O(\log(|\log \varepsilon |)^M). \]
This is easy, see e.g. \cite{Br2}. The (maybe singular) terms $f \D{r_k'}$ play no role because terms involving $\D{r_k'}$
do not give any contribution to the integral.
\end{proof}

\begin{KOR}\label{INTLOGLOGGROWTH}
For every $\log$-$\log$ growth form $\xi$:
\[ \lim_{\varepsilon \rightarrow 0} \int_{St_k'(\varepsilon)} |\xi| = 0.  \]
\end{KOR}
\begin{proof}
We have by (\ref{eqnrprime})
\[ \frac{1}{\log(r_k)} \D{\varphi_i} \prec \frac{1}{\log(r_k')} \sigma, \]
and
\[ \frac{\D{r_k}}{r_k \log(r_k)} \prec \frac{1}{\log(r_k')} \sigma, \]
where the $\sigma$ are (*)-forms w.r.t. $k$.

If $\xi$ has correct degree, by definition, it involves at least either $\frac{1}{\log(r_k)} \D{\varphi_k}$
or $\frac{\D{r_k}}{r_k \log(r_k)}$.
Hence
\[ \xi \prec \frac{1}{\log(r_k')} \sigma, \]
for some (*)-form $\sigma$. The statement follows using \ref{INTSTARFORM} because, of course,
$\log$ grows faster than any power of $\log$-$\log$.
\end{proof}

\begin{PAR}
First of all $\supp(\Div(f_1)) \cap \supp(\Div(f_2)) \cap \supp(D) = \emptyset$ by assumption, hence we may represent
(cf. \ref{COMPSTARPRODUCT}):
\[ g = \sigma g_{1,2} \Omega^{n-2} + \ddc ((1-\sigma) g_{1,2}) g', \]
where $\sigma$ is equal to 1 in a neighborhood of $D$ and $g_{1,2} = g_1 \ast g_2$. 
We may represent this by
\[ g_{1,2} = \sigma_{1} g_{1} \Omega + \ddc (\sigma_2 g_1) g_2, \]
where $\sigma_1 + \sigma_2 = 1$ is a partition of unity of the following form:
The intersection of $\Div(f_1)$ with $D$ occurs precisely on the pre-image under the
projection on the Baily-Borel compactification of a set of isolated points in the
1-dimensional boundary stratum isomorphic to some $\nSh({}^K \nH(\nI))$. Choose a
$C^\infty$-function $p\sigma$ on $\nSh({}^K\nH(\nI))$ with support on some disc around one of these points,
which is 1 on a smaller neighborhood of the point.

We may assume that in $B_\varepsilon(D)$ for small $\varepsilon$,
$\sigma_1$ is just the pre-image of $p\sigma$ under the projection $\nSh({}^{K'}_{(\Delta')^\circ}\nSDB) \rightarrow \nSh({}^K\nH(\nI))$.

We have hence in a neighborhood of $D$:
\begin{eqnarray}\label{eqng}
g &=&  \sigma_{1} g_{1} \Omega^{n-1} + \sigma_2 g_2 \Omega^{n-1} \\
&& + \left( (\dd \sigma_2 \wedge \dd^c g_1) g_2  
- (\dd^c \sigma_2 \wedge \dd g_1) g_2 + (\ddc \sigma_2) g_1 g_2 \right) \wedge \Omega^{n-2}.\nonumber
\end{eqnarray}

In any of the neighborhoods considered in \ref{nbhdfirsttype} and \ref{nbhdsecondtype} we have
\begin{equation}\label{eqng1}
 g_0 = \sum_{j \le n} a_j \log(r_j) + \log(h_\mathcal{E}(s)) + \psi(z)
\end{equation}
for some harmonic function $\psi$, where $s$ is the corresponding trivializing section of $\nMorphSt^*(\mathcal{E})$.

Recall that we want to show that

\[ \lim_{\varepsilon \rightarrow 0} \int_{\partial B_\varepsilon(D)} g \wedge \dd^c g_{0} - g_{0} \wedge \dd^c g = 0.  \]

Inserting (\ref{eqng}) and (\ref{eqng1}) into this, 
since in any case (by \ref{INTLOGLOGGROWTH}) the limit of integrals of $\log$-$\log$ forms is 0, we are reduced to show, one the one hand (first line in \ref{eqng}), that
\[ \lim_{\varepsilon \rightarrow 0 } \int_{St_k'(\varepsilon)} | \D \varphi_j \sigma_{l} g_{l} \Omega^{m-3} |  = 0\]
and
\[ \lim_{\varepsilon \rightarrow 0 } \int_{St_k'(\varepsilon)} | \log(r_j) \dd^c (\sigma_{l}g_{l}) \Omega^{m-3} |  = 0, \]
where $l=1,2$, 
in neighborhoods of both types. Every other term in (\ref{eqng1}) inserted into 
\[ \lim_{\varepsilon \rightarrow 0} \int_{St_k'(\varepsilon)} | g \wedge \dd^c g_{0} - g_{0} \wedge \dd^c g | \]
yields a limit over an integral of a form of $\log$-$\log$ type, which is zero by Lemma \ref{INTLOGLOGGROWTH}. 

If $j = k$ we may rewrite $r_j$ by means of $r_k'$ and apply (\ref{eqnlemmaomega1}) of Lemma \ref{LEMMAESTIMATEOMEGA}.
In the other case we apply (\ref{eqnlemmaomega2}) of Lemma \ref{LEMMAESTIMATEOMEGA}.
(Note that $\D \varphi_j$ is of the form  $\log(r_j)$ times a $\log$-$\log$-form.)
After this, the vanishing of the limit $\varepsilon \rightarrow 0$ follows from Lemma \ref{INTSTARFORM}.

On the other hand (second line in \ref{eqng}), we have to show
\begin{align}\label{eqnint1}
 \lim_{\varepsilon \rightarrow 0 } \int_{St_1'(\varepsilon)}|  \D \varphi_1 \left( \dd \sigma_2 \wedge \dd^c g_1)  - \dd^c \sigma_2 \wedge \dd g_1 + (\ddc \sigma_2) g_1 \right) \wedge g_2 \wedge \Omega^{m-4} | &= 0\\
 \label{eqnint2}
 \lim_{\varepsilon \rightarrow 0 } \int_{St_1'(\varepsilon)} | \log(r_1) \left( \dd \sigma_2 \wedge \dd^c g_1)  - \dd^c \sigma_2 \wedge \dd g_1 + (\ddc \sigma_2) g_1 \right) \wedge \dd^c g_2 \wedge \Omega^{m-4} | &= 0
\end{align}
Here $j=k=1$ because this is non-zero only in neighborhoods of the second type, and there cannot be an intersection of components above $\nSh({}^K\nH(\nI))$.

The estimate (\ref{eqnlemmaomega3}) of Lemma \ref{LEMMAESTIMATEOMEGA}, we get vanishing of the limit $\varepsilon \rightarrow 0$ of (\ref{eqnint1}) by Lemma \ref{INTSTARFORM} again.
\end{PAR}
For (\ref{eqnint2})
note that we constructed $\sigma_2$ (in $B_\varepsilon(D)$!) as the pre-image (under the projection $\nSh({}^{K_1}_{\nRPCD_1^\circ} \nSDB) \rightarrow \nSh({}^K\nH(\nI))$ of a function supported on
a small disc around the zeros or poles of $f_1$ in the boundary of the Baily-Borel compactification. Let $X$ be the pre-image of these discs in some neighborhood of the boundary in $\nSh({}^K_{\Delta_1^\circ}\nSDB)$.
Hence the form
\[ \dd \left( g_0 (\dd \sigma_2 \wedge \dd^c g_1 - \dd^c \sigma_2 \wedge \dd g_1 + (\ddc \sigma_2) g_1) \wedge \dd^c g_2 \wedge \dd^c \log(h_\mathcal{E}(s)) \wedge \Omega^{m-3} \right) \]
is supported only on $X$.
Applying the Theorem of Stokes to this function on $\partial B_\varepsilon(D)$, we get
\begin{align*}
0 =& -\int_{\partial B_\varepsilon(D)}
 g_0 \left( \dd \sigma_2 \wedge \dd^c g_1 - \dd^c \sigma_2 \wedge \dd g_1 + (\ddc \sigma_2) g_1 \right) \\
& \qquad \wedge \dd^c g_2 \wedge \mathbf{\Omega} \wedge \Omega^{m-5}  \\
&+  \int_{\partial B_\varepsilon(D)}
 g_0 \left( \dd \sigma_2 \wedge \dd^c g_1 - \dd^c \sigma_2 \wedge \dd g_1 + (\ddc \sigma_2) g_1 \right) \\
& \qquad \wedge \textbf{d} \dd^c g_2 \wedge \dd^c \log(h_{\mathcal{E}}(s)) \wedge \Omega^{m-5}   \\
&+ \int_{\partial B_\varepsilon(D)}
 g_0 \left( - \dd \sigma_2 \wedge \textbf{d} \dd^c g_1 \underbrace{ - \textbf{d} \dd^c \sigma_2 \wedge \dd g_1 + \ddc \sigma_2 \wedge \textbf{d} g_1}_{=0} \right) \\
 & \qquad \wedge \dd^c g_2 \wedge \dd^c \log(h_{\mathcal{E}}(s)) \wedge \Omega^{m-5}   \\
&+  \int_{\partial B_\varepsilon(D)}
\textbf{d} g_0 \wedge \left( \dd \sigma_2 \wedge \dd^c g_1 - \dd^c \sigma_2 \wedge \dd g_1 + (\ddc \sigma_2) g_1 \right) \\
& \qquad \wedge \dd^c g_2 \wedge \dd^c \log(h_{\mathcal{E}}(s)) \wedge \Omega^{m-5}
\end{align*}
The limit of the first line is the one we have to estimate. We may hence instead consider the remaining three lines.
Those we may again restrict to small neighborhoods $U_i$ (all of the second type!) and estimate
the absolute value of the integral. Here all problematic terms are of the form
\[ \int_{St_1'(\varepsilon)} |\log(r_1) \xi' \wedge \xi \wedge \dd^c \log(h_{\mathcal{E}}(s)) \wedge \Omega^{m-4}| \]
or
\[ \int_{St_1'(\varepsilon)} |\frac{\D{r_1}}{r_1} \wedge \xi' \wedge \xi \wedge \dd^c g_2 \wedge \dd^c \log(h_{\mathcal{E}}(s)) \wedge \Omega^{m-5}|,  \]
where $\xi'$ is a smooth form of degree at least 1, generated by $r_{m-2} \dd \varphi_{m-2}$ and $\dd r_{m-2}$ and $\xi$ is log-log.
The latter poses no problem because $\frac{\D{r_1}}{r_1}$ is, up to terms involving $\D{r_1'}$, a smooth form (\ref{eqnrprime}).

Now, by the estimate (\ref{eqnlemmaomega4}) of Lemma \ref{LEMMAESTIMATEOMEGA}, we get vanishing of the limit $\varepsilon \rightarrow 0$ by Lemma  \ref{INTSTARFORM} again.
\end{proof}

\begin{LEMMA}\label{LEMMAESTIMATEOMEGA}
Let $U$ be a neighborhood of any type and let $i$ be the Witt rank of $\nL$.
Then for any $\log$-$\log$ form $\xi$ of rank $2(m-2-i)-1$, we have
\begin{equation}\label{eqnlemmaomega1}
 \xi \wedge \Omega^{i} \prec \frac{1}{\log(r_k')^2} \sigma,
\end{equation}
where $\sigma$ is a (*)-form w.r.t. to $k$.

Also we have for any $j \not= k$
\begin{equation}\label{eqnlemmaomega2}
 \xi \wedge \Omega^i \prec \frac{1}{\log(r_k')\log(r_j)} \sigma,
\end{equation}
where $\sigma$ is another (*)-form w.r.t. $k$.

If $U$ is a neighborhood of the second type (\ref{nbhdsecondtype}),
for any $\log$-$\log$ form $\xi' \wedge \xi$ of degree $2(m-2)-5$, where $\xi'$ is a smooth form of degree at least 1, involving only $r_{m-2}\dd\varphi_{m-2}$ and $\dd r_{m-2}$, where $r_{m-2}$, $\varphi_{m-2}$ are the polar coordinates of the projection to a $\nSh({}^K\nH(\nI))$ in a neighborhood of the point, we have
\begin{equation}\label{eqnlemmaomega3}
 \dd \varphi_1 \wedge \xi' \wedge \xi \wedge \Omega \prec \frac{1}{\log(r_1')} \sigma,
\end{equation}
and
\begin{equation}\label{eqnlemmaomega4}
\xi' \wedge \xi \wedge \dd^c \log(h_{\mathcal{E}}(s)) \wedge \Omega \prec \frac{1}{\log(r_1')^2} \sigma,
\end{equation}
where the $\sigma$ are (*)-forms w.r.t. the index 1.
\end{LEMMA}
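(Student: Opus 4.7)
The plan is to compute $\Omega$ explicitly in each of the two types of neighborhoods described in \ref{nbhdfirsttype} and \ref{nbhdsecondtype}, and then verify the four estimates by degree-counting combined with the resulting pointwise bounds.

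In a neighborhood of the first type, using the formula for $h_{\mathcal{E}}(s)$ given in \ref{nbhdfirsttype}, the leading part as one approaches the boundary divisor $z_1\cdots z_n=0$ is the quadratic form $L := \sum_{i,j \le n} \langle \lambda_i,\lambda_j\rangle_{z'} \log r_i \log r_j$, while the remaining terms $\sum_i \log(r_i)\Psi_i + \Psi_0$ are of strictly lower order because the $\Psi_i$ are $\ddc$-closed and $\Psi_0$ is smooth. Hence $\log h_{\mathcal{E}}(s) = \log L + O(1/\log r)$ and by a direct computation,
\[
\Omega = -\ddc \log h_{\mathcal{E}}(s) = \frac{1}{L^2}\bigl(L\cdot \ddc L - \partial L \wedge \overline{\partial} L\bigr) + (\text{lower order}).
\]
Since $L$ is homogeneous of degree two in $(\log r_1,\ldots,\log r_n)$, each entry of the matrix representing this $(1,1)$-form in the basis
\(\{\D\varphi_i\wedge \D\varphi_j,\ \D\varphi_i\wedge \frac{\D r_j}{r_j},\ \frac{\D r_i}{r_i}\wedge \frac{\D r_j}{r_j}\}_{i,j\le n}\) and the bounded basis elements in the remaining (smooth) directions is bounded by $C/(\log r)^2$. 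Thus $\Omega$ has the shape
\[
\Omega \prec \frac{1}{(\log r)^2}\,\tau + \eta,
\]
where $\tau$ is a bounded combination of the forms above and $\eta$ is smooth in the transverse directions. In a second-type neighborhood, the analogous computation from the formula $h_\mathcal{E}(s)=\log(r_1)\Psi_1(z_{m-2})+\Psi_0$ shows that only the single divisor coordinate $r_1$ contributes a singular factor and one gets $\Omega \prec \frac{1}{(\log r_1)^2}\tau_1 + \eta_1$, with $\tau_1$ built from $\frac{\D r_1}{r_1}, \D\varphi_1$ and smooth forms in $z_{m-2},\ldots$.

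With these pointwise shapes in hand, the estimates follow from a uniform mechanism: I rewrite each contributing differential from $\xi$ in terms of the building blocks of a (*)-form and use $\D\varphi_j = \log(r_j) \cdot \frac{\D\varphi_j}{\log r_j}$ to transfer $1/\log r_j$ factors coming from $\xi$ onto $\log r_j$ factors absorbed elsewhere. For (1), $\xi$ has degree $2(m-2-i)-1$ and $\Omega^i$ has degree $2i$, so $\xi\wedge \Omega^i$ has degree $2(m-2)-1$: exactly one of the $2(m-2)$ available differentials is missing. Combined with (\ref{eqnrprime}), the missing one can always be chosen so that $\xi\wedge\Omega^i$ can be expressed, after substitution, as a (*)-form in the index $k$ multiplied by a bounded coefficient times a factor of at most $1/(\log r_k')^2$, coming entirely from one occurrence of $\Omega$ (converted from $1/(\log r_k)^2$ to $1/(\log r_k')^2$ via (\ref{eqnrprime})). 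For (2), one instead uses the $\frac{\D r_j}{r_j \log r_j}$ factor already present in $\xi$ (guaranteed by the degree count, since $\xi$ has odd degree as a log-log form) to absorb one of the two $1/\log r_k$ factors from $\Omega^i$ into $1/\log r_j$. For (3) and (4), the second-type neighborhood structure lets me check the asserted decomposition directly; in (4) I additionally use that $\dd^c \log h_\mathcal{E}(s) \prec \frac{1}{\log r_1}\cdot (\text{log-log})$ to pick up the extra $1/\log r_1'$ factor.

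The main obstacle will be to organize the bookkeeping cleanly for the first-type case, where several divisor components meet: one must show that no matter how the generic log-log differentials in $\xi$ are distributed among the coordinates $r_1,\ldots,r_n$, one can always absorb exactly two $1/\log r_k'$ factors (for (1)) or one each of $1/\log r_k'$ and $1/\log r_j$ (for (2)) using the structure of $\Omega$ and the conversion formulas between $r_k$ and $r_k'$, without introducing any unbounded coefficient in the (*)-basis. This amounts to a finite combinatorial verification, which together with Lemma \ref{INTSTARFORM} completes the argument.
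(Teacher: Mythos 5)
Your overall route (expand $\Omega = \ddc \log h_{\mathcal{E}}(s)$ explicitly in each type of neighborhood and then bound term by term) is the same as the paper's, but the two places where you wave your hands are exactly where the actual content of the lemma sits, and as stated your intermediate bounds are false. In a first-type neighborhood you claim $\Omega \prec \frac{1}{(\log r)^2}\tau + \eta$ with $\tau$ bounded; but the only estimates available are $h_{\mathcal{E}}(s) \gg \log(r_i)\log(r_j)$ for $i\neq j$ and $h_{\mathcal{E}}(s) \gg \log(r_i)^2$ \emph{only when} $Q_{\nL}(\lambda_i)>0$. When $\lambda_k$ is isotropic (which occurs precisely in the Witt rank $2$ case), the summands of $\Omega^2$ carrying $\log(r_k)^2$ in the numerator (the terms of the shape (\ref{eqn1})) are only bounded by $1/\log(r_j)^2$ with $j\neq k$ — a quantity that does \emph{not} tend to zero on $St_k'(\varepsilon)$, so it is useless for the application via Lemma \ref{INTSTARFORM}. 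The paper closes this hole not by bookkeeping but by a structural argument on the full product $\xi\wedge\Omega^i$: either $\xi$ must supply a factor $\frac{\D\varphi_k}{\log(r_k)}$, or a $\D\varphi_k$ (resp. $r_k\D\varphi_k$) appears inside the dangerous factor and the wedge vanishes (resp. decays faster). Your "finite combinatorial verification" is precisely this missing step, and it requires the positivity structure $\langle\lambda_i,\lambda_j\rangle>0$, $Q_\nL(\lambda_i)\ge 0$ of the cone, not just homogeneity of $L$.

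The second-type case is worse: there $h_{\mathcal{E}}(s) = \log(r_1)\psi_1(z_{m-2})+\psi_0$ grows only \emph{linearly} in $\log(r_1)$, so both of your claims $\Omega \prec \frac{1}{(\log r_1)^2}\tau_1+\eta_1$ and $\dd^c\log h_{\mathcal{E}}(s) \prec \frac{1}{\log(r_1)}\,(\text{log-log})$ are false: the summand $\frac{\log(r_1)^2}{h_{\mathcal{E}}(s)^2}\,\dd^c\psi_1\wedge\dd\psi_1$ is $O(1)$, and $\frac{\log(r_1)\,\dd^c\psi_1}{h_{\mathcal{E}}(s)}$ is $O(1)$ times a smooth (not normalized log-log) form. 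Estimates (\ref{eqnlemmaomega3}) and (\ref{eqnlemmaomega4}) hold only because these exceptional terms are proportional to $r_{m-2}\,\dd\varphi_{m-2}\wedge\dd r_{m-2}$ and therefore cancel against $\xi'$ — this is the entire reason the hypothesis on $\xi'$ is in the statement, and your proposal never uses it, which is the clearest sign the argument cannot close as written. A smaller point: for (\ref{eqnlemmaomega2}) your parity argument ("odd degree forces a $\frac{\D r_j}{r_j\log(r_j)}$ factor in $\xi$") is not valid, since log-log forms are generated over smooth forms and a given monomial need not contain any singular differential at all.
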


\begin{proof}
We have in any case:
\begin{align*}
 \dd^c \log(r_i) &= \D{\varphi}_i,  \\
 \dd \log(r_i) &= \frac{\D{r}_i}{r_i},
\end{align*}
and
\begin{align*}
 \dd^c \log h_{\mathcal{E}}(s) &=  \frac{ \dd^c h_{\mathcal{E}}(s)}{h_{\mathcal{E}}(s)},  \\
 \dd \log h_{\mathcal{E}}(s) &=  \frac{ \dd h_{\mathcal{E}}(s)}{h_{\mathcal{E}}(s)},  \\
 \Omega \sim \ddc \log h_{\mathcal{E}}(s) &= \frac{ \dd h_{\mathcal{E}}(s)}{h_{\mathcal{E}}(s)}  \frac{ \dd^c h_{\mathcal{E}}(s)}{h_{\mathcal{E}}(s)} + \frac{ \ddc h_{\mathcal{E}}(s)}{h_{\mathcal{E}}(s)}.
\end{align*}

In a neighborhood {\em of the first type} (\ref{nbhdfirsttype}), we may write
\[ h_{\mathcal{E}}(s) = \sum_{i,j\le n} \langle \lambda_i, \lambda_j \rangle \log(r_i) \log(r_j) + \sum_{i\le n} \log(r_i) \psi_i(z) + \psi_0(z), \]
where $\psi_i$ are harmonic functions and $\psi_0$ is smooth.
Here the $\lambda_i$ are linearly independent and $\langle \lambda_i, \lambda_j \rangle > 0$ if $i \not= j$.
Hence for $i \not= j$ always
\begin{equation}\label{eqnestimateh1}
 h_{\mathcal{E}}(s) \gg \log(r_i)\log(r_j),
\end{equation}
and if $Q_\nL(\lambda_i)>0$
\begin{equation}\label{eqnestimateh2}
 h_{\mathcal{E}}(s) \gg \log(r_i)^2.
\end{equation}

Hence
\begin{align}
\label{eqnddc1} \dd^c h_{\mathcal{E}}(s) =&
  \log(r_k) (\sum_{j\le n} \langle \lambda_k, \lambda_j \rangle \D{\varphi_j} + \dd^c \psi_k)
  + \sum_{i \le n} \D{\varphi_i} \psi_i  \nonumber\\
  &+ \sum_{i \le n , j \le n, i\not=k} \langle \lambda_i, \lambda_j \rangle \log(r_i) \D{\varphi_j}
  + \sum_{i \le n, i\not=k} \log(r_i) \dd^c \psi_k + \dd^c \psi_0  
\end{align}
\begin{align}
\label{eqnddc2} \dd h_{\mathcal{E}}(s) =&
  \log(r_k) (\sum_{j \le n} \langle \lambda_k, \lambda_j \rangle \frac{\D{r_j}}{r_j} + \dd \psi_k)
  + \sum_{i \le n} \frac{\D{r_j}}{r_j} \psi_i  \nonumber\\
  &+ \sum_{i \le n , j \le n, i\not=k} \langle \lambda_i, \lambda_j \rangle \log(r_i) \frac{\D{r_j}}{r_j}
  + \sum_{i \le n, i\not=k} \log(r_i) \dd \psi_k + \dd \psi_0 \\
\label{eqnddc3} \ddc h_{\mathcal{E}}(s) =&
  \D{\varphi_k} (\sum_{j \le n} \langle \lambda_k, \lambda_j \rangle \frac{\D{r_j}}{r_j} + \dd \psi_k)
  + \sum_{i \le n} \frac{\D{r_j}}{r_j} \dd^c \psi_i  \nonumber\\
  &+ \sum_{i \le n , j \le n, i\not=k} \langle \lambda_i, \lambda_j \rangle \D{\varphi_i} \frac{\D{r_j}}{r_j}
  + \sum_{i \le n, i\not=k} \D{\varphi_i} \dd \psi_k + \ddc \psi_0.
\end{align}

We now substitute $\frac{\D{r_k}}{r_k}$ by a form of shape
\[ f \frac{\D{r_k'}}{r_k'} + \xi, \]
where $\xi$ is smooth. This is possible by means of formula (\ref{eqnrprime}).

First assume $i=2$.
$\Omega^2$ is proportional to
\begin{align}
\label{eqnomega1} & \frac{ \ddc h_{\mathcal{E}}(s)}{h_{\mathcal{E}}(s)} \wedge \frac{ \ddc h_{\mathcal{E}}(s)}{h_{\mathcal{E}}(s)}  \\
\label{eqnomega2} &+ \frac{ \dd h_{\mathcal{E}}(s)}{h_{\mathcal{E}}(s)}  \frac{ \dd^c h_{\mathcal{E}}(s)}{h_{\mathcal{E}}(s)} \wedge \frac{ \ddc h_{\mathcal{E}}(s)}{h_{\mathcal{E}}(s)}.
\end{align}

In (\ref{eqnomega1}), multiplying out the expression (\ref{eqnddc3}) squared, we have for every occurring summand $S$
\[ S \prec \frac{1}{\log(r_k)^2} \sigma \prec \frac{1}{\log(r_k')^2} \sigma, \]
where $\sigma$ is a (*)-form, using the estimate (\ref{eqnestimateh1}).

In (\ref{eqnomega2}), multiplying out the product of (\ref{eqnddc1}--\ref{eqnddc3}), we have
\[ S \prec \frac{1}{\log(r_k)^2} \sigma \prec \frac{1}{\log(r_k')^2} \sigma, \]
using the estimate (\ref{eqnestimateh1}) again,
except for the summands of the form
\begin{equation}\label{eqn1}
 \log(r_k) \log(r_k) (\sum_{j \le n} \langle \lambda_k, \lambda_j \rangle \D{\varphi_j} + \dd^c \psi_k) \wedge (\sum_{j \le n} \langle \lambda_k, \lambda_j \rangle \frac{\D{r_j}}{r_j} + \dd \psi_k) \wedge S,
\end{equation}
where $S$ is any summand of (\ref{eqnddc3}).

But now in the expression $\xi \wedge \Omega^2$ either occurs a $\frac{\D{\varphi}_k}{\log(r_k)}$ from $\xi$, hence the estimate (\ref{eqnlemmaomega1}) is true, or a $r_k \D{\varphi}_k$ occurs in $S$ which satisfies a much stronger estimate, or a $\D{\varphi}_k$ occurs in $S$. It occurs, however, multiplied with $(\sum_{j \le n} \langle \lambda_k, \lambda_j \rangle \frac{\D{r_j}}{r_j} + \dd \psi_k)$ so (\ref{eqn1}) is zero in that case.

The estimate (\ref{eqnlemmaomega2}) is more easy and left to the reader.

Now assume $i=1$. Then, by the estimate (\ref{eqnestimateh2}), already every summand $S$ in (\ref{eqnddc1}--\ref{eqnddc3}) divided by $h_\mathcal{E}(s)$ satisfies
\[ S \prec \frac{1}{\log(r_k)} \sigma \prec \frac{1}{\log(r_k')} \sigma, \]
with a (*)-form $\sigma$.

The estimates (\ref{eqnlemmaomega3}, \ref{eqnlemmaomega4}) do not involve neighborhoods of the first type.

In a neighborhood {\em of the second type} (\ref{nbhdsecondtype}), we may write
\[ h_{\mathcal{E}}(s) = \log(r_1) \psi_1(z_{m-2}) + \psi_0(z), \]
where $\psi_1$ is harmonic and depends only on $z_{m-2}$ and $\psi_0$ is smooth.

Clearly in this case:
\begin{equation}\label{eqnestimateh3}
 h_{\mathcal{E}}(s) \gg \log(r_1).
\end{equation}

Also
\begin{align*}
 \dd^c h_{\mathcal{E}}(s) &=
 \log(r_1) \dd^c \psi_1(z_{m-2}) + \D{\varphi_1} \psi_1(z_{m-2}) + \dd^c \psi_0(z), \\
 \dd h_{\mathcal{E}}(s) &=
 \log(r_1) \dd \psi_1(z_{m-2}) + \frac{\D{r_1}}{r_1} \psi_1(z_{m-2}) + \dd \psi_0(z), \\
 \ddc h_{\mathcal{E}}(s) &=
 \D{\varphi_1} \dd \psi_1(z_{m-2}) + \frac{\D{r_1}}{r_1} \dd^c \psi_1(z_{m-2}) + \ddc \psi_0(z).
\end{align*}

We now substitute again $\frac{\D{r_1}}{r_1}$ by a form of shape
\[ f \frac{\D{r_1'}}{r_1'} + \xi, \]
where $\xi$ is smooth. This is possible by means of formula (\ref{eqnrprime}).

Here, for (\ref{eqnlemmaomega1}), we may argue exactly as before.
(\ref{eqnlemmaomega2}) is vacuous in this type of neighborhood.
For (\ref{eqnlemmaomega3}): Write once again $\Omega$ as
\begin{align}
\label{eqnomega3} & \frac{ \ddc h_{\mathcal{E}}(s)}{h_{\mathcal{E}}(s)}  \\
\label{eqnomega4} +& \frac{ \dd^c h_{\mathcal{E}}(s)}{h_{\mathcal{E}}(s)} \wedge \frac{ \ddc h_{\mathcal{E}}(s)}{h_{\mathcal{E}}(s)}.
\end{align}

In (\ref{eqnomega3}), using (\ref{eqnestimateh3}), we have for any summand
\[ S \prec \frac{1}{\log(r_1')} \sigma, \]
for a (*)-form $\sigma$.

In (\ref{eqnomega4}), using (\ref{eqnestimateh3}) again, we get the same for any summand except possibly for
\[ \frac{1}{h_\mathcal{E}(s)^2} \log(r_1)^2 \dd^c \psi_1(z_{m-2}) \wedge \dd \psi_1(z_{m-2}), \]
which cancels, however, with $\xi'$ because $\dd^c \psi_1(z_{m-2}) \wedge \dd \psi_1(z_{m-2})$ is proportional to $r_{m-2} \dd \varphi_{m-2} \wedge \dd r_{m-2}$.

For (\ref{eqnlemmaomega4}): Consider:
\begin{equation}
\label{eqnomega5} \dd^c \log(h_\mathcal{E}(s)) \wedge \Omega = \frac{\dd^c h_{\mathcal{E}}(s)}{h_{\mathcal{E}}(s)} \wedge \frac{ \ddc h_{\mathcal{E}}(s)}{h_{\mathcal{E}}(s)}.
\end{equation}

Using (\ref{eqnestimateh3}) again, we now have for any summand in 
\[ S \prec \frac{1}{\log(r_1')^2} \sigma, \]
for a (*)-form $\sigma$,
except possibly for
\[ \frac{1}{h_\mathcal{E}(s)^2} \log(r_1) \dd^c \psi_1(z_{m-2}) \dd \varphi_i \dd \psi_1(z_{m-2}), \]
which cancels with $\xi'$ because $\dd^c \psi_1(z_n) \wedge \dd \psi_1(z_n)$ is proportional to $r_n \D{\varphi_n} \wedge \D{r_n}$
and possibly for
\[ \frac{1}{h_\mathcal{E}(s)^2} \log(r_1) \dd^c \psi_1(z_{m-2})  \ddc \psi_0(z), \]
But then a $\frac{\dd \varphi_1}{\log(r_1)}$ must appear somewhere in the expression (\ref{eqnlemmaomega4}).
\end{proof}

\section{The arithmetic and geometric volume of Shimura varieties of orthogonal type}

This section contains the main results of this article. All results are conditional on the existence of a
good theory of canonical integral models of toroidal compactifications of Shimura varieties as outlined in
the first sections. In the thesis of the author, the existence of such a theory has been proven under
a missing technical hypothesis (cf. \ref{MAINCONJECTURE}). 

\begin{PAR}\label{DISCORBIT}
Recall the definitions of section \ref{DEFLAMBDAMU} and \ref{RN}.
The relation between $\widetilde{\mu}$ and $\widetilde{\lambda}$ 
is given by a global orbit equation:
\end{PAR}
\begin{SATZ}[{\cite[Theorem 10.5]{Paper1}}]\label{GLOBALORBITEQUATION}
Assume $m \ge 3$, $m-n \ge 1$. Let $D$ be the discriminant of $\nL_\Z$ and $D'$ be the $D$-primary part of the discriminant of $\nM_\Z$.
\[ \widetilde{\lambda}^{-1}(\nL_\Z; 0) \widetilde{\mu}(\nL_\Z, \nM_\Z, \kappa; 0) = \sum_{\alpha \SO'(\nL_\Zh) \subset \nIsome(\nM, \nL)(\Af) \cap \kappa} \widetilde{\lambda}^{-1}(\alpha^\perp_\Z; 0) \]
and
\[ \frac{\dd}{\dd s} \left. { \left(\widetilde{\lambda}^{-1}(\nL_\Z; s) \widetilde{\mu}(\nL_\Z, \nM_\Z, \kappa; s) \right) } \right|_{s=0} \equiv \sum_{\alpha\SO'(\nL_\Zh) \subset \nIsome(\nM, \nL)(\Af) \cap \kappa} \frac{\dd}{\dd s} \left.  \widetilde{\lambda}^{-1}(\alpha^\perp_\Z; s) \right|_{s=0} \]
in $\R_{2DD''}$, where $D''$ is the product of primes such that $p^2 \nmid D'$.
\end{SATZ}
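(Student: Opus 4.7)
The plan is to reduce the global identity to a product of local identities, one at each place $\nu$ of $\Q$, by means of Witt's extension theorem applied locally. Specifically, I would first prove the \textbf{local orbit equation}
\[ \mu_\nu(\nL_{\Z_\nu}, \nM_{\Z_\nu}, \kappa_\nu; s) = \lambda_\nu(\nL_{\Z_\nu}; s) \sum_{\alpha_\nu \SO'(\nL_{\Z_\nu}) \subset \nIsome(\nM, \nL)(\Q_\nu)\cap \kappa_\nu} \lambda_\nu^{-1}(\alpha_\nu^\perp; s), \]
for each finite $\nu=p$ and similarly for $\nu=\infty$. For finite $p$ this follows from unwinding the definition of $\mu_p$ as a volume of $\nIsome(\nM,\nL\oplus H^s)(\Q_p)\cap \kappa\oplus(H^s\otimes \nM_\Zp^*)$, writing this volume as a sum over $\SO'(\nL_\Zp\oplus H^s)$-orbits, and identifying the stabilizer of an isometry $\alpha_p$ with $\SO'((\alpha_p^\perp)_{\Zp}\oplus H^s)$ via Witt's theorem. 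The correction factors $\prod_{i=1}^{s}(1-p^{-2i})$ that normalize $\lambda_p$ cancel in the quotient $\lambda_p(\nL)/\lambda_p(\alpha^\perp)$. For $\nu=\infty$ the analogous identity is a direct ratio of Gamma factors $\Gamma_{n,m}/\Gamma_{m-1,m}$ summed over the (finite set of) Archimedean orbits, which is again a consequence of Witt plus a volume computation on compact orthogonal groups.

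Next I would \textbf{globalize}. Taking the product over all places of the local identity, distributivity rewrites
\[ \prod_\nu \lambda_\nu^{-1}(\nL)\mu_\nu(\nL,\nM,\kappa) = \sum_{(\alpha_\nu)_\nu} \prod_\nu \lambda_\nu^{-1}(\alpha_\nu^\perp), \]
and the indexing tuples $(\alpha_\nu)_\nu$ correspond bijectively to adelic $\SO'(\nL_\Zh)$-orbits of global isometries in $\nIsome(\nM,\nL)(\Af)\cap\kappa$ (at places where $\nL$ and $\nM$ are unimodular the local orbit is determined uniquely, so the tuple has only finitely many nontrivial components). Incorporating the Archimedean discriminant normalization into $\widetilde{\lambda}$ and $\widetilde{\mu}$ yields, as an identity of meromorphic functions of $s$,
\[ \widetilde{\lambda}^{-1}(\nL_\Z;s)\widetilde{\mu}(\nL_\Z,\nM_\Z,\kappa;s) = \sum_{\alpha}\widetilde{\lambda}^{-1}(\alpha^\perp_\Z;s). \]
Evaluation at $s=0$ gives the first assertion as an equality in $\R$, and formal differentiation at $s=0$ gives the second assertion, modulo the subtleties discussed below.

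The \textbf{main obstacle} is the behaviour at bad primes $p\mid 2DD''$. At such primes $\nL_\Zp$ may fail to be unimodular, or $\nM_\Zp^*/\nM_\Zp$ may fail to be cyclic, so the saturated sublattice $(\alpha_p^\perp)_\Zp$ inside $\nL_\Zp$ is not intrinsically well-defined from $\alpha_p$: different choices of integral structure yield lattices differing by $p$-power scalars, and the Witt extension argument provides only that the stabilizer of $\alpha_p$ is an inner form of $\SO'(\alpha_p^\perp)$ up to $p$-powers. The value $\lambda_p^{-1}(\alpha_p^\perp;0)$ is unaffected by a $\Z_p^*$-scaling of the quadratic form (the measure on $\SO'$ being independent of that choice), so the value equation at $s=0$ is a genuine equality in $\R$; however the derivative $\frac{d}{ds}\lambda_p^{-1}(\alpha_p^\perp;s)|_{s=0}$ depends on the normalization of the discriminant at $p$ through a term proportional to $\log p$. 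Hence the derivative identity only makes sense modulo the group generated by $\log p$ for $p\mid 2DD''$, i.e.\ in $\R_{2DD''}$, which is exactly the content of the second assertion.

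Concretely, the technical heart of the argument is a careful bookkeeping of (i)~the stabilization procedure $\nL\mapsto \nL\oplus H^s$ and its compatibility with the orthogonal complement operation, and (ii)~the matching of global and local orbit classes via strong approximation for $\SO'$, which is available here because $\nL$ has $\Q$-rank $\ge 1$ in all cases relevant to the Main Theorem. The Archimedean factor and the sign of $\widetilde{\mu}$ require the computations of \cite[\S6,\S10]{Paper1} to ensure that, after normalization, the sum on the right truly equals $\widetilde{\mu}(\nL,\nM,\kappa)/\widetilde{\lambda}(\nL)$ at $s=0$ rather than off by a rational constant.
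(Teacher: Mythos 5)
This statement is not proved in the present paper at all: it is quoted from \cite[Theorem 10.5]{Paper1}, and the only trace of its proof here is the remark in the proof of Main Theorem \ref{RESULTGLOBAL} that the global identity ``followed from the fact that the local orbit equation for $\widetilde{\lambda}_p$ and $\widetilde{\mu}_p$ remains true'' at the primes exactly dividing $D$, failing in general when $p^2\mid D$. Your route --- a local orbit equation $\lambda_\nu^{-1}(\nL)\mu_\nu(\nL,\nM,\kappa)=\sum_\alpha\lambda_\nu^{-1}(\alpha^\perp)$ at every place obtained from Witt's theorem plus an orbit--stabilizer volume count (with the $\prod_i(1-p^{-2i})$ normalizations cancelling), globalization by matching tuples of local orbits with adelic $\SO(\nL_\Zh)$-orbits, incorporation of the tilde-discriminant normalizations, and the observation that the bad primes $p\mid 2DD''$ contaminate only the derivative through rational multiples of $\log p$ --- is exactly the strategy of the cited source that the paper relies on, so your proposal is essentially the same approach (with the usual caveat that the precise bookkeeping at the primes dividing $2DD''$, which you only sketch, is where the actual work lies).
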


There is also a relation between different $\widetilde{\mu}$'s in the style of Kitaoka's recursion formula: 
\begin{SATZ}[{\cite[Theorem 10.6]{Paper1}}]\label{GLOBALKITAOKA}
Assume $m \ge 3$, $m-n \ge 1$. Let $D$ be the discriminant of $\nL_\Z$ and $\nM_\Z = \nM_\Z' \perp \nM_\Z''$. Let $D'$ be the $D$-primary part of the 
discriminant of $\nM_\Z'$ (not $\nM_\Z$ !).
Let $\kappa \in (\nL_\Z^*/\nL_\Z) \otimes \nM_\Z^*$ with a corresponding decomposition $\kappa = \kappa' \oplus \kappa''$.
We have
\[ \widetilde{\lambda}^{-1}(\nL_\Z; 0) \widetilde{\mu}(\nL_\Z, \nM_\Z, \kappa; 0) = \sum_{\substack{\alpha \SO'(\nL_\Zh) \subset \nIsome(\nM', \nL)(\Af) \cap \kappa' \\ \kappa'' \cap \alpha^\perp_\Af \otimes (\nM''_\Af)^* \not= \emptyset}} \widetilde{\lambda}^{-1}(\alpha^\perp_\Z; 0) \widetilde{\mu}(\alpha^\perp_\Z, \nM_\Zh, \kappa''; 0) \]
and
\begin{gather*} 
\frac{\dd}{\dd s} \left. { \left( \widetilde{\lambda}^{-1}(\nL_\Z; s) \widetilde{\mu}(\nL_\Z, \nM_\Z, \kappa; s) \right) } \right|_{s=0} \\ = 
\sum_{\substack{\alpha \SO'(\nL_\Zh) \subset \nIsome(\nM', \nL)(\Af) \cap \kappa' \\ \kappa'' \cap \alpha^\perp_\Af \otimes (\nM''_\Af)^* \not= \emptyset}} \frac{\dd}{\dd s} \left. { \left( \widetilde{\lambda}^{-1}(\alpha^\perp_\Z; s) \widetilde{\mu}(\alpha^\perp_\Z, \nM'', \kappa''; s)
\right) } \right|_{s=0} 
\end{gather*}
in $\R_{2DD'}$. Here $\kappa'' \in (\nL_\Zh^*/\nL_\Zh)\otimes (\nM_\Z'')^*$ is considered as an element of $((\alpha^\perp_\Zh)^* / \alpha^\perp_\Zh) \otimes (\nM''_\Zh)^*$ via 
$\kappa'' \mapsto \kappa'' \cap (\alpha^\perp_\Af \otimes (\nM''_\Af)^*)$.
\end{SATZ}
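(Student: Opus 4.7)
The plan is to reduce Theorem \ref{GLOBALKITAOKA} to the orbit equation Theorem \ref{GLOBALORBITEQUATION} by iterating it once and re-indexing the resulting sum according to the orthogonal decomposition $\nM_\Q = \nM'_\Q \perp \nM''_\Q$. First I apply Theorem \ref{GLOBALORBITEQUATION} to the pair $(\nL_\Z, \nM_\Z)$, rewriting the left-hand side of the value identity as a sum $\sum_\alpha \widetilde{\lambda}^{-1}(\alpha^\perp_\Z; 0)$ over right $\SO'(\nL_\Zh)$-orbits of adelic isometries $\alpha: \nM_\Af \hookrightarrow \nL_\Af$ matching $\kappa$. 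Each such $\alpha$ restricts via the splitting of $\nM$ to a canonical pair $(\alpha', \alpha'')$, where $\alpha' = \alpha|_{\nM'_\Af}$ matches $\kappa'$ and $\alpha''=\alpha|_{\nM''_\Af}$ lands in $\alpha'^\perp_\Af$; compatibility with $\kappa$ translates into the two conditions that $\alpha'$ matches $\kappa'$ and that $\kappa'' \cap \alpha'^\perp_\Af \otimes (\nM''_\Af)^* \neq \emptyset$, in which case $\alpha''$ matches the class $\kappa'' \cap (\alpha'^\perp_\Af \otimes (\nM''_\Af)^*)$ on the right-hand side of the theorem.

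Next I regroup the sum as a double sum: the outer sum runs over $\SO'(\nL_\Zh)$-orbits of $\alpha'$ matching $\kappa'$ (with the non-emptiness condition), and for each such $\alpha'$ the inner sum runs over $\SO'(\alpha'^\perp_\Zh)$-orbits of $\alpha''$ matching $\kappa''$ inside $\alpha'^\perp$. This passage from $\SO'(\nL_\Zh)$-orbits of full $\alpha$ to iterated orbits of $(\alpha',\alpha'')$ uses that the stabiliser in $\SO'(\nL_\Zh)$ of the chosen representative $\alpha'$ surjects (up to the cokernel contributing to primes in $2D$) onto $\SO'(\alpha'^\perp_\Zh)$. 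Applying Theorem \ref{GLOBALORBITEQUATION} a second time, now to the pair $(\alpha'^\perp_\Z, \nM''_\Z)$ with coset $\kappa''$, the inner sum collapses to
\[
\sum_{\alpha'' \SO'(\alpha'^\perp_\Zh)} \widetilde{\lambda}^{-1}((\alpha')^\perp_\Z \cap (\alpha'')^\perp_\Z; 0) = \widetilde{\lambda}^{-1}(\alpha'^\perp_\Z; 0)\, \widetilde{\mu}(\alpha'^\perp_\Z, \nM''_\Z, \kappa''; 0),
\]
which produces the first (value) identity after identifying $(\alpha')^\perp \cap (\alpha'')^\perp$ with $\alpha^\perp$.

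The derivative identity follows by the same iteration, this time starting from the derivative version of Theorem \ref{GLOBALORBITEQUATION}, combined with the Leibniz rule applied to the right-hand side of Kitaoka:
\[
\frac{\dd}{\dd s}\bigl(\widetilde{\lambda}^{-1}(\alpha'^\perp_\Z; s)\, \widetilde{\mu}(\alpha'^\perp_\Z, \nM''_\Z, \kappa''; s)\bigr)\Big|_{s=0} = \widetilde{\lambda}^{-1}{}'(\alpha'^\perp_\Z; 0)\, \widetilde{\mu}(\alpha'^\perp_\Z, \nM''_\Z, \kappa''; 0) + \widetilde{\lambda}^{-1}(\alpha'^\perp_\Z; 0)\, \widetilde{\mu}'(\alpha'^\perp_\Z, \nM''_\Z, \kappa''; 0).
\]
Summing the first Leibniz term over the outer $\alpha'$-orbits and reassembling the inner sum via the value orbit equation recovers the contribution $\sum_\alpha \widetilde{\lambda}^{-1}{}'(\alpha^\perp_\Z; 0)$, matching the right-hand side of the derivative orbit equation applied to $(\nL,\nM)$. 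The second Leibniz term, after using the derivative orbit equation on $(\alpha'^\perp, \nM'')$, reassembles the remaining cross-derivative. The two sides then agree by Leibniz on the left.

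The main obstacle is the discriminant bookkeeping leading to the sharper error modulus $\R_{2DD'}$ (rather than the $\R_{2DD''}$ that would be produced by a naive iteration, since the discriminant of $\alpha'^\perp_\Z$ generally introduces primes beyond those dividing $2D$ and $D'$). The point is that the errors incurred at the second application of the orbit equation, involving primes of $\mathrm{disc}(\alpha'^\perp)$, cancel pairwise once one keeps the local $\lambda$ and $\mu$ factors grouped as they are on the right-hand side of Kitaoka: what appears on the right is precisely the product $\widetilde{\lambda}^{-1}(\alpha'^\perp_\Z; s)\, \widetilde{\mu}(\alpha'^\perp_\Z, \nM''_\Z, \kappa''; s)$, and it is this combined quantity (not the individual factors) that admits an exact local expression modulo $\R_{2DD'}$. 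Verifying the cancellation reduces to the local density computations of \cite[Section 10]{Paper1} underlying Theorem \ref{GLOBALORBITEQUATION}, applied simultaneously to $\nM'$ in $\nL$ and to $\nM''$ in the varying orthogonal complement $\alpha'^\perp$.
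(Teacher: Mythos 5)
The paper does not actually prove Theorem~\ref{GLOBALKITAOKA}: it is cited, together with the orbit equation~\ref{GLOBALORBITEQUATION}, from \cite[Theorem~10.6]{Paper1}, so there is no internal proof here to compare your argument to. That said, your strategy of deriving the Kitaoka recursion by a double application of the orbit equation is a natural reconstruction, and several of the steps are sound: the value identity follows cleanly because the value form of \ref{GLOBALORBITEQUATION} is exact, the re-indexing of $\SO'(\nL_\Zh)$-orbits of $\alpha$ into iterated orbits of $(\alpha',\alpha'')$ is the right combinatorics (modulo checking the surjectivity of the stabiliser onto $\SO'(\alpha'^\perp_\Zh)$ at the relevant primes), and the Leibniz-rule bookkeeping for the derivative identity is correct.

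The genuine gap is precisely the one you flag but then wave away, and it is the mathematical content of the theorem, not incidental bookkeeping. The derivative orbit equation applied to the pair $(\alpha'^\perp_\Z, \nM''_\Z)$ is only guaranteed modulo $\R_{2\,\discr(\alpha'^\perp_\Z)\cdot(\cdots)}$, i.e.\ it can fail at primes dividing $\discr(\alpha'^\perp_\Z)$. At a prime $p$ with $p\nmid 2D$ but $p\mid \discr(\nM'_\Z)$, the lattice $\nL_\Zp$ is unimodular and hence $\discr(\alpha'^\perp_\Zp)$ is a unit times $\discr(\nM'_\Zp)$, so such $p$ does enter the error for the inner application. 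But such $p$ is not in $2DD'$, since $D'$ is the $D$-primary part of $\discr(\nM'_\Z)$ and so consists only of primes dividing $D$. A naive double application therefore only lands in $\R_{2D\cdot\discr(\nM')}$, which is strictly weaker than the claimed $\R_{2DD'}$. Your assertion that these extra error terms ``cancel pairwise once one keeps the local $\lambda$ and $\mu$ factors grouped'' is exactly what remains to be proved; as written it is an appeal rather than an argument. The honest resolution is to verify that the \emph{local} derivative orbit equation at primes $p\nmid 2D$ is in fact exact for the pairs $(\alpha'^\perp_{\Zp},\nM''_{\Zp})$ arising in the inner sum --- equivalently, to run the local density computations of \cite[Section~10]{Paper1} directly for the split lattice $\nM'\perp\nM''$ in $\nL$ rather than deducing them formally from~\ref{GLOBALORBITEQUATION}. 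Without that local input, the modulus obtained by your iteration is too coarse.
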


\begin{PAR} \label{SPECIALMODELS}
Let $\nL_{\Z}$ be a lattice with quadratic form of discriminant $D\not=0$ and signature $(m-2,2)$.
Let $K$ be the {\em discriminant kernel }
of $\nL_{\Zh}$. It is an admissible compact open subgroup for all $p \nmid D$. Let $\nRPCD$ be a
complete and smooth $K$-admissible rational polyhedral cone decomposition, and let $\nSh({}^K_\nRPCD \nO(\nL))$ the {\em global} canonical
model defined over $\Z[1/(2D)]$.

At primes $p\not=2$, with $p|D$ but $p^2\nmid D$, we can find a lattice $\nL'_\Z$ containing $\nL_\Z$ as a saturated sublattice of codimension one such that
$\nL'_\Zpp$ is unimodular. If $\nRPCD$ is chosen appropriate, we 
may define thus a local model over $\Zpp$ of $\nSh({}^K_\nRPCD \nO(\nL))_\Q$ at those $p$ by taking
normalization and Zariski closure in the model $\nSh({}^{K'}_{\nRPCD'} \nO(\nL'))$ over $\spec(\Zpp)$. We understand by $\nSh$ the so glued global model over $\Z[1/(2D')]$, where
$D'$ is the product of primes $p$ such that $p^2|D$. Let $\overline{\mathcal{E}}$ be as before.
In the same way, we glue $\nMorphSt^* \overline{\mathcal{E}}$ on $\nSh$ from the pullbacks of $\nMorphSt^* \overline{\mathcal{E}}$ on 
$\nSh({}^{K'}_{\nRPCD'} \nO(\nL'))$ over $\spec(\Zpp)$. Note that it is not clear from the properties of the theory of canonical models outlined in the first sections, whether this 
depends on the choice of lattices $\nL'_\Z$. The theorem below, however, is true for any of these choices.
\end{PAR}

\begin{HAUPTSATZ}\label{RESULTGLOBAL}
Assume \ref{MAINCONJECTURE}. We have
\[
\begin{array}{rrcl}
\text{1.} \qquad & \vol_{E}(\nSh) &=& 4 \widetilde{\lambda}^{-1}(\nL_\Z; 0)  \\
\text{2.} \qquad & \widehat{\vol}_{\overline{\mathcal{E}}}(\nSh) &\equiv& \frac{d}{ds} 4 \widetilde{\lambda}^{-1}(\nL_{\Z};s)|_{s=0} \qquad \text{ in $\R_{2D'}$}
\end{array}
\]

Let $\nM_\Z$ be a lattice of dimension $n$ with positive definite $Q_\nM \in \Sym^2(\nM_\Q^*)$. 
Let $D''$ be the product of primes $p$ such that $\nM_\Zp \not\subseteq \nM_\Zp^*$ or $\nM_\Zp^*/\nM_\Zp$ is not cyclic. 
Assume 
\begin{itemize}
\item $m-n \ge 4$, {\em or} 
\item $m=4, n=1$ and $\nL_\Q$ is has Witt rank 1. 
\end{itemize}
Then we have
\[
\begin{array}{rrcl}
\text{3.} &  \vol_{\mathcal{E}}(\nZ(\nL_\Z, \nM_\Z, \kappa; K)) &=& 4 \widetilde{\lambda}^{-1}(\nL_\Z; s) \widetilde{\mu}(\nL_\Z, \nM_\Z, \kappa; 0),  \\
\text{4.} &  \hght_{\overline{\mathcal{E}}}(\nZ(\nL_\Z, \nM_\Z, \kappa; K)) &\equiv& \frac{\dd}{\dd s} \left. { 4 \left( \widetilde{\lambda}^{-1}(\nL_\Z; s) \widetilde{\mu}(\nL_\Z, \nM_\Z, \kappa; s) \right) } \right|_{s=0} \\
&&& \text{in } \R_{2D D''}.
\end{array}
\]
3. is true without the restriction on $m$ and $n$. Note that for $n=1$ and integral $Q_\nM$, we have trivially always $D''=1$. 
\end{HAUPTSATZ}

\begin{proof}

1. and 3., which are concerned with geometric volumes, are
well-known but proofs are included to emphasize the analogies with the arithmetic volume case.

If formula 1. is true for a lattice $\nL$ then it is true for any lattice $\nL'$ with $\nL_\Q \cong \nL_\Q'$ respecting the quadratic form up to scalar. This is because in this case
the two associated Shimura varieties are related by a correspondence, and for the assertion we only have to show that 
\begin{equation}\label{eqvol}
 \frac{[\SO'(\nL'_\Zh):K]}{[\SO'(\nL_\Zh):K]} = \frac{\lambda^{-1}(\nL'_\Z; 0)}{\lambda^{-1}(\nL_\Z; 0)},  
\end{equation}
where $K \subseteq \SO'(\nL'_\Zh) \cap \SO'(\nL_\Zh)$ is any finite index subgroup. Observe that the morphism $\nMorphSt$ (\ref{MAINTHEOREM3STACKS}) is compatible with change of $K$ (see also \ref{ARITHMVOLNOTNEAT}).
Now $\lambda(\nL; 0)$ and $\lambda(\nL'; 0)$ are Euler products, and their factors at almost all $p$ are the same. The finite Euler factors are the volumes of 
$\SO'(\nL_\Zp)$ and $\SO'(\nL_\Zp')$, respectively, w.r.t. the canonical measures. This shows (\ref{eqvol}).

We continue by showing 1. for lattices $\nL_\Z$ with signature $(0,2)$. Let $\nL'_\Z$ be the 
lattice $\{ A \in M_2(\Z) \ |\ {}^t A = A\}$ with quadratic form $Q_\nL(A) = \det(A)$. By the remark before, we may assume w.l.o.g. that the discriminant of $\nL_\Z$ 
is fundamental, and there is a
saturated embedding $\nL_\Z \hookrightarrow \nL_\Z'$. The formula 1. is then basically just the classical Dirichlet class number formula. We prove it here as follows:
Let $E'(\tau, s)$ be the Eisenstein series \cite[12.4]{Paper1} und $E(\tau,s)=\frac{Z(2s)}{Z(s)}E'(\tau,s)$. 
We let $E(\nL_\Z; s)$ be its trace (in the stack sense) over the embedded Shimura variety.
By Kronecker's limit formula, we have (using our normalization of the metric $h_E$, cf. \ref{BORCHERDSBUNDLE}):
\[ 2 E(\nL_\Z, s+1) = \vol_E(\nSh({}^K \nO(\nL)) + \hght_{\overline{\mathcal{E}}}(\nSh({}^K \nO(\nL)) s + O(s^2).  \]
On the other hand, we have \cite[Theorem 12.6, 5.]{Paper1}:
\[ E(\nL_\Z, s+1) = 2 \widetilde{\lambda}^{-1}(\nL'_\Z; s) \]
(up to multiplication with a rational function in $2^{-s}$ with is 1 at $s=0$).
Taking the value at $s=0$, resp. the derivative at $s=0$, we get formula 1. --- respectively 2. for this special lattice and embedding --- in this case.

In the remaining case $m \ge 3$, we give two different proofs of formula 1:

{\em First proof:} 

Let $\nM_\Z = <q>$, $q\in \Q$ and $\kappa \in (\nL^*_\Z/\nL_\Z) \otimes \nM_\Z^* = \nL^*_\Z/\nL_\Z$ is a coset. 

We have the following identity:
\begin{equation}\label{MUGEOMETRY}
 \mu(\nL_\Z, <q>, \kappa; 0) = \frac{\vol_E(\nZ(\nL_\Z, <q>, \kappa; K))}{\vol_E(\nSh({}^K \nO(\nL)))}.
\end{equation}
This is an application of the Siegel-Weil formula, see \cite[4.17, 4.20, 4.21]{Kudla4} and also \cite[Theorem 7.6.8]{Thesis}.
A certain average over this equation, sufficient for proving 1. by induction follows also from Borcherds' theory --- see Theorem \ref{MAINTHEOREMAVERAGE}, 1. However, this
would exclude some special cases. We mention it because it is precisely the geometric analogue of 
the proof of 2. that we will give below. 

Comparing \ref{MUGEOMETRY} to the value at $s=0$ of the global orbit equation (\ref{GLOBALORBITEQUATION})
\[ \mu(\nL_\Z, <q>, \kappa; 0) \lambda(\nL_\Z; 0)^{-1} = \sum_j \lambda((g_j^{-1} x)^\perp; 0)^{-1}, \]

we establish that assertion 1. is true for lattices of signature $(m-2,2)$ if and only if it is true for lattices of signature $(m-3,2)$,
whereby it is proven by induction on $m$ because $\sum_i 4 \lambda((g_j^{-1} x)^\perp; 0)^{-1}$ is $\vol_E(\nZ(\nL_\Z, <q>, \kappa; K))$ by the induction
hypothesis.

{\em Second proof:} We assume again $m\ge 3$ and use $\tau(\SO(\nL_\Q)) = 2$, where $\tau$ means Tamagawa number.
We have the following elementary relation:
\[  \Lambda \tau = \vol(K) \vol_E(\nSh({}^K\nO(\nL))), \]
where $\Lambda = 2 \lambda_\infty^{-1}(\nL; 0)$ is the comparison factor from Lemma \ref{BORCHERDSBUNDLECOMP} and 
$\vol(K)$ is computed with respect to the product of the canonical volumes. Recall that $\Lambda$ involved the canonical volume form on $\SO(\nL_\R)$ and
the product over all $\nu$ of the canonical measures is a Tamagawa measure (\cite[Lemma 10.1]{Paper1}). 
We have $\vol(K) = \prod_p \lambda_p(\nL; 0)$ by definition of the $\lambda_p$. Everything 
put together yields $\vol_E(\nSh({}^K\nO(\nL))) = 4  \lambda^{-1}(\nL_\Z; 0)$ (which equals also $4  \widetilde{\lambda}^{-1}(\nL_\Z; 0)$).

3. follows from 1. using the value of the global orbit equation at $s=0$:
\[ \widetilde{\lambda}^{-1}(\nL_\Z; 0) \widetilde{\mu}(\nL_\Z, \nM_\Z, \kappa; 0) = \sum_{\alpha \SO'(\nL_\Zh) \subset \nIsome(\nM, \nL)(\Af) \cap \kappa} \widetilde{\lambda}^{-1}(\alpha^\perp_\Z; 0). \]

2., {\em as an identity $\R_{(2D)}$}, follows (using 1.) from the following local statement:
\begin{ASSERTION}\label{localstatement}
Let $p \not=2$.
Let $\nL_\Zpp$ be a unimodular (at $p$) lattice and signature $(m-2,2)$.
Let $K$ be any admissible compact open subgroup and $\nRPCD$ be
complete and smooth w.r.t. $K$, and let 
$\nSh = \nSh({}^K_\nRPCD \nO(\nL))$ the Shimura variety, considered as variety over $\spec(\Zpp)$.
We have
\[ \widehat{\vol}_{\overline{\mathcal{E}},p}(\nSh ) \equiv
\vol_{E}(\nSh_\C) \left. \frac{ \frac{d}{ds} \lambda^{-1}(\nL_\Z;s)}{\lambda^{-1}(\nL_\Z;s)}\right|_{s=0}  \]
in $\R^{(p)}$ and any $\Z$-model $\nL_\Z \subset \nL_{\Zpp}$. 
\end{ASSERTION}
First of all, to prove Assertion \ref{localstatement} we may w.l.o.g. multiply $Q_\nL$ by a scalar $\in \Z_{(p)}^*$ because the Shimura datum and 
$\mathcal{E}$ as a $\SO$-equivariant bundle are not affected by this and
the Hermitian metric changes by a factor in $\Z_{(p)}^*$ with does not change the arithmetic volume considered in $\R^{(p)}$.
Assertion \ref{localstatement} also may be shown for any specific $p$-{\em admissible} $K$ and will then be true for any such choice (see \ref{ARITHMVOLNOTNEAT}).

The strategy of induction, similar to the first proof of 1., is to walk through the set of (unimodular at $p$) lattices in a special way, starting from lattices $\nL_\Zpp$ with known
\[  \avol_{\overline{\mathcal{E}},p}(\nSh({}^{K}_\nRPCD \nO(\nL_\Zpp)))  \] 
(for admissible $K$) and then to construct special Borcherds products by the Theorems \ref{THEOREMBORCHERDSPREP1}--\ref{THEOREMBORCHERDSPREP4} in such a way that
all quantities in Theorem \ref{MAINTHEOREMAVERAGE}, 2. --- except one --- are already known. This path through the set of lattices will be mostly according to the dimension and Witt rank of the lattice. It is illustrated by Figure \ref{figind}, wherein an arrow indicates logical dependence, i.e. the reverse walking direction.

\begin{figure}
{\footnotesize
\begin{center}
\begin{sideways}
$ \xymatrix{ 
*+[F]\txt{$(0,2)^0$ \\ Heegner points \mbox{\textcircled{10}} } & *+[F]\txt{$(1,2)^0$ \\ Shimura curves (compact)} \ar[rd]^-{\mbox{\textcircled{9}}} & *+[F]\txt{$(2,2)^0$ \\ compact surface} \ar[rdd]^-{\mbox{\textcircled{8}}} & \\
& *+[F]\txt{$(1,2)^1$ \\ modular curve} & *+[F]\txt{$(2,2)^1 \enspace q \equiv 1 \enspace (4)$ \\ Hilbert mod. surface } \ar[l]_-{\mbox{\textcircled{2}}} & \\
& & *+[F]\txt{$(2,2)^1$ \\ Hilbert mod. surface} \ar[rd]^-{\mbox{\textcircled{4}}} & *+[F]\txt{$(3,2)^1$ \\ twisted Siegel threefolds} \ar[l]_-{\mbox{\textcircled{5}}} & *+[F]\txt{$(4,2)^1$} \ar[l]_-{\mbox{\textcircled{6}}} \\
& & *+[F]\txt{$(2,2)^2$ \\ product of. mod. curves} \ar[r]^-{\mbox{\textcircled{7}}} & *+[F]\txt{$(3,2)^2$ \\ Siegel mod. threefold} \ar[luu]_(.2){\mbox{\textcircled{3}}} & *+[F]\txt{$(4,2)^2$} \ar[l]_-{\mbox{\textcircled{6}}} \ar[lu]_-{\mbox{\textcircled{6}}} & \ar[l]_-{\mbox{\textcircled{6}}} \ar[lu]_-{\mbox{\textcircled{6}}} \cdots
} $
\end{sideways}
\end{center}
}
\caption{Strategy of induction on the type of lattice}
\label{figind}
\end{figure}
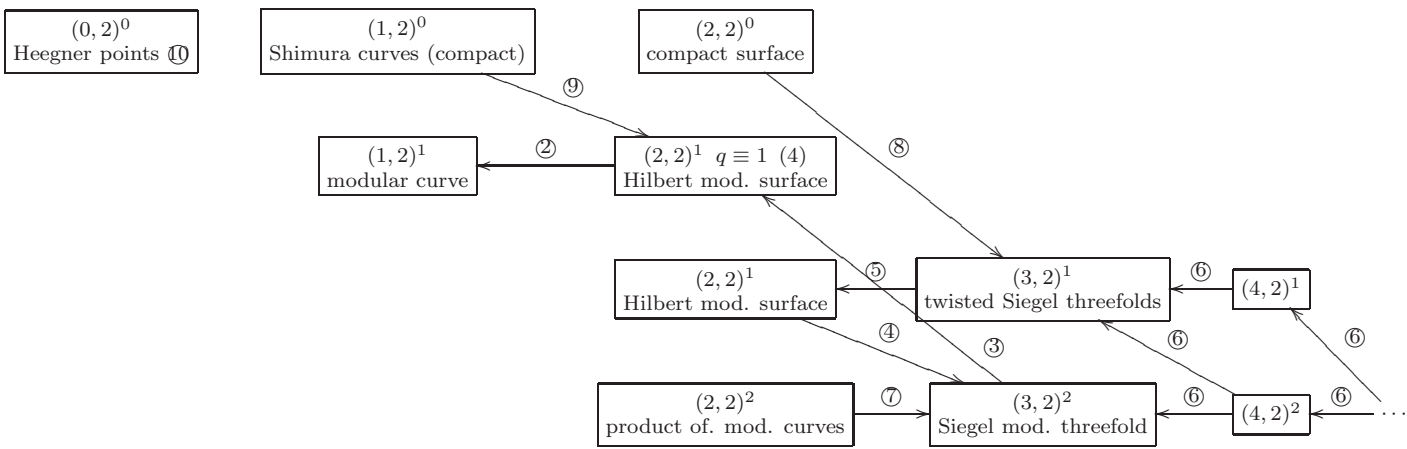

STEP 1:
We start with the case of a lattice $\nL_\Zpp$ of signature $(1,2)$ and Witt rank $1$. These are up to multiplication of $Q_\nL$ by scalars in $\Z_{(p)}^*$
of the form $Q_\nL(z) = z_1z_2-z_3^2$ which is equivalently the space $\{ X \in M_2(\Zpp) \where {}^tX = X\}$
with $Q_\nL(X) = \det(X)$. The arithmetic volume $\avol_{\overline{\mathcal{E}},p}(\nSh({}^{K}_\nRPCD \nO(\nL_\Zpp)))$ has been calculated in K\"uhn's thesis \cite{Kuehn}.
Since multiplying the norm (square root of the metric) by a scalar $\rho$ changes the arithmetic volume of an arithmetic variety $X$ of 
{\em arithmetic} dimension $\dim(X)$ by 
\[ - \dim(X) \log(\rho) \vol, \] 
comparing to \cite[Theorem 6.1]{Kuehn}, we see that 
\[ \avol_{\overline{\mathcal{E}}} (\Sh({}^{K(1)} \nO(\nL)) = \vol_{\mathcal{E}} (\Sh({}^{K(1)} \nO(\nL)) (-2\frac{\zeta'(-1)}{\zeta(-1)} - 1 + \log(2) + \log(2\pi) + \gamma ). \]
In 1. we obtained that
\[ \vol_{\mathcal{E}} (\Sh({}^{K(1)} \nO(\nL)) = 4 \lambda^{-1}(\nL_\Z;0) = -2\zeta(-1) = \frac{1}{6}, \]
which is of course well-known, taking into account that $\mathcal{E}$ is $\OO(-2)$ (identifying $\nShD(\nO(\nL))$ with $\PP^1$), hence $\nMorphSt^*(\mathcal{E})$ is the line-bundle of classical modular forms of weight 2, and $\SO(\nL)$ is $\PGL_2$.
On the other hand, we have (\cite[Example 11.11]{Paper1}):
\[ 4 \widetilde{\lambda}^{-1}(\nL, s) =  -2\zeta(-1) + -2\zeta(-1) \left( -2 \frac{\zeta'(-1)}{\zeta(-1)} - 1 + \frac{1}{2}\log(2) + \log(2\pi) + \gamma \right)s + O(s^2). \]
Hence formula 2. is true in $\R_2$.

From now on we proceed by induction along the arrows of the diagram according to the following scheme:
Choose a saturated $\nL_\Z \subset \nL_\Zpp$ such that its discriminant kernel $K \subset \SO(\nL_\Af)$ is neat (it's automatically $p$-admissible because
$\nL_\Zpp$ was assumed to be unimodular).
Assume that there is a modular form $F$ of weight $2-\frac{m}{2}$ associated with the  Weil representation of $\nL_\Z$ with $q$-expansion
\[ F(\tau) = \sum_{k \in \Q, k \gg -\infty} a_k q^k \]
with the following properties 
\begin{itemize}
\item $\nZ(\nL, <-k>, a_k; K)$ for all $k<0$ is a union of (images of) canonical models of sub-Shimura varieties (good reduction),
\item $\avol_{\overline{\mathcal{E}}, p}(\nSh({}^{K'}_\nRPCD \nO(\nL'))$ for all these sub-Shimura varieties is already known,
\item $a_0(0) \not= 0$.
\end{itemize}
Note that the first property is equivalent to $p|k \Rightarrow a_k=0$ for all $k<0$.

Then apply Theorem \ref{MAINTHEOREMAVERAGE}, 2. to get
\begin{align*} 
 & \sum_{q <0} \widetilde{\mu}'(\nL_\Z, <-q>, a_q; 0) \vol_E(\nSh({}^{K}_\nRPCD \nO(\nL)))  \\
 +& \sum_{q< 0} \widetilde{\mu}(\nL_\Z, <-q>, a_q; 0) \avol_{\overline{\mathcal{E}},p}(\nSh({}^{K}_\nRPCD \nO(\nL))) \\
 =& \sum_{q < 0} \avol_{\overline{\mathcal{E}},p}(\nZ(\nL_\Z, <-q>, a_q; K)).
\end{align*} 
Inserting the induction hypothesis and 1., we get:
\begin{gather}
 \sum_{q < 0} \left(  \widetilde{\mu}'(\nL_\Z, <-q>, a_q; 0) \widetilde{\lambda}^{-1}(\nL_\Z; 0) +  \widetilde{\mu}(\nL_\Z, <-q>, a_q; 0)  \avol_{\overline{\mathcal{E}},p}(\nSh({}^{K}_\nRPCD \nO(\nL))) \right. \nonumber \\ 
 \left.- \sum_{K\alpha \subseteq \nIsome(<-q>, \nL_\Af) \cap \supp(a_q)} a_q(\alpha) (\widetilde{\lambda}^{-1})'(\alpha^\perp_\Z; 0) \right)  = 0 \label{eqc1} 
\end{gather} 
The derivative of the global orbit equation \ref{GLOBALORBITEQUATION} summed up over all $\kappa \in \nL_\Zh^*/\nL_\Zh, q<0$ and weighted with $a_q(\kappa)$ yields:
\begin{gather}
 \sum_{q < 0} \left(  \widetilde{\mu}'(\nL_\Z, <-q>, a_q; 0) \widetilde{\lambda}^{-1}(\nL_\Z; 0) + \widetilde{\mu}(\nL_\Z, <-q>, a_q; 0) (\widetilde{\lambda}^{-1})'(\nL_\Z; 0) \right. \nonumber \\ 
 \left.- \sum_{K\alpha \subseteq \nIsome(<-q>, \nL_\Af) \cap \supp(a_q)} a_q(\alpha) (\widetilde{\lambda}^{-1})'(\alpha^\perp_\Z; 0) \right)  = 0. \label{eqc2}
\end{gather} 
The difference of (\ref{eqc1}) and (\ref{eqc2}) gives
\begin{gather*}
 \sum_{q < 0}  \widetilde{\mu}(\nL_\Z, <-q>, a_q; 0)\left(  \avol_{\overline{\mathcal{E}},p}(\nSh({}^{K}_\nRPCD \nO(\nL))) - (\widetilde{\lambda}^{-1})'(\nL_\Z; 0) \right) \\
 = a_0(0)\left(  \avol_{\overline{\mathcal{E}},p}(\nSh({}^{K}_\nRPCD \nO(\nL))) - (\widetilde{\lambda}^{-1})'(\nL_\Z; 0) \right) = 0 
\end{gather*} 
using $\sum_{q<0} \mu(\nL_\Z, <-q>, a_q; 0) = a_0(0)$ (see proof of \ref{MAINTHEOREMAVERAGE}, 1.).
Since we have $a_0(0) \not=0$, we get in particular that Assertion \ref{localstatement} is true for $\nL_\Z$.

This reduces the proof of Assertion \ref{localstatement}, and hence 2., to finding appropriate input forms for Borcherds products. This is done for the individual steps in Figure \ref{figind} as follows: 

STEP 2:
\textbf{Hilbert modular surfaces of prime discriminant $q \equiv 1 \enspace (4)$}. This is the case considered in \cite{BBK}. 
We reproduce their argument here as follows: Theorem \ref{THEOREMBORCHERDSPREP3}, 1. shows that a Borcherds lift of non-zero weight can be found such that
all occurring Shimura varieties in $\nZ(\nL_\Z, <-q>, a_q; K)$ for the occurring $a_q$ are of the form already treated in STEP 1. 
\\
STEP 3:
\textbf{Siegel modular threefold}. In Theorem \ref{THEOREMBORCHERDSPREP4} it is shown that a Borcherds lift of non-zero weight can be found, such that
every Shimura variety occurring in \mbox{$\nZ(\nL_\Z, <-q>, a_q; K)$} for the non-zero $a_q$'s is of the form considered in STEP 2 (even copies of the same). 
\\
STEP 4:
\textbf{general Hilbert modular surfaces}. In Theorem \ref{THEOREMBORCHERDSPREP4} it is shown that a Borcherds lift of non-zero weight can be found, such that
every Shimura variety occurring in \mbox{$\nZ(\nL_\Z, <-q>, a_q; K)$} for the non-zero $a_q$'s is a $\nSh({}^{K}_\nRPCD \nO(\nL_\Zpp))$ 
for a given lattice $\nL_\Zpp$ of signature $(2,2)$, Witt rank 1. Now $\avol_{\overline{\mathcal{E}},p}(\nSh({}^{K}_\nRPCD \nO(\nL)))$ is known by STEP 3, and $\avol_{\overline{\mathcal{E}},p}(\nZ(\nL_\Z, <-q>, a_q; K))$ may be deduced from the comparison of the formula in \ref{MAINTHEOREMAVERAGE} with the derivative of the orbit equation by reverting the argument above.
\\
STEP 5:
\textbf{twisted Siegel threefolds}. This is the case $\nL$ is of signature $(2, 3)$ and Witt rank 1. We have only to avoid $\nZ(\nL_\Z, <-q>, a_q; K)$'s
with occurring Shimura varieties for lattices with are not unimodular at $p$ {\em or} compact ones. This is achieved by Theorem \ref{THEOREMBORCHERDSPREP2}, 1.
\\
STEP 6:
\textbf{dimension 4 and higher}. We know all arithmetic volumes for orthogonal Shimura varieties of dimension 3. Hence we may proceed by induction on the 
dimension and have only to avoid $\nZ(\nL_\Z, <-q>, a_q; K)$'s containing Shimura varieties for lattices with are not unimodular at $p$. This is achieved by Theorem \ref{THEOREMBORCHERDSPREP1}.
\\
We are now left with a couple of cases that have been omitted in the above process. They can be treated analogously to STEP 4 above, by reverting the argument.
\\
STEP 7:
\textbf{product of modular curves}. This is the case of signature $(2,2)$, Witt rank 2. Use Theorem \ref{THEOREMBORCHERDSPREP4} again.
This can also be treated directly, using STEP 1 (K\"uhn's thesis), see \cite[section 7.8]{BKK1} for a proof.
\\
STEP 8:
\textbf{compact surfaces}. This is the case of signature $(2,2)$, Witt rank 0. Use Thereom \ref{THEOREMBORCHERDSPREP2}, 2.
\\
STEP 9:
\textbf{Shimura curves}. This is the case of signature $(1,2)$, Witt rank 0. Use Theorem \ref{THEOREMBORCHERDSPREP3}, 2.
\\
STEP 10:
\textbf{Heegner points}. These cases have been treated along the lines of the proof of 1. above.

This completes the proof of Assertion \ref{localstatement}, and hence of 2., as an identity in $\R_{2D}$. We use this, and Theorem \ref{MAINTHEOREMAVERAGE} again, to derive the following local assertion, from which 4. follows:
\begin{ASSERTION}\label{localstatement2}
Let $p \not=2$.
Let $\nL_\Zpp$ be a unimodular (at $p$) lattice and of signature $(m-2,2)$, Let $\nM_\Zpp$ be a positive definite lattice of dimension $n$ with cyclic 
$\nM_\Zp^*/\nM_\Zp$. 
Assume $m-n \ge 4$, or $m=4, n=1$ and $\nL_\Q$ is isotropic, or $m=5, n=2$ and $\nL_\Q$ has Witt rank 2.
Let $\nL_\Z$, $\nM_\Z$ any saturated lattices in $\nL_\Zpp$ and $\nM_\Zpp$, respectively, and let $K$ be any admissible compact open subgroup in the discriminant kernel of $\nL_\Z$.
We have:
\begin{align*}
 & \widetilde{\mu}'(\nL_\Z, \nM_\Z, \kappa; 0) \vol_E(\nSh({}^{K}_\nRPCD \nO(\nL)))  \\
 +& \deg_E(\nZ(\nL_\Z, \nM_\Z, \kappa; K)) \avol_{\overline{\mathcal{E}}}(\nSh({}^{K}_\nRPCD \nO(\nL))) \\
 \equiv& \hght_{\overline{\mathcal{E}}}(\nZ(\nL_\Z, \nM_\Z, \kappa; K))
\end{align*}
in $\R^{(p)}$. Here
\[ \deg_E(\nZ(\nL_\Z, \nM_\Z, \kappa; K)) := \frac{\vol_E(\nZ(\nL_\Z, \nM_\Z, \kappa; K))}{\vol_E(\nSh({}^K\nO(\nL)))} \]
 is the (relative) geometric degree.
\end{ASSERTION} 

We first prove this statement for $n=1$:
In this case it follows immediately from Theorem \ref{MAINTHEOREMAVERAGE}, 2., using a Borcherds product such that all $\nZ$'s in the divisor consist of Shimura varieties corresponding to lattices which are unimodular at $p$, {\em except} for $\nZ(\nL_\Z, <q>, \kappa; K)$ which shall occur, too, with non-zero multiplicity. Theorem \ref{THEOREMBORCHERDSPREP1}, 2. enables us to construct such a product. Note that the assumptions imply that $\nSh({}^K \nO(\nL))$ does have cusps.

Proof for $n\ge2$: Because $\nM_\Zpp^*/\nM_\Zpp$ is cyclic, we may find lattices $\nM_\Z$ and $\nM'_\Z$, unimodular at $p$, and $<q>_\Z$ such that 
\[ \nM_\Z = <q> \perp \nM_\Z' \]
is a saturated lattice in $\nM_\Zpp$. Let $\kappa = \kappa_q \oplus \kappa'$ be a corresponding decomposition.
Let $\SO'(\nL_\Zh)\alpha_i \subset \nIsome(\nL_\Zh, \nM_\Zh') \cap \kappa'$ be a decomposition into orbits. We may form the cycles
$\nZ(\alpha_i^\perp, <q>, \kappa_q)$ on $\nSh({}^{K_i}_{\Delta_i} \nO(\alpha_i^\perp))$, where the $K_i$ are the respective 
discriminant kernels (all admissible by construction of $\nM'_\Z$). The latter Shimura varieties are all equipped with morphisms into
$\nSh({}^K_\Delta \nO(\nL_{\Zpp}))$ (the union of their images is the cycle $\nZ(\nL, \nM', \kappa', K)$).
These Shimura varieties all have the same Shimura datum, and one could see their images as conjugated images of a single Shimura variety with varying $K$ as in the
definition of special cycle (\ref{DEFSPECIALCYCLE}).
In each of these Shimura varieties, we have cycles $\nZ(\alpha_i^\perp, <q>, \kappa_q; K_i)$. Identifying them with their image in $\nSh({}^K_\Delta \nO(\nL_\Zpp))$, we get 
\[ \nZ(\nL, \nM, \kappa; K) = \bigcup_{\SO'(\nL_\Zh)\alpha \subset \nIsome(\nL_\Zh, \nM_\Zh')\cap \kappa'} \nZ(\alpha_i^\perp, <q>, \kappa_q; K_i). \]
Using the $n=1$ case, we know the height of the right hand side. For this note that the assumptions imply that $\nSh({}^{K_i} \nO(\alpha_i^\perp))$ has 
cusps. 
Comparing this with global Kitaoka (Theorem \ref{GLOBALKITAOKA}), we get the general result,
and its truth does, of course, not depend on the models $\nL_\Z$, $\nM_\Z$ chosen and not on $K$ (cf. also \ref{ARITHMVOLNOTNEAT}).

Finally the previous Assertion \ref{localstatement} at $p|D$, where $p\not=2$ and $p^2\nmid D$, and therefore 2. as an identity in $\R_{2D'}$ (instead of $\R_{2D}$) now follows from \ref{localstatement2} applied to any of the embeddings used to construct the model $\nSh$ (see \ref{SPECIALMODELS}). 
For this, we have to use that the derivative of the global orbit equation \ref{GLOBALORBITEQUATION} is also valid for
$\widetilde{\lambda}$ and $\widetilde{\mu}$ in $\R^{(p)}$ for those $p$. Recall, that this followed from the fact
that the local orbit equation for $\widetilde{\lambda}_p$, and $\widetilde{\mu}_p$ remains true for those $p$. 
This is definitely wrong in general if $p^2|D$ (cf. also \cite[11.8]{Paper1}). 
\end{proof}

\begin{BEM}
Main Theorem \ref{RESULTGLOBAL}, 4. has been proven in full generality, also including information at $2D$ and $\infty$ in \cite{KRY1,KRY2,KRY3} for Shimura curves and
in \cite{Yang2} for the modular curve ($n=1$). For the equality at $\infty$, with $\mu_\infty$ replaced by the full $\infty$-factor of the Fourier coefficient of the Eisenstein series, the special cycles have to be complemented by the Kudla-Millson Greens functions (depending on the imaginary part as well),
cf. also the introduction to the author's thesis \cite{Thesis}. 

There are stronger results also for Hilbert modular surfaces \cite{BBK,KR1,KR3}, for Siegel threefolds \cite{KR3},
and for the product of modular curves (The $\nZ$'s for $n=1$ are the Hecke correspondences in this case) in \cite[section 7.8]{BKK1}.
\end{BEM}

% The Appendices part is started with the command \appendix;
% appendix sections are then done as normal sections
\appendix

\def\thesection{\Alph{section}}

\section{Lemmas on quadratic forms}\label{LEMMATAQUADRATICFORMS}

\begin{LEMMA}\label{LEMMA221NORMAL}Let $p\not=2$ and
$\nL_\Zpp$ be an unimodular lattice of signature $(2,2)$ and Witt rank 1, with fundamental discriminant $D$.
Up to multiplication of $Q_\nL$ by a unit in $\Z_{(p)}^*$, 
there exists a saturated lattice $\nL_\Z \subset \nL_\Zpp$ of the form
\[ \nL_\Z = H \oplus <x^2+xy+\frac{1-D}{4}y^2> \]
if $D \equiv 1$ modulo 4 or 
\[ \nL_\Z = H \oplus <x^2-\frac{D}{4}y^2> \]
if $4|D$.
\end{LEMMA}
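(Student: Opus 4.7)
The plan is to use Hasse--Minkowski to identify $\nL_\Q$ with a quadratic space of the prescribed shape (up to an overall scalar) and then to realize this identification by a global $\Z$-lattice that matches $\nL_\Zpp$ at $p$.

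First, since $\nL_\Q$ has Witt rank $1$ it is isotropic at every place; at the odd prime $p$, any primitive isotropic vector in $\nL_\Zpp$ extends by Hensel's lemma to a $\Zpp$-hyperbolic plane summand $H_\Zpp \subset \nL_\Zpp$, and its orthogonal complement $L_{0,\Zpp}$ is a unimodular binary $\Zpp$-lattice of signature $(1,1)$. Using a rational isotropic vector $p$-adically close to the chosen local one, we obtain a Witt decomposition $\nL_\Q = H_\Q \perp L_{0,\Q}$ over $\Q$ with $L_{0,\Q}$ anisotropic binary of signature $(1,1)$ and discriminant class $-D \in \Q^*/(\Q^*)^2$ (since $H_\Q$ has discriminant $-1$ and $\nL_\Q$ has fundamental discriminant $D$).

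Next, I would identify $L_{0,\Q}$ with the target form $q_1 = x^2 + xy + \tfrac{1-D}{4}y^2$ (resp.\ $q_2 = x^2 - \tfrac{D}{4}y^2$) up to similitude. Both are anisotropic binary forms of signature $(1,1)$ with discriminant $-D$, so they are similar: $L_{0,\Q} \cong \lambda \cdot q_i$ for some $\lambda \in \Q^*$, well-defined modulo $N_{\Q(\sqrt{D})/\Q}(\Q(\sqrt{D})^*)$. Since $L_{0,\Zpp}$ is unimodular it represents a $p$-adic unit, which forces the local class of $\lambda$ at $p$ to be a unit modulo norms; combining with Hasse's norm theorem, we correct $\lambda$ by a global norm from $\Q(\sqrt{D})^*$ so that $\lambda \in \Z_{(p)}^*$. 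Replacing $Q_\nL$ by $\lambda^{-1} Q_\nL$ then gives an isomorphism $\nL_\Q \cong H_\Q \perp \langle q_i \rangle$ of $\Q$-quadratic spaces.

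Finally, fix such an isomorphism and let $\nL_\Z^0 := H_\Z \perp \langle q_i \rangle_\Z \subset \nL_\Q$. Both $\nL_\Z^0 \otimes \Zpp$ and $\nL_\Zpp$ are unimodular $\Zpp$-lattices in $\nL_\Qp$ of rank $4$ with the same discriminant class, and these invariants classify such lattices for odd $p$; hence there exists $\sigma \in O(\nL_\Qp)$ with $\sigma(\nL_\Z^0 \otimes \Zpp) = \nL_\Zpp$. Weak approximation for $O(\nL_\Q)$, which holds for indefinite forms of rank $\geq 3$, lets us approximate $\sigma$ by a $\Q$-isometry $\tilde\sigma \in O(\nL_\Q)$ close enough at $p$ that $\tilde\sigma(\nL_\Z^0) \otimes \Zpp = \nL_\Zpp$; then $\nL_\Z := \tilde\sigma(\nL_\Z^0)$ has the required form. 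The hardest part is the control of $\lambda$ in the second step: forcing it to be a $p$-adic unit (not merely a rational number) requires combining the unimodularity of $L_{0,\Zpp}$ with Hasse's norm theorem to absorb the ambiguity modulo global norms from $\Q(\sqrt D)$.
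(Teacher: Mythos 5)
Your argument is correct, but it takes a genuinely different and much heavier route than the paper's. The paper works entirely inside the given lattice: it splits $\nL_\Zpp = H \perp \nL_\Zpp'$ over $\Zpp$ (possible since the form is isotropic and unimodular at the odd prime $p$), rescales $Q_\nL$ by a unit so that the unimodular binary complement $\nL_\Zpp'$ represents $1$, and then uses the elementary fact that an anisotropic binary space of determinant $-D$ representing $1$ is the norm form of $\Q(\sqrt{D})$, so the principal form of discriminant $D$ (the ring-of-integers lattice) sits inside $\nL_\Zpp'$ as a saturated $\Z$-sublattice --- no approximation or norm-group input at all. You instead identify the binary kernel with the principal form only up to a similitude factor $\lambda$, adjust $\lambda$ modulo global norms to a $p$-unit, and then transport the standard $\Z$-lattice $H \perp \langle q_i\rangle$ into $\nL_\Zpp$ via the classification of unimodular $\Zp$-lattices at odd $p$ together with approximation of a $p$-adic isometry by a rational one. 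This is sound, and its advantage is that it is systematic (it does not need the complement to represent $1$ and it makes explicit exactly which rescalings are available), while the paper's version buys brevity and stays purely local and elementary. Two points in your write-up deserve tightening: (i) the adjustment of $\lambda$ really rests on $p\nmid 2D$ (which you should note follows from unimodularity of $\nL_\Zpp$ plus $D$ fundamental) and on the elementary existence of global norms with prescribed $p$-valuation (in the inert case $p^2 = N_{\Q(\sqrt{D})/\Q}(p)$, in the split case prescribe valuations at the two primes above $p$); Hasse's norm theorem is not what is doing the work. (ii) The approximation step needs density of the rational orthogonal group of $\nL_\Q$ in the $p$-adic one; this is true, but the right justification is to reduce to $\SO$ by composing with a rational reflection and then use weak approximation for $\SO(\nL_\Q)$ (for instance via the spinor-norm sequence and weak approximation for $\Spin$, using that $\nL_\Q$ is isotropic), whereas ``indefinite of rank $\geq 3$'' is the Eichler condition for strong approximation of the spin group and is not the statement you are invoking.
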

\begin{proof}
We may write $\nL_\Zpp = H \oplus \nL_\Zpp'$ and assume that $\nL_\Zpp'$ represents 1. $\nL_\Zpp'$ is anisotropic by assumption.
Then it is well-known and elementary that 
$\nL_\Zpp'$ contains a $\Z$-lattice of the required form.
\end{proof}

\begin{LEMMA}\label{LEMMA210INCLUSION}
Let $p\not=2$ and $\nL_\Zpp$ be a unimodular anisotopic lattice of signature $(1,2)$.
Up to multiplication of $Q_\nL$  with a unit in $\Z_{(p)}^*$,
there exists a unimodular lattice $\nL_\Zpp' = \nL_\Zpp \oplus <x>_\Zpp$, $x \in \Z_{>0}$ which is 
isotropic and of {\em prime} fundamental discriminant $q$, $q \not= p$ and a saturated lattice $\nL_\Z' \subset \nL_\Zpp'$ 
of the form 
\[ \nL_\Z' = H \oplus <x^2+xy+\frac{1-q}{4}y^2>. \]
\end{LEMMA}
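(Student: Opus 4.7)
The plan is to reduce to Lemma \ref{LEMMA221NORMAL} by enlarging $\nL_\Zpp$ to a unimodular lattice $\nL_\Zpp'$ of signature $(2,2)$, Witt rank~$1$, and fundamental discriminant equal to a prime $q \equiv 1 \pmod 4$ with $q \ne p$; then the lemma directly produces the required $\Z$-structure $\nL_\Z' = H \oplus \langle x^2 + xy + \frac{1-q}{4}y^2 \rangle$. So the real content is the construction of the positive integer $x$ together with $q$.

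First I would fix a global form $\nL_\Q$ realizing $\nL_\Zpp$, write $\det(\nL_\Q) = -d$ with $d > 0$ squarefree and (after rescaling $Q_\nL$ by a unit of $\Z_{(p)}^*$, as permitted) coprime to $p$. Anisotropy of the ternary form $\nL_\Q$ at signature $(1,2)$ translates by Hasse--Minkowski into its Hasse invariant being $-1$ at an odd, finite, nonempty set $S$ of rational primes, with $p \notin S$. I would then seek a prime $q$ and set $x := d_0 q$, where $d_0$ is the squarefree part of $d$; this makes the discriminant class of $\nL_\Q \oplus \langle x \rangle$ equal to $q$ modulo squares, keeps $x$ a $p$-adic unit as soon as $q \ne p$, and produces a form of signature $(2,2)$. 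By Hasse--Minkowski, the $4$-dimensional form $\nL_\Q \oplus \langle x \rangle$ is isotropic if and only if $\nL_\Q$ represents $-x$ locally at every finite prime $\ell$, a condition trivial at $\ell$'s outside $S \cup \{p, q\}$ and amounting at the remaining primes to a finite list of Legendre-symbol constraints on $q$. Combined with $q \equiv 1 \pmod 4$ and $q \ne p$, these constraints carve out a finite union of arithmetic progressions modulo a fixed integer; Dirichlet's theorem then yields infinitely many admissible primes $q$.

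With such $q$ chosen and $x := d_0 q$, the lattice $\nL_\Zpp' := \nL_\Zpp \oplus \langle x \rangle_\Zpp$ is unimodular at $p$, of signature $(2,2)$, isotropic over $\Q$, and has fundamental discriminant $q$; Witt rank is exactly $1$ because $\nL_\Zpp$ is anisotropic of codimension $1$ inside $\nL_\Zpp'$. Lemma \ref{LEMMA221NORMAL} (case $D = q \equiv 1 \pmod 4$) then delivers the claimed saturated sublattice $\nL_\Z' \subset \nL_\Zpp'$ of the form $H \oplus \langle x^2 + xy + \frac{1-q}{4}y^2 \rangle$, completing the proof.

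The main obstacle will be verifying simultaneous solvability: the Hasse-invariant conditions at the primes of $S$ that force isotropy of $\nL_\Q \oplus \langle x \rangle$ must be compatible with $q \equiv 1 \pmod 4$, $q \ne p$, and with $q$ avoiding the primes of $S$. This is the quadratic-reciprocity calculation — expressing each local condition as a congruence on $q$, and checking that their conjunction is nonempty — and it is exactly where one must exploit the flexibility to rescale $Q_\nL$ by a $p$-adic unit if a given normalization inadvertently renders the system inconsistent modulo some prime.
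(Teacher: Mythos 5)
Your proposal is correct and follows essentially the same route as the paper: there one also adjoins $\langle x\rangle$ with $x=Dq$, where $D$ is the square-free part of the determinant and $q\equiv 1\pmod 4$ is a prime ($q\neq p$) chosen via congruence conditions at the bad odd primes and a residue class mod $8$ so that the resulting $(2,2)$ lattice is isotropic with fundamental discriminant $q$ and Witt rank $1$, after which Lemma \ref{LEMMA221NORMAL} is applied. The ``obstacle'' you flag is not a real one: your local conditions are congruences on $q$ at pairwise distinct primes (nonsquare conditions at the anisotropic primes of $\nL_\Q$, a class mod $8$, and $q\equiv 1\pmod 4$), so the Chinese remainder theorem plus Dirichlet settles simultaneous solvability without any reciprocity computation or further rescaling.
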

\begin{proof}
Write $\nL_\Zpp = <\alpha_1, \dots, \alpha_3>_\Zpp$, with $\alpha_i \in \Z$ square-free and 
relatively prime. This is possible without multiplying $Q_\nL$ by a scalar.
Let $D$ be the square-free part of $\prod \alpha_i$. 
We may find a prime $q$, different from $p$, such that
$q \equiv 1\ (4)$ and 
for any $l\not=2$ with $\nu_l(\prod \alpha_i)=2$ (hence $l\nmid D$), we have $Dq \equiv -\alpha_j\ (l)$, where $\alpha_j$ is the one not divisible by $l$.

We may also prescribe its residue mod $8$, such that $\nL_{\Q_2} \oplus <Dq>$ is isotropic. For if they were 
anisotropic for $q,q'$ congruent to $1, 5$ modulo 8 respectively,
we would get $<Dq>_{\Q_2} \cong <Dq'>_{\Q_2}$ from Witt's Theorem and the uniqueness of the 4-dimensional anisotropic space, which is absurd.

Hence
\[ \nL_\Zpp \oplus <Dq>_\Zpp \]
is unimodular, of signature $(2,2)$ with square-free discriminant $q$. 
It is isotropic at all $l\not=2$, too, because of the congruence condition on $Dq$. Hence the Witt rank is 1 and
we may apply the previous lemma to it (this changes also the form on
$\nL_\Zpp$ by a scalar) to get the result.
\end{proof}

\begin{LEMMA}\label{LEMMA210REPN}
Let $p\not=2$ and $\nL_\Zpp$ be an unimodular anisotropic lattice of signature $(1,2)$.
Up to multiplication of $Q_\nL$ with a unit in $\Z_{(p)}^*$,
there is a saturated lattice $\nL_\Z \subset \nL_\Zpp$ of discriminant $D=2D'$, where $D'$ is square-free
and with $\nL_\Z^*/\nL_\Z$ cyclic.

It has the property that
a {\em primitive} $\kappa \in \nL_{\Zh}^*/\nL_{\Zh}$ represents an $m \in \Q$ if and only if
$Q_\nL(\kappa) \equiv m \enspace (1)$. 
\end{LEMMA}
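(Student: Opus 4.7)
The plan is a local-to-global construction of $\nL_\Z$ inside $\nL_\Q$. At the prime $p$ itself, saturation forces $\nL_{\Z,p} = \nL_\Zpp$, which is unimodular by hypothesis, so no local choice is needed there. At every other prime $l \neq p$, I would specify the $\Z_l$-lattice $\nL_{\Z,l} \subset \nL_{\Q_l}$ with prescribed local invariants; these local choices assemble into a global $\Z$-lattice in $\nL_\Q$ by a standard gluing argument, provided $\nL_{\Z,l}$ agrees with some fixed reference $\Z$-model at almost all $l$. Strong approximation for $\SO(\nL_\Q)$ is applicable because $\SO(\nL_\R)$ is noncompact of signature $(1,2)$, so such a global lattice exists and is unique up to isometry in its genus.

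At primes $l$ for which $\nL_{\Q_l}$ admits a unimodular $\Z_l$-lattice, I would choose $\nL_{\Z,l}$ unimodular; by standard results on ternary lattices over $\Z_l$, this is possible outside a finite set $S$ of obstruction primes. At each $l \in S$ with $l$ odd, I would take $\nL_{\Z,l}$ to be diagonal of shape $\langle u_1, u_2, l u_3 \rangle_{\Z_l}$ with units $u_i \in \Z_l^*$, so that its discriminant group is cyclic of order $l$. The freedom to multiply $Q_\nL$ by a unit in $\Z_{(p)}^*$ is then used, as in Lemmas~\ref{LEMMA221NORMAL}--\ref{LEMMA210INCLUSION}, to simultaneously adjust the $2$-adic behaviour so that the $2$-component of $\nL_\Z^*/\nL_\Z$ becomes exactly $\Z/2\Z$, while leaving all other local lattices intact. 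The global lattice $\nL_\Z$ obtained this way has $\nL_\Z^*/\nL_\Z \cong \bigoplus_l \nL_{\Z,l}^*/\nL_{\Z,l}$ cyclic of order $2 D'$ with $D' = \prod_{l \in S, l \text{ odd}} l$ square-free.

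For the representation property, I would decompose $\nL_\Zh^*/\nL_\Zh \cong \prod_l \nL_{\Z,l}^*/\nL_{\Z,l}$ and $\kappa = (\kappa_l)$; then a vector $v \in \kappa + \nL_\Zh$ with $Q_\nL(v) = m$ exists iff locally at each $l$ the coset $\kappa_l + \nL_{\Z,l}$ represents $m$. At $l \nmid D$ the local lattice is unimodular of rank $3$, hence represents every element of $\Z_l$. At $l \in S$ odd, primitivity of $\kappa$ means $\kappa_l$ generates $\Z/l\Z$, and in the chosen basis a generic vector of $\kappa_l + \nL_{\Z,l}$ reads $v = a u_1 + b u_2 + (l^{-1} + c) u_3$ with
\[
Q_\nL(v) = u_1 a^2 + u_2 b^2 + u_3/l + 2 u_3 c + l u_3 c^2.
\]
As $(a,b)$ range over $\Z_l$, the unimodular binary form $u_1 a^2 + u_2 b^2$ is universal over $\Z_l$ (for $l$ odd), so any target value $m \equiv Q_\nL(\kappa_l) \pmod{\Z_l}$ is achieved by an appropriate choice of $(a,b,c)$. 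The case $l = 2$ is handled analogously using the explicit normal form provided by Lemma~\ref{LEMMA221NORMAL}.

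The hard part will be the coordination at $l = 2$: selecting the unit in $\Z_{(p)}^*$ by which to rescale $Q_\nL$ so that the $2$-part of $\nL_\Z^*/\nL_\Z$ is exactly $\Z/2\Z$, while simultaneously preserving the Hasse invariant at $2$ (hence global anisotropy) and the unimodularity at $p$. This requires a careful parity argument balancing the local Hilbert symbols across $\{2, p\} \cup S$, along the lines already used in Lemmas~\ref{LEMMA221NORMAL} and~\ref{LEMMA210INCLUSION}. Once this delicate rescaling is in place, the representation property at $l = 2$ reduces, as in the odd case, to the universality of a binary unimodular form, completing the construction.
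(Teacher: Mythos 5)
There is a genuine gap in the construction, and it sits exactly where the rescaling hypothesis of the lemma is needed. You reserve the freedom to multiply $Q_\nL$ by a unit of $\Z_{(p)}^*$ solely for the prime $2$, and at odd obstruction primes $l\in S$ you prescribe the local lattice $\langle u_1,u_2,l u_3\rangle_{\Zl}$. But at an odd prime $l\neq p$ the local space $\nL_{\Ql}$ can be \emph{anisotropic with square (unit) discriminant} --- this really occurs under the hypotheses, e.g.\ for the trace-zero reduced norm form of a quaternion algebra ramified at two odd primes different from $p$, which has signature $(1,2)$, is anisotropic over $\Q$, and admits a unimodular $\Zpp$-structure. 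Such a local space contains no unimodular $\Z_l$-lattice (a unimodular ternary $\Z_l$-lattice, $l$ odd, spans an isotropic space) and no lattice of shape $\langle u_1,u_2,lu_3\rangle$ (wrong discriminant square class); in fact \emph{every} lattice in it has non-cyclic $l$-part of the discriminant group. So without rescaling at $l$ your list of local lattices cannot be realized, and no lattice with cyclic discriminant group exists at all. The paper's proof handles precisely this: it diagonalizes over $\Zpp$ with square-free integer entries and multiplies the form by the (odd, prime-to-$p$) product of the primes dividing two of the entries, which moves every odd place into one of the two good cases. Two further points: a global rescaling by a non-square unit cannot ``leave all other local lattices intact''; and your goal of forcing the $2$-part to be exactly $\Z/2\Z$ is unattainable when $\nL_{\Q_2}$ is anisotropic --- there the best one can do is a cyclic group of order $4$, which is what the paper uses and what the statement's $D=2D'$ with $D'$ square-free (but possibly even) is designed to allow; your claimed $D'=\prod_{l\in S,\,l\text{ odd}}l$ is then false.

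The verification of the representation property also rests on a false claim: a binary unimodular form $u_1a^2+u_2b^2$ over $\Z_l$ is \emph{not} universal when $-u_1u_2$ is a non-square (e.g.\ $x^2+y^2$ misses $l$ for $l\equiv 3\ (4)$; it only represents elements of even valuation, though it does represent all units). So you cannot hit an arbitrary target $m-lu_3(\beta/l+c)^2$ for a fixed $c$. The step is salvageable, and this is how the paper argues: since $\kappa$ is primitive, $2u_3\beta$ is a unit, so a suitable choice of the free integer coordinate $c$ makes the remaining target a \emph{unit} of $\Z_l$, which the binary unimodular form does represent. Likewise the case $l=2$ needs the explicit $2$-adic lattices $H\perp\langle1\rangle$ resp.\ $\langle x^2+xy+y^2\rangle\perp\langle2\epsilon\rangle$ used in the paper, not Lemma~\ref{LEMMA221NORMAL}, which concerns signature $(2,2)$. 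Finally, the appeal to strong approximation and uniqueness in the genus is unnecessary: prescribing completions that agree with a reference lattice at almost all primes already determines a global lattice inside the fixed space $\nL_\Q$.
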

\begin{proof}
First we prove the existence of the lattice. 
Write $\nL_\Zpp = <\alpha_1, \alpha_2, \alpha_3>_\Zpp$, with $\alpha_i \in \Z$ square-free. By multiplying
$Q_\nL$, if necessary, by a scalar, we may assume that the $\alpha_i$ are pairwise relatively coprime.

The lattice $\nL_\Z = <\alpha_1, \alpha_2, \alpha_3>_\Z$ then already satisfies the assumption locally for all $l \not= 2$.
It suffices to construct a lattice $\nL_{\Z_2}$ satisfying the assumption.
There are 2 cases: 

1. The space $\nL_{\Q_2}$  is isotropic, hence there is, up to multiplication of the quadratic form by 2, a lattice of the form
\[ \nL_{\Z_2} = H \perp <1>, \]
which has discriminant 2.

2. The space $\nL_{\Q_2}$ is anisotropic, hence, up to multiplication of the lattice by $\pm 2$, of the form
\[ \nL_{\Q_2} = <1, 3, 2\epsilon> \]
with $\epsilon \in \{1,3,-3,-1\}$. In it there exists the lattice
\[ \nL_{\Z_2} = <x^2+xy+y^2> \perp <2 \epsilon>  \]
of discriminant 4, with $\nL_{\Z_2}^*/\nL_{\Z_2}$ cyclic of order 4.

The only-if part of the claimed property is clear. For the if part, we have to show that $m$ is represented by $\nL_\Zl$ for all $l$. For $l \not| D$, $\nL_\Zl$ represents every $m \in \Zl$. 
For $l|D, l \not= 2$, $\kappa_l$ represents $m$ if
\[ \alpha_1 x_1^2 + \alpha_2 x_2^2 + \alpha_3((\frac{\beta}{l})^2 + 2\frac{\beta x_3}{l} + x_3^2) = \frac{n}{l} = m \]
has a solution with $x_i \in \Zl$, where $\beta \in \Zl^*$ is determined by $\kappa$ and $n \in \Zl^*$. Here w.l.o.g. $l|\alpha_3$.

Now the condition given implies $\frac{\alpha_3}{l} \beta^2 \equiv n\ (l)$. Since $\alpha_1 x_1^2 + \alpha_2 x_2^2$ represents $\Z_l^*$, $\kappa_l$ represents $m$ (choose $x_3$ appropriately in the above equation).

For $l=2$, case 1, $\kappa_2$ represents $m$ if
\[ x_1 x_2 + ((\frac{1}{2})^2 + x_3 + x_3^2) = \frac{n}{4} = m \]
has a solution with $x_i \in \Zl$. This follows from the condition which boils down to $n \equiv 1\ (4)$ in this case.

For $l=2$, case 2, $\kappa_2$ represents $m$ if
\[ x_1^2 + x_1x_2+x_2^2 + 2\epsilon((\frac{\beta}{4})^2 + 2\frac{\beta}{4}x_3 + x_3^2) = \frac{n}{8} = m \]
has a solution with $x_i \in \Zl$. Here $\beta=\pm 1$ distinguishes the two $\kappa_2$. The condition says
$\beta^2\epsilon \equiv n\ (8)$, hence solubility because $x_1^2 + x_1x_2+x_2^2$ represents $\Z_2^*$.
\end{proof}

The strong form of the lemma is wrong for lattices of dimension 2.
However, we have the following weaker form:
\begin{LEMMA}\label{LEMMA110REPN}
Let $p\not=2$ and $\nL_\Zpp$ be a unimodular anisotropic lattice of signature $(1,1)$ and of discriminant $-q$, where $q \equiv 1 \enspace (4)$ is prime.
Up to multiplication of $Q_\nL$ by a unit $\in \Z_{(p)}^*$,
there is a saturated lattice $\nL_\Z \subset \nL_\Zpp$ of discriminant $q$ of the form
\[ \nL_\Z = <x^2+xy+\frac{1-q}{4}y^2>. \]

It has the property that
a {\em primitive} $\kappa \in \nL_{\Zh}^*/\nL_{\Zh}$ represents a $m=\frac{l'}{q} \in \Q$, where $l'\not=2$ {\em is prime}, if and only if
$Q_\nL(\kappa) \equiv m \enspace (1)$. 
\end{LEMMA}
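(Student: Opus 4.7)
The approach mirrors that of Lemma \ref{LEMMA210REPN} but replaces the abundance of local degrees of freedom (present for a ternary form) by the arithmetic of the quadratic field $\Q(\sqrt{q})$. Since $q \equiv 1 \pmod 4$, the form $f(x,y) = x^2 + xy + \frac{1-q}{4}y^2$ is precisely the norm form of the ring of integers $\Z[\frac{1+\sqrt{q}}{2}]$ of $\Q(\sqrt{q})$, and its signature is $(1,1)$ with associated discriminant matching (up to a unit sign convention) that of $\nL_\Zpp$. Thus the rational quadratic space $(\nL_\Q, Q_\nL)$ is isometric, after multiplication of $Q_\nL$ by a suitable unit in $\Z_{(p)}^*$, to $(\Q(\sqrt{q}), N)$. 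First I would take the $\Z$-lattice $\nL_\Z \subset \nL_\Zpp$ corresponding to $\Z[\frac{1+\sqrt{q}}{2}]$; it is saturated at $p$ (as $p \nmid q$) and has the claimed form.

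For the representation property, the "only if" direction is immediate. For the "if" direction, I would apply the Hasse principle and check local representability at each prime. The discriminant group $\nL_\Zh^*/\nL_\Zh$ is cyclic of order $q$, supported only at $\ell = q$; so a primitive $\kappa$ is determined by a non-zero element of $\nL_{\Z_q}^*/\nL_{\Z_q}$. For $\ell \neq q, l'$, the rational $m = l'/q$ is a unit at $\ell$ and $\nL_{\Z_\ell}$ is unimodular of rank $2$: if $\ell$ splits in $\Q(\sqrt{q})$ the lattice is hyperbolic and represents every unit, while if $\ell$ is inert the norm map from the units of the unramified quadratic extension is surjective onto $\Z_\ell^*$, so again every unit is represented. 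The two nontrivial places are $\ell = l'$ and $\ell = q$.

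At $\ell = l'$, the coset $\kappa$ is trivial, so we need $\nL_{\Z_{l'}}$ itself to represent $l'/q$, which has $l'$-valuation $1$. Since norms from an inert unramified quadratic extension have even valuation, this forces $l'$ to be split (or ramified) in $\Q(\sqrt{q})$, i.e.\ $q$ to be a square mod $l'$; by quadratic reciprocity and $q \equiv 1 \pmod 4$ this is equivalent to $l'$ being a square mod $q$. At $\ell = q$, writing $f(x,y) = (x + y/2)^2 - \frac{q}{4}y^2$ shows that the reduction of $f$ modulo $q$ takes only square values. Translating the coset condition $Q_\nL(\kappa) \equiv l'/q \pmod 1$ into an equation modulo $q^2$ and computing, the condition becomes exactly that $l'$ is a square modulo $q$.

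The key observation — and the only real obstacle — is that both local conditions at $l'$ and at $q$ coincide, via quadratic reciprocity, with the single congruence "$l'$ is a square modulo $q$", and this congruence is in turn equivalent to the coset condition $Q_\nL(\kappa) \equiv m \pmod 1$. Granted this, the "if" direction follows from local solvability at every prime together with the Hasse principle for the indefinite binary form $\nL_\Q$, completing the proof.
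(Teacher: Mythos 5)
Your local analysis is essentially the paper's own proof: away from $2,q,l'$ a unimodular binary lattice represents all units, at $\ell=l'$ the hypothesis forces $l'$ to be a square mod $q$ (using $q\equiv 1\,(4)$ to absorb the sign) and quadratic reciprocity makes $q$ a square mod $l'$, and at $\ell=q$ the coset condition itself supplies the needed congruence, which lifts (Hensel); packaging the lattice as the norm form of $\Z[\frac{1+\sqrt q}{2}]$ is just a convenient rephrasing of the same computations.

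One correction, though: drop the final appeal to the Hasse principle. ``Represents'' here means representation by a vector of the adelic lattice coset $\kappa+\nL_{\Zh}\subset\nL_{\Af}$ (this is how the lemma is used, via the set $R(m)$ in Theorem \ref{THEOREMBORCHERDSPREP3}), so everywhere-local representability -- which you have already established -- is the whole statement, and no globalization is needed. Moreover, if a genuinely global vector in the prescribed coset were required, Hasse--Minkowski for the binary space $\nL_\Q$ would not deliver it: it gives some rational $v$ with $Q(v)=m$ but no control over its coset at the finite places, and for binary lattices there is a real obstruction of class-group type (the class number of $\Q(\sqrt q)$ need not be $1$). So the sentence invoking the Hasse principle is at best superfluous and at worst misleading; with it removed, your argument matches the paper's.
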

\begin{proof}
The existence of the lattice is well-known (compare also \ref{LEMMA221NORMAL}).
For the property, we have to show, that $\nL_\Zl$ represents $\frac{l'}{q}$ under the condition.

We can write $\nL_\Zl = <1,-q>$, hence we see (like in the proof of the last lemma) that the condition implies $-x^2\equiv l'\ (q)$ has a solution, 
i.e. $-l'$ is a quadratic residue mod q. Since $q \equiv 1\ (4)$ this is equivalent to $l'$ being a quadratic residue mod $q$. 

For $l \notin \{2,q\}$, 
\[ x_1^2 - q x_2^2 = \frac{l'}{q} \]
clearly has a solution for $x_1,x_2 \in \Zl$ if $l\not=l'$, and for $l=l'$ it does, if $q$ is a square mod $l'$, and the latter is true by the law of quadratic reciprocity.
For $l=q$, $\kappa_q$ represents $m$, if
\[ x_1^2 - q ((\frac{\beta}{q})^2 + 2\frac{\beta}{q}x_2 + x_2^2) = \frac{l'}{q} \]
has a solution with $x_i \in \Z_q$. Now $-\beta^2 \equiv l'\ (q)$, so this is clearly possible.

For $l=2$, we have $\nL_{\Z_2} = <x^2+xy+z^2>$, which represents $\Z_2^*$.
\end{proof}

\begin{LEMMA}\label{LEMMA220INCLUSION}
Let $p\not=2$ and $\nL_\Zpp$ be a unimodular anisotopic lattice of signature $(2,2)$.
Up to multiplication of $Q_\nL$  with a unit in $\Z_{(p)}^*$,
there exists a unimodular lattice $\nL_\Zpp' = \nL_\Zpp \oplus <x>$ of signature $(3,2)$ 
and a saturated $\Z$-lattice $\nL_\Z' \subset \nL_\Zpp'$ of discriminant $D=2D'$, where $D'$ is square-free of the form 
\[ \nL_\Z' = H \oplus \nL_\Z''. \]
such that $(\nL_\Z')^*/\nL_\Z'$ is cyclic.
\end{LEMMA}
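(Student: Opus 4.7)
The plan is to reduce to Lemma~\ref{LEMMA210REPN} by first splitting off a hyperbolic plane at $p$, and then applying that lemma to the rank-$3$ orthogonal complement.

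First I would choose a positive integer $x$ with $|x|_p = 1$ and exhibit the splitting. Since $\nL_{\Qp}$ is (for odd $p$) the unique $4$-dimensional anisotropic quadratic space over $\Qp$ up to isomorphism, it represents every nonzero element of $\Qp$; combined with unimodularity of $\nL_\Zpp$, every unit of $\Zpp$ is represented primitively. In particular $-x$ is primitively represented by some $v \in \nL_\Zpp$, so $e_1 := v + w \in \nL_\Zpp' := \nL_\Zpp \oplus \langle x \rangle$ (with $w$ a generator of the $\langle x \rangle$-summand) is primitive isotropic. Because $\nL_\Zpp'$ is unimodular at the odd prime $p$, standard lattice theory over the DVR $\Zpp$ extends $e_1$ to a hyperbolic pair, giving an orthogonal decomposition
\[ \nL_\Zpp' = H \oplus \nL_\Zpp^{(0)}, \]
where $\nL_\Zpp^{(0)}$ is unimodular of rank $3$ and signature $(2,1)$. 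Moreover $x$ can be arranged (by choice of its class modulo $(\Qp^*)^2$) so that $\nL_{\Qp}^{(0)}$ is anisotropic over $\Qp$: the Witt class of $\nL_{\Qp} \oplus \langle x \rangle$ varies with $x$, and only one of the two unit classes in $\Qp^*/(\Qp^*)^2$ yields a $5$-dimensional form of Witt rank $2$, so I pick $x$ in the other class (which can always be realized by a positive integer coprime to $p$).

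Next, Lemma~\ref{LEMMA210REPN} applies to $\nL_\Zpp^{(0)}$ (after reversing the sign of $Q$ to convert signature $(2,1)$ to $(1,2)$; all conclusions are symmetric under this sign flip). It yields, up to multiplying $Q$ by a unit in $\Zpp^*$, a saturated $\Z$-lattice $\nL_\Z^{(0)} \subset \nL_\Zpp^{(0)}$ of discriminant $2D'$ with $D'$ squarefree and with cyclic discriminant group $(\nL_\Z^{(0)})^*/\nL_\Z^{(0)}$. The required unit multiplication is absorbed into the flexibility allowed by the statement of Lemma~\ref{LEMMA220INCLUSION}, by rescaling $Q_\nL$ on $\nL_\Zpp$ compatibly (the value $x$ rescales too).

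Finally I set $\nL_\Z' := H_\Z \oplus \nL_\Z^{(0)}$ with $H_\Z$ the standard $\Z$-hyperbolic plane of Gram matrix $\bigl(\begin{smallmatrix} 0 & 1 \\ 1 & 0 \end{smallmatrix}\bigr)$. Then
\[ \nL_\Z' \otimes_\Z \Zpp = (H_\Z \otimes \Zpp) \oplus \nL_\Zpp^{(0)} = H \oplus \nL_\Zpp^{(0)} = \nL_\Zpp', \]
so $\nL_\Z'$ is saturated; its discriminant equals $-1 \cdot (\pm 2 D') = \mp 2 D'$, of the required form $2D'$ with $D'$ squarefree up to absorbing the sign; and
\[ (\nL_\Z')^*/\nL_\Z' \;\cong\; (H_\Z)^*/H_\Z \oplus (\nL_\Z^{(0)})^*/\nL_\Z^{(0)} \;=\; 0 \oplus \text{cyclic}, \]
hence cyclic. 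The main obstacle is verifying the anisotropy of $\nL_{\Qp}^{(0)}$ for a valid $x$: this is a short local computation with Witt classes and Hasse invariants over $\Qp$ showing that a suitable unit class (hence a suitable positive integer $x$ coprime to $p$) exists. Once this step is in place, everything else is formal, and the hypotheses of Lemma~\ref{LEMMA210REPN} are met directly.
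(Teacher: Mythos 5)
Your overall strategy (adjoin $\langle x\rangle$, split off a hyperbolic plane, apply Lemma \ref{LEMMA210REPN} to the rank-$3$ complement) is exactly the paper's, but the step you single out as "the main obstacle" rests on a confusion between local and global anisotropy, and as written it cannot be carried out. For odd $p$, any unimodular $\Zpp$-lattice of rank $\ge 3$ is automatically \emph{isotropic} over $\Qp$: its reduction mod $p$ is a nondegenerate form over $\F_p$ in at least three variables, which has a nontrivial zero by Chevalley--Warning, and this zero lifts by Hensel. Hence $\nL_{\Qp}^{(0)}$ is isotropic over $\Qp$ for \emph{every} choice of $x$, your claimed dichotomy between the two unit classes is false (for every unit $x$ the five-dimensional local form has Witt index $2$), and your opening assertion that $\nL_{\Qp}$ is the anisotropic quaternary form over $\Qp$ is likewise wrong --- the hypothesis "anisotropic" in the lemma refers to $\nL_\Q$, not $\nL_{\Qp}$.

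Fortunately, the condition you actually need in order to invoke Lemma \ref{LEMMA210REPN} is anisotropy of the rank-$3$ complement \emph{over $\Q$}, and this is automatic for an arbitrary positive $x \in \Z_{(p)}^*$, which is how the paper argues. Indeed, $\nL'_\Q = \nL_\Q \perp \langle x\rangle$ is five-dimensional and indefinite, hence isotropic by Hasse--Minkowski, so $\nL'_\Q \cong H_\Q \perp \nL''_\Q$; and if $\nL''_\Q$ were also isotropic, then in the Witt ring $[\nL_\Q] = [\nL'_\Q] - [\langle x\rangle]$ would have anisotropic dimension at most $1+1=2$, contradicting that $\nL_\Q$ is anisotropic of dimension $4$. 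So the Witt index of $\nL'_\Q$ is exactly $1$ and $\nL''_\Q$ is anisotropic of signature $(2,1)$ (the sign flip to match the $(1,2)$ of Lemma \ref{LEMMA210REPN} is harmless, as you note). With this correction the rest of your argument --- the integral splitting over $\Zpp$ from unimodularity at the odd prime $p$, the saturation of $H_\Z \oplus \nL_\Z^{(0)}$, and the computation of the discriminant group --- goes through.
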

\begin{proof}
Take $\nL'_\Zpp = \nL_\Zpp \oplus <x>$ for an arbitrary positive $x \in \Z_{(p)}^*$. It is automatically isotropic, hence of the form
\[ \nL'_\Zpp = H \oplus \nL_\Zpp''. \]
Applying Lemma \ref{LEMMA210REPN} to $\nL_\Zpp''$, we get the result (we multiply $Q_{\nL'}$ by the same scalar; this multiplies also $Q_\nL$ on $\nL_\Zpp$ by this scalar --- note, however, 
that multiplying the quadratic form on $H$ by a scalar does not affect its isomorphism class).
\end{proof}

\section{Lacunarity of modular forms}\label{LACUNARITY}

\begin{LEMMA}\label{LEMMAMODFORM1}
Let $N>0$ be an integer and $p$ a prime with $p\nmid N$.

Let $f$ be a holomorphic modular form of (half\nobreakdash-)integral weight $k \not= 0$ for the group $\Gamma(N)$.
If $f$ has a Fourier expansion of the form
\[  f = \sum_{n \in \Q_{\ge 0}} a_{n} q^{n}, \]
where $a_n$ is zero, unless $n$ is an integral multiple of $\frac{p}{N}$, then $f=0$.
\end{LEMMA}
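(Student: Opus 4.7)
Plan: The hypothesis that $a_n = 0$ unless $n \in (p/N)\Z$ is equivalent, since $\gcd(p,N) = 1$, to the single functional equation $f(\tau + N/p) = f(\tau)$. Indeed $f(\tau + N/p) - f(\tau) = \sum_n a_n (e^{2\pi i n N/p} - 1) q^n$, and $e^{2\pi i n N/p} = 1$ exactly for $n \in (p/N)\Z$. Combined with the modularity of $f$ under $\Gamma(N)$, this says that $f$ transforms correctly under the weight-$k$ slash operator by the subgroup
\[ G := \langle \Gamma(N),\ T^{N/p} \rangle \subset \SL_2(\R), \qquad T := \M{1 & 1 \\ 0 & 1}, \]
which lies in $\SL_2(\Z[1/p])$ but strictly contains $\Gamma(N)$.

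The main step is to show that $G$ is in fact dense in $\SL_2(\R)$. For $\gamma = \M{a & b \\ c & d} \in \Gamma(N)$ one computes
\[ \gamma T^{N/p} \gamma^{-1} = I + \tfrac{N}{p}\M{-ac & a^2 \\ -c^2 & ac} \in G, \]
and specializing $\gamma = \M{1 & 0 \\ N & 1}$ and its variants produces many elements of $G \cap (\SL_2(\Z[1/p]) \setminus \SL_2(\Z))$. Using these conjugates together with products with $T^{N/p}$ and generators of $\Gamma(N)$, a Mennicke-style argument (equivalently, an explicit construction of the elementary matrices $\M{1 & 1/p^k \\ 0 & 1}$ and $\M{1 & 0 \\ 1/p^k & 1}$ inside $G$ for all $k \ge 1$) shows that $G$ contains a finite-index subgroup of $\SL_2(\Z[1/p])$. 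By strong approximation for $\SL_2/\Q$, the projection $\SL_2(\Z[1/p]) \to \SL_2(\R)$ is dense, hence so is $G$.

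Finally, a holomorphic function on $\HH$ transforming via the weight-$k$ cocycle $(c\tau+d)^k$ under a dense subgroup of $\SL_2(\R)$ must vanish when $k \neq 0$. By continuity of both sides of $f(\gamma\tau) = (c\tau+d)^k f(\tau)$ in $\gamma$, the relation extends to all $\gamma \in \SL_2(\R)$: restriction to $\gamma = T^t$ yields $f(\tau + t) = f(\tau)$ for all real $t$, so $f$ depends only on $\Im \tau$; restriction to the diagonal torus $\mathrm{diag}(\lambda, \lambda^{-1})$ yields the homogeneity $f(\lambda^2 \tau) = \lambda^{-k} f(\tau)$, forcing $f(\tau) = C \cdot (\Im \tau)^{-k/2}$. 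This is incompatible with holomorphy on $\HH$ unless $C = 0$, giving $f \equiv 0$. For half-integral weight the identical argument applies after lifting $G$ to the metaplectic cover $\Mp_2(\R) \to \SL_2(\R)$ and using the corresponding cocycle.

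The hard part will be the density assertion in the middle paragraph: while $\SL_2(\Z[1/p])$ is classically dense in $\SL_2(\R)$, pinning down that the specific group $G = \langle \Gamma(N), T^{N/p} \rangle$ already has dense image requires some patient arithmetic with the commutator identities above and, ideally, an explicit generation statement for $\SL_2(\Z[1/p])$ in terms of $\Gamma(N)$ and $T^{N/p}$.
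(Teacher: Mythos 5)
Your opening step (periodicity with period $N/p$, hence modularity under $G=\langle \Gamma(N),\,T^{N/p}\rangle$) and your closing step (a weight-$k\neq 0$ cocycle relation under a dense subgroup of $\SL_2(\R)$, resp.\ $\Mp_2(\R)$, forces $f=0$) are both sound and agree with the paper's setup. The genuine gap is the middle step: the claim that $G$ contains a finite-index subgroup of $\SL_2(\Z[1/p])$ is the entire content of your argument, and you do not prove it --- you defer it to an unspecified ``Mennicke-style argument'' and yourself flag it as ``the hard part''. Explicit generation statements of this type for $\SL_2$ over $\Z[1/p]$ are genuinely delicate (this is not $\SL_n$, $n\ge 3$, where elementary generation is classical), and manufacturing the matrices $\M{1&1/p^k\\0&1}$ and $\M{1&0\\1/p^k&1}$ inside $G$ from the conjugates $\gamma T^{N/p}\gamma^{-1}$ is exactly the work that is missing. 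As written, the proof does not close.

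The paper avoids this difficulty by aiming much lower: it needs only that $G$ is \emph{non-discrete}, and it gets that from the single element $C=\M{1&\frac{N}{p}\\0&1}\M{1&0\\N&1}$, whose trace $2+\frac{N^2}{p}$ has $p$-adic absolute value $p>1$; hence one eigenvalue of $C$ has $p$-adic absolute value $>1$ and $\|C^i\|_p\to\infty$. Since every element of a fixed coset $\alpha\,\Gamma(N)$ has $p$-adic matrix norm at most $\|\alpha\|_p$ (entries of $\Gamma(N)\subset\SL_2(\Z)$ are $p$-adic integers), finitely many cosets would bound the norms on $G$, so $[G:\Gamma(N)]=\infty$; a discrete subgroup of $\SL_2(\R)$ containing the lattice $\Gamma(N)$ with infinite index is impossible, so $G$ is non-discrete and $f=0$ for $k\neq0$. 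If you wish to keep your final step verbatim, note that non-discreteness already gives you density for free: the identity component of $\overline{G}$ is a nontrivial connected subgroup normalized by $\Gamma(N)$, hence by Borel density normal in $\SL_2(\R)$, hence equal to $\SL_2(\R)$. So the cheapest repair of your proposal is to replace the unproven generation claim by this one-line $p$-adic norm computation.
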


\begin{proof}
The assumption implies that $f$ is periodic with period $\frac{N}{p}$.
It is hence a modular form for the group~$\Gamma$, generated by $\Gamma(N)$ and the matrix
$\M{1&\frac{N}{p}\\0&1}$. This group contains the product
\[ C =  
\M{1&\frac{N}{p}\\0&1} \M{1&0\\N&1} = 
\M{1+\frac{N^2}{p}&\frac{N}{p}\\N&1 }. \]
The trace of $C$ is equal to $2+\frac{N^2}{p}$. Since $p\nmid N$, its $p$-adic valuation is $>1$. Hence at least one of the eigenvalues of $C$ has p-adic valuation 
$>1$. (We choose some fixed extension of the $p$-adic valuation to $\overline{\Q}$). It follows that $\|C^i\| \rightarrow \infty $ for any chosen $p$-adic matrix norm $\|\cdot \|$.
From this, it follows that $[\Gamma:\Gamma(N)]=\infty$. For assume that there are finitely many representatives $\alpha_i$.
Let $\nu$ be the maximum of their $p$-adic matrix norms. Every element $\gamma \in \Gamma$ is of the form
\[ \alpha_i \gamma` \]
for $\gamma` \in \Gamma(N)$. The matrix norm of $\gamma$ is hence $\le \nu$. A contradiction.
Hence $\Gamma$ cannot be a discrete subgroup, and since $k\not=0$, we have $f=0$.
\end{proof}

\begin{PROP}\label{LEMMAMODFORM2}
Let $q \equiv 1 \enspace (4)$ be a prime and $S$ a finite set of primes.
Let $\chi_q(x) = \left(\frac{x}{q}\right)$ and $k \ge 2$ be an integer.

If $f \in \HolModForm(\Gamma_0(q), \chi_q, k)$ has a Fourier expansion of the form
\[  f = \sum_{n \in \Z_{\ge 0}} a_{n} q^{n}, \]
with algebraic $a_n$, where (i) $a_n=0$, whenever $\chi_q(n)=-1$, and (ii) $a_p=0$, whenever $p \not\in S$.

Then $f=0$.
\end{PROP}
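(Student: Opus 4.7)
The plan is to split $f = f_{\mathrm{cusp}} + f_E$ into its cuspidal and Eisenstein components (the decomposition respects $M_k(\Gamma_0(q),\chi_q)$ since $k\ge 2$), and to treat each summand separately, using condition (ii) as the main lever while condition (i) serves to simplify the eigenspace book-keeping.

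For the Eisenstein part I would use that $M_k(\Gamma_0(q),\chi_q)^{\mathrm{Eis}}$ is spanned by Eisenstein series attached to pairs $(\psi_1,\psi_2)$ of primitive Dirichlet characters with $\psi_1\psi_2=\chi_q$ and product of conductors dividing $q$; since $\chi_q$ is primitive of conductor $q$, the only possibilities are $\{\psi_1,\psi_2\}=\{1,\chi_q\}$. The corresponding Eisenstein series have $p$-th Fourier coefficient (for $p\ne q$) of the shape $\alpha(p)+\beta(p)p^{k-1}$ with $\alpha,\beta$ unramified characters and $k\ge 2$, so the second term dominates as $p\to\infty$. Condition (ii) thus forces the coefficients of the Eisenstein expansion in this basis to be zero, giving $f_E=0$.

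For the cuspidal part, decompose $f_{\mathrm{cusp}}=\sum c_i g_i$ into normalized Hecke eigenforms for the prime-to-$q$ Hecke algebra (which acts semisimply on $S_k(\Gamma_0(q),\chi_q)$ by the Petersson inner product). Each $g_i$ is either a newform of level $q$ or an oldform coming from a form of level $1$ and character $\chi_q$ — but the latter is impossible because $\chi_q\ne 1$ cannot appear as a nebentypus on $\SL_2(\Z)$; so all $g_i$ are in fact newforms. Condition (ii) says $\sum_i c_i\,a_p(g_i)=0$ for every $p\notin S$. By strong multiplicity one, the systems $(a_p(g_i))_{p\notin S\cup\{q\}}$ of distinct newforms are pairwise distinct, and using either Serre's theorem on the density of primes with $a_p\ne 0$ (density $\ge 1/2$ for any non-zero cusp form of weight $\ge 2$), or a direct argument via linear independence of the associated Dirichlet series $L(g_i,s)$ on a suitable half-plane, one concludes that the only solution of the linear system is $c_i=0$ for all $i$.

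The main obstacle is the linear-dependence step in the cuspidal part: it is easy for one eigenform if one invokes Serre's lacunarity theorem, but the linear combination requires a little care. My preferred route is to note that the partial Dirichlet series $\sum_{n:\,p\mid n\Rightarrow p\in S\cup\{q\}} a_n n^{-s}$ of each $g_i$ converges to a finite Euler product that is, up to finitely many local factors, determined by the eigenvalues $a_p(g_i)$ for $p\in S$; condition (ii) says $L(f,s)$ equals a finite linear combination of such partial series, and matching this against the known functional equation of $L(f,s)$ forces $f=0$. Condition (i) is used here to eliminate contributions from the $n$ with $\chi_q(n)=-1$, so that the truncated Euler product description is actually consistent with the full Dirichlet series, and not merely with a projection of it.
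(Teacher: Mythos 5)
Your decomposition into Eisenstein and cuspidal parts, and the observation that all cuspidal eigenforms must be newforms because $\chi_q$ is primitive of conductor $q$, match the setup the paper uses. The Eisenstein step (grow $a_p$ like $p^{k-1}$ against the Deligne bound $O(p^{(k-1)/2})$ on the cuspidal side) is sound, modulo stating it cleanly in terms of $a_p(f_E)=-a_p(f_{\mathrm{cusp}})$ for $p\notin S$.

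The cuspidal step, however, is where your proposal breaks down. You acknowledge the obstacle --- that Serre's lacunarity theorem applies to a single eigenform, not a linear combination --- but the Dirichlet-series route you then sketch does not repair it, for a concrete reason: condition (ii) only constrains $a_p$ at \emph{prime} indices $p\notin S$, not at all $n$ whose prime factors lie in $S\cup\{q\}$. For a Hecke eigenform, $a_p=0$ does not force $a_{p^2}=0$ (indeed $a_{p^2}=-\chi_q(p)p^{k-1}\ne 0$), and $f$ itself is a priori not an eigenform, so there is no reason for $L(f,s)$ to reduce to a finite Euler product over $S$. Matching against a functional equation therefore has no traction. Separately, you never rule out CM newforms, yet the hypothesis $q\equiv 1\ (4)$ is present in the statement precisely to do this: for $q\equiv 1\ (4)$ there are no CM newforms in $S_k(\Gamma_0(q),\chi_q)$ compatible with condition (i), and without that exclusion Serre's density-zero result is simply unavailable (for a CM form the set $\{p:a_p=0\}$ has density $1/2$, so a clever linear combination might vanish at all $p\notin S$).

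What the paper actually does for the cuspidal (and, jointly, Eisenstein) piece is quite different: it chooses a large auxiliary prime $\ell$, forms the product of the mod-$\ell$ Galois representations attached to the eigenforms $g_1,\dots,g_n$ and the two Eisenstein series, and invokes a result of Ono--Skinner to the effect that the joint image contains a subgroup conjugate to $\SL_2(\F_1)\times\cdots\times\SL_2(\F_n)$ with Abelian quotient. It then applies the \v{C}ebotarev density theorem to produce a positive density of primes $p$ whose Frobenius is, up to conjugacy, a prescribed element --- chosen so that the $\ell$-adic valuations make the eigenform contributions to $a_p(f)$ drop out while the Eisenstein contribution survives (or a diagonal Frobenius making one cusp-form contribution dominate, if the Eisenstein coefficients vanish). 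This yields $a_p(f)\not\equiv 0\pmod{\ell}$ for a positive density of $p$, contradicting $a_p=0$ off the finite set $S$. This joint-Galois-image-plus-\v{C}ebotarev mechanism is the key idea your proposal is missing; it is what upgrades the one-eigenform lacunarity statement to the required statement about arbitrary algebraic linear combinations.
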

\begin{proof}
This follows from an idea of \cite{OS}, see also \cite[Lemma 4.14]{BBK}:
Since $q \equiv 1 \enspace (4)$, there are no forms of type `CM' satisfying condition (i).

Since the $a_n$ are algebraic, we can write $f$ as a linear combination
\[ f = c_0 E_0 + c_\infty E_\infty + \sum_{i=1}^n c_i f_i, \]
where the $c_i$ are algebraic, the $f_i$ are cuspidal Hecke eigenforms and $E_0, E_\infty$ are the Eisenstein series. 

In \cite[Lemma 1]{OS} it is shown that for all sufficiently large primes $l$ the image of the mod $l$ representation 
$\rho := \rho_{1,l} \times \cdots \times \rho_{n,l}$ contains a subgroup $G$ conjugated to 
\[ \SL_2(\F_1) \times \cdots \times \SL_2(\F_n) \] 
and $\im(\rho)/G$ is Abelian, where the $\F_i$'s are defined in [loc. cit.].
Choose $l$ such that all $|c_i|_l=1$, whenever $c_i \not= 0$.

The mod $l$ representations $\rho_{0, l}$, resp. $\rho_{\infty,l}$ of $E_0$, resp. $E_\infty$ \cite[p. 28]{Ribet} are abelian and
$\rho' := (\rho_{0,l} \times \rho_{\infty, l}) \times (\rho_{1,l} \times \cdots \times \rho_{n,l})$ contains a subgroup of the form 
\[ H \times G \] 
because $\SL_2(\F_i)$ has no non-trivial Abelian quotient if $l>4$. Here $H \subset \im(\rho_{0,l} \times \rho_{\infty, l})$ is a subgroup that, at least, contains a pair of matrices with different traces.
Choose a pair $(M_1,M_2) \in H$ of matrices with different or equal traces, according to whether $c_0 \equiv -c_\infty \enspace (l)$ or not.

By \v{C}ebotarev, there is a positive density of primes $p$, such that the image of $\Frob_p$ is conjugated
to 
\[ A= M_1 \times M_2 \times \M{0&1\\-1&0} \times \cdots \times \M{0&1\\-1&0}. \]

Hence
$|a_p(c_0 E_0 + c_\infty E_\infty)|_l = 1$ and $|a_p(f_i)|_l<1$, We get $a_p(f)\not=0$ for infinitely many primes, a contradiction.
If $c_0=c_\infty=0$, and $c_1\not=0$ (say) choose 
\[ A=\M{1&0\\0&1} \times \M{1&0\\0&1} \times \M{1&0\\0&1} \times \M{0&1\\-1&0} \times \cdots \times \M{0&1\\-1&0} \] 
and argue as before.
\end{proof}

\begin{LEMMA}\label{LEMMAMODFORM3}
Let $\Gamma \subset \Mp_2(\R)$ be an arithmetic subgroup and 
$V_\rho, \rho$ an {\em irreducible} representation of $\Gamma$.
Let $f \in \HolModForm(\Gamma, \rho, k)$ and $\beta: V_\rho \rightarrow \C$ be a non-zero linear form.
If $\beta \circ f = 0$ then $f = 0$.
\end{LEMMA}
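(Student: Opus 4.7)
The plan is straightforward and purely formal. I would consider the annihilator
\[ W := \{ \beta' \in V_\rho^* \mid \beta' \circ f \equiv 0 \text{ on } \HH \}. \]
By hypothesis, $\beta \in W$, so $W$ is nonzero. The key observation is that $W$ is a $\Gamma$-subrepresentation of the dual representation $\rho^*$ on $V_\rho^*$ (where $\gamma \cdot \beta' := \beta' \circ \rho(\gamma^{-1})$).

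To verify $\Gamma$-stability, let $\beta' \in W$ and $\gamma \in \Gamma$. Using the transformation law $f(\gamma \tau) = j(\gamma,\tau)^k \rho(\gamma) f(\tau)$, for any $\tau \in \HH$ one computes
\[ ((\gamma \cdot \beta') \circ f)(\tau) = \beta'(\rho(\gamma^{-1}) f(\tau)) = j(\gamma, \gamma^{-1}\tau)^k \beta'(f(\gamma^{-1}\tau)) = 0, \]
since $\beta' \circ f \equiv 0$. Hence $\gamma \cdot \beta' \in W$.

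Since $V_\rho$ is irreducible as a $\Gamma$-representation, so is its dual $V_\rho^*$ (finite-dimensional subrepresentations of $V_\rho^*$ correspond bijectively to quotient representations of $V_\rho$). Therefore the nonzero $\Gamma$-stable subspace $W \subseteq V_\rho^*$ must equal all of $V_\rho^*$. This means $\beta' \circ f \equiv 0$ for every linear form $\beta'$ on $V_\rho$, which forces $f \equiv 0$.

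There is no real obstacle in this argument; the only thing to be careful about is the transformation law computation showing $W$ is stable, and the standard fact that dualizing preserves irreducibility in finite dimensions. The statement is essentially a Schur-type observation used in the preceding theorems to reduce vanishing of $V_\rho$-valued modular forms to vanishing of scalar-valued ones obtained by projecting onto a suitable vector.
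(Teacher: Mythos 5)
Your proof is correct, but it takes a slightly different route from the paper's. You form the annihilator $W=\{\beta'\in V_\rho^*\mid \beta'\circ f\equiv 0\}$, check via the transformation law (using the cocycle identity $j(\gamma,\gamma^{-1}\tau)j(\gamma^{-1},\tau)=1$) that $W$ is a $\Gamma$-subrepresentation of the dual, and conclude from irreducibility of $V_\rho^*$ that $W=V_\rho^*$, hence $f=0$. The paper instead argues through a Burnside/Jacobson-density step: choosing a basis with $\beta=e_0^*$, irreducibility over $\C$ gives operators $O_i=\sum_j\alpha_j\rho(\gamma_j)$ in the span of $\rho(\Gamma)$ carrying $e_0$ to $e_i$, and then $e_i^*\circ f=e_0^*\circ O_i\circ f=\sum_j\alpha_j\,(\beta\circ f)|_k\gamma_j=0$ for every $i$. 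Both arguments exploit the same identity $\rho(\gamma)f(\tau)=j(\gamma,\tau)^{-k}f(\gamma\tau)$; yours buys a marginally more elementary statement (it uses only the definition of irreducibility applied to the dual, so it needs no algebraically closed field and no density theorem), while the paper's version makes explicit which scalar-valued slash-translates of $\beta\circ f$ recover the other coordinate functions of $f$. An equivalent and even shorter variant of your idea: the span of $\{f(\tau):\tau\in\HH\}$ is a $\Gamma$-stable subspace of $V_\rho$ annihilated by the nonzero form $\beta$, hence proper, hence zero.
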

\begin{proof}
This follows immediately from the irreducibility:
Choose a basis $\{e_i\}$ of $V_\rho$ such that $\beta=e_0^*$.
Since $V_\rho$ is irreducible, for any $i$ there is an operator of the form
\[ O_i = \sum_j \alpha_j \rho(\gamma_j), \]
which interchanges $e_i$ and $e_0$.
Consider the form $O_i \circ f$. $e_0^* \circ O_i \circ f$ is equal to $e_i^* \circ f$ on the one hand. 
On the other hand it is equal to $\sum_i \alpha_i (e_0^* \circ f)|_k \gamma_i$, which is zero by assumption. Hence $f=0$.
\end{proof}

\section{Semi-linear representations}

Let $K$ be a pro-finite topological group and $\tau: K \rightarrow \Gal(F:\Q)$ be a continuous homomorphism, where $F$ is a (possibly infinite) algebraic extension of $\Q$.

\begin{DEF}\label{SEMILINEAR}
By a $\tau$-semi-linear representation of $K$ on a finite-dimensional $F$-vector-space $V$, 
we understand a continuous homomorphism
\[ \omega: K \rightarrow \Aut_\Q(V) \]
(where $V$ is equipped with the discrete topology) satisfying 
\[ \omega(g) (\alpha v ) = \tau_g(\alpha) \omega(g) v. \]
\end{DEF}

\begin{BEISPIEL}\label{EXSEMILINEAR}
For example, a $\tau$-semi-linear representation may be given by a $\Q$-vector space $V$ and a 
continuous homomorphism $\omega$, fitting into the following commutative diagram
\begin{equation}\label{commdiagcycl} 
\xymatrix{ K \ar[r]^-\omega \ar[dr]_\tau & \GL_{F}(V_{F}) \rtimes \Gal(F:\Q) \ar[d] \\
& \Gal(F:\Q)
} 
\end{equation}
Obvious any such homorphism determines and is determined by its restriction $\omega'$ to the first factor, which
is an element in $H^1(K, \GL_{F}(V_{F}))$, where $K$ acts via $\tau$.
Here $\GL_{F}(V_{F})$ is equipped with the discrete topology.
\end{BEISPIEL}

\begin{PAR}
$\tau$-semi-linear representations of $K$ form a category, 
morphisms being linear morphisms of $F$-vector-spaces respecting the representations. 

There is a notion
of induction for these representations, and we have for $K_1 \subseteq K_2$ (open subgroup) and $V_i$ a $\tau$-semi-linear representation of $K_i$:
\[ \Hom_{K_1}(V_2, V_1) \cong \Hom_{K_2}(V_2, \ind_{K_1}^{K_2}(V_1))  \]
naturally, and since $[K_1:K_2] < \infty$, we also have
\[ \Hom_{K_1}(V_1, V_2) \cong \Hom_{K_2}(\ind_{K_1}^{K_2}(V_1), V_2),  \]
naturally. The induction may be defined as
\[ \ind_{K_1}^{K_2}(V) = \{ f: K_2 \rightarrow V \where f(hg) = \omega(h) f(g)\ \forall h\in K_1, g \in K_2 \} \]
with the $F$-linear structure given by
\[ (\alpha f)(g):= \tau_g(\alpha) f(g). \]
We have also 
\begin{gather*} \Hom_{K_2}(\ind_{K_1}^{K_2}(V), \ind_{K_1}^{K_2}(W))  = \\
 \{ \varphi: K_2 \rightarrow \Hom_\Q(V,W) \where \varphi(h_1gh_2) = \omega(h_1) \circ \varphi(g) \circ \omega(h_2)\ \forall h_1,h_2 \in K_1, g \in K_2, \\
  \varphi(g) \alpha v = \tau_g(\alpha) \varphi(g) v\ \forall g \in K_2, \alpha \in F  \}.
\end{gather*}
Here a function $\varphi$ acts by convolution
\[ (\varphi \ast f)(g) := \frac{1}{[K_2:K_1]} \sum_{x \in K_2 \backslash K_1} \varphi(gx^{-1}) f(x). \]
\end{PAR}

\begin{BEISPIEL}
Any $\Q$-representation $\omega', V$ of $K$ gives rise to a $\tau$-semi-linear representation $\omega(k) = \omega'(k) \circ \tau_k$ on $V_{F}$.

For a $\tau$-semi-linear representation $V, \omega$ the dual is defined by $V^*$ (usual dual) acted on by $(\omega^*(g)v^*)(v) := \tau_g(v^*(\omega(g^{-1})v))$. It is a $\tau$-semi-linear representation, too.
\end{BEISPIEL}

\begin{BEISPIEL}\label{EXABSTRACTGAMMA0P}
Let $p$ be an odd prime, $\chi$ be the non-trivial character of order 2 of $(\Z/p\Z)^*$ and $\tau: (\Z/p\Z)^* \rightarrow \Gal(\Q(\zeta_p)/\Q)$ be the natural isomorphism.
It gives rise to a semi-linear representation 
\[  \M{a&*\\0&d} \mapsto \chi(a) \tau_{ad}.  \]
A function $\varphi \in \Hom(\ind_{K_1}^{K_2}\chi, \ind_{K_1}^{K_2} \chi)$ is determined by its values at
\[ \M{1&0\\0&1} \text{ and } \M{0&1\\-1&0}.  \]
The semi-linearity forces
\[\tau_\alpha(\varphi(\M{1&0\\0&1})) = \varphi(\M{1&0\\0&1}) \quad \text{ and } \quad \tau_\alpha(\varphi(\M{0&1\\-1&0})) = \chi(\alpha)\varphi( \M{0&1\\-1&0}), \]
therefore
\[ \varphi(\M{1&0\\0&1}) \in \Q \quad \text{ and } \quad \varphi(\M{0&1\\-1&0}) \in \Q \cdot \sqrt{p^*} , \]
where $p^* = p$ if $p \equiv 1$ mod 4 and $p^*=-p$ otherwise.
A calculation shows that $\End_{K_2}(\ind_{K_1}^{K_2}\chi) = \Q^2$ as algebras, hence the irreducible constituents
of $\ind_{K_1}^{K_2}\chi$ as $\SL_2(\Z/p\Z)$-representation are also irreducible for this semi-linear representation of $\GL_2(\Z/p\Z)$.
\end{BEISPIEL}

\bibliographystyle{abbrvnat}
\bibliography{hoermann}

\begin{thebibliography}{42}
\providecommand{\natexlab}[1]{#1}
\providecommand{\url}[1]{\texttt{#1}}
\expandafter\ifx\csname urlstyle\endcsname\relax
  \providecommand{\doi}[1]{doi: #1}\else
  \providecommand{\doi}{doi: \begingroup \urlstyle{rm}\Url}\fi

\bibitem[Ash et~al.(1975)Ash, Mumford, Rapoport, and Tai]{AMRT}
A.~Ash, D.~Mumford, M.~Rapoport, and Y.~Tai.
\newblock \emph{Smooth compactification of locally symmetric varieties}.
\newblock Math. Sci. Press, Brookline, Mass., 1975.
\newblock Lie Groups: History, Frontiers and Applications, Vol. IV.

\bibitem[Baily and Borel(1966)]{BB}
W.~L. Baily, Jr. and A.~Borel.
\newblock Compactification of arithmetic quotients of bounded symmetric
  domains.
\newblock \emph{Ann. of Math. (2)}, 84:\penalty0 442--528, 1966.
\newblock ISSN 0003-486X.

\bibitem[Borcherds(1998)]{Borcherds1}
R.~E. Borcherds.
\newblock Automorphic forms with singularities on {G}rassmannians.
\newblock \emph{Invent. Math.}, 132\penalty0 (3):\penalty0 491--562, 1998.
\newblock ISSN 0020-9910.

\bibitem[Borcherds(1999)]{Borcherds2}
R.~E. Borcherds.
\newblock The {G}ross-{K}ohnen-{Z}agier theorem in higher dimensions.
\newblock \emph{Duke Math. J.}, 97\penalty0 (2):\penalty0 219--233, 1999.
\newblock ISSN 0012-7094.

\bibitem[Bourbaki(1989)]{Bourbaki}
N.~Bourbaki.
\newblock \emph{Commutative algebra. {C}hapters 1--7}.
\newblock Elements of Mathematics (Berlin). Springer-Verlag, Berlin, 1989.
\newblock ISBN 3-540-19371-5.
\newblock Translated from the French, Reprint of the 1972 edition.

\bibitem[Bruinier(2004)]{Br2}
J.~H. Bruinier.
\newblock Two applications of the curve lemma for orthogonal groups.
\newblock \emph{Math. Nachr.}, 274/275:\penalty0 19--31, 2004.
\newblock ISSN 0025-584X.

\bibitem[Bruinier and K{\"u}hn(2003)]{BK}
J.~H. Bruinier and U.~K{\"u}hn.
\newblock Integrals of automorphic {G}reen's functions associated to {H}eegner
  divisors.
\newblock \emph{Int. Math. Res. Not.}, 31:\penalty0 1687--1729, 2003.
\newblock ISSN 1073-7928.

\bibitem[Bruinier et~al.(2007)Bruinier, Burgos~Gil, and K{\"u}hn]{BBK}
J.~H. Bruinier, J.~I. Burgos~Gil, and U.~K{\"u}hn.
\newblock Borcherds products and arithmetic intersection theory on {H}ilbert
  modular surfaces.
\newblock \emph{Duke Math. J.}, 139\penalty0 (1):\penalty0 1--88, 2007.
\newblock ISSN 0012-7094.

\bibitem[Burgos~Gil et~al.(2005)Burgos~Gil, Kramer, and K{\"u}hn]{BKK2}
J.~I. Burgos~Gil, J.~Kramer, and U.~K{\"u}hn.
\newblock Arithmetic characteristic classes of automorphic vector bundles.
\newblock \emph{Doc. Math.}, 10:\penalty0 619--716 (electronic), 2005.
\newblock ISSN 1431-0635.

\bibitem[Burgos~Gil et~al.(2007)Burgos~Gil, Kramer, and K{\"u}hn]{BKK1}
J.~I. Burgos~Gil, J.~Kramer, and U.~K{\"u}hn.
\newblock Cohomological arithmetic {C}how rings.
\newblock \emph{J. Inst. Math. Jussieu}, 6\penalty0 (1):\penalty0 1--172, 2007.
\newblock ISSN 1474-7480.

\bibitem[Deligne(1973)]{Deligne7}
P.~Deligne.
\newblock Espaces hermitiens sym\'etriques.
\newblock transcription available at http://www.jmilne.org/, 1973.

\bibitem[Faltings and Chai(1990)]{FC}
G.~Faltings and C.-L. Chai.
\newblock \emph{Degeneration of abelian varieties}, volume~22 of
  \emph{Ergebnisse der Mathematik und ihrer Grenzgebiete (3) [Results in
  Mathematics and Related Areas (3)]}.
\newblock Springer-Verlag, Berlin, 1990.
\newblock ISBN 3-540-52015-5.
\newblock With an appendix by David Mumford.

\bibitem[Gillet(2009)]{Gillet}
H.~Gillet.
\newblock Arithmetic intersection theory on {D}eligne-{M}umford stacks.
\newblock In \emph{Motives and algebraic cycles}, volume~56 of \emph{Fields
  Inst. Commun.}, pages 93--109. Amer. Math. Soc., Providence, RI, 2009.

\bibitem[Gross and Zagier(1986)]{GZ}
B.~H. Gross and D.~B. Zagier.
\newblock Heegner points and derivatives of {$L$}-series.
\newblock \emph{Invent. Math.}, 84\penalty0 (2):\penalty0 225--320, 1986.
\newblock ISSN 0020-9910.

\bibitem[Grothendieck et~al.(1962--1964)]{SGAIII}
A.~Grothendieck et~al.
\newblock S\'eminaire de {G}\'eom\'etrie alg\'ebrique du {B}ois-{M}arie {III},
  {S}ch\'emas en {G}roupes, 1962--1964.

\bibitem[Harris and Zucker(1994)]{HaZu1}
M.~Harris and S.~Zucker.
\newblock Boundary cohomology of {S}himura varieties. {I}. {C}oherent
  cohomology on toroidal compactifications.
\newblock \emph{Ann. Sci. \'Ecole Norm. Sup. (4)}, 27\penalty0 (3):\penalty0
  249--344, 1994.
\newblock ISSN 0012-9593.

\bibitem[H\"ormann(2010{\natexlab{a}})]{Paper1}
F.~H\"ormann.
\newblock On recursive properties of certain $p$-adic {W}hittaker functions.
\newblock submitted for publication, arXiv:1010.1009v1, 2010{\natexlab{a}}.

\bibitem[H\"ormann(2010{\natexlab{b}})]{Thesis}
F.~H\"ormann.
\newblock \emph{The arithmetic volume of {S}himura varieties of orthogonal
  type}.
\newblock PhD thesis, Humboldt-Universit\"at zu Berlin, 2010{\natexlab{b}}.

\bibitem[Kisin(2009)]{Kisin}
M.~Kisin.
\newblock Integral canonical models of {S}himura varieties.
\newblock \emph{J. Th\'eor. Nombres Bordeaux}, 21\penalty0 (2):\penalty0
  301--312, 2009.

\bibitem[Kisin(2010)]{Kisin2}
M.~Kisin.
\newblock Integral models for {S}himura varieties of abelian type.
\newblock \emph{J. Amer. Math. Soc.}, 23\penalty0 (4):\penalty0 967--1012,
  2010.

\bibitem[K{\"o}hler(2005)]{Koehler}
K.~K{\"o}hler.
\newblock A {H}irzebruch proportionality principle in {A}rakelov geometry.
\newblock In \emph{Number fields and function fields---two parallel worlds},
  volume 239 of \emph{Progr. Math.}, pages 237--268. Birkh\"auser Boston,
  Boston, MA, 2005.

\bibitem[Kudla(2003)]{Kudla4}
S.~S. Kudla.
\newblock Integrals of {B}orcherds forms.
\newblock \emph{Compositio Math.}, 137\penalty0 (3):\penalty0 293--349, 2003.
\newblock ISSN 0010-437X.

\bibitem[Kudla(2004)]{Kudla6}
S.~S. Kudla.
\newblock Special cycles and derivatives of {E}isenstein series.
\newblock In \emph{Heegner points and {R}ankin {$L$}-series}, volume~49 of
  \emph{Math. Sci. Res. Inst. Publ.}, pages 243--270. Cambridge Univ. Press,
  Cambridge, 2004.

\bibitem[Kudla and Rapoport(1999)]{KR1}
S.~S. Kudla and M.~Rapoport.
\newblock Arithmetic {H}irzebruch-{Z}agier cycles.
\newblock \emph{J. Reine Angew. Math.}, 515:\penalty0 155--244, 1999.
\newblock ISSN 0075-4102.

\bibitem[Kudla and Rapoport(2000)]{KR3}
S.~S. Kudla and M.~Rapoport.
\newblock Cycles on {S}iegel threefolds and derivatives of {E}isenstein series.
\newblock \emph{Ann. Sci. \'Ecole Norm. Sup. (4)}, 33\penalty0 (5):\penalty0
  695--756, 2000.
\newblock ISSN 0012-9593.

\bibitem[Kudla et~al.(1999)Kudla, Rapoport, and Yang]{KRY1}
S.~S. Kudla, M.~Rapoport, and T.~Yang.
\newblock On the derivative of an {E}isenstein series of weight one.
\newblock \emph{Internat. Math. Res. Notices}, 7:\penalty0 347--385, 1999.
\newblock ISSN 1073-7928.

\bibitem[Kudla et~al.(2004)Kudla, Rapoport, and Yang]{KRY2}
S.~S. Kudla, M.~Rapoport, and T.~Yang.
\newblock Derivatives of {E}isenstein series and {F}altings heights.
\newblock \emph{Compos. Math.}, 140\penalty0 (4):\penalty0 887--951, 2004.
\newblock ISSN 0010-437X.

\bibitem[Kudla et~al.(2006)Kudla, Rapoport, and Yang]{KRY3}
S.~S. Kudla, M.~Rapoport, and T.~Yang.
\newblock \emph{Modular forms and special cycles on {S}himura curves}, volume
  161 of \emph{Annals of Mathematics Studies}.
\newblock Princeton University Press, Princeton, NJ, 2006.
\newblock ISBN 978-0-691-12551-0; 0-691-12551-1.

\bibitem[K{\"u}hn(2001)]{Kuehn}
U.~K{\"u}hn.
\newblock Generalized arithmetic intersection numbers.
\newblock \emph{J. Reine Angew. Math.}, 534:\penalty0 209--236, 2001.
\newblock ISSN 0075-4102.

\bibitem[Lan(2008)]{Lan}
K.-W. Lan.
\newblock \emph{Arithmetic compactifications of {PEL}-type {S}himura
  varieties}.
\newblock PhD thesis, Harvard University, 2008.

\bibitem[Milne(2005)]{Milne4}
J.~S. Milne.
\newblock Introduction to {S}himura varieties.
\newblock In \emph{Harmonic analysis, the trace formula, and {S}himura
  varieties}, volume~4 of \emph{Clay Math. Proc.}, pages 265--378. Amer. Math.
  Soc., Providence, RI, 2005.

\bibitem[Milne and Shih(1979)]{MiShi}
J.~S. Milne and K.-y. Shih.
\newblock Shimura varieties: conjugates and the action of complex
  multiplication.
\newblock available on http://www.jmilne.org/, 1979.

\bibitem[Moonen(1998)]{Moonen}
B.~Moonen.
\newblock Models of {S}himura varieties in mixed characteristics.
\newblock In \emph{Galois representations in arithmetic algebraic geometry
  ({D}urham, 1996)}, volume 254 of \emph{London Math. Soc. Lecture Note Ser.},
  pages 267--350. Cambridge Univ. Press, Cambridge, 1998.

\bibitem[Mumford(1977)]{Mumford1}
D.~Mumford.
\newblock Hirzebruch's proportionality theorem in the noncompact case.
\newblock \emph{Invent. Math.}, 42:\penalty0 239--272, 1977.
\newblock ISSN 0020-9910.

\bibitem[Ono and Skinner(1998)]{OS}
K.~Ono and C.~Skinner.
\newblock Fourier coefficients of half-integral weight modular forms modulo
  {$l$}.
\newblock \emph{Ann. of Math. (2)}, 147\penalty0 (2):\penalty0 453--470, 1998.
\newblock ISSN 0003-486X.

\bibitem[Pink(1990)]{Pink}
R.~Pink.
\newblock \emph{Arithmetical compactification of mixed {S}himura varieties}.
\newblock Bonner Mathematische Schriften [Bonn Mathematical Publications], 209.
  Universit\"at Bonn Mathematisches Institut, Bonn, 1990.
\newblock Dissertation, Rheinische Friedrich-Wilhelms-Universit{\"a}t Bonn,
  Bonn, 1989.

\bibitem[Ribet(1977)]{Ribet}
K.~A. Ribet.
\newblock Galois representations attached to eigenforms with {N}ebentypus.
\newblock In \emph{Modular functions of one variable, {V} ({P}roc. {S}econd
  {I}nternat. {C}onf., {U}niv. {B}onn, {B}onn, 1976)}, pages 17--51. Lecture
  Notes in Math., Vol. 601. Springer, Berlin, 1977.

\bibitem[Soul{\'e}(1992)]{SABK}
C.~Soul{\'e}.
\newblock \emph{Lectures on {A}rakelov geometry}, volume~33 of \emph{Cambridge
  Studies in Advanced Mathematics}.
\newblock Cambridge University Press, Cambridge, 1992.
\newblock ISBN 0-521-41669-8.
\newblock With the collaboration of D. Abramovich, J.-F. Burnol and J. Kramer.

\bibitem[Vasiu(1999)]{Vasiu1}
A.~Vasiu.
\newblock Integral canonical models of {S}himura varieties of preabelian type.
\newblock \emph{Asian J. Math.}, 3\penalty0 (2):\penalty0 401--518, 1999.
\newblock ISSN 1093-6106.

\bibitem[Weil(1964)]{Weil1}
A.~Weil.
\newblock Sur certains groupes d'op\'erateurs unitaires.
\newblock \emph{Acta. Math.}, 111:\penalty0 143--211, 1964.

\bibitem[Weil(1965)]{Weil2}
A.~Weil.
\newblock Sur la formule de {S}iegel dans la th\'eorie des groupes classiques.
\newblock \emph{Acta. Math.}, 113:\penalty0 1--87, 1965.

\bibitem[Yang(2004)]{Yang2}
T.~Yang.
\newblock Faltings heights and the derivative of {Z}agier's {E}isenstein
  series.
\newblock In \emph{Heegner Points and Rankin $L$-Series}, volume~49. MSRI
  Publications, 2004.

\end{thebibliography}

%% Authors are advised to submit their bibtex database files. They are
%% requested to list a bibtex style file in the manuscript if they do
%% not want to use elsarticle-harv.bst.

%% References without bibTeX database:

% \begin{thebibliography}{00}

%% \bibitem must have one of the following forms:
%%   \bibitem[Jones et al.(1990)]{key}...
%%   \bibitem[Jones et al.(1990)Jones, Baker, and Williams]{key}...
%%   \bibitem[Jones et al., 1990]{key}...
%%   \bibitem[\protect\citeauthoryear{Jones, Baker, and Williams}{Jones
%%       et al.}{1990}]{key}...
%%   \bibitem[\protect\citeauthoryear{Jones et al.}{1990}]{key}...
%%   \bibitem[\protect\astroncite{Jones et al.}{1990}]{key}...
%%   \bibitem[\protect\citename{Jones et al., }1990]{key}...
%%   \harvarditem[Jones et al.]{Jones, Baker, and Williams}{1990}{key}...
%%

% \bibitem[ ()]{}

% \end{thebibliography}

\end{document}